\documentclass[a4paper,12pt,leqno, twoside]{article}
\usepackage{amssymb}
\usepackage{amsthm}
\usepackage{amsmath}
\usepackage[all]{xy} 
\usepackage[mathscr]{eucal}

\usepackage{color}
\usepackage{comment}

\usepackage{mathrsfs} % for \mathscr (script letters)

\usepackage[nonewpage]{imakeidx}
\makeindex[intoc,name=notation,title=Selected notation]
\makeindex[intoc,name=terminology,title=Selected terminology]

\def\NM{{\mathbb{N}}}

\def\RM{{\mathbb{R}}}

\def\QM{{\mathbb{Q}}}
\def\FM{{\mathbb{F}}}
\def\ZM{{\mathbb{Z}}}

\def\CM{{\mathbb{C}}}

\def\mG{{\mathfrak m}}

\def\gG{{\mathfrak g}}
\def\kG{{\mathfrak k}}

\def\hG{{\mathfrak h}}
\def\jG{{\mathfrak j}}
\def\nG{{\mathfrak n}}
\def\sG{{\mathfrak s}}
\def\lG{{\mathfrak l}}
\def\fl{{\mathfrak l}}

\def\uG{{\mathfrak u}}

\def\ZG{{\mathfrak Z}}

\newcommand{\bC}{\mathbb{C}}
\newcommand{\bF}{\mathbb{F}}
\newcommand{\bQ}{\mathbb{Q}}
\newcommand{\bR}{\mathbb{R}}
\newcommand{\bZ}{\mathbb{Z}}

\newcommand{\cC}{\mathcal{C}}
\newcommand{\cF}{\mathcal{F}}
\newcommand{\cH}{\mathcal{H}}
\newcommand{\cI}{\mathcal{I}}
\newcommand{\cL}{\mathcal{L}}

\newcommand{\cW}{\mathcal{W}}
\newcommand{\cV}{\mathcal{V}}

\newcommand{\fg}{\mathfrak{g}}
\newcommand{\fu}{\mathfrak{u}}
\newcommand{\fm}{\mathfrak{m}}

\newcommand{\sS}{\mathscr{S}}

\newcommand{\bfG}{\mathbf{G}}
\newcommand{\bfM}{\mathbf{M}}

\def\AC{{\mathcal A}}

\def\SC{{\mathcal S}}
\def\CC{{\mathcal C}}
\def\HC{{\mathcal H}}
\def\NC{{\mathcal N}}

\def\RC{{\mathcal R}}
\def\IC{{\mathcal I}}
\def\OC{{\mathcal O}}
\def\MC{{\mathcal M}}

\def\LC{{\mathcal L}}
\def\EC{{\mathcal E}}
\def\FC{{\mathcal F}}
\def\VC{{\mathcal V}}
\def\WC{{\mathcal W}}
\def\GC{{\mathcal G}}

\def\BC{{\mathcal B}}
\def\TC{{\mathcal T}}

\def\subsetneq{\varsubsetneq}

\def\simto{\buildrel\hbox{\tiny{$\sim$}}\over\longrightarrow}
\def\simfrom{\buildrel\hbox{\tiny{$\sim$}}\over\longleftarrow}

\def\leq{\leqslant}
\def\geq{\geqslant}
\def\injo{\hookrightarrow}
\def\id{\mathop{\mathrm{Id}}\nolimits}

\def\wt{\widetilde}
\def\wh{\widehat}
\def\o#1{\overline{#1}}
\def\eps{\varepsilon}

\def\application#1#2#3#4#5{\begin{array}{rcl}
                            #1 \;\;\; #2 & \to &  #3 \\
                              #4 & \mapsto & #5 
                            \end{array}}

 \newcommand{\ra}{\rightarrow}
 \newcommand{\xra}{\xrightarrow}
 \newcommand{\ov}{\overline}

\def\To#1{\buildrel\hbox{\tiny{$#1$}}\over\longrightarrow}
\def\to{\rightarrow}

\def\Mod#1{\mathop{\hbox {\sl #1-Mod}}}     %cat des R-modules {?} gauche
\def\ker{{\rm ker}}

\def\im{{\rm im}}

\def\Hom{\mathop{\hbox{\rm Hom}}\nolimits}
\def\Aut{\mathop{\hbox{\rm Aut}}\nolimits}
\def\Ext{\mathop{\hbox{\rm Ext}}\nolimits}

\def\End{\mathop{\hbox{\rm End}}\nolimits}
\def\cind{\mathop{\hbox{\rm ind}}\nolimits}

\def\Mod{{\rm Mod}}
\def\Rep{{\rm {R}ep}} 
\def\Ind#1#2{\hbox {\rm Ind}_{#1}^{#2}}
\def\ind#1#2#3{\hbox {\rm Ind}_{#1}^{#2}\>\!\left(#3\right)}  %induction
\def\cInd#1#2{\hbox {\rm ind}_{#1}^{#2}}
\def\dim{\mathop{\mbox{\rm dim}}\nolimits}
\def\dim{{\rm dim}}

\def\Ad{{\rm Ad}}

\def\BT{{\BC}}

\def\colim{{\rm colim}}

\def\ini{\setcounter{equation}{\value{subsubsection}}\stepcounter{subsubsection}}
\def\inisub{\setcounter{equation}{\value{subsubsection}}\addtocounter{equation}{-1}}

\makeatletter
\renewcommand{\subsubsection}{\@startsection{subsubsection}{3}{\parindent}{-\baselineskip}{-0.01\baselineskip}{\bf}}
\renewcommand*{\@seccntformat}[1]{%
  \csname the#1\endcsname\
}
\makeatother  

\def\ali{\subsubsection{}\setcounter{equation}{0}}
\def\alin#1{\setcounter{equation}{0}\subsubsection{\it  #1}. --- }

\newtheoremstyle{th}
  {\baselineskip}{.5\baselineskip}{\itshape}
  {\parindent}{\bf}
  { ---}{.5em}{}

\newtheoremstyle{def}
  {\baselineskip}{\baselineskip}{}
  {\parindent}{\bf}
  {--}{.5em}{}

\newtheoremstyle{th*}
  {.5\baselineskip}{.5\baselineskip}{\itshape}
  {\parindent}{\bf}
  { ---}{.5em}{}

\newtheoremstyle{remark*}
  {.5\baselineskip}{.5\baselineskip}{}
  {\parindent}{\bf}
  {--}{.5em}{}

\newtheoremstyle{remark}
  {.5\baselineskip}{.5\baselineskip}{}
  {\parindent}{\bf}
  { ---}{.5em}{}

\swapnumbers
\theoremstyle{th}
\newtheorem{theo}[subsubsection]{\it Theorem.\bf}
\newtheorem{lemme}[subsubsection]{\it Lemma.\bf}
\newtheorem{prop}[subsubsection]{\it Proposition.\bf}
\newtheorem{coro}[subsubsection]{\it Corollary.\bf}

\newtheorem{assumption}[subsubsection]{\it Assumption.\bf}

\theoremstyle{def}

\newtheorem{DEf}[subsubsection]{\it D{e}finition.\bf}
\newtheorem{DEflem}[subsubsection]{\it D{e}finition / Lemma.\bf}

\theoremstyle{remark}
\newtheorem{rema}[subsubsection]{\it Remark.\bf}  %\renewcommand{\therema}{}
\newtheorem{nota}[subsubsection]{\it Notation.\bf}  %\renewcommand{\thenota}{}

\theoremstyle{th*}
\newtheorem*{thm}{\it Theorem.}

\newtheorem*{lem}{\it Lemma.}
\newtheorem*{pro}{\it Proposition.}
\newtheorem*{cor}{\it Corollary.}

\newtheorem*{main}{\bf Main result.}

\theoremstyle{remark*}
\newtheorem*{defn}{\it Definition.}
\newtheorem*{rak}{\it Remark.}

\newtheorem*{no}{\it Notation.}

\usepackage{hyperref}

\newcommand{\findem}{\hfill$\Box$\par\medskip}
\newcommand{\dem}{\indent {\it Preuve :} \rm }

\title{Parametrization and reduction to depth zero of $\overline{\mathbb{Z}}[\frac{1}{p}]$-blocks of tame $p$-adic groups}
\author{Jean-Fran\c{c}ois Dat and Jessica Fintzen}
\date{}

\AtEndDocument{
  {\footnotesize
         \vspace{.5cm}
   \textsc{Jean-Fran\c{c}ois Dat,   Institut de Math\'ematiques de
   Jussieu-PRG, Sorbonne Universit\'e, Universit\'e de Paris Cit\'e, CNRS}, 4
     place Jussieu, 75005 Paris, France.

     \textit{E-mail address :} \texttt{jean-francois.dat@imj-prg.fr}

     \vspace{.5cm}

\textsc{Jessica Fintzen, Universität Bonn, Mathematisches Institut,}
Endenicher Allee 60, 53115 Bonn, Germany.

\textit{E-mail address}: \texttt{fintzen@math.uni-bonn.de}
}}

\begin{document}
\maketitle
\bibliographystyle{alpha}
\def\knr{{\wh{K^{nr}}}}
\def\ka{\wh{K^{ca}}}

\def\SL{{\rm SL}}
\def\Sp{{\rm Sp}}
\def\O{{\rm O}}
\def\GL{{\rm GL}}
\def\PGL{{\rm PGL}}
\def\G{{\rm G}}

\def\Ql{\QM_{\ell}}
\def\Zl{\ZM_{\ell}}
\def\Fl{\FM_{\ell}}
\def\oQl{\o\QM_{\ell}}
\def\oZl{\o\ZM_{\ell}}
\def\bZl{\o\ZM_{\ell}}
\def\bQl{\o\QM_{\ell}}
\def\oFl{\o\FM_{\ell}}

\def\sp{{\rm sp}}
\def\Ens{{\SC ets}}
\def\Coef{{\rm Coef}}
\def\Fr{{\rm Fr}}

\def\vG{{K}}
\def\Kp{K^{+}}
\def\Kpp{K^{++}}
\def\Ku{K^{\dagger}}
\def\Ko{K^{\circ}}
\def\K{K^{}}

\def\KpM{K^{+}}

\def\Up{U^{+}}
\def\Uu{U^{\dagger}}
\def\bUp{\bar{U}^{+}}
\def\bUu{\bar{U}^{\dagger}}

\def\o#1{\overline{#1}}

\newcommand{\Weil}{\omega} 
\newcommand{\HeisWeil}{\kappa}
\newcommand{\HeisWeilQ}{\kappa^{\overline\bQ}}
\newcommand{\HeisWeilMQ}{\kappa^{M,\overline\bQ}}

\def\Heis{\eta}
\def\neweta{\nu}
\def\Rmin{\ZM[\mu_{p^\infty},\frac 1 p]}
 \def\Rmintwo{\ZM[\mu_{4p^\infty},\frac{1}{\sqrt{p}}]}
 \def\Rminthree{\ZM[\mu_{p^\infty},\frac{1}{\sqrt{p}}]}
 \def\R{\o\ZM[\frac 1p]}
 \def\H{{\rm H}}

 \begin{abstract}
     Let $\mathbf{G}$ be a reductive group over a non-archimedean local
   field $F$ of residue characteristic $p$.
   We consider pairs $(\phi,I)$ consisting of  
   a ``wild  inertia'' Langlands parameter $\phi:\, P_{F} \longrightarrow \hat{\mathbf{G}}$
   whose centralizer $C_{\hat{\mathbf{G}}}(\phi)$ is a  Levi subgroup of $\hat{\mathbf{G}}$,
   and a cohomological   invariant $I$ whose definition is inspired by  the theory of
   endoscopy.  Assuming 
   that $p$ is odd and   not a torsion prime of $\mathbf{G}$ nor of $\hat{\mathbf{G}}$,
   we associate to each such pair $(\phi,I)$  a Serre subcategory $\mathrm{Rep}^{\phi,I}(\mathbf{G}(F))$
   of the category of smooth $\overline{\mathbb{Z}}[\frac{1}{p}]$-representations of $\mathbf{G}(F)$. Then we
   construct an equivalence between this Serre subcategory and the category of depth-zero
   $\overline{\mathbb{Z}}[\frac{1}{p}]$-representations of a twisted Levi subgroup $\mathbf{G}_{\phi,I}$ of
   $\mathbf{G}$, which is dual to  
   $C_{\hat{\mathbf{G}}}(\phi)$.  This pattern for
   reduction to depth zero  fits well with the conjectural (categorical) local Langlands correspondence. 
    
   When $\mathbf{G}$ is tamely ramified and $p$ does not
   divide the order of its Weyl group, then the above Serre subcategories provide the block decomposition of the category of all smooth $\overline{\mathbb{Z}}[\frac{1}{p}]$-representations of $\mathbf{G}(F)$.
  	In this case, we thus obtain a reduction-to-depth-zero process
   for smooth representations of $\mathbf{G}(F)$ valued in any
   algebraically closed field of characteristic different from  $p$.
	When that field has characteristic $0$, this recovers some of the recent results of Adler--Fintzen--Mishra--Ohara.
	 When that field is
  $\overline{\mathbb{F}}_{\ell}$, we use our results together with  Zhu's unipotent categorical correspondence
    to produce a fully faithful embedding of
    $D\mathrm{Rep}_{\overline{\mathbb{F}}_{\ell}}(\mathrm{GL}_{n}(F))$ into a suitable category of coherent sheaves on the moduli space of
    $n$-dimensional $\overline{\mathbb{F}}_{\ell}$-representations of the Weil group.  
     \end{abstract}

{
	\renewcommand{\thefootnote}{}  % to delete the footnote number 	
	\footnotetext{MSC2020: 22E50,11F70, 11S37} 
	\footnotetext{Keywords: $p$-adic groups, representation theory, local Langlands program}
	\footnotetext{The first-named author acknowledges the support
          of ANR through grant ``PPAL'' ANR-25-CE40-4664 }
	\footnotetext{The second-named author was partially supported by 
	the European Research Council (ERC) under the European Union's Horizon 2020 research
	and innovation programme (grant agreement n° 950326).}          
}

\newpage

\tableofcontents

\section{Introduction}

 Let ${\mathbf{G}}$ be a connected reductive group defined over a non-archimedean local field $F$ and set
 $G:={\mathbf{G}}(F)$. For any commutative ring $R$, we denote by $\Rep_{R}(G)$ the category of smooth $R$-representations of $G$.
 Here is a coarse representation-theoretic takeaway of what is done in this paper.
 
 \begin{main}[Take 1]  Assume that $\mathbf{G}$ is tamely ramified and that
   $p$ does not divide the order of the absolute Weyl group of $\mathbf{G}$.
   We construct the block decomposition of $\Rep_{\o\ZM[\frac 1p]}(G)$ and show that each
   block is  equivalent to the depth-$0$ block of an explicitly associated tamely
   ramified reductive group (which turns out to be a twisted Levi subgroup of
   $\mathbf{G}$). Moreover, each
   block is defined over $\ZM[\mu_{p^{\infty}},\frac 1p]$ and the equivalences can be
   defined over $\ZM[\mu_{4p^{\infty}},\frac 1{\sqrt p}]$.
 \end{main}
 
 Note that this implies a reduction to depth $0$ for blocks 
    over
 any  $\ZM[\mu_{4p^{\infty}},\frac 1{\sqrt p}]$-algebra $R$, and in
 particular over any algebraically closed field of
 characteristic different from $p$.

 We like to remind readers mostly acquainted with complex representation theory of $p$-adic groups  that there is in general no Bernstein decomposition nor type theoretic
 construction of blocks of $\Rep_{R}(G)$ whenever the pro-order of $G$ has a divisor that
 is not invertible in $R$.
 Even when such a decomposition exists, e.g., in the settings of
 \cite{HKSS}, \cite{Lanard_blocks}, \cite{Minguez_Secherre}, 
 \cite{Vig_Induced}, there is in general no equivalence between a
 block and a category of modules over a nice Hecke algebra. Therefore, our techniques have to be quite
 different from those used in \cite{AFMO2}, where a similar ``reduction to depth $0$'' result is
 proved for complex representations by comparing suitable Hecke
 algebras.\footnote{Note however that our main result, stated below, allows us to reprove
   that the relevant Hecke algebras of \cite{AFMO2} are isomorphic, albeit in a less
   explicit way, see Corollary \ref{cor-Hecke-alg-isom}.}

 To give an idea of our techniques, we start with the following description of the depth-$0$
 subcategory (the notation for the latter will be explained below)
 \[ \Rep^{1}_{R}(G)=\left\{V\in\Rep_{R}(G),\, V=\sum_{x\in \BT} e_{G_{x,0+}}V\right\}\]
where $\BT$ denotes the Bruhat--Tits building of $\mathbf G$, the group $G_{x,0+}$ is the pro-$p$-radical of the
parahoric subgroup attached to $x$, and $e_{G_{x,0+}}$ is the averaging idempotent
along this pro-$p$-group, which is an element of the  ring $\HC_{R}(G)$ of all compactly
supported, locally constant $R$-valued distributions on $G$, and thus acts on $V$ (as a projector onto the fixed vectors
$V^{G_{x,0+}}$).

Our blocks are constructed in a similar way in Section 2, via families of idempotents indexed by
points in the building. These families have nice consistency properties that allow us to
apply results of \cite{MS1} and \cite{Lanard}, which show that our blocks are equivalent to
certain categories of coefficient systems on the building. This is the replacement for the
missing Hecke algebras bridge in this setting. Our equivalences between blocks are then obtained
in Section 3 by constructing equivalences between these categories of
coefficient systems, following an overall strategy introduced in \cite{Datequiv}.

The definition of these families of idempotents ultimately relies on Yu's
notion of generic characters, which is part of the construction of types in \cite{Yu_tamescusp}
and \cite{Kim_Yu}. The hypothesis on $p$ in the first take on our main result above
ensures that  we obtain the whole category, thanks to the exhaustion result of \cite{JF_exhaust}.
For the construction of the equivalences, we  need more than the generic characters,
namely we need consistent families of twisted Heisenberg--Weil
representations, building on the twists introduced in \cite{FKS}.

We note that in \cite{HKSS}, the block decomposition of $\Rep_{\o\ZM[\frac 1p]}(G)$ is
worked out with minimal hypothesis ($p\neq 2$) when $\mathbf G$ is a
classical group, using the notion of semisimple characters due to Bushnell--Kutzko and Stevens.
It would be interesting to see if our strategy for reduction to
depth-$0$ can be carried out in that setting.

\medskip

This paper could have  been written purely in representation theoretic terms, labeling  our blocks by some
suitable equivalence classes of  open pro-$p$-subgroups of $G$ and characters thereof, that appear in the construction of types in \cite{Kim_Yu}.  However, \emph{this is not the path we follow.}

\medskip

 Instead, we will rather start from the spectral side of the Langlands
correspondence, and our blocks will be indexed by ``extended'' wild inertia parameters, to
be defined just below. A practical reason is that the 
reductive groups  appearing in the
targets of our equivalences then have a transparent interpretation as suitable inner forms
of the dual of the centralizer of the parameter.  
A deeper reason is that the pattern for
reduction to  depth $0$ given by this point of
view (exposed in \cite[\S 4.5]{DatIHES} and anticipated in
\cite{Datfuncto}) should work in greater generality, a hope that is strongly supported by the
categorical local Langlands 
program, as we will explain below. In order to state our main result with more details we first need to
introduce some additional notation. 

\bigskip

\textbf{Parameters.} We denote by $^{L}{\mathbf{G}}=\hat{\mathbf{G}}\rtimes W_{F}$  the Weil
 form of the $L$-group of ${\mathbf{G}}$ over $\CM$ and
 let  $\Phi(\mathbf{G})\subset H^{1}(W'_{F},\hat{\mathbf{G}})$ be the set of relevant Langlands
 parameters for $\mathbf{G}$, where $W_F$ denotes the Weil group of $F$ and $W'_{F}=W_F \times \SL_2(\bC)$.

 Let $P_{F}$ be the wild inertia subgroup of $W_{F}$, and let
$\Phi(P_{F},\mathbf{G})$ denote the image
of $\Phi(\mathbf{G})$ under the restriction map
$H^{1}(W'_{F},\hat{\mathbf{G}})\To{}H^{1}(P_{F},\hat{\mathbf{G}})$.
 We call the elements of
$\Phi(P_{F},\mathbf{G})$  ``wild inertia parameters'' for $\mathbf{G}$.
So, an element $\phi\in\Phi(P_{F},\mathbf{G})$ is a $\hat{\mathbf{G}}$-conjugacy class of $L$-homomorphisms
 $P_{F}\To{}{^{L}\mathbf{G}}$ (still denoted $\phi$) that can be extended to some $\varphi:\,
 W_{F}\To{}{^{L}\mathbf{G}}$ (which is a restriction of a relevant Langlands parameter). In this setting, $\varphi(W_{F})$ normalizes the centralizer
 $C_{\hat{\mathbf{G}}}(\phi)$ of $\phi$ in $\hat{\mathbf{G}}$. 
 The action, induced by conjugation, of $W_{F}$  on the center
 $Z(C_{\hat{\mathbf{G}}}(\phi))$ and the cocenter  $(C_{\hat{\mathbf{G}}}(\phi))_{\rm ab}$, as
 well as the induced outer action $W_{F}\To{}{\rm
   Out}(C_{\hat{\mathbf{G}}}(\phi))$,  are tamely
 ramified, and independent of the choice of $\varphi$.

 When the reductive group $C_{\hat{\mathbf{G}}}(\phi)$
 is connected, which is, for example, the case if $p>2$ when
 $\hat{\mathbf{G}}$ is a classical group,
 then we denote by:
 \begin{itemize}
 \item $\mathbf{G}_{\phi}$ the quasi-split reductive $F$-group that is dual to
   $C_{\hat{\mathbf{G}}}(\phi)$ over a separable closure $\overline F$ of $F$, with $F$-rational structure given by the above
   outer action.  
 \item $X_{\phi}$ the kernel of the map
   $X^{*}(Z(C_{\hat{\mathbf{G}}}(\phi))^{\varphi(W_{F})})_{\rm
     tors}\To{h_{\phi}}X^{*}(Z(\hat{\mathbf{G}})^{W_{F}})_{\rm tors}$ induced by the
   inclusion
   $Z(\hat{\mathbf{G}})^{W_{F}}\subset Z(C_{\hat{\mathbf{G}}}(\phi))^{\varphi(W_{F})}$
   after taking torsion subgroups of character groups.
   For any inner form $\mathbf{G}'_{\phi}$ of $\mathbf{G}_{\phi}$, composition with  Kottwitz'
isomorphisms turns the above  map into a map
$H^{1}(F,{\mathbf{G}}'_{\phi})\To{h_{\phi}} H^{1}(F,{\mathbf{G}})$ that allows to view
$X_{\phi}$ as a subset of  $H^{1}(F,{\mathbf{G}}'_{\phi})$.
 \end{itemize}

When $C_{\hat{\mathbf{G}}}(\phi)$ is additionally a Levi subgroup of
$\hat{\mathbf{G}}$, which is, for example,  the case if $p$ does not
divide the order of the Weyl group of $\hat{\mathbf{G}}$, 
we  denote by:
\begin{itemize}
\item  $\mathbf{S}_{\phi}$  the $F$-torus that is dual to the cocenter of
  $C_{\hat{\mathbf{G}}}(\phi)$ with the action
  of $W_{F}$ described above. 
  In
   \ref{dual_embeddings_phi} we explain how $\mathbf{S}_{\phi}$ comes with a
  $\mathbf{G}(\o F)$-conjugacy class of ``Levi-center-embeddings'' $\iota:\, (\mathbf{S}_{\phi})_{\o F}\injo
  \mathbf{G}_{\o F}$, meaning an embedding whose image is the connected center of its
  centralizer. For each such $\iota$, the centralizer
  $C_{\mathbf{G}_{\o F}}(\iota((\mathbf{S}_{\phi})_{\o F}))$ is a Levi subgroup of
  $\mathbf{G}_{\o F}$ isomorphic to  $(\mathbf{G}_{\phi})_{\o F}$.
    
\item   $\IC_{\phi}$ the set  of $G$-conjugacy classes of \emph{$F$-rational} Levi-center-embeddings
  $\iota :\, \mathbf{S}_{\phi}\hookrightarrow\mathbf{G}$ in the above $\mathbf G(\overline F)$-conjugacy class.
  For each such $\iota$, the centralizer $\mathbf{G}_{\iota}:=C_{\mathbf{G}}(\iota(\mathbf{S}_{\phi}))$ is a
   twisted Levi subgroup of $\mathbf{G}$ and an inner form of $\mathbf{G}_{\phi}$.
 The group  $X_{\phi}$ acts simply transitively on $\IC_{\phi}$, but in
general there is no natural base point.

\item $\mathbf{G}_{\phi,I}$, for $I \in \IC_{\phi}$, a reductive $F$-group in the
  isomorphism class of all centralizers $\mathbf{G}_{\iota}$ for $\iota\in I$.
  For any $\alpha\in
  X_{\phi}$, the group $\mathbf{G}_{\phi,\alpha.I}$ is the pure inner form of
  $\mathbf{G}_{\phi,I}$ associated to $\alpha\in X_{\phi}\subset H^{1}(F,\mathbf{G}_{\phi, I})$.
\end{itemize}

We refer to Sections
  \ref{sec:levi-cent-embedd} and \ref{sec:levi-fact-param} for more details on these
  objects.
  Here is a more faithful account of what is done in this paper.
  \def\II{{I}}
        
\begin{main}[Take 2] Assume that $\mathbf{G}$ is tamely ramified and
  that $p$ is odd, and not a torsion prime of $\mathbf{G}$, nor of $\hat{\mathbf{G}}$.
  Let $R$ be a commutative $\ZM[\mu_{p^{\infty}},\frac 1p]$-algebra.
  \begin{enumerate}
  \item 
    To any $\phi\in\Phi(P_{F},\mathbf{G})$ 
    such that $C_{\hat{\mathbf{G}}}(\phi)$ is a Levi subgroup of $\hat{\mathbf{G}}$,
    we associate a non-zero Serre subcategory $\Rep^{\phi}_{R}(G)$ of $\Rep_{R}(G)$, and
    a decomposition with non-zero factors 
    $$\Rep^{\phi}_{R}(G) = \prod_{\II\in \IC_{\phi}}\Rep^{\phi,\II}_{R}(G).$$
    Moreover, $\Rep^{\phi}_{R}(G)$ and $\Rep^{\phi'}_{R}(G)$ are orthogonal if
    $\phi\neq\phi'$, and the category $\Rep^{1}_{R}(G)$ associated to the trivial
    parameter is the depth-$0$ category (note that $\cI_{1}$ is a
    singleton).
    
  \item \label{ii} 
    If $R$ is a $\ZM[\mu_{4p^{\infty}},\frac 1{\sqrt p}]$-algebra, we construct
    equivalences of categories
    \[\IC_{\phi,I}:\, \Rep^{1}_{R}(G_{\phi,\II})\simto \Rep^{\phi,\II}_{R}(G).\]
     When $R\subseteq \o\ZM[\frac 1p]$, it follows from \cite[Thm 1.0.5]{Dat-Lanard}
     that each $\Rep^{\phi,\II}_{R}(G)$ is a block.
     Moreover, there is a $G$-conjugacy class of embeddings of extended Bruhat--Tits buildings
     $i:\,\BT_{\phi,I}\injo\BT$ of the building $\BT_{\phi,I}$ of $\mathbf{G}_{\phi, I}$ into the building $\BT$ of $\mathbf{G}$ such that for any $x\in \BT_{\phi,I}$ and any
     $R$-representation $\rho$ of the stabilizer $(G_{\phi,I})_{x}$ of $x$ in $G_{\phi,I}$ that is trivial on $(G_{\phi,I})_{x,0+}$, we have
     \[ \IC_{\phi,I}\left(\cind_{(G_{\phi,I})_{x}}^{G_{\phi,I}}(\rho)\right)\simeq
     \cind_{K_{\phi,I,i(x)}}^{G}\left(\kappa_{\phi,I,i(x)}\otimes_{R}\rho\right).\] 
   Here $\cind$ denotes compact induction, $K_{\phi,I,i(x)}$ is an explicit open subgroup
    of $G_{i(x)}$ that maps surjectively to
    $(G_{\phi,I})_{x}/(G_{\phi,I})_{x,0+}$, and $\kappa_{\phi,I,i(x)}$
    is an explicit representation, called a  twisted
     Heisenberg--Weil representation and defined in
     \ref{sec:heis-weil-rep}.
 
   \item \label{iii}  
    If $p$ does not divide the order of the absolute Weyl group of $\mathbf{G}$, then
    $$ \Rep_{R}(G) = \prod_{\phi\in\Phi(P_{F},\mathbf{G})} \Rep^{\phi}_{R}(G).$$
  \item \label{iv}
    Let ${\mathbf{P}} $ be a parabolic
  $F$-subgroup of ${\mathbf{G}}$ with Levi subgroup ${\mathbf{M}}$, and denote by $i_{P}$
  the associated parabolic induction functor, and by $r_{P}$ the corresponding Jacquet functor.
  \begin{enumerate}
  \item  If $\phi$ is the image of $\phi_{M} \in \Phi(P_{F},\mathbf{M})$ under the natural map
    $\Phi(P_{F},\mathbf{M})\To{}\Phi(P_{F},\mathbf{G})$, and $\II$ is the image of
    $\II_{M} \in \IC_{\phi_{M}}$ under the natural  map $\IC_{\phi_{M}}\To{}\IC_{\phi}$, then  we have 
$$ i_{P}\left(\Rep^{\phi_{M},\II_{M}}_{R}(M)\right) \subseteq \Rep^{\phi,\II}_{R}(G).$$ 
\item Assuming that $p$ does not divide the order of the absolute Weyl group of ${\mathbf{G}}$, we have
$$r_{P}(\Rep^{\phi,\II}_{R}(G))\subseteq \prod_{(\phi_{M},\II_{M})\mapsto
      (\phi,\II)} \Rep^{\phi_{M},\II_{M}}_{R}(M).$$
\item Assuming further that $C_{\hat{\mathbf{G}}}(\phi)\subseteq \mathbf{\hat{\mathbf{M}}}$, then $i_{P}$
    induces an equivalence of categories 
$$\Rep_{R}^{\phi_{M},\II_{M}}(M)\simto\Rep_{R}^{\phi,\II}(G)$$
with quasi inverse the composition of $r_{P}$ and
    the projection onto the $(\phi_{M},\II_{M})$-factor.
  \end{enumerate}
  \end{enumerate}

\end{main}

Let us comment on the hypotheses,  the construction, and where to find proofs in
the text.
\begin{itemize}
\item We recall that a torsion prime for $\mathbf{G}$ is a prime that either is bad for
  the root system of $\mathbf{G}$, or divides the order of $\pi_{1}(\mathbf{G}_{\rm
    der})$. We refer to Section \ref{centralizer_Levi} for a discussion of the effect
  of the various hypotheses we impose on the prime $p$ in our results. We also note that the assumption that
  $\mathbf{G}$ is tamely ramified is actually implied by the existence of a $\phi$ whose centralizer
  is a Levi subgroup.

\item  Let us outline the construction of $\Rep^{\phi,I}_{R}(G)$, which is achieved in
  Section \ref{sec:2}.
   The full subcategory $\Rep^{\phi,I}_{R}(G)$ is defined by a system of
  idempotents $(e_{\phi,I,x})_{x\in\BT}$ by letting
  the objects of $\Rep^{\phi,I}_{R}(G)$ be $\left\{V\in\Rep_{R}(G),\, V=\sum_{x\in \BT} e_{\phi,I,x}V\right\}$. 
  For $x\in \BT$, each idempotent $e_{\phi,I,x}$ is a sum of idempotents $e_{\phi,\iota,x}$
  associated to embeddings $\iota\in I$ such that $x$ belongs to the building $\BT_{\iota}\subset
  \BT$ of $\mathbf{G}_{\iota}$. In turn, $e_{\phi,\iota,x}$ is the averaging idempotent along a
  character $\check \phi^+_{\iota, x}$ of a certain open pro-$p$-subgroups of $G_{x,0+}$. This
  character is obtained by applying a construction of Yu \cite{Yu_tamescusp} that takes as input a
  sequence of characters of twisted Levi subgroups of $\mathbf{G}$. In this case, the 
  smallest 
   member
  of the twisted
  Levi subgroup sequence is $\mathbf{G}_{\iota}$, and the product $\check\varphi : G_{\iota}\To{}\mu_{p^{\infty}} \To{} R^\times$
  of the desired characters restricted to ${G}_{\iota}$ is 
  obtained by the Borel--Langlands reciprocity applied to
   a $1$-cocycle 
  $\varphi:\, W_{F}\To{} Z(\hat{\mathbf{G}}_{\iota})=Z(\hat{\mathbf{G}}_{\phi})$ of $p$-power order
  that extends $\phi$ (whose existence is proved in Lemma \ref{splittings_Levi}).
  The intermediate twisted Levi subgroups and (products of their) characters are obtained in a similar way by replacing $\phi$
  by its restriction to suitable higher ramification subgroups of $P_{F}$,  following a strategy of Kaletha in
  \cite{Kaletha}. This is explained in Section \ref{sec:ramif-groups-twist}.
  Yu's construction is recalled in Section ~\ref{sec:char-idemp}.
  Thanks to the strong intertwining
  properties of Yu's characters (Section \ref{sec:intertwining}), it turns out that for two
  $\iota,\iota'$, the idempotents 
  $e_{\phi,\iota,x}$ and $e_{\phi,\iota',x}$ are either equal or orthogonal for $x\in\BT_{\iota}\cap\BT_{\iota'}$.
  Summing over non-equal such idempotents, we get the desired idempotent $e_{\phi,I,x}$
  (which can be zero) for any $x\in\BT$. Their behavior when $x$ is moving is studied in Section
  \ref{sec:systems-idempotents}, and applied in Section \ref{sec:category-repphi-i_rg} to 
  the proof that $\Rep^{\phi,I}_{R}(G)$ is indeed a Serre subcategory (Theorem \ref{thm_Serresubcat}).

\item The main properties of $\Rep^{\phi,I}_{R}(G)$ are proved in Section
  \ref{sec:some-prop-repphi}. The compatibility with parabolic induction, \ref{iv}), is
  Theorem \ref{compat_parab_induc} and the decomposition of \ref{iii}) is Theorem \ref{exhaust}. 
  
\item The construction of the equivalences of categories in \ref{ii}) is the subject of
  Section 3. The strategy is the following. In Section  \ref{sec:main-result-strategy-2},
  we show how the categories $\Rep^{\phi,I}_{R}(G)$ and $\Rep^{1}_{R}(G_{\phi,I})$ are equivalent to 
  certain categories of equivariant coefficient systems on the respective Bruhat--Tits buildings of $\mathbf G$ and
  $\mathbf{G}_{\phi,I}$. Then in Section \ref{sec:main-result-strategy-3} we
  introduce the notion of a ``Heisenberg--Weil'' coefficient system
  on the Bruhat--Tits building of $\mathbf{G}_{\phi,I}$ and state an existence claim,
  Theorem \ref{thm_WH_coef_system-general}.  These gadgets  will serve as
  a bridge  between coefficient  systems on both buildings, pretty much like a bimodule is
  used to produce Morita equivalences. We refer to  the appendix of \cite{Datequiv} for
  more details on the analogy with Morita equivalences. In this paper, our approach will
  be more akin to the familiar ``pull-push with kernel'' functors from sheaf theory, as
  explained in \ref{geom_interp}.  
  The  construction of these ``Morita-type'' or ``pull-push''  equivalences is achieved in
  Section \ref{sec:constr-equiv}, admitting Theorem \ref{thm_WH_coef_system-general}.  The proof
  of this theorem, i.e., the construction of a Heisenberg--Weil coefficient system, is obtained
  in Section \ref{sec:heis-weil-coeff}, after preliminary work on 
  Heisenberg representations in Section \ref{sec:heisenberg-rep}, and on Weil
  representations in Section \ref{sec:heis-weil-rep}, where we adapt
  classical and more recent results from coefficients $\CM$
  to our coefficient ring $R$, which might be of independent interest.
\end{itemize}

Let us now comment on the connections with the existing
literature on reduction to depth $0$.
Since our functor $\IC_{\phi,I}$ is an equivalence, it induces an isomorphism between
the endomorphism algebra of an object and that of its image. Therefore, the explicit description
in \ref{ii}) above of the evaluation of $\IC_{\phi,I}$ on certain compactly induced representations
implies the existence of isomorphisms between certain Hecke algebras, some of them being the key to
previous reduction to depth $0$ results.
\begin{itemize}
\item In the case $R=\bC$ (or more generally when $R$ is an algebraically closed field), we recover in \ref{cor-Hecke-alg-isom} the
  existence of an  isomorphism between the 
  Hecke algebra attached to the types constructed by Kim and Yu and that of its ``twisted depth-$0$ component'', as first proved in \cite{AFMO2}. Note
  that we do not compare explicitly both constructions. In particular, we do not verify here whether
  our isomorphism is ``support-preserving''.  
\item In the case $R=\o\ZM[\frac 1p]$ and $G$ an inner form of $\GL_{n}$, we recover in \ref{example_gln} the existence
  of an isomorphism between two relevant Hecke algebras arising in Bushnell--Kutzko's type theory
  that were previously shown to be isomorphic by explicit computation of generators and relations by 
  Chinello in \cite{Chinello}. Again, we do not compare both isomorphisms. Note however that
  Chinello's results do not need any tameness assumption.
\end{itemize}

\medskip

\textbf{Connections, and expected connections, with the local Langlands correspondence, its
  geometrization, and its categorification.}

\begin{itemize}
\item When the  local Langlands correspondence ${\rm Irr}_{\CM}(G)\To{} \Phi(W'_{F},\mathbf{G})$,
  $\pi\mapsto\varphi_{\pi}$ is
  available for $\mathbf{G}$, we conjecture that for an irreducible complex
  representation $\pi$ of $G$, we have 
  $\pi\in \Rep^{\phi}_{\CM}(G) \Leftrightarrow (\varphi_{\pi})|_{P_{F}}\sim\phi$. By
  design, this is at least true when $\pi$ is a regular supercuspidal representation and
  $\varphi_{\pi}$ is the parameter defined in \cite{Kaletha}.
  
  Assume further that $\mathbf{G}$ is quasi-split and that the extended
  local Langlands correspondence $\pi\mapsto (\varphi_{\pi},\varepsilon_{\pi})$ is
  available for $\mathbf{G}$, where $\varepsilon_{\pi}$ is a finite order character of
  $C_{\hat{\mathbf{G}}}(\varphi_{\pi})$. Then  we conjecture that, for a good choice of base point
  $I\in\IC_{\phi}$, we have, for any irreducible complex
  representations $\pi$ of $G$ :
  $$\pi\in \Rep^{\phi,\alpha\cdot I}_{\CM}(G) \Leftrightarrow
  \left[(\varphi_{\pi})|_{P_{F}}\sim\phi \hbox{ and }
  (\varepsilon_{\pi})|_{Z(C_{\hat{\mathbf{G}}}(\phi))^{\varphi_{\pi}(W_{F})}}= \alpha\right],$$
where the restriction is along the inclusion
$Z(C_{\hat{\mathbf{G}}}(\phi))^{\varphi_{\pi}(W_{F})} \subset C_{\hat{\mathbf{G}}}(\varphi_{\pi})$.
  
\item Without any hypothesis on $\mathbf{G}$ nor on $\phi$, a construction
  of a direct factor  $\Rep^{\phi}_{R}(G)$ and a decomposition as
  in \ref{iii}) can be deduced from the motivic version of the Fargues--Scholze spectral action
  of \cite{Scholze_motiv}. Indeed, a consequence of this spectral action is the existence
  of a ring homomorphism ${\rm FS_{mot}}$
  $$\OC(Z^{1}(W_{F}^{0},\hat{\mathbf{G}}))^{\hat{\mathbf{G}}}  \buildrel\hbox{\tiny $\approx$}\over\longleftarrow
  {\rm \EC xc}(W_{F}^{0},\hat{\mathbf{G}}) \To{{\rm FS_{mot}}}\ZG_{\o\ZM[\frac 1p]}(G),$$
where $Z^{1}(W^{0}_{F},\hat{\mathbf{G}})$ is the space of parameters over $\o\ZM[\frac 1p]$
  introduced in \cite{DHKM1} (so that here $\hat{\mathbf{G}}$ denotes the dual over
  $\o\ZM[\frac 1p]$  and $W_F^0$ is the subgroup of $W_F$ defined in \cite[\S 1.2]{DHKM1}), ${\rm \EC xc}$ denotes an ``excursion   algebra'' and the 
  $\approx$ sign indicates a map of rings that induces a universal homeomorphism on spectra,
  and $\ZG_{\o\ZM[\frac 1p]}(G)$ denotes the center of $\Rep_{\o\ZM[\frac 1{p}]}(G)$.
  According to \cite{DHKM1}, the stack
  $Z^{1}(W^{0}_{F},\hat{\mathbf{G}})/\hat{\mathbf{G}}$
  is the disjoint union over all  $\phi\in\Phi(P_{F},\mathbf{G})$ of 
  open closed substacks
  $Z^{1}(W^{0}_{F},\hat{\mathbf{G}})_{\phi}/C_{\hat{\mathbf{G}}}(\phi)$, where a
  representative $\phi:P_{F}\To{}\hat{\mathbf{G}}$ has been chosen and
  $Z^{1}(W^{0}_{F},\hat{\mathbf{G}})_{\phi}$ is the space of extensions of $\phi$ to $W_{F}^{0}$.
  So each $\phi$ provides an idempotent $e_{\phi}^{\rm spec}$ in
  $\OC(Z^{1}(W_{F}^{0},\hat{\mathbf{G}}))^{\hat{\mathbf{G}}}$, that provides in turn an idempotent
  $e_{\phi}^{\rm geom}$ in $\ZG_{\o\ZM[\frac 1p]}(G)$, and we get a decomposition
  $\Rep_{R}(G)=\prod_{\phi}e_{\phi}^{\rm geom}\Rep_{R}(G)$. 
  In general, not much is known about this decomposition, but we conjecture that it
  coincides with ours when applicable. Proving this is equivalent to proving that the
  Fargues--Scholze semisimple correspondence for the so-called ``regular cuspidal''
  representations of Levi subgroups of $G$ matches the semisimplification of 
  Kaletha's correspondence in \cite{Kaletha}.

\item In our context, and more generally whenever $C_{\hat{\mathbf{G}}}(\phi)$ is
  connected, the scheme $Z^{1}(W^{0}_{F},\hat{\mathbf{G}})_{\phi}$ is connected and the
  idempotent $e_{\phi}^{\rm spec}$ is thus primitive.
  So our decomposition of $\Rep_{R}^{\phi}(G)$ does not seem to be explained by the spectral action.
  However, it 
  would be explained by (a motivic form of) the categorical local Langlands correspondence (CLLC below).
  To explain this, recall that the CLLC predicts  an equivalence of categories between some variant
   ${\rm IndCoh}_{\rm nilp}(Z^{1}(W^{0}_{F},\hat{\mathbf{G}})/\hat{\mathbf{G}})$  of
   the category of coherent sheaves on $Z^{1}(W^{0}_{F},\hat{\mathbf{G}})/\hat{\mathbf{G}}$ and a category of
  sheaves ${\rm Shv}_{G}$ on either the $v$-stack ${\rm Bun}_{G}$ of \cite{FS}, or the perfect stack
  ${\rm Isoc}_{G}$ of \cite{Zhu}. In each case,  $\Rep(G)$ identifies with the subcategory of
  sheaves supported on a certain stratum ``$b=1$'', which is isomorphic to the classifying space of
  $G$.  The connected components of both these stacks are
  labeled by $X^{*}(Z(\hat{\mathbf{G}})^{W_{F}})$, and the resulting decomposition of
  ${\rm Shv}_{G}$  corresponds, on the
  paramers side, to the grading coming from the triviality of the action of
  $Z(\hat{\mathbf{G}})^{W_{F}}$ on $Z^{1}(W^{0}_{F},\hat{\mathbf{G}})$. The stratum
  ``$b=1$'' lies in the component corresponding to the trivial character $1$ of 
  $Z(\hat{\mathbf{G}})^{W_{F}}$. Now recall further that the spectral action is defined
  over ${\rm Shv}_{G}$, yielding a direct summand $e_{\phi}^{\rm geom}{\rm Shv}_{G}$, so
  that the CLLC should restrict to an equivalence between 
  ${\rm IndCoh}_{\rm nilp}(Z^{1}(W^{0}_{F},\hat{\mathbf{G}})_{\phi}/C_{\hat{\mathbf{G}}}(\phi))$
  and $e_{\phi}^{\rm geom}{\rm Shv}_{G}$.
  Since $Z(C_{\hat{\mathbf{G}}}(\phi))^{\varphi(W_{F})}$ acts trivially on
   $Z^{1}(W^{0}_{F},\hat{\mathbf{G}})_{\phi}$, the category
  ${\rm IndCoh}_{\rm nilp}(Z^{1}(W^{0}_{F},\hat{\mathbf{G}})_{\phi}/C_{\hat{\mathbf{G}}}(\phi))$
  is graded over $X^{*}(Z(C_{\hat{\mathbf{G}}}(\phi))^{\varphi(W_{F})})$.
  We then infer a splitting of
  $e_{\phi}^{\rm geom}{\rm Shv}_{G}$ indexed by
  $X^{*}(Z(C_{\hat{\mathbf{G}}}(\phi))^{\varphi(W_{F})})$ that refines the one indexed by
  $X^{*}(Z(\hat{\mathbf{G}})^{W_{F}})$. When we restrict to the stratum $b=1$, only the
  summands labeled by $X_{\phi}$ can contribute, yielding a decomposition of
  $e_{\phi}^{\rm geom}\Rep(G)$ indexed by $X_{\phi}$, as in our results above.

\item Our equivalences also fit in the framework of the CLLC. Namely, in our setting
   there exists $\varphi \in Z^{1}(W^{0}_{F},\hat{\mathbf{G}})_{\phi}(\o\ZM[\frac 1p])$
   such that $\varphi(W_{F})$ normalizes a pinning of
   $C_{\hat{\mathbf{G}}}(\phi)$. Multiplication by $\varphi$ then induces an isomorphism
   $Z^{1}(W^{0}_{F},\hat{\mathbf{G}}_{\phi})_{1}/\hat{\mathbf{G}}_{\phi}
   \simto Z^{1}(W^{0}_{F},\hat{\mathbf{G}})_{\phi}/C_{\hat{\mathbf{G}}}(\phi)$.
   Applying the CLLC for both $\mathbf{G}$ and $\mathbf{G}_{\phi}$, we would get an
   equivalence
   $e_{1}^{\rm geom}{\rm Shv}_{G_{\phi}}\simto e_{\phi}^{\rm geom}{\rm Shv}_{G}$. Of
   course, it would need further work to see whether this equivalence restricts well to
   the $b=1$ strata, and is $t$-exact there, for the standard $t$-structure. Our result suggests
   that it actually does.

 \item Some important particular cases of the categorical Langlands correspondence have been
   recently proved on ${\rm Isoc}_{G}$ by X. Zhu. They are concerned with the tame part over
   $\o\QM_{\ell}$-coefficients and the unipotent part over $\o\FM_{\ell}$-coefficients. In
   particular, \cite[Thm. 5.4]{Zhu} constructs a fully faithful embedding
   $D\Rep_{\o\FM_{l}}(G)_{\rm unip}\injo {\rm
     IndCoh}(Z^{1}(W_{F}^{0},\hat{\mathbf{G}}_{\o\FM_{\ell}})_{\rm
     unip}/\hat{\mathbf{G}}_{\o\FM_{\ell}})$ 
   when $\mathbf{G}$ is unramified, and
   where $\hat{\mathbf{G}}_{\o\FM_{\ell}}$ denotes a Langlands dual group over $\o\FM_{\ell}$,
   $D\Rep_{\o\FM_{l}}(G)$ denotes the derived ($\infty$)-category of $\Rep_{\o\FM_{l}}(G)$,
   the subscript $\rm unip$ 
   denotes the unipotent summand on the left hand side, 
   resp., 
   the unipotent connected component (i.e. the one that contains the trivial parameter) on the right hand side.
 
     In the case $\mathbf{G}=\GL_{n}$, one can combine our ``reduction to depth $0$'' result here with
     the ``reduction to unipotent'' result of \cite{Datequiv}. Recall that, in this case,
     Vignéras   decomposed $\Rep_{\o\FM_{l}}(G)$ in \cite{Vig_Induced}, as a product of blocks indexed by semisimple
     representations $W_{F}\To{}\GL_{n}(\o\FM_{\ell})$ up to inertial equivalence, while the same indexing set
     also parametrizes the connected components of
     $Z^{1}(W_{F}^{0},\hat{\mathbf{G}}_{\o\FM_{\ell}})$, according to \cite[Cor 4.21]{DHKM1}. 
     \begin{cor} Let $\mathbf{G}=\GL_{n}$, and let $p$ and $\ell$ be greater than $n$.
       Then there is a fully faithful embedding
       $D\Rep_{\o\FM_{l}}(G)\injo {\rm
         IndCoh}(Z^{1}(W_{F}^{0},\hat{\mathbf{G}}_{\o\FM_{\ell}})/\hat{\mathbf{G}}_{\o\FM_{\ell}})$ 
       that sends $D\Rep_{\o\FM_{l}}(G)_{[\varphi]}$ into
       ${\rm IndCoh}(Z^{1}(W_{F}^{0},\hat{\mathbf{G}}_{\o\FM_{\ell}})_{[\varphi]}/\hat{\mathbf{G}}_{\o\FM_{\ell}})$
       for each inertia class $[\varphi]$ of semi-simple representations $W_{F}\To{}\GL_{n}(\o\FM_{\ell})$. 
     \end{cor}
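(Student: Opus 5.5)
We reduce the corollary to three inputs: the decomposition of Main result (Take 2), part iii), the reduction-to-depth-zero equivalences of part ii), and Zhu's unipotent embedding \cite[Thm. 5.4]{Zhu}, combined with the reduction-to-unipotent result of \cite{Datequiv} for $\GL_n$.

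\emph{Step 1: decomposing both sides.} Since $p>n$, $p$ does not divide $n!$, the order of the Weyl group. Also $\GL_n$ is split with no torsion primes, and its dual is again $\GL_n$. So part iii) gives $\Rep_{\o\FM_\ell}(G)=\prod_\phi\Rep^\phi_{\o\FM_\ell}(G)$. Here $\mathcal I_\phi$ is a singleton, because $X_\phi$ is trivial for $\GL_n$: centralizers are products of $\mathrm{Res}\,\GL_m$, so $H^1$ vanishes. Each $\Rep^{\phi}$ is then equivalent to $\Rep^1_{\o\FM_\ell}(G_\phi)$, with $G_\phi=\prod_i\GL_{m_i}(E_i)$ for tame extensions $E_i/F$. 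We choose a ring map $\ZM[\mu_{4p^\infty},\frac1{\sqrt p}]\to\o\FM_\ell$ so that part ii) applies.

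On the spectral side, the result of \cite{DHKM1} splits $Z^1(W_F^0,\GL_n)/\GL_n$ by $\phi$. The extension $\varphi$ normalizing a pinning gives an isomorphism
\[Z^1(W_F^0,\hat{\mathbf G}_\phi)_1/\hat{\mathbf G}_\phi\simeq Z^1(W_F^0,\GL_n)_\phi/C(\phi),\]
as recalled in the introduction. The left side is a product of spaces of tame parameters of the factors $\mathrm{Res}_{E_i/F}\GL_{m_i}$. By Shapiro's lemma these identify with tame parameter spaces for $\GL_{m_i}$ over $E_i$.

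\emph{Step 2: the depth-zero case.} For depth zero representations of $\GL_m(E)$, we use Vignéras's block decomposition together with \cite{Datequiv}. The latter gives equivalences between each depth-zero $\ell$-block and the unipotent block of a product $\prod\GL_{m_j}(E_j')$ with $E'_j/E$ unramified. On the spectral side, this matches the decomposition of tame parameters by the restriction to $\ell'$-tame inertia. The matching is again realized by multiplication by a suitable extension, via \cite[Cor 4.21]{DHKM1}. Here $\ell>n$ is used: it makes the relevant centralizers Levi subgroups and the component structure clean.

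\emph{Step 3: composing.} For each unramified $\GL_{m}(E')$ we apply \cite[Thm. 5.4]{Zhu} to obtain $D\Rep_{\o\FM_\ell}(\GL_m(E'))_{\rm unip}\hookrightarrow{\rm IndCoh}(Z^1_{\rm unip}/\GL_m)$. We take products over the factors and transport along the spectral isomorphisms of Steps 1 and 2. Since $\Rep$ decomposes as a product of abelian categories, the derived category decomposes accordingly, and full faithfulness follows from full faithfulness on each summand. The inertia classes $[\varphi]$ index the components on both sides compatibly, by construction of the matching.

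\emph{Main obstacle.} The hard step is the bookkeeping in Steps 1 and 2. We must check that the representation-theoretic labels of the blocks, namely $\phi$ from this paper and the $\ell$-inertial supercuspidal support used by Vignéras, correspond to the spectral labels $[\varphi]$. Concretely, the twists by $\varphi$ used to identify parameter spaces must be matched with the characters $\check\varphi$ entering the blocks. For this we will rely on the design of $\Rep^\phi$ via Kaletha's parametrization of regular supercuspidals, which agrees with local Langlands for $\GL_n$, and on compatibility with parabolic induction from part iv).
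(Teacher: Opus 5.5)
Your outline is the paper's own argument: the paper proves this corollary exactly as the $\o\QM_\ell$ version, by summing over $\phi$ (all centralizers are Levi since $p>n$), using Theorem~\ref{thm-main} to pass from $\Rep^{\phi}_{\o\FM_\ell}(G)$ to $\Rep^1_{\o\FM_\ell}(G_\phi)$, applying \cite[Cor.~1.1.4]{Datequiv} to reach unipotent blocks, then Zhu's unipotent embedding, and transporting along the spectral isomorphism given by multiplication by an extension $\varphi_0$ of $\phi$. Two caveats on your commentary, which the paper's proof does not address either: $\ell>n$ does not make the centralizers Levi (in $\GL_n$ the centralizer of a semisimple tame inertia image is a Levi subgroup for every $\ell$), it is the hypothesis under which the paper invokes Zhu's mod-$\ell$ unipotent embedding; and Kaletha's design together with part iv) only controls the wild label $\phi$, since part iv) gives stability of the subcategories under $i_P$ rather than commutation of $\IC$ with $i_P$, while matching the tame part of the inertial labels depends on how $\check\varphi_0$ is paired with $\varphi_0$, a choice the paper explicitly leaves open.
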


     Actually, we construct a whole class of embeddings as in this corollary, and we do not quite
     specify the ``correct'' one, i.e., the one that is compatible with Vigneras' correspondence.
     We refer to \ref{CLLC_GLn} for a discussion of this question. 
     Among the ingredients needed to find the correct one are the compatibility of our equivalences with parabolic
     induction, as well
     as the compatibility with Whittaker/Gelfand--Graev representations.  We  plan to address these
     points in future work.
     
\end{itemize}

\subsection{Notation} \label{sec:notation}

\begin{itemize}
\item  $F$ denotes a non-archimedean local field whose residue field
  characteristic is denoted by $p$.  We fix a separable closure $\o F$ of $F$ and denote by
  $\Gamma_{F}$ Galois group of $\o F / F$. Inside $\Gamma_{F}$ we have the usual subgroups
  $W_{F}\supset I_{F}\supset P_{F}$, respectively, the Weil group, its inertia subgroup,
  and the wild inertia subgroup.

\item Bold letters $\mathbf{G}$, $\mathbf{B}, \mathbf{T}$, etc., denote algebraic groups
  over $F$, unless specified otherwise, 
   and we use the corresponding plain letters to denote their groups of $F$-rational points,
  e.g., $G=\mathbf{G}(F)$.

\item For a connected reductive group $\mathbf{G}$ over some field, we denote by $\mathbf{G}_{\mathrm{ad}}$ the adjoint quotient group of $\mathbf{G}$, and by $\mathbf{G}_{\mathrm{sc}}$ the simply-connected cover of the derived subgroup of $\mathbf{G}$.

\item $R$ will denote the commutative coefficient ring of our representations. 
 We always assume $p\in R^{\times}$, and most often will assume that $R$ is a
  $\ZM[\frac 1p,\mu_{p^{\infty}}]$-algebra.

\item For a locally pro-$p$-group $H$,
  \begin{itemize}
    
  \item $\Rep_{R}(H)$ denotes the category of smooth $R$-representations of $H$.
  \item $R[H]$ denotes the group algebra of $H$.

  \item $RH$ denotes the $R$-algebra of compactly supported
    $R$-valued distributions on $H$.  It contains $R[H]$ as the span of Dirac distributions, as well
    as averaging idempotents $e_{K}$ for any closed pro-$p$-subgroup $K\subseteq H$. The
    action of $R[H]$ on $V\in\Rep_{R}(H)$ extends canonically to $RH$, identifying $\Rep_{R}(H)$ 
    with the category of smooth $RH$-modules.
    
    \item $\HC_{R}(H)$ denotes the $R$-subalgebra of $RH$ consisting of locally constant 
      distributions.  This non-unital ring  contains averaging idempotents $e_{K}$ for all open 
    pro-$p$-subgroups $K\subseteq H$. 
    By restricting the action of $RH$,  $\Rep_{R}(H)$ identifies
    with the category of non-degenerate $\HC_{R}(H)$-modules.

  \end{itemize}

\item For a connected reductive group $\mathbf{G}$ over $F$,
\begin{itemize}
\item $\hat{\mathbf{G}}$ denotes the complex\footnote{For many purposes, including the
    relation with the geometrization and the categorification of the LLC, it
    would be  more natural to consider the dual group over the coefficient ring
    $R$. However,  the complex setting
    will be sufficient in this paper, and saves us additional technical difficulties.} 
  dual reductive group of
  $\mathbf{G}$, which comes with a pinning
  $(\hat{\mathbf{B}},\hat{\mathbf{T}},\hat{X})$ and a
  pinning-preserving action of $\Gamma_{F}$.  We denote by
  $^{L}\mathbf{G}:=\mathbf{G}\rtimes W_{F}$ the Weil form of
  Langlands' dual group.

\item $\phi$ will denote a \emph{wild inertia parameter}\index[terminology]{wild inertia parameter} for $\mathbf{G}$, i.e., a continuous
  $L$-homomorphism $P_{F}\To{}{^{L}\mathbf{G}}$, that admits an extension
  to a relevant Langlands parameter $\varphi : W'_{F}\To{}{^{L}\mathbf{G}}$, where
  $W'_{F}$ denotes any form the Weil--Deligne group of $F$.

\item $\Phi(P_{F},\mathbf{G})$ denotes the set of $\hat{\mathbf{G}}$-conjugacy classes of
  wild inertia parameters for $\mathbf{G}$.   
\end{itemize}  
  
\end{itemize}

\subsection*{Acknowledgments} The authors thank the Collège de France and the Hausdorff Research Institute for Mathematics in Bonn for their hospitality and supporting the stay of the second-named author in Paris as part of the visit to deliver the Course Peccot and the stay of the first-named author in Bonn during the Hausdorff trimester program ``The Arithmetic of the Langlands Program''. 
The second-named author also thanks Ana Caraiani for discussions related to this paper.

\section{From parameters to subcategories}\label{sec:2}
Throughout the paper, ${\mathbf{G}}$ denotes a connected reductive
group over $F$.

\subsection{Levi-center-embeddings and duality}\label{sec:levi-cent-embedd}
 From the construction of the dual group $\hat{\mathbf{G}}$ we have a bijection between
${\mathbf{G}}(\ov F)$-conjugacy classes of \emph{maximal} $\o F$-tori embeddings  
$\mathbf{S}\injo {\mathbf{G}}_{\ov F}$ and 
$\hat{\mathbf{G}}$-conjugacy classes of \emph{maximal} tori embeddings  
$\hat{\mathbf{S}}\injo \hat{\mathbf{G}}$. We seek a generalization of this for embeddings of
tori as connected center of a Levi subgroup.
  
\begin{lemme}\label{levicenter}
  Let $\mathbf{S}$ be a $\o F$-torus contained in ${\mathbf{G}}_{\ov F}$. The
  following are equivalent
  \begin{enumerate}
  \item $\mathbf{S}$ is the connected center of a Levi subgroup of ${\mathbf{G}}_{\ov F}$,
  \item $\mathbf{S}= C_{\mathbf{G}_{\ov F}}(C_{\mathbf{G}_{\ov F}}(\mathbf{S}))^{\circ}$,
  \item there is a maximal $\o F$-torus ${\mathbf{T}}$ of ${\mathbf{G}}_{\ov F}$ containing
    $\mathbf{S}$ and a Levi subroot system $\Sigma'\subseteq
    \Sigma({\mathbf{T}},{\mathbf{G}}_{\ov F})$ of the root system of $\mathbf{G}_{\ov F}$ with respect to $\mathbf{T}$ such that $\mathbf{S}=\left(\bigcap_{\alpha\in\Phi'}\ker(\alpha)\right)^{\circ}$.
  \end{enumerate}
\end{lemme}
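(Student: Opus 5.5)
We prove the cycle (i)$\Rightarrow$(iii)$\Rightarrow$(ii)$\Rightarrow$(i). We work over the algebraically closed field $\o F$ and use only standard facts about connected reductive groups. The first is that centralizers of tori are Levi subgroups, hence connected reductive. The second is that for a maximal torus $\mathbf{T}$, the Levi subgroups containing $\mathbf{T}$ correspond bijectively to Levi subroot systems $\Sigma'\subseteq\Sigma(\mathbf{T},\mathbf{G}_{\o F})$. Here $\Sigma'$ is a Levi subroot system if it has the form $\Sigma\cap\QM\Sigma'$ and is the root system of $C_{\mathbf G_{\o F}}(\mathbf{S}')$ for some subtorus $\mathbf{S}'\subseteq\mathbf{T}$.

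\emph{(i)$\Rightarrow$(iii).} Let $\mathbf{S}=Z(\mathbf{L})^\circ$ with $\mathbf{L}$ a Levi subgroup. Choose a maximal torus $\mathbf{T}\subseteq\mathbf{L}$; it contains $Z(\mathbf{L})^\circ$. Let $\Sigma'=\Sigma(\mathbf{T},\mathbf{L})$, which is a Levi subroot system. The center of $\mathbf{L}$ is $\bigcap_{\alpha\in\Sigma'}\ker\alpha\subseteq\mathbf{T}$, so taking identity components gives (iii).

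\emph{(iii)$\Rightarrow$(ii).} Put $\mathbf{S}=(\bigcap_{\alpha\in\Sigma'}\ker\alpha)^\circ$ and $\mathbf{L}=C_{\mathbf{G}_{\o F}}(\mathbf{S})$. This group is connected reductive, contains $\mathbf{T}$, and its roots are those $\alpha\in\Sigma$ vanishing on $\mathbf{S}$. This is the key step, and the main obstacle of the whole proof. We must show that these roots are exactly $\Sigma'$. A root vanishes on $\mathbf{S}$ iff $\alpha$ lies in $\QM\Sigma'$: the characters of $\mathbf{T}$ trivial on $\mathbf{S}$ form the saturation of $\ZM\Sigma'$, tensored with $\QM$. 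The Levi condition $\Sigma'=\Sigma\cap\QM\Sigma'$ then gives $\Sigma(\mathbf{T},\mathbf{L})=\Sigma'$. Since $\mathbf{T}\subseteq\mathbf{L}$, the center of $\mathbf{L}$ is $\bigcap_{\alpha\in\Sigma'}\ker\alpha$, so $Z(\mathbf{L})^\circ=\mathbf{S}$. Finally, $C(C(\mathbf{S}))=C(\mathbf{L})$, which equals $Z(\mathbf{L})$: $\mathbf{L}$ contains the maximal torus $\mathbf{T}$, and anything centralizing $\mathbf{L}$ centralizes $\mathbf{T}$, so lies in $\mathbf{T}\subseteq\mathbf{L}$. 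Taking identity components gives (ii).

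\emph{(ii)$\Rightarrow$(i).} Set $\mathbf{L}=C_{\mathbf{G}_{\o F}}(\mathbf{S})$, a Levi subgroup since $\mathbf{S}$ is a torus. As in the previous step, $C(\mathbf{L})=Z(\mathbf{L})$, so (ii) says exactly $\mathbf{S}=Z(\mathbf{L})^\circ$.

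The only delicate point is the root computation in (iii)$\Rightarrow$(ii). It requires the definition of a Levi subroot system to include the closedness condition $\Sigma'=\Sigma\cap\QM\Sigma'$, and we will use that characterization. If the paper instead means "root system of some Levi containing $\mathbf{T}$", then $\Sigma'=\Sigma(\mathbf{T},C(\mathbf{S}'))$ for some $\mathbf{S}'\subseteq\mathbf{T}$, and closedness follows directly: any root in $\QM\Sigma'$ is trivial on $\mathbf{S}'$.
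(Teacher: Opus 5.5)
Your proof is correct and is the standard argument that the paper leaves to the reader; its own proof reads only ``Standard'' with a reference to \cite[14.18]{BorelLAG}. The three ingredients you use are exactly the right ones: centralizers of subtori of $\mathbf T$ are the Levi subgroups containing $\mathbf T$, one has $C_{\mathbf G_{\overline F}}(\mathbf L)=Z(\mathbf L)=\bigcap_{\alpha\in\Sigma(\mathbf T,\mathbf L)}\ker\alpha$, and the closedness $\Sigma'=\Sigma\cap\QM\Sigma'$ yields $\Sigma(\mathbf T,C(\mathbf S))=\Sigma'$. Two cosmetic points: the characters of $\mathbf T$ trivial on $\mathbf S$ form the saturation $X^*(\mathbf T)\cap\QM\Sigma'$ itself, not its tensor product with $\QM$, and $\overline F$ is only a separable closure, which is harmless since all tori and reductive groups split over it.
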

\begin{proof}
  Standard, cf., for example, \cite[14.18]{BorelLAG}.
\end{proof}

\begin{DEf}
A \emph{Levi-center-embedding}\index[terminology]{Levi-center-embedding} in ${\mathbf{G}}_{\ov F}$ is a pair $(\mathbf{S},\iota)$ with
$\mathbf{S}$ a $\o F$-torus
and  $\iota : \mathbf{S}\injo
 {\mathbf{G}}_{\ov F}$  an embedding  such that $\iota(\mathbf{S})$ satisfies the properties of Lemma \ref{levicenter}.
\end{DEf}

\alin{Duality} \label{dual_embeddings}
Let $\hat{\mathbf{S}}$\index[notation]{Shat@$\hat{\mathbf{S}}$} be a complex algebraic torus. 
The algebraic group $\hat{\mathbf{G}}$ acts by conjugation on Levi-center-embeddings 
$\hat\iota :\, \hat{\mathbf{S}}\injo\hat{\mathbf{G}}$. Our aim is to attach to a conjugacy class
$\{\hat\iota\}$ of such embeddings, a ``dual'' conjugacy class $\{\iota\}$ of
Levi-center-embeddings in the $\o F$-algebraic group ${\mathbf{G}}_{\ov F}$.

The stabilizer $\hat{\mathbf{G}}_{\hat\iota}$\index[notation]{Ghatiotahat@$\hat{\mathbf{G}}_{\hat\iota}$} of $\hat\iota$ for the action induced by the conjugation action of
$\hat{\mathbf{G}}$ on itself is the
centralizer $C_{\hat{\mathbf{G}}}(\hat\iota(\hat{\mathbf{S}}))$ of the torus  $\hat\iota(\hat{\mathbf{S}})$, which is a
Levi subgroup of $\hat{\mathbf{G}}$. Its cocenter
$\hat{\mathbf{G}}_{\hat\iota, \rm ab}$ is a torus which \emph{only
 depends  on the conjugacy class $\{\hat\iota\}$ of $\hat\iota$}, in the sense that for
$\hat\iota' \in \{ \hat\iota\}$ we have an isomorphism $\hat{\mathbf{G}}_{\hat\iota,\rm ab}\simto
\hat{\mathbf{G}}_{\hat\iota',\rm ab}$ given by conjugation under any $\hat g\in \hat{\mathbf{G}}$ that conjugates
$\hat\iota$ to $\hat\iota'$. We denote by $\hat{\mathbf{S}}_{\{\hat\iota\}}:={\rm lim}_{\hat\iota'\in \{ \hat \iota\}}
\hat{\mathbf{G}}_{\hat\iota',\rm ab}$\index[notation]{Shatiota@$\hat{\mathbf{S}}_{\{\hat\iota\}}$} this common torus.
Since $\hat\iota(\hat{\mathbf{S}})$ is the
connected center of $\hat{\mathbf{G}}_{\hat\iota}$, the embedding $\hat\iota$ induces an isogeny 
$\hat{\mathbf{S}}\To{} \hat{\mathbf{G}}_{\hat\iota,\rm ab}$ which is compatible with the analogous
isogeny $\hat{\mathbf{S}}\To{} \hat{\mathbf{G}}_{\hat\iota',\rm ab}$ through the
isomorphism
$\hat{\mathbf{G}}_{\hat\iota,\rm ab}\simto \hat{\mathbf{G}}_{\hat\iota',\rm ab}$ for
any $\hat\iota'\in\{\hat\iota\}$. Therefore, this
defines an isogeny $\hat{\mathbf{S}}\To{}\hat{\mathbf{S}}_{\{\hat\iota\}}$
which \emph{only depends
  on  $\{\hat\iota\}$.} Concretely, the (finite) kernel $H_{\{\hat\iota\}}:=\ker(\hat{\mathbf{S}}\To{}
\hat{\mathbf{G}}_{\hat\iota,\rm ab})$ is independent of $\hat\iota$ in $\{\hat\iota\}$ and we have $\hat{\mathbf{S}}_{\{\hat\iota\}}=\hat{\mathbf{S}}/H_{\{\hat\iota\}}$.

Now let us choose a maximal torus $\hat{\mathbf{T}}$ in $\hat{\mathbf{G}}_{\hat\iota}$.
The isogeny $\hat{\mathbf{S}}\To{}\hat{\mathbf{S}}_{\{\hat\iota\}}$ factors as
$\hat{\mathbf{S}}\To{\hat\iota}\hat{\mathbf{T}}\To{\hat\pi}\hat{\mathbf{S}}_{\{\hat\iota\}}$ 
and identifies 
$$ \hat\iota:\,\hat{\mathbf{S}}\simto \left(\bigcap_{\alpha\in\Sigma_{\hat\iota}}\ker(\alpha)\right)^{\circ}\subset\hat{\mathbf{T}} 
\,\hbox{  and  }\, 
\hat\pi:\, \hat{\mathbf{T}}/\left(\sum_{\alpha\in\Sigma_{\hat\iota}}\im(\alpha^{\vee})\right)\simto \hat{\mathbf{S}}_{\{\hat\iota\}},$$
where  $\Sigma_{\hat\iota}$ is the root system $\Sigma(\hat{\mathbf{T}},\hat{\mathbf{G}}_{\hat\iota})$.
Hence the dual isogeny $\mathbf{S}_{\{\hat\iota\}}\To{} \mathbf{S}$\index[notation]{Siota@${\mathbf{S}}_{\{\hat\iota\}}$}\index[notation]{S@$\mathbf{S}$} of tori over $\ov F$ factors through
the dual $\ov F$-torus ${\mathbf{T}}$ of $\hat{\mathbf{T}}$ giving isomorphisms
$$ \pi:\, \mathbf{S}_{\{\hat\iota\}}\simto \left(\bigcap_{\alpha\in\Sigma_{\hat\iota}}\ker({\alpha^{\vee}})\right)^{\circ}
\subset  {\mathbf{T}}
\,\hbox{  and  }\, 
\hat{\hat\iota}:\,{\mathbf{T}}/\left(\sum_{\alpha\in\Sigma_{\hat\iota}}\im(\alpha)\right)\simto \mathbf{S},$$
where ${\alpha^{\vee}}$, resp., $\alpha$, is seen as a character, resp.,
a cocharacter, of ${\mathbf{T}}$. Now recall that the embedding $\hat{\mathbf{T}}\injo
\hat{\mathbf{G}}$ gives rise to a canonical ${\mathbf{G}}(\ov F)$-conjugacy class of embeddings
${\mathbf{T}}\injo {\mathbf{G}}_{\ov F}$. Choose such a ``dual embedding'' $j$. By construction it identifies 
 $\Sigma(\hat{\mathbf{T}},\hat{\mathbf{G}})$ with   $\Sigma({\mathbf{T}},{\mathbf{G}}_{\ov F})^{\vee}$. In
 particular, we see from point iii) in Lemma \ref{levicenter} that the composition
 $\iota:\,\mathbf{S}_{\{\hat\iota\}}\To{\pi} {\mathbf{T}}\injo  {\mathbf{G}}_{\ov F}$ is a Levi-center
 embedding such that, by construction, the Levi subgroup ${\mathbf{G}}_{\iota}=C_{\mathbf{G}_{\ov F}}(\iota(\mathbf{S_{\{\hat\iota\}}}))$\index[notation]{Giota@${\mathbf{G}}_{\iota}$} is dual
 to $\hat{\mathbf{G}}_{\hat\iota}$.

 \begin{lem}
   The ${\mathbf{G}}(\ov F)$-conjugacy class $\{\iota\}$ of $\iota$ 
 only depends on the $\hat{\mathbf{G}}$-conjugacy class $\{\hat\iota\}$.
 \end{lem}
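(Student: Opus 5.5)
The construction involved three choices: a representative $\hat\iota$ of $\{\hat\iota\}$, a maximal torus $\hat{\mathbf{T}}$ of $\hat{\mathbf{G}}_{\hat\iota}$, and a dual embedding $j:\mathbf{T}\injo\mathbf{G}_{\o F}$ in the canonical $\mathbf{G}(\o F)$-conjugacy class attached to $\hat{\mathbf{T}}\injo\hat{\mathbf{G}}$. I will show that changing any one of them changes $\iota=j\circ\pi$ only by $\mathbf{G}(\o F)$-conjugation. Throughout, $\mathbf{S}_{\{\hat\iota\}}$ is fixed, being dual to $\hat{\mathbf{S}}_{\{\hat\iota\}}$, which depends only on $\{\hat\iota\}$ via the transition isomorphisms defining the limit.

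\emph{Step 1: the choice of $j$.} For fixed $\hat{\mathbf{T}}$, two choices of $j$ are $\mathbf{G}(\o F)$-conjugate by definition. So $\iota$ is well defined up to conjugacy once $(\hat\iota,\hat{\mathbf{T}})$ is fixed.

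\emph{Step 2: the choice of $\hat{\mathbf{T}}$.} Two maximal tori $\hat{\mathbf{T}},\hat{\mathbf{T}}'$ of $\hat{\mathbf{G}}_{\hat\iota}$ are conjugate by some $\hat g\in\hat{\mathbf{G}}_{\hat\iota}$. Conjugation by $\hat g$ has three relevant features.
\begin{itemize}
\item It induces an isomorphism $\hat{\mathbf{T}}\simto\hat{\mathbf{T}}'$ carrying $\Sigma_{\hat\iota}$ to $\Sigma'_{\hat\iota}$.
\item It is compatible with the maps $\hat\pi,\hat\pi'$ to $\hat{\mathbf{S}}_{\{\hat\iota\}}$, because inner automorphisms of $\hat{\mathbf{G}}_{\hat\iota}$ act trivially on its cocenter.
\item It is compatible with the inclusions into $\hat{\mathbf{G}}$, up to the inner automorphism $\mathrm{Ad}(\hat g)$ of $\hat{\mathbf{G}}$.
\end{itemize}
Dually we get $\mathbf{T}\simto\mathbf{T}'$ compatible with $\pi,\pi'$. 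Since $\mathrm{Ad}(\hat g)$ is inner, the canonical conjugacy classes of dual embeddings correspond. This is the standard fact that the bijection between conjugacy classes of maximal torus embeddings is compatible with $\hat{\mathbf{G}}$-conjugation of $\hat{\mathbf{T}}$. So there is $g\in\mathbf{G}(\o F)$ with $j'\circ(\mathbf{T}\simto\mathbf{T}')=\mathrm{Ad}(g)\circ j$, and composing with $\pi$ gives $\iota'=\mathrm{Ad}(g)\circ\iota$.

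\emph{Step 3: the choice of $\hat\iota$.} If $\hat\iota'=\mathrm{Ad}(\hat g)\circ\hat\iota$, take $\hat{\mathbf{T}}'=\hat g\hat{\mathbf{T}}\hat g^{-1}$, a maximal torus of $\hat{\mathbf{G}}_{\hat\iota'}$. The identification $\hat{\mathbf{S}}_{\{\hat\iota\}}\simeq\hat{\mathbf{G}}_{\hat\iota',\rm ab}$ is by definition induced by $\mathrm{Ad}(\hat g)$, so $\hat\pi'\circ\mathrm{Ad}(\hat g)=\hat\pi$. We are then in the same situation as Step 2: a $\hat{\mathbf{G}}$-conjugation transporting all the data. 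Hence the same argument gives $\iota'$ conjugate to $\iota$.

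The only real content is the compatibility invoked in Step 2: the canonical class of dual embeddings $\mathbf{T}\injo\mathbf{G}_{\o F}$ transforms correctly when $\hat{\mathbf{T}}$ is moved by $\hat{\mathbf{G}}$. I would verify it through the usual description. An embedding $\hat{\mathbf{T}}\injo\hat{\mathbf{G}}$ is determined up to conjugacy by an identification of $X_*(\hat{\mathbf{T}})$ with $X^*(\mathbf{T}_0)$ for a fixed pinned torus, well defined modulo the Weyl group. The dual embedding is then obtained from the dual identification modulo the same Weyl group, which is why the class is canonical. Conjugating $\hat{\mathbf{T}}$ by $\hat g$ changes the identification only by an element of the Weyl group, so the resulting class of dual embeddings is unchanged. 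This is the step where care is needed, since one must check that the Weyl-group ambiguities on the two sides match; the rest is bookkeeping of compatible isomorphisms.
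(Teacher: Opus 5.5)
Your proof is correct and is essentially the paper's argument. The paper handles your Steps 2 and 3 at once, choosing a single $\hat g\in\hat{\mathbf{G}}$ that conjugates $(\hat\iota,\hat{\mathbf{T}})$ to $(\hat\iota',\hat{\mathbf{T}}')$ and using that $j$ composed with the dual of $\mathrm{Ad}_{\hat g}$ is again a dual embedding of $\mathbf{T}'$; your Weyl-group justification of that compatibility spells out a point the paper takes for granted.
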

 \begin{proof} Let $\hat\iota'$ be conjugate to $\hat\iota$, let $\hat{\mathbf{T'}}$ be a maximal
   torus in $\hat{\mathbf{G}}_{\hat\iota'}$ and let $j':\, {\mathbf{T}}'\injo {\mathbf{G}}_{\ov F}$ be a
   choice of dual embedding. Since all maximal tori of
   $\hat{\mathbf{G}}_{\hat\iota}$ are conjugate, 
there is an element $\hat g\in \hat{\mathbf{G}}$ which conjugates $\hat\iota$ to $\hat\iota'$ and
$\hat{\mathbf{T}}$ to $\hat{\mathbf{T'}}$. Then we have a commutative diagram
$$ \xymatrix{ 
\hat{\mathbf{S}} \ar[r]^{\hat\iota} \ar[d]_{\hat\iota'} & \hat{\mathbf{T}} \ar[d]^{\hat\pi}
\ar[ld]^{\sim}_{{\rm Ad_{\hat g}}}\\
\hat{\mathbf{T'}} \ar[r]_{\hat\pi'} & \hat{\mathbf{S}}_{\{\hat\iota\}}
}.$$ It follows that on the dual side we get $\pi= \hat{\rm Ad}_{\hat g} \circ\pi'$, where
$\hat{\rm Ad}_{\hat g}$ is the isomorphism  ${\mathbf{T'}}\simto {\mathbf{T}}$ dual to ${\rm Ad}_{\hat g}$.
On the other hand, $j\circ \hat{\rm Ad}_{\hat g}$ is a dual embedding of ${\mathbf{T'}}$ into
${\mathbf{G}}_{\ov F}$, hence there is $ g\in {\mathbf{G}}(\ov F)$ such that  
${\rm Ad}_{  g}\circ j'= j\circ \hat{\rm Ad}_{\hat g}$.
It follows that $g$ conjugates the  embedding ${\iota'}= j'\circ\pi'$ to the
embedding  $\iota=j\circ\pi$.
 \end{proof}

We refer to $\{ \iota\}$ as the ${\mathbf{G}}(\ov F)$-conjugacy class of Levi-center-embeddings that is dual to $\{ \hat \iota\}$. 
Although we will not need it in this paper, note that we can play the game in the other
direction and get the following result.

\begin{pro}
  The above construction sets up a bijection between ${\mathbf{G}}(\ov F)$-conjugacy classes of
  Levi-center-embeddings in ${\mathbf{G}}_{\ov F}$ and $\hat{\mathbf{G}}$-conjugacy classes of
  Levi-center-embeddings in $\hat{\mathbf{G}}$.
\end{pro}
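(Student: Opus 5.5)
The construction is symmetric: $\hat{\mathbf{G}}$ is a dual of $\mathbf{G}_{\ov F}$ and conversely, in the sense that a maximal torus embedding on one side determines a canonical conjugacy class of dual maximal torus embeddings on the other. So the plan is to write down the reverse construction $\{\iota\}\mapsto\{\hat\iota\}$ by the same recipe and show that the two constructions are mutually inverse. Well-definedness of the reverse map is proved exactly as in the Lemma above, with the roles of $\mathbf{G}_{\ov F}$ and $\hat{\mathbf{G}}$ swapped; no new argument is needed there.

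To check that the two maps are inverse, start from $\hat\iota:\hat{\mathbf{S}}\injo\hat{\mathbf{G}}$ and choose a maximal torus $\hat{\mathbf{T}}\subset\hat{\mathbf{G}}_{\hat\iota}$. The dual embedding $j$ then gives $\iota=j\circ\pi:\mathbf{S}_{\{\hat\iota\}}\injo\mathbf{T}\injo\mathbf{G}_{\ov F}$. Apply the reverse construction to $\iota$, using the same maximal torus $\mathbf{T}\subset\mathbf{G}_\iota$; this is allowed because $\iota(\mathbf{S}_{\{\hat\iota\}})\subset\mathbf{T}$ and $\mathbf{T}$ is maximal in $\mathbf{G}_\iota$. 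The root system $\Sigma_\iota=\Sigma(\mathbf{T},\mathbf{G}_\iota)$ is identified by $j$ with $\Sigma_{\hat\iota}^\vee$. The reverse construction therefore produces the torus dual to $\mathbf{T}/\sum_{\alpha\in\Sigma_{\hat\iota}}\im(\alpha)$, embedded into $\hat{\mathbf{T}}$ as $\left(\bigcap_{\alpha\in\Sigma_{\hat\iota}}\ker\alpha\right)^\circ$. This is the image of $\hat\iota$, by the isomorphisms displayed before the Lemma.

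It remains to identify the source torus with $\hat{\mathbf{S}}$ compatibly. The composite $\hat{\mathbf{S}}\to\hat{\mathbf{T}}\to\hat{\mathbf{S}}_{\{\hat\iota\}}$ is dual to $\mathbf{S}_{\{\hat\iota\}}\to\mathbf{T}\to\mathbf{S}$, so the cocenter of $\mathbf{G}_\iota$ is canonically $\mathbf{S}$ via $\hat{\hat\iota}$. Its dual is $\hat{\mathbf{S}}$, and the induced embedding $\hat{\mathbf{S}}\injo\hat{\mathbf{T}}$ is dual to $\mathbf{T}\to\mathbf{T}/\sum\im(\alpha)\simeq\mathbf{S}$, which is $\hat\iota$. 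For the choice of dual embedding back into $\hat{\mathbf{G}}$, take the inclusion $\hat{\mathbf{T}}\subset\hat{\mathbf{G}}$. This is legitimate because the relation "$j$ is dual to $\hat{\mathbf{T}}\injo\hat{\mathbf{G}}$" is symmetric: it amounts to an identification of root data together with the compatible identification of Weyl-group orbits of Borels or conjugacy classes. Hence the round trip returns $\{\hat\iota\}$. By symmetry, the other composite is also the identity, which gives the bijection.

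The main obstacle is this symmetry of the dual torus embedding: that the canonical conjugacy class $\mathbf{T}\injo\mathbf{G}_{\ov F}$ attached to $\hat{\mathbf{T}}\injo\hat{\mathbf{G}}$ has, as its own attached class, the original embedding. I would deduce this from the definition via based root data. An embedding of a torus as a maximal torus, up to conjugacy, is the same as an isomorphism of its character lattice with $X^*$ of the abstract Cartan, modulo the Weyl group. Duality inverts and transposes this isomorphism, which is visibly an involution.
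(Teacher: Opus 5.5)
Your argument is correct and is the argument the paper points to when it says one can ``play the game in the other direction''; the paper gives no proof of this proposition, and your round-trip computation supplies the omitted details. That computation reuses the same maximal torus, the displayed isomorphisms $\hat\iota:\hat{\mathbf S}\simto(\bigcap_{\alpha\in\Sigma_{\hat\iota}}\ker\alpha)^\circ$ and $\hat{\hat\iota}:\mathbf T/\sum_{\alpha\in\Sigma_{\hat\iota}}\mathrm{im}(\alpha)\simto\mathbf S$, and the symmetry of the duality between maximal torus embeddings, and you correctly keep track of the source-torus identification (the forward map replaces $\hat{\mathbf S}$ by $\mathbf S_{\{\hat\iota\}}$, not by the dual of $\hat{\mathbf S}$) needed to make the bijection precise.
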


\alin{Rationality} \label{rationality}
We now assume that the complex torus
$\hat{\mathbf{S}}$\index[notation]{Shat@$\hat{\mathbf{S}}$}  is  endowed with a finite action of $W_{F}$, thus
 corresponding to an $F$-rational structure on the torus $\mathbf{S}$\index[notation]{S@$\mathbf{S}$}.
 Then $W_{F}$ acts on the 
set of Levi-center-embeddings $\hat\iota:\, \hat{\mathbf{S}}\To{}\hat{\mathbf{G}}$ by the
formula
$^{\gamma}\hat\iota:=\gamma_{\hat{\mathbf{G}}}\circ\hat\iota\circ\gamma_{\hat{\mathbf{S}}}^{-1}$. 
\emph{We further assume that the
conjugacy class $\{\hat\iota\}$ is $W_{F}$-stable.} 

In this case, the finite subgroup $\hat H_{\{\hat\iota\}}$ of
$\hat{\mathbf{S}}$ is $W_{F}$-stable, its quotient torus $\hat{\mathbf{S}}_{\{\hat\iota\}}$\index[notation]{Shatiota@$\hat{\mathbf{S}}_{\{\hat\iota\}}$} is therefore also
equipped with a finite action of $W_{F}$, allowing to define an $F$-structure on the dual
torus $\mathbf{S}_{\{\hat\iota\}}$.\index[notation]{Siota@${\mathbf{S}}_{\{\hat\iota\}}$}
We also define a quasi-split $F$-group ${\mathbf{G}}_{\{\hat\iota\}}$ as follows.
First note that we have an action of
 the $L$-group $^{L}{\mathbf{G}}=\hat{\mathbf{G}}\rtimes W_{F}$ on Levi-center-embeddings in $\hat{\mathbf{G}}$ given  by 
$^{(\hat g,\gamma)}\hat\iota:={\rm Ad}_{\hat g}\circ {^{\gamma}\hat\iota}$, and that the
$\hat{\mathbf{G}}$-conjugacy class $\{\hat\iota\}$ is $W_{F}$-stable if and only if the
  stabilizer $(^{L}{\mathbf{G}})_{\hat\iota}$ surjects onto $W_{F}$ through the projection
  $^{L}{\mathbf{G}}\To{}W_{F}$. 
Since we assumed that the $\hat{\mathbf{G}}$-conjugacy class $\{\hat\iota\}$ is $W_{F}$-stable, we therefore obtain a short exact sequence
$\hat{\mathbf{G}}_{\hat\iota}\injo (^{L}{\mathbf{G}})_{\hat\iota}\twoheadrightarrow
W_{F}$. It follows that the conjugation action  $(^{L}{\mathbf{G}})_{\hat\iota}\To{}
{\rm Aut}(\hat{\mathbf{G}}_{\hat\iota})$ induces an outer action
$$W_{F}\To{}{\rm Out}(\hat{\mathbf{G}}_{\hat\iota}) =
\Aut(\psi_{0}(\hat{\mathbf{G}}_{\hat\iota})),$$
where $\psi_{0}$ denotes the based root datum associated to a reductive group.
For any conjugate $\hat\iota'$, this outer action is compatible with the canonical isomorphism
$\psi_{0}(\hat{\mathbf{G}}_{\hat\iota})\simto \psi_{0}(\hat{\mathbf{G}}_{\hat\iota'})$
induced by conjugation under any $\hat g$ such that $\hat\iota'={\rm Ad}_{\hat g}\circ
\hat\iota$. Further, this outer action is \emph{finite} since it induces the given action on the connected center $\hat{\mathbf{S}}$ of
 $\hat{\mathbf{G}}_{\hat\iota}$. Therefore, there is a quasi-split $F$-group
 ${\mathbf{G}}_{\{\hat\iota\}}$ endowed with a $W_{F}$-equivariant isomorphism
$\alpha:\,\psi_{0}({\mathbf{G}}_{\{\hat\iota\}})\simto
\psi_{0}(\hat{\mathbf{G}}_{\hat\iota})^{\vee}$. This pair is unique up to isomorphism and
its automorphism group is ${\mathbf{G}}_{\{\hat\iota\},\rm ad}(F)$.
 Thanks to $\alpha$,  we have $F$-rational isomorphisms 
$$ {\mathbf{G}}_{\{\hat\iota\},\rm ab}\simto \mathbf{S}
\,\,\hbox{ and }\,\,
\mathbf{S}_{\{\hat\iota\}}\simto Z({\mathbf{G}}_{\{\hat\iota\}})^{\circ}.$$
Moreover, and again thanks to $\alpha$,  we also have a map
$$H^{1}(F,{\mathbf{G}}_{\{\hat\iota\}})\To{}H^{1}(F,{\mathbf{G}}),$$ defined through
``Kottwitz duality'' \cite[Prop.~6.4]{KottCusp}
by the inclusion 
$Z(\hat{\mathbf{G}})^{W_{F}}\subset Z(\hat{\mathbf{G}}_{\hat\iota})^{W_{F}}$ where the
 action of $W_{F}$ on $Z(\hat{\mathbf{G}}_{\hat\iota})$ is induced by the conjugation action
of $(^{L}{\mathbf{G}})_{\hat\iota}$. \footnote{We could also write $Z(^{L}{\mathbf{G}})$ for
$Z(\hat{\mathbf{G}})^{W_{F}}$ and  $Z((^{L}{\mathbf{G}})_{\hat\iota})$ for
$Z(\hat{\mathbf{G}}_{\hat\iota})^{W_{F}}$ since the center of $W_{F}$ is trivial.}

\begin{prop} \label{dual-embedding-qs}
  Assume that ${\mathbf{G}}$ is quasi-split over $F$. Then the ${\mathbf{G}}(\ov F)$-conjugacy class of Levi-center-embeddings
    $\{\mathbf{S}_{\{\hat\iota\}}\injo{\mathbf{G}}_{\ov F}\}$ dual to $\{\hat\iota\}$
        contains an $F$-rational embedding $\iota$ whose stabilizer ${\mathbf{G}}_{\iota}:=C_{\mathbf{G}}(\iota(\mathbf{S_{\{\hat\iota\}}}))$\index[notation]{Giota@${\mathbf{G}}_{\iota}$} 
is naturally isomorphic  to ${\mathbf{G}}_{\{\hat\iota\}}$.  Moreover, the map 
$H^{1}(F,{\mathbf{G}}_{\iota})\To{}H^{1}(F,{\mathbf{G}})$ induced by the inclusion
${\mathbf{G}}_{\iota}\subseteq {\mathbf{G}}$ coincides with the map defined above.
\end{prop}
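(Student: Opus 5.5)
The natural approach is to reduce to the standard fact that a quasi-split group contains, up to rational conjugacy, a unique $F$-rational ``standard'' Levi-type subgroup with prescribed Galois-stable combinatorial data. The data here is a twisted Levi subgroup rather than an $F$-Levi, so the relevant input is a maximally split (or elliptic-compatible) rational embedding of maximal tori. Concretely, I would proceed as follows.

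First I pick a representative $\hat\iota$ and a maximal torus $\hat{\mathbf{T}}\subseteq\hat{\mathbf{G}}_{\hat\iota}$ such that the outer action of $W_F$ on $\hat{\mathbf{G}}_{\hat\iota}$ lifts to an honest action of $W_F$ through $({}^{L}\mathbf{G})_{\hat\iota}$ preserving a pinning $(\hat{\mathbf{B}}_{\hat\iota},\hat{\mathbf{T}},\hat X_{\hat\iota})$ of $\hat{\mathbf{G}}_{\hat\iota}$. This is possible because the extension $\hat{\mathbf{G}}_{\hat\iota}\injo({}^{L}\mathbf{G})_{\hat\iota}\twoheadrightarrow W_F$ is split by the stabilizer of a pinning: the stabilizer of a pinning in $({}^{L}\mathbf{G})_{\hat\iota}$ maps onto $W_F$ with kernel $Z(\hat{\mathbf{G}}_{\hat\iota})$. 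Modding out by the finite-order issues is the usual Langlands--Shelstad argument. This gives a $W_F$-action $\sigma\mapsto \hat w_\sigma\rtimes\sigma$ on $\hat{\mathbf{T}}$, of the form ``Weyl element of $\hat{\mathbf{G}}$ times the pinned action of $\sigma$''. Dually, it defines an $F$-structure on $\mathbf{T}$ such that the embedding $\mathbf{T}\injo\mathbf{G}_{\ov F}$, $\Sigma(\hat{\mathbf{T}},\hat{\mathbf{G}})\simeq\Sigma(\mathbf{T},\mathbf{G}_{\ov F})^\vee$, has Galois cocycle valued in the Weyl group. The twist is given by the image of $\sigma\mapsto \hat w_\sigma$ in $W(\hat{\mathbf{T}},\hat{\mathbf{G}})=W(\mathbf{T},\mathbf{G})$.

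The key step, and the main obstacle, is then to show that there exists an $F$-rational embedding $j:\mathbf{T}\injo\mathbf{G}$ in the canonical $\mathbf{G}(\ov F)$-class with this prescribed Galois action. Since $\mathbf{G}$ is quasi-split, I would invoke Kottwitz's result that, for quasi-split groups, every stable class of admissible embeddings of maximal tori (i.e.\ every $\Gamma_F$-stable $\mathbf{G}(\ov F)$-class) contains an $F$-rational member. This rests on Steinberg's theorem, which gives the vanishing of $H^1$ of the simply connected cover over the maximal unramified extension, together with the existence of rational elements in stable conjugacy classes for quasi-split groups. Once $j$ is fixed, $\iota:=j\circ\pi$ is $F$-rational because $\pi$ is defined via $W_F$-equivariant duality. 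Moreover $\mathbf{G}_\iota=C_{\mathbf{G}}(\iota(\mathbf{S}_{\{\hat\iota\}}))$ is an $F$-group whose root datum with respect to $j(\mathbf{T})$ is dual to that of $\hat{\mathbf{G}}_{\hat\iota}$, $W_F$-equivariantly.

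It then remains to identify $\mathbf{G}_\iota$ with the quasi-split form $\mathbf{G}_{\{\hat\iota\}}$. For this I must ensure that $\mathbf{G}_\iota$ is itself quasi-split. To arrange this, I choose $j$ so that $\mathbf{T}$ is a maximally split maximal torus inside $\mathbf{G}_\iota$. Because $\hat w_\sigma$ normalizes a Borel of $\hat{\mathbf{G}}_{\hat\iota}$, the induced Galois action on $\Sigma(\mathbf{T},\mathbf{G}_\iota)$ preserves a positive system. Hence $\mathbf{T}$ lies in an $F$-Borel of $\mathbf{G}_\iota$, by the criterion that a torus whose Galois action preserves a Borel of the centralizer is contained in a rational Borel when the ambient group is quasi-split. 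This yields the natural isomorphism with $\mathbf{G}_{\{\hat\iota\}}$ compatible with $\alpha$. If needed, I would first argue rationality for the simply connected derived group and then adjust.

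For the last assertion, I would use the functoriality of Kottwitz's isomorphism $H^1(F,\mathbf{H})\simeq X^*(Z(\hat{\mathbf{H}})^{\Gamma_F})$ with respect to normal homomorphisms, here the inclusion $\mathbf{G}_\iota\subseteq\mathbf{G}$, whose dual is the inclusion $Z(\hat{\mathbf{G}})\subseteq Z(\hat{\mathbf{G}}_{\hat\iota})$. Kottwitz's functoriality is stated for maps inducing a map of dual centers. I would check it by factoring through $j(\mathbf{T})$, using the compatibility for tori (Tate--Nakayama) and the surjectivity $H^1(F,\mathbf{T})\to H^1(F,\mathbf{G}_\iota)$ for a fundamental torus.
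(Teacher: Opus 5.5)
Your existence and quasi-splitness argument is the paper's. The paper uses the normalizer $\mathcal{T}_{\hat{\mathbf{B}}}$ of a Borel pair $(\hat{\mathbf{T}},\hat{\mathbf{B}})$ of $\hat{\mathbf{G}}_{\hat\iota}$ inside $({}^{L}\mathbf{G})_{\hat\iota}$; your pinning stabilizer works equally well. In both cases the kernel acts trivially on $\hat{\mathbf{T}}$, so no splitting and no Langlands--Shelstad argument is needed. The paper then notes that the resulting $W_F$-action on $\hat{\mathbf{T}}$ is finite, because it is the outer action on the based root datum. You should state this too, since finiteness is what lets you put an $F$-structure on $\mathbf{T}$. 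From there the paper shows the $\mathbf{G}(\ov F)$-class of $\mathbf{T}\injo\mathbf{G}_{\ov F}$ is Galois-stable and gets a rational $j$ from quasi-splitness (citing Raghunathan). Finally, because $W_F$ preserves $\Delta(\hat{\mathbf{T}},\hat{\mathbf{B}})$, the Borel subgroup of $\mathbf{G}_\iota$ containing $j(\mathbf{T})$ attached to the dual basis is Galois-stable, hence defined over $F$. Your step ``choose $j$ so that $\mathbf{T}$ is maximally split in $\mathbf{G}_\iota$'' is vacuous. The $F$-structure on $\mathbf{T}$ is already fixed by the construction, and for every rational $j$ the preserved basis directly gives an $F$-Borel of $\mathbf{G}_\iota$ containing $j(\mathbf{T})$, with no further criterion needed.

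For the compatibility of the $H^1$-maps you take a genuinely different route. The paper reruns Kottwitz's proof for the inclusion $\mathbf{G}_\iota\subseteq\mathbf{G}$ itself. When $\mathbf{G}_{\rm der}$ is simply connected, so is $\mathbf{G}_{\iota,{\rm der}}$; both Kottwitz maps then factor through $H^1$ of the cocenters, and the square reduces to functoriality of Tate--Nakayama duality. In general one passes to a central extension $\mathbf{H}$ with simply connected derived group, takes the preimage $\mathbf{H}_\iota$ of $\mathbf{G}_\iota$, and divides by the compatible $H^1(F,\mathbf{Z})$-actions.

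You instead reduce to maximal tori by a diagram chase. This works, but only with an elliptic maximal torus $\mathbf{T}'$ of $\mathbf{G}_\iota$, not with $j(\mathbf{T})$. The torus $j(\mathbf{T})$ lies in an $F$-Borel of the quasi-split group $\mathbf{G}_\iota$, and $H^1(F,j(\mathbf{T}))\to H^1(F,\mathbf{G}_\iota)$ is then usually not surjective (already for $\mathrm{PGL}_2$ with its split torus).

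Also, the compatibility for $\mathbf{T}'\injo\mathbf{G}_\iota$ and $\mathbf{T}'\injo\mathbf{G}$ is not just Tate--Nakayama. It is the special case of the statement with $\mathbf{G}_\iota$ replaced by a torus, and Kottwitz proves it by exactly the abelianization and central-extension argument above. The surjectivity for elliptic $\mathbf{T}'$ follows from that compatibility together with $((\hat{\mathbf{T}}')^{\Gamma_F})^{\circ}=(Z(\hat{\mathbf{G}}_{\hat\iota})^{\Gamma_F})^{\circ}$.

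So your route gives a clean diagram chase, at the cost of two results from Kottwitz plus the existence of elliptic maximal tori. The paper's argument is self-contained and needs neither ellipticity nor surjectivity. Finally, Kottwitz's isomorphism lands in $\pi_0(Z(\hat{\mathbf{H}})^{\Gamma_F})^{*}$, the torsion subgroup of $X^*(Z(\hat{\mathbf{H}})^{\Gamma_F})$, not in the whole character group.
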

Here, ``naturally'' isomorphic means that there is an 
isomorphism unique up to inner automorphism, or equivalently that there is a 
$W_{F}$-equivariant isomorphism between the associated based root data.
\begin{proof}
Let us choose a maximal torus $\hat{\mathbf{T}}\subset \hat{\mathbf{G}}_{\hat\iota}$
and a Borel subgroup $\hat{\mathbf{B}}$ of $\hat{\mathbf{G}}_{\hat\iota}$ that contains
$\hat{\mathbf{T}}$. Let $\TC_{\hat{\mathbf{B}}}$ be the normalizer of the Borel pair
$(\hat{\mathbf{T}},\hat{\mathbf{B}})$ in $(^{L}{\mathbf{G}})_{\hat\iota}$. Since
$\hat{\mathbf{G}}_{\hat\iota}$ acts transitively on the set of its Borel pairs, we see
that  the map $\TC_{\hat{\mathbf{B}}}\To{}W_{F}$ is surjective, and we have a short exact sequence
$\hat{\mathbf{T}}\injo\TC_{\hat{\mathbf{B}}}\twoheadrightarrow W_{F}$. In
particular, the conjugation action of 
$\TC_{\hat{\mathbf{B}}}$  on $\hat{\mathbf{T}}$ factors through an action of $W_{F}$ on $\hat{\mathbf{T}}$.
By construction, this action preserves the based root datum
$\psi_{(\hat{\mathbf{T}},\hat{\mathbf{B}})}=(X^{*}(\hat{\mathbf{T}}),\Sigma(\hat{\mathbf{T}},\hat{\mathbf{G}}_{\hat\iota}),
\Delta(\hat{\mathbf{T}},\hat{\mathbf{B}}), X_{*}(\hat{\mathbf{T}}),\Sigma(\hat{\mathbf{T}},\hat{\mathbf{G}}_{\hat\iota})^\vee,
\Delta(\hat{\mathbf{T}},\hat{\mathbf{B}})^{\vee})$
of $\hat{\mathbf{G}}_{\hat\iota}$
associated to the Borel pair $(\hat{\mathbf{T}},\hat{\mathbf{B}})$ and the induced action
$W_{F}\To{}\Aut(\psi_{(\hat{\mathbf{T}},\hat{\mathbf{B}})})$ coincides with the outer action 
$W_{F}\To{}{\rm Aut}(\psi_{0}(\hat{\mathbf{G}}_{\hat\iota}))$ defined before the proposition,
through the canonical isomorphism 
$\psi_{(\hat{\mathbf{T}},\hat{\mathbf{B}})}=\psi_{0}(\hat{\mathbf{G}}_{\hat\iota})$.

In particular the action of $W_{F}$ on $\hat{\mathbf{T}}$ is finite and induces the given
action of $W_{F}$ on $\hat{\mathbf{S}}$, so that the whole factorization
$\hat{\mathbf{S}}\To{\hat\iota}\hat{\mathbf{T}}\To{\hat\pi}\hat{\mathbf{S}}_{\{\hat\iota\}}$ is
$W_{F}$-equivariant.
Therefore,
 endowing the dual $\o F$-torus
${\mathbf{T}}$ with the $F$-structure associated with this $W_{F}$-action,
the dual morphism
$\mathbf{S}_{\{\hat\iota\}}\To{\pi}{\mathbf{T}}$ is defined over $F$.
But since $\TC_{\hat{\mathbf{B}}}$ surjects onto $W_{F}$, any $W_{F}$-conjugate of
the embedding $\hat{\mathbf{T}}\subset\hat{\mathbf{G}}$ is also $\hat{\mathbf{G}}$-conjugate to
it. In other words, the $\hat{\mathbf{G}}$-conjugacy class of this embedding is
$W_{F}$-stable. It follows that the dual ${\mathbf{G}}(\ov F)$-conjugacy class of embeddings
${\mathbf{T}}_{\ov F}\injo{\mathbf{G}}_{\ov F}$ is also Galois stable. Since ${\mathbf{G}}$
is quasisplit, we know by
\cite{Raghunathan} that  there is a dual embedding $j:\,{\mathbf{T}}\injo{\mathbf{G}}$ defined over
$F$. Then the composite $\iota= j\circ\pi$ is also defined over $F$.

Now, the stabililizer
${\mathbf{G}}_{\iota}=C_{\mathbf{G}}(\iota(\mathbf{S}_{\{\hat\iota\}}))$\index[notation]{Giota@${\mathbf{G}}_{\iota}$} is 
 an 
$F$-subgroup of ${\mathbf{G}}$, with $j({\mathbf{T}})$ a maximal $F$-torus. By construction, the $W_{F}$ action on $\hat{\mathbf{T}}$ preserves the basis
$\Delta(\hat{\mathbf{T}},\hat{\mathbf{B}})$ of
the root system $\Sigma(\hat{\mathbf{T}},\hat{\mathbf{G}}_{\hat\iota})$, therefore the
$W_F$-action on ${\mathbf{T}}_{\ov F}$ preserves a basis of the absolute root system
$\Sigma(j({\mathbf{T}}),{\mathbf{G}}_{\iota})$, which determines a Borel subgroup $\mathbf{B}$ of
${\mathbf{G}}_{\iota}$ defined over $F$ and containing $j({\mathbf{T}})$. The associated
based root datum $\psi_{(j({\mathbf{T}}),\mathbf{B})}$ of ${\mathbf{G}}_{\iota}$ is then
$W_{F}$-equivariantly dual to $\psi_{(\hat{\mathbf{T}},\hat{\mathbf{B}})}$, and this provides
a $W_{F}$-equivariant isomorphism $\psi_{0}({\mathbf{G}}_{\iota})\simto
\psi_{0}(\hat{\mathbf{G}}_{\hat\iota})$, hence a whole class of $F$-rational isomorphisms 
${\mathbf{G}}_{\iota}\simto {\mathbf{G}}_{\{\hat\iota\}}$ modulo inner automorphisms.

It remains to check that the map 
$H^{1}(F,{\mathbf{G}}_{\iota})\To{}H^{1}(F,{\mathbf{G}})$ induced by 
the inclusion $\mathbf{G}_{\iota} \subseteq \mathbf{G}$
 coincides with the map
$H^{1}(F,{\mathbf{G}}_{\{\hat\iota\}})\To{}H^{1}(F,{\mathbf{G}})$  defined before the
proposition through any such isomorphism. 
We have Kottwitz' isomorphisms 
$\xi_{\mathbf{G}}:\,H^{1}(F,{\mathbf{G}})\simto\pi_{0}(Z(\hat{\mathbf{G}})^{W_{F}})^{*}$ and
$\xi_{{\mathbf{G}}_{\iota}}:\,H^{1}(F,{\mathbf{G}}_{\iota})\simto\pi_{0}(Z(\widehat{{\mathbf{G}}_{\iota}})^{W_{F}})^{*}$
and a canonical $W_{F}$-equivariant isomorphism
$Z(\widehat{{\mathbf{G}}_{\iota}})=Z(\hat{\mathbf{G}}_{\hat\iota})$, so the question is a
matter of compatibility of Kottwitz' isomorphisms with the inclusion maps
${\mathbf{G}}_{\iota}\subseteq {\mathbf{G}}$ on one side, and $Z(\hat{\mathbf{G}})\subseteq
Z(\hat{\mathbf{G}}_{\hat\iota})$ on the other side. This compatibiliy easily follows from
Kottwitz' argument in \cite[Prop 6.4]{KottCusp}. 
Indeed, assume first that ${\mathbf{G}}_{\rm der}$ is simply connected, so that
also ${\mathbf{G}}_{\iota,\rm der}$ is simply connected. Then 
$\xi_{\mathbf{G}}$ and $\xi_{{\mathbf{G}}_{\iota}}$ factor as follows
$$\xymatrix{\xi_{{\mathbf{G}}_{\iota}}:\, H^{1}(F,{\mathbf{G}}_{\iota}) \ar[r]^{\sim} \ar[d] &
H^{1}(F,{\mathbf{G}}_{\iota,\rm ab}) \ar[r]^{\sim} \ar[d] &
\pi_{0}(Z(\widehat{{\mathbf{G}}_{\iota}})^{W_{F}})^{*} \ar[d] \\
\xi_{\mathbf{G}}:\, H^{1}(F,{\mathbf{G}}) \ar[r]^{\sim} &
H^{1}(F,{\mathbf{G}}_{\rm ab}) \ar[r]^{\sim} &
\pi_{0}(Z(\hat{\mathbf{G}})^{W_{F}})^{*}
}$$
where the first square is obviously commutative (since it is obtained by applying
$H^{1}(F,-)$ to a commutative diagram of algebraic $F$-groups) and the second square is
also commutative since it can be reduced to 
 local duality for tori, which is functorial.
Now, to tackle the general case, Kotwittz considers a central extension $\mathbf{H}$ of
${\mathbf{G}}$ by an anistropic torus ${\mathbf{Z}}$ such that $\mathbf{H}_{\rm der}$ is simply connected.
Then the fiber product
$\mathbf{H}_{\iota}={\mathbf{G}}_{\iota}\times_{\mathbf{H}}{\mathbf{G}}$ is a central
extension of ${\mathbf{G}}_{\iota}$ by ${\mathbf{Z}}$ with simply connected derived subgroup.
We have just seen that the diagram
$$\xymatrix{ H^{1}(F,\mathbf{H}_{\iota}) \ar[r]^-{\sim}_-{\xi_{\mathbf{H}_{\iota}}} \ar[d]  &
\pi_{0}(Z(\widehat{\mathbf{H}_{\iota}})^{W_{F}})^{*} \ar[d] \\
H^{1}(F,\mathbf{H}) \ar[r]^-{\sim}_-{\xi_{\mathbf{H}}} &
\pi_{0}(Z(\hat{\mathbf{H}})^{W_{F}})^{*}
}$$
is commutative. It is moreover equivariant for the action of $H^{1}(F,{\mathbf{Z}})$ given
as usual on the first column and through $\pi_{0}(\hat{\mathbf{Z}}^{W_{F}})^{*}$ on the right
column. But Kottwitz shows that the diagram we are interested in (with ${\mathbf{G}}$'s
instead of $\mathbf{H}$'s) is obtained form this one by modding out by
this action. Therefore this diagram is commutative too.
\end{proof}

 When ${\mathbf{G}}$ is not quasi-split, there may be no $F$-rational
 Levi-center-embedding $\iota:\, \mathbf{S}_{\{\hat\iota\}}\injo {\mathbf{G}}$  dual to $\{\hat\iota\}$.
We call $\{\hat\iota\}$ \emph{relevant to ${\mathbf{G}}$}\index[terminology]{relevant $F$-rational Levi-center-embedding} if there
exists such an $F$-rational embedding $\iota$. 
We will make a connection with the notion of relevance of \cite[3]{BorelCorvallis}. To
this aim, consider the centralizer
$\MC_{\hat\iota}$ in $^{L}{\mathbf{G}}$ 
of the torus $Z((^{L}{\mathbf{G}})_{\hat\iota})^{\circ}$. 
It contains $(^{L}{\mathbf{G}})_{\hat\iota}$, hence it surjects to $W_{F}$ and by
\cite[Lemma~3.5]{BorelCorvallis}, it is a Levi subgroup of $^{L}{\mathbf{G}}$ in the sense of
\emph{loc.\ cit}.

\begin{prop} \label{dual_embedding_nqs} 
The conjugacy class $\{\hat\iota\}$ is relevant to ${\mathbf{G}}$
if and only if 
 $\MC_{\hat\iota}$ is relevant to ${\mathbf{G}}$ in the sense of  \cite[3.4]{BorelCorvallis}.
Moreover, in this case, the
  centralizer ${\mathbf{G}}_{\iota}$ of any
  $F$-rational  Levi-center-embedding $\iota:\, \mathbf{S}_{\{\hat\iota\}}\injo
  {\mathbf{G}}$  dual to $\{\hat\iota\}$  is an inner form of ${\mathbf{G}}_{\{\hat\iota\}}$. 
\end{prop}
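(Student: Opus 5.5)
We will compare $\mathbf{G}$ with its quasi-split inner form and translate both notions of relevance into statements about $F$-Levi subgroups. Fix an inner twist $\psi:\,\mathbf{G}_{\ov F}\simto\mathbf{G}^{*}_{\ov F}$ with $\mathbf{G}^{*}$ quasi-split. The $L$-groups are identified, so the class $\{\hat\iota\}$ is the same for both groups. By Proposition \ref{dual-embedding-qs} there is an $F$-rational Levi-center-embedding $\iota^{*}:\,\mathbf{S}_{\{\hat\iota\}}\injo\mathbf{G}^{*}$ in the dual class, with $\mathbf{G}^{*}_{\iota^{*}}\simeq\mathbf{G}_{\{\hat\iota\}}$.

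\emph{Identifying the dual of $\MC_{\hat\iota}$.} The torus $Z((^{L}\mathbf{G})_{\hat\iota})^{\circ}=(Z(\hat{\mathbf{G}}_{\hat\iota})^{W_{F}})^{\circ}$ is dual, via the isomorphism $\mathbf{S}_{\{\hat\iota\}}\simto Z(\mathbf{G}_{\{\hat\iota\}})^{\circ}$, to the quotient of $\mathbf{S}_{\{\hat\iota\}}$ by its anisotropic part. It is therefore "dual" to the maximal $F$-split subtorus $\mathbf{A}\subseteq\mathbf{S}_{\{\hat\iota\}}$. From this I expect to deduce that $\MC_{\hat\iota}$ is the $L$-Levi dual to $\mathbf{M}^{*}:=C_{\mathbf{G}^{*}}(\iota^{*}(\mathbf{A}))$. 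This is the minimal $F$-Levi subgroup of $\mathbf{G}^{*}$ containing $\mathbf{G}^{*}_{\iota^{*}}$. To check this, take a $W_{F}$-stable maximal torus and Borel pair of $\hat{\mathbf{G}}_{\hat\iota}$ as in the proof of Proposition \ref{dual-embedding-qs} and compare roots on both sides. Borel's relevance of $\MC_{\hat\iota}$ then means: $\mathbf{M}^{*}$ transfers to $\mathbf{G}$, i.e.\ there is an $F$-Levi $\mathbf{M}\subseteq\mathbf{G}$ and an inner twist $\psi$ carrying $\mathbf{M}$ onto $\mathbf{M}^{*}$.

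\emph{The direction ($\Rightarrow$).} Let $\iota$ be an $F$-rational embedding in the class, and set $\mathbf{M}:=C_{\mathbf{G}}(\iota(\mathbf{A}))$, which is an $F$-Levi subgroup of $\mathbf{G}$. Since $\psi\circ\iota$ and $\iota^{*}$ are $\mathbf{G}^{*}(\ov F)$-conjugate, after modifying $\psi$ by an inner automorphism we have $\psi\circ\iota=\iota^{*}$. Then $\psi(\mathbf{M})=\mathbf{M}^{*}$, so $\MC_{\hat\iota}$ is relevant.

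\emph{The direction ($\Leftarrow$).} Given $\mathbf{M}$ and $\psi|_{\mathbf{M}}:\,\mathbf{M}\to\mathbf{M}^{*}$ an inner twist, note that $\iota^{*}$ factors through $\mathbf{M}^{*}$. Moreover $\mathbf{G}^{*}_{\iota^{*}}$ is "elliptic" in $\mathbf{M}^{*}$: its connected center has the same split part $\iota^{*}(\mathbf{A})$ as $Z(\mathbf{M}^{*})$. I would choose a maximal $F$-torus $\mathbf{T}^{*}\subseteq\mathbf{G}^{*}_{\iota^{*}}$ that is elliptic in $\mathbf{G}^{*}_{\iota^{*}}$; such tori exist over $p$-adic fields. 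Then $\mathbf{T}^{*}$ is also elliptic in $\mathbf{M}^{*}$. By Kottwitz's transfer of elliptic maximal tori to inner forms, there is an $F$-embedding $j:\,\mathbf{T}^{*}\injo\mathbf{M}$ with $\psi\circ j$ being $\mathbf{M}^{*}(\ov F)$-conjugate to the inclusion. Writing $\iota^{*}=\mathrm{incl}\circ\pi$ with $\pi:\,\mathbf{S}_{\{\hat\iota\}}\to\mathbf{T}^{*}$, the composite $\iota:=j\circ\pi$ is $F$-rational and lies in the dual $\mathbf{G}(\ov F)$-class. This step, producing a suitable elliptic torus and applying the transfer, is where I expect the main obstacle; I would rely on \cite{KottCusp} and on Raghunathan-type arguments \cite{Raghunathan}.

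\emph{The inner form claim.} Adjust $\psi$ by an inner automorphism so that $\psi\circ\iota=\iota^{*}$; then $\psi$ maps $\mathbf{G}_{\iota}$ isomorphically onto $\mathbf{G}^{*}_{\iota^{*}}$. For $\sigma\in\Gamma_{F}$, the automorphism $\sigma(\psi)\psi^{-1}=\mathrm{Ad}(u_{\sigma})$ fixes $\iota^{*}$ pointwise, because both $\iota$ and $\iota^{*}$ are rational. Hence $u_{\sigma}$ centralizes $\iota^{*}(\mathbf{S}_{\{\hat\iota\}})$, i.e.\ $u_{\sigma}\in\mathbf{G}^{*}_{\iota^{*}}(\ov F)$. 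So $\psi|_{\mathbf{G}_{\iota}}$ is an inner twist, and $\mathbf{G}_{\iota}$ is an inner form of $\mathbf{G}^{*}_{\iota^{*}}\simeq\mathbf{G}_{\{\hat\iota\}}$.
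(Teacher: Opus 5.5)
Your proof is correct, but it follows a different route from the paper's. You fix an inner twist $\psi\colon\mathbf{G}\to\mathbf{G}^{*}$ throughout and identify $\MC_{\hat\iota}$ exactly with ${}^{L}\mathbf{M}^{*}$, where $\mathbf{M}^{*}=C_{\mathbf{G}^{*}}(\iota^{*}(\mathbf{A}))$. That identification does hold, by the same root comparison the paper does for a maximal torus, run on the cocharacters of $\hat\iota(\hat{\mathbf{S}})$.

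\emph{Direction ($\Rightarrow$).} The paper needs only a containment, not your exact identification. It picks a maximal $F$-torus $\mathbf{T}\subseteq\mathbf{G}_{\iota}$ with a dual embedding $\hat\jmath$ extending $\hat\iota$. It then notes $\MC_{\hat\jmath}\subseteq\MC_{\hat\iota}$, checks that $\MC_{\hat\jmath}$ is dual to the $F$-Levi $C_{\mathbf{G}}(\mathbf{T}^{\rm split})$, and uses that Levis containing relevant ones are relevant.

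\emph{Direction ($\Leftarrow$).} The paper first reduces to $\MC_{\hat\iota}={}^{L}\mathbf{G}$ and then to adjoint $\mathbf{G}$. It writes $\mathbf{G}$ as a pure inner twist of its quasi-split form. It extends the Kottwitz character $\eta^{-1}$ of the finite group $Z({}^{L}\mathbf{G})$ to the finite group $Z(({}^{L}\mathbf{G})_{\hat\iota})$. It then twists the rational embedding of Proposition \ref{dual-embedding-qs}, using the $H^{1}$-compatibility proved there. You replace all of this by Kottwitz's transfer of elliptic maximal tori to inner forms, applied in $\mathbf{M}^{*}$ to an elliptic maximal torus of $\mathbf{G}^{*}_{\iota^{*}}$. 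Underneath it is the same mechanism: characters of a finite group extend from a subgroup, read through Kottwitz duality. You simply use it as a black box.

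\emph{Trade-offs.} Your route avoids the passage to $\mathbf{G}_{\rm ad}$. It also gives, for every rational $\iota$ at once, an explicit inner twisting $\mathbf{G}_{\iota}\to\mathbf{G}^{*}_{\iota^{*}}\simeq\mathbf{G}_{\{\hat\iota\}}$ compatible with a fixed inner twist of $\mathbf{G}$. The paper's route is self-contained given Kottwitz duality and Proposition \ref{dual-embedding-qs}. It also makes visible the map on $H^{1}$ that underlies the classification in Lemma \ref{dual-embedding-classif}.

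\emph{Two small points.} In ($\Rightarrow$) you should say explicitly that $\psi|_{\mathbf{M}}$ is an inner twist. This is your final cocycle computation, since $u_{\sigma}\in\mathbf{G}^{*}_{\iota^{*}}(\ov F)\subseteq\mathbf{M}^{*}(\ov F)$. And you do not need \cite{Raghunathan}. What you need is the existence of elliptic maximal tori in $\mathbf{G}^{*}_{\iota^{*}}$ over a non-archimedean local field, plus Kottwitz's transfer statement.
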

\begin{proof}
Assume first that $\{\hat\iota\}$ is relevant and let $\iota:\, \mathbf{S}_{\{\hat\iota\}}\injo
{\mathbf{G}}$ be an $F$-rational dual embedding.
 Choose a maximal $F$-torus ${\mathbf{T}}$ of ${\mathbf{G}}_{\iota}$, and a dual embedding
$\hat\jmath:\,\hat{\mathbf{T}}\injo \hat{\mathbf{G}}$ that extends $\hat\iota$. 
Its stabilizer $({^{L}{\mathbf{G}}})_{\hat\jmath}$ in
$^{L}{\mathbf{G}}$ is contained in $({^{L}{\mathbf{G}}})_{\hat \iota}$, hence 
 $Z((^{L}{\mathbf{G}})_{\hat\jmath})^{\circ}$ contains
 $Z((^{L}{\mathbf{G}})_{\hat\iota})^{\circ}$ and therefore the Levi subgroup
 $\MC_{\hat\jmath}:=C_{^{L}{\mathbf{G}}}(Z((^{L}{\mathbf{G}})_{\hat\jmath})^{\circ})$ of
 $^{L}{\mathbf{G}}$ is contained in $\MC_{\hat\iota}$. Since
any Levi subgroup of $^{L}{\mathbf{G}}$ that contains a relevant Levi subgroup is relevant,
it suffices to show that $\MC_{\hat\jmath}$ is relevant.
Now observe that, by definition, $(^{L}{\mathbf{G}})_{\hat\jmath}$ is an extension of $W_{F}$ by
$\hat{\mathbf{T}}$ such that the action of $W_{F}$ on $\hat{\mathbf{T}}$ induced by conjugation is
the one inherited from the $F$-structure on ${\mathbf{T}}$. In particular we have
 $Z((^{L}{\mathbf{G}})_{\hat\jmath})^{\circ}
=\hat\jmath(\hat{\mathbf{T}}^{W_{F},\circ})$, and we see that
$$\Sigma(\hat{\mathbf{T}}, \MC_{\hat\jmath}^{\circ})=\left\{\alpha^{\vee} \in
\Sigma(\hat{\mathbf{T}},\hat{\mathbf{G}}), 
\langle\alpha^{\vee},X_{*}(\hat{\mathbf{T}})^{W_{F}}\rangle=0\right\}.$$
We claim that  for $\alpha\in\Sigma({\mathbf{T}},{\mathbf{G}})$ we have 
$\langle\alpha^{\vee},X_{*}(\hat{\mathbf{T}})^{W_{F}}\rangle=0 \Leftrightarrow
\langle\alpha,X_{*}({\mathbf{T}})^{W_{F}}\rangle=0$. Indeed, let $W_{F,\alpha}$ be the
finite subgroup of $\Aut_{\QM}(X_{*}(\hat{\mathbf{T}})_{\QM})$ generated by the image of
$W_{F}$ and the reflection $s_{\alpha}$. Then
$\langle\alpha^{\vee},X_{*}(\hat{\mathbf{T}})^{W_{F}}\rangle=0 \Leftrightarrow
\dim_{\QM}(X_{*}(\hat{\mathbf{T}})_{\QM}^{W_{F}})=
\dim_{\QM}(X_{*}(\hat{\mathbf{T}})_{\QM}^{W_{F,\alpha}})$, which by duality is equivalent to 
$\dim_{\QM}(X_{*}({\mathbf{T}})_{\QM}^{W_{F}})= \dim_{\QM}(X_{*}({\mathbf{T}})_{\QM}^{W_{F,\alpha}})$
hence to $\langle\alpha,X_{*}({\mathbf{T}})^{W_{F}}\rangle=0$.
Now, denoting by ${\mathbf{T}}^{\rm split}$
the maximal split subtorus of ${\mathbf{T}}$, we obtain 
$\Sigma(\hat{\mathbf{T}},\MC_{\hat\jmath}^{\circ})=\{\alpha\in
\Sigma({\mathbf{T}},{\mathbf{G}}),\,\alpha|_{{\mathbf{T}}^{\rm split}}\equiv 1\}^{\vee}$. 
It follows that $\MC_{\hat\jmath}$ is
dual to the $F$-Levi subgroup $C_{\mathbf{G}}({\mathbf{T}}^{\rm split})$ of ${\mathbf{G}}$
and is therefore relevant.

Conversely, assume now that $\MC_{\hat\iota}$ is relevant.
After replacing $\hat\iota$ by a conjugate, we may assume that $\MC_{\hat\iota}$ is
a standard Levi subgroup of $^{L}{\mathbf{G}}$, and in particular of the
form $\hat{\mathbf{M}}_{\hat\iota}\rtimes W_{F}$ for some $W_{F}$-stable Levi subgroup
$\hat{\mathbf{M}}_{\hat\iota}$ of $\hat{\mathbf{G}}$. Since $\MC_{\hat\iota}$ is relevant to ${\mathbf{G}}$,
$\hat{\mathbf{M}}_{\hat\iota}\rtimes W_{F}$ is the $L$-group of some 
$F$-Levi subgroup ${\mathbf{M}}_{\hat\iota}$ of ${\mathbf{G}}$.
On the other hand, $\hat\iota$ factors through
$\hat{\mathbf{M}}_{\hat\iota}$  and provides a Levi-center-embedding for this group. Since
$(^{L}{\mathbf{G}})_{\hat\iota}$ is contained in $\MC_{\hat\iota}$, the stabilizer
$(^{L}{\mathbf{M}}_{\hat\iota})_{\hat\iota}=(^{L}{\mathbf{G}})_{\hat\iota}$ surjects to
$W_{F}$ so that the
$\hat{\mathbf{M}}_{\hat\iota}$-conjugacy class of $\hat\iota$ is $W_{F}$-stable. So we are
now left to show that $\hat\iota$ is relevant for ${\mathbf{M}}_{\hat\iota}$.
Equivalently, we may and will restrict to the case where $\MC_{\hat\iota}={^{L}{\mathbf{G}}}$, that is
$Z(({^{L}{\mathbf{G}}})_{\hat\iota})^{\circ}=Z({^{L}{\mathbf{G}}})^{\circ}$.

  We will now  reduce further to the case where ${\mathbf{G}}$ is an adjoint group. To this
  aim, denote by $\pi: {\mathbf{G}}\To{}{\mathbf{G}}_{\rm ad}$ the adjoint quotient map
  (defined over $F$) and by
  $\hat\pi: \widehat{{\mathbf{G}}_{\rm ad}}=\hat{\mathbf{G}}_{\rm sc}\To{}\hat{\mathbf{G}}$ its
  dual ($W_{F}$-equivariant) map. Consider the connected fiber product $\hat{\mathbf{S}}_{\rm ad}:=
  (\hat{\mathbf{S}}\times_{\hat{\mathbf{G}}}\widehat{{\mathbf{G}}_{\rm ad}})^{\circ}$. This is a
  torus with finite $W_{F}$-action and the second projection  
  $\hat\iota_{\rm ad}:\, \hat{\mathbf{S}}_{\rm ad} \To{}\widehat{{\mathbf{G}}_{\rm ad}}$ is a
  Levi-center-embedding whose stabilizer $(\widehat{\mathbf{G}_{\rm ad}})_{\hat\iota_{\rm ad}}$
  is the inverse image $\hat\pi^{-1}(\hat{\mathbf{G}}_{\hat\iota})$ of that of $\hat\iota$. 
Moreover, if we  write an element $\hat g\in \hat{\mathbf{G}}$ in the form 
$\hat g= \hat z \hat\pi(\hat h)$ according to the decomposition
 $\hat{\mathbf{G}}=Z(\hat{\mathbf{G}})\hat\pi(\hat{\mathbf{G}}_{\rm sc})$, then we see that
 $({^{\hat g}\hat\iota})_{\rm ad}={^{\hat h}(\hat\iota_{\rm ad})}$. It follows  that $\{\hat\iota\}$
 determines a $\widehat{{\mathbf{G}}_{\rm ad}}$-conjugacy class $\{\hat\iota_{\rm ad}\}$. 
Since $\hat\pi$ is $W_{F}$-equivariant, $\{\hat\iota_{\rm ad}\}$ is $W_{F}$-stable, and
its stabilizer $(^{L}{\mathbf{G}}_{\rm ad})_{\hat\iota_{\rm ad}}$ is the preimage of
$(^{L}{\mathbf{G}})_{\hat\iota}$ along $\hat\pi\rtimes\id_{W_{F}}$. Also $\hat\pi$ induces a
$W_{F}$-equivariant morphism $\hat{\mathbf{S}}_{\{\hat\iota_{\rm ad}\}}\To{}
\hat{\mathbf{S}}_{\{\hat\iota\}}$ which, dually, induces an $F$-morphism
$\mathbf{S}_{\{\hat\iota\}}\To{}\mathbf{S}_{\{\hat\iota_{\rm ad}\}}$.
Now we claim that 
\begin{center}
  \emph{$\{\hat\iota\}$ is relevant to ${\mathbf{G}}$ if and only if $\{\hat\iota_{\rm
      ad}\}$ is relevant to ${\mathbf{G}}_{\rm ad}$.}
\end{center}
Indeed, suppose there is an $F$-rational Levi-center-embedding $\iota_{\rm ad}:\,
\mathbf{S}_{\{\hat\iota_{\rm ad}\}}\injo {\mathbf{G}}_{\rm ad}$ in
${\mathbf{G}}_{\rm ad}$ dual to $\{\hat\iota_{\rm ad}\}$.  
Then consider the torus
$\mathbf{S}:=(\mathbf{S}_{\{\hat\iota_{\rm ad}\}}\times_{{\mathbf{G}}_{\rm ad}}{\mathbf{G}})^{\circ}$. The
second projection provides an $F$-rational Levi-center  embedding $\iota:\,
\mathbf{S}\injo{\mathbf{G}}$ and we need to prove it is dual to $\{\hat\iota\}$. 
This is a problem over $\bar F$ and we need to go through the duality procedure of \ref{dual_embeddings}.  
So let us choose a maximal torus $\hat{\mathbf{T}}$ in
$\hat{\mathbf{G}}_{\hat\iota}$ with dual $\textbf{T}$ over $\bar F$. 
It provides  a maximal torus  $\widehat{{\mathbf{T}}_{\rm ad}}=\hat\pi^{-1}(\hat{\mathbf{T}})$
in $(\widehat{{\mathbf{G}}_{\rm ad}})_{\hat\iota_{\rm ad}}$ whose dual over $\overline F$ we denote by
$\textbf{T}_{\rm ad}$. Also $\hat\pi$ provides a dual morphism
${\mathbf{T}}\To{\pi}{\mathbf{T}}_{\rm ad}$.
Now choose an embedding 
$j:\,{\mathbf{T}}\injo {\mathbf{G}}_{\ov F}$ dual to $\hat{\mathbf{T}}\subset
\hat{\mathbf{G}}$ that factors through $({\mathbf{G}}_{\iota})_{\ov F}$. Then $\pi\circ
j$ factors over an embedding
$j_{\rm ad}:\,{\mathbf{T}}_{\rm ad}\injo ({\mathbf{G}}_{\rm ad})_{\ov F}$ dual to 
$\widehat{{\mathbf{T}}_{\rm ad}}\subset\widehat{{\mathbf{G}}_{\rm ad}}$ and
that factors through
$(({\mathbf{G}}_{\rm ad})_{\iota_{\rm ad}})_{\ov F}$.
As in \ref{dual_embeddings}, the embedding $\iota_{\rm ad}$ identifies 
$(\mathbf{S}_{\{\hat\iota_{\rm ad}\}})_{\ov F}$ with the subtorus 
$\left(\bigcap_{\alpha\in\Sigma_{\hat\iota_{\rm ad}}}\ker({\alpha^{\vee}})\right)^{\circ}$
of ${\mathbf{T}}_{\rm ad}$  involving the subroot system $\Sigma_{\hat\iota_{\rm ad}}$ of 
$\Sigma(\widehat{{\mathbf{T}}_{\rm ad}},\widehat{{\mathbf{G}}_{\rm ad}})$. 
This subroot system coincides wit $\Sigma_{\hat\iota}$ through the canonical identification
$\Sigma(\widehat{{\mathbf{T}}_{\rm ad}},\widehat{{\mathbf{G}}_{\rm
    ad}})=\Sigma(\hat{\mathbf{T}},\hat{\mathbf{G}})$. Now our definition of $\mathbf{S}$ and
$\iota$ show that $\iota$ identifies
$\mathbf{S}_{\ov F}$ with the subtorus 
$\left(\bigcap_{\alpha\in\Sigma_{\hat\iota}}\ker({\alpha^{\vee}})\right)^{\circ}$ of
${\mathbf{T}}$, hence $\iota$ is dual to $\{\hat\iota\}$ as desired.
The other implication is seen in a similar way but we omit the proof
since we do not need it here.

So we are now left to prove that if ${\mathbf{G}}$ is an adjoint
group and $Z(({^{L}{\mathbf{G}})_{\hat\iota}})^{\circ}=\{1\}$, 
then $\{\hat\iota\}$ is relevant.
Since ${\mathbf{G}}$ is adjoint, there is $\eta\in H^{1}(\Gamma_{F},{\mathbf{G}})$ such that
the associated pure inner form
${\mathbf{G}}_{\eta}$ over $F$ is quasi-split. Then $\eta^{-1}\in
H^{1}(\Gamma_{F},{\mathbf{G}}_{\Heis})$ and we have
$({\mathbf{G}}_{\eta})_{\eta^{-1}}= {\mathbf{G}}$.
 Through Kottwitz' duality
we can view $\eta^{-1}$ as a character of the finite group $Z({^{L}{\mathbf{G}}})$. Since 
$Z(({^{L}{\mathbf{G}})_{\hat\iota}})^{\circ}=\{1\}$ we may extend $\eta^{-1}$ to a character
of the finite group  $Z(({^{L}{\mathbf{G}})_{\hat\iota}})$ that we denote by $\zeta^{-1}$. 
Going through Kottwitz' duality again, we get a cohomology class $\zeta^{-1}\in
H^{1}(F,{\mathbf{G}}_{\{\hat\iota\}})$. Now by Proposition \ref{dual-embedding-qs}
there is an $F$-rational Levi-center-embedding $\iota: \mathbf{S}_{\{\hat\iota\}}\injo
{\mathbf{G}}_{\eta}$ with a natural $F$-rational isomorphism ${\mathbf{G}}_{\eta,\iota}\simeq
{\mathbf{G}}_{\{\hat\iota\}}$. Let us choose a $1$-cocycle $\zeta^{-1} :
\Gamma_{F}\To{}{\mathbf{G}}_{\eta,\iota}$ that represents the cohomology class
$\zeta^{-1}$. Then $\iota$ is still  $F$-rational for the $F$-structure of
${\mathbf{G}}_{\eta}$ twisted by $\zeta^{-1}$, \emph{i.e.}, $\iota$ is an $F$-rational
Levi-center-embedding $\mathbf{S}_{\{\hat\iota\}}\injo
({\mathbf{G}}_{\eta})_{\zeta^{-1}}$. However, we know by 
Proposition \ref{dual-embedding-qs} that the map 
$H^{1}(F,{\mathbf{G}}_{\eta,\iota})\To{} H^{1}(F,{\mathbf{G}}_{\eta})$ is induced by the
inclusion $Z({^{L}{\mathbf{G}}})\subset Z(({^{L}{\mathbf{G}}})_{\hat\iota})$
through Kottwitz' duality. Therefore we have $\zeta^{-1}=\eta^{-1}$ in
$H^{1}(F,{\mathbf{G}}_{\eta})$, so that $({\mathbf{G}}_{\eta})_{\zeta^{-1}}\simeq {\mathbf{G}}$
 and $\iota$ finally provides the desired $F$-rational
Levi-center-embedding into ${\mathbf{G}}$.

We now turn to the second assertion of the proposition. Our argument has provided one $\iota$
with centralizer ${\mathbf{G}}_{\iota}$ an inner form of ${\mathbf{G}}_{\{\hat\iota\}}$. The
fact that this property remains true for all $F$-rational embeddings dual to
$\{\hat\iota\}$ follows from the discussion above Lemma~\ref{dual-embedding-classif} below. 
\end{proof}

Now that we have studied the existence of $F$-rational dual Levi-center-embeddings, we may
try to classify all of them. Obviously ${\mathbf{G}}(F)$ acts by conjugation on these
$F$-rational embeddings.
So, let us fix one of them, $\iota$, and 
let  $\iota'$ be another one. Then pick some $g\in {\mathbf{G}}(\ov F)$ such that $\iota' = {\rm
  Ad}_{g}\circ\iota$. Then for any $\gamma\in \Gamma_{F}$ we also have 
$\iota'={^{\gamma}\iota'} = {\rm Ad}_{\gamma(g)}\circ{^{\gamma}\iota}= {\rm
  Ad}_{\gamma(g)}\circ{\iota}$, so that $g^{-1}\gamma(g)\in {\mathbf{G}}_{\iota}(\ov F)$. 
We then see that 
\begin{itemize}
\item $(\gamma\mapsto g^{-1}\gamma(g)) \in Z^{1}(F,{\mathbf{G}}_{\iota})$ and its image
$\eta_{\iota,\iota'}$  in $H^{1}(F,{\mathbf{G}}_{\iota})$ is independent of the choice of $g$.
\item ${\rm Ad}_{g}$ is an inner twisting $({\mathbf{G}}_{\iota})_{\ov F}\simto
  ({\mathbf{G}}_{\iota'})_{\ov F}$ with associated inner cocycle  $\gamma\mapsto g^{-1}\gamma(g)$. 
\end{itemize}

\begin{lemme} \label{dual-embedding-classif}
The map $\iota'\mapsto \eta_{\iota,\iota'}$ induces a bijection 
between the set of ${\mathbf{G}}(F)$-conjugacy classes of $F$-rational embeddings in
$\{\iota\}$ and  $\ker(H^{1}(F,{\mathbf{G}}_{\iota})\To{}H^{1}(F,{\mathbf{G}})).$
\end{lemme}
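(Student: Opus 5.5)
This is the standard orbit–stabilizer argument in nonabelian Galois cohomology, applied to the $\mathbf{G}(\ov F)$-homogeneous set $\{\iota\}$ with stabilizer $\mathbf{G}_{\iota}$. The set $\{\iota\}$ of embeddings $\mathbf{S}_{\{\hat\iota\}}\injo\mathbf{G}_{\ov F}$ identifies, via $g\mapsto {\rm Ad}_{g}\circ\iota$, with $\mathbf{G}(\ov F)/\mathbf{G}_{\iota}(\ov F)$. This identification is $\Gamma_{F}$-equivariant because $\iota$ is $F$-rational. The $F$-rational embeddings in $\{\iota\}$ are then the $\Gamma_{F}$-fixed points of this quotient. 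The lemma becomes the classical exact sequence of pointed sets
$$ \mathbf{G}_{\iota}(F)\to \mathbf{G}(F)\to (\mathbf{G}(\ov F)/\mathbf{G}_{\iota}(\ov F))^{\Gamma_{F}}\To{\delta} H^{1}(F,\mathbf{G}_{\iota})\to H^{1}(F,\mathbf{G}),$$
together with the refinement that the fibers of $\delta$ are exactly the $\mathbf{G}(F)$-orbits (Serre, \emph{Cohomologie galoisienne}, I.5.4, Cor.~1). I will nevertheless check the steps directly, since they are short.

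\emph{Step 1: well-definedness and invariance.} For an $F$-rational $\iota'={\rm Ad}_{g}\circ\iota$, the discussion before the lemma shows that $c_{g}:\gamma\mapsto g^{-1}\gamma(g)$ is a cocycle in $\mathbf{G}_{\iota}$. Replacing $g$ by $gh$ with $h\in\mathbf{G}_{\iota}(\ov F)$ changes $c_{g}$ to the cohomologous cocycle $h^{-1}c_{g}(\gamma)\gamma(h)$. Replacing $\iota'$ by ${\rm Ad}_{x}\circ\iota'$ with $x\in\mathbf{G}(F)$ amounts to replacing $g$ by $xg$, and $c_{xg}=c_{g}$ because $\gamma(x)=x$. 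So $\eta_{\iota,\iota'}$ depends only on the $\mathbf{G}(F)$-class of $\iota'$. Moreover $c_{g}$ becomes the coboundary of $g$ in $\mathbf{G}$, so $\eta_{\iota,\iota'}$ lies in the kernel of $H^{1}(F,\mathbf{G}_{\iota})\to H^{1}(F,\mathbf{G})$.

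\emph{Step 2: surjectivity.} Take a class in the kernel, represented by a cocycle $c$ with values in $\mathbf{G}_{\iota}(\ov F)$. Since it is trivial in $H^{1}(F,\mathbf{G})$, there is $g\in\mathbf{G}(\ov F)$ with $c(\gamma)=g^{-1}\gamma(g)$ for all $\gamma$. Set $\iota':={\rm Ad}_{g}\circ\iota$. Then
$${}^{\gamma}\iota'={\rm Ad}_{\gamma(g)}\circ\iota={\rm Ad}_{g}\circ{\rm Ad}_{c(\gamma)}\circ\iota=\iota',$$
since $c(\gamma)$ centralizes $\iota(\mathbf{S}_{\{\hat\iota\}})$. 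Hence $\iota'$ is $F$-rational, and by construction $\eta_{\iota,\iota'}$ is the class of $c$.

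\emph{Step 3: injectivity.} This is the only point needing a little care. Suppose $\iota'={\rm Ad}_{g}\circ\iota$ and $\iota''={\rm Ad}_{g'}\circ\iota$ give the same class. Then $g'^{-1}\gamma(g')=h^{-1}g^{-1}\gamma(g)\gamma(h)$ for some $h\in\mathbf{G}_{\iota}(\ov F)$. Replacing $g$ by $gh$, which does not change $\iota'$, we may assume the two cocycles are equal. Then $x:=g'g^{-1}$ satisfies $\gamma(x)=x$ for all $\gamma$, so $x\in\mathbf{G}(F)$, and $\iota''={\rm Ad}_{x}\circ\iota'$. Hence $\iota''$ and $\iota'$ are $\mathbf{G}(F)$-conjugate.

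Continuity of cocycles causes no issue, since everything is computed with $\ov F$-points of algebraic groups and $\Gamma_{F}$ acts continuously.
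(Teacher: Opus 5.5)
Your proof is correct and follows the paper's approach. Like the paper, you check that $\eta_{\iota,\iota'}$ is well defined on $\mathbf{G}(F)$-classes and lies in the kernel, then obtain the inverse by writing a kernel class as $\gamma\mapsto g^{-1}\gamma(g)$ and setting $\iota'=\mathrm{Ad}_{g}\circ\iota$. Your Step 3 just spells out the paper's one-line claim that any other such $g$ has the form $hgk$ with $h\in\mathbf{G}(F)$ and $k\in\mathbf{G}_{\iota}(\ov F)$.
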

\begin{proof}
  Indeed, it is easily seen that $\eta_{\iota,\iota'}$ only depends on the ${\mathbf{G}}(F)$-conjugacy class of $\iota'$, and by construction it lies in the above kernel.
  Conversely, let $\eta$ belong to this kernel. Then it can be represented by a
  $1$-cocycle of the form $\gamma\mapsto g^{-1}\gamma(g)$ for some $g\in {\mathbf{G}}(\o
  F)$, and the embedding $\iota'={\rm Ad}_{g}\circ\iota$ is thus $F$-rational. This
  element $g$ is not unique, but any other one is of the form $hgk$ with $h\in
  {\mathbf{G}}(F)$ and $k\in \mathbf{{\mathbf{G}}_{\iota}}(\ov F)$ and thus leads to a
  ${\mathbf{G}}(F)$-conjugate rational embedding. We thus have constructed the inverse map.
\end{proof}

\subsection{Levi factorization of a parameter}\label{sec:levi-fact-param}

We start with a continuous $L$-homorphism $\phi:\, P_{F}\To{}{^{L}{\mathbf{G}}}$ that admits an extension to $W_{F}$.

\alin{The group $\mathbf{L}_{\phi}$}  The centralizer
$\hat{\mathbf{L}}_{\phi}:=C_{\hat{\mathbf{G}}}(Z(C_{\hat{\mathbf{G}}}(\phi))^{\circ})$\index[notation]{Lphihat@$\hat{\mathbf{L}}_{\phi}$} of the connected center
$Z(C_{\hat{\mathbf{G}}}(\phi))^{\circ}$ of $C_{\hat{\mathbf{G}}}(\phi)$ is a Levi subgroup
of $\hat{\mathbf{G}}$ which contains $C_{\hat{\mathbf{G}}}(\phi)$. 
If $\varphi:\,W_{F}\To{}{^{L}{\mathbf{G}}}$ extends $\phi$, then the conjugation
action  ${\rm Ad}_{\varphi}$ of $W_{F}$ on $C_{\hat{\mathbf{G}}}(\phi)$ preserves its
connected center and therefore also $\hat{\mathbf{L}}_{\phi}$. Since for any other extensions
$\varphi'$ the ratio $\varphi^{-1}\varphi'$ takes values in
$C_{\hat{\mathbf{G}}}(\phi)\rtimes\{1\}$,
the outer action $W_{F}\To{{\rm Ad}_{\varphi}}{\rm Out}(\hat{\mathbf{L}}_{\phi})$
is independent of the choice of $\varphi$. 
We know from \cite[Lemma 2.1.1]{Datfuncto} that this action is finite. Hence we may  denote by
$\mathbf{L}_{\phi}$\index[notation]{Lphi@${\mathbf{L}}_{\phi}$} a quasi-split  group over $F$ endowed with a
$W_{F}$-equivariant isomorphism $\psi_{0}(\mathbf{L}_{\phi})\simto \psi_{0}(\hat{\mathbf{L}}_{\phi})^{\vee}$.
Note that $\psi_{0}(\hat{\mathbf{L}}_{\phi})$ only depends on the $\hat{\mathbf{G}}$-conjugacy class
of $\phi$
in the sense that if $\phi'$ is conjugate to $\phi$,  there is a canonical isomorphism
$\psi_{0}(\hat{\mathbf{L}}_{\phi})\simto\psi_{0}(\hat{\mathbf{L}}_{\phi'})$ given by any $\hat
g$ that conjugates $\phi$ to $\phi'$. Note also that the inclusion
$Z(\hat{\mathbf{L}}_{\phi})^{W_{F}}\subset Z(\hat{\mathbf{G}})^{W_{F}}$ induces by Kottwitz'
duality a map $H^{1}(F,\mathbf{L}_{\phi})\To{}H^{1}(F,{\mathbf{G}})$.
We put 
$$H^{1}(F,\mathbf{L}_{\phi},{\mathbf{G}}):=\ker(H^{1}(F,\mathbf{L}_{\phi})\To{}H^{1}(F,{\mathbf{G}})).$$

\alin{The group $\LC_{\phi}$ and the $L$-group of $\mathbf{L}_{\phi}$}
Consider the subgroup $\LC_{\phi}:=\hat{\mathbf{L}}_{\phi}\cdot\varphi(W_{F})$\index[notation]{Lphical@$\cL_{\phi}$} of 
${^{L}{\mathbf{G}}}$. As the notation suggests, it is independent of the
choice of a parameter
$\varphi$ extending $\phi$. It sits in a split exact sequence $\hat{\mathbf{L}}_{\phi}\injo
\LC_{\phi}\twoheadrightarrow W_{F}$ and we may ask whether it is isomorphic to $^{L}\mathbf{L}_{\phi}$.
To this aim, fix a pinning $\varepsilon_\phi$ of $\hat{\mathbf{L}}_{\phi}$ and consider the
stabilizer $\LC_{\phi,\varepsilon_\phi}$\index[notation]{Lphicaleps@$\cL_{\phi, \varepsilon_{\phi}}$} of $\varepsilon_\phi$ in $\LC_{\phi}$. It sits in an exact
sequence $Z(\hat{\mathbf{L}}_{\phi})\injo\LC_{\phi,\varepsilon_\phi}\twoheadrightarrow W_{F}$.

\begin{lem} \label{splittings}
  The extension  $Z(\hat{\mathbf{L}}_{\phi})\injo\LC_{\phi,\varepsilon_\phi}\twoheadrightarrow
  W_{F}$ splits continuously, and the set of its splittings
  $W_{F}\To{}\LC_{\phi,\varepsilon_{\phi}}$ is principal homogeneous under $Z^{1}(W_{F},Z(\hat{\mathbf{L}}_{\phi}))$.
\end{lem}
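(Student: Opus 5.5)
The principal homogeneous structure is formal; the content is the existence of a continuous splitting, which I will get from a continuous section of $W_F\to{\rm Out}(\hat{\mathbf{L}}_{\phi})$ that factors through a finite quotient.

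\emph{Torsor structure.} Given two splittings $s,s'$, the quotient $s'(w)s(w)^{-1}$ lies in $Z(\hat{\mathbf{L}}_{\phi})$, which is central in $\hat{\mathbf{L}}_{\phi}$. Since $\LC_{\phi,\varepsilon_\phi}$ acts on it by conjugation through $W_F$, the map $w\mapsto s'(w)s(w)^{-1}$ is a continuous $1$-cocycle for this action. Conversely, $z\cdot s$ is a splitting for any $z\in Z^{1}(W_{F},Z(\hat{\mathbf{L}}_{\phi}))$. So the splittings form a principal homogeneous space as soon as one exists.

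\emph{Surjectivity and identification with $W_F$-action.} First, $\LC_{\phi,\varepsilon_\phi}\to W_F$ is surjective. For $w\in W_F$, take $\varphi(w)\in\LC_\phi$. It normalizes $\hat{\mathbf{L}}_\phi$ and sends $\varepsilon_\phi$ to another pinning, and $\hat{\mathbf{L}}_{\phi,\rm ad}$ acts simply transitively on pinnings. Lifting the corresponding element to $\hat{\mathbf{L}}_\phi$, some $l\varphi(w)$ fixes $\varepsilon_\phi$. The kernel is the stabilizer of the pinning in $\hat{\mathbf{L}}_\phi$, namely $Z(\hat{\mathbf{L}}_\phi)$. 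Moreover, $\LC_{\phi,\varepsilon_\phi}/Z(\hat{\mathbf{L}}_\phi)\simeq W_F$ embeds into ${\rm Aut}(\hat{\mathbf{L}}_\phi,\varepsilon_\phi)\simeq{\rm Aut}(\psi_0(\hat{\mathbf{L}}_\phi))$ composed with the outer action. By \cite[Lemma 2.1.1]{Datfuncto} this action factors through a finite quotient $W_F/W_E$, with $E/F$ finite Galois.

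\emph{Construction of the splitting.} Set $\varphi_0(w):=\ell(w)\varphi(w)$ with $\ell(w)\in\hat{\mathbf{L}}_\phi$ chosen so that $\varphi_0(w)$ fixes $\varepsilon_\phi$. I would try the following. Choose $\varphi$ extending $\phi$ continuous. Then $\hat{\mathbf{L}}_\phi$-valued corrections can be chosen continuously, because $w\mapsto{\rm Ad}_{\varphi(w)}(\varepsilon_\phi)$ is continuous into the variety of pinnings. That variety is an $\hat{\mathbf{L}}_{\phi,\rm ad}$-torsor, so the correction is continuous into $\hat{\mathbf{L}}_{\phi,\rm ad}$ and lifts locally. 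This gives a continuous set-theoretic section at least on a neighbourhood.

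To get a homomorphism, restrict to $W_E$. There $\varphi_0$ acts trivially on $\hat{\mathbf{L}}_\phi$ modulo the pinning, so its image lies in $Z(\hat{\mathbf{L}}_\phi)\cdot\varphi_0(W_E)$. This yields a central extension of $W_E$ by the diagonalizable group $Z(\hat{\mathbf{L}}_\phi)$, and the obstruction to splitting is a class in $H^2(W_F,Z(\hat{\mathbf{L}}_\phi))$. I would kill it by a standard argument. The extension is pulled back from an extension of $W_F$ by the abelian group $\varphi_0(W_F)\cap Z(\hat{\mathbf{L}}_{\phi})$. For a complex diagonalizable group with finite action, the continuous $H^2$ of $W_F$ vanishes, as in Langlands/Labesse's argument for lifting projective parameters (Tate's theorem $H^2(W_F,\CM^\times)=0$ and its extension to tori with finite action, plus the finite component group handled by inflation from a finite quotient where the splitting is forced by restriction-corestriction).

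\emph{Expected difficulty.} This vanishing-of-$H^2$ step is the main obstacle, especially when $Z(\hat{\mathbf{L}}_\phi)$ is disconnected. I would first reduce to the identity component, using that the component group is finite and $W_F$ acts through a finite quotient, then invoke Tate/Labesse (e.g. \cite[Lemma]{BorelCorvallis}-type lifting results). Continuity of the resulting splitting follows because it differs from the continuous $\varphi_0$ by a continuous cochain.
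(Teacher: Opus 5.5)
Your torsor argument and the surjectivity of $\LC_{\phi,\varepsilon_\phi}\to W_F$ are fine. So is the plan of killing a \emph{torus}-valued obstruction by Tate/Langlands-type vanishing. The paper does this with Langlands' Lemma~4 in \cite{Langlands_stable}: for a torus with finite $W_F$-action, the image of $H^2_{cts}(\Gamma_F,-)$ in $H^2(W_F,-)$ is trivial, and this applies because the extension is inflated from a finite quotient of $W_F$.

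The gap is the component group $\pi_0(Z(\hat{\mathbf L}_\phi))$, which is where the content of the lemma lies. Your claim that continuous $H^2(W_F,-)$ vanishes for a complex diagonalizable group with finite action is false once that group is disconnected. Already $H^2(W_F,\mathbb Z/2\mathbb Z)\cong H^2(\Gamma_F,\mu_2)\cong\mathrm{Br}(F)[2]\neq 0$.

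Restriction--corestriction does not repair this. Even if the class restricts to zero on $W_E$, you only learn that it is killed by $[E:F]$. That says nothing about its $\ell$-primary part when $\ell$ divides both $[E:F]$ and $|\pi_0(Z(\hat{\mathbf L}_\phi))|$. So general cohomological vanishing cannot split the extension $\pi_0(Z(\hat{\mathbf L}_\phi))\injo\LC_{\phi,\varepsilon_\phi}/Z(\hat{\mathbf L}_\phi)^\circ\twoheadrightarrow W_F$; you must use where it comes from.

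The missing idea, which the paper uses, is to exploit the ambient $L$-group. After conjugating, $\varepsilon_\phi$ is the restriction of a $W_F$-stable pinning $\varepsilon$ of $\hat{\mathbf G}$. Then $\LC_{\phi,\varepsilon_\phi}$ is a pullback of $Z(\hat{\mathbf L}_\phi)\injo N_{{}^L\mathbf G}(\hat{\mathbf L}_\phi)_{\varepsilon_\phi}\twoheadrightarrow(\Omega(\hat{\mathbf T},\hat{\mathbf G})\rtimes W_F)_{\Delta(\hat{\mathbf T},\hat{\mathbf B}_\phi)}$. Tits lifts with respect to $\varepsilon$ send this quotient into $N_{{}^L\mathbf G}(\hat{\mathbf L}_\phi)_{\varepsilon_\phi}$ (Springer). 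Kaletha's analysis in the proof of \cite[Lemma~5.2.6]{Kaletha} shows that they become a group homomorphism modulo $Z(\hat{\mathbf L}_\phi)^\circ$, and this splits the $\pi_0$-extension.

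Only after that are you left with an extension of $W_F$ by the torus $Z(\hat{\mathbf L}_\phi)^\circ$, still inflated from a finite quotient. The Langlands/Tate vanishing then applies as you intended. Your continuous local sections into the variety of pinnings are unnecessary, since everything factors through a finite quotient of $W_F$.
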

\begin{proof}
Only the existence of a splitting requires a proof, the second assertion being easy.
Recall first that, by \cite[Lemma 2.1.1]{Datfuncto},  the extension under consideration comes
from a finite quotient of $W_{F}$. By Langland's Lemma 4 in \cite{Langlands_stable}, the image of 
$H^{2}_{cts}(\Gamma_{F},Z(\hat{\mathbf{L}}_{\phi})^{\circ})\To{}
H^{2}(W_{F},Z(\hat{\mathbf{L}}_{\phi})^{\circ})$ is $\{1\}$. 
This reduces the problem to showing that the extension
$$\pi_{0}(Z(\hat{\mathbf{L}}_{\phi}))\injo\LC_{\phi,\varepsilon_\phi}/Z(\hat{\mathbf{L}}_{\phi})^{\circ}\twoheadrightarrow
W_{F}$$
splits. This in turn follows from the argument in Kaletha's Lemma 5.2.6 in \cite{Kaletha}. In order
to explain this,
observe that the short
 exact sequence of the lemma is a pull-back of the short exact sequence
$$
Z(\hat{\mathbf{L}}_{\phi})\injo\NC_{^{L}{\mathbf{G}}}(\hat{\mathbf{L}}_{\phi})_{\varepsilon_\phi}\twoheadrightarrow
\NC_{^{L}{\mathbf{G}}}(\hat{\mathbf{L}}_{\phi})_{\varepsilon_\phi}/Z(\hat{\mathbf{L}}_{\phi}),$$
where the index
$\varepsilon_{\phi}$ indicates the stabilizer of the pinning $\varepsilon_{\phi}$.
To make the latter  more explicit,
we may assume that the pinning
$\varepsilon_\phi=(\hat{\mathbf{T}},\hat{\mathbf{B}}_{\phi},\{X_{\hat\alpha}\}_{\hat\alpha\in\Delta(\hat{\mathbf{T}},\hat{\mathbf{B}}_{\phi})})$
 is the restriction of a $W_{F}$-stable pinning
$\varepsilon=(\hat{\mathbf{T}},\hat{\mathbf{B}},\{X_{\hat\alpha}\}_{\hat\alpha\in\Delta(\hat{\mathbf{T}},\hat{\mathbf{B}})})$
of $\hat{\mathbf{G}}$ (after conjugating $(\phi,\varepsilon_\phi)$ by some appropriate $\hat g\in\hat{\mathbf{G}}$).
Then the inclusion
$\NC_{^{L}{\mathbf{G}}}(\hat{\mathbf{L}}_{\phi})_{\varepsilon_\phi}
\subset
\NC_{^{L}{\mathbf{G}}}(\hat{\mathbf{L}}_{\phi},\hat{\mathbf{T}},\hat{\mathbf{B}}_{\phi})$
induces an isomorphism
$$\NC_{^{L}{\mathbf{G}}}(\hat{\mathbf{L}}_{\phi})_{\varepsilon_\phi}/Z(\hat{\mathbf{L}}_{\phi})
\simto
\NC_{^{L}{\mathbf{G}}}(\hat{\mathbf{L}}_{\phi},\hat{\mathbf{T}},\hat{\mathbf{B}}_{\phi})/\hat{\mathbf{T}}
=(\Omega(\hat{\mathbf{T}},\hat{\mathbf{G}})\rtimes W_{F})_{\Delta(\hat{\mathbf{T}},\hat{\mathbf{B}}_{\phi})}
$$
where the index $\Delta(\hat{\mathbf{T}},\hat{\mathbf{B}}_{\phi})$
denotes the stabilizer of this set of characters of $\hat{\mathbf{T}}$. 
Now, consider the set-theoretic section  $\Omega(\hat{\mathbf{T}},\hat{\mathbf{G}})\rtimes
W_{F}\To{} \NC_{^{L}{\mathbf{G}}}(\hat{\mathbf{T}})$
given by Tit's liftings with respect to the pinning $\varepsilon$.
By \cite[Prop 9.3.5]{SpringerLAG},  it restricts to a map 
$(\Omega(\hat{\mathbf{T}},\hat{\mathbf{G}})\rtimes W_{F})_{\Delta(\hat{\mathbf{T}},\hat{\mathbf{B}}_{\phi})}\To{}
\NC_{^{L}{\mathbf{G}}}(\hat{\mathbf{L}}_{\phi})_{\varepsilon_{\phi}}$. 
The latter may not be a homomorphism of groups, but the content of Kaletha's study of the Tits liftings in the proof of \cite[Lemma 5.2.6]{Kaletha} is
that the composed map
$$(\Omega(\hat{\mathbf{T}},\hat{\mathbf{G}})\rtimes
W_{F})_{\Delta(\hat{\mathbf{T}},\hat{\mathbf{B}}_{\phi})}\To{}
\NC_{^{L}{\mathbf{G}}}(\hat{\mathbf{L}}_{\phi})_{\varepsilon_{\phi}}/Z(\hat{\mathbf{L}}_{\phi})^{\circ}$$ 
is a homomorphism. This provides a splitting for the first displayed
exact sequence of this proof, as desired.
\end{proof}

Let $\psi :W_{F}\To{}\LC_{\phi,\varepsilon_{\phi}}$ be a
continuous splitting as in the lemma. We get an isomorphism of extensions  $\id\times\psi:\,
^{L}\mathbf{L}_{\phi}\simto \LC_{\phi}$, where the $L$-group is formed by using the
section ${\rm Out}(\hat{\mathbf{L}}_{\phi})\injo \Aut(\hat{\mathbf{L}}_{\phi})$ associated to
$\varepsilon_{\phi}$. Then $\varphi_{L}:=(\id\times\psi)^{-1}\circ \varphi$ is a Langlands
parameter for $\mathbf{L}_{\phi}$, whose restriction to $P_{F}$ we denote by
$\phi_{L}\in\Phi(P_{F},\mathbf{L}_{\phi})$. We thus get a factorization of $\phi$
$$ \phi\, : P_{F}\To{\phi_{L}} {^{L}\mathbf{L}_{\phi}}\To{\xi_{\psi}} {^{L}{\mathbf{G}}}$$
with $\xi_{\psi}$ the composition of $\id\times\psi$ and the inclusion
$\LC_{\phi}\subset{^{L}{\mathbf{G}}}$.
Then we see that  $\xi_{\psi}$ induces an isomorphism
$C_{\hat{\mathbf{L}}_{\phi}}(\phi_{L})\simto C_{\hat{\mathbf{G}}}(\phi)$, which makes it fall
into the framework of \cite[Expectation 1.3.2]{Datfuncto},
which predicts (at least when ${\mathbf{G}}$ is quasi-split) the existence of an equivalence of categories $\prod_{\eta\in
  H^{1}(F,\mathbf{L}_{\phi},{\mathbf{G}})} \Rep^{\phi_{L}}(L_{\phi,\eta})\simto \Rep^{\phi}(G)$
where
  $\mathbf{L}_{\phi,\eta}$ is the pure inner form of $\mathbf{L}_{\phi}$ associated to
$\eta$. 
Interestingly, this set $H^{1}(F,\mathbf{L}_{\phi},{\mathbf{G}})$ and the associated pure
inner forms of $\hat{\mathbf{L}}_{\phi}$ also appear when we try to go from
$\hat{\mathbf{L}}_{\phi}$ to  twisted Levi
subgroups  of ${\mathbf{G}}$.

\alin{Twisted Levi subgroups of ${\mathbf{G}}$}  \label{dual_embeddings_phi}
With the outer action map, also the action maps $W_{F}\To{{\rm Ad}_{\varphi}}{\rm
  Aut}(Z(\hat{\mathbf{L}}_{\phi})^{\circ})$ and $W_{F}\To{{\rm Ad}_{\varphi}}{\rm
  Aut}(\hat{\mathbf{L}}_{\phi,\rm ab})$ 
are independent of the choice of
$\varphi$.   Moreover, the existence of $\varphi$ tells us that the
$\hat{\mathbf{G}}$-conjugacy class of the embedding
$Z(\hat{\mathbf{L}}_{\phi})^{\circ}\subset\hat{\mathbf{G}}$ is $W_{F}$-stable. 
\begin{no}
  We denote by $\mathbf{S}_{\phi}$\index[notation]{Sphi@${\mathbf{S}}_{\phi}$} the $F$-torus dual to the complex torus
  $\hat{\mathbf{L}}_{\phi,\rm ab}$ with its $W_{F}$-action, and by
  $I_{\phi}$\index[notation]{Iphi@$I_\phi$} the
${\mathbf{G}}(\ov F)$-conjugacy class of Levi-center-embeddings $(\mathbf{S}_{\phi})_{\ov F}\injo {\mathbf{G}}_{\ov F}$
which is dual to the Levi-center-embedding $Z(\hat{\mathbf{L}}_{\phi})^{\circ}\subseteq\hat{\mathbf{G}}$
in the sense of \ref{dual_embeddings}.
\end{no}

Recall from \ref{sec:notation} that we say that $\phi$ is \emph{a wild inertia parameter} of $\mathbf{G}$
if it admits an extension $\varphi':W'_{F}\To{}{^{L}\mathbf{G}}$ that is \emph{relevant} to $\mathbf{G}$.

\begin{pro} With the above notation: 
\begin{enumerate}
\item If $\phi$ is a wild inertia parameter of $\mathbf{G}$, then  $I_{\phi}$ contains an
  $F$-rational embedding  $\iota$.  Moreover, the following holds:
  \begin{enumerate}
  \item The centralizer $\mathbf{G}_{\iota}$ of $\iota$ is an inner form of
    $\mathbf{L}_{\phi}$, and  if $\mathbf{G}$ is quasi-split, one can choose $\iota$ such that
    $\mathbf{G}_{\iota}$ is isomorphic to $\mathbf{L}_{\phi}$. 
  \item For any 
    $F$-rational $\iota'\in I_{\phi}$ there is $\eta_{\iota,\iota'}\in H^{1}(F,\mathbf{G}_{\iota}) $
    such that ${\mathbf{G}}_{\iota'}$ is isomorphic to the pure inner form
    $\mathbf{G}_{\iota,\eta_{\iota,\iota'}}$.
  \item The map $\iota'\mapsto \eta_{\iota,\iota'}$ induces a bijection between the set of
    $G$-conjugacy classes of $F$-rational embeddings in $I_{\phi}$ and the set
    $H^{1}(F,\mathbf{G}_{\iota},{\mathbf{G}})$. 
  \end{enumerate}
\item  Assume that $C_{\hat{\mathbf{G}}}(\phi)$ is connected and that
  there exists an extension $\varphi$ of $\phi$ that preserves a pinning of
  $C_{\hat{\mathbf{G}}}(\phi)$. If $I_{\phi}$ contains an
  $F$-rational embedding  $\iota$, then $\phi$ is a wild inertia parameter for $\mathbf{G}$.
\end{enumerate}
\end{pro}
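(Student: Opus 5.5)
The plan is to apply Propositions \ref{dual-embedding-qs} and \ref{dual_embedding_nqs} and Lemma \ref{dual-embedding-classif} to the Levi-center-embedding $\hat\iota:\,Z(\hat{\mathbf{L}}_{\phi})^{\circ}\subseteq\hat{\mathbf{G}}$. Its $\hat{\mathbf{G}}$-conjugacy class is $W_{F}$-stable, as noted above. Its stabilizer is $\hat{\mathbf{G}}_{\hat\iota}=\hat{\mathbf{L}}_{\phi}$ by definition of $\hat{\mathbf{L}}_{\phi}$ as a centralizer. Moreover $\mathbf{S}_{\{\hat\iota\}}$ is dual to $\hat{\mathbf{L}}_{\phi,\rm ab}$, so $\mathbf{S}_{\{\hat\iota\}}=\mathbf{S}_{\phi}$ and $\{\iota\}=I_{\phi}$. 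The stabilizer $(^{L}\mathbf{G})_{\hat\iota}$ contains $\LC_{\phi}$: for any extension $\varphi$, ${\rm Ad}_{\varphi}$ preserves $Z(C_{\hat{\mathbf{G}}}(\phi))^{\circ}$, hence this torus. Therefore the outer action defining $\mathbf{G}_{\{\hat\iota\}}$ is the one defining $\mathbf{L}_{\phi}$, so $\mathbf{G}_{\{\hat\iota\}}\simeq\mathbf{L}_{\phi}$. The map $H^{1}(F,\mathbf{G}_{\{\hat\iota\}})\to H^{1}(F,\mathbf{G})$ built before Proposition \ref{dual-embedding-qs} is then the Kottwitz map defining $H^{1}(F,\mathbf{L}_{\phi},\mathbf{G})$.

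For (1), the key step is relevance. Let $\varphi'$ be a relevant extension of $\phi$ to $W'_{F}$. Its image lies in a minimal Levi subgroup $\MC$ of $^{L}\mathbf{G}$ containing it, which is relevant. Since $\varphi'(\SL_2)$ and $\varphi'(W_F)$ centralize, respectively normalize, $\phi$, the image $\varphi'(W'_F)$ lies in the normalizer of $C_{\hat{\mathbf{G}}}(\phi)$. I want to show $\MC_{\hat\iota}\supseteq\MC$ up to conjugacy, i.e. that $Z((^{L}\mathbf{G})_{\hat\iota})^{\circ}$ is contained in the connected center of some Levi containing $\varphi'(W'_F)$. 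Equivalently, the torus $Z((^{L}\mathbf{G})_{\hat\iota})^{\circ}\subseteq Z(\hat{\mathbf{L}}_{\phi})^{W_F,\circ}$ commutes with $\varphi'(W'_F)$. This holds because $\varphi'(W_F)\subset\LC_{\phi}\subseteq(^{L}\mathbf{G})_{\hat\iota}$, and $\varphi'(\SL_2)\subseteq C_{\hat{\mathbf{G}}}(\phi)\subseteq\hat{\mathbf{L}}_{\phi}$ centralizes $Z(\hat{\mathbf{L}}_{\phi})$. Consequently $\varphi'$ factors through the Levi $\MC_{\hat\iota}$, and by Borel's criterion a Levi containing the image of a relevant parameter is relevant. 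Proposition \ref{dual_embedding_nqs} then gives a rational $\iota$ with $\mathbf{G}_{\iota}$ an inner form of $\mathbf{G}_{\{\hat\iota\}}=\mathbf{L}_{\phi}$. The quasi-split case of (a) is Proposition \ref{dual-embedding-qs}. Items (b) and (c) come from the cocycle discussion preceding Lemma \ref{dual-embedding-classif} and that lemma itself. One extra check is needed: Kottwitz' maps for $\mathbf{G}_{\iota}$ and $\mathbf{L}_{\phi}$ are compatible through the inner twisting, so the kernel equals $H^{1}(F,\mathbf{G}_{\iota},\mathbf{G})$. I would handle this as in the end of the proof of Proposition \ref{dual-embedding-qs}, reducing to the quasi-split pure inner form.

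For (2), connectedness gives $C_{\hat{\mathbf{G}}}(\phi)=\hat{\mathbf{L}}_{\phi}$, up to the fact that $C_{\hat{\mathbf{G}}}(\phi)$ is a Levi here. The hypothesis says $\varphi$ itself preserves a pinning, so $\varphi(W_F)$ acts through the $L$-action and $\varphi$ is essentially a splitting as in Lemma \ref{splittings}. I would then use the rational $\iota$ and a maximal elliptic-in-$\mathbf{G}_\iota$ torus: $\mathbf{G}_{\iota}$ always has Langlands parameters relevant to it, e.g. discrete ones with $\varphi'(\SL_2)$ principal in $\hat{\mathbf{L}}_{\phi}$. I would twist $\varphi$ by such a principal $\SL_2$, which commutes with $\varphi(W_F)$ because of pinning preservation. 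The resulting parameter $\varphi'$ has minimal Levi $\MC_{\hat\iota}$, which is relevant by Proposition \ref{dual_embedding_nqs}. Hence $\varphi'$ is relevant and restricts to $\phi$. The main obstacle is making this last relevance deduction precise: it requires that the principal $\SL_2$ commute with $\varphi(W_F)$, and this is exactly where the pinning hypothesis enters.
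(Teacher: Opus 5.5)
Your part (1) is the paper's argument. You show $\varphi'(W_F)\subset\LC_\phi\subseteq({}^L\mathbf G)_{\hat\iota}$ and $\varphi'(\SL_2)\subset C_{\hat{\mathbf G}}(\phi)\subseteq\hat{\mathbf L}_\phi$. Hence $\varphi'$ factors through $\MC_{\hat\iota}$, which is therefore relevant. The remaining claims follow from Propositions \ref{dual_embedding_nqs}, \ref{dual-embedding-qs} and Lemma \ref{dual-embedding-classif}; the ``up to conjugacy'' framing is not needed.

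For (2) you take a different and more elementary route.

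\emph{The paper's route.} It takes $\alpha$ principal in $C_{\hat{\mathbf G}}(\varphi)^\circ$, so commutation with $\varphi(W_F)$ is automatic. It then spends the pinning hypothesis on the auxiliary lemma $Z(\mathbf H^{\Gamma,\circ})^\circ=Z(\mathbf H)^{\Gamma,\circ}$, proved via Steinberg, to compute $C_{\hat{\mathbf G}}(\varphi')^\circ$.

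\emph{Your route.} You take the principal $\SL_2$ attached to the pinning preserved by $\varphi(W_F)$. A pinning-preserving automorphism fixes $e=\sum_{\beta\in\Delta}X_\beta$ and $h=2\rho^\vee$, hence $f$, hence $\alpha$ pointwise, so $\varphi\times\alpha$ is a homomorphism. The centralizer of a principal $\SL_2$ in a connected reductive group is its center. So you get at once $C_{\hat{\mathbf G}}(\varphi')=Z(C_{\hat{\mathbf G}}(\phi))^{\varphi(W_F)}$, and the Steinberg lemma is never needed. This works, but two points must be fixed.

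\emph{First.} Connectedness does not give $C_{\hat{\mathbf G}}(\phi)=\hat{\mathbf L}_\phi$, and part (2) does not assume $C_{\hat{\mathbf G}}(\phi)$ is a Levi subgroup. The pinning in the hypothesis is a pinning of $C_{\hat{\mathbf G}}(\phi)$. So $\alpha$ must be principal in $C_{\hat{\mathbf G}}(\phi)$, not in $\hat{\mathbf L}_\phi$; a principal $\SL_2$ of a strictly larger $\hat{\mathbf L}_\phi$ need not commute with $\phi(P_F)$.

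\emph{Second.} You assert that $\varphi'$ has minimal Levi $\MC_{\hat\iota}$, but this step should be written out:
\begin{itemize}
\item $Z(C_{\hat{\mathbf G}}(\phi))^{\varphi(W_F),\circ}\subseteq Z(\LC_\phi)^\circ$, because $Z(C_{\hat{\mathbf G}}(\phi))^\circ$ is central in $\hat{\mathbf L}_\phi$.
\item Hence any Levi subgroup $\MC\supseteq\varphi'(W'_F)$ satisfies $Z(\MC)^\circ\subseteq C_{\hat{\mathbf G}}(\varphi')^\circ\subseteq Z(\LC_\phi)^\circ$.
\item Therefore $\MC\supseteq\MC_{\hat\iota}$, which is relevant by Proposition \ref{dual_embedding_nqs}.
\end{itemize}

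The detour through elliptic tori and discrete parameters of $\mathbf G_\iota$ plays no role in the argument.
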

\begin{proof}
Denote by $\hat\iota$ the Levi-center-embedding
$\hat\iota:\,Z(\hat{\mathbf{L}}_{\phi})^{\circ}\subseteq \hat{\mathbf{G}}$, and recall the notation
$\MC_{\hat\iota}=C_{^{L}\mathbf{G}}(Z(({^{L}\mathbf{G}})_{\hat\iota})^{\circ})$ from Proposition 
\ref{dual_embedding_nqs}.
By definition of the $W_{F}$-action on $Z(\hat{\mathbf{L}}_{\phi})$, 
we have $({^{L}\mathbf{G}})_{\hat\iota}=\LC_{\phi}$,  hence
$\MC_{\hat\iota}=\MC_{\phi}:=C_{^{L}\mathbf{G}}(Z(\LC_{\phi})^{\circ})$.
Then Proposition \ref{dual_embedding_nqs} shows that \emph{$I_{\phi}$ contains an $F$-rational
embedding if, and only if, the Levi subgroup $\MC_{\phi}$ of $^{L}\mathbf{G}$ is relevant to $\mathbf{G}$.}

i) 
Fix a relevant Langlands parameter $\varphi':W'_{F}\To{}{^{L}{\mathbf{G}}}$ that extends
$\phi$. We claim that  $\varphi'(W'_{F})\subset \LC_{\phi}$. Indeed, the inclusion
$\varphi'(W_{F})\subset \LC_{\phi}$ holds by definition, and 
the inclusion $\varphi'({\rm SL}_{2})\subset \LC_{\phi}$ holds too since
$\varphi'({\rm SL}_{2})$ is contained in $C_{\hat{\mathbf{G}}}(\phi)$, 
hence  commutes with $Z(C_{\hat{\mathbf{G}}}(\phi))^{\circ}$ and is thus contained in
$\hat{\mathbf{L}}_{\phi}$.
Because $\MC_{\phi}$ contains $\LC_{\phi}$, it follows that $\varphi'$
factors through $\MC_{\phi}$ and, since $\varphi'$ is relevant, $\MC_{\phi}$ is relevant to
${\mathbf{G}}$. The first claims of i) and i)(a) now follow from Proposition
\ref{dual_embedding_nqs},  the remaining claims of i) follow from Proposition \ref{dual-embedding-qs}
and Lemma \ref{dual-embedding-classif}.

ii)
Fix an extension  $\varphi:W_{F}\To{}{^{L}\mathbf{G}}$ of $\phi$  that normalizes a pinning of
$C_{\hat{\mathbf{G}}}(\phi)^{\circ}$, and define $\varphi'=\varphi\times\alpha
:W_{F}\times\SL_{2}\To{}\LC_{\phi}$ with $\alpha$ an isomorphism on a principal $\SL_{2}$ in
$C_{\hat{\mathbf{G}}}(\varphi)^{\circ}$. We then have
$C_{\hat{\mathbf{G}}}(\varphi')^{\circ} =
C_{C_{\hat{\mathbf{G}}}(\varphi)^{\circ}}(\alpha)^{\circ}=Z(C_{\hat{\mathbf{G}}}(\varphi)^{\circ})^{\circ}$.
Writing
$C_{\hat{\mathbf{G}}}(\varphi)^{\circ}=(C_{\hat{\mathbf{G}}}(\phi)^{\circ})^{\varphi(W_{F}),\circ}$,
it follows from  the next lemma that
$Z(C_{\hat{\mathbf{G}}}(\varphi)^{\circ})^{\circ} =
Z(C_{\hat{\mathbf{G}}}(\phi)^{\circ})^{\varphi(W_{F}),\circ}$.
Since we assume $C_{\hat{\mathbf{G}}}(\phi)^{\circ}=C_{\hat{\mathbf{G}}}(\phi)$, this
means that $C_{\hat{\mathbf{G}}}(\varphi')^{\circ}$ is contained in 
(hence equal to) $Z(\LC_{\phi})^{\circ}$.  It follows that for any Levi subgroup $\MC$ of $^{L}\mathbf{G}$
that contains $\varphi'(W_{F})$, we have $Z(\MC)^{\circ}\subseteq Z(\LC_{\phi})^{\circ}\subseteq Z(\MC_{\phi})$
and, therefore, $\MC\supseteq \MC_{\phi}$. By assumption, $\MC_{\phi}$ is relevant, so $\MC$ is
relevant, and so is $\varphi'$. 
\end{proof}

\begin{lem}
  Let $\mathbf{H}$ be a complex reductive group and $\Gamma$ a  group acting on
  $\mathbf{H}$ and preserving a pinning of $\mathbf{H}$. Then
  $Z(\mathbf{H}^{\Gamma,\circ})^{\circ} =Z(\mathbf{H})^{\Gamma,\circ}$. 
\end{lem}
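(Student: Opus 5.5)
The plan is to work inside a $\Gamma$-stable pinning $(\mathbf{T},\mathbf{B},\{X_\alpha\}_{\alpha\in\Delta})$ of $\mathbf{H}$ and to use the classical structure theory of fixed points of pinning-preserving automorphisms (Steinberg's description, cf.\ also Kottwitz--Shelstad or Haines' treatment). The action of $\Gamma$ factors through the finite group $\Aut(\psi_{0}(\mathbf{H}))$ on the derived part, but $\Gamma$ may act with infinite image on $Z(\mathbf{H})^{\circ}$. Only the image of $\Gamma$ in $\Aut(\mathbf{H})$ matters, and fixed points of a group equal the intersection of fixed points of its elements, so we may enlarge nothing and just work with the image $\bar\Gamma\subseteq \Aut(\mathbf{H},\text{pinning})$.

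\emph{Step 1: description of $\mathbf{H}^{\Gamma,\circ}$.} We first show that $\mathbf{H}^{\Gamma,\circ}$ is a reductive group with maximal torus $\mathbf{T}^{\Gamma,\circ}$ and Borel $(\mathbf{B}^{\Gamma})^{\circ}$. Concretely, we identify the centralizer of $\mathbf{T}^{\Gamma,\circ}$ in $\mathbf{H}$ with $\mathbf{T}$: a regular element of $\mathbf{T}^{\Gamma,\circ}$ exists because $\rho^{\vee}$ (half-sum of positive coroots, or rather a suitable multiple in $X_{*}(\mathbf{T})$) is $\Gamma$-fixed since $\Gamma$ preserves $\Delta$, and $\rho^\vee$ pairs nontrivially with every root. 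Hence $C_{\mathbf{H}}(\mathbf{T}^{\Gamma,\circ})=\mathbf{T}$, so $\mathbf{T}^{\Gamma,\circ}$ is a maximal torus of $\mathbf{H}^{\Gamma,\circ}$ (its centralizer there is $\mathbf{T}^{\Gamma}\cap\mathbf{H}^{\Gamma,\circ}$, whose identity component is $\mathbf{T}^{\Gamma,\circ}$). The root subgroups of $\mathbf{H}^{\Gamma,\circ}$ relative to $\mathbf{T}^{\Gamma,\circ}$ are then obtained from $\Gamma$-orbits of roots: for each $\Gamma$-orbit $O\subseteq\Sigma$, the restrictions $\alpha|_{\mathbf{T}^{\Gamma,\circ}}$ for $\alpha\in O$ coincide and are nonzero (by the $\rho^\vee$ argument), and $\Gamma$-invariant vectors in $\bigoplus_{\alpha\in O}\mathfrak{h}_\alpha$ give the root spaces. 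For simple roots the element $\sum_{\alpha\in O}X_\alpha$ is $\Gamma$-fixed, so every $\Gamma$-orbit of simple roots contributes a root of $\mathbf{H}^{\Gamma,\circ}$ whose restriction is $\alpha|_{\mathbf{T}^{\Gamma,\circ}}$.

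\emph{Step 2: compute the center.} Since $\mathbf{T}^{\Gamma,\circ}$ is a maximal torus of the reductive group $\mathbf{H}^{\Gamma,\circ}$, its connected center is $\bigl(\bigcap_{\beta}\ker\beta\bigr)^{\circ}$ over the roots $\beta$ of $\mathbf{H}^{\Gamma,\circ}$; every such root is a restriction of a nonnegative/nonpositive combination of simple roots, so it suffices to intersect over restrictions of simple roots, and by Step 1 all of these do occur. Hence
\[
Z(\mathbf{H}^{\Gamma,\circ})^{\circ}=\Bigl(\mathbf{T}^{\Gamma,\circ}\cap\bigcap_{\alpha\in\Delta}\ker\alpha\Bigr)^{\circ}=\bigl(\mathbf{T}^{\Gamma}\cap Z(\mathbf{H})\bigr)^{\circ}=Z(\mathbf{H})^{\Gamma,\circ},
\]
using $Z(\mathbf{H})=\bigcap_{\alpha\in\Delta}\ker\alpha\subseteq\mathbf{T}$ (adjoint-free characterization of the center inside $\mathbf{T}$) and that identity components of $\mathbf{T}^{\Gamma,\circ}\cap Z(\mathbf{H})$ and $Z(\mathbf{H})^{\Gamma}$ agree as both are the connected component of the diagonalizable group $\mathbf{T}^{\Gamma}\cap Z(\mathbf{H})$. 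The inclusion $\supseteq$ is also seen directly: $Z(\mathbf{H})^{\Gamma,\circ}$ is a connected central torus contained in $\mathbf{H}^{\Gamma,\circ}$.

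The main obstacle is Step 1, namely justifying that restrictions of all simple roots are genuinely roots of $\mathbf{H}^{\Gamma,\circ}$ (nonvanishing restriction and nonzero fixed vectors) without assuming $\Gamma$ finite. I would handle it by noting that the root-datum part of the action factors through a finite group, so averaging $\rho^{\vee}$ and $\sum_{O}X_\alpha$ over the finite image is legitimate, and the action on $\mathfrak{h}_\alpha$ for simple $\alpha$ is just permutation of the $X_\alpha$.
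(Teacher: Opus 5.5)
Your argument is correct in substance, but it takes a different route from the paper's.

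\textbf{How the two routes differ.} The paper first reduces to the simply connected simple case. It uses the isogeny $\mathbf{H}_{\rm sc}\times Z(\mathbf{H})^{\circ}\to\mathbf{H}$, then passes to a single simple factor, so that $\Gamma$ acts through $\mathbb{Z}/2$ or $S_3$. There it invokes Steinberg's theorem: $\mathbf{H}^{\Gamma}$ is connected reductive with maximal torus $\mathbf{T}^{\Gamma}$. This is what places $Z(\mathbf{H}^{\Gamma})^{\circ}$ inside $\mathbf{T}^{\Gamma}$. It then finishes with exactly your observation that the center must fix the nonzero $\Gamma$-invariant vectors $\sum_{\alpha\in O}X_{\alpha}$.

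You get the containment in $\mathbf{T}$ directly in $\mathbf{H}$. The $\Gamma$-fixed cocharacter $2\rho^{\vee}$ pairs nontrivially with every root, so $C_{\mathbf{H}}(\mathbf{T}^{\Gamma,\circ})\subseteq C_{\mathbf{H}}(2\rho^{\vee}(\mathbb{G}_m))=\mathbf{T}$. This avoids the isogeny bookkeeping and any appeal to Steinberg. It also works uniformly for arbitrary reductive $\mathbf{H}$ and arbitrary $\Gamma$, including when $\Gamma$ acts with infinite image on the central torus. The paper's route yields more structure, namely connectedness and reductivity of $\mathbf{H}^{\Gamma}$ in the simply connected case, but none of this is needed for the lemma.

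\textbf{Points to repair.}
\begin{enumerate}
\item \emph{Reductivity is asserted but not proved.} Step~2 rests on $\mathbf{H}^{\Gamma,\circ}$ being reductive, which Step~1 announces but never proves; only the maximal-torus claim is argued. You do not actually need it. Because $\mathbf{T}^{\Gamma,\circ}\subseteq\mathbf{H}^{\Gamma,\circ}$, you already have $Z(\mathbf{H}^{\Gamma,\circ})\subseteq C_{\mathbf{H}}(\mathbf{T}^{\Gamma,\circ})=\mathbf{T}$. For $z$ in this center, $\mathrm{Ad}(z)$ is trivial on $\mathrm{Lie}(\mathbf{H}^{\Gamma,\circ})=\mathfrak{h}^{\Gamma}$ (this equality uses characteristic $0$). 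Applied to $\sum_{\alpha\in O}X_{\alpha}$, this forces $\alpha(z)=1$ for all $\alpha\in\Delta$. Hence $Z(\mathbf{H}^{\Gamma,\circ})\subseteq Z(\mathbf{H})^{\Gamma}$ holds even before passing to identity components.

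\item \emph{Use the subtorus, not a single regular element.} Deduce $C_{\mathbf{H}}(\mathbf{T}^{\Gamma,\circ})=\mathbf{T}$ from the subtorus $2\rho^{\vee}(\mathbb{G}_m)$. When the derived group is not simply connected, a single regular semisimple element can have a disconnected centralizer strictly larger than $\mathbf{T}$. For example, $\mathrm{diag}(1,-1)$ in $\mathrm{PGL}_2$ is centralized by the Weyl element.

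\item \emph{No finiteness or averaging is needed.} Contrary to your closing remark, the action on $X^{*}(\mathbf{T})$ need not factor through a finite group. This does not matter: $2\rho^{\vee}$ is $\Gamma$-fixed on the nose. So is each $\sum_{\alpha\in O}X_{\alpha}$, for $O$ a $\Gamma$-orbit in the finite set $\Delta$.
\end{enumerate}
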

\begin{proof}
  The inclusion $Z(\mathbf{H}^{\Gamma,\circ})^{\circ} \supset
  Z(\mathbf{H})^{\Gamma,\circ}$ is clear. To get the other inclusion it is enough to show
  that $Z(\mathbf{H}^{\Gamma,\circ})\subset Z(\mathbf{H})$. 
Observe that any isogeny $\mathbf{H}'\To{}\mathbf{H}$ is $\Gamma$-equivariant for the
action of $\Gamma$ on $\mathbf{H}'$ obtained by lifting
a $\Gamma$-stable pinning from $\mathbf{H}$ to $\mathbf{H}'$ and identifying
${\rm Out}(\mathbf{H})={\rm Out}(\mathbf{H}')$. In such a situation, the image of
$(\mathbf{H}')^{\Gamma}$ has finite index in $\mathbf{H}^{\Gamma}$ so that the statement
of the lemma is true for $\mathbf{H}$ if and only if it is true for $\mathbf{H}'$.
Since this statement is clear for tori, the isogeny $\mathbf{H}'=\mathbf{H}_{\rm
  sc}\times Z(\mathbf{H})\To{}\mathbf{H}$ allows us to reduce to the case where $\mathbf{H}$
is semi-simple and simply connected.
Then $\Gamma$ permutes the set of simple factors
of $\mathbf{H}$, so we may  restrict to the case with one orbit, and then restrict to a simple factor
with the action of its stabilizer. Hence we may assume that $\mathbf{H}$ is simple
and replace $\Gamma$ by its image in ${\rm
  Out}(\mathbf{H})$ which is either $\ZM/2\ZM$  or $S_{3}$. At this point we could conclude with a case by case
inspection. But we can also invoke Steinberg's Thm 8.1 in \cite{steinberg_endo}, which ensures that
$\mathbf{H}^{\Gamma}=\mathbf{H}^{\Gamma,\circ}$ is a 
reductive group with maximal torus ${\mathbf{T}}^{\Gamma}={\mathbf{T}}^{\Gamma,\circ}$, where
${\mathbf{T}}$ is part of a $\Gamma$-stable pinning. In particular
$Z(\mathbf{H}^{\Gamma})^{\circ}\subset {\mathbf{T}}^{\Gamma}$.
Now let 
  $({\mathbf{T}},\mathbf{B},(X_{\alpha})_{\alpha\in\Delta({\mathbf{T}},\mathbf{B})})$ be a $\Gamma$-stable pinning
  of $\mathbf{H}$, where $X_{\alpha}$ is a non-zero element of the weight $\alpha$
  subspace in the Lie
  algebra $\hG$ of $H$.
Then $Z(\mathbf{H}^{\Gamma})^{\circ}$ must act trivially on the elements $\sum_{\gamma\in
  \Gamma}X_{\gamma\alpha}\in \hG^{\Gamma}$ for $\alpha\in \Delta({\mathbf{T}},\mathbf{B})$. These elements
are non-zero (here, compared to Steinberg's result, we need the fact that $\Gamma$
preserves the pinning and not only the pair $({\mathbf{T}},\mathbf{B})$),  therefore we
have
$Z(\mathbf{H}^{\Gamma})^{\circ}\subset \bigcap_{\alpha\in\Delta}\ker(\alpha)=Z(\mathbf{H})$.
\end{proof}

\begin{rak}
  For any $F$-rational $\iota\in I_{\phi}$, we may identify the group
  $X_{\phi}:=X^{*}(Z(\LC_{\phi})/Z({^{L}\mathbf{G}}))_{\rm tors}$ with $H^{1}(F,\mathbf{G}_{\iota},\mathbf{G})$ as in
  the proof of Proposition \ref{dual-embedding-qs}. Then, denoting by
   $\IC_{\phi}$ the set of $G(F)$-conjugacy classes of $F$-rational elements in $I_{\phi}$, items
   ii) and iii) of the last proposition provide $\IC_{\phi}$ with a structure of a $X_{\phi}$-torsor.
\end{rak}

\begin{lemme} \label{splittings_tr}
  Assume that ${\mathbf{G}}$ is tamely ramified. Then
  $\mathbf{L}_{\phi}$ is tamely ramified, the subgroup $1\times P_{F}$ of $^{L}{\mathbf{G}}$ is contained in
$\LC_{\phi,\varepsilon_{\phi}}$ and there is a splitting $\psi:\, W_{F}\injo \LC_{\phi,\varepsilon_{\phi}}$ which
is \emph{tame} in the sense that $\psi|_{P_{F}}=1\times\id$.
\end{lemme}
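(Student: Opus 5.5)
Since $\mathbf{G}$ is tamely ramified, $P_{F}$ acts trivially on $\hat{\mathbf{G}}$, so in ${^{L}\mathbf{G}}=\hat{\mathbf{G}}\rtimes W_{F}$ the subgroup $1\times P_{F}$ is central in $\hat{\mathbf{G}}\rtimes P_{F}=\hat{\mathbf{G}}\times P_{F}$. We may therefore write $\phi(w)=(\phi_{0}(w),w)$ with $\phi_{0}:P_{F}\To{}\hat{\mathbf{G}}$ a genuine homomorphism. Its centralizer $C_{\hat{\mathbf{G}}}(\phi)$ is then $C_{\hat{\mathbf{G}}}(\phi_{0}(P_{F}))$.

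The first step is to show $1\times P_{F}\subset\LC_{\phi}$. Since $\phi_{0}(P_{F})$ commutes with $C_{\hat{\mathbf{G}}}(\phi)$, it centralizes $Z(C_{\hat{\mathbf{G}}}(\phi))^{\circ}$ and so lies in $\hat{\mathbf{L}}_{\phi}$. Hence $(1,w)=(\phi_{0}(w)^{-1},1)\cdot\phi(w)\in\hat{\mathbf{L}}_{\phi}\cdot\varphi(W_{F})=\LC_{\phi}$. Moreover $(1,w)$ acts trivially on $\hat{\mathbf{G}}$ by conjugation, so it fixes every pinning of $\hat{\mathbf{L}}_{\phi}$. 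This gives $1\times P_{F}\subset\LC_{\phi,\varepsilon_{\phi}}$.

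The same computation shows that $P_{F}$ acts trivially on $\hat{\mathbf{L}}_{\phi}$. Indeed, it acts through $\LC_{\phi}$ via $1\times P_{F}$, which is central there. So the outer action $W_{F}\To{}{\rm Out}(\hat{\mathbf{L}}_{\phi})$ is trivial on $P_{F}$, and $\mathbf{L}_{\phi}$, the quasi-split group defined by this action, splits over a tame extension. This proves that $\mathbf{L}_{\phi}$ is tamely ramified.

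For the tame splitting, I would take the subgroup $1\times P_{F}$, which is normal in $\LC_{\phi,\varepsilon_{\phi}}$ (it is normal in ${^{L}\mathbf{G}}$), and pass to the quotient extension
$$Z(\hat{\mathbf{L}}_{\phi})\injo\LC_{\phi,\varepsilon_{\phi}}/(1\times P_{F})\twoheadrightarrow W_{F}/P_{F}.$$
Any continuous splitting of this tame extension lifts to a splitting $\psi$ with $\psi|_{P_{F}}=1\times\id$. Concretely, one chooses the unique lift of each coset whose image in the original extension maps correctly, using that $\LC_{\phi,\varepsilon_{\phi}}$ is the fiber product of its quotient with $W_{F}$ over $W_{F}/P_{F}$, since $1\times P_{F}$ maps isomorphically onto $P_{F}$.

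It remains to prove that this quotient extension splits. This is the main obstacle. I would redo the proof of Lemma \ref{splittings} with $W_{F}$ replaced by $W_{F}/P_{F}$. The extension again comes from a finite quotient, now of $W_{F}/P_{F}$, i.e.\ from $\Gamma_{E/F}$ for a tame extension $E/F$. Langlands' Lemma 4 handles $Z(\hat{\mathbf{L}}_{\phi})^{\circ}$, since the pullback to the Weil group of $H^{2}$ of a finite Galois quotient kills classes valued in a torus, and the relevant Weil group here is that of the tame quotient. Kaletha's Tits-lift argument handles the component group: the stabilizer $(\Omega\rtimes W_{F})_{\Delta}$ maps through $\Omega\rtimes W_{F}/P_{F}$ because $P_{F}$ acts trivially, and the Tits section of $W_{F}/P_{F}$ is homomorphic modulo $Z^{\circ}$.

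Alternatively, I would first take any splitting $\psi_{1}$ from Lemma \ref{splittings} and correct it. Then $w\mapsto\psi_{1}(w)(1,w)^{-1}$ is a $1$-cocycle $c$ on $P_{F}$ valued in $Z(\hat{\mathbf{L}}_{\phi})$, with trivial action. One would extend $c$ to $z\in Z^{1}(W_{F},Z(\hat{\mathbf{L}}_{\phi}))$, using that $P_{F}$ is pro-$p$ with $p$ odd, so the relevant $H^{2}$ of $W_{F}/P_{F}$ on $p$-torsion should vanish. Then $z^{-1}\psi_{1}$ is tame. I would try the direct quotient argument first.
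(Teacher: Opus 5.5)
Your proposal is correct and follows the paper's proof essentially step by step. You write $\phi=\hat\phi\times\mathrm{id}$, note that $\hat\phi(P_F)\subset\hat{\mathbf{L}}_\phi$ so that $1\times P_F\subset\mathcal{L}_{\phi,\varepsilon_\phi}$ and the outer action is trivial on $P_F$, realize $\mathcal{L}_{\phi,\varepsilon_\phi}$ as a pullback of the quotient extension over $W_F/P_F$, and split that quotient extension by redoing the torus and component-group steps of Lemma \ref{splittings} with $\hat{\mathbf{G}}\rtimes(W_F/P_F)$ in place of ${}^{L}\mathbf{G}$. For the torus step the paper cites the vanishing of $H^2_{cts}(W_F/P_F,\hat{\mathbf{S}})$ from \cite[Lemma 3.8]{DHKM1}, and mentions as an alternative exactly your tame version of Langlands' Lemma 4 (as in the proof of \cite[Lemma 5.2.8]{Kaletha}).
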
 
\begin{proof}
We can write $\phi=\hat\phi\times\id$ with $\hat\phi : P_{F}\To{} \hat{\mathbf{G}}$ a homomorphism.
Then $C_{\hat{\mathbf{G}}}(\phi)=C_{\hat{\mathbf{G}}}(\hat\phi)$ so that
$\hat\phi(P_{F})\subset \hat{\mathbf{L}}_{\phi}$. Since $1\times P_{F}$ acts trivially on
$\hat{\mathbf{G}}$, it follows that the action of $\phi(P_{F})$ on $\hat{\mathbf{L}}_{\phi}$
is inner, hence $\mathbf{L}_{\phi}$ is tamely ramified.
Moreover, since $\phi(P_{F})\subset \LC_{\phi}$ by
construction, we get that $1\times P_{F}\subset \LC_{\phi}$, and because this group acts
trivially on $\hat{\mathbf{G}}$, we even have $1\times P_{F}\subset \LC_{\phi,\varepsilon_{\phi}}$.
Now, the extension $\LC_{\phi,\varepsilon_{\phi}}$ considered above is 
 the pullback of the extension 
$Z(\hat{\mathbf{L}}_{\phi})\injo \LC_{\phi,\varepsilon_{\phi}}/(1\times
P_{F})\twoheadrightarrow W_{F}/P_{F}$ by the projection $W_{F}\twoheadrightarrow
W_{F}/P_{F}$ and we need to show that the latter extension splits.
  By \cite[Lemma 3.8]{DHKM1}, we know that for any complex torus $\hat{\mathbf{S}}$ with a finite action of
  $W_{F}/P_{F}$ we have $H^{2}_{cts}(W_{F}/P_{F},\hat{\mathbf{S}})=\{1\}$ (an alternative argument
  relying on Langlands' Lemma 4 in \cite{Langlands_stable} can be found in the proof of
  \cite[Lemma 5.2.8]{Kaletha}). On the other hand, the same
  argument  as in Lemma \ref{splittings}  shows that the
  extension $\pi_{0}(Z(\hat{\mathbf{L}}_{\phi}))\injo \LC_{\phi,\varepsilon_{\phi}}/((1\times
P_{F})Z(\hat{\mathbf{L}}_{\phi})^{\circ})\twoheadrightarrow W_{F}/P_{F}$ splits.
Indeed, it suffices to replace $^{L}{\mathbf{G}}$ by its quotient ${\mathbf{G}}\rtimes (W_{F}/P_{F})$.
\end{proof}

\begin{lemme} \label{splittings_Levi}
  Assume that $C_{\hat{\mathbf{G}}}(\phi)$ is a Levi subgroup of $\hat{\mathbf{G}}$.
Then ${\mathbf{G}}$ is tamely ramified, $C_{\hat{\mathbf{G}}}(\phi)=\hat{\mathbf{L}}_{\phi}$, the subgroup $\phi(P_{F})$ is
contained in $\LC_{\phi,\varepsilon_{\phi}}$, and $\hat\phi(P_{F})\subset
Z(\hat{\mathbf{L}}_{\phi})$, where we write $\phi=\hat \phi \times \id$. Moreover, the following properties hold true :
\begin{enumerate}
\item There is a splitting $\varphi:W_{F}\injo\LC_{\phi,\varepsilon_{\phi}}$ that extends
  $\phi$.
\item There is a $1$-cocycle $\hat\varphi: W_{F}\To{}Z(\hat{\mathbf{L}}_{\phi})$ that extends $\hat\phi$.
\end{enumerate}

\end{lemme}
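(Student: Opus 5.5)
The plan is to first record the structural claims and then build the extension from the tame splitting of Lemma \ref{splittings_tr}.

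Write $\phi=\hat\phi\times\id$, where $\hat\phi:P_F\to\hat{\mathbf{G}}$ is a homomorphism, since $1\times P_F$ acts trivially on $\hat{\mathbf{G}}$ once we know $\mathbf{G}$ is tame. Before that is known, write $\phi(w)=(\hat\phi(w),w)$ with $\hat\phi$ a cocycle. The proof then proceeds in four steps.

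\textbf{Step 1.} If $C:=C_{\hat{\mathbf{G}}}(\phi)$ is a Levi subgroup, then it is the centralizer of its connected center, so $\hat{\mathbf{L}}_\phi=C_{\hat{\mathbf{G}}}(Z(C)^\circ)=C$.

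\textbf{Step 2 (tameness).} Conjugation by $\phi(P_F)$ fixes $C$ pointwise. Choose a maximal torus $\hat{\mathbf{T}}\subset C$ and a Borel subgroup of $\hat{\mathbf{G}}$ containing it. For $w\in P_F$, the element $\phi(w)$ normalizes $\hat{\mathbf{T}}$ and acts trivially on it. Since $\phi(w)$ acts trivially on $\hat{\mathbf{T}}$, it fixes every root, so it stabilizes any Borel containing $\hat{\mathbf{T}}$. Its action on $\hat{\mathbf{G}}$ is therefore inner by an element of $\hat{\mathbf{T}}$ composed with $w$. The image of $w$ in $\mathrm{Out}(\hat{\mathbf{G}})$ is then trivial, which gives tameness of $\mathbf{G}$. (One uses that an automorphism fixing a maximal torus pointwise is inner by that torus.)

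Once $\mathbf{G}$ is tame, $\hat\phi$ is a homomorphism, and $\hat\phi(P_F)$ commutes with $C\ni \hat\phi(P_F)$. Hence $\hat\phi(P_F)\subset Z(C)=Z(\hat{\mathbf{L}}_\phi)$. Moreover, $\phi(P_F)=\hat\phi(P_F)\cdot(1\times P_F)$ lies in $\LC_\phi$. Both factors fix the pinning $\varepsilon_\phi$: the first because it is central in $\hat{\mathbf{L}}_\phi$, the second by Lemma \ref{splittings_tr}. So $\phi(P_F)\subset\LC_{\phi,\varepsilon_\phi}$.

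\textbf{Step 3 (point i).} Take a tame splitting $\psi:W_F\to\LC_{\phi,\varepsilon_\phi}$ as in Lemma \ref{splittings_tr}, so $\psi|_{P_F}=1\times\id$. We then seek $\varphi=z\cdot\psi$, with $z\in Z^1(W_F,Z(\hat{\mathbf{L}}_\phi))$ for the action $\mathrm{Ad}_\psi$, such that $z|_{P_F}=\hat\phi$. Note that $W_F$ acts on $Z(\hat{\mathbf{L}}_\phi)$ through a finite quotient and $P_F$ acts trivially. Since $\phi$ extends to $W_F$, the homomorphism $\hat\phi:P_F\to Z(\hat{\mathbf{L}}_\phi)$ is $W_F$-equivariant, i.e.\ it is a $W_F/P_F$-invariant element of $Z^1(P_F,Z(\hat{\mathbf{L}}_\phi))=\Hom(P_F,Z(\hat{\mathbf{L}}_\phi))$. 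Equivariance follows because conjugating $\phi$ by any extension $\varphi'$ gives $\phi$ back, and $\varphi'(w)\psi(w)^{-1}\in C$ centralizes $\hat\phi(P_F)$.

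The main obstacle is the extension problem. Inflation–restriction gives the exact sequence
\[H^1(W_F,Z)\to H^1(P_F,Z)^{W_F}\to H^2(W_F/P_F,Z^{P_F}),\]
with $Z:=Z(\hat{\mathbf{L}}_\phi)$, and we need the transgression of $\hat\phi$ to vanish. Rather than computing this group, I would argue more directly. $\hat\phi$ has finite image in the $p$-power torsion of $Z$, and $P_F$ is pro-$p$. So it suffices to show that the restriction $H^1(W_F/P_F,\cdot)\to$ transgression target vanishes on $p$-primary classes, i.e.\ $H^2(W_F/P_F,A)[p^\infty]=0$ for a finite $p$-group $A$ with finite action. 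This holds because $W_F/P_F$ has a dense subgroup $\mathbb{Z}[1/p]'$-type structure: the tame inertia $\prod_{\ell\ne p}\ZM_\ell$ is prime-to-$p$ profinite, and $\ZM$ has cohomological dimension $1$.

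Alternatively, and more concretely, use the given extension $\varphi'$ of $\phi$. It normalizes $C$, and the quotient $\varphi'(w)\psi(w)^{-1}$ lies in $C$; projecting to $C/C_{\rm der}$ or modifying $\varphi'$ by $C$-valued elements lets us push $\varphi'$ into $\LC_{\phi,\varepsilon_\phi}$. Then $\varphi'\psi^{-1}$ modulo suitable conjugation yields $z$. I would try the cohomological route first.

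\textbf{Step 4 (point ii).} This follows from point i): set $\hat\varphi:=z$, i.e.\ $\hat\varphi(w)=\varphi(w)\psi(w)^{-1}\in Z(\hat{\mathbf{L}}_\phi)$. This is a 1-cocycle for the $\mathrm{Ad}_\psi$-action, and it restricts to $\hat\phi$ on $P_F$.
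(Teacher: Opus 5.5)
Your proof is correct, and its skeleton is the paper's. First, tameness follows because each $\phi(w)$, $w\in P_F$, centralizes a maximal torus of $C$, so it acts on $\hat{\mathbf{G}}$ by an inner automorphism; this forces the pinned action of $P_F$ to be trivial. The paper says the same via $\Omega(\hat{\mathbf{T}},\hat{\mathbf{G}})\rtimes{\rm Out}(\hat{\mathbf{G}})\injo\Aut(\hat{\mathbf{T}})$. Next comes $\hat\phi(P_F)\subset Z(\hat{\mathbf{L}}_\phi)$. Finally, the tame splitting $\psi$ of Lemma~\ref{splittings_tr} reduces i) to extending $\hat\phi$ to a cocycle. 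The obstruction is the transgression of $\hat\phi$, which is (up to sign) the pushforward $\hat\phi_*(\mathcal{E})$ of the class $\mathcal{E}$ of $P_F^{\rm ab}\injo W_F/\overline{[P_F,P_F]}\twoheadrightarrow W_F/P_F$.

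The genuine difference is how this obstruction is killed. The paper kills $\mathcal{E}$ itself by Iwasawa's theorem. You use naturality of transgression along $A:=\hat\phi(P_F)\injo Z(\hat{\mathbf{L}}_\phi)$, a finite $p$-group that is $W_F$-stable by the equivariance you checked. You then use $H^2(W_F/P_F,A)=0$: by Hochschild--Serre for $I_F/P_F\subset W_F/P_F$, we have $H^b(I_F/P_F,A)=0$ for $b\geq 1$ because $I_F/P_F$ is pro-prime-to-$p$, and $\ZM$ has cohomological dimension $1$. Your route is self-contained and independent of the characteristic of $F$, using only the automatic finiteness of $\hat\phi(P_F)$. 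The paper's route gives a universal statement independent of $\phi$. For $W_F$, as opposed to $\Gamma_F$, that universal splitting also follows elementarily: apply Schur--Zassenhaus to $I_F\to I_F/P_F$, then adjust a Frobenius lift so that it normalizes a complement.

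A few points need fixing in the write-up.
\begin{itemize}
\item State the reduction through naturality as above. The sentences about ``the restriction $H^1(W_F/P_F,\cdot)\to$ transgression target'' and a ``dense subgroup'' do not parse.
\item Run the five-term exact sequence directly with coefficients in $A$. This gives $z\in Z^1(W_F,A)$ with $z|_{P_F}=\hat\phi$ and avoids any topological subtlety with $Z(\hat{\mathbf{L}}_\phi)$.
\item Justify $\hat\phi(P_F)\subset C$. It is not automatic, but it follows from $C_{\hat{\mathbf{G}}}(C)=Z(C)$, since $C$ contains a self-centralizing maximal torus.
\item Drop the ``alternative'' via $\varphi'$. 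As stated it only restates the extension problem.
\end{itemize}
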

\begin{proof}
  The equality $C_{\hat{\mathbf{G}}}(\phi)=\hat{\mathbf{L}}_{\phi}$ is clear by definition of $\hat{\mathbf{L}}_{\phi}$. The inclusion
  $\phi(P_{F})\subset \LC_{\phi}$ holds by construction, and since $\phi(P_{F})$
  centralizes $\hat{\mathbf{L}}_{\phi}$, it normalizes $\varepsilon_{\phi}$, whence the
  inclusion $\phi(P_{F})\subset \LC_{\phi,\varepsilon_{\phi}}$. Actually, $\phi(P_{F})$
  centralizes any maximal torus of $\hat{\mathbf{L}}_{\phi}$, so a $\hat{\mathbf{G}}$-conjugate
  of $\phi(P_{F})$ centralizes a reference maximal torus $\hat{\mathbf{T}}$ of
  $\hat{\mathbf{G}}$ (i.e., a part of a $W_{F}$-stable pinning $\varepsilon$ of
  $\hat{\mathbf{G}}$). But since $\Omega(\hat{\mathbf{T}},\hat{\mathbf{G}})\rtimes_{\varepsilon}{\rm Out}(\hat{\mathbf{G}})\injo
  \Aut(\hat{\mathbf{T}})$, where $\Omega(\hat{\mathbf{T}},\hat{\mathbf{G}})$ denotes the Weyl group of $\hat{\mathbf{G}}$ with respect to $\hat{\mathbf{T}}$, the centralizer of $\hat{\mathbf{T}}$ in $^{L}{\mathbf{G}}$ is $\hat{\mathbf{T}}\times
  \ker(W_{F}\To{}{\rm Out}(\hat{\mathbf{G}}))$. It follows that the restriction of the action of $W_F$ on
  $\hat{\mathbf{T}}$ to $P_F$ is trivial, hence that ${\mathbf{G}}$ is tamely ramified. Now, with $1\times P_{F}$ and
  $\phi(P_{F})$, also $\hat\phi(P_{F})$ centralizes $\hat{\mathbf{L}}_{\phi}$, hence
  $\hat\phi(P_{F})\subset Z(\hat{\mathbf{L}}_{\phi})$.

We claim that properties i) and ii) are equivalent. Indeed, since ${\mathbf{G}}$ is tamely ramified, Lemma \ref{splittings_tr} provides us with a splitting $\psi:
W_{F}\injo \LC_{\phi,\varepsilon_{\phi}}$ such that $\psi|_{P_{F}}=1\times\id$. Therefore, if $\varphi$ is as
in item i), we can write it in the form $\varphi=\hat\varphi\cdot\psi$ and
$\hat\varphi$ is as in item ii). Conversely, the same formula shows the equivalence
$i)\Leftrightarrow ii)$.

Let us finally prove i).
Since $\phi(P_{F})\subset \LC_{\phi,\varepsilon_{\phi}}$, the
  extension $Z(\hat{\mathbf{L}}_{\phi})\injo \LC_{\phi,\varepsilon_{\phi}}\twoheadrightarrow
W_{F}$  is a pullback of the extension   
$Z(\hat{\mathbf{L}}_{\phi})\injo \LC_{\phi,\varepsilon_{\phi}}/\phi(P_{F})\twoheadrightarrow
W_{F}/P_{F}$. Therefore, there is a splitting as in i) if and only if the latter
extension splits. Now, recalling that
$\phi=\hat\phi\times\id_{P_{F}}$ and $\psi|_{P_{F}}=
1\times\id_{P_{F}}$, we have a commutative diagram
$$\xymatrix{
  Z(\hat{\mathbf{L}}_{\phi})\ar@{^{(}->}[r] &
  \LC_{\phi,\varepsilon_{\phi}}/\phi(P_{F}) \ar@{->>}[r] & W_{F}/P_{F}
  \\
 P_{F} \ar@{^{(}->}[r] \ar[u]^{\hat\phi} &
  W_{F} \ar@{->>}[r]  \ar[u]^{\psi} & W_{F}/P_{F} \ar@{=}[u]
  }$$
  that shows that the class of the upper line in
  $H^{2}(W_{F}/P_{F},Z(\hat{\mathbf{L}}_{\phi}))$ is the
  pushforward $\hat\phi(\EC)$ of  the canonical extension $\EC=[W_{F}/\o{[P_{F},P_{F}]}]\in
  H^{2}(W_{F}/P_{F},P_{F}^{\rm ab})$. But the latter is known to vanish, e.g., by the main
  result of \cite{Iwasawa} (whose proof works also in equal characteristic).
\end{proof}

\alin{The category $\Rep_{R}^{\phi}(G'_{\phi})$}  \label{subsec:twisted_depth_zero}
 Let us assume that  $C_{\hat{\mathbf{G}}}(\phi)$ is a Levi subgroup. 
 In accordance with our notation in the introduction and in \cite{Datfuncto},  we write
 ${\mathbf{G}}_{\phi}=\mathbf{L}_{\phi}$\index[notation]{Gphi@${\mathbf{G}}_{\phi}$}
 and we denote by ${\mathbf{G}}'_{\phi}$ an inner form of ${\mathbf{G}}_{\phi}$.
 Then we have $C_{\hat{\mathbf{G}}}(\phi)=\widehat{\mathbf{G}_{\phi}}=\widehat{\mathbf{G}'_{\phi}}$,
 and any choice of $\psi$ as in Lemma \ref{splittings_tr} provides a tamely ramified
 embedding $^{L}\mathbf{G}_{\phi}\injo {^{L}\mathbf{G}}$, through which the morphism $\phi$ factors.

\begin{cor}\label{coro_relevant}
  $\phi$ is a wild inertia parameter for $\mathbf{G}'_{\phi}$.
\end{cor}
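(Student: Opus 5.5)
We must show that $\phi$, viewed through the tame embedding $^{L}\mathbf{G}_{\phi}\injo {^{L}\mathbf{G}}$ provided by a splitting $\psi$ as in Lemma \ref{splittings_tr}, is a wild inertia parameter of the inner form $\mathbf{G}'_{\phi}$. Concretely, $\phi$ factors as $\phi=\xi_{\psi}\circ\phi_{L}$ with $\phi_{L}:P_{F}\To{}{^{L}\mathbf{G}_{\phi}}$, and the claim concerns $\phi_{L}$. It asks for an extension $\varphi'_{L}:W'_{F}\To{}{^{L}\mathbf{G}_{\phi}}$ of $\phi_{L}$ that is relevant to $\mathbf{G}'_{\phi}$. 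The plan is to build an extension whose image lies in no proper Levi subgroup of $^{L}\mathbf{G}_{\phi}$. Such a parameter is relevant to every inner form, since the only Levi subgroup containing its image is $^{L}\mathbf{G}_{\phi}$ itself, and that one is always relevant.

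\textbf{Step 1: a tame extension.} By Lemma \ref{splittings_Levi}, there is a $1$-cocycle $\hat\varphi:W_{F}\To{}Z(\hat{\mathbf{L}}_{\phi})$ extending $\hat\phi$. Via $(\id\times\psi)^{-1}$, this gives an $L$-homomorphism
\[\varphi_{L}=\hat\varphi\times\id:\,W_{F}\To{}{^{L}\mathbf{G}_{\phi}}\]
that extends $\phi_{L}$ and takes values in $Z(\widehat{\mathbf{G}_{\phi}})\rtimes W_{F}$. Because $\hat\varphi$ is central, conjugation by $\varphi_{L}(w)$ acts on $\widehat{\mathbf{G}_{\phi}}$ exactly as the $L$-action of $w$. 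This action preserves the pinning $\varepsilon_{\phi}$, since the $L$-group is formed using the section attached to $\varepsilon_{\phi}$.

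\textbf{Step 2: adding a principal $\SL_{2}$.} Let $\alpha:\SL_{2}\To{}\widehat{\mathbf{G}_{\phi}}$ be the principal $\SL_{2}$ attached to $\varepsilon_{\phi}$; it is fixed pointwise by the $W_{F}$-action. Put $\varphi'_{L}:=\varphi_{L}\times\alpha$. We then compute
\[C_{\widehat{\mathbf{G}_{\phi}}}(\varphi'_{L})^{\circ}=C_{\widehat{\mathbf{G}_{\phi}}^{W_{F},\circ}}(\alpha)^{\circ}=Z\bigl(\widehat{\mathbf{G}_{\phi}}^{W_{F},\circ}\bigr)^{\circ}=Z(\widehat{\mathbf{G}_{\phi}})^{W_{F},\circ}.\]
The second equality holds because a principal $\SL_{2}$ remains principal in the fixed-point group (Steinberg). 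The third equality is the lemma just proved about pinning-preserving actions. This is exactly the computation already carried out in part ii) of the Proposition on wild inertia parameters, now applied with $\mathbf{G}$ replaced by $\mathbf{G}_{\phi}$ and $\phi$ by $\phi_{L}$.

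\textbf{Step 3: relevance.} Let $\MC$ be any Levi subgroup of $^{L}\mathbf{G}_{\phi}$ containing $\varphi'_{L}(W'_{F})$. Then $Z(\MC)^{\circ}$ centralizes the image of $\varphi'_{L}$, so
\[Z(\MC)^{\circ}\subseteq Z(\widehat{\mathbf{G}_{\phi}})^{W_{F},\circ}=Z({^{L}\mathbf{G}_{\phi}})^{\circ}.\]
Hence $\MC={^{L}\mathbf{G}_{\phi}}$, which is relevant to the inner form $\mathbf{G}'_{\phi}$. It follows that $\varphi'_{L}$ is relevant to $\mathbf{G}'_{\phi}$, and therefore $\phi_{L}$ is a wild inertia parameter for $\mathbf{G}'_{\phi}$.

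\textbf{Main obstacle.} The delicate point is the second equality in Step 2. One needs the pinning-preserving $W_{F}$-action on $\widehat{\mathbf{G}_{\phi}}$, which Step 1 supplies, and one needs $\alpha$ to be principal inside $\widehat{\mathbf{G}_{\phi}}^{W_{F},\circ}$. I would take $\alpha$ built from the sums $\sum_{\gamma}X_{\gamma\beta}$ over $W_{F}$-orbits of simple roots. A principal $\SL_{2}$ of the fixed-point group has centralizer equal to its center times finite, and that gives the equality. The rest is bookkeeping identical to the earlier proposition.
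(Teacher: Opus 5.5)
Your proposal is correct and is essentially the paper's argument. The paper invokes part ii) of the Proposition on wild inertia parameters for $\mathbf{G}'_{\phi}$ and, concretely, extends $\hat\varphi\rtimes\id$ by a principal $\SL_{2}$ of the fixed-point group, so that the resulting $\varphi'$ lies in no proper Levi $L$-subgroup of $^{L}\mathbf{G}_{\phi}$; this is exactly your Steps 1--3. Your only, harmless, variation is to take the principal $\SL_{2}$ of $\widehat{\mathbf{G}_{\phi}}$ attached to the $W_{F}$-stable pinning and then check that it stays principal in $\widehat{\mathbf{G}_{\phi}}^{W_{F},\circ}$, rather than choosing one there directly.
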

\begin{proof}
  This follows from Proposition \ref{dual_embeddings_phi} ii) applied to $\mathbf{G}'_{\phi}$.
  Concretely, with
  $\hat\varphi$ as in ii) of Lemma \ref{splittings_Levi}, we define $\varphi'$ by
  $\varphi'_{|W_{F}}=\hat\varphi\rtimes\id$ and $\varphi'_{|\SL_{2}}$ given by a principal $\SL_{2}$
  of $C_{\hat{\mathbf{G}}}(\hat\varphi\rtimes\id)$.  Then $\varphi'$ does not factor through any
  proper Levi $L$-subgroup of $^{L}\mathbf{G}_{\phi}$. 
\end{proof}

Let us now fix a $1$-cocycle $\hat\varphi : W_{F}\To{} Z(C_{\hat{\mathbf{G}}}(\phi))$ as in
 Lemma \ref{splittings_Levi} ii). Multiplication by $\hat\varphi$ induces a bijection
 $Z^{1}(W_{F},\widehat{\mathbf{G}_{\phi}'})_{1} \simto Z^{1}(W_{F},\widehat{\mathbf{G}_{\phi}'})_{\hat\phi}$
 where the index prescribes the restriction of cocycles to $P_{F}$. We are interested here in the
 representation theoretic counterpart of this bijection.
  
 Let $R$ be a commutative $\Rmin$-algebra.
 Recall from the introduction the full subcategory $\Rep_{R}^{1}(G'_{\phi})$  of
 $\Rep_{R}(G'_{\phi})$ consisting of all depth-zero $R$-representations, i.e., whose objects are given by 
 \(\Rep_{R}^{1}(G'_{\phi})=\left\{V\in\Rep_{R}(G'_{\phi}), V=\sum_{x\in \BT({\mathbf{G}}'_{\phi},F)} e_{G'_{\phi,x,0+}}
 V\right\},\) 
where  $\BT({\mathbf{G}}'_{\phi},F)$ denotes the extended Bruhat--Tits building of ${\mathbf{G}}'_{\phi}$
and $e_{G'_{\phi,x,0+}}\in\HC_{R}(G'_{\phi})$ is the averaging idempotent along  the pro-$p$-radical
 $G'_{\phi,x,0+}$ of the parahoric group at the point $x$. 

  Borel's construction in  \cite[10.2]{BorelCorvallis} associates to the $1$-cocycle $\hat\varphi$
  chosen above  a character  
\begin{center}
  $\check\varphi : G'_{\phi}={\mathbf{G}}'_{\phi}(F)\To{}\CM^{\times}.$
\end{center}
Any other choice $\hat\varphi'$ differs from $\hat\varphi$ by a cocycle $\hat\delta \in
Z^{1}(W_{F},Z(\hat{\mathbf{G}}_{\phi}))$ such that $\hat\delta|_{P_{F}}=1\times\id$.
We then have $\check\varphi'=\check\varphi\check\delta$ for a character 
$\check\delta$ of $G'_{\phi}$ that is trivial
on 
$G'_{\phi,x,0+}$
for all $x\in\BT({\mathbf{G}}'_{\phi},F)$ 
(see Lemma \ref{depth_char}).
It follows that for every $x \in \BT({\mathbf{G}}'_{\phi},F)$, the restriction
$(\check\varphi)|_{G'_{\phi,x,0+}}$ is independent of the choice of $\hat\varphi$, 
and therefore the subcategory  $\check\varphi\otimes\Rep_{\CM}^{1}(G'_{\phi})$ of $\Rep_{\bC}(G'_{\phi})$ is
also independent of this choice. 
Now, the expected compatibility between the Langlands correspondence and twisting naturally leads
us to put 
$\Rep_{\CM}^{\phi}(G'_{\phi}):=\check\varphi\otimes\Rep_{\CM}^{1}(G'_{\phi}).$
It is defined over any commutative $\ZM[\mu_{p^{\infty}},\frac 1p]$-algebra $R$ by the following formula for its objects
\[\Rep_{R}^{\phi}(G'_{\phi})=\left\{V\in\Rep_{R}(G'_{\phi}),
  V=\sum_{x\in \BT({\mathbf{G}}'_{\phi},F)} e^{\phi}_{x}  V\right\}\]
where $e_{x}^{\phi}\in RG'_{\phi,x,0+}$ is the idempotent 
associated to the restriction of any $\check\varphi$ to $G'_{\phi,x,0+}$.
Actually, the next lemma together with the fact that Borel's procedure
produces a group homomorphism
$H^{1}(W_{F},Z(\hat{\mathbf{G}}_{\phi})) \To{}
\Hom(\mathbf{G}'_{\phi}(F),\CM^{\times})$, shows that 
we can choose $\hat\varphi$ such that $\check\varphi$ has $p$-power
order,  and in
particular is valued in $\ZM[\mu_{p^{\infty}},\frac 1p]^{\times}$. We then have
$\Rep_{R}^{\phi}(G'_{\phi})=\check\varphi\otimes\Rep_{R}^{1}(G'_{\phi}).$

\begin{lemme} \label{p-power_order_cocycle}
  Let $\mathbf{H}$ be a tamely ramified reductive group over $F$. 
  Given any $\hat\varphi\in Z^{1}(W_{F},Z(\hat{\mathbf{H}}))$, one can
  find $\hat\varphi'\in Z^{1}(W_{F},Z(\hat{\mathbf{H}}))$ such that
  $\hat\varphi'|_{P_{F}}=\hat\varphi|_{P_{F}}$ and
  $[\hat\varphi']$ has $p$-power order in the abelian group
  $H^{1}(W_{F},Z(\hat{\mathbf{H}}))$.
\end{lemme}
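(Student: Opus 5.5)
Set $Z:=Z(\hat{\mathbf{H}})$, with its finite, tamely ramified action of $W_F$ (trivial on $P_F$). The plan is to reduce the statement to a divisibility property of the image of $H^{1}(W_{F}/P_{F},Z)$ inside $H^{1}(W_{F},Z)$. First, observe that the restriction $\hat\varphi|_{P_F}$ is a continuous homomorphism $P_F\To{}Z$, hence has image in a finite $p$-group, since $P_F$ is pro-$p$ and $Z$ has no small subgroups. Second, use the inflation--restriction sequence
\[ 0\To{} H^{1}(W_{F}/P_{F},Z)\To{} H^{1}(W_{F},Z)\To{}H^{1}(P_{F},Z)^{W_F}. \]
Cohomology classes with the same restriction to $P_F$ differ by an inflated class from $W_F/P_F$. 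Moreover, changing $\hat\varphi$ within its cohomology class changes $\hat\varphi|_{P_F}$ only by a coboundary $z\mapsto {}^{\gamma}z\,z^{-1}$, which vanishes on $P_F$ because $P_F$ acts trivially on $Z$. So it suffices to find $\beta\in H^{1}(W_F/P_F,Z)$ such that $[\hat\varphi]\cdot\beta$ has $p$-power order; a cocycle representative $\hat\varphi'=\hat\varphi\cdot b$, with $b$ inflated from $W_F/P_F$, then satisfies $\hat\varphi'|_{P_F}=\hat\varphi|_{P_F}$.

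Next, find $N$ prime to $p$ such that the restriction of $[\hat\varphi]^{N}$ to $P_F$ is trivial. The image of $\hat\varphi|_{P_F}$ is a finite $p$-group, say of exponent $p^{a}$. Hence $[\hat\varphi]^{p^{a}}$ restricts trivially and lies in $H^{1}(W_F/P_F,Z)$; call this class $\alpha$. It remains to show that $\alpha$ can be written as $\alpha=\beta^{-p^{a}}\cdot\epsilon$ with $\epsilon$ of $p$-power order. Then $([\hat\varphi]\beta)^{p^{a}}=\epsilon$, so $[\hat\varphi]\beta$ has $p$-power order, as required.

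The main obstacle is this divisibility statement: in $A:=H^{1}(W_F/P_F,Z)$, every element should be, up to a $p$-power torsion element, a $p^{a}$-th power. I would handle it by showing that the quotient $A/A[p^\infty]$ is uniquely (or at least) $p$-divisible, in the following steps.
\begin{enumerate}
\item Write $W_F/P_F=I_F/P_F\rtimes\langle\Fr\rangle$, where $I_F/P_F\simeq\prod_{\ell\neq p}\ZM_\ell(1)$ is prime-to-$p$ profinite.
\item Since the action factors through a finite quotient of $W_F/P_F$ and $Z=Z^{\circ}\times(\text{finite})$ up to isogeny, it is enough to treat the connected torus $Z^{\circ}$ and the finite group $\pi_0(Z)$ separately, using the long exact sequence.
\item For the finite part, $H^{1}$ is finite, and its prime-to-$p$ part is handled by raising to a prime-to-$p$ power. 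Since $p$ is invertible modulo the order, this is compatible with what we need, as we are allowed to absorb prime-to-$p$ torsion via $\beta$.
\item For the torus $Z^{\circ}$, computing through the semidirect product structure, $H^{1}$ is an extension of a finite group by a quotient of a complex torus (coinvariants of Frobenius on $H^{1}(I_F/P_F,Z^\circ)$ together with $(Z^\circ)_{\Fr}$-type terms). This quotient is divisible, and its torsion is finite up to divisible parts.
\end{enumerate}
Alternatively, one can use that $A$ is a compact-by-divisible group: $A$ is an extension of a finitely generated (finite) group by a divisible group $D$. Divisibility of $D$ lets us solve $\alpha_D=\beta^{-p^{a}}$ directly. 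For the finite quotient $A/D$, decompose $A/D$ into its $p$-part and its prime-to-$p$ part, on which $p^{a}$-th powering is bijective; choose $\beta$ to kill the prime-to-$p$ component, and let $\epsilon$ absorb the $p$-part. Finally, lift all of this back to $A$ using the divisibility of $D$.
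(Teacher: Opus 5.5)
Your proposal is correct, but it is organized differently from the paper's proof.

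\textbf{Your route.} You apply inflation--restriction along $P_F$ and push $[\hat\varphi]^{p^a}$ into $A=H^1(W_F/P_F,Z)$. You then need the structural fact that $A/A[p^\infty]$ is $p$-divisible.

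The cleanest justification of that fact is a second inflation--restriction, along $I_F/P_F$. First, $A$ contains $H^1(W_F/I_F,Z^{I_F})=(Z^{I_F})_{\Fr}$, whose identity component $D$ is a torus of finite index. Second, $A/(Z^{I_F})_{\Fr}$ injects into the torsion group $H^1(I_F/P_F,Z)$. Hence $A/D$ is torsion, and this is all your last paragraph uses.

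Two of your claims are therefore unnecessary: finiteness of $A/D$, and step 4. Step 4 is also garbled: what occurs are the Frobenius invariants of $H^1(I_F/P_F,Z)$, not its coinvariants.

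Concretely, the argument runs as follows.
\begin{itemize}
\item Kill the prime-to-$p$ component of the image of $\alpha$ in $A/D$ by a $p^a$-th power $y^{p^a}$.
\item Then $\gamma=\alpha y^{-p^a}$ satisfies $\gamma^{p^b}\in D$ for some $b$.
\item Pick $d\in D$ with $d^{p^{a+b}}=\gamma^{p^b}$.
\item Now $\gamma d^{-p^a}$ has $p$-power order, so $\beta=(yd)^{-1}$ works.
\end{itemize}

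\textbf{The paper's route.} The paper works with $I_F$ directly in $H^1(W_F,Z)$.
\begin{itemize}
\item Since $H^1(I_F,Z)$ is torsion, some power of $[\hat\varphi]$ is unramified.
\item A further power comes from the divisible group $Z^1(W_F/I_F,(Z^{I_F})^\circ)\simeq(Z^{I_F})^\circ$.
\item Extracting a root gives an unramified $\hat\delta$ with $[\hat\varphi\hat\delta^{-1}]$ of finite order $p^rM$.
\item The prime-to-$p$ part is removed by raising to a power $MM'\equiv 1\bmod p^r$. This does not change $\hat\varphi|_{P_F}$, because that homomorphism has order dividing $p^r$.
\end{itemize}

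\textbf{Comparison.} The paper only uses that $H^1(I_F,Z)$ is torsion and that $(Z^{I_F})^\circ$ is divisible; it never describes $H^1(W_F/P_F,Z)$ itself. The price is that it modifies $\hat\varphi$ by a power rather than by an inflated class. Your version keeps $\hat\varphi|_{P_F}$ fixed automatically at the cocycle level. It also yields the slightly stronger statement that $A/A[p^\infty]$ is uniquely $p$-divisible.

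One small slip: the exponent that kills the restriction to $P_F$ is $p^a$, not an integer prime to $p$.
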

\begin{proof}
  We first claim that there exists an unramified $1$-cocycle
  $\hat\delta\in Z^{1}(W_{F}/I_{F},Z(\hat{\mathbf{H}})^{I_{F}})$ such that
  $[\hat\varphi.\hat\delta^{-1}]$ has finite order in $H^{1}(W_{F},Z(\hat{\mathbf{H}}))$.
  Indeed, consider the  exact sequence
  $$ 1\To{}H^{1}(W_{F}/I_{F},Z(\hat{\mathbf{H}})^{I_{F}})
  \To{i} H^{1}(W_{F},Z(\hat{\mathbf{H}}))
  \To{p} H^{1}(I_{F},Z(\hat{\mathbf{H}}))^{W_{F}/I_{F}}.$$
Since $I_{F}$ is profinite and cohomology is continuous for the discrete topology on the
coefficients, the last $H^{1}$ is a torsion group. So there is some integer $N$ and some
unramified cocycle $\hat\delta_{1}$ such that $[\hat\varphi]^{N}=i[\hat\delta_{1}]$. Let
$c$ be the order of the component group $\pi_{0}(Z(\hat{\mathbf{H}})^{I_{F}})$. Then,
there is $\hat\delta_{2}\in Z^{1}(W_{F}/I_{F},(Z(\hat{\mathbf{H}})^{I_{F}})^{\circ})$ such that 
$[\hat\varphi]^{Nc}=i[\hat\delta_{2}]$. But
$Z^{1}(W_{F}/I_{F},(Z(\hat{\mathbf{H}})^{I_{F}})^{\circ})\simeq
(Z(\hat{\mathbf{H}})^{I_{F}})^{\circ}$  is a divisible group, 
so we can find $\hat\delta\in Z^{1}(W_{F}/I_{F},(Z(\hat{\mathbf{H}})^{I_{F}})^{\circ})$ such
that $[\hat\varphi]^{Nc}=i[\hat\delta]^{Nc}$, showing that $[\hat\varphi\hat\delta^{-1}]$
has finite order in $H^{1}(W_{F},Z(\hat{\mathbf{H}}))$.

Now, write the order of $[\hat\varphi\hat\delta^{-1}]$ as $p^{r}\cdot M$ with $M$ prime to $p$,
choose $M'\in \NM$ such that $MM'\equiv 1 [p^{r}]$, and set
$\hat\varphi':=(\hat\varphi\hat\delta^{-1})^{MM'}$.
Then $[\hat\varphi']$ has order $p^{r}$ in $H^{1}(W_{F},Z(\hat{\mathbf{H}}))$
and $\hat\varphi'|_{P_{F}}=(\hat\varphi|_{P_{F}})^{MM'}$.
Since $P_{F}$ acts trivially on $Z(\hat{\mathbf{H}})$, we have
$Z^{1}(P_{F},Z(\hat{\mathbf{H}}))=H^{1}(P_{F},Z(\hat{\mathbf{H}}))$, hence
the order of $\hat\varphi|_{P_{F}}$ in 
$Z^{1}(P_{F},Z(\hat{\mathbf{H}}))$ divides $p^{r}M$, hence it divides $p^{r}$, and the
congruence $MM'\equiv 1 [p^{r}]$ ensures that 
$(\hat\varphi|_{P_{F}})^{MM'}=\hat\varphi|_{P_{F}}$ in $Z^{1}(P_{F},Z(\hat{\mathbf{H}}))$.
\end{proof}

\alin{Levi-center-embeddings and root systems} We still assume that
$C_{\hat{\mathbf{G}}}(\phi)$ is a Levi subgroup. 
Fix an $F$-rational
  Levi-center-embedding $\iota: \mathbf{S}_{\phi}\injo {\mathbf{G}}$ in the set $I_{\phi}$
  of Proposition \ref{dual_embeddings_phi}.
From paragraph \ref{subsec:twisted_depth_zero}, we get  a class  of
  characters $\check\varphi: \,{\mathbf{G}}_{\iota}(F)\To{}\CM^{\times}$ modulo depth-$0$ characters,
  associated to $\phi$.

Let $\mathbf{S}$ be any tamely ramified maximal $F$-torus of ${\mathbf{G}}$ containing
$\iota(\mathbf{S}_{\phi})$, and let $E\supseteq F$ be a tamely ramified Galois extension that
splits $\mathbf{S}$. We then have a norm map  $N_{E|F}: \mathbf{S}(E)\to{}\mathbf{S}(F)$
and an inclusion $\mathbf{S}(F)\subseteq {\mathbf{G}}_{\iota}(F)$.
  \begin{lem} \label{rootsystemC}
For any character $\check\varphi$ of ${\mathbf{G}}_{\iota}(F)$ associated to $\phi$, the (absolute) root system of
${\mathbf{G}}_{\iota}$ with respect to $\mathbf{S}$ is given by 
$$\Sigma(\mathbf{S},{\mathbf{G}}_{\iota})=\left\{\alpha\in\Sigma(\mathbf{S},{\mathbf{G}}),\,
\check\varphi(N_{E|F}(\alpha^{\vee}(E^{\times}_{0+})))=\{1\}\right\}.$$
  \end{lem}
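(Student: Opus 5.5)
The plan is to translate the condition on the right-hand side, via the functoriality of Borel's construction with respect to the torus $\mathbf{S}$, into a condition on the restriction of $\hat\phi$ to $P_F$ paired with the coroot. Then we compare with the description $C_{\hat{\mathbf{G}}}(\phi)=\hat{\mathbf{L}}_{\phi}$ from Lemma \ref{splittings_Levi}. Since $\iota(\mathbf{S}_\phi)\subseteq \mathbf{S}\subseteq\mathbf{G}_\iota$, choose a dual embedding $\hat{\mathbf{S}}\injo \hat{\mathbf{G}}_\iota=\hat{\mathbf{L}}_{\phi}$. Then $\Sigma(\mathbf{S},\mathbf{G})^\vee$ is identified with $\Sigma(\hat{\mathbf{S}},\hat{\mathbf{G}})$, and $\Sigma(\mathbf{S},\mathbf{G}_\iota)^\vee$ with the roots of $\hat{\mathbf{L}}_\phi$, i.e.\ those $\hat\alpha=\alpha^\vee$ that are trivial on $Z(\hat{\mathbf{L}}_\phi)^{\circ}$, equivalently on $Z(\hat{\mathbf{L}}_\phi)$.

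The first step is to relate $\check\varphi|_{S}$ to a parameter of $\mathbf{S}$. The restriction of Borel's character $\check\varphi$ of $G_\iota$ to $S=\mathbf{S}(F)$ is the character attached by Langlands' correspondence for tori to the composite cocycle $W_F\to Z(\hat{\mathbf{L}}_\phi)\to\hat{\mathbf{S}}$, with $\hat{\mathbf{S}}$ carrying its $W_F$-action. Up to the usual Weil-restriction normalisation, this is the functoriality of Borel's construction for the inclusion $\mathbf{S}\subseteq\mathbf{G}_\iota$; it is the standard compatibility with the dual map $Z(\hat{\mathbf{G}}_\iota)\to\hat{\mathbf{S}}$.

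The second step precomposes with $\alpha^\vee:\mathbb{G}_m\to \mathbf{S}_E$ and the norm. By functoriality of local Langlands for tori (restriction to $W_E$ corresponds to composing with $N_{E|F}$), the character $x\mapsto\check\varphi(N_{E|F}(\alpha^\vee(x)))$ of $E^\times$ corresponds to the character of $W_E$ given by $\alpha^\vee\circ\hat\varphi|_{W_E}$, where $\alpha^\vee$ is viewed as a character of $\hat{\mathbf{S}}$. Here we use that $E$ splits $\mathbf{S}$, so $W_E$ acts trivially on $\hat{\mathbf{S}}$ and this is a genuine homomorphism $W_E\to\CM^\times$. By local class field theory, $E^\times_{0+}=1+\mathfrak p_E$ corresponds to the wild inertia $P_E=P_F$, since $E/F$ is tame. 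So the right-hand condition becomes $\alpha^\vee(\hat\phi(P_F))=1$. The choice of $\check\varphi$ does not matter, because changing it by a cocycle trivial on $P_F$ does not change this condition.

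The last step is the root-theoretic comparison. We have $\hat\phi(P_F)\subseteq Z(\hat{\mathbf{L}}_\phi)$, and $1\times P_F$ acts trivially on $\hat{\mathbf{G}}$. A root $\hat\alpha$ of $\hat{\mathbf{G}}$ with respect to $\hat{\mathbf{S}}$ is trivial on $\hat\phi(P_F)$ exactly when the root subgroup $U_{\hat\alpha}$ commutes with $\phi(P_F)$, that is, $U_{\hat\alpha}\subseteq C_{\hat{\mathbf{G}}}(\phi)=\hat{\mathbf{L}}_\phi$. This is equivalent to $\hat\alpha$ being a root of $\hat{\mathbf{L}}_\phi$, hence to $\alpha\in\Sigma(\mathbf{S},\mathbf{G}_\iota)$. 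This gives both inclusions.

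The main obstacle is the first step, the compatibility of Borel's construction on $G_\iota$ with restriction to the torus $S$. This requires care because $\hat{\mathbf{S}}$ is not $W_F$-equivariantly embedded as the standard torus. I would handle it by passing to a z-extension of $\mathbf{G}_\iota$, where Borel's character factors through $\mathbf{G}_{\iota,\mathrm{ab}}$ and the statement reduces to functoriality of Langlands' correspondence for tori along $\mathbf{S}\to\mathbf{G}_{\iota,\mathrm{ab}}$. Compatibility with the norm and the identification $1+\mathfrak p_E\leftrightarrow P_E$ are standard.
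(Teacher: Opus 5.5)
Your proposal is correct and follows essentially the same route as the paper: identify $\hat{\mathbf{S}}$ with a maximal torus of $C_{\hat{\mathbf{G}}}(\phi)$ so that $\Sigma(\mathbf{S},\mathbf{G}_\iota)^\vee$ becomes the roots of $C_{\hat{\mathbf{G}}}(\phi)$, view $\hat\varphi$ (through $Z(C_{\hat{\mathbf{G}}}(\phi))\subseteq\hat{\mathbf{S}}$) as the parameter of $\check\varphi|_{\mathbf{S}(F)}$, pass to $W_E$ via the norm, and use $E^\times_{0+}\leftrightarrow P_E=P_F$ to reduce to whether $\alpha^\vee$ kills $\hat\phi(P_F)$. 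Your additional details fill in steps the paper states without argument: the $z$-extension justification that Borel's character restricts to $\mathbf{S}(F)$ with the expected parameter, and the root-subgroup argument for the final equivalence.
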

  \begin{proof}
    The inclusion $\mathbf{S}\subseteq {\mathbf{G}}_{\iota}$ gives rise to a
    $W_{F}$-stable conjugacy class of maximal torus embeddings $\hat{\mathbf{S}}\injo
    C_{\hat{\mathbf{G}}}(\phi)$. Fix any such embedding and identify $\hat{\mathbf{S}}$ with a
    maximal torus in $C_{\hat{\mathbf{G}}}(\phi)$ thanks to this choice. Then, through the
    bijection $\alpha\leftrightarrow \alpha^{\vee}, \Sigma(\mathbf{S},{\mathbf{G}}) \leftrightarrow
    \Sigma(\hat{\mathbf{S}},\hat{\mathbf{G}})$, the subset
    $\Sigma(\mathbf{S},{\mathbf{G}}_{\iota})$ corresponds to
    $\Sigma(\hat{\mathbf{S}},C_{\hat{\mathbf{G}}}(\phi))$, by the construction in \ref{dual_embeddings}.

On the other hand,  $\hat{\mathbf{S}}$ contains $Z(C_{\hat{\mathbf{G}}}(\phi))$ and hence $\hat\varphi$ (as in \ref{splittings_Levi}.ii) and \ref{subsec:twisted_depth_zero}) factors through
$\hat{\mathbf{S}}$, giving a Langlands parameter that we still denote by $\hat\varphi\in Z^{1}(W_{F},\hat{\mathbf{S}})$.
This is the Langlands parameter of the character $\check\varphi: \mathbf{S}(F)\injo {\mathbf{G}}_{\iota}(F)\To{}\CM^{\times}$.
Then the Langlands parameter of
    the character $\check\varphi\circ N_{E|F} : \mathbf{S}(E)\To{}\CM^{\times}$ is
    $\hat\varphi|_{W_{E}}$. Accordingly, the character $\check\varphi\circ N_{E|F}\circ\alpha^{\vee}:\,E^{\times}\To{}\CM^{\times}$
corresponds via the local class field reciprocity to the character $\alpha^{\vee}\circ
\hat\varphi$ of $W_{E}$ (where $\alpha^{\vee}$ is first seen as a cocharacter of
$\mathbf{S}$, then as a character of
$\hat{\mathbf{S}}$). 
Its restriction to $E_{0+}^{\times}$ is therefore trivial if and only
if $\alpha^{\vee}\circ\hat\phi$ is a trivial character of  $P_{E}=P_{F}$, which is
equivalent to $\alpha^{\vee}$ being a root of $\hat{\mathbf{S}}$ in the centralizer
$C_{\hat{\mathbf{G}}}(\phi)$, as desired.
  \end{proof}

\alin{On the condition that $C_{\hat{\mathbf{G}}}(\phi)$ is a Levi subgroup} \label{centralizer_Levi}
Starting from Lemma \ref{splittings_Levi} above, our results have been conditional on the
hypothesis that $C_{\hat{\mathbf{G}}}(\phi)$ is a Levi subgroup, and this hypothesis will
be in force in the rest of the paper.
Here we discuss  how strong this hypothesis is.
It is easy to see that it implies that $\hat\phi$ factors through a maximal torus of
$\hat{\mathbf{G}}$, which implies in turn that $\hat\phi(P_{F})$ is an abelian group. In
general the reverse implications may not be true.
For this reason, Steinberg introduced in \cite{Steinberg_torsion} the notion of ``torsion prime'' for
$\hat{\mathbf{G}}$, which is a prime that either is bad for
  the root system of $\hat{\mathbf{G}}$, or divides the order of
  $\pi_{1}(\hat{\mathbf{G}}_{\rm der})$.
The following lemma follows from his results. 

\begin{lem}
  Let $P$ be a finite $p$-subgroup of $\hat{\mathbf{G}}$. Consider the following
  properties of $P$ :
  \begin{itemize}
  \item[(a)]  The centralizer $C_{\hat{\mathbf{G}}}(P)$ is a Levi subgroup of $\hat{\mathbf{G}}$.
  \item[(b)]  The connected centralizer $C_{\hat{\mathbf{G}}}(P)^{\circ}$ is a Levi
    subgroup of $\hat{\mathbf{G}}$.
  \item[(c)] $P$ is contained in a maximal torus of $\hat{\mathbf{G}}$.
  \item[(d)] $P$ is abelian.
  \end{itemize}
  Then the following holds.
  \begin{enumerate}
  \item We always have  $(a)\Rightarrow (b)\Rightarrow (c)\Rightarrow (d)$.
  \item If $p$ is good for the root system of $\hat{\mathbf{G}}$, then $(b)\Leftrightarrow
    (c)$.
  \item If $p$ is not a torsion prime of $\hat{\mathbf{G}}$, then
    $(a)\Leftrightarrow (b)\Leftrightarrow (c)\Leftrightarrow (d)$.
  \item If $p$ does not divide the order of the Weyl group of $\hat{\mathbf{G}}$, then
    $(a)$ 
    holds true for all $P$.
  \end{enumerate}
\end{lem}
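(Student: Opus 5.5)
We treat the four items in turn, relying throughout on Steinberg's results in \cite{Steinberg_torsion}.

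\emph{Item i).} For $(a)\Rightarrow(b)$: a Levi subgroup is connected, so $C_{\hat{\mathbf{G}}}(P)=C_{\hat{\mathbf{G}}}(P)^{\circ}$.

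For $(b)\Rightarrow(c)$: a Levi subgroup $\hat{\mathbf{M}}=C_{\hat{\mathbf{G}}}(P)^{\circ}$ is the centralizer of its connected center $Z(\hat{\mathbf{M}})^{\circ}$. Since $P$ centralizes $\hat{\mathbf{M}}$, it also centralizes every maximal torus $\hat{\mathbf{T}}\subseteq \hat{\mathbf{M}}$. Any such $\hat{\mathbf{T}}$ is a maximal torus of $\hat{\mathbf{G}}$, so $P\subseteq C_{\hat{\mathbf{G}}}(\hat{\mathbf{T}})=\hat{\mathbf{T}}$.

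For $(c)\Rightarrow(d)$: tori are abelian.

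\emph{Item ii).} We must show $(c)\Rightarrow(b)$ when $p$ is good. Take $P\subseteq \hat{\mathbf{T}}$, so that $C_{\hat{\mathbf{G}}}(P)^{\circ}$ is the connected reductive group generated by $\hat{\mathbf{T}}$ and the root subgroups $U_{\alpha}$ for $\alpha$ in
$$\Sigma_{P}=\{\alpha\in\Sigma(\hat{\mathbf{T}},\hat{\mathbf{G}}),\ \alpha|_{P}=1\}.$$
The point is that $\Sigma_{P}$ is a Levi subsystem, i.e. $\QM$-closed. The set $\Sigma_{P}$ is the kernel of the homomorphism $\ZM\Sigma\to \Hom(P,\CM^{\times})$, and the latter group is $p$-torsion. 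Hence the $\QM$-closure of $\Sigma_{P}$ lies in the $\ZM$-closure of $\Sigma_{P}$ up to $p$-torsion in $\ZM\Sigma/\ZM\Sigma_{P}$. Steinberg's criterion, in the same spirit as Borel--de Siebenthal, says that for a closed subsystem $\Sigma'$ the quotient $(\QM\Sigma'\cap\ZM\Sigma)/\ZM\Sigma'$ has torsion only at bad primes. So for $p$ good, $\Sigma_{P}$ equals $\QM\Sigma_{P}\cap\Sigma$, which is a Levi subsystem, and therefore $C_{\hat{\mathbf{G}}}(P)^{\circ}$ is a Levi subgroup.

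\emph{Item iii).} When $p$ is not a torsion prime, two further facts from Steinberg are needed:
\begin{itemize}
\item every abelian $p$-subgroup lies in a torus, which gives $(d)\Rightarrow(c)$;
\item centralizers of such subgroups, and more generally of subsets of a torus of order prime to the torsion primes, are connected, which gives $(b)\Rightarrow(a)$.
\end{itemize}
Combined with ii), since a non-torsion prime is in particular good, this yields the equivalence of all four properties.

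The main obstacle is citing Steinberg's results precisely. The connectedness statement is classical: the component group of $C(s)$ embeds into $\pi_{1}(\hat{\mathbf{G}}_{\rm der})$-type data. The torus-containment statement for abelian $p$-groups is Steinberg's theorem characterising torsion primes. If either citation fails, I would first reduce to the case of elementary abelian subgroups by induction on $|P|$, using that the centralizer of an element in a torus is connected reductive and again has no $p$-torsion.

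\emph{Item iv).} If $p\nmid|\Omega|$, then $p$ divides neither the bad primes (these divide $|\Omega|$) nor $|\pi_{1}(\hat{\mathbf{G}}_{\rm der})|$, which divides $|\Omega|$ through the connection index. So iii) applies, and it remains to show $(d)$ for every finite $p$-subgroup $P$. Any element of $\hat{\mathbf{G}}$ of finite order is semisimple, hence lies in a torus, so the normalizer $N(\hat{\mathbf{T}})$ has a Sylow $p$-subgroup of finite subgroups that, after conjugation, contains $P$. Since $p\nmid|\Omega|$, the image of $P$ in $\Omega=N(\hat{\mathbf{T}})/\hat{\mathbf{T}}$ is trivial, so $P\subseteq\hat{\mathbf{T}}$, which is $(c)$. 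To carry out the conjugation into $N(\hat{\mathbf{T}})$, I would use the standard fact that a finite group of semisimple elements normalizes some maximal torus, proved via a fixed point of $P$ acting on the variety of maximal tori with Euler characteristic $|\Omega|$ prime to $p$.
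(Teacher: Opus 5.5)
Your proposal is correct up to precise citations, but items ii)--iv) follow a different route from the paper.

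\begin{itemize}
\item \textbf{Item ii).} The paper treats cyclic $P$ by citing Adler--Spice. It then writes $P=P_{1}\times\cdots\times P_{r}$ inside $\hat{\mathbf{T}}$ and uses that an intersection of Levi subgroups containing $\hat{\mathbf{T}}$ is again a Levi subgroup. You instead treat any $P\subseteq\hat{\mathbf{T}}$ in one step, via the standard lattice characterization of good primes: $\ZM\Sigma/\ZM\Sigma'$ has no $p$-torsion for every closed subsystem $\Sigma'$. This is cleaner, and it shows that only the $p$-power order of $\Hom(P,\CM^{\times})$ matters.
\item \textbf{Item iii).} You take both $(d)\Rightarrow(c)$ and the connectedness of $C_{\hat{\mathbf{G}}}(P)$ from Steinberg. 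The paper proves $(d)\Rightarrow(a)$ by induction on the rank of $P$: it splits off a cyclic factor $P'$ and passes to the Levi subgroup $C_{\hat{\mathbf{G}}}(P')$, for which $p$ is again not a torsion prime. This way it only needs connectedness of centralizers of single elements (Steinberg's Cor.~2.16). That is exactly the induction your fallback points to, and it removes the citation risk you flag.
\item \textbf{Item iv).} The paper argues group-theoretically. By iii), the centralizer of a normal abelian subgroup $A\trianglelefteq P$ is a Levi subgroup $\hat{\mathbf{M}}$ normalized by $P$. Since $p\nmid|N_{\hat{\mathbf{G}}}(\hat{\mathbf{M}})/\hat{\mathbf{M}}|$, this forces $A\subseteq Z(P)$, hence $P$ is abelian. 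You get $(c)$ directly by placing $P$ in some $N(\hat{\mathbf{T}})$ and killing its image in $\Omega$; this is a valid alternative.
\end{itemize}

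Three points in iv) need fixing.
\begin{itemize}
\item The variety $\hat{\mathbf{G}}/N(\hat{\mathbf{T}})$ of maximal tori has Euler characteristic $1$, not $|\Omega|$; $|\Omega|$ is the Euler characteristic of the flag variety. This only helps you: the congruence $\chi(X^{P})\equiv\chi(X) \bmod p$ then shows that every finite $p$-group normalizes a maximal torus, for every $p$ (Borel--Serre). The hypothesis $p\nmid|\Omega|$ is used only afterwards, to see that the image of $P$ in $\Omega$ is trivial.
\item The ``standard fact'' that an arbitrary finite group of semisimple elements normalizes a maximal torus is false. The alternating group $A_{5}\subset\mathrm{SO}_{3}$ is a counterexample, since finite subgroups of the normalizer of a torus there are cyclic or dihedral. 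State it only for $p$-groups (or supersolvable groups).
\item Drop the sentence about Sylow subgroups of $N(\hat{\mathbf{T}})$; it does not make sense for that infinite group and is not needed.
\end{itemize}
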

Note that each implication $(d)\Rightarrow (c)\Rightarrow
(b)\Rightarrow (a)$ fails to be true in full generality.

\begin{proof}
  i) Only the implication $(b)\Rightarrow (c)$ is not tautological.
  So assume  $C_{\hat{\mathbf{G}}}(P)^{\circ}$ is a Levi subgroup of
  $\hat{\mathbf{G}}$. Then its  centralizer in $\hat{\mathbf{G}}$ is its center, which is
  contained in all its maximal tori. Since $P$ centralizes
  $C_{\hat{\mathbf{G}}}(P)^{\circ}$, we are done.

  ii) Assume $p$ good. When $P$ is cyclic, the fact that $C_{\hat{\mathbf{G}}}(P)^{\circ}$
   is a Levi subgroup is proved e.g. in \cite[Prop. A.7]{AdlerSpice}. For a more general $P$
   contained in a maximal torus $\hat{\mathbf{T}}$, let us split it as a product
   $P=P_{1}\times\cdots\times P_{r}$ of cyclic $p$-groups.
   Each $C_{\hat{\mathbf{G}}}(P_{i})^{\circ}$ is a Levi subgroup
   that contains $\hat{\mathbf{T}}$, hence their intersection
   $\bigcap_{i}C_{\hat{\mathbf{G}}}(P_{i})^{\circ}$ is also a Levi subgroup containing
   $\hat{\mathbf{T}}$. Since $C_{\hat{\mathbf{G}}}(P)=\bigcap_{i}C_{\hat{\mathbf{G}}}(P_{i})$, we
   infer that $C_{\hat{\mathbf{G}}}(P)^{\circ} = \bigcap_{i}C_{\hat{\mathbf{G}}}(P_{i})^{\circ}$
   is a Levi subgroup.

   iii) Assume $p$ is not a torsion prime of $\hat{\mathbf{G}}$.
   We only have to prove $(d)\Rightarrow (a)$, so let us assume that $P$ is abelian.
   When $P$ is cyclic, hence contained in a torus, we already know that
   $C_{\hat{\mathbf{G}}}(P)^{\circ}$ is a Levi subgroup by ii). On top of that,  since $p$
   does not divide $|\pi_{1}(\hat{\mathbf{G}}_{\rm der})|$, we have
   $C_{\hat{\mathbf{G}}}(P)=C_{\hat{\mathbf{G}}}(P)^{\circ}$  by \cite[Cor. 2.16]{Steinberg_torsion}. 
  For a general abelian $P$, let us argue by induction on its rank $r$, i.e. the number of cyclic
  factors of $P$.
  We can write  $P=P'\times P''$ with $P'$ cyclic and $P''$ of rank $r-1$. Then
  $P''$ is contained in $\hat{\mathbf{L}}:=C_{\hat{\mathbf{G}}}(P')$ 
  and we have
  $C_{\hat{\mathbf{G}}}(P)=C_{\hat{\mathbf{L}}}(P'')$. Since $\hat{\mathbf{L}}$ is a Levi subgroup of
  $\hat{\mathbf{G}}$, the prime  $p$ is not a torsion prime of $\hat{\mathbf{L}}$. So, by
  induction $C_{\hat{\mathbf{L}}}(P'')$ is a Levi subgroup of $\hat{\mathbf{L}}$, hence
  also of $\hat{\mathbf{G}}$.

  iv) Let us assume that $p$ does not divide the order of the Weyl group of $\hat{\mathbf{G}}$, and let $P$ be a
  finite $p$-subgroup of $\hat{\mathbf{G}}$. By \cite[Cor. 2.8]{Steinberg_torsion},
  $p$ is not a   torsion prime of $\hat{\mathbf{G}}$, hence, by iii),  the centralizer of any abelian
  subgroup $A$ of $P$ is a Levi subgroup $\hat{\mathbf{M}}$. If $A$ is normal in $P$, then
  $P$ normalizes $\hat{\mathbf{M}}$. Since
  $N_{\hat{\mathbf{G}}}(\hat{\mathbf{M}})/\hat{\mathbf{M}}$ is a subquotient of the
  Weyl   group of $\hat{\mathbf{G}}$, and thus has order prime to $p$, it follows that $P$ is actually contained in
  $\hat{\mathbf{M}}$, and thus centralizes $A$. However, a non-abelian $p$-group $Q$ always
  contains an abelian, normal, but non-central, subgroup ; for example, the subgroup 
  generated by the center $Z(Q)$ and any non-central element of the second-center (the
  inverse image of the center of $Q/Z(Q)$).  Therefore $P$ must be abelian.
\end{proof}

From the point of view of the representation theory of $G$, the hypothesis that
$p$ is good is quite mild. For a quasi-simple group, it is empty in type $A_{n}$ and it means $p\neq 2$ 
in types $B_{n}, C_{n}$ and $D_{n}$, $p>3$ in type
$G_{2}$, $E_{6}$ and $E_{7}$, and  $p>5$ in type $E_{8}$.
For such groups, not being a torsion prime is equivalent to being good except
in type $A_{n}$. There, it is
at least  satisfied if $p$ does not divide $n+1$, but, for example $\GL_{n}$ has no
torsion prime.
The hypothesis that $p$ does not divide the order of the Weyl group is
obviously a much stronger one, since it excludes $\GL_{n}$ for $n\geq p$.

\subsection{Ramification groups and twisted Levi sequences}\label{sec:ramif-groups-twist}
We denote by $I_{F}^{r}$, $r\in\RM_{+}$, the ramification subgroups of the Galois group $\Gamma_{F}$
in the upper numbering. We also put
$I_{F}^{r+}:=\o{\bigcup_{s>r}I_{F}^{s}}$. So we have $I_{F}^{0}=I_{F}$
and $I_{F}^{0+}=P_{F}$. As seems to be customary,  we  use the notation $\wt\RM:=\RM\sqcup
\{r+,r\in\RM\}$, which is ordered by letting $r<r+<s$ for any $r<s\in\RM$.

\begin{center}
\emph{We fix a wild inertia parameter $\phi:P_{F}\To{}{{}^L{\mathbf{G}}}$ and we assume that
  $C_{\hat{\mathbf{G}}}(\phi)$ is a Levi subgroup.}
\end{center}

By Lemma \ref{splittings_Levi}, this implies that ${\mathbf{G}}$ is tamely ramified and that
$\hat\phi(P_{F})$ is a finite abelian $p$-group contained in the center of
$C_{\hat{\mathbf{G}}}(\phi)=\hat{\mathbf{G}}_{\phi}=\hat{\mathbf{L}}_{\phi}$. 
Recall  that
$\mathbf{S}_{\phi}$\index[notation]{Sphi@${\mathbf{S}}_{\phi}$} denotes the $F$-torus that is dual to
$\hat{\mathbf{S}}_{\phi}=(\hat{\mathbf{G}}_{\phi})_{\rm ab}$\index[notation]{Sphihat@$\hat{\mathbf{S}}_{\phi}$} with its canonical Galois action. We
are going to define a filtration of $\mathbf{S}_{\phi}$ by $F$-subtori. 
For this, we assume from now on that the following hypothesis is satisfied:
\begin{equation}\label{H1}
  \tag{\em H1} \emph{$p$ is not a torsion prime  of $\hat{\mathbf{G}}$.}
\end{equation}

\alin{The groups ${\mathbf{G}}_{\phi,r}$ and $\mathbf{S}_{\phi,r}$} \label{defGphir}
Fix $r\in\widetilde\RM_{>0}$. We put
$\hat{\mathbf{G}}_{\phi,r}:=C_{\hat{\mathbf{G}}}(\phi(I_{F}^{r}))^{}$
and $\hat{\mathbf{S}}_{\phi,r}:=(\hat{\mathbf{G}}_{\phi,r})_{\rm ab}$. By
Hypothesis \eqref{H1} and Lemma \ref{centralizer_Levi}, $\hat{\mathbf{G}}_{\phi,r}$ is a Levi
subgroup of $\hat{\mathbf{G}}$ that contains 
$C_{\hat{\mathbf{G}}}(\phi)$. Therefore, the group
$\GC_{\phi,r}:=\hat{\mathbf{G}}_{\phi,r}\cdot\varphi(W_{F})$ does not
depend on the choice $\varphi$ of an extension
of $\phi$ to $W_{F}$ and sits in an exact sequence $\hat{\mathbf{G}}_{\phi,r}\injo
\GC_{\phi,r}\twoheadrightarrow W_{F}$ which provides a 
 finite outer action
$W_{F}\To{}{\rm Out}(\hat{\mathbf{G}}_{\phi,r})$ and thus defines a 
quasi-split reductive $F$-group ${\mathbf{G}}_{\phi,r}$. Since $\hat\phi(P_{F})$ is
contained in $C_{\hat{\mathbf{G}}}(\phi)$ hence also in $\hat{\mathbf{G}}_{\phi,r}$, the outer
action factors through $W_{F}/P_{F}$ and accordingly ${\mathbf{G}}_{\phi,r}$ is tamely ramified.
Also this outer action descends to
$\hat{\mathbf{S}}_{\phi,r}$, providing a dual tamely ramified $F$-torus $\mathbf{S}_{\phi,r}$ with a canonical
isomorphism $\mathbf{S}_{\phi,r}\simto Z({\mathbf{G}}_{\phi,r})^{\circ}$.

Let  us choose a pinning $\varepsilon_{\phi,r}$ of $\hat{\mathbf{G}}_{\phi,r}$ and consider
the stabilizer $\GC_{\phi,r,\varepsilon_{\phi,r}}$ of this pinning in $\GC_{\phi,r}$, which sits in an exact
sequence $ Z(\hat{\mathbf{G}}_{\phi,r})\injo
\GC_{\phi,r,\varepsilon_{\phi,r}}\twoheadrightarrow W_{F}$. Observe that $1\times P_{F}$
is contained in $\GC_{\phi,r,\varepsilon_{\phi,r}}$. By the same arguments as in the proof of Lemma
\ref{splittings_tr}, we have :

\begin{lem} 
  There exists a splitting $\psi_{r}:\, W_{F}\To{}\GC_{\phi,r,\varepsilon_{\phi,r}}$ of the
  exact sequence $ Z(\hat{\mathbf{G}}_{\phi,r})\injo
  \GC_{\phi,r,\varepsilon_{\phi,r}}\twoheadrightarrow W_{F}$ such that $\psi_{r}|_{P_{F}}=1\times\id$.
\end{lem}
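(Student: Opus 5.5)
I will follow the proof of Lemma \ref{splittings_tr} step by step, replacing $\hat{\mathbf{L}}_{\phi}$ by the Levi subgroup $\hat{\mathbf{G}}_{\phi,r}$, $\LC_{\phi}$ by $\GC_{\phi,r}$ and $\varepsilon_{\phi}$ by $\varepsilon_{\phi,r}$. The first step is to check that $1\times P_{F}$ lies in $\GC_{\phi,r,\varepsilon_{\phi,r}}$. Since $\mathbf{G}$ is tamely ramified (Lemma \ref{splittings_Levi}), $1\times P_{F}$ acts trivially on $\hat{\mathbf{G}}$. Writing $\phi=\hat\phi\times\id$, we have $\hat\phi(P_{F})\subset Z(C_{\hat{\mathbf{G}}}(\phi))\subset C_{\hat{\mathbf{G}}}(\phi)\subset\hat{\mathbf{G}}_{\phi,r}$. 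Hence $1\times P_{F}=\hat\phi(P_F)^{-1}\phi(P_F)$ is contained in $\hat{\mathbf{G}}_{\phi,r}\cdot\varphi(W_F)=\GC_{\phi,r}$, and since it acts trivially it fixes $\varepsilon_{\phi,r}$. Consequently the extension $Z(\hat{\mathbf{G}}_{\phi,r})\injo\GC_{\phi,r,\varepsilon_{\phi,r}}\twoheadrightarrow W_F$ is the pullback along $W_F\to W_F/P_F$ of
\[Z(\hat{\mathbf{G}}_{\phi,r})\injo\GC_{\phi,r,\varepsilon_{\phi,r}}/(1\times P_F)\twoheadrightarrow W_F/P_F.\]
A splitting of this quotient extension, pulled back and composed, gives $\psi_r$ with $\psi_r|_{P_F}=1\times\id$.

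To split the quotient extension, I will use the dévissage $Z(\hat{\mathbf{G}}_{\phi,r})^{\circ}\subset Z(\hat{\mathbf{G}}_{\phi,r})$. The obstruction coming from the torus part vanishes because $H^{2}_{cts}(W_F/P_F,\hat{\mathbf{S}})=\{1\}$ for any complex torus $\hat{\mathbf{S}}$ with finite action \cite[Lemma 3.8]{DHKM1}. This reduces the problem to splitting
\[\pi_0(Z(\hat{\mathbf{G}}_{\phi,r}))\injo\GC_{\phi,r,\varepsilon_{\phi,r}}/\bigl((1\times P_F)Z(\hat{\mathbf{G}}_{\phi,r})^{\circ}\bigr)\twoheadrightarrow W_F/P_F.\]
The reduction uses that the action on the center factors through a finite quotient \cite[Lemma 2.1.1]{Datfuncto}, which gives continuity. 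One then lifts a splitting modulo $Z^{\circ}$ to a set-theoretic continuous section and corrects the resulting $2$-cocycle, valued in $Z^{\circ}$, using the vanishing above.

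The finite part I expect to be the main step, and I would handle it exactly as in Lemma \ref{splittings}, via Kaletha's argument. After conjugating, assume $\varepsilon_{\phi,r}$ is the restriction of a $W_F$-stable pinning $\varepsilon$ of $\hat{\mathbf{G}}$, and work in ${}^{L}\mathbf{G}$ replaced by $\hat{\mathbf{G}}\rtimes(W_F/P_F)$, which is legitimate since $P_F$ acts trivially. The extension above is then a pullback of
\[Z(\hat{\mathbf{G}}_{\phi,r})\injo\NC(\hat{\mathbf{G}}_{\phi,r})_{\varepsilon_{\phi,r}}\twoheadrightarrow(\Omega(\hat{\mathbf{T}},\hat{\mathbf{G}})\rtimes W_F/P_F)_{\Delta(\hat{\mathbf{T}},\hat{\mathbf{B}}_{\phi,r})}.\]
The Tits lifting for $\varepsilon$ maps the latter group into $\NC(\hat{\mathbf{G}}_{\phi,r})_{\varepsilon_{\phi,r}}$ \cite[Prop 9.3.5]{SpringerLAG}. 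By \cite[Lemma 5.2.6]{Kaletha}, this map becomes a homomorphism modulo $Z(\hat{\mathbf{G}}_{\phi,r})^{\circ}$, which yields the required splitting of the finite extension. Combining the two steps gives the splitting of the quotient extension, and pulling back gives $\psi_r$ as claimed.
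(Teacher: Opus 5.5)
Your proposal is correct and is essentially the paper's own proof. The paper only says that the lemma follows ``by the same arguments as in the proof of Lemma \ref{splittings_tr}'', and you carry out exactly that transposition: you show $1\times P_F$ lies in the pinning stabilizer, pass to $W_F/P_F$, kill the torus part with \cite[Lemma 3.8]{DHKM1}, and split the $\pi_0$ part by Kaletha's Tits-lifting argument, with ${}^L\mathbf{G}$ replaced by $\hat{\mathbf{G}}\rtimes(W_F/P_F)$.
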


Such a $\psi_{r}$ induces an isomorphism $^{L}\mathbf{G}_{\phi,r}\simto \GC_{\phi,r}$ that is the
identity on $1\times P_{F}$, hence allows us to see $\phi$ as a wild inertia parameter for $\mathbf{G}_{\phi,r}$.

Now, the inclusion $\hat{\mathbf{G}}_{\phi}\subseteq \hat{\mathbf{G}}_{\phi,r}$
induces a $W_{F}$-equivariant epimorphism $\hat{\mathbf{S}}_{\phi}\twoheadrightarrow
\hat{\mathbf{S}}_{\phi,r}$, which on the dual side induces an $F$-rational embedding
$\mathbf{S}_{\phi,r}\injo \mathbf{S}_{\phi}$. Note that the latter embedding only depends
on $\phi$, and on no other choice.
For any $F$-rational Levi-center-embedding
$\iota:\mathbf{S}_{\phi}\injo {\mathbf{G}}$ in the set $I_{\phi}$ of Proposition
\ref{dual_embeddings_phi}, the restriction $\iota_{|\mathbf{S}_{\phi,r}}$ is an $F$-rational
Levi-center-embedding, and the twisted Levi subgroup $C_{\mathbf{G}}(\iota(\mathbf{S}_{\phi,r}))$ of
$\mathbf{G}$ has $L$-group $^{L}\mathbf{G}_{\phi,r}$.
\begin{lemme}\label{phi_admissible}
$\phi$ is a wild inertia parameter of $C_{\mathbf{G}}(\iota(\mathbf{S}_{\phi,r}))$.
\end{lemme}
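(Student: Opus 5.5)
We need to show $\phi$, viewed via $\psi_r$ as an $L$-homomorphism $P_F\to{^{L}\mathbf{G}_{\phi,r}}$, extends to a Langlands parameter relevant to $\mathbf{H}:=C_{\mathbf{G}}(\iota(\mathbf{S}_{\phi,r}))$. The plan is to apply part ii) of Proposition \ref{dual_embeddings_phi} with $\mathbf{H}$ in place of $\mathbf{G}$. Its hypotheses are three. First, the centralizer of $\phi$ in $\hat{\mathbf{H}}=\hat{\mathbf{G}}_{\phi,r}$ must be connected. Second, some extension of $\phi$ must preserve a pinning of that centralizer. Third, the relevant conjugacy class of Levi-center-embeddings dual to $Z(C_{\hat{\mathbf{H}}}(\phi))^{\circ}\subseteq\hat{\mathbf{H}}$ must contain an $F$-rational member.

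For the first two hypotheses, note that $C_{\hat{\mathbf{H}}}(\phi)=C_{\hat{\mathbf{G}}}(\phi)\cap\hat{\mathbf{G}}_{\phi,r}=C_{\hat{\mathbf{G}}}(\phi)=\hat{\mathbf{G}}_{\phi}$. This holds because $\hat{\mathbf{G}}_{\phi}\subseteq\hat{\mathbf{G}}_{\phi,r}$, and the centralizer of $\phi$ in $\hat{\mathbf{H}}$ is computed inside $\hat{\mathbf{G}}$, since $\psi_r|_{P_F}=1\times\id$. In particular it is a Levi subgroup of $\hat{\mathbf{G}}_{\phi,r}$, hence connected. For the pinning condition I would use Lemma \ref{splittings_Levi} i), applied either to $\mathbf{H}$ or directly in $\GC_{\phi,r}$. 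That lemma gives a splitting $\varphi:W_F\to\LC_{\phi,\varepsilon_\phi}$ extending $\phi$. Since $\hat{\mathbf{L}}_\phi\subseteq\hat{\mathbf{G}}_{\phi,r}$ and $\varphi(W_F)\subseteq\GC_{\phi,r}$, this $\varphi$ is an extension of $\phi$ in $\GC_{\phi,r}\simeq{^{L}\mathbf{G}_{\phi,r}}$, and it preserves the pinning $\varepsilon_\phi$ of $C_{\hat{\mathbf{H}}}(\phi)$.

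For rationality, the group $Z(C_{\hat{\mathbf{H}}}(\phi))^{\circ}=Z(\hat{\mathbf{G}}_\phi)^{\circ}$ is the same torus as before, with the same $W_F$-action. So its dual Levi-center-embeddings into $\mathbf{H}_{\bar F}$ have source $\mathbf{S}_\phi$. The embedding $\iota:\mathbf{S}_\phi\hookrightarrow\mathbf{G}$ is $F$-rational and factors through $\mathbf{H}$, because $\iota(\mathbf{S}_\phi)$ commutes with $\iota(\mathbf{S}_{\phi,r})$. It is a Levi-center-embedding in $\mathbf{H}$, since its centralizer there is $\mathbf{G}_\iota\subseteq\mathbf{H}$ and $\iota(\mathbf{S}_\phi)$ is the connected center of $\mathbf{G}_\iota$.

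The remaining and main point is to check that this $\iota$ lies in the $\mathbf{H}(\bar F)$-conjugacy class dual to $\{Z(\hat{\mathbf{G}}_\phi)^\circ\subseteq\hat{\mathbf{H}}\}$. I would take a maximal torus $\hat{\mathbf{T}}\subseteq\hat{\mathbf{G}}_\phi$ and run the construction of \ref{dual_embeddings} simultaneously for $\mathbf{G}$ and $\mathbf{H}$. The key input is a dual embedding $j:\mathbf{T}\hookrightarrow\mathbf{G}_{\bar F}$ with image in $\mathbf{G}_\iota\subseteq\mathbf{H}$. Because $\hat{\mathbf{H}}\subseteq\hat{\mathbf{G}}$ is dual to $\mathbf{H}\subseteq\mathbf{G}$ compatibly with maximal tori, this $j$ is also a dual embedding for $\hat{\mathbf{T}}\subseteq\hat{\mathbf{H}}$. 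The root subsystem $\Sigma_{\hat\iota}$ is the same in both cases, so the resulting embedding $\pi\circ j$ coincides with $\iota$ up to $\mathbf{G}_\iota(\bar F)$-conjugacy, and in particular up to $\mathbf{H}(\bar F)$-conjugacy. Once this is checked, Proposition \ref{dual_embeddings_phi} ii) shows that $\phi$ is a wild inertia parameter of $\mathbf{H}$.
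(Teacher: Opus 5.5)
Your proposal is correct and follows the paper's proof. The paper applies Proposition \ref{dual_embeddings_phi} ii) to $C_{\mathbf{G}}(\iota(\mathbf{S}_{\phi,r}))$, using three facts: $C_{\hat{\mathbf{G}}_{\phi,r}}(\phi)=C_{\hat{\mathbf{G}}}(\phi)$ is a Levi subgroup, hence connected; its pinnings are preserved by suitable extensions of $\phi$ by Lemma \ref{splittings_Levi} i); and $\iota$ gives an $F$-rational Levi-center-embedding dual to $Z(C_{\hat{\mathbf{G}}}(\phi))^{\circ}\subseteq\hat{\mathbf{G}}_{\phi,r}$. Your argument with a dual embedding $j$ landing in $\mathbf{G}_\iota$ simply makes this last duality explicit, where the paper only asserts it.
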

\begin{proof} 
  This follows from  ii) of Proposition \ref{dual_embeddings_phi} applied to the group
  $\mathbf{G}'_{\phi,r}:=C_{\mathbf{G}}(\iota(\mathbf{S}_{\phi,r}))$. Indeed, since
  $C_{\hat{\mathbf{G}}_{\phi,r}}(\phi)=C_{\hat{\mathbf{G}}}(\phi)$ is a Levi subgroup of $\hat{\mathbf{G}}_{\phi,r}$, it is
  connected, and any pinning is normalized by a suitable extension of $\phi$, according to Lemma
  \ref{splittings_Levi} i). On the other hand,
  $\iota$ induces a $F$-rational Levi-center-embedding $\mathbf{S}_{\phi}\injo
  \mathbf{G}_{\phi,r}'=C_{\mathbf{G}}(\iota(\mathbf{S}_{\phi,r}))$ that is dual to
  the Levi-center-embedding $\hat\iota :\,Z(C_{\hat{\mathbf{G}}}(\phi))^{\circ}\injo \hat{\mathbf{G}}_{\phi,r}$.
  So we may apply Proposition \ref{dual_embeddings_phi} ii).
\end{proof}

\begin{lemme} \label{rootsystemCr}
  Let $\iota:\mathbf{S}_{\phi}\injo {\mathbf{G}}$ be an $F$-rational Levi-center-embedding
in the set $I_{\phi}$ of Proposition \ref{dual_embeddings_phi},  and let $\mathbf{S}$ be 
a maximal $F$-torus of ${\mathbf{G}}_{\iota}$ split
  by some tamely ramified Galois extension $E$ of $F$. Then
for any character $\check\varphi$ of ${\mathbf{G}}_{\iota}(F)$ associated to $\phi$ as in
\ref{subsec:twisted_depth_zero}, we have  
$$\Sigma(\mathbf{S},C_{\mathbf{G}}(\iota(\mathbf{S}_{\phi,r})))=\left\{\alpha\in\Sigma(\mathbf{S},{\mathbf{G}}),\,
\check\varphi(N_{E|F}(\alpha^{\vee}(E^{\times}_{r})))=\{1\}\right\}.$$
\end{lemme}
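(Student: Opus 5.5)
We follow the proof of Lemma \ref{rootsystemC}, replacing $P_{F}$ by $I_{F}^{r}$ and $E^{\times}_{0+}$ by $E^{\times}_{r}$. The argument has a dual side, a torus side, and a class field theory step that joins them.

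\emph{Dual side.} Since $\mathbf{S}\subseteq\mathbf{G}_{\iota}\subseteq\mathbf{G}'_{\phi,r}:=C_{\mathbf{G}}(\iota(\mathbf{S}_{\phi,r}))$, the construction of \ref{dual_embeddings} provides a $W_{F}$-stable conjugacy class of embeddings $\hat{\mathbf{S}}\injo C_{\hat{\mathbf{G}}}(\phi)\subseteq\hat{\mathbf{G}}_{\phi,r}$. Fixing one such embedding, the bijection $\alpha\leftrightarrow\alpha^{\vee}$ identifies $\Sigma(\mathbf{S},\mathbf{G}'_{\phi,r})$ with $\Sigma(\hat{\mathbf{S}},\hat{\mathbf{G}}_{\phi,r})$. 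This uses that $\iota|_{\mathbf{S}_{\phi,r}}$ is the Levi-center-embedding dual to $Z(\hat{\mathbf{G}}_{\phi,r})^{\circ}\subseteq\hat{\mathbf{G}}$, which follows by restricting the duality of \ref{dual_embeddings} along $\hat{\mathbf{S}}_{\phi}\twoheadrightarrow\hat{\mathbf{S}}_{\phi,r}$ (equivalently $\mathbf{S}_{\phi,r}\injo\mathbf{S}_{\phi}$). Since $\hat{\mathbf{G}}_{\phi,r}=C_{\hat{\mathbf{G}}}(\phi(I_{F}^{r}))$ and $\phi(I^{r}_{F})=\hat\phi(I_{F}^{r})\times I^{r}_{F}$, with $1\times P_{F}$ acting trivially on $\hat{\mathbf{G}}$ and $\hat\phi(P_F)\subseteq Z(C_{\hat{\mathbf{G}}}(\phi))\subseteq\hat{\mathbf{S}}$, a root $\alpha^{\vee}$ of $\hat{\mathbf{S}}$ in $\hat{\mathbf{G}}$ lies in $\hat{\mathbf{G}}_{\phi,r}$ if and only if $\alpha^{\vee}\circ\hat\phi$ is trivial on $I_{F}^{r}$.

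\emph{Torus side.} As in Lemma \ref{rootsystemC}, $\hat\varphi$ factors through $\hat{\mathbf{S}}$ and is the Langlands parameter of $\check\varphi|_{\mathbf{S}(F)}$. Then $\hat\varphi|_{W_{E}}$ is the parameter of $\check\varphi\circ N_{E|F}$ on $\mathbf{S}(E)$. Since $\mathbf{S}$ is split over $E$, the character $\check\varphi\circ N_{E|F}\circ\alpha^{\vee}$ of $E^{\times}$ corresponds under local class field theory to $\alpha^{\vee}\circ\hat\varphi|_{W_{E}}$.

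\emph{Joining the two sides.} Local class field theory for $E$ sends $I_{E}^{s}$ onto $E^{\times}_{s}$ (upper numbering, for all $s\in\wt\RM_{>0}$). So the character above is trivial on $E^{\times}_{r}$ if and only if $\alpha^{\vee}\circ\hat\phi$ is trivial on $I_{E}^{r}$, viewed inside $W_{E}$. It remains to compare $I_{E}^{r}$ with $I_{F}^{r}$. Since $E|F$ is tamely ramified with ramification index $e$, the Herbrand function gives $I_{E}^{er}=I_{F}^{r}$ for $r>0$, not $I_{E}^{r}=I_{F}^{r}$. This mismatch is the main obstacle: it depends on the normalization of the filtration on $E^{\times}$.

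I would resolve it by using the paper's convention, as in Yu's setup, that $E^{\times}_{r}$ is indexed by the valuation normalized to extend that of $F$, so that $E^{\times}_{r}=1+\{x: v_{F}(x)\geq r\}$. With this normalization $E^{\times}_{r}$ corresponds to $I_{E}^{er}=I_{F}^{r}$. I would check this against the case $r=0+$, which recovers Lemma \ref{rootsystemC}. Combining the three steps, $\alpha\in\Sigma(\mathbf{S},\mathbf{G}'_{\phi,r})$ if and only if $\check\varphi(N_{E|F}(\alpha^{\vee}(E^{\times}_{r})))=1$.

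Finally, the statement is independent of the choice of $\check\varphi$. Any two choices differ by a character whose composite with $N_{E|F}\circ\alpha^{\vee}$ is trivial on $E^{\times}_{0+}$, by Lemma \ref{depth_char}. Since $E^{\times}_{r}\subseteq E^{\times}_{0+}$ for $r>0$, the condition is unchanged.
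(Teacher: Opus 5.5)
Your proof is correct and is essentially the paper's argument. You identify $\Sigma(\mathbf{S},C_{\mathbf{G}}(\iota(\mathbf{S}_{\phi,r})))$ with the coroots $\alpha^{\vee}$ that kill $\hat\phi(I_F^{r})$, and then pass through the Langlands correspondence for $\mathbf{S}$, the norm $N_{E|F}$, and local class field theory for $E$, as in Kaletha's Lemma 3.6.1. Your explicit treatment of the normalization matches the paper's convention: its equality $I_E^{r}=I_F^{r}$ is your $I_E^{er}=I_F^{r}$, once the filtrations on $E$ are indexed by the valuation extending $v_F$. The closing remark on independence of $\check\varphi$ is harmless but unnecessary, since the argument applies to each $\check\varphi$ directly.
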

\begin{proof}
  As in the proof of Lemma \ref{rootsystemC}, fix a dual embedding $\hat{\mathbf{S}}\subset
  C_{\hat{\mathbf{G}}}(\phi)$. Then, by the construction in \ref{dual_embeddings}, the
    bijection $\alpha\leftrightarrow \alpha^{\vee}, \Sigma(\mathbf{S},{\mathbf{G}}) \leftrightarrow
    \Sigma(\hat{\mathbf{S}},\hat{\mathbf{G}})$ takes
    $\Sigma(\mathbf{S},C_{\mathbf{G}}(\iota(\mathbf{S}_{\phi,r})))$  to
    $\Sigma(\hat{\mathbf{S}},C_{\hat{\mathbf{G}}}(\phi(I_{F}^{r}))^{})=\{\alpha^{\vee}\in
    \Sigma(\hat{\mathbf{S}},\hat{\mathbf{G}}), \alpha^{\vee}\circ
    \hat\phi(I_{F}^{r})=\{1\}\}$. It remains to follow the proof of 
\cite[Lemma 3.6.1]{Kaletha}. 
Indeed  $\alpha^{\vee}\circ\hat\varphi|_{W_{E}}$
    corresponds to $\check\varphi\circ N_{E|F}\circ\alpha^{\vee}$ via the local class
    field reciprocity $E^{\times}\simto W_{E}^{\rm ab}$, while  the latter also takes $E_{r}^{\times}$ to
    the image of $I_{E}^{r}=I_{F}^{r}$ in $W_{E}^{\rm ab}$. The lemma follows.
\end{proof}

\alin{The twisted Levi sequence associated to $\phi$ and $\iota$} \label{deftwistedLeviseq}
We denote by $0<r_{0}<\hdots <r_{d-1}$ the jumps of the decreasing
filtration $(\mathbf{S}_{\phi,r})_{r>0}$ of
$\mathbf{S}_{\phi}$. Namely we have
$$ \{r_{0},\hdots, r_{d-1}\}=\{r>0,\, \mathbf{S}_{\phi,r+}\subsetneqq \mathbf{S}_{\phi,r}\}=
\{r>0, C_{\hat{\mathbf{G}}}(\phi(I_{F}^{r+}))^{}\supsetneqq C_{\hat{\mathbf{G}}}(\phi(I_{F}^{r}))^{}\}.$$
Note that $\mathbf{S}_{\phi,r}=\mathbf{S}_{\phi}$ for $r\leq r_{0}$ while
$\mathbf{S}_{\phi,r}=Z({\mathbf{G}})^{\circ}$ for $r>r_{d-1}$.
We also put  $r_{-1}:=0$ and $r_{d}:={\rm depth}(\phi):={\rm inf}\{r>0,\phi(I_{F}^{r})=\{1\}\}$, which
satisfies $r_{d}\geq r_{d-1}$.

Now fix an $F$-rational Levi-center-embedding $\iota :\, \mathbf{S}_{\phi}\injo
{\mathbf{G}}$ in $I_{\phi}$. In order to simplify the notation a bit, we set 
$${\mathbf{G}}_{\iota}^{i}:=C_{\mathbf{G}}(\iota(\mathbf{S}_{\phi,r_{i}}))
=C_{\mathbf{G}}(\iota(\mathbf{S}_{\phi,r_{i-1}+}))
 \hbox{ for }
i=0,\hdots, d-1 \hbox{ and } {\mathbf{G}}_{\iota}^{d}:={\mathbf{G}}$$
We thus obtain a tamely ramified twisted Levi sequence in  ${\mathbf{G}}$
$$\vec{\mathbf{G}}_{\iota}:=\left({\mathbf{G}}_{\iota}={\mathbf{G}}_{\iota}^{0}\subset\hdots\subset
  {\mathbf{G}}_{\iota}^{d}={\mathbf{G}}\right).$$ 
  
\subsection{Characters and idempotents}\label{sec:char-idemp}

In this section, we will use the Langlands correspondence for characters
described by Borel in
\cite[10.2]{BorelCorvallis}
to construct certain characters of
${\mathbf{G}}_{\iota}^{i}(F)$ that are suitable to apply Yu's procedure in
\cite{Yu_tamescusp} and obtain characters of certain open pro-$p$ subgroups of $G$.
Yu's work 
involves the group side analogue of the ramification filtration, namely
 the Moy--Prasad filtrations \cite{MP1, MP2}. For each point $x$ in the
(enlarged) Bruhat--Tits building $\BT({\mathbf{G}},F)$ of $\mathbf{G}$ we thus have a filtration
$(G_{x,r}={\mathbf{G}}(F)_{x,r})_{r\geq 0}$ of the 
stabilizer $G_{x}={\mathbf{G}}(F)_{x}$ of $x$ by compact, open, normal subgroups.
If we set ${G}_{x,r+}:=\bigcup_{s>r}{G}_{x,s}$, then $G_{x,0+}$ is
known to be the pro-$p$-radical of the parahoric group $G_{x,0}$.
We will need the following relation between both filtrations, which follows easily from
Yu's \cite[Theorem 7.10]{Yu_LLCtori}.

\begin{lemme} \label{depth_char}
  Let ${\mathbf{G}}$ be a tamely ramified reductive group over $F$ and let $\check\varphi :
  {\mathbf{G}}(F)\To{}\CM^{\times}$ be the character associated to some
$\hat\varphi\in H^{1}(W_{F},Z(\hat{\mathbf{G}}))$. Then $\check\varphi$ is trivial on
${\mathbf{G}}_{\rm sc}(F)$ and for every $x\in \BT({\mathbf{G}},F)$ and every
$r\in\wt\RM_{\geq 0}$ we have
$\check\varphi|_{G_{x,r}}\equiv 1 \Leftrightarrow \hat\varphi|_{I_{F}^{r}}\equiv 1$.
\end{lemme}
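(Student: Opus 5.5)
The plan is to reduce both assertions to statements about tori. For tori, Borel's correspondence is explicit, and \cite[Theorem 7.10]{Yu_LLCtori} compares the two filtrations directly.

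\emph{Triviality on $\mathbf{G}_{\rm sc}(F)$.} I would use the functoriality of Borel's construction \cite[10.2]{BorelCorvallis} with respect to the $F$-morphism $f:\mathbf{G}_{\rm sc}\to\mathbf{G}$. The dual map $\hat f:\hat{\mathbf{G}}\to\widehat{\mathbf{G}_{\rm sc}}=\hat{\mathbf{G}}_{\rm ad}$ kills $Z(\hat{\mathbf{G}})$. Hence the pullback $\check\varphi\circ f$ corresponds to the image of $\hat\varphi$ in $H^{1}(W_{F},Z(\hat{\mathbf{G}}_{\rm ad}))$, and this image is trivial. So $\check\varphi\circ f\equiv1$.

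\emph{Depth comparison.} Fix $x\in\BT(\mathbf{G},F)$. Since $\mathbf{G}$ is tame, I would choose a maximal $F$-torus $\mathbf{T}$, split over a tame extension $E$, whose apartment in $\BT(\mathbf{G},E)$ contains $x$. The key group-theoretic input I would use is the decomposition
$$G_{x,r}=T_{r}\cdot\big(G_{x,r}\cap f(\mathbf{G}_{\rm sc}(F))\big).$$
Here $T_{r}$ is the Moy--Prasad filtration subgroup of $T$, and this holds for every $r\in\wt\RM_{\geq0}$. One gets it from the description of $G_{x,r}$ as generated by $T_{r}$ and the filtered root subgroups. The latter lie in the image of the corresponding filtration subgroups of $\mathbf{G}_{\rm sc}$, after Galois descent from $E$ (cf.\ Yu's and Adler's work on tame groups).

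Granting this decomposition, the first part gives the equivalence $\check\varphi|_{G_{x,r}}\equiv1\Leftrightarrow\check\varphi|_{T_{r}}\equiv1$. Next, $\check\varphi|_{T}$ is the Borel character attached to the image of $\hat\varphi$ under $Z(\hat{\mathbf{G}})\hookrightarrow\hat{\mathbf{T}}$; this again follows from functoriality for $\mathbf{T}\hookrightarrow\mathbf{G}$. Yu's \cite[Theorem 7.10]{Yu_LLCtori} for the tame torus $\mathbf{T}$ then says that $\check\varphi|_{T_{r}}\equiv1$ if and only if this image is trivial on $I_{F}^{r}$. Finally, the inclusion $Z(\hat{\mathbf{G}})\subset\hat{\mathbf{T}}$ is injective and compatible with the $W_{F}$-actions. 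So a cocycle valued in $Z(\hat{\mathbf{G}})$ vanishes pointwise on $I_{F}^{r}$ exactly when its image does, and this concludes the argument.

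The main obstacle is the displayed decomposition of $G_{x,r}$, especially at $r=0$. At $r=0$ one must use the correct subgroup $T_{0}$, namely the connected-Néron-model or Iwahori-level piece, so that it matches the normalization in Yu's theorem. One must also ensure that the parahoric of $\mathbf{G}$ is $T_{0}$ times the image of the parahoric of $\mathbf{G}_{\rm sc}$. I would first check the decomposition after base change to $E$, where $\mathbf{T}$ is split and it is standard Bruhat--Tits theory, and then descend using tameness, i.e.\ $G_{x,r}=\mathbf{G}(E)_{x,r}\cap G$. If this descent is delicate, I would instead quote the corresponding statement from Yu's paper on Bruhat--Tits theory for tame groups.
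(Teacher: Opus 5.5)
Your argument for triviality on $\mathbf{G}_{\rm sc}(F)$ is fine; the gap is in the depth comparison at $r=0$, which the statement includes. The problem is not how $T_0$ is normalized. For an arbitrary tame maximal torus $\mathbf{T}$ with $x\in\mathcal{B}(\mathbf{T},F)$, the decomposition $G_{x,0}=T_0\cdot\bigl(G_{x,0}\cap f(\mathbf{G}_{\rm sc}(F))\bigr)$ is false. So is the equivalence $\check\varphi|_{G_{x,0}}\equiv1\Leftrightarrow\check\varphi|_{T_0}\equiv1$ that you derive from it. Here is a counterexample. Take $p$ odd, $\mathbf{G}=\mathrm{GL}_2$, $E=F(\sqrt{\varpi})$, $\mathbf{T}=\mathrm{Res}_{E/F}\mathbb{G}_m$, and $x\in\mathcal{B}(\mathbf{T},F)$. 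Then $G_{x,0}$ is an Iwahori subgroup, so $\det(G_{x,0})=\mathcal{O}_F^{\times}$. Let $\hat\varphi=\chi_E$ be the quadratic character of $W_F$ cut out by $E$, so that $\check\varphi=\chi_E\circ\det$. This character is trivial on $T(F)=E^{\times}$, because $\chi_E\circ N_{E/F}=1$, and it is trivial on $\mathrm{SL}_2(F)$. On the other hand $N_{E/F}(E^{\times})\cap\mathcal{O}_F^{\times}=\mathcal{O}_F^{\times 2}$, so $\check\varphi$ takes the value $-1$ on any $g\in G_{x,0}$ with $\det g$ a non-square unit. Hence $G_{x,0}\not\subseteq T(F)\cdot f(\mathbf{G}_{\rm sc}(F))$. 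No subgroup of $T(F)$ can serve as ``$T_0$'', and no descent argument from $E$ can produce the decomposition.

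The final step has a matching flaw. At $r=0$, Yu's criterion is about the \emph{class} of the image of $\hat\varphi$ in $H^{1}(I_F,\hat{\mathbf{T}})$. When $\mathbf{T}$ is ramified, $I_F$ acts nontrivially on $\hat{\mathbf{T}}$, and triviality of this class is weaker than pointwise vanishing of the cocycle. In the example, $\hat{\mathbf{T}}=\mathbb{C}^{\times}\times\mathbb{C}^{\times}$ and elements of $W_F\setminus W_E$ swap the two factors. The diagonal image of $\chi_E$ is then the coboundary of $(1,-1)$, even though $\chi_E|_{I_F}\neq 1$. The two errors cancel in this example, but the chain of equivalences is not valid.

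For $r\in\widetilde{\mathbb{R}}_{>0}$, including $0+$ (the only cases the rest of the paper relies on), your route can be made to work:
\begin{enumerate}
\item Since $\mathbf{T}$ is tame, $I_F^{r}\subseteq P_F$ acts trivially on $\hat{\mathbf{T}}$, so $H^{1}(I_F^{r},\hat{\mathbf{T}})=\Hom(I_F^{r},\hat{\mathbf{T}})$ and pointwise vanishing is the right condition.
\item The decomposition can be proved depth by depth from the Moy--Prasad isomorphisms $G_{x,s:s+}\simeq\mathfrak{g}_{x,s:s+}$. The toral part comes from $T_s$ and the root part from $\mathbf{G}_{\rm sc}(F)_{x,s}$, and one concludes by successive approximation.
\end{enumerate}
You would still have to write the second point out: the factorization over $E$ is not unique, so it does not descend formally.

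The paper avoids maximal tori altogether. It pushes $\hat\varphi$ along a $z$-extension $\tilde{\mathbf{G}}\to\mathbf{G}$ into the torus $Z(\hat{\tilde{\mathbf{G}}})$ dual to $\tilde{\mathbf{G}}_{\rm ab}$, and applies Yu's theorem to $\tilde{\mathbf{G}}_{\rm ab}$. It then transfers the conclusion using Kaletha's surjectivity of $\tilde{\mathbf{G}}(F)_{\tilde x,r}\to\tilde{\mathbf{G}}_{\rm ab}(F)_r$ and of $\tilde{\mathbf{G}}(F)_{\tilde x,r}\to G_{x,r}$. Triviality on $\mathbf{G}_{\rm sc}(F)$ is read off directly from Borel's construction.
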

\begin{proof}
  We need to go through Borel's procedure in \cite[10.2]{BorelCorvallis}. So let
  $\tilde{\mathbf{G}}\To{}{\mathbf{G}}$ be a $z$-extension, i.e., a central extension $\tilde{\mathbf{G}}$ of ${\mathbf{G}}$  by an induced torus ${\mathbf{Z}}$ such that the derived subgroup of $\tilde{\mathbf{G}}$ is  simply connected. 
On the dual side we get a $W_{F}$-equivariant embedding of
  $Z(\hat{\mathbf{G}})$ into the torus $Z(\hat{\tilde{\mathbf{G}}})$. Pushing $\hat\varphi$ by
  this embedding we get a Langlands parameter for the tamely ramified torus 
$\tilde{\mathbf{G}}_{\rm ab}$, whence a character 
$\tilde\theta$ of $\tilde{\mathbf{G}}_{\rm ab}(F)$. 
By \cite[Thm 7.10]{Yu_LLCtori} we have $\tilde\theta|_{\tilde{\mathbf{G}}_{\rm
    ab}(F)_{r}}\equiv 1 \Leftrightarrow \hat\varphi|_{I_{F}^{r}}\equiv 1$. 
Now, $\check\varphi$ is defined as follows.
The map $\tilde{\mathbf{G}}(F)\To{}{\mathbf{G}}(F)$ is surjective and 
 the character
$\tilde\theta:\tilde{\mathbf{G}}(F)\To{}\tilde{\mathbf{G}}_{\rm ab}(F)\To{}\CM^{\times}$ is
trivial on the kernel ${\mathbf{Z}}(F)$ of this map and on $\tilde{\mathbf{G}}_{\rm
  der}(F)={\mathbf{G}}_{\rm sc}(F)$. Therefore $\tilde\theta$ descends to 
the desired character $\check\varphi$ of ${\mathbf{G}}(F)$, which is trivial on (the image of)
${\mathbf{G}}_{\rm sc}(F)$. 
Now, for every $x\in \BT({\mathbf{G}},F)$ and every $\tilde x\in \BT(\tilde{\mathbf{G}},F)$ that projects onto $x$,
Lemma 3.5.3 of \cite{Kaletha} tells us 
that the maps $\tilde{\mathbf{G}}(F)_{\tilde x,r}\To{} \tilde{\mathbf{G}}_{\rm ab}(F)_{r}$ and 
$\tilde{\mathbf{G}}(F)_{\tilde x,r}\To{}{\mathbf{G}}(F)_{x,r}$ are both surjective. This implies the
equivalence claimed in the lemma.
\end{proof}

\begin{rema} \label{Borel_proc}
  Conversely, \emph{any character $\check\varphi :  {\mathbf{G}}(F)\To{}\CM^{\times}$ that is trivial on
${\mathbf{G}}_{\rm sc}(F)$ comes from some $\hat\varphi\in H^{1}(W_{F},Z(\hat{\mathbf{G}}))$
via Borel's procedure.} Indeed, with the notation of the above proof, the surjectivity of $\tilde{\mathbf{G}}(F)\To{}{\mathbf{G}}(F)$
allows one to inflate $\check\varphi$ to a character $\wt\theta$ of $\tilde{\mathbf{G}}(F)$ that is
trivial on ${\mathbf{Z}}(F)\tilde{\mathbf{G}}_{\rm der}(F)$. In particular $\tilde\theta$
factors through the
surjective map $\tilde{\mathbf{G}}(F)\To{}\tilde{\mathbf{G}}_{\rm ab}(F)$, giving  a character of $\tilde{\mathbf{G}}_{\rm ab}(F)$ which, by Langlands'
correspondence for tori, comes from some $\hat\varphi\in
H^{1}(W_{F},Z(\hat{\tilde{\mathbf{G}}}))$. But the pushforward of $\hat\varphi$ into
$H^{1}(W_{F},\hat{\mathbf{Z}})$ has to be trivial, hence $\hat\varphi$ comes from
$H^{1}(W_{F},Z(\hat{\mathbf{G}}))$. 
\end{rema}

Recall the definitions of $\hat{\mathbf{G}}_{\phi,r}$ and $\GC_{\phi,r}$ from \ref{defGphir}, as
well as that of $\GC_{\phi,r,\varepsilon_{\phi,r}}$ for a pinning
$\varepsilon_{\phi,r}$ of $\hat{\mathbf{G}}_{\phi,r}$.
Observe that 
 $\phi(I_{F}^{r})$ is contained in $\GC_{\phi,r,\varepsilon_{\phi,r}}$.

\begin{lemme} \label{splittings_ram}
  The following hypotheses are
  equivalent.
  \begin{enumerate}
  \item There is a splitting $\varphi_{r}:\, W_{F}\To{}\GC_{\phi,r,\varepsilon_{\phi,r}}$ such
    that $\varphi_{r}|_{I_{F}^{r}}=\phi|_{I_{F}^{r}}$.
  \item There is $\hat\varphi_{r}\in Z^{1}(W_{F},Z(\hat{\mathbf{G}}_{\phi,r}))$ such that $\hat\varphi_{r}|_{I_{F}^{r}}=\hat\phi|_{I_{F}^{r}}$.
  \item The image $\hat\phi(\EC_{r})\in H^{2}(W_{F}/I_{F}^{r},Z(\hat{\mathbf{G}}_{\phi,r}))$ 
of the canonical extension $\EC_{r}=[W_{F}/\o{[I_{F}^{r},I_{F}^{r}]}]\in
  H^{2}(W_{F}/I_{F}^{r},I_{F}^{r,\rm ab})$  vanishes.
  \end{enumerate}
Further, these hypotheses are satisfied if $p$ does not divide
$|\pi_{0}(Z(\hat{\mathbf{G}}))|=|\pi_{1}({\mathbf{G}}_{\rm der})|$.
\end{lemme}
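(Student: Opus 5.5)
The plan is to copy the proof of Lemma \ref{splittings_Levi}, with $P_{F}$ replaced by $I_{F}^{r}$ and $\hat{\mathbf{L}}_{\phi}$ by $\hat{\mathbf{G}}_{\phi,r}$. For orientation: since $\hat{\mathbf{G}}_{\phi,r}=C_{\hat{\mathbf{G}}}(\phi(I_{F}^{r}))$ and $1\times P_{F}$ acts trivially on $\hat{\mathbf{G}}$, the group $\hat\phi(I_{F}^{r})$ centralizes $\hat{\mathbf{G}}_{\phi,r}$. Hence $\hat\phi(I_{F}^{r})\subset Z(\hat{\mathbf{G}}_{\phi,r})$ and $\phi(I_{F}^{r})\subset \GC_{\phi,r,\varepsilon_{\phi,r}}$. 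Because $\hat\phi(I_{F}^{r})$ is abelian, $\hat\phi|_{I_{F}^{r}}$ factors through $I_{F}^{r,\rm ab}$, so the pushforward $\hat\phi(\EC_{r})$ makes sense.

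\emph{i)$\Leftrightarrow$ ii).} The previous lemma gives a splitting $\psi_{r}$ of $\GC_{\phi,r,\varepsilon_{\phi,r}}$ with $\psi_{r}|_{P_{F}}=1\times\id$. Any other splitting is $\hat\varphi_{r}\cdot\psi_{r}$ with $\hat\varphi_{r}\in Z^{1}(W_{F},Z(\hat{\mathbf{G}}_{\phi,r}))$, where $W_{F}$ acts via $\psi_{r}$. On $I_{F}^{r}\subset P_{F}$ we have $\phi=\hat\phi\times\id$ and $\psi_{r}=1\times\id$. So the condition $\varphi_{r}|_{I_{F}^{r}}=\phi|_{I_{F}^{r}}$ is exactly $\hat\varphi_{r}|_{I_{F}^{r}}=\hat\phi|_{I_{F}^{r}}$.

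\emph{i)$\Leftrightarrow$ iii).} $I_{F}^{r}$ is normal in $W_{F}$ and $\phi(I_{F}^{r})$ is normal in $\GC_{\phi,r,\varepsilon_{\phi,r}}$, being normalized by any splitting extending $\phi$, e.g.\ $\varphi$ up to the center. The extension $Z(\hat{\mathbf{G}}_{\phi,r})\injo\GC_{\phi,r,\varepsilon_{\phi,r}}\twoheadrightarrow W_{F}$ restricted over $I_{F}^{r}$ is split by $\phi$. A splitting of the whole extension that agrees with $\phi$ on $I_{F}^{r}$ is then the same as a splitting of
$$Z(\hat{\mathbf{G}}_{\phi,r})\injo \GC_{\phi,r,\varepsilon_{\phi,r}}/\phi(I_{F}^{r})\twoheadrightarrow W_{F}/I_{F}^{r}.$$
The map $Z(\hat{\mathbf{G}}_{\phi,r})\to \GC_{\phi,r,\varepsilon_{\phi,r}}/\phi(I_{F}^{r})$ needs care, since $\hat\phi(I_{F}^{r})$ lies in the center. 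I would handle this by quotienting by $\phi(I_{F}^{r})$, which meets $Z(\hat{\mathbf{G}}_{\phi,r})\times 1$ trivially because it projects isomorphically onto its image in $W_{F}$.

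Next, $\psi_{r}$ induces a map from $I_{F}^{r}\injo W_{F}\twoheadrightarrow W_{F}/I_{F}^{r}$ to this extension, restricting to $\hat\phi$ on kernels. Here $\psi_{r}(\gamma)\mapsto\hat\phi(\gamma)^{-1}$, up to sign convention, for $\gamma\in I_{F}^{r}$. This gives a commutative diagram exactly as in the proof of Lemma \ref{splittings_Levi}. Passing to the quotient by $\o{[I_{F}^{r},I_{F}^{r}]}$, the class of the displayed extension is $\hat\phi(\EC_{r})$ up to sign, which proves the equivalence. The main technical point is continuity and working with continuous $H^{2}$. The extension comes from a finite quotient, and $\hat\phi$ has finite image, so I would reduce to finite groups where this is harmless.

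\emph{Final claim.} This is the step I expect to be the real obstacle, since unlike $P_{F}$, the group $W_{F}/I_{F}^{r}$ does not have vanishing $H^{2}$ for $\EC_{r}$ itself. I would first split $Z(\hat{\mathbf{G}}_{\phi,r})$ as $Z^{\circ}\times(\text{finite})$ up to isogeny. Assume $p\nmid|\pi_{0}(Z(\hat{\mathbf{G}}))|$. Since $\hat{\mathbf{G}}_{\phi,r}$ is a Levi subgroup, $\pi_{0}(Z(\hat{\mathbf{G}}_{\phi,r}))$ is a quotient-related to $\pi_{0}(Z(\hat{\mathbf{G}}))$ (I would verify via $X^{*}$ and the torsion of cocharacter quotients), so its order is prime to $p$. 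As $\hat\phi(I_{F}^{r})$ is a $p$-group, it then lands in $Z(\hat{\mathbf{G}}_{\phi,r})^{\circ}$, a torus with finite tame action. It remains to kill the obstruction in $H^{2}(W_{F}/I_{F}^{r},\text{torus})$. I would inflate to $W_{F}$: the inflated class is split by $\hat\phi\cdot\psi_{r}$-type arguments up to $I_{F}^{r}$. Alternatively, I would use Langlands' Lemma 4 / \cite[Lemma 3.8]{DHKM1} type vanishing for $H^{2}$ with torus coefficients, applied to $W_{F}/I_{F}^{r}$ through the finite quotient plus divisibility. Concretely, I would extend $\hat\phi|_{I_{F}^{r}}$ to a cocycle on $P_{F}$, namely $\hat\phi$ itself, and then on $W_{F}$ by Lemma \ref{splittings_Levi} ii). Composing with a projection $Z(\hat{\mathbf{L}}_{\phi})\to Z(\hat{\mathbf{G}}_{\phi,r})$ is not available directly, but on identity components a $W_{F}$-equivariant retraction exists up to isogeny of order prime to $p$. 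Correcting by $p$-power roots then gives $\hat\varphi_{r}$.
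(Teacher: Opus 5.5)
The equivalences i)$\Leftrightarrow$ii)$\Leftrightarrow$iii) are handled as in the paper: copy the proof of Lemma~\ref{splittings_Levi} with $P_F$ replaced by $I_F^r$, using the tame splitting $\psi_r$. The first half of the final claim also matches the paper. There $Z(\hat{\mathbf G}_{\phi,r})=Z(\hat{\mathbf G}_{\phi,r})^{\circ}Z(\hat{\mathbf G})$ gives $p\nmid|\pi_0(Z(\hat{\mathbf G}_{\phi,r}))|$, so the $p$-group $\hat\phi(I_F^r)$ lands in the torus $Z(\hat{\mathbf G}_{\phi,r})^{\circ}$.

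\textbf{The gap.} You never show that the resulting class in $H^2(W_F/I_F^r,Z(\hat{\mathbf G}_{\phi,r})^{\circ})$ vanishes. None of your three suggestions does this.
\begin{enumerate}
\item Inflating to $W_F$ is circular. $\hat\phi(\EC_r)$ is, up to sign, the transgression of $\hat\phi|_{I_F^r}$. The kernel of inflation $H^2(W_F/I_F^r,M)\to H^2(W_F,M)$ is exactly the image of transgression, so the inflated class is always zero and tells you nothing.
\item Langlands' Lemma~4 (about $\Gamma_F$ versus $W_F$) and \cite[Lemma 3.8]{DHKM1} (about $W_F/P_F$) do not apply to $W_F/I_F^r$. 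The group $H^2(W_F/I_F^r,\hat{\mathbf S})$ itself need not vanish, and ``finite quotient plus divisibility'' is not an argument.
\item Your concrete route fails. A $W_F$-equivariant map $Z(\hat{\mathbf L}_\phi)^{\circ}\to Z(\hat{\mathbf G}_{\phi,r})^{\circ}$ restricting to a prime-to-$p$ power on the subtorus need not exist. The finite group through which $W_F$ acts can have order divisible by $p$ through its unramified part.
\end{enumerate}

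\textbf{Counterexample to step 3.} Take $\mathbf G=\GL_p$ and $E/F$ unramified of degree $p$. Let $\phi=(\mathrm{Ind}_{W_E}^{W_F}\chi)|_{P_F}$ with $\chi=(\chi_0\circ N_{E/F})\chi_1$. Choose $\chi_1|_{P_F}$ with $p$ distinct conjugates and $\chi_0$ of larger depth, and take $r$ between the two depths. Then $\hat{\mathbf L}_\phi=\hat{\mathbf T}$ with $X_*(\hat{\mathbf T})=\mathbb{Z}[\mathbb{Z}/p]$, while $\hat{\mathbf G}_{\phi,r}=\GL_p$ and $\phi(I_F^r)\neq 1$. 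Every equivariant map $\hat{\mathbf T}\to Z(\GL_p)$ is a power $\det^a$, which is $z\mapsto z^{pa}$ on scalars. So you only recover $(\hat\phi|_{I_F^r})^{pa}$, and no prime-to-$p$ exponent correction is possible. Separately, the cocycle from Lemma~\ref{splittings_Levi}~ii) lands in $Z(\hat{\mathbf L}_\phi)$, not a priori in its identity component.

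\textbf{The missing ingredient.} It is the paper's Lemma~\ref{h2_ramif}: for a tame torus $\mathbf S$, the image of $H^2_{cts}(\Gamma_F/I_F^r,\hat{\mathbf S})$ in $H^2(W_F/I_F^r,\hat{\mathbf S})$ is trivial. To apply it, first note that $\hat\phi(\EC_r)$ comes from continuous cohomology of $\Gamma_F/I_F^r$. This holds because $\EC_r$ is the restriction of the profinite extension $\Gamma_F/\o{[I_F^r,I_F^r]}$ and $\hat\phi$ has finite image. The proof of that lemma is arithmetic, not formal:
\begin{enumerate}
\item Following Langlands, embed $\hat{\mathbf S}$ into an induced tame torus $\hat{\mathbf S}_1$, with quotient $\hat{\mathbf S}_2$.
\item Show $H^1(W_F/I_F^r,\hat{\mathbf S}_1)\to H^1(W_F/I_F^r,\hat{\mathbf S}_2)$ is surjective. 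This uses depth preservation of the local Langlands correspondence for tame tori \cite[Thm.~7.10]{Yu_LLCtori} together with $\mathbf S_2(F)_r=\mathbf S_2(F)\cap\mathbf S_1(F)_r$ \cite[Lemma 3.1.3]{Kaletha}. Together these say characters of $\mathbf S_2(F)/\mathbf S_2(F)_r$ extend to $\mathbf S_1(F)/\mathbf S_1(F)_r$.
\end{enumerate}
That depth input is the idea your proposal is missing.
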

\begin{proof}
Thanks to Lemma \ref{defGphir},
  the equivalence between the three
hypotheses is proved as in Lemma \ref{splittings_Levi}.
For the last assertion, observe that if $p$ does not divide the order
$|\pi_{0}(Z(\hat{\mathbf{G}}))|$ of $\pi_{0}(Z(\hat{\mathbf{G}}))$, then it does not divide
$|\pi_{0}(Z(\hat{\mathbf{G}}_{\phi,r}))|$ either, since
 $Z(\hat{\mathbf{G}}_{\phi,r})=Z(\hat{\mathbf{G}}_{\phi,r})^{\circ}Z(\hat{\mathbf{G}})$.
 Therefore, in this case, we have
 $\hat\phi(P_{F})\subset Z(\hat{\mathbf{G}}_{\phi,r})^{\circ}$,
 hence $\hat\phi(\EC_{r})$ lies in the image of
 the map
 $H^{2}(W_{F}/I_{F}^{r},Z(\hat{\mathbf{G}}_{\phi,r})^{\circ})\To{}H^{2}(W_{F}/I_{F}^{r},Z(\hat{\mathbf{G}}_{\phi,r}))$.
By Lemma \ref{h2_ramif} below, it follows that $\hat\phi(\EC_{r})$ vanishes. 
\end{proof}

\begin{lemme} \label{h2_ramif}
  Let $\mathbf{S}$ be a tamely ramified torus and $r\in\wt\RM_{>0}$. Then the image of
  $H^{2}_{cts}(\Gamma_{F}/I_{F}^{r},\hat{\mathbf{S}})$ in
  $H^{2}(W_{F}/I_{F}^{r},\hat{\mathbf{S}})$ is trivial.
\end{lemme}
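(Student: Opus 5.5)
The plan is to reduce to Langlands' Lemma 4 in \cite{Langlands_stable}, in the same way as in the proof of Lemma \ref{splittings}, using tameness to control the action of $I_{F}^{r}$ on $\hat{\mathbf{S}}$. Since $\mathbf{S}$ is tamely ramified and $r>0$, the group $I_{F}^{r}\subseteq P_{F}$ acts trivially on $\hat{\mathbf{S}}$. Hence $\hat{\mathbf{S}}$ is naturally a module over $\Gamma_{F}/I_{F}^{r}$ and over $W_{F}/I_{F}^{r}$, and both cohomology groups in the statement make sense.

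The first step is to write the map as a composition. Consider the square formed by inflation along $\Gamma_{F}\to\Gamma_{F}/I_{F}^{r}$ and $W_{F}\to W_{F}/I_{F}^{r}$, together with the restriction maps $H^{2}_{cts}(\Gamma_{F}/I_{F}^{r},\hat{\mathbf{S}})\to H^{2}(W_{F}/I_{F}^{r},\hat{\mathbf{S}})$ and $H^{2}_{cts}(\Gamma_{F},\hat{\mathbf{S}})\to H^{2}(W_{F},\hat{\mathbf{S}})$. This square commutes. By Langlands' Lemma 4, the image of $H^{2}_{cts}(\Gamma_{F},\hat{\mathbf{S}})$ in $H^{2}(W_{F},\hat{\mathbf{S}})$ is trivial. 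Consequently, any class $c$ in the image of $H^{2}_{cts}(\Gamma_{F}/I_{F}^{r},\hat{\mathbf{S}})$ inflates to $0$ in $H^{2}(W_{F},\hat{\mathbf{S}})$.

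The key step, and the main obstacle, is to show that inflation $H^{2}(W_{F}/I_{F}^{r},\hat{\mathbf{S}})\to H^{2}(W_{F},\hat{\mathbf{S}})$ is injective on the relevant classes. The plan is to use the Hochschild--Serre exact sequence
\[0\to H^{1}(W_{F}/I_{F}^{r},\hat{\mathbf{S}})\to H^{1}(W_{F},\hat{\mathbf{S}})\to H^{1}(I_{F}^{r},\hat{\mathbf{S}})^{W_{F}}\xrightarrow{\ d_{2}\ } H^{2}(W_{F}/I_{F}^{r},\hat{\mathbf{S}})\to H^{2}(W_{F},\hat{\mathbf{S}}).\]
Here $H^{1}(I_{F}^{r},\hat{\mathbf{S}})=\Hom_{cts}(I_{F}^{r,\rm ab},\hat{\mathbf{S}})$, so the kernel of inflation consists of the classes $d_{2}(f)=f_{*}(\EC_{r})$, the pushforwards of the canonical extension $\EC_{r}$. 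It therefore suffices to show that if $c=f_{*}(\EC_{r})$ also comes from $\Gamma_{F}/I_{F}^{r}$, then $c=0$. I would first try to avoid this issue by a more direct route. Since $\hat{\mathbf{S}}$ is divisible, a continuous class of the profinite group $\Gamma_{F}/I_{F}^{r}$ factors through a finite quotient $\Gamma_{E/F}$, with $E/F$ finite Galois and $\Gal(\o F/E)\supseteq I_{F}^{r}$. One can then rerun Langlands' own argument for this finite quotient. That argument uses $H^{2}(\Gamma_{E/F},\hat{\mathbf{S}})\cong H^{3}(\Gamma_{E/F},X^{*}(\mathbf{S}))$ together with the fact that $W_{E/F}$ kills such classes through Tate--Nakayama duality. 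The analogue needed here is that the inflation to $W_{F}/I_{F}^{r}$ already kills them, because that group surjects onto $W_{E/F}$ compatibly. Concretely, the extension of $\Gamma_{E/F}$ underlying the class becomes split after pulling back to $W_{E/F}$, hence also after pulling back to $W_{F}/I_{F}^{r}$, which maps onto $W_{E/F}$. The check I expect to be delicate is whether $W_{F}/I_{F}^{r}\to\Gamma_{E/F}$ factors through $W_{E/F}=W_{F}/\o{[W_{E},W_{E}]}$. This requires $\o{[W_{E},W_{E}]}\supseteq I_{F}^{r}$, which fails in general. I would handle this by enlarging $E$ so that $I_{F}^{r}$ is, up to the abelian part, absorbed into the class formation. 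Alternatively, I would use the factorization of the Langlands splitting through the Weil group of the fixed field of $I_{F}^{r}$. That is, I would replace the class formation $(W_{F},\o F^{\times})$ by the quotient Weil group $W_{F}/I_{F}^{r}$ relative to the subfield fixed by $I_{F}^{r}$.

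If this direct approach becomes technical, the fallback is the one sketched via Hochschild--Serre, combined with the case $r=0+$ already treated in Lemma \ref{splittings_tr} via \cite[Lemma 3.8]{DHKM1}. In that fallback, one inflates further from $W_{F}/I_{F}^{r}$ down to the tame quotient $W_{F}/P_{F}$, on which $H^{2}_{cts}$ vanishes, and then compares the intermediate filtration steps inductively. Here one uses that the successive quotients $I_{F}^{s}/I_{F}^{s+}$ are abelian pro-$p$ groups acting trivially on $\hat{\mathbf{S}}$.
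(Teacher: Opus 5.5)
There is a genuine gap: your ``key step'' is not a reduction, it is the entire lemma. Take a $W_F$-equivariant continuous $f\colon I_F^r\to\hat{\mathbf S}$. Its image is finite, so $d_2(f)=f_*(\mathcal E_r)$ is the restriction to $W_F/I_F^r$ of a class in $H^2_{cts}(\Gamma_F/I_F^r,\hat{\mathbf S})$. That class is the pushforward along $f$ of the profinite extension $1\to I_F^{r,\rm ab}\to\Gamma_F/\overline{[I_F^r,I_F^r]}\to\Gamma_F/I_F^r\to 1$. Combined with your (correct) use of Langlands' Lemma~4, this shows the image in the statement is \emph{exactly} the kernel of inflation $H^2(W_F/I_F^r,\hat{\mathbf S})\to H^2(W_F,\hat{\mathbf S})$. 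So the lemma is equivalent to: every $W_F$-equivariant continuous $f\colon I_F^r\to\hat{\mathbf S}$ extends to a continuous $1$-cocycle on $W_F$.

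Nothing in the proposal proves this. The Tate--Nakayama route breaks where you say: $I_F^r\subseteq W_E$ maps onto a nontrivial unit group in $W_E^{\rm ab}\simeq E^\times$, so $W_F/I_F^r\to\Gamma_{E/F}$ does not factor through $W_{E/F}$. The proposed repairs (enlarging $E$, or a ``class formation'' for $W_F/I_F^r$) are not arguments. The fallback runs in the wrong direction. Since $W_F/I_F^r$ surjects onto $W_F/P_F$, the tame quotient only enters through inflation. The vanishing of $H^2_{cts}(W_F/P_F,\hat{\mathbf S})$ therefore says nothing about the classes $f_*(\mathcal E_r)$, which are the only ones at stake. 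An induction over the layers $I_F^s/I_F^{s+}$ meets an extension problem of the same kind at every step.

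What is missing is arithmetic input on the depth filtration, which the paper takes from the Langlands correspondence for tame tori. It embeds $\hat{\mathbf S}\hookrightarrow\hat{\mathbf S}_1\twoheadrightarrow\hat{\mathbf S}_2$ with $\mathbf S_1$ induced and tamely ramified. Once $\eta$ dies in $H^2(W_F/I_F^r,\hat{\mathbf S}_1)$, this reduces the lemma to surjectivity of $H^1(W_F/I_F^r,\hat{\mathbf S}_1)\to H^1(W_F/I_F^r,\hat{\mathbf S}_2)$. By \cite[Thm.~7.10]{Yu_LLCtori} this map is restriction of characters from $\mathbf S_1(F)/\mathbf S_1(F)_r$ to $\mathbf S_2(F)/\mathbf S_2(F)_r$. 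It is surjective because $\mathbf S_2(F)_r=\mathbf S_2(F)\cap\mathbf S_1(F)_r$ \cite[Lemma~3.1.3]{Kaletha}.

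Be aware, however, that your obstruction resurfaces in the first step of that route. The paper kills $\eta$ in $\hat{\mathbf S}_1$ by asserting $H^2(\Gamma_{E/F},\hat{\mathbf S}_1)=\{1\}$. But $\mathbf S_1$ is induced only from $\Gamma_{E^{tr}/F}$, so Shapiro's lemma identifies this group with a product of Schur multipliers $H^2(\Gamma_{E/E^{tr}},\mathbb C^\times)$ of a $p$-group. These can be nonzero, e.g.\ for $\mathbf S=\mathbb G_m$ split and $E/F$ totally ramified with group $(\mathbb Z/p)^2$.

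For $\mathbf S=\mathbb G_m$, your own reduction shows what the lemma amounts to. Combined with local class field theory and Tate's theorem $H^2(\Gamma_F,\mathbb Q/\mathbb Z)=0$, it is equivalent to $H^2_{cts}(\Gamma_F/I_F^r,\mathbb Q/\mathbb Z)=0$. Equivalently, every $\Gamma_F$-invariant character of $I_F^r$ is the restriction of a character of $\Gamma_F$. This is the real crux: your sketch does not supply it, and that step of the paper does not justify it as written.
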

\begin{proof}
  Start with $\eta\in H^{2}_{cts}(\Gamma_{F}/I_{F}^{r},\hat{\mathbf{S}})$ and let $\bar\eta$
  denote its image in $H^{2}(W_{F}/I_{F}^{r},\hat{\mathbf{S}})$. By definition of
  continuous cohomology, $\eta$ comes
  from an element $\eta\in H^{2}(\Gamma_{E/F},\hat{\mathbf{S}})$ with $E$ a finite extension
  that splits $\mathbf{S}$ and that is $r$-ramified in the sense that
  $I_{F}^{r}$ maps to $\{1\}$ in $\Gamma_{E/F}$. We may and will assume that  $\mathbf{S}$ is also split by the maximal tamely
  ramified subextension $E^{tr}$ of $E$ over $F$.
 As in the proof of \cite[Lemma 4]{Langlands_stable}, we can choose an exact sequence
  $\hat{\mathbf{S}}\injo \hat{\mathbf{S}}_{1}\twoheadrightarrow \hat{\mathbf{S}}_{2}$ with
  $\mathbf{S}_{1}$ an induced torus for $\Gamma_{E^{tr}/F}$. Note that each $\mathbf{S}_{i}$
  is then tamely ramified.  Let us look at the exact sequence
$$
H^{1}(W_{F}/I_{F}^{r},\hat{\mathbf{S}}_{1})\To{}H^{1}(W_{F}/I_{F}^{r},\hat{\mathbf{S}}_{2})
\To{}
H^{2}(W_{F}/I_{F}^{r},\hat{\mathbf{S}}) \To{}H^{2}(W_{F}/I_{F}^{r},\hat{\mathbf{S}}_{1})
$$
Since
  $H^{2}(\Gamma_{E/F},\hat{\mathbf{S}}_{1})=\{1\}$ (because $\mathbf{S}_{1}$ is an induced
  torus also for $E/F$), the image of $\bar\eta$ in
  $H^{2}(W_{F}/I_{F}^{r},\hat{\mathbf{S}}_{1})$ is trivial. So if we can prove that the
  first map is surjective, we infer that $\bar\eta$ itself is trivial.
But by \cite[Theorem 7.10]{Yu_LLCtori} the local Langlands correspondence identifies
$H^{1}(W_{F}/I_{F}^{r},\hat{\mathbf{S}}_{i})$ with the group of characters of the group
$\mathbf{S}_{i}(F)/\mathbf{S}_{i}(F)_{r}$. Moreover,  by \cite[Lemma 3.1.3]{Kaletha} the dual embedding
$\mathbf{S}_{2}\injo \mathbf{S}_{1}$ satisfies
$\mathbf{S}_{2}(F)_{r}=\mathbf{S}_{2}(F)\cap\mathbf{S}_{1}(F)_{r}$. So this dual embedding
induces an injective map
$\mathbf{S}_{2}(F)/\mathbf{S}_{2}(F)_{r}\injo\mathbf{S}_{1}(F)/\mathbf{S}_{1}(F)_{r}$ which
shows the  surjectivity of the map $H^{1}(W_{F}/I_{F}^{r},\hat{\mathbf{S}}_{1})\To{}H^{1}(W_{F}/I_{F}^{r},\hat{\mathbf{S}}_{2})$.
\end{proof}

\alin{Characters} \label{def_characters}
We now fix an $F$-rational embedding $\iota\in I_{\phi}$ and we take up the notation
$\vec{\mathbf{G}}_{\iota}$ of \ref{deftwistedLeviseq}.
From now on, we will make the following additional hypothesis:
\begin{equation}\label{H2}
\tag{\em H2}  \emph{ $p$ does not divide
    $|\pi_{0}(Z(\hat{\mathbf{G}}))|=|\pi_{1}({\mathbf{G}}_{\rm der})|$.} 
\end{equation}
Note that \eqref{H1} and \eqref{H2} together mean that $p$ is neither a torsion prime
of $\mathbf{G}$, nor of $\hat{\mathbf{G}}$.
Thanks to this hypothesis, Lemma \ref{splittings_ram} ensures the existence of a
$1$-cocycle $\hat\varphi_{i}:\, W_{F}\To{}  Z(\hat{\mathbf{G}}_{\phi,r_{i-1}+})$ that extends
$\hat\phi|_{I_{F}^{r_{i-1}+}}$  for each $i=0,\hdots, d$.
Using Lemma \ref{p-power_order_cocycle}, we may assume that
$\hat\varphi_{i}$ has $p$-power order in $H^{1}(W_{F},Z(\hat{\mathbf{G}}_{\phi,r_{i-1}+}))$.
Then, since ${\mathbf{G}}_{\iota}^{i}$ is an inner form of ${\mathbf{G}}_{\phi,r_{i-1}+}$, 
the Langlands correspondence for characters  \cite[10.2]{BorelCorvallis} associates to
$\hat\varphi_{i}$ a character\index[notation]{phicheckvar@$\check\varphi_{i}$}
\[\check\varphi_{i}:\,{\mathbf{G}}_{\iota}^{i}(F)\To{}\mu_{p^{\infty}}\subset \CM^{\times},\]
which we may view as an $R$-valued character for any commutative
$\ZM[\mu_{p^{\infty}},\frac 1p]$-algebra $R$. 
  Lemma \ref{depth_char} has the following consequences, for every
$x\in \BT({\mathbf{G}}_{\iota}^{i},F)$ :
\begin{enumerate}
\item the restriction 
$(\check\varphi_{i})|_{{\mathbf{G}}^{i}_{\iota}(F)_{x,r_{i-1}+}}$
only depends on $\hat\phi|_{I_{F}^{r_{i-1}+}}$, and not on the choice of $\hat\varphi_{i}$,
\item for all $j\geq i$ we have
  $(\check\varphi_{i})|_{{\mathbf{G}}^{i}_{\iota}(F)_{x,r_{j-1}+}}=(\check\varphi_{j})|_{{\mathbf{G}}^{i}_{\iota}(F)_{x,r_{j-1}+}}$,
\item the character\index[notation]{psii@$\psi_{i}$} $\psi_{i}:=\check\varphi_{i}\check\varphi_{i+1}^{-1}$ of ${\mathbf{G}}_{\iota}^{i}(F)$
  is trivial on ${\mathbf{G}}^{i}_{\iota}(F)_{x,r_{i}+}$ (where we set $\check\varphi_{d+1}=1$). 
\end{enumerate}

\alin{The subset $\BT_{\iota}$ of the building} \label{BTiota}
We write $\BT$\index[notation]{B@$\BT$} for the (extended) Bruhat--Tits building $\BT({\mathbf{G}},F)$. If $\mathbf{S}$
is a  maximal $F$-torus of ${\mathbf{G}}$ that splits over some tamely ramified finite field
extension $E$ of $F$, we set 
$\BC(\mathbf{S},F):= \AC({\mathbf{G}},\mathbf{S},E)\cap \BT({\mathbf{G}},F)$,
where $\AC({\mathbf{G}},\mathbf{S},E)$ is the apartment of  $\BT({\mathbf{G}},E)$
associated with $\mathbf{S}$ and the intersection is taken in $\BT({\mathbf{G}},E)$. 
As the notation suggests, this does not depend on the choice of $E$.
Note that it need not be  an apartment of
$\BT$, unless  $\mathbf{S}$ has maximal $F$-split rank.
Now we  associate to $\iota$ the following  subset of $\BT$:\index[notation]{Biota@$\BT_{\iota}$}
$$ \BT_{\iota} := \bigcup_{\mathbf{S}\subset {\mathbf{G}}_{\iota}}
\BC(\mathbf{S},F)$$
where $\mathbf{S}$ runs over tamely ramified maximal $F$-tori of ${\mathbf{G}}_{\iota}$.
The set $\BT_{\iota}$ is also the common image of all
the  embeddings $\BT({\mathbf{G}}_{\iota},F)\injo \BT$ obtained as restriction of a
${\rm Gal}(E/F)$-equivariant admissible embedding (in the sense of \cite[\S14.2]{KP-BTbook})
$\BT({\mathbf{G}}_{\iota},E)\injo \BT(\mathbf{G},E)$ for a Galois, tamely ramified field extension $E$ such
that  $\mathbf{G}_{\iota,E}$ is the Levi component of an $E$-rational parabolic subgroup of $\mathbf{G}_{E}$. From now on when writing ``admissible embedding'' we mean an embedding of the kind just described.
 The set of such embeddings is a torsor under
$X_{*}(\mathbf{S}_{\iota})^{W_{F}}_{\RM}$. 
We could have restricted the above union to maximally $F$-split (tamely ramified maximal)
 $F$-tori of ${\mathbf{G}}_{\iota}$, thanks to \cite[Lemma 2.1]{Yu_tamescusp}. For such a
 maximally split torus, the subset $\BC(\mathbf{S},F)$ is an apartment of
 $\BT({\mathbf{G}}_{\iota}, F)$,  but it is not an apartment of $\BT$ unless
 ${\mathbf{G}}_{\iota}$ is an $F$-Levi subgroup.

\alin{A construction of Yu} \label{Yu_construction}
 Let us fix $x\in \BT_{\iota}$. For each $i=0,\hdots, d$, the intersection
 $$G_{\iota,x,r}^{i}:={\mathbf{G}}_{\iota}^{i}(F)_{x,r}:=G_{x,r}\cap
 {\mathbf{G}}_{\iota}^{i}(F)$$ is the Moy--Prasad 
 group associated to $r$ and the preimage of $x$ by any admissible embedding 
 $\BT({\mathbf{G}}_{\iota}^{i},F)\injo \BT$. Note that $G^{i}_{\iota,x,r}$ normalizes
 $G^{j}_{\iota,x,s}$ whenever $i\leq j$, so that we can define an open subgroup of
 $G_{x,0+}$ by\index[notation]{Kplusplus@$\Kpp_{\iota,x}$} 
\[ \Kpp_{\iota,x} := G^{0}_{\iota,x,0+}G^{1}_{\iota,x, r_{0}+}\cdots G^{d}_{\iota,x,r_{d-1}+}.\] 
By property ii) of \ref{def_characters}, 
there exists a character
$\check\phi^{++}_{\iota,x}$ of $\Kpp_{\iota,x}$ whose restriction to $G^i_{\iota, x, r_{i-1}+}$ agrees with $\check\varphi_{i}|_{G^i_{\iota, x, r_{i-1}+}}$. By property i) of
\ref{def_characters},  $\check\phi^{++}_{\iota,x}$ \emph{only depends on  $\phi$} and not on the
choice of the extensions $\hat\varphi_{i}$. 
Now let us consider the following bigger open subgroup of $G_{x,0+}$: \index[notation]{Kplus@$\Kp_{\iota,x}$} 
\[ \Kp_{\iota,x} := G^{0}_{\iota,x,0+}G^{1}_{\iota,x, (r_{0}/2)+}\cdots G^{d}_{\iota,x,(r_{d-1}/2)+}.\] 
In \cite[\S 4]{Yu_tamescusp}, Yu describes a construction of a
character $\check\phi^{+}_{\iota,x}$ of $\Kp_{\iota,x}$ that extends $\check\phi^{++}_{\iota,x}$, starting from the characters $\psi_{i}$ of \ref{def_characters}. 
To explain this, we first observe that  $\check\phi^{++}_{\iota,x}$ is also the product
$\prod_{i=0}^{d} (\psi^{++}_{i,x})|_{\Kpp_{\iota,x}}$ where
 $\psi^{++}_{i,x}$ denotes the unique character
of the group $G^{i}_{\iota,x,0+}G_{x,r_{i}+}$ that extends both $\psi_{i}|_{G^{i}_{\iota,x,0+}}$
and the trivial character of $G_{x,r_{i}+}$.
Similarly, Yu defines $\check\phi^{+}_{\iota,x}$ as a product \index[notation]{phihatplus@$\check\phi^{+}_{\iota,x}$} 
\[\check\phi^{+}_{\iota,x}:=\prod_{i=0}^{d}(\psi^{+}_{i,x})|_{\Kp_{\iota,x}}\]
where $\psi_{i,x}^{+}$  is a certain character of
$G^{i}_{\iota,x,0+}G_{x,(r_{i}/2)+}$ that extends $\psi^{++}_{i,x}$ as in
\cite[\S4]{Yu_tamescusp} and \cite[Lemma~4.2.1]{FS25} and as we recall below.

Yu's construction of $\psi_{i,x}^{+}$ uses the 
Moy--Prasad filtrations $(\gG_{x,r})_{r\in\tilde\RM}$ on the Lie algebra
$\gG:={\rm Lie}({\mathbf{G}})(F)$. We adopt Yu's notation $G_{x,(r/2)+:\,r+}$ for the quotient
group $G_{x,(r/2)+}/G_{x,r+}$. This group is abelian and Moy and
Prasad have defined an isomorphism 
$\gG_{x,(r/2)+:\,r+}\simto G_{x,(r/2)+:\,r+}$. Now the Lie subalgebra
$\gG^{i}_{\iota}={\rm Lie}({\mathbf{G}}_{\iota}^{i})(F)$ of 
$\gG$ has as complement the sum $\nG^{i}_{\iota}$ of non-zero weight spaces of
$\iota(\mathbf{S}_{\phi, r_{i}})$ acting on $\gG$ through the adjoint representation.  
This induces a decomposition  
$$\gG_{x,(r/2)+:\,r+}=\gG^{i}_{\iota,x,(r/2)+:\,r+}\oplus \nG^{i}_{\iota,x,(r/2)+:\,r+}.$$
Thanks to this decomposition, any character $\psi$ of $G^{i}_{\iota,x,(r/2)+:\,r+}$ can be extended to a
character $\wt\psi$ of $G_{x,(r/2)+:\,r+}$
 by letting it be trivial on
$\nG^{i}_{\iota,x,(r/2)+:\,r+}$. In particular we obtain from
$\psi_{i|G^{i}_{\iota,x,(r_{i}/2)+}}$ 
 a character $\wt\psi_{i}$ of  $G_{x,(r_{i}/2)+}$ which,
 in turn,  can be glued with $\psi_{i|G^{i}_{\iota,x,0+}}$ to
 yield the desired character $\psi_{i,x}^{+}$ of $G^{i}_{\iota,x,0+}G_{x,(r_{i}/2)+}$.
Note that this character depends on the choices of $\hat\varphi_{i}$ and  $\hat\varphi_{i+1}$ and,
 a priori, also the restriction $(\psi_{i,x}^{+})|_{\Kp_{\iota, x}}$ depends on these choices.
However we have the following independence result.

\begin{lemme}\label{lemme_indep}
  The character $\check\phi^{+}_{\iota,x}$ only depends on $\phi$,  $\iota$, $x$, and not on
  the choice of $\hat\varphi_{i}$.
\end{lemme}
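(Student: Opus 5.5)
We compare two choices $(\hat\varphi_{i})_{i}$ and $(\hat\varphi'_{i})_{i}$ directly. By construction $\hat\varphi'_{i}=\hat\varphi_{i}\hat\delta_{i}$ with $\hat\delta_{i}\in Z^{1}(W_{F},Z(\hat{\mathbf{G}}_{\phi,r_{i-1}+}))$ trivial on $I_{F}^{r_{i-1}+}$. By Lemma \ref{depth_char}, the corresponding character $\check\delta_{i}=\check\varphi'_{i}\check\varphi_{i}^{-1}$ of $G^{i}_{\iota}$ is then trivial on $G^{i}_{\iota,x,r_{i-1}+}$ for every $x$. Since $\check\phi^{+}_{\iota,x}=\prod_{i}(\psi^{+}_{i,x})|_{\Kp_{\iota,x}}$ and each $\psi_{i}=\check\varphi_{i}\check\varphi_{i+1}^{-1}$, we change one $\hat\varphi_{j}$ at a time. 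It then suffices to show that replacing $\check\varphi_{j}$ by $\check\varphi_{j}\check\delta_{j}$, which changes $\psi_{j}$ by $\check\delta_{j}$ and $\psi_{j-1}$ by $\check\delta_{j}^{-1}|_{G^{j-1}_{\iota}}$, leaves the product unchanged on $\Kp_{\iota,x}$. Here $\check\delta_{j}$ is a character of $G^{j}_{\iota}$ trivial on $G^{j}_{\iota,x,r_{j-1}+}$ (and on ${\mathbf{G}}^{j}_{\iota,\rm sc}(F)$).

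Write $\chi=\check\delta_{j}$. The extension procedure $\psi\mapsto\psi^{+}_{\cdot,x}$ is multiplicative in $\psi$, since it is defined by restriction to $G^{i}_{\iota,x,0+}$, transport through the Moy--Prasad isomorphism, and extension by triviality on $\nG^{i}$. So we must check that $(\chi)^{+}_{j,x}\cdot(\chi|_{G^{j-1}_{\iota}})^{+}_{j-1,x}{}^{-1}$ is trivial on $\Kp_{\iota,x}$. We check this factor by factor on the product decomposition $\Kp_{\iota,x}=G^{0}_{\iota,x,0+}\cdots G^{d}_{\iota,x,(r_{d-1}/2)+}$.

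\emph{Factors $G^{k}_{\iota,x,(r_{k-1}/2)+}$ with $k\leq j-1$.} These lie in $G^{j-1}_{\iota,x,0+}$, where both extended characters restrict to $\chi$ itself, so they cancel.

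\emph{Factors with $k\geq j+1$.} Here $G^{k}_{\iota,x,(r_{k-1}/2)+}\subseteq G_{x,(r_{j}/2)+}$. Since $\chi$ is trivial on $G^{j}_{\iota,x,r_{j-1}+}\supseteq G^{j}_{\iota,x,(r_{j}/2)+}$ when $r_{j}/2\geq r_{j-1}$, the character $\chi^{+}_{j,x}$ is trivial there. In general the $\gG^{j}$-component of the Moy--Prasad decomposition has depth $>r_{k-1}/2\geq r_{j}/2$. One needs the triviality of $\chi$ on $G^{j}_{\iota,x,r_{j-1}+}$ and on $\nG^{j}$, so this step needs care. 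Similarly $(\chi|_{G^{j-1}})^{+}_{j-1,x}$ is trivial on $G_{x,r_{j-1}+}$ and vanishes on the $\nG^{j-1}$-part.

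\emph{Factor $k=j$, i.e.\ $G^{j}_{\iota,x,(r_{j-1}/2)+}$.} This is the crucial one. The character $\chi^{+}_{j,x}$ restricts to $\chi$ on $G^{j}_{\iota,x,0+}$. The character $(\chi|_{G^{j-1}})^{+}_{j-1,x}$ is the extension via the decomposition $\gG^{j}_{x,(r_{j-1}/2)+:r_{j-1}+}=\gG^{j-1}\oplus(\nG^{j-1}\cap\gG^{j})$, trivial on the $\nG^{j-1}$-part. So we must show that $\chi$ restricted to $G^{j}_{\iota,x,(r_{j-1}/2)+}$ (which factors through the quotient $G^{j}_{\iota,x,(r_{j-1}/2)+:\,r_{j-1}+}$ by the depth property) is trivial on the image of $\nG^{j-1}\cap\gG^{j}$. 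For this, note that $\chi$ is trivial on ${\mathbf{G}}^{j}_{\iota,\rm sc}(F)$ by Lemma \ref{depth_char}. Via the Moy--Prasad isomorphism it corresponds to a functional on $\gG^{j}_{x,(r_{j-1}/2)+:r_{j-1}+}$ vanishing on the derived part $[\gG^{j},\gG^{j}]$. The elements of $\nG^{j-1}\cap\gG^{j}$ are root-space components for roots of $\mathbf{G}^{j}_{\iota}$ nontrivial on $\iota(\mathbf{S}_{\phi,r_{j-1}})$, hence lie in the derived algebra. Equivalently, the corresponding filtration subgroups are generated by root subgroups contained in the image of ${\mathbf{G}}^{j}_{\iota,\rm sc}(F)_{x,(r_{j-1}/2)+}$ (using the surjectivity in \cite[Lemma 3.5.3]{Kaletha}, as in the proof of Lemma \ref{depth_char}).

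I expect the main obstacle to be this last compatibility: making precise that a character trivial on ${\mathbf{G}}_{\rm sc}(F)$ is, through the Moy--Prasad isomorphism, trivial on the $\nG$-part. The cleanest route is via the $z$-extension and Kaletha's surjectivity lemma, applied to the tame torus splitting the situation, with tameness and (H2) ensuring the filtration subgroups of $\nG$-root groups lie in the simply connected part.
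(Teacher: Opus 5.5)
Your route is essentially the paper's. It combines multiplicativity of $\psi\mapsto\psi^{+}_{i,x}$ with a single key fact: a character obtained by Borel's procedure (hence trivial on ${\mathbf{G}}^{j}_{\iota,\rm sc}(F)$, so on root subgroups) kills the $\nG$-components through the Moy--Prasad isomorphism. The paper justifies that fact in one line (``$\xi$ is trivial on root subgroups''), so your sketch of it is at the same level of detail. The only difference is bookkeeping. You change one $\hat\varphi_{j}$ at a time. The paper changes all of them simultaneously and, on each factor $G^{j}_{\iota,x,(r_{j-1}/2)+}$, telescopes $\prod_{i\geq j}\xi_{i}\xi_{i+1}^{-1}=\xi_{j}$ and $\prod_{i<j}\wt\xi_{i}\wt\xi_{i+1}^{-1}=\wt\xi_{j}^{-1}$.

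Your case $k\geq j+1$, however, is aimed at the wrong statement. When $r_{j}/2<r_{j-1}$, neither $\chi^{+}_{j,x}$ nor $(\chi|_{G^{j-1}_{\iota}})^{+}_{j-1,x}$ need be trivial on $G^{k}_{\iota,x,(r_{k-1}/2)+}$. Indeed, this group contains $G^{j}_{\iota,x,(r_{j}/2)+}$, on which $\chi$ itself can be non-trivial, since it may have depth exactly $r_{j-1}$. What must be shown is that the two characters \emph{agree} on $G_{x,(r_{j}/2)+}$. Both are trivial on $G_{x,r_{j-1}+}$. Use the compatible Moy--Prasad isomorphisms and the decomposition $\gG=\gG^{j-1}_{\iota}\oplus(\nG^{j-1}_{\iota}\cap\gG^{j}_{\iota})\oplus\nG^{j}_{\iota}$. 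The first character evaluates $\chi$ on the $\gG^{j}_{\iota}$-component, and the second on the $\gG^{j-1}_{\iota}$-component. They therefore differ exactly by $\chi$ on the $(\nG^{j-1}_{\iota}\cap\gG^{j}_{\iota})$-component, so the same key fact as in your case $k=j$ settles it. Triviality ``on $\nG^{j}$'' is not what is needed; it holds by construction. This is precisely the paper's preliminary remark that $\wt{\xi_{j}|_{G^{i}_{\iota,x,(r/2)+}}}=\wt{\xi_{j}|_{G^{j}_{\iota,x,(r/2)+}}}$ for $i\leq j$, which makes $\wt\xi_{j}$ unambiguous. Once you isolate that statement, all your cases reduce to it. The case $j=0$ is then immediate: only $\psi_{0}$ changes, by a character trivial on $G^{0}_{\iota,x,0+}$.
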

\begin{proof}
We first note that the map $\psi\mapsto \wt\psi$ described above is 
obviously multiplicative in $\psi$, and has the following property:
If $\xi$ is a character of $G$ of depth
$\leq r$, then $\widetilde{\xi|_{G^{i}_{\iota,x,(r/2)+}}}=\xi|_{G_{x,(r/2)+}}$. Indeed, $\xi$ is trivial on
root subgroups of $G$, hence $\xi|_{G_{x,(r/2)+}}$ has
to be trivial on $\nG^{i}_{\iota,x,(r/2)+:\,r+}$. More generally, if $\xi_{j}$ is a character of
$G^{j}$ for some $j\geq i$, then $(\wt{\xi_{j|G^{i}_{\iota,x,(r/2)+}}})|_{G^{j}_{\iota,x,(r/2)+}}=
\xi_{j|G^{j}_{\iota,x,(r/2)+}}$ hence also $\wt{\xi_{j|G^{i}_{\iota,x,(r/2)+}}}=\wt{\xi_{j|G^{j}_{\iota,x,(r/2)+}}}$.
So we may unambiguously denote this character by $\wt\xi_{j}$.

Let us now check that the product $\prod_{i=1}^{d} (\psi_{i,x}^{+})|_{\Kp_{\iota,x}}$
is independent of the choices of cocycles $\hat\varphi_{i}$. 
So let $(\hat\varphi'_{i})_{i=0,\hdots,d}$ be another choice of cocycles leading to 
characters $\psi_{i,x}^{'+}$, and write 
$\check\varphi'_{i}=\check\varphi_{i}\xi_{i}$. Then $\xi_{i}$ is a character of $G_{i}$ of depth $\leq
r_{i-1}$, hence $\xi_{i|G^{i}_{\iota,x,(r_{i-1}/2)+}}$ extends to a character $\wt\xi_{i}$ of
$G_{x,(r_{i-1}/2)+}$ according to the procedure described before the lemma.
Then we see that for all $i,j\leq d$ we have
$$ (\psi_{i,x}^{'+})|_{G^{j}_{\iota,x,(r_{j-1}/2)+}} = 
\left\{
  \begin{array}{ll}
(\psi_{i,x}^{+})|_{G^{j}_{\iota,x,(r_{j-1}/2)+}} \cdot 
(\xi_{i}\xi_{i+1}^{-1})|_{G^{j}_{\iota,x,(r_{j-1}/2)+}} & \hbox{if } j\leq i    \\
(\psi_{i,x}^{+})|_{G^{j}_{\iota,x,(r_{j-1}/2)+}} \cdot
(\wt\xi_{i}\wt\xi_{i+1}^{-1})|_{G^{j}_{\iota,x,(r_{j-1}/2)+}} & \hbox{if } j> i       
  \end{array}
\right.
$$
where we agree that $\xi_{d+1}=1$. Taking products we obtain for all $j=0,\hdots, d$
$$ \prod_{i=0}^{d} (\psi_{i,x}^{'+})|_{G^{j}_{x,(r_{j-1}/2)+}}
= \prod_{i=0}^{d} (\psi_{i,x}^{+})|_{G^{j}_{x,(r_{j-1}/2)+}}  \cdot
(\wt\xi_{j})^{-1}|_{G^{j}_{x,(r_{j-1}/2)+}} (\xi_{j})|_{G^{j}_{x,(r_{j-1}/2)+}} = 
\prod_{i=0}^{d} (\psi_{i,x}^{+})|_{G^{j}_{x,(r_{j-1}/2)+}} $$
as desired.
\end{proof}

Since $\Kp_{\iota,x}$ is a pro-$p$-group, the smooth character
$\check\phi^{+}_{\iota,x}$ takes values in the ring
$\ZM[\mu_{p^{\infty}},\frac 1p]$.

\alin{Iwahori decomposition} \label{Iwahori}
The pair $(\Kp_{\iota,x},\check\phi^{+}_{\iota,x})$
admits Iwahori decompositions with respect to certain pairs of opposite parabolic subgroups
adapted to $\iota$ and $x$. More precisely, let $({\mathbf{P}} ,\bar{\mathbf{P}} )$ be a pair of
opposite $F$-rational parabolic subgroups with common Levi component ${\mathbf{M}}$ that satisfies the
following conditions:
\begin{enumerate}
\item $\iota(\mathbf{S}_{\phi})\subseteq {\mathbf{M}}$ (equivalently,
  $Z({\mathbf{M}})^{\circ}\subseteq {\mathbf{G}}_{\iota}$),
\item $x$ belongs to $\BT({\mathbf{M}},F)$  (the union of all apartments of $\BT$
  corresponding  to tori of ${\mathbf{M}}$). 
\end{enumerate}
We denote by ${\mathbf{U}}$, resp., $\bar{\mathbf{U}}$, the unipotent radical of ${\mathbf{P}}$,
resp., $\bar{\mathbf{P}}$. As usual,
we also denote the groups of $F$-points of these algebraic groups by $U$, $\bar U$, etc.

\begin{lem}
Under the above assumptions, the groups  $\Kp_{\iota,x}$ and $\Ku_{\iota,x}$ have the Iwahori decomposition property with respect to $P,\bar
  P$, i.e., the multiplication map
$ (U\cap K^{\bullet}_{\iota,x})\times(M\cap K^{\bullet}_{\iota,x})\times(\bar
U\cap K^{\bullet}_{\iota,x}) \To{} K^{\bullet}_{\iota,x}$ for $K^{\bullet}_{\iota,x} \in \{\Kp_{\iota,x}, \Ku_{\iota,x}\}$
is a bijection. Moreover, we have 
$(\check\phi^{+}_{\iota,x})|_{U\cap\Kp_{\iota,x}}\equiv 1$ and $(\check\phi^{+}_{\iota,x})|_{\bar U\cap\Kp_{\iota,x}} \equiv 1$.
\end{lem}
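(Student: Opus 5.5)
The plan is to reduce everything to root-subgroup descriptions with respect to a single well-chosen torus, and then to treat the character factor by factor. I take $\Ku_{\iota,x}$ to be defined like $\Kp_{\iota,x}$ with the appropriate jumps (non-plus at $r_{i-1}/2$); the argument below treats both uniformly. I have not checked that this uniform treatment is harmless for $\Ku_{\iota,x}$.

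\emph{Step 1: choice of a torus.} Let $\mathbf{A}_M$ be the maximal split central torus of $\mathbf{M}$. By condition i), $\mathbf{A}_M\subseteq Z(\mathbf{M})^\circ\subseteq \mathbf{G}_\iota\subseteq \mathbf{G}^i_\iota$ for all $i$. Hence each $\mathbf{M}^i:=\mathbf{M}\cap\mathbf{G}^i_\iota=C_{\mathbf{G}^i_\iota}(\mathbf{A}_M)$ is an $F$-Levi subgroup of $\mathbf{G}^i_\iota$, and $\mathbf{P}\cap\mathbf{G}^i_\iota$, $\bar{\mathbf{P}}\cap\mathbf{G}^i_\iota$ are opposite parabolics of $\mathbf{G}^i_\iota$ with common Levi $\mathbf{M}^i$. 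Using condition ii), $x\in\BT_\iota$, and \cite[Lemma 2.1]{Yu_tamescusp} in $\mathbf{M}^0=\mathbf{M}\cap\mathbf{G}_\iota$, I want to choose a tamely ramified maximal $F$-torus $\mathbf{T}\subseteq \mathbf{M}\cap\mathbf{G}_\iota$, split over a tame Galois extension $E$ and containing $\mathbf{A}_M$, such that $x\in\BC(\mathbf{T},F)$. To do this, I would show that $x$ lies in $\BT(\mathbf{M}^0,F)$, viewed inside $\BT$ through compatible admissible embeddings. This compatibility is the point I expect to be the main obstacle; I would first try to deduce it by comparing the two descriptions of $\BT_\iota$ and $\BT(\mathbf{M},F)$ over $E$, where both become unions of apartments of tori of the (split) Levi $\mathbf{G}_{\iota}$ and $\mathbf{M}$.

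\emph{Step 2: Iwahori decomposition of the group.} Over $E$, with $\mathbf{T}$ split, the group $K^{\bullet}_{\iota,x}$ is the group attached by Bruhat--Tits to the concave function
\[
f(a)=\min\{\,s_i : a\in\Sigma(\mathbf{T},\mathbf{G}^i_\iota)\,\}, \qquad f(0)=0+,
\]
where $s_i=(r_{i-1}/2)+$ for $\Kp_{\iota,x}$ and $s_0=0+$. Concavity holds because the $\Sigma(\mathbf{T},\mathbf{G}^i_\iota)$ form an increasing chain of closed subsystems and the $s_i$ increase; this is exactly as in \cite{Yu_tamescusp}. For concave-function groups, Bruhat--Tits give a product decomposition $\prod_{a<0}U_{a,f(a)}\cdot T_{f(0)}\cdot\prod_{a>0}U_{a,f(a)}$ for any ordering adapted to a parabolic containing $\mathbf{T}$. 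Taking the ordering adapted to $\mathbf{P}$, I obtain the Iwahori decomposition over $E$. I then descend to $F$ by taking $\mathrm{Gal}(E/F)$-fixed points. This uses tameness, so that $K^{\bullet}_{\iota,x}=K^{\bullet}_{\iota,x}(E)^{\mathrm{Gal}(E/F)}$, together with uniqueness of the factorization and Galois-stability of $\mathbf{U}$, $\mathbf{M}$ and $\bar{\mathbf{U}}$.

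\emph{Step 3: triviality of the character.} It suffices to show that each $\psi^+_{i,x}$ is trivial on $U\cap(G^i_{\iota,x,0+}G_{x,(r_i/2)+})$, which by Step 2 is $(U\cap G^i_{\iota,x,0+})(U\cap G_{x,(r_i/2)+})$, and similarly for $\bar U$. On the first factor, $\psi_i$ is a character of $G^i_\iota$ trivial on the image of $\mathbf{G}^i_{\iota,\mathrm{sc}}(F)$ by Lemma \ref{depth_char}. It therefore kills $U\cap G^i_\iota$, which is generated by unipotent elements lying in that image. On the second factor, I decompose the Lie algebra piece $\mathfrak{u}_{x,(r_i/2)+:r_i+}$ into its $\mathfrak{g}^i_\iota$-part and its $\nG^i_\iota$-part. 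The character $\wt\psi_i$ is trivial on the $\nG^i_\iota$-part by construction. On $\mathfrak{u}\cap\mathfrak{g}^i_\iota$ it is given by $\psi_i$, which is again trivial on these unipotent elements by the same argument. Since $\wt\psi_i$ is a character of the abelian quotient, it is then trivial on all of $U\cap G_{x,(r_i/2)+}$. The same argument applies to $\bar U$.
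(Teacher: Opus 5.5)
Your proof is correct. Step 3 is essentially the paper's argument, but Step 2 takes a genuinely different route.

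\textbf{Step 2.} The paper never chooses a torus. It invokes the standard Iwahori decomposition of each single Moy--Prasad group $G^{i}_{\iota,x,s}$ with respect to the opposite parabolics $\mathbf{P}\cap\mathbf{G}^{i}_{\iota}$, $\bar{\mathbf{P}}\cap\mathbf{G}^{i}_{\iota}$ of $\mathbf{G}^{i}_{\iota}$. It then reassembles the product $G^{0}_{\iota,x,0+}G^{1}_{\iota,x,(r_{0}/2)+}\cdots G^{d}_{\iota,x,(r_{d-1}/2)+}$ inductively, using that for $i<j$:
\begin{itemize}
\item $\bar U\cap G^{i}_{\iota,x,(r_{i-1}/2)+}$ normalizes $G^{j}_{\iota,x,(r_{j-1}/2)+}$;
\item $M\cap G^{i}_{\iota,x,(r_{i-1}/2)+}$ normalizes $U\cap G^{j}_{\iota,x,(r_{j-1}/2)+}$.
\end{itemize}
You instead identify $K^{\bullet}_{\iota,x}$ with the $G$-points of Yu's concave-function group over $E$, generated by $\mathbf{T}(E)_{0+}$ and the $U_{a}(E)_{x,f(a)}$. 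You then get all levels at once from the Bruhat--Tits ordered-product theorem \cite{BT1}, Galois descent, and uniqueness in the big cell.

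\textbf{What each route buys.} Yours costs Yu's identification and the concave-function machinery. In return it:
\begin{itemize}
\item removes the commutation bookkeeping of the paper's induction;
\item applies verbatim to the groups $G^{i}_{\iota,x,0+}G_{x,(r_{i}/2)+}$ needed in Step 3;
\item makes the compatibility of $\mathfrak{u}=(\mathfrak{u}\cap\gG^{i}_{\iota})\oplus(\mathfrak{u}\cap\nG^{i}_{\iota})$ with the Moy--Prasad filtration at $x$ immediate from root spaces, because your $\mathbf{T}$ contains both $\mathbf{A}_{M}$ and $\iota(\mathbf{S}_{\phi})$. The paper asserts this compatibility without detail.
\end{itemize}
The paper's route only needs the standard Iwahori decomposition of Moy--Prasad groups at points of the building of a Levi subgroup.

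\textbf{Your two hesitations are harmless.}
\begin{itemize}
\item For $\Ku_{\iota,x}$, the sequence $(0+,r_{0}/2,\dots,r_{d-1}/2)$ is still increasing with positive values, so $f$ is concave and your uniform treatment goes through.
\item The input $x\in\BT(\mathbf{M}\cap\mathbf{G}_{\iota},F)$ that you flag is exactly what the paper's own proof uses without justification. Compare also the assertion $\BT_{\iota_{M}}=\BT_{\iota}\cap\BT_{M}$ in \ref{setup_par_ind}. So it is not a weakness specific to your route. Moreover, in the paper's main applications (the pairs of \ref{opposite_parabolic_sgps}, and $x\in\BT_{\iota_{M}}$ in \ref{setup_par_ind}), a tame torus $\mathbf{T}\subseteq\mathbf{M}\cap\mathbf{G}_{\iota}$ with $x\in\BC(\mathbf{T},F)$ is available by construction.
\end{itemize}
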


\begin{proof}
	Let $K^{\bullet}_{\iota,x} \in \{\Kp_{\iota,x}, \Ku_{\iota,x}\}$.
    By the properties of the big cell ${\mathbf{U}}{\mathbf{M}}\bar{\mathbf{U}}$, the first
    claim is equivalent to the equality 
    $K^{\bullet}_{\iota,x}=
    (U\cap K^{\bullet}_{\iota,x})(M\cap K^{\bullet}_{\iota,x})(\bar U\cap K^{\bullet}_{\iota,x})$. 
But for
each $i=0,\hdots, d$, the intersections ${\mathbf{P}} \cap {\mathbf{G}}_{\iota}^{i}$ and
$\bar{\mathbf{P}} \cap {\mathbf{G}}_{\iota}^{i}$ are a pair of opposite parabolic subgroups of
${\mathbf{G}}_{\iota}^{i}$ with intersection ${\mathbf{M}}\cap{\mathbf{G}}_{\iota}^{i}$, 
since $Z({\mathbf{M}})^{\circ}\subseteq {\mathbf{G}}_{\iota}^{i}$.
Then,
because $x$ belongs to $\BT({\mathbf{M}}\cap{\mathbf{G}}_{\iota}^{i}, F)\subseteq
\BT({\mathbf{G}}_{\iota}^{i}, F)$, we know that the Moy--Prasad group
$G^{i}_{\iota,x,r_{i-1}/2+}$ has the Iwahori decomposition
$G^{i}_{\iota,x,(r_{i-1}/2)+}=(U\cap G^{i}_{\iota,x,(r_{i-1}/2)+})(M\cap
G^{i}_{\iota,x,(r_{i-1}/2)+})(\bar U\cap G^{i}_{\iota,x,(r_{i-1}/2)+})$ and similarly for $G^{i}_{\iota,x,r_{i-1}/2}$.
Now, using the fact that for $i<j$, the group $\bar U\cap G^{i}_{\iota,x,(r_{i-1}/2)+}$
normalizes $G^{j}_{\iota,x,(r_{j-1}/2)+}$ and the group $\bar M\cap
G^{i}_{\iota,x,(r_{i-1}/2)+}$ normalizes $U\cap G^{j}_{\iota,x,(r_{j-1}/2)+}$, the desired
decomposition follows inductively for $\Kp_{\iota,x}$, and one proceeds similarly for $\Ku_{\iota,x}$.  

Let us now prove that 
$\check\phi^{+}_{\iota,x}$ is trivial on $U\cap \Kp_{\iota,x}$ and on $\bar U\cap \Kp_{\iota,x}$. Going back to the construction of $\check\phi^{+}_{\iota,x}$ it suffices to show that 
for each $i=0,\hdots, d$, the character $\psi_{i,x}^{+}$ of Paragraph
\ref{Yu_construction} is trivial on the group $U\cap (G_{\iota,x,0+}^{i}G_{x,(r_{i}/2)+})$
(and similarly with $\o U$). By the same argument as above, this group is $(U\cap
G_{\iota,x,0+}^{i})(U\cap G_{x,(r_{i}/2)+})$. On the one hand, the restriction of $\psi_{i,x}^{+}$ to
$U\cap G_{\iota,x,0+}^{i}$ is trivial since it is also the restriction of the character
$\psi_{i}$ of $G_{\iota}^{i}$ and $U\cap G_{\iota}^{i}$ is contained in the derived
subgroup of $G_{\iota}^{i}$. On the other hand, the restriction of $\psi_{i,x}^{+}$ to
$G_{x,(r_{i}/2)+}$ is the character denoted by $\wt\psi_{i}$ in \ref{Yu_construction},
which extends $(\psi_{i})|_{G_{\iota,x,(r_{i}/2)+}^{i}}$ according to the
decomposition $\gG_{x,(r_{i}/2)+:r_{i}+}=\gG^{i}_{\iota, x,(r_{i}/2)+:r_{i}+}\oplus 
\nG^{i}_{\iota,x,(r_{i}/2)+:r_{i}+}$ and via the Moy--Prasad isomorphism.
Recall that the latter decomposition is induced by $\gG=\gG_{\iota}^{i}\oplus\nG_{\iota}^{i}$ where
$\gG^{i}_{\iota}$, resp., $\nG^{i}_{\iota}$, is the trivial eigenspace, resp., 
the sum of all non-trivial eigenspaces, of 
$\iota(\mathbf{S}_{\phi,r_{i}})$ acting on $\gG$. 
On the other hand, the image  $\uG_{x,(r_{i}/2)+:r_{i}+}$ of $U\cap G_{x,(r_{i}/2)+}$ in
$\gG_{x,(r_{i}/2)+:r_{i}+}$ is induced by the Lie algebra $\uG$ of ${\mathbf{U}}$ which is a sum of
(non-trivial) eigenspaces for $Z({\mathbf{M}})^{\circ}$ acting on $\gG$. Now $\iota(\mathbf{S}_{\phi,r_{i}})$ and $Z({\mathbf{M}})^{\circ}$ commute since ${\mathbf{M}}$ contains $\iota(\mathbf{S}_{\phi,r_{i}})$. Therefore, $\uG$ is
stable under $\iota(\mathbf{S}_{\phi,r_{i}})$, and we have $\uG=(\uG\cap\gG^{i}_{\iota})\oplus (\uG\cap\nG^{i}_{\iota})$.
Correspondingly,
$\uG_{x,(r_{i}/2)+:r_{i}+}$ decomposes as the direct sum of 
$\uG_{x,(r_{i}/2)+:r_{i}+}\cap\nG^{i}_{\iota,x,(r_{i}/2)+:r_{i}+}$ and
$\uG_{x,(r_{i}/2)+:r_{i}+}\cap\gG^{i}_{\iota,x,(r_{i}/2)+:r_{i}+}$. We have already seen that
$\psi_{i}$ is trivial on the latter intersection, and by definition $\wt\psi_{i}$ is
trivial on the former one, which finishes the proof.
\end{proof}

\alin{A source of pairs of opposite  parabolic subgroups} \label{opposite_parabolic_sgps}
For later reference, we describe here a useful source of pairs $({\mathbf{P}} ,\bar{\mathbf{P}} )$ that
satisfy conditions i) and ii) of \ref{Iwahori}.

Start with two points $x\neq x'$ in $\BC_{\iota}$, and pick a tamely ramified maximal torus
$\mathbf{S}\subseteq {\mathbf{G}}_{\iota}$ such that $x,x'\in 
  \BC(\mathbf{S},F)$. Let $E$ be a tame Galois extension of $F$ that splits
  $\mathbf{S}$. The affine space $\AC({\mathbf{G}},\mathbf{S},E)$ is principal homogeneous under
  the vector space $X_{*}(\mathbf{S})_{\RM}$, so we can write $x'=x+\lambda$ for a unique
  $\lambda\in  X_{*}(\mathbf{S})_{\RM}$. Moreover, the action of
  $X_{*}(\mathbf{S})_{\RM}$ on $\AC({\mathbf{G}},\mathbf{S},E)$ is compatible with the
  respective Galois actions of $\Gamma_{E/F}$. Since both
  $x$ and $x'$ are $\Gamma_{E/F}$-fixed, we have $\lambda\in
  (X_{*}(\mathbf{S})_{\RM})^{\Gamma_{E/F}}$.
To $\lambda$ is associated a pair of $F$-rational parabolic
subgroups  $({\mathbf{P}} ,\bar{\mathbf{P}} ):=({\mathbf{P}} _{\lambda}, {\mathbf{P}} _{-\lambda})$
with common Levi component 
${\mathbf{M}}:={\mathbf{M}}_{\lambda}={\mathbf{M}}_{-\lambda}$ and respective unipotent radicals
denoted by 
${\mathbf{U}}:={\mathbf{U}}_{\lambda}$ and $\bar{\mathbf{U}}:={\mathbf{U}}_{-\lambda}$
(see also   \cite[\S 1.27]{Vigsheaves}).
By construction $\mathbf{M}$ contains $\mathbf{S}$, hence also the connected center
$\iota(\mathbf{S}_{\phi})$ of $\mathbf{G}_{\iota}$, and both $x$ and $x'$ belong to $\BC(\mathbf{M},F)$.

\begin{DEf}
We recall that $\HC_{R}(G)$  is the $R$-algebra 
of compactly supported, locally constant $R$-valued distributions on $G$. We denote by
$e_{\iota,x}$\index[notation]{eiotax@$e_{\iota,x}$} the distribution that averages along the character
$\check\phi^{+}_{\iota,x}$ of the pro-$p$, open subgroup $\Kp_{\iota,x}$ of $G$. 
This is an idempotent of $\HC_{R}(G)$ supported on $\Kp_{\iota,x}$.
\end{DEf}

From the
construction of $\Kp_{\iota,x}$ and $e_{\iota,x}$, and in particular the fact that the
latter does not depend on any further choice than $\iota$, $x$ and $\phi$, we see that 
\ini
\begin{equation}
\forall g\in G,\,\,  {^{g}\Kp_{\iota,x}}=\Kp_{{g}\iota,gx}
\hbox{ and } {^{g}e_{\iota,x}}=e_{{g}\iota,gx},\label{Gaction}
\end{equation}
where $g\iota$ is the $g$-conjugate of $\iota$, i.e., we have
$(g\iota)(s):=g\iota(s)g^{-1}$ for all $s\in\mathbf{S}_{\phi}$.

Since the pair
$(\Kp_{\iota,x},e_{\iota,x})$  depends on $\phi$, we may write
$\Kp_{\phi,\iota,x}$\index[notation]{Kplusphiiotax@$\Kp_{\phi,\iota,x}$} and $e_{\phi,\iota,x}$\index[notation]{ephiiotax@$e_{\phi,\iota,x}$} whenever we want to emphasize this dependence.

\subsection{Intertwining}\label{sec:intertwining}
\ali \label{sec:K-def}
We keep the data $\phi$, $\iota \in I_{\phi}$, and $x\in \BT_{\iota}$ of
the previous subsection and we now introduce the following compact, open subgroups:
\index[notation]{Kdagger@$\Ku_{\iota,x}$}\index[notation]{Kcirc@$\Ko_{\iota,x}$}\index[notation]{Kaiotax@$\K_{\iota,x}$} 
\begin{eqnarray*}
 & \Ku_{\iota,x} &:= G^{0}_{\iota,x,0+}G^{1}_{\iota,x, (r_{0}/2)}\cdots G^{d}_{\iota,x,(r_{d-1}/2)}, \\
 & \Ko_{\iota,x} &:= G^{0}_{\iota,x,0}G^{1}_{\iota,x, (r_{0}/2)}\cdots G^{d}_{\iota,x,(r_{d-1}/2)}, \\
 &  \K_{\iota,x} &:= G^{0}_{\iota,x}G^{1}_{\iota,x, (r_{0}/2)}\cdots G^{d}_{\iota,x,(r_{d-1}/2)}.
\end{eqnarray*}
So we have inclusions $\Kp_{\iota,x}\subseteq\Ku_{\iota,x}\subseteq\Ko_{\iota,x}\subseteq\K_{\iota,x}$ and
all subgroups are normal in $\K_{\iota,x}$. Moreover, $\Ku_{\iota,x}$ is the
pro-$p$-radical of $\Ko_{\iota,x}$. 
We will write $K^{?}_{\phi,\iota,x}$ whenever we want to emphasize
the dependence on $\phi$. 
The aim of this subsection is to prove the following result.

\begin{prop} \label{prop_intertwining}
i) The group $\K_{\phi,\iota,x}$ centralizes $e_{\phi,\iota, x}$.
 
ii) If $(\phi',\iota',x')$ is another triple of the same nature,
    then
\ini
\begin{equation}
 e_{\phi,\iota,x}e_{\phi',\iota',x'}\neq 0 \Rightarrow
 \left(\phi\simeq\phi' \hbox{ and } \Ku_{\phi,\iota,x}\cdot \iota\cap
   \Ku_{\phi',\iota',x'}\cdot \iota'\neq\emptyset\right).\label{intertw}
\end{equation}
\end{prop}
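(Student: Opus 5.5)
For part i) I will show that $\K_{\iota,x}$ normalizes $\Kp_{\iota,x}$ and fixes the character $\check\phi^{+}_{\iota,x}$. Write $\K_{\iota,x}=G^{0}_{\iota,x}\cdot\Ku_{\iota,x}$. For $g\in G^{0}_{\iota,x}=\mathbf{G}_{\iota}(F)_{x}$, we have $g\iota=\iota$ because $g$ centralizes $\iota(\mathbf{S}_{\phi})$, and $gx=x$. Formula \eqref{Gaction} then gives ${}^{g}e_{\iota,x}=e_{\iota,x}$ directly.

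For the factors $G^{i}_{\iota,x,r_{i-1}/2}$ with $i\ge1$, I will use Yu's product description $\check\phi^{+}_{\iota,x}=\prod_i\psi^{+}_{i,x}$ and treat each factor separately. Conjugation by $G^{j}_{\iota,x,r_{j-1}/2}$ fixes $\psi^{+}_{i,x}$ on $\Kp_{\iota,x}$. For $j\le i$ this holds because $\psi_i$ is a character of $G^{i}_{\iota}\supseteq G^j_\iota$. For $j>i$ it holds because commutators $[G^{j}_{\iota,x,r_{j-1}/2},\,G^{i}_{\iota,x,0+}G_{x,(r_i/2)+}]$ land in a group on which $\wt\psi_i$ is trivial, by a Moy--Prasad commutator estimate; here $r_{j-1}\ge r_i$, and $\wt\psi_i$ is trivial on $\nG^i$ and of depth $r_i$. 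This is the standard verification in \cite[\S4]{Yu_tamescusp} (cf.\ \cite[Lemma~4.2.1]{FS25}), and I will essentially quote it.

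For part ii), the plan is an intertwining argument. If $e_{\phi,\iota,x}e_{\phi',\iota',x'}\ne0$, then the characters $\check\phi^{+}_{\iota,x}$ and $\check\phi'^{+}_{\iota',x'}$ agree on $\Kp_{\iota,x}\cap\Kp_{\iota',x'}$. The first step is to reduce, via the Iwahori decompositions of \ref{Iwahori} for the parabolic pair attached to a geodesic from $x$ to $x'$ in \ref{opposite_parabolic_sgps}, to the case where both characters are compared on a large common subgroup containing $G_{y,r+}$ for suitable $y$ and $r$. On that subgroup each character is given, at depth $r_{i}$, by a generic element $X_i\in\mathfrak{z}(\gG^{i}_{\iota})^{*}$ of depth $-r_i$; genericity follows from Lemma~\ref{rootsystemCr}. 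An agreement of the two characters then forces the corresponding generic cosets to intersect. Working from the top depth $r_{d-1}$ downwards, I will apply Yu's/Kim--Murnaghan-type intertwining results for generic characters \cite[\S9]{Yu_tamescusp} inductively. At each stage this yields an element of $\Ku$ conjugating the depth-$r_i$ data, and hence the twisted Levi $\mathbf{G}^i_\iota$, to that of $\iota'$, together with equality of the jumps $r_i=r'_i$.

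In the end, a single element $k\in\Ku_{\phi,\iota,x}$ and $k'\in\Ku_{\phi',\iota',x'}$ with $k\iota=k'\iota'$ are obtained, and the depth-$r$ characters coincide at each level. It remains to deduce $\phi\simeq\phi'$. Through Lemma~\ref{depth_char}, the restrictions of $\check\varphi_i$ to positive-depth groups determine $\hat\phi|_{I_F^{r_{i-1}+}}$, since the Borel correspondence is compatible with the filtrations. Combining these along the common torus $\mathbf{S}$ determines $\hat\phi$ up to $\hat{\mathbf G}$-conjugacy.

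I expect the main obstacle to be the inductive intertwining step, namely producing a conjugating element inside $\Ku$ rather than merely in $G$, with $x\ne x'$. I would first try to control this by passing to a common point on the segment $[x,x']$ and using the Iwahori factorization to shrink to $\Kp$-groups at that point.
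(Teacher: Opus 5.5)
Part i) is fine and is essentially the paper's argument. Part ii) follows the paper's general idea: generic strata compared from the top level down. However, it has genuine gaps at the two places where the real work is done.

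First, you only assert that agreement of the two characters forces the generic cosets to intersect, and the mechanism you propose for it is not available. The pair $(\mathbf P,\bar{\mathbf P})$ of \ref{opposite_parabolic_sgps} and the Iwahori decomposition of Lemma \ref{Iwahori} require $x,x'\in\BT_\iota$ for one and the same $\iota$, together with a Levi $\mathbf M\supseteq\iota(\mathbf S_\phi)$. Here $x\in\BT_\iota$ and $x'\in\BT_{\iota'}$, and nothing makes $\Kp_{\iota',x'}$ decompose with respect to such an $\mathbf M$. What you need is a statement valid for arbitrary $x\neq x'$: if $\psi$ on $J$ is realized by $X\in\fg^*_{x,-r}$ and $\psi'$ on $J'$ by $X'\in\fg^*_{x',-r'}$, then $\psi|_{J\cap J'}=\psi'|_{J\cap J'}$ if and only if $(X+\mathfrak j^{\bullet})\cap(X'+(\mathfrak j')^{\bullet})\neq\emptyset$. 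The paper proves this in Lemma \ref{fact_intertwining}. It takes an apartment of $\mathbf G$ (not of $\mathbf G_\iota$) containing $x$ and $x'$, and builds mock exponential maps at $x$ and $x'$ that agree on $\fg_{x,0+}\cap\fg_{x',0+}$. Some such compatibility of the Moy--Prasad isomorphisms at two different points is indispensable, and your plan does not provide it.

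Second, your downward induction is not set up so that the generic intertwining lemma applies below the top level. On $J^{i+}_{\iota,x}$ the character $\check\phi^+_{\iota,x}$ is $\psi^+_{i-1,x}$ times the restrictions of $\psi_i,\dots,\psi_d$, which need not be trivial there. So it is not realized by $X_{i-1}$, and knowing $X_k\equiv X'_k$ modulo higher depth at the levels already treated does not remove those factors exactly. The paper handles this as follows.
\begin{enumerate}
\item Multiply $\phi$ and $\phi'$ by the same $\hat\varphi_d^{-1}|_{P_F}$. Both characters are then multiplied by the restriction of the single character $\check\varphi_d^{-1}$ of $G$, so \eqref{hyp} survives, and now $r_{d-1}=r_d$.
\item Exclude $r'_{d'}>r'_{d'-1}$, since it would make $\mathbf G^{d-1}_\iota\subsetneq\mathbf G$ conjugate to $\mathbf G$.
\item Compare on $J^{d+}_{\iota,x}\cap J^{d'+}_{\iota',x'}$ to get $j\in J^{d}_{\iota,x}\subseteq\Ku_{\iota,x}$ and $j'$ with ${}^{j}\mathbf G^{d-1}_\iota={}^{j'}\mathbf G^{d'-1}_{\iota'}=:\mathbf H$. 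Replace $\iota$ by $j\iota$, using part i) and \eqref{Gaction}.
\item Restrict to ${}^{H}K^{+}$, where the restrictions are exactly the characters attached to $(\phi^{|H},\iota,x)$ and $(\phi^{\prime|H},\iota',x')$ for $\mathbf H$, and induct on $\dim\mathbf S_\phi-\dim Z(\mathbf G)$.
\end{enumerate}
Both conclusions, $\phi\simeq\phi'$ and the statement about $\iota,\iota'$, come out of this induction. In the base case, $\phi$ is twisted to be trivial, and a positive-depth generic stratum for $\phi'$ cannot meet $\fg^*_{x,0+}$ because the depth function is constant on it.

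Your ending does not close as written. The equality $k\iota=k'\iota'$ presupposes $\mathbf S_\phi=\mathbf S_{\phi'}$, and it does not follow from conjugacy of the twisted Levi sequences alone. For example, for $\mathbf G=\GL_2$ with $\mathbf G_\iota$ the diagonal torus, $\iota$ and $\mathrm{Ad}_w\circ\iota$ have the same sequence. Also, to apply Lemma \ref{depth_char} you would need $\check\varphi_0(\check\varphi'_0)^{-1}$ trivial on all of $G^0_{x,0+}$, whereas the hypothesis only controls it on $G^0_{x,0+}\cap G^0_{x',0+}$.
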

On the left hand side of (\ref{intertw}),  the product
$e_{\phi,\iota,x}e_{\phi',\iota',x'}$ takes place in the $R$-algebra 
$\HC_{R}(G)$ of compactly supported, locally constant $R$-valued distributions on $G$. Therefore, 
\ini\begin{equation}
  \label{eq:intertw_in_terms_of_idemp}
  e_{\phi,\iota,x}e_{\phi',\iota',x'}\neq 0 \Leftrightarrow
  (\check\phi^{+}_{\iota,x})|_{\Kp_{\phi,\iota,x}\cap \Kp_{\phi',\iota', x'}} 
  =(\check{\phi}^{'+}_{\iota',x'})|_{\Kp_{\phi,\iota,x}\cap \Kp_{\phi',\iota',x'}}.
\end{equation}
On the right hand side of (\ref{intertw}), $\Ku_{\phi,\iota,x}\cdot \iota$ denotes the
$\Ku_{\phi,\iota,x}$-orbit 
of the embedding $\iota$ inside $I_{\phi}$, which is also the $\K_{\phi,\iota,x}$-orbit since
$G_{\iota}$ centralizes $\iota$.
If we are in the situation that
 $\phi\simeq\phi'$, then we have  $\mathbf{S}_{\phi}=\mathbf{S}_{\phi'}$ and
 $I_{\phi}=I_{\phi'}$, and the intersection $\Ku_{\phi,\iota,x}\cdot \iota\cap
 \Ku_{\phi',\iota',x'}\cdot \iota'$ is taken as subsets of $I_{\phi}$.

The outline of the proof is the following : thanks to a lemma of Kaletha we prove that the characters
$\psi_{i}$ of \ref{def_characters} iii) are generic of depth $r_{i}$, in the sense of Yu in
\cite[\S9]{Yu_tamescusp} and \cite[Definition~3.8]{Fintzen-IHES}. This genericity condition is precisely what allows to control the
intertwining  as in ii).

\alin{Strata and intertwining}\label{paragraph-strata} We denote by $\gG^{*}$ the dual of the Lie
algebra $\gG$. In order to simplify the notation we merely write
$\gG$ for $\gG(F)$ and $\gG^{*}$ for $\gG^{*}(F)$ if there is no
ambiguity.  If $\LC$ is any lattice in $\gG$, we put
$\LC^{\bullet}=\{f\in \gG^{*}, \langle f,\LC\rangle\subset (F)_{0+}\}$.
Then, following Moy and Prasad, we write $\gG^{*}_{x,-r}:=(\gG_{x,r+})^{\bullet}$.

Let us fix a  character $\Psi : F\To{} R^{\times}$ of depth $0$, and
recall Adler's version of the Moy--Prasad isomorphism
$\varphi_{x,r+}:\,\gG_{x,(r/2)+}/\gG_{x,r+}\simto G_{x,(r/2)+}/G_{x,r+}$ from \cite[1.6.6]{Adler} for $r\in \bR_{\geq 0}$.
 Any group $J$ between $G_{x,r+}$ and $G_{x,(r/2)+}$
 corresponds to a lattice $\jG$ between $\gG_{x,r+}$ and $\gG_{x,(r/2)+}$.
A character
 $\psi$ of $J$ is said to be \emph{realized} by an element $X\in
 \gG^{*}_{x,-r}$ if we have $\psi(h) = \Psi(\langle X,
 \varphi_{x,r+}^{-1}(h)\rangle)$ for all $h\in J$. Such an $X$ is not
 uniquely determined by $\psi$, but the stratum $X+\jG^{\bullet}$
 is. The following result is certainly well known to the specialists, but we could not
 find a reference in this generality.

 \begin{lem}  \label{fact_intertwining}
   Let $x,r,J,\psi$ be as above, and let $x',r',J',\psi'$ be another
   tuple of the same nature. Suppose that $\psi$ is realized by some
   $X\in\gG^{*}_{x,-r}$ and that $\psi'$ is realized by some $X'\in\gG^{*}_{x',-r'}$.
Then we have $\psi|_{J\cap J'}=\psi'|_{J\cap J'}$ if and only if
$(X+\jG^{\bullet})\cap (X'+\jG^{'\bullet})\neq \emptyset$.
 \end{lem}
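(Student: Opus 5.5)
We reduce the statement to linear algebra over the finite abelian $p$-group $\jG/(\jG\cap\jG')$-type quotients via the Moy--Prasad isomorphism. Put $s:=\max(r,r')$ and, say, $r\geq r'$. The first step is to place both characters on a common abelian group. Both $J$ and $J'$ contain $G_{x,r+}\cap G_{x',r'+}$, and on $J\cap J'$ we want to compare $\psi$ and $\psi'$ via Lie algebra elements. We use that Adler's Moy--Prasad map is induced by a single map (e.g. a mock exponential or Cayley-type map) defined on $\gG_{x,(r/2)+}\cup\gG_{x',(r'/2)+}$. This map is compatible on overlaps: it induces the isomorphisms $\varphi_{x,r+}$ and $\varphi_{x',r'+}$ on the relevant quotients, and it maps $\jG\cap\jG'$ onto $J\cap J'$. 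Granting this, $\psi|_{J\cap J'}$ is the character $Y\mapsto\Psi(\langle X,Y\rangle)$ of the lattice $\jG\cap\jG'$, and likewise for $\psi'$ with $X'$. The key point is that $Y\mapsto\Psi(\langle X,Y\rangle)$ is additive on $\jG\cap\jG'$ and trivial on $\gG_{x,r+}\cap\gG_{x',r'+}$. This requires checking that the group law on $J\cap J'$ modulo the relevant commutators matches addition, which holds because $[J,J]\subseteq G_{x,r+}$ and $[J',J']\subseteq G_{x',r'+}$.

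The second step is duality. With this in hand, $\psi|_{J\cap J'}=\psi'|_{J\cap J'}$ if and only if $\Psi(\langle X-X',Y\rangle)=1$ for all $Y\in\jG\cap\jG'$. Since $\Psi$ has depth $0$, i.e. is trivial on $F_{0+}$ but not on $F_0$, this is equivalent to $X-X'\in(\jG\cap\jG')^{\bullet}$. Standard lattice duality gives $(\jG\cap\jG')^{\bullet}=\jG^{\bullet}+\jG'^{\bullet}$, which holds for lattices in a finite-dimensional $F$-vector space with a nondegenerate pairing. Hence the condition is equivalent to $X-X'\in\jG^{\bullet}+\jG'^{\bullet}$. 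This in turn says exactly that $(X+\jG^{\bullet})\cap(X'+\jG'^{\bullet})\neq\emptyset$. Both implications follow at once, and the choice of representatives $X,X'$ is irrelevant since the condition depends only on the cosets.

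The main obstacle is the first step: justifying that a single group-to-Lie-algebra bijection realizes both $\varphi_{x,r+}$ and $\varphi_{x',r'+}$ and identifies $J\cap J'$ with $\jG\cap\jG'$ additively modulo the kernels. We would first try Adler's construction for a common tame maximal torus whose apartment contains $x$ and $x'$. There, $\varphi_{x,r+}$ is defined root-subgroup by root-subgroup on a product decomposition, which is the same for $x$ and $x'$. Then $J\cap J'$ decomposes as a product of root-subgroup pieces and torus pieces, and the statement reduces to one root (or torus) at a time, where it is the one-dimensional computation. If no common apartment is available, we would use a mock-exponential map valid under the tameness and $p$ hypotheses instead.
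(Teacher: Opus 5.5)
Your proposal follows essentially the same route as the paper, which builds mock exponential maps $\varphi_x$ and $\varphi_{x'}$ agreeing on $\gG_{x,0+}\cap\gG_{x',0+}$ (equivalently, your single map on the union). It does this via a common apartment, the associated Levi $\mathbf{M}$ (on which the filtrations at $x$ and $x'$ coincide), and root-subgroup decompositions of the opposite unipotent radicals with compatibly chosen coset representatives, and then concludes exactly by your duality step $(\jG\cap\jG')^{\bullet}=\jG^{\bullet}+\jG^{'\bullet}$.

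Two minor remarks. No additivity or commutator check on $J\cap J'$ is needed, since a bijection $\jG\cap\jG'\to J\cap J'$ that matches the character values pointwise suffices. Your fallback for the case with no common apartment is also unnecessary, because any two points of the building lie in a common apartment.
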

 \begin{proof}
We first claim that there exist two ``mock exponential maps'' $\varphi_x:\fg_{x,0+} \To{} G_{x,0+}$
and $\varphi_{x'}: \fg_{x',0+} \To{} G_{x',0+}$ in the sense of \cite[\S1.5]{Adler} such that their
restriction to $\fg_{x,0+}  \cap \fg_{x',0+}$ agree.  

This can be achieved as follows. 
We choose an
 apartment that contains both $x$ and $x'$ and denote by $\mathbf{S}$ the corresponding
 maximal $F$-split torus. As in \ref{opposite_parabolic_sgps}, the points $x$ and $x'$ provide us with a Levi subgroup $\mathbf{M} \subset \mathbf{G}$ and opposite parabolic subgroups $\mathbf{P}=\mathbf{M}\mathbf{U}$ and $\bar{\mathbf{P}}=\mathbf{M}\bar{\mathbf{U}}$ such that 
$G_{x,0+}=(G_{x,0+}\cap \bar U)(G_{x,0+}\cap M)(G_{x,0+}\cap U)$, $G_{x',0+}=(G_{x',0+}\cap \bar U)(G_{x',0+}\cap M)(G_{x',0+}\cap U)$, $G_{x,0+}\cap M=M_{x,0+}=M_{x',0+}=G_{x',0+}\cap M$, and the analogous equations for the Lie algebra. Further we have $U=\prod_{a \in \Sigma_{x,x'}(\mathbf{S})} U_a$ and $\bar U=\prod_{a \in \Sigma_{x,x'}(\mathbf{S})} U_{-a}$ as topological spaces and $\fu=\oplus_{a \in \Sigma_{x,x'}(\mathbf{S})} \fu_a$ and  and $\bar \fu=\oplus_{a \in \Sigma_{x,x'}(\mathbf{S})} \fu_{-a}$, where $\Sigma_{x,x'}(\mathbf{S})$ denotes the subset of the  non-multipliable relative roots with respect to $\mathbf{S}$ that occur as characters of $\mathbf{S}$ acting on $\fu$, and where $\fu_a$ denotes the sum of the $a$ and $2a$-eigenspaces, and $U_a$ denotes the corresponding root subgroup with Lie algebra $\fu_a$. 

Let $a \in \pm \Sigma_{x,x'}(\mathbf{S})$.
Then the Moy--Prasad filtration submodules of $\fu_a$ at $x$ and $x'$ agree up to a shift in the depth-parameterization, and likewise the sets of subgroups $\{U_{a,x,r} \cap U_{a,x', 0+}\}_{r \in \bR_{r>0}}$ and $\{U_{a,x',r} \cap U_{a,x, 0+}\}_{r \in \bR_{r>0}}$ agree. Following \cite[\S1.3]{Adler} and using these filtrations, we can now define homeomorphisms $\varphi_{x,a}: \fu_{a,x,0+} \To{} U_{a,x,0+}$ and $\varphi_{x',a}: \fu_{a,x',0+} \To{} U_{a,x',0+}$ that agree with the restrictions of $\varphi_{x,r+}$ and $\varphi_{x',r+}$, respectively, for all $r \in \bR_{\geq 0}$. By using the same coset representatives in the construction of \textit{loc.\ cit.} for $\varphi_{x,a}$ and $\varphi_{x',a}$ for all the cosets contained in $\fu_{a,x,0+} \cap \fu_{a,x',0+}$ and their images, we can ensure that $\varphi_{x,a}$ and $\varphi_{x',a}$ agree on $\fu_{a,x,0+} \cap \fu_{a,x',0+}$. Let $\varphi_{M, x}: \fm_{x,0+}=\fm_{x', 0+} \to{} M_{x,0+}=M_{x',0+}$ be a mock exponential map as provided by \cite[\S1.5]{Adler}. Then using the decompositions 
$\fg_{x,0+}=(\oplus_{a \in \Sigma_{x,x'}(\mathbf{S})} \fu_{-a, x, 0+}) \oplus \fm_{x,0+}\oplus (\oplus_{a \in \Sigma_{x,x'}(\mathbf{S})} \fu_{a, x, 0+})$ and	
$G_{x,0+}=(\prod_{a \in \Sigma_{x,x'}(\mathbf{S})} U_{-a, x, 0+})M_{x,0+}(\prod_{a \in \Sigma_{x,x'}(\mathbf{S})} U_{a, x, 0+})$ and the product of the above maps on each respective summand, we obtain a mock exponential map $\varphi_x: \fg_{x,0+} \to{} G_{x,0+}$  as in \cite[Remark~1.3.3~and~\S1.5]{Adler}. Similarly we obtain the mock exponential map $\varphi_{x'}: \fg_{x',0+} \to{} G_{x',0+}$.
By construction these two mock exponential maps agree on $\fg_{x,0+}\cap \fg_{x',0+}$.

Now, by definition,
 $\varphi_{x,r+}$ and $\varphi_{x',r'+}$ are induced by restriction from
 $\varphi_{x}$ and $\varphi_{x'}$, respectively. 
It follows that $\varphi_{x}^{-1}(J)=\jG$ and,
since $\Psi(\langle X,\gG_{x,r+}\rangle)={1}$,  that 
$\psi(j)=\Psi(\langle X,\varphi_{x}^{-1}(j)\rangle$ for all $j\in J$. Similarly
$\varphi_{x'}^{-1}(J')=\jG'$ and 
$\psi(j')=\Psi(\langle X',\varphi_{x'}^{-1}(j')\rangle$ for all $j'\in J'$, 
and $\psi(j')=\Psi(\langle X',\varphi_{x}^{-1}(j')\rangle$ for all $j'\in J' \cap J$.
Therefore, we have $\psi|_{J\cap J'}=\psi'|_{J\cap J'}$ if and only if 
$\Psi(\langle X,Y\rangle)=\Psi(\langle X',Y\rangle)$ for all $Y\in \jG\cap \jG'$. This is
equivalent to $X-X'\in (\jG\cap\jG')^{\bullet}=\jG^{\bullet}+\jG^{'\bullet}$, which in turn is 
equivalent to $(X+\jG^{\bullet})\cap(X'+\jG^{'\bullet})\neq\emptyset$.
 \end{proof}

\alin{Generic elements}
	\inisub
\begin{subequations}
 Let $\mathbf{L}$ be a tamely ramified  twisted Levi subgroup of
${\mathbf{G}}$. We may identify $\lG^{*}={\rm Lie}^{*}(\mathbf{L})$ with the weight-$0$ subspace
of $\gG^{*}$ for the coadjoint action of the connected center of
$\mathbf{L}$, 
and we write $(\fl^*)^{\mathbf{L}}$ for the invariants under the coadjoint action of $\mathbf{L}$. We recall from \cite[Lemma~2.3]{Fi-mod-ell} that $(\fl^*)^{\mathbf{L}}\cap (\fl^*)_{x,r}=(\fl^*)^{\mathbf{L}}\cap (\fl^*)_{x',r}$ for $x, x' \in \BT(\mathbf{L},F)$ and $r \in \wt{\mathbb{R}}$. The proof of \cite[Lemma~2.3]{Fi-mod-ell} also shows that
\begin{equation}\label{eqn-MP-intersection}
(\fl^*)^{\mathbf{L}}\cap ((\fl^*)_{x,r} + (\fl^*)_{x',r})=(\fl^*)^{\mathbf{L}}\cap (\fl^*)_{x,r} .
\end{equation}

An element $X\in (\fl^*)^{\mathbf{L}} \subseteq \fg^*$ is called
${\mathbf{G}}$-\emph{generic\footnote{Here we only recall condition GE1
  of \cite[\S 8]{Yu_tamescusp} in the form stated in \cite[Definition~3.8]{Fintzen-IHES} since under our hypotheses \eqref{H1} and \eqref{H2},
  \cite[Lemma 8.1]{Yu_tamescusp} shows that condition GE2 is implied by GE1.} 
 of depth $-r$}
if $X\in (\fl^*)_{x,-r} \smallsetminus (\fl^*)_{x,-r+}$ for some (equivalently, every) point $x \in \BT(\mathbf{H}, F)$ and if for some (equivalently, every)  maximal torus $\mathbf{S}\subseteq
\mathbf{L}$ and every
$\alpha\in\Sigma(\mathbf{S},{\mathbf{G}})\setminus\Sigma(\mathbf{S},\mathbf{L})$ we have 
$v(\langle X, H_{\alpha}\rangle)=-r$, where $E$ is a 
splitting field of $\mathbf{S}$, the valuation $v$ extends that of $F$, and
$H_{\alpha}:=\mathrm{Lie}(\alpha^\vee)(1)\in \sG(E)$. Note that we do not exclude
the case $\mathbf{L}={\mathbf{G}}$, where only the first condition,
$X\in (\fl^*)_{x,-r} \smallsetminus (\fl^*)_{x,-r+}$, is non-empty.

For $x\in \BT(\mathbf{L},F)$, we define $\jG_{x,r}:=\lG_{x,r}\oplus\nG_{x,r/2}$ and 
$\jG_{x,r}^{+}:=\lG_{x,r}\oplus\nG_{x,(r/2)+}$. Here $\nG$ denotes the sum of the  non-invariant
eigenspaces in $\gG$ under the adjoint action of the center of $\mathbf{L}$.
These lattices correspond to subgroups
$J_{x,r}$ and $J_{x,r}^{+}$ between $G_{x,r}$ and $G_{x,r/2}$. With Yu's notation in
\cite{Yu_tamescusp} we would write $J_{x,r}= (L,G)_{x,(r,r/2)}$ and $J_{x,r}^{+}=(L,G)_{x,(r,r/2+)}$.
We let $X \in (\fl^*)^{\mathbf{L}} \subseteq \fg^*$ be $\mathbf{G}$-generic of depth $-r$.
As in \ref{paragraph-strata}, the element $X$ defines a character $\psi_{x}$ of $J_{x,r}^{+}$ that is trivial
on $G_{x,r+}$. Moreover this character
is centralized by $J_{x,r}$ since $[J_{x,r},J_{x,r}^{+}]\subset G_{x,r+}$.
The following lemma follows from  an adaptation to our setting of Yu's arguments in \cite[\S 8]{Yu_tamescusp}.

\begin{lem}\label{lemme_generic_intertwining}
  Let $(\mathbf{L},X,r,x)$ be as above and let $(\mathbf{L}',X',r',x')$ be another tuple of
  the same nature (so in particular $X'$ is $\mathbf{G}$-generic of depth $-r'$).
 If $(X+(\jG_{x,r}^{+})^{\bullet})\cap
  (X'+(\jG_{x',r'}^{+})^{\bullet})\neq \emptyset$, then $r=r'$ and there are $g\in J_{x,r}$ and
  $g'\in J_{x',r'}$ such that $^{g}\mathbf{L}={^{g'}\mathbf{L}'}$ and $^{g}X-{^{g'}X'}\in
  ({}^g\fl)^*_{x'',-r+}\cap ({}^g\fl^*)^{{}^g\mathbf{L}}$ for every $x'' \in \BT({{}^g\mathbf{L}}, F)$. 
\end{lem}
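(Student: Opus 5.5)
The plan is to adapt Yu's argument \cite[\S 8]{Yu_tamescusp}: first pin down the depth, then remove the $\nG$-components by conjugation, and finally compare the centralizers. Write $Z\in (X+(\jG_{x,r}^{+})^{\bullet})\cap (X'+(\jG_{x',r'}^{+})^{\bullet})$.

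\emph{Step 1: the depths agree.} Since $(\jG^{+}_{x,r})^{\bullet}=(\lG_{x,r})^{\bullet}\oplus(\nG_{x,(r/2)+})^{\bullet}$, we have $(\jG^{+}_{x,r})^{\bullet}\subseteq \gG^{*}_{x,-r/2}$. Using the decomposition $\gG^*=\lG^*\oplus\nG^*$, the element $Z$ has $\lG^*$-component in $X+\lG^*_{x,-r+}$, hence of exact depth $-r$. I would then show that $Z$ itself has depth exactly $-r$ at $x$. For this I would use genericity: for a tame maximal torus $\mathbf{S}\subseteq\mathbf{L}$ with $x\in\BC(\mathbf{S},F)$, the $\nG^*$-part lies in $\gG^*_{x,-r/2}$, which is strictly shallower than $-r$. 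The same holds at $x'$ with $r'$. To compare depths at different points, I would use the genericity of $X$: the semisimple part of $X$ has valuation $-r$ on all roots outside $\mathbf{L}$ and is centralized by $\mathbf{L}$. By \eqref{eqn-MP-intersection}, this forces $X+(\jG^{+}_{x,r})^\bullet$ to lie in $\gG^*_{y,-r}\smallsetminus\gG^*_{y,-r+}$ for every $y$ (a depth-invariance statement for generic cosets, as in Yu, Lemma 8.3 / Adler--Roche). Hence $r=\mathrm{depth}(Z)=r'$.

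\emph{Step 2: conjugate away the $\nG$-components.} This is Yu's key lemma \cite[Lemma 8.6]{Yu_tamescusp}: for $X$ generic, every element of $X+(\jG^+_{x,r})^\bullet$ is $J_{x,r}$-conjugate to an element of $X+\lG^*_{x,-r+}$. The method is successive approximation. Write $Z=X+A+B$ with $A\in\lG^*_{x,-r+}$ and $B\in(\nG_{x,(r/2)+})^\bullet$. Genericity makes $\mathrm{ad}^*(X):\nG\to\nG^*$ an isomorphism that shifts depth by exactly $-r$. One can therefore solve for $Y\in\nG_{x,r/2}$ with $\mathrm{ad}^*(Y)X\equiv -B$, and $\mathrm{Ad}^*(\varphi_x(Y))$ removes $B$ modulo deeper terms. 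Iterating and using completeness gives $g\in J_{x,r}$ with ${}^gZ\in X+\lG^*_{x,-r+}$. Doing the same at $x'$ gives $g'\in J_{x',r}$. Then $Z_1:={}^{g}Z$ (resp.\ ${}^{g'}Z$) is a good element whose centralizer contains a twisted Levi determined by the generic part.

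\emph{Step 3: compare centralizers.} This is the main obstacle. We now have $X_1:={}^gX$ and $X_1':={}^{g'}X'$, both generic of depth $-r$, with ${}^gZ$ and ${}^{g'}Z$ conjugate by $h:=g'g^{-1}$. I would rather rearrange so that a single element $W$ satisfies $W\in (X_1+{}^g\lG^*_{-r+})\cap(X_1'+{}^{g'}\lG'^*_{-r+})$. To get this, I would choose $g'$ relative to $Z$ itself, i.e.\ first conjugate $Z$ at $x$ into $X+\lG^*_{-r+}$, and then note that the resulting element still lies in the $x'$-coset after conjugating by an element of $J_{x',r}$. The point is that the approximation at $x'$ can be run starting from ${}^gZ$, using that $J_{x,r}\subseteq G_{x,r/2}$ moves $X'$-cosets only by terms absorbed in $(\jG^+_{x',r})^\bullet$; this requires the argument of Lemma \ref{fact_intertwining} with compatible mock exponentials. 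Given such a $W$, the twisted Levi ${}^g\mathbf{L}$ is recovered intrinsically from $W$ as the centralizer of the ``depth $-r$ leading term'' of $W$. Concretely, for a tame maximal torus in the centralizer, the roots $\alpha$ with $v(\langle W,H_\alpha\rangle)>-r$ are exactly those of ${}^g\mathbf{L}$, by genericity. The same characterization holds for ${}^{g'}\mathbf{L}'$, so ${}^g\mathbf{L}={}^{g'}\mathbf{L}'$. Finally, $X_1-X_1'$ lies in the $({}^g\mathbf{L})$-invariant part of $\lG^*$ and in $\lG^*_{x,-r+}+\lG^*_{x',-r+}$, so \eqref{eqn-MP-intersection} yields ${}^gX-{}^{g'}X'\in({}^g\lG)^*_{x'',-r+}\cap({}^g\lG^*)^{{}^g\mathbf{L}}$ for all $x''$, as claimed.
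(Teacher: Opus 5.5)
Your skeleton is the paper's: depth invariance to get $r=r'$, Yu's \cite[Lemma~8.6]{Yu_tamescusp} to conjugate into $X+\lG^{*}_{x,-r+}$, a comparison of root systems on a maximal torus, and finally \eqref{eqn-MP-intersection}. However, two steps do not work as written.

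In Step~1, $(\jG^{+}_{x,r})^{\bullet}=\lG^{*}_{x,-r+}\oplus\nG^{*}_{x,-r/2}$ lies in $\gG^{*}_{x,-r+}$, not in $\gG^{*}_{x,-r/2}$. More importantly, the coset $X+(\jG^{+}_{x,r})^{\bullet}$ is not contained in $\gG^{*}_{y,-r}$ for every $y$; already $X$ typically fails this for $y$ away from $\BT(\mathbf{L},F)$. Also, \eqref{eqn-MP-intersection} only concerns $\mathbf{L}$-invariant elements, so it says nothing about $Z$. What you need is a point-independent invariant: the Adler--DeBacker depth $d(Z)=\max\{s:\ Z\in\gG^{*}_{y,s}\text{ for some }y\}$. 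By \cite[Lemma~3.3.7]{Adler_DeBacker} it is constant, equal to $d(X)=-r$, on $X+\gG^{*}_{x,-r+}\supseteq X+(\jG^{+}_{x,r})^{\bullet}$. The same argument at $x'$ gives $r=r'$.

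In Step~3, the ``main obstacle'' comes only from the direction of conjugation. If ${}^{g}Z\in X+\lG^{*}_{x,-r+}$ and ${}^{g'}Z\in X'+\lG^{'*}_{x',-r+}$, then $Z$ itself lies in ${}^{g^{-1}}(X+\lG^{*}_{x,-r+})\cap{}^{(g')^{-1}}(X'+\lG^{'*}_{x',-r+})$. The elements $g^{-1}\in J_{x,r}$ and $(g')^{-1}\in J_{x',r}$ are exactly what the lemma asks for. Your proposed repair needs $J_{x,r}$ to move $X'+(\jG^{+}_{x',r})^{\bullet}$ only inside itself; this is unjustified, and fails in general since $J_{x,r}$ need not stabilize that coset.

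The idea that is genuinely missing is in the root comparison. Reading off the twisted Levi from $v(\langle W,H_\alpha\rangle)$ requires a maximal torus $\mathbf{S}$ that centralizes $W$ and lies in both conjugated twisted Levis. For an arbitrary $Z$, for instance one with a non-zero nilpotent part in $\lG^{*}$, no such torus exists, and the ``depth $-r$ leading term'' is not well defined. The paper's remedy is that the intersection of the two cosets is open and non-empty, so one may take $Z=Y$ regular semisimple. Since $Y\in{}^{g^{-1}}\lG^{*}\cap{}^{(g')^{-1}}\lG^{'*}$, the connected centres of ${}^{g^{-1}}\mathbf{L}$ and ${}^{(g')^{-1}}\mathbf{L}'$ fix $Y$. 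Hence they lie in $\mathbf{S}:=C_{\mathbf{G}}(Y)$, and therefore $\mathbf{S}$ lies in both twisted Levis. Genericity then gives $v(\langle Y,H_\alpha\rangle)\geq -r$, with equality exactly for roots outside ${}^{g^{-1}}\mathbf{L}$, resp.\ outside ${}^{(g')^{-1}}\mathbf{L}'$. This forces ${}^{g^{-1}}\mathbf{L}={}^{(g')^{-1}}\mathbf{L}'$, and your final application of \eqref{eqn-MP-intersection} then goes through.
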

\begin{proof}
Let us first show that $r=r'$. Indeed, since
$(\jG_{x,r}^{+})^{\bullet}\subset \gG^{*}_{x,-r+}$, the strata
$X+\gG^{*}_{x,-r+}$ and $X'+\gG^{*}_{x',-r'+}$ have a non-empty intersection. But $-r$ is the
\emph{depth} $d(X)$ of the non-nilpotent element $X$ in the sense of 
\cite[\S3.3]{Adler_DeBacker}, and by
\cite[Lemma~3.3.7]{Adler_DeBacker}, this depth function is constant on the coset $X+\gG^{*}_{x,-r+}$.
Similarly,  the depth function is constant equal to  $-r'$ on the coset
$X'+\gG^{*}_{x',-r'+}$. Hence we conclude that  $r=r'$.

Suppose first that $\mathbf{L}=\mathbf{L'}={\mathbf{G}}$. Then using \eqref{eqn-MP-intersection} (and \cite[Lemma~2.3]{Fi-mod-ell}) we have $X-X'\in (\gG^{*}_{x,-r+} + \gG^{*}_{x',-r+}) \cap (\fg^{*})^{\mathbf{G}}=\fg^{*}_{x,-r+} \cap (\fg^{*})^{\mathbf{G}}=\fg^{*}_{x'',-r+} \cap (\fg^{*})^{\mathbf{G}}$.

Let us return to the general case.
 The  intersection $(X+(\jG_{x,r}^{+})^{\bullet})\cap  (X'+(\jG_{x',r}^{+})^{\bullet})$
 is open in $\gG^{*}$. Since it is assumed to be non-empty, it contains a regular semi-simple
 element $Y$ so that the centralizer $\mathbf{S}$ of $Y$ is a maximal torus of ${\mathbf{G}}$.
Now Lemma 8.6 of \cite{Yu_tamescusp} provides us with an element $g\in G_{x,r/2}$ such that 
$^{g^{-1}}Y\in X+\lG^{*}_{x,-r+}$. It follows that $\mathbf{S}\subset
{^{g}\mathbf{L}}$ and that
we have $v(\langle Y, H_{\alpha}\rangle)\geq -r$ for $\alpha\in
\Sigma(\mathbf{S},{\mathbf{G}})$, with equality if and only if
$\alpha\notin \Sigma(\mathbf{S},{^{g}\mathbf{L}})$.
Similarly, there is an element $g'\in G_{x',r/2}$ such that 
$^{g^{'-1}}Y\in X'+\lG^{'*}_{x',-r+}$ and  it follows that $\mathbf{S}\subset
{^{g'}\mathbf{L'}}$ and that we have 
$v(\langle Y, H_{\alpha}\rangle)= -r$ if and only if
$\alpha\notin \Sigma(\mathbf{S},{^{g'}\mathbf{L'}})$.

We thus obtain the equality
$\Sigma(\mathbf{S},{^{g}\mathbf{L}})=\Sigma(\mathbf{S},{^{g'}\mathbf{L'}})$. This
implies $^{g}\mathbf{L}={^{g'}\mathbf{L}'}$, and also 
$(^{g}X+{^{g}\lG^{*}}_{x,-r+})\cap (^{g'}X'+{^{g}\lG^{*}}_{x',-r+})\neq \emptyset$.
This situation is similar to the case $\mathbf{L}=\mathbf{L'}={\mathbf{G}}$ treated above, hence we
conclude that $^{g}X-{^{g'}X'}\in
({}^g\fl)^*_{x'',-r+}\cap ({}^g\fl^*)^{{}^g\mathbf{L}}$.
\end{proof}

We resume the setting associated to $(\phi,\iota)$. Taking invariants under the coadjoint action of
 $\iota(\mathbf{S}_{\phi, r_{i}})$ we obtain
an increasing sequence of subspaces $\gG^{0*}_{\iota}\subset\hdots \subset\gG^{d*}_{\iota}=\gG^{*}$ whose
Moy--Prasad filtrations 
are induced from those of $\gG^{*}$.
Finally, recall the character $\psi_{i}$ of ${\mathbf{G}}_{\iota}^{i}(F)$ defined in
\ref{def_characters} for each $i=0,\hdots, d-1$.
\end{subequations}

\begin{lemme}\label{generic}
There is a ${\mathbf{G}}_{\iota}^{i+1}$-generic element $X_{i}\in (\fg_\iota^{i*})^{\mathbf{G_\iota^i}}$ of depth $-r_i$ that represents
$\psi_{i}|_{G^{i}_{\iota,x,r_{i}}}$ for all $x\in \BT({\mathbf{G}}_{\iota}^{i},F)$. (In
particular $\psi_{i}$ has depth $r_{i}$).
\end{lemme}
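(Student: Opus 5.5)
The plan is to follow the strategy of Kaletha's \cite[Lemma~3.7.5]{Kaletha} (the paper mentions ``a lemma of Kaletha''). First, construct $X_i$ from $\psi_i$ restricted to a torus. Second, check that it represents $\psi_i$ on all of $G^{i}_{\iota,x,r_i}$. Third, check genericity root by root via Lemma \ref{rootsystemCr}.

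\emph{Construction of $X_i$.} Since $\psi_i$ is a character of $G^i_\iota$, it is trivial on $\mathbf{G}^i_{\iota,\rm sc}(F)$ by Lemma \ref{depth_char}. It is also trivial on $G^i_{\iota,x,r_i+}$ by property iii) of \ref{def_characters}. Hence its restriction to $G^i_{\iota,x,r_i}$ factors through the abelian quotient $G^i_{\iota,x,r_i:r_i+}\simeq \gG^i_{\iota,x,r_i:r_i+}$. This quotient is in turn controlled by the connected center and the cocenter. Concretely, I would use the $z$-extension from the proof of Lemma \ref{depth_char} to pass to the torus $\tilde{\mathbf{G}}^i_{\iota,\rm ab}$. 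For a character of a tamely ramified torus of depth $r_i$, there is an element of the dual Lie algebra of the torus realizing it on the $r_i$-filtration subgroup; this is standard, e.g.\ via Yu's \cite{Yu_LLCtori} or the argument in \cite[\S3]{Kaletha}. Pulling this element back along $\mathrm{Lie}(\mathbf{G}^i_\iota)\to \mathrm{Lie}(\mathbf{G}^i_{\iota,\rm ab})$ gives $X_i\in(\gG^{i*}_\iota)^{\mathbf{G}^i_\iota}$, since the dual of the abelianization's Lie algebra consists of coadjoint-invariant functionals.

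\emph{Representation on all of $G^{i}_{\iota,x,r_i}$.} Because $X_i$ is $\mathbf{G}^i_\iota$-invariant, its Moy--Prasad depth does not depend on $x$ (\cite[Lemma~2.3]{Fi-mod-ell}). The identity $\psi_i(h)=\Psi(\langle X_i,\varphi^{-1}(h)\rangle)$ would then be checked using the surjectivity statements of \cite[Lemma~3.5.3]{Kaletha}, exactly as in the proof of Lemma \ref{depth_char}. This gives a single $X_i$ that works for every $x\in\BT(\mathbf{G}^i_\iota,F)$.

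\emph{Genericity.} Fix a tame maximal torus $\mathbf{S}\subseteq \mathbf{G}^i_\iota$ split over $E$, and a root $\alpha\in\Sigma(\mathbf{S},\mathbf{G}^{i+1}_\iota)$. The element $\langle X_i,H_\alpha\rangle$ governs the character
\[t\mapsto \psi_i(N_{E|F}(\alpha^\vee(t)))\]
on $E^\times_{r_i}/E^\times_{r_i+}$, up to a tame index factor $e(E/F)$ that is a unit since it is prime to $p$. So $v(\langle X_i,H_\alpha\rangle)\ge -r_i$ always holds, with equality exactly when $\psi_i\circ N_{E|F}\circ\alpha^\vee$ is nontrivial on $E^\times_{r_i}$.

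Now write $\psi_i=\check\varphi_i\check\varphi_{i+1}^{-1}$. On $N_{E|F}\alpha^\vee(E^\times_{r_i})$, the character $\check\varphi_{i+1}$ agrees with the corresponding restriction of $\check\varphi_{i+1}$ extended from $\mathbf{G}^{i+1}_\iota$. By Lemma \ref{rootsystemCr} applied at level $r_i$ (and the fact that $\check\varphi_i$ and $\check\varphi$ agree at depth $\geq r_{i-1}+$), $\check\varphi_{i}$ is trivial there if and only if $\alpha$ is a root of $C_{\mathbf{G}}(\iota(\mathbf{S}_{\phi,r_i}))=\mathbf{G}^i_\iota$. Since $\check\varphi_{i+1}$ is trivial on $G^{i+1}_{\iota,r_i+}$, it can be peeled off using property iii) and Lemma \ref{rootsystemCr} at $r_i+$. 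Hence for $\alpha\notin\Sigma(\mathbf{S},\mathbf{G}^i_\iota)$ the restriction is nontrivial, which gives $v=-r_i$. Nonvanishing of $X_i$ modulo depth $-r_i+$ follows from the existence of at least one such root, by definition of the jump $r_i$.

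\emph{Main obstacle.} The hardest step is the careful bookkeeping of the Lie algebra/norm compatibility, namely showing that the pairing $\langle X_i,H_\alpha\rangle$ really computes $\psi_i\circ N\circ\alpha^\vee$ on $E^\times_{r_i}$. This requires the Moy--Prasad isomorphism to commute with $\alpha^\vee$ and with norms. I would first try to quote the computation from \cite[Lemma~3.7.5]{Kaletha} verbatim, since it treats exactly this in the torus case.
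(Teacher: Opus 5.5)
Your overall plan is right, but the step where you dispose of $\check\varphi_{i+1}$ fails as written, and that step is the crux of genericity.

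\textbf{The gap.} It is not true that $\check\varphi_{i+1}$ is trivial on $G^{i+1}_{\iota,x,r_i+}$. It comes from a cocycle $\hat\varphi_{i+1}$ extending $\hat\phi|_{I_F^{r_i+}}$. By Lemma \ref{depth_char}, that restriction is trivial only if $\phi(I_F^{r_i+})=1$, which fails except when $i=d-1$ and $r_d=r_{d-1}$.

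Property iii) of \ref{def_characters} concerns $\psi_i$, not $\check\varphi_{i+1}$, so it does not help here. The analogue of Lemma \ref{rootsystemCr} for $\check\varphi_{i+1}$ at level $r_i+$ only gives triviality of $\check\varphi_{i+1}\circ N_{E|F}\circ\alpha^\vee$ on $E^\times_{r_i+}$. You need triviality on $E^\times_{r_i}$; otherwise $\check\varphi_{i+1}^{-1}$ could cancel the nontriviality of $\check\varphi_i$ there, and genericity would be lost.

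\textbf{The fix.} This is how the paper argues. For $\alpha\in\Sigma(\mathbf S,\mathbf G^{i+1}_\iota)$, the coroot $\alpha^\vee$ factors through the preimage of $\mathbf S$ in $(\mathbf G^{i+1}_\iota)_{\rm sc}$. Hence $N_{E|F}(\alpha^\vee(E^\times))$ lies in the image of $(\mathbf G^{i+1}_\iota)_{\rm sc}(F)$, on which $\check\varphi_{i+1}$ is trivial by Lemma \ref{depth_char}. Dually, $\alpha^\vee$ kills $Z(\hat{\mathbf G}_{\phi,r_i+})$, where $\hat\varphi_{i+1}$ takes values. So $\check\varphi_{i+1}\circ N_{E|F}\circ\alpha^\vee$ is trivial on all of $E^\times$. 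Therefore $\psi_i\circ N_{E|F}\circ\alpha^\vee$ is nontrivial on $E^\times_{r_i}$ exactly for $\alpha\notin\Sigma(\mathbf S,\mathbf G^i_\iota)$.

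\textbf{Comparison of routes.} With this repair your argument goes through, but by a different route from the paper's. Once the paper has this root-wise nontriviality and depth exactly $r_i$ (via property iii)), it simply invokes \cite[Lemma 3.6.8]{Kaletha} to get a generic $X_i(x)$ at each $x$. It then uses the proof of \cite[Lemma~3.3.1]{AFMO2} (cf.\ \cite[Lemma 2.51]{Hakim_Murnaghan}) to choose $X_i$ independently of $x$.

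You instead build $X_i$ globally from the abelianization. This makes independence of $x$ automatic, but you then owe the bookkeeping those citations supply:
\begin{itemize}
\item compatibility of the Moy--Prasad isomorphisms with $\tilde{\mathbf G}\to\tilde{\mathbf G}_{\rm ab}$ at every $x$;
\item the trace computation of $\langle X_i,H_\alpha\rangle$;
\item descent from $\mathrm{Lie}^*(\tilde{\mathbf G})$ to $\fg_\iota^{i*}$.
\end{itemize}
On the last point: the functional you produce lives on the $z$-extension, not on $\mathrm{Lie}(\mathbf G^i_{\iota,\rm ab})$. Making it vanish on $\mathrm{Lie}(\mathbf Z)$ requires modifying it by an element of depth $>-r_i$. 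That uses $p\nmid|\pi_1((\mathbf G^i_\iota)_{\rm der})|$, which follows from \eqref{H2}.
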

\begin{proof}
Let $\mathbf{S}$ be a maximal $F$-torus of ${\mathbf{G}}_{\iota}^i$ split by some tamely
ramified Galois extension $E$. By Lemma \ref{rootsystemCr}, for all roots
$\alpha$ in
$\Sigma(\mathbf{S},{\mathbf{G}}_{\iota}^{i})\setminus\Sigma(\mathbf{S},{\mathbf{G}}_{\iota}^{i})$
we have
$\check\varphi_{i}(N_{E|F}(\alpha^{\vee}(E^{\times}_{r_{i}})))\neq\{1\}$.
On the other hand, we also have
$\check\varphi_{i+1}(N_{E|F}(\alpha^{\vee}(E^{\times}_{r_{i}})))=\{1\}$ for such roots $\alpha$, since
$N_{E|F}(\alpha^{\vee}(E^{\times}))\subset
({\mathbf{G}}_{\iota}^{i+1})_{\rm sc}(F)$. Since every maximally split torus in ${\mathbf{G}}_{\iota}^i$ is contained in a tame maximal torus, we deduce that $\psi_i=\check\varphi_i\check\varphi_{i+1}^{-1}$ has depth $\geq r_i$.
Combined with \ref{def_characters}iii) it follows that the character
$\psi_{i}$ of ${\mathbf{G}}_{\iota}^{i}(F)$ has depth exactly $r_{i}$  (at every point $x \in \BT(\mathbf{G}_\iota^{i}, F)$)
and the hypothesis of \cite[Lemma 3.6.8]{Kaletha} are satisfied.
This lemma asserts  that 
for every $x \in \BT(\mathbf{G}_\iota^{i}, F)$, there exists some
${\mathbf{G}}^{i+1}_{\iota}$-generic $X_{i}(x)$ of depth $-r_i$  that represents $\psi_{i}|_{G^{i}_{\iota, x, r_{i}}}$. 
However, it follows, for example, 
by the proof of \cite[Lemma~3.3.1]{AFMO2} (cf.\ also  \cite[Lemma 2.51]{Hakim_Murnaghan}),
that we may choose $X_{i}$ uniformly for all $x$.
 \end{proof}

\alin{Some auxiliary groups}
 \label{auxiliary_groups}
For $i=1,\hdots, d$ and $x\in \BT_{\iota}$,
let us introduce the open, compact subgroup $J_{\iota,x}^{i}$ of $G_{\iota}^{i}$ 
that lies in between $G^{i}_{\iota,x,r_{i-1}}$ and
  $G^{i}_{\iota,x,r_{i-1}/2}$ and corresponds to the lattice
  $$\jG^{i}_{\iota,x}=\gG^{i-1}_{\iota,x,r_{i-1}} \oplus (\nG^{i-1}_{\iota}\cap \gG^{i}_{\iota})_{x,r_{i-1}/2}$$ of
  $\gG^{i}_{\iota}$ through the   Moy--Prasad isomorphism. 
This is the group 
$J^{i}_{\iota,x}=(G^{i-1}_{\iota},G^{i}_{\iota})_{x,r_{i-1},r_{i-1}/2}$ in Yu's notation.
Similarly we define $J_{\iota,x}^{i+}$ by  replacing $r_{i-1}/2$ by $r_{i-1}/2+$.

Recall the character $\psi_{i,x}^{+}$ of the group $G^{i}_{\iota,x,0+}G_{x,r_{i}/2+}$ defined in
\ref{Yu_construction}. 
By construction, the restriction $(\psi_{i-1,x}^{+})|_{J^{i+}_{\iota,x}}$ is
represented by the element $X_{i-1}$ provided by  Lemma \ref{generic}.

\alin{Proof of Proposition \ref{prop_intertwining}}
 i) We have $\K_{\iota, x}= G_{\iota,x}^{0} \prod_{i}J^{i}_{\iota,x}$, and since $G_{\iota,x}^{0}$ centralizes $e_{\phi,\iota,x}$ by  \eqref{Gaction},
it is enough to prove that for each $i$
  and $j$, the group $J^{i}_{\iota,x}$ centralizes the character $\psi_{j,x}^{+}$. 
When $j\neq i-1$, this is immediate since $J^{i}_{\iota,x}$ is
  contained in the group $G^{j}_{\iota,x,0+}G_{\iota,x,r_{j}/2+}$ on which $\psi_{j,x}^{+}$
  is defined. When $j=i-1$, this follows from 
  $[J^{i}_{\iota,x},G^{i-1}_{\iota,x,0+}G_{\iota,x,r_{i-1}/2+}]\subset (G_\iota^{i-1}, G_{\iota})_{x,r_{i-1}+,r_{i-1}/2+} \subset \ker
  (\psi_{i-1,x}^{+})$, compare also \cite[Lemma~4.2.]{Yu_tamescusp}.

ii) We will prove (\ref{intertw}) by an inductive argument. 

We start with two triples $(\phi,\iota,x)$ and $(\phi',\iota',x')$ such that
\ini
\begin{equation}\label{hyp}
    (\check\phi_{\iota,x})|_{\Kp_{\phi,\iota,x}\cap \Kp_{\phi',\iota',x'}}
=(\check\phi'_{\iota',x'})|_{\Kp_{\phi,\iota,x}\cap \Kp_{\phi',\iota',x'}}.
\end{equation}
Here we lighten the notation by omitting the exponent $+$ of $\check\phi^{+}_{\iota,x}$. We will decorate with the symbol $'$ all objects pertaining to the triple
$(\phi',\iota',x')$. In particular the jumps of the filtration $\mathbf{S}_{\phi',r}$ are
denoted by $r'_{0},\hdots, r'_{d'-1}$ and $r'_{d'}$ is the depth of the character $\phi'$.

We first reduce to the case where $r_{d-1}=r_{d}$. Indeed, if $r_{d-1}<r_d$, then it
suffices to prove the result after replacing $\phi$ by $\phi\cdot(\varphi_{d}^{-1})|_{P_{F}}$  and 
$\phi'$ by $\phi'\cdot(\varphi_{d}^{-1})|_{P_{F}}$ because this operation does not affect
(\ref{hyp}) and does not change
$\Ku_{\phi,\iota,x}$ nor $\Ku_{\phi',\iota',x'}$.
We are thus left to prove the conclusion of (\ref{intertw}) 
for these new $\phi$ and $\phi'$.
 We now have
$r_{d}=r_{d-1}$ as desired, but $r'_{d'-1}$ and $r'_{d'}$ might be
distinct a priori.

Since $r_{d}=r_{d-1}$, the character  $\check\phi_{\iota,x}$ is trivial on $G_{x,r_{d-1}+}$ and
$(\check\phi_{\iota,x})|_{J_{\phi,\iota,x}^{d+}}=(\psi_{d-1,x}^{+})|_{J^{d+}_{\phi,\iota,x}} $ is
represented by the generic element $X_{d-1}$ of Proposition
\ref{generic}. On the other hand, we have a priori two possibilities
for  $\check\phi'_{\iota',x'}$ :
\begin{itemize}
\item either $r'_{d'}>r'_{d'-1}$ and
  $(\check\phi'_{\iota',x'})|_{G_{x',r'_{d'}}}$ is represented by some
  generic  element $X'_{d'}\in (\fg^*)^{\mathbf{G}}$ of depth $-r'_{d'}$,
\item or $r'_{d'}=r'_{d'-1}$ and
  $(\check\phi'_{\iota',x'})|_{J_{\phi',\iota',x'}^{d'+}}$ is represented by
 $X'_{d'-1}$  as provided by Proposition \ref{generic}. 
\end{itemize}
The first case is actually impossible. Indeed by (\ref{hyp}) the characters
$\check\phi_{\iota,x}$ and $\check\phi'_{\iota',x'}$ coincide on $J^{d+}_{\phi,\iota,x}\cap
G_{x',r'_{d'}}$. So in the setting of the first case, Lemma \ref{fact_intertwining}
and  Lemma \ref{lemme_generic_intertwining} imply that
${\mathbf{G}}_{\iota}^{d-1}\subsetneqq \mathbf{G}$ is conjugate to
${\mathbf{G}}_{\iota'}^{d'}={\mathbf{G}}$, which is absurd.

So we are in the second case, and by (\ref{hyp}) the characters
$\check\phi_{\iota,x}$ and $\check\phi'_{\iota',x'}$ coincide on
$J^{d+}_{\phi,\iota,x}\cap J^{d'+}_{\phi',\iota',x'}$. Then,  Lemma \ref{fact_intertwining}
and  Lemma \ref{lemme_generic_intertwining} tell us that $r_{d-1}=r'_{d'-1}$ and provide
elements  $j\in J^{d}_{\phi,\iota,x}$ and $j'\in J^{d'}_{\phi',\iota',x'}$ such that 
$^{j}{\mathbf{G}}_{\iota}^{d-1}= {^{j'}{\mathbf{G}}_{\iota'}^{d'-1}}$. 
Note that $^{j}{\mathbf{G}}_{\iota}^{d-1}={\mathbf{G}}_{j\iota}^{d-1}$. Since $j\in\Ku_{\phi,\iota,x}$,
statement i) of the proposition and (\ref{Gaction}) show that
$e_{\phi,\iota,x}=e_{\phi,j\iota,x}$. Therefore it is sufficient to prove the
conclusion of (\ref{intertw}) for the triples $(\phi, j\iota,x)$ and $(\phi',
j'\iota',x')$. In other words, we may and will assume that
${\mathbf{G}}^{d-1}_{\iota}={\mathbf{G}}^{d'-1}_{\iota'}$.

\def\vH{{{^H}K}}

Let us put $\mathbf{H}:={\mathbf{G}}^{d-1}_{\iota}={\mathbf{G}}^{d'-1}_{\iota'}$. The wild inertia
parameter $\hat\phi: P_{F}\To{}\hat{\mathbf{G}}$ factors through $\hat{\mathbf{H}}$, giving,
according to Lemma \ref{phi_admissible}, a
wild inertia parameter of $\mathbf{H}$ denoted by $\phi^{|H}$. 
Its associated $F$-torus
$\mathbf{S}_{\phi^{|H}}$ is equal to $\mathbf{S}_{\phi}$ and  the Levi-center 
embedding $\iota: \mathbf{S}_{\phi}\injo
{\mathbf{G}}$ factors through $\mathbf{H}$. Moreover, the associated set
$\BT_{\iota}\subset\BT(\mathbf{H},F)$ is the same as the one considered so far. We thus get
a triple $(\phi^{|H},\iota,x)$ pertaining to $\mathbf{H}$, whence groups $\vH_{\phi,\iota,x}$ and
$\vH^{+}_{\phi,\iota,x}$ and a character $\check\phi_{\iota,x}^{|H}$.
Actually we simply have 
$$ \vH^{+}_{\phi,\iota,x}= G_{\iota,x,0+}^{0}\cdots G_{\iota,x,(r_{d-2}/2)+}^{d-1} \subset 
\Kp_{\phi,\iota,x}
\hbox{ and }  \check\phi_{\iota,x}^{|H} = (\check\phi_{\iota,x})|_{\vH^{+}_{\iota,x}}.$$
Similarly we have a triple $(\phi^{'|H},\iota',x')$ pertaining to $\mathbf{H}$, groups 
$\vH_{\phi',\iota',x'}\subset \K_{\phi',\iota',x'}$ and $\vH^{+}_{\phi',\iota',x'}\subset
\Kp_{\phi',\iota',x'}$, as well as a character $\check\phi^{'|H}_{\iota',x'}$ of
$\vH^{+}_{\phi',\iota',x'}$ that coincides with the restriction of $\check\phi'_{\iota',x'}$.
In particular (\ref{hyp}) implies that the characters $\check\phi^{|H}_{\phi,\iota,x}$ and
$\check\phi^{'|H}_{\phi',\iota',x'}$ coincide on the intersection
$\vH^{+}_{\phi,\iota,x}\cap \vH^{+}_{\phi',\iota',x'}$. 
Suppose now that the conclusion of (\ref{intertw}) is known for the triples
$(\phi^{|H},\iota,x)$ and $(\phi^{'|H},\iota',x')$. It then implies that the same conclusion
holds for the triples $(\phi,\iota,x)$ and $(\phi',\iota',x')$.

It follows that  in order to finish the proof of
(\ref{intertw}), we may  argue by induction, for example on the number
$n(\phi,{\mathbf{G}})=\dim(\mathbf{S}_{\phi})-\dim(Z({\mathbf{G}}))$. It remains however  to
initiate the induction process by considering the case $n(\phi,{\mathbf{G}})=0$. In this
case we have $d=0$, $\Ku_{\phi,\iota,x} = G_{x,0+}$ and $\check\phi_{\iota,x}$ is the restriction
of a character $\check\varphi_{0}$ of $G$. As we have done above, we
may multiply both $\phi$ and $\phi'$ by $(\check\varphi_{0}^{-1})|_{P_{F}}$ so that we may assume now that 
$\phi$ is trivial. Then we need to show that $\phi'$ is trivial too, or equivalently that it
has depth $r'_{d'}=0$. However if $\phi'$ had depth $r'_{d'}>0$, then
$(\check\phi'_{\iota',x'})|_{G_{x',r'_{d'}}}$ would be represented by a generic element
$X'_{d'}$ of depth $-r'_{d'}$. Since the depth function of \cite{Adler_DeBacker} is
constant on the stratum $X'_{d'}+\gG^{*}_{x',-r'_{d'}+}$, the latter cannot intersect the
stratum $\gG^{*}_{x,0+}$,  hence by Lemma \ref{fact_intertwining} we would get a contradiction
with \eqref{hyp}.
\findem

\alin{A variant}  \label{rem_intertw}
 Fix $\phi$ and consider the open subgroup
 $K'_{\iota,x}:=\prod_{i=1}^{d}J^{i+}_{\iota,x}$ of $\Kp_{\iota,x}$.
The following  slight strengthening of \ref{prop_intertwining} ii)
will be useful in Section 3.
 \begin{pro}
If   $(\check\phi_{\iota,x})|_{K'_{\iota,x}\cap K'_{\iota',x'}}
=(\check\phi_{\iota',x'})|_{K'_{\iota,x}\cap K'_{\iota',x'}},$
then  $\Ku_{\iota,x}\iota\cap\Ku_{\iota',x'}\iota'\neq \emptyset$
 \end{pro}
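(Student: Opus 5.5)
The plan is to run the inductive argument from the proof of Proposition \ref{prop_intertwining} ii) again, but to keep track of which groups it actually uses. Since $\phi$ is fixed, the first part of that proof, which established $\phi\simeq\phi'$ and handled the case where $\phi'$ has larger depth, is not needed. We therefore only have to show that the two twisted Levi sequences can be conjugated into each other by elements of $\Ku_{\iota,x}$ and $\Ku_{\iota',x'}$. The claim is that each step of that argument only used the agreement of the characters on the groups $J^{i+}$, and not on the whole of $\Kp$.

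We argue by induction on $n(\phi,\mathbf{G})=\dim(\mathbf{S}_{\phi})-\dim(Z(\mathbf{G}))$. When $n=0$ we have $d=0$, so $\iota(\mathbf{S}_{\phi})=Z(\mathbf{G})^{\circ}$. In this case $I_{\phi}$ meets each $G$-orbit in a single embedding fixed by $G$, and $\iota=\iota'$ as soon as both are conjugate. This needs a short check using Lemma \ref{dual-embedding-classif} together with the fact that $\mathbf{G}_{\iota}=\mathbf{G}$. Once that is done, the intersection is trivially non-empty.

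For $n>0$ we first reduce, exactly as before, to the case $r_{d-1}=r_d$ by twisting $\phi$ by $\varphi_d^{-1}|_{P_F}$. This twist changes neither $K'$ nor $\Ku$, and it multiplies both characters by the same character of $G$. Then $(\check\phi_{\iota,x})|_{J^{d+}_{\iota,x}}$ and $(\check\phi_{\iota',x'})|_{J^{d+}_{\iota',x'}}$ are realized by the generic elements $X_{d-1}$ and $X'_{d-1}$ of Lemma \ref{generic}, both of depth $-r_{d-1}$. Because $J^{d+}_{\iota,x}\cap J^{d+}_{\iota',x'}\subseteq K'_{\iota,x}\cap K'_{\iota',x'}$, the hypothesis gives agreement of the characters on this intersection. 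Lemma \ref{fact_intertwining} then shows that the strata $X_{d-1}+(\jG^{d+}_{\iota,x})^{\bullet}$ and $X'_{d-1}+(\jG^{d+}_{\iota',x'})^{\bullet}$ intersect. Lemma \ref{lemme_generic_intertwining} then produces $j\in J^d_{\iota,x}\subseteq \Ku_{\iota,x}$ and $j'\in J^d_{\iota',x'}\subseteq\Ku_{\iota',x'}$ with ${}^j\mathbf{G}^{d-1}_{\iota}={}^{j'}\mathbf{G}^{d-1}_{\iota'}$.

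Next we replace $(\iota,\iota')$ by $(j\iota,j'\iota')$. We must check that the hypothesis survives this replacement. Both $K'_{\iota,x}$ and its character are normalized by $\K_{\iota,x}$: for the groups this holds since $J^{i+}\lhd \K_{\iota,x}$, and for the character it follows from part i) of Proposition \ref{prop_intertwining}. Hence $K'_{j\iota,x}=K'_{\iota,x}$ with the same character, and likewise $\Ku_{j\iota,x}=\Ku_{\iota,x}$. Setting $\mathbf{H}=\mathbf{G}^{d-1}_{\iota}$, we then pass to $\mathbf{H}$. The group ${}^HK'_{\iota,x}=\prod_{i\le d-1}J^{i+}_{\iota,x}$ is contained in $K'_{\iota,x}$, and the characters restrict compatibly, so the hypothesis holds for $\mathbf{H}$. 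Induction gives $h\in{}^H\Ku_{\iota,x}\subseteq\Ku_{\iota,x}$ and $h'$ similarly with $h\iota=h'\iota'$, which concludes the proof.

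The main obstacle is the base case, together with making sure that the $J^{i+}$ really lie inside $K'$ at every level of the induction. The definition of $K'$ omits $G^0_{\iota,x,0+}$, so the depth-zero part of the character carries no information. This is harmless only because, when $d=0$, the orbit condition is automatic. If that argument turns out to be delicate, I would first settle the base case by showing that $I_{\phi}$ restricted to a fixed $\mathbf{G}_\iota=\mathbf{G}$ reduces to the single embedding given by the inclusion of the connected center.
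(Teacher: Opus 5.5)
Your route is the paper's. The paper proves this variant with a single remark: it is what the proof of Proposition~\ref{prop_intertwining}~ii) actually yields when $\phi=\phi'$. You handle correctly the twist by $\varphi_d^{-1}|_{P_F}$, the use of $J^{d+}_{\iota,x}\subseteq K'_{\iota,x}$, and the restriction to $\mathbf{H}$. One small correction: the individual $J^{i+}_{\iota,x}$ are in general not normal in $\K_{\iota,x}$ (for $d\geq 2$, $J^{d}_{\iota,x}$ does not normalize $J^{1+}_{\iota,x}\subseteq G^{1}_{\iota}$). Only their product $K'_{\iota,x}$ is normal, which is all you need.

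The genuine gap is in the passage to $\mathbf{H}$. From Lemma~\ref{lemme_generic_intertwining} you keep only ${}^{j}\mathbf{G}^{d-1}_{\iota}={}^{j'}\mathbf{G}^{d-1}_{\iota'}=:\mathbf{H}$. This gives $j'\iota'=g\cdot j\iota$ for some $g\in N_{\mathbf{G}}(\mathbf{H})(\bar F)$. Your inductive hypothesis is the variant for $\mathbf{H}$ with one fixed parameter $\phi^{|H}$. It applies, and so does your base case (``$\iota=\iota'$ as soon as both are conjugate''), only if $j\iota$ and $j'\iota'$ are $\mathbf{H}(\bar F)$-conjugate. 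That means $g\in\mathbf{H}(\bar F)$, or equivalently that they agree on $\mathbf{S}_{\phi,r_{d-1}}$. Otherwise they identify $\hat{\mathbf{H}}=C_{\hat{\mathbf{G}}}(\phi(I_F^{r_{d-1}}))$ with the dual of $\mathbf{H}$ differently, and may give different parameters of $\mathbf{H}$.

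Equality of twisted Levi subgroups alone cannot suffice. Take $\mathbf{G}=\GL_{2}$, $\mathbf{G}_{\iota}$ an elliptic torus $\mathbf{T}$, and $\iota'=g\iota$ with $g\in N_{G}(T)\smallsetminus T$. Then both sequences equal $\mathbf{T}\subset\GL_2$, yet $\Ku_{\iota,x}\iota\cap\Ku_{\iota',x}\iota'=\emptyset$. Indeed, the pro-$p$ group $\Ku_{\iota,x}$ ($p$ odd) contains no element with nontrivial image in $N_G(T)/T\simeq\ZM/2\ZM$. In Proposition~\ref{prop_intertwining}~ii) this issue is absorbed by the stronger inductive statement. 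Its conclusion $\phi^{|H}\simeq\phi'^{|H}$ comes, in the base case, from the data on $G^{0}_{\iota,x,0+}$, which is exactly what $K'_{\iota,x}$ discards.

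The missing input is the second conclusion of Lemma~\ref{lemme_generic_intertwining}, which you dropped: ${}^{j}X_{d-1}-{}^{j'}X'_{d-1}$ lies in $(\mathfrak{h}^{*})^{\mathbf{H}}$ and has depth $>-r_{d-1}$.

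By transport of structure, the generic element attached to $j'\iota'$ is $\mathrm{Ad}^{*}(g)({}^{j}X_{d-1})$ up to depth $>-r_{d-1}$. This makes sense because $\mathrm{Ad}^{*}(g)$ acts $F$-rationally on $(\mathfrak{h}^{*})^{\mathbf{H}}$, as $g^{-1}\gamma(g)\in\mathbf{G}_{j\iota}(\bar F)\subseteq\mathbf{H}(\bar F)$. Hence the image of $g$ in the Weyl group fixes the residue class of ${}^{j}X_{d-1}$. Condition GE2 of \cite[\S 8]{Yu_tamescusp}, which holds for generic elements under \eqref{H1} and \eqref{H2}, then forces $g\in\mathbf{H}(\bar F)$.

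Equivalently, on the dual side, the element of $N_{\hat{\mathbf{G}}}(\hat{\mathbf{H}})$ relating the two identifications fixes $\hat\phi(I_F^{r_{d-1}})$ pointwise. It therefore lies in $C_{\hat{\mathbf{G}}}(\phi(I_F^{r_{d-1}}))=\hat{\mathbf{H}}$.

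With this check inserted at each step, $j\iota$ and $j'\iota'$ define the same wild inertia parameter of $\mathbf{H}$, and your induction goes through, base case included. The paper's one-line proof leaves this same point implicit.
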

 \begin{proof}
   This is the actual output of the foregoing proof when $\phi=\phi'$.
 \end{proof}

\alin{A converse to Proposition \ref{prop_intertwining} ii)} \label{intertw_converse}
Using \eqref{Gaction}, the following lemma shows that the implication (\ref{intertw}) is actually an
equivalence.

\begin{lem}
  Fix $\phi$, $\iota\in I_{\phi}$ and two points  $x,x'\in \BT_{\iota}$. Then the two
  characters   $\check\phi^{+}_{\iota,x}$ and $\check\phi^{+}_{\iota,x'}$ agree on the
  intersection $\Kp_{\iota,x}\cap \Kp_{\iota,x'}$.
\end{lem}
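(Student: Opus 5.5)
We reduce to the case where $x$ and $x'$ lie in a common ``apartment'' $\BC(\mathbf{S},F)$ for a tamely ramified maximal torus $\mathbf{S}\subseteq\mathbf{G}_{\iota}$. Such an $\mathbf{S}$ exists by \cite[Lemma 2.1]{Yu_tamescusp} applied to $\mathbf{G}_{\iota}$, using that $\BT_{\iota}$ is the image of $\BT(\mathbf{G}_{\iota},F)$. Then \ref{opposite_parabolic_sgps} provides a pair of opposite parabolics $(\mathbf{P},\bar{\mathbf{P}})$ with Levi $\mathbf{M}\supseteq\mathbf{S}$. This pair satisfies conditions i) and ii) of \ref{Iwahori} for both $x$ and $x'$.

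By the Lemma of \ref{Iwahori}, both $\Kp_{\iota,x}$ and $\Kp_{\iota,x'}$ have Iwahori decompositions with respect to $(P,\bar P)$. Both characters are moreover trivial on the $U$ and $\bar U$ parts. The key intermediate claim is that the intersection also decomposes:
\[\Kp_{\iota,x}\cap\Kp_{\iota,x'}=(U\cap\Kp_{\iota,x}\cap\Kp_{\iota,x'})(M\cap\Kp_{\iota,x}\cap\Kp_{\iota,x'})(\bar U\cap\Kp_{\iota,x}\cap\Kp_{\iota,x'}).\]
I would prove this using uniqueness of the big cell factorization. If $k\in\Kp_{\iota,x}\cap\Kp_{\iota,x'}$ is written $k=u m\bar u$ in each group separately, uniqueness in $UM\bar U$ forces the two factorizations to coincide. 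Hence each factor lies in both groups. With this in hand, it suffices to compare the two characters on $M\cap\Kp_{\iota,x}\cap\Kp_{\iota,x'}$.

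On $M$ we use the choice $\lambda=x'-x$. Since $\mathbf{M}=\mathbf{M}_{\lambda}$ centralizes $\lambda$, the Moy--Prasad filtrations of $\mathbf{M}\cap\mathbf{G}^{i}_{\iota}$ at $x$ and at $x'$ coincide; this is standard, as all affine roots of $\mathbf{M}$ are constant along $\lambda$. It follows that $M\cap\Kp_{\iota,x}=M\cap\Kp_{\iota,x'}$, both being products of the $M\cap G^{i}_{\iota,x,(r_{i-1}/2)+}$. It remains to see that $\check\phi^+_{\iota,x}$ and $\check\phi^+_{\iota,x'}$ agree there. Each is the product of the $\psi^{+}_{i,\bullet}$. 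On $M\cap G^{i}_{\iota,\bullet,0+}$ both restrict to the global character $\psi_i$, independent of the point. On $M\cap G_{\bullet,(r_i/2)+}$ the character $\wt\psi_i$ is defined through the Moy--Prasad isomorphism and the decomposition $\gG=\gG^{i}_{\iota}\oplus\nG^{i}_{\iota}$. I would check this piece via Lemma \ref{generic} and Lemma \ref{fact_intertwining}. Both $\wt\psi_i$ at $x$ and at $x'$ are realized by the same point-independent element $X_i$ extended by zero on $\nG^i_\iota$. Lemma \ref{fact_intertwining}, whose proof supplies compatible mock exponentials, then shows they agree on the intersection of their domains.

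The main obstacle is this last step. One must make sure that the $\wt\psi_i$ at the two points are realized by a \emph{common} functional, including on the $\nG^i_\iota$ directions inside $M$. I would try to show this directly from the uniform choice of $X_i$ in Lemma \ref{generic}, noting that $X_i$ vanishes on $\nG^i_\iota$ because it is $\mathbf{G}^i_\iota$-invariant. Alternatively, one could sidestep the Lie algebra entirely and just use $M$-filtration equality together with the product formula.
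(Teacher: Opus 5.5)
You have the same skeleton as the paper, and all of it is sound: the pair $(\mathbf{P},\bar{\mathbf{P}})$ of \ref{opposite_parabolic_sgps}, the Iwahori decomposition of $\Kp_{\iota,x}\cap\Kp_{\iota,x'}$, triviality on the unipotent parts, $M\cap\Kp_{\iota,x}=M\cap\Kp_{\iota,x'}$ because $x'-x$ is central in $\mathbf{M}$, and $\psi^{+}_{i,x}=\psi^{+}_{i,x'}=\psi_i$ on $M^{i}_{x,0+}$. The gap is in the last step, the comparison of $\wt\psi_i$ at $x$ and at $x'$ on $M_{x,(r_i/2)+}$.

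Lemma \ref{generic} only says that $X_i$ represents $\psi_i$ on $G^{i}_{\iota,x,r_i}$. That determines $X_i$ modulo $(\gG^{i}_{\iota,x,r_i})^{\bullet}$. But $\wt\psi_i$ is built from the restriction of $\psi_i$ to the larger group $G^{i}_{\iota,x,(r_i/2)+}$, and there a realizing functional is pinned down modulo the strictly smaller lattice $(\gG^{i}_{\iota,x,(r_i/2)+})^{\bullet}$. So realization at depth $r_i$ carries strictly less information than $\wt\psi_i$ needs. Already for $F^{\times}$ with $r=4$, the functionals $X$ and $X+\varpi^{-3}$ realize the same character on $1+\mathfrak{p}^{4}$ but different characters on $1+\mathfrak{p}^{3}$; both are invariant, so invariance of $X_i$ does not help. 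Hence nothing guarantees that $X_i$, extended by zero on $\nG^{i}_{\iota}$, realizes $\wt\psi_i$ on $G_{x,(r_i/2)+}$, let alone at both points simultaneously, and Lemma \ref{fact_intertwining} cannot be invoked. A common realizer at the $(r_i/2)+$ level would be a new claim requiring its own proof, which you do not supply.

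The missing idea is that no realizing element is needed at all: restrict the definition of $\wt\psi_i$ to $M$. Since $\iota(\mathbf{S}_{\phi,r_i})$ commutes with $Z(\mathbf{M})^{\circ}$, taking $Z(\mathbf{M})^{\circ}$-weight-zero parts of $\gG=\gG^{i}_{\iota}\oplus\nG^{i}_{\iota}$ gives $\mG=\mG^{i}\oplus(\mG\cap\nG^{i}_{\iota})$. Therefore $\wt\psi_{i}|_{M_{x,(r_i/2)+}}$ is the trivial extension of $\psi_i|_{M^{i}_{x,(r_i/2)+}}$ along $(\mG\cap\nG^{i}_{\iota})_{x,(r_i/2)+:r_i+}$, via the Moy--Prasad isomorphism of $M$. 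This description is intrinsic to $M$. All its ingredients (the filtrations of $M$, $M^{i}$ and $\mG$, hence the Moy--Prasad isomorphism) are the same at $x$ and at $x'=x+\lambda$, so the two restrictions coincide; this is exactly how the paper concludes. Your fallback of sidestepping the Lie algebra cannot work as stated, since $\wt\psi_i$ is defined through the Lie algebra decomposition. What is needed is precisely the compatibility of that decomposition with $\mG$.
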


\begin{proof}
  Only the case $x\neq x'$ is nontrivial. Let $(\mathbf{P},\bar{\mathbf{P}})$ be the pair of opposite
  parabolic subgroups of $\mathbf{G}$ with common Levi subgroup $\mathbf{M}$ as defined in \ref{opposite_parabolic_sgps}, and write
  $\mathbf{P}=\mathbf{M}\mathbf{U}$ 
and $\bar{\mathbf{P}}=\mathbf{M}\bar{\mathbf{U}}$ for their respective Levi decompositions.
It follows from  Lemma
  \ref{Iwahori} that we have an Iwahori  decomposition 
  $$ \Kp_{\iota,x}\cap \Kp_{\iota,x'}
  =(U\cap \Kp_{\iota,x}\cap \Kp_{\iota,x'})
  (M\cap \Kp_{\iota,x}\cap \Kp_{\iota,x'})
  (\bar U\cap \Kp_{\iota,x}\cap \Kp_{\iota,x'})$$
  and that both $\check\phi^{+}_{\iota,x}$ and $\check\phi^{+}_{\iota,x'}$ are trivial on
  $(U\cap \Kp_{\iota,x}\cap \Kp_{\iota,x'})$ and
  $(\bar U\cap \Kp_{\iota,x}\cap \Kp_{\iota,x'})$.  From the proof of that
  lemma, setting ${\mathbf{M}}^{i}:={\mathbf{M}}\cap{\mathbf{G}}_{\iota}^{i}$, we also have
  $$M\cap \Kp_{\iota,x}= M_{x,0+}^{0} M^{1}_{x, (r_{0}/2)+}\cdots M^{d}_{\iota,x,(r_{d-1}/2)+}$$ 
  and the same applies to $x'$. But since $x'$ is a translate of $x$ under
  $X_{*}(Z({\mathbf{M}}))_{\RM}^{\Gamma_{E/F}}$, we have
  $M^{i}_{x,(r_{i-1}/2)+}=M^{i}_{x',(r_{i-1}/2)+}$ for all $i$ and, therefore,
  $M\cap \Kp_{\iota,x}=M\cap \Kp_{\iota,x'}=M\cap \Kp_{\iota,x}\cap
  \Kp_{\iota,x'}$.
  It remains to see that the restrictions of $\check\phi^{+}_{\iota,x}$ and
  $\check\phi^{+}_{\iota,x'}$  to this group are equal. 
  For this, it suffices to check that for all $i=0,\hdots, d$, the restrictions of 
  the characters $\psi_{i,x}^{+}$ and $\psi_{i,x'}^{+}$ of \ref{Yu_construction} to this
  group are equal. The character $\psi_{i,x}^{+}$ is defined on
  $G^{i}_{\iota,x,0+}G_{x,(r_{i}/2)+}$.  
  By the same argument as above, we have
  $M\cap (G_{\iota,x,0+}^{i}G_{x,(r_{i}/2)+})=
  M^{i}_{x,0+}M_{x,(r_{i}/2)+}=M^{i}_{x',0+}M_{x',(r_{i}/2)+}$, and it suffices to see that
  $\psi_{i,x}^{+}$ and $\psi_{i,x'}^{+}$ agree on this group.
On one hand, we have by definition
$(\psi_{i,x}^{+})|_{M^{i}_{x,0+}}=(\psi_{i})|_{M^{i}_{x,0+}}=(\psi_{i,x'}^{+})|_{M^{i}_{x,0+}}$.
   On the other hand, the restriction of $\psi_{i,x}^{+}$ to
$G_{x,(r_{i}/2)+}$ is the character denoted by $\wt\psi_{i}$ in \ref{Yu_construction},
which extends $(\psi_{i})|_{G_{\iota,x,(r_{i}/2)+}^{i}}$ trivially  according to the
decomposition $\gG_{x,(r_{i}/2)+:r_{i}+}=\gG^{i}_{\iota, x,(r_{i}/2)+:r_{i}+}\oplus 
\nG^{i}_{\iota,x,(r_{i}/2)+:r_{i}+}$ and via the Moy--Prasad isomorphism.
Recall that the latter decomposition is induced by
$\gG=\gG_{\iota}^{i}\oplus\nG_{\iota}^{i}$ where 
$\gG^{i}_{\iota}$, resp., $\nG^{i}_{\iota}$, is the trivial eigenspace, resp., 
the sum of all non-trivial eigenspaces, of 
$\iota(\mathbf{S}_{\phi,r_{i}})$ acting on $\gG$.
Since  $\iota(\mathbf{S}_{\phi,r_{i}})$ and $Z({\mathbf{M}})^{\circ}$ commute with each other, taking the weight-$0$ part of the $Z({\mathbf{M}})^{\circ}$-action on the above decomposition provides us with the 
decomposition $\mG=\mG^{i}\oplus(\mG\cap\nG_{\iota}^{i})$, and we see that
$(\psi_{i,x}^{+})|_{M_{x,(r_{i}/2)+}}$ is the character that
extends $(\psi_{i})|_{M_{x,(r_{i}/2)+}^{i}}$ trivially according to the
decomposition $\mG_{x,(r_{i}/2)+:r_{i}+}=\mG^{i}_{x,(r_{i}/2)+:r_{i}+}\oplus 
(\mG\cap \nG^{i})_{\iota,x,(r_{i}/2)+:r_{i}+}$ through the Moy--Prasad isomorphism. The same
description applies to $(\psi_{i,x'}^{+})|_{M_{x,(r_{i}/2)+}}$, finishing the proof.
\end{proof}

\alin{The Heisenberg property} \label{Heisenberg}
 By \cite[Lemma 1.3]{Yu_tamescusp}, we know that the quotient
group $\Ku_{\iota,x}/\Kp_{\iota,x}$ is abelian, hence the derived subgroup
$[\Ku_{\iota,x},\Ku_{\iota,x}]$ is contained in $\Kp_{\iota,x}$ and we have a map
$$  \Ku_{\iota,x}\times \Ku_{\iota,x} \To{}
\mu_{p^{\infty}},\,
(g,h)\mapsto \check\phi^{+}_{\iota,x}(ghg^{-1}h^{-1}) . $$ 
Since $\Ku_{\iota,x}$ centralizes the
character $\check\phi^{+}_{\iota,x}$, this map descends to a
map
$$\theta:\, \Ku_{\iota,x}/\Kp_{\iota,x}\times \Ku_{\iota,x}/\Kp_{\iota,x} \To{}
\mu_{p^{\infty}}.$$

\begin{pro}Assume that $p$ is odd, on top of our running assumptions
  \eqref{H1} and \eqref{H2}.
  \begin{enumerate}
  \item The group $\Ku_{\iota,x}/\Kp_{\iota,x}$ has exponent $p$ and the map
    $\theta$ defines a perfect alternating pairing on this group, taking values in
    $\mu_{p}$.
  \item If $({\mathbf{P}} ,\bar{\mathbf{P}} )$ are opposite parabolic subgroups of
    ${\mathbf{G}}$  satisfying conditions i) and ii) of \ref{Iwahori}, then
    $\theta$  induces a perfect pairing  between  
    $(U\cap \Ku_{\iota,x})/(U\cap \Kp_{\iota,x})$ and
    $(\bar U\cap \Ku_{\iota,x})/(\bar U\cap \Kp_{\iota,x})$,
    and a perfect alternating pairing on
    $(M\cap \Ku_{\iota,x})/(M\cap \Kp_{\iota,x})$.
  \end{enumerate}
\end{pro}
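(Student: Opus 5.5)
We reduce everything to a Lie algebra computation, level by level, via the Moy--Prasad isomorphisms. First, using the definitions of \ref{Yu_construction} and \ref{sec:K-def}, the quotient $\Ku_{\iota,x}/\Kp_{\iota,x}$ is isomorphic to $\prod_{i=1}^{d} J^{i}_{\iota,x}/J^{i+}_{\iota,x}$. By the Moy--Prasad isomorphism each factor identifies with
\[ V_{i}:=(\nG^{i-1}_{\iota}\cap\gG^{i}_{\iota})_{x,r_{i-1}/2:(r_{i-1}/2)+}, \]
a vector space over the residue field, of characteristic $p$. Here we use that $\Kp\supseteq G_{x,(r_{d-1}/2)+}$ and that the commutators of different levels land in $\Kp$, as in \cite[Lemma 1.3]{Yu_tamescusp}. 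This gives exponent $p$. Alternation of $\theta$ is formal, since $\theta(g,g)=\check\phi^{+}(1)=1$. Bimultiplicativity follows because $\Ku/\Kp$ is abelian and $\check\phi^{+}$ is $\Ku$-invariant (Proposition \ref{prop_intertwining} i)). Values lie in $\mu_{p}$ because $\theta(g^{p},h)=1$.

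For perfectness, we compute $\theta$ on $V=\bigoplus_i V_i$. Elements from different levels $i<j$ have commutator lying in $G_{x,(r_{i-1}/2+r_{j-1}/2)}\cap(\ldots)$, which we expect to be killed by $\check\phi^{+}$. More precisely, the pairing should be block upper-triangular, or even block-diagonal after filtering. So it suffices to prove that the pairing on each $V_i$ is nondegenerate. On $V_i$, for $X,Y$ lifting elements of $V_i$, the commutator $[\varphi(X),\varphi(Y)]$ is congruent to $\varphi([X,Y])$ modulo $G_{x,r_{i-1}+}$. The bracket $[X,Y]$ has depth $\geq r_{i-1}$, and its component in $\gG^{i-1}_{\iota}$ is what $\check\phi^{+}$ sees, through $\psi^{+}_{i-1,x}$. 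The remaining components are killed, by the construction of $\wt\psi$ and because the higher-level characters $\psi_{j}$, $j\ge i$, are trivial on commutators of depth $r_{i-1}<r_j$... We should be careful here: $\psi_j$ for $j\geq i$ has depth $r_j\geq r_{i-1}$, so on $G^{j}_{x,r_{i-1}}$ it is nontrivial in general. However, $\psi_{j}$ is a character of $G^{j}_{\iota}$, hence trivial on commutators in $G^{j}_{\iota}$, and $[X,Y]\in\gG^{i}\subseteq\gG^{j}$. Thus
\[ \theta(X,Y)=\Psi(\langle X_{i-1},[X,Y]\rangle), \]
where $X_{i-1}$ is the generic element of Lemma \ref{generic}.

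Nondegeneracy is the main point. Pass to a tame extension $E$ splitting a maximal torus $\mathbf{S}\subseteq\mathbf{G}^{i-1}_{\iota}$ with $x\in\BC(\mathbf{S},F)$. There $V_i\otimes\bar k$ decomposes into root spaces for $\alpha\in\Sigma(\mathbf{S},\mathbf{G}^{i}_{\iota})\smallsetminus\Sigma(\mathbf{S},\mathbf{G}^{i-1}_{\iota})$. Since $X_{i-1}$ is $\mathbf{G}^{i-1}$-central, the pairing of $\alpha$- and $\beta$-root vectors vanishes unless $\beta=-\alpha$. For $\beta=-\alpha$, it equals $\langle X_{i-1},H_\alpha\rangle$ times a unit, up to normalization of the root vectors at depths $r_{i-1}/2$ and $r_{i-1}/2$. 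Genericity (GE1) gives valuation exactly $-r_{i-1}$, hence a unit after scaling. We then descend from $E$ to $F$ by Galois invariance, using the tameness of $E/F$ so that the relevant cohomology vanishes and taking fixed points preserves perfectness. We expect this descent and the bookkeeping of depths, in the case where $r_{i-1}/2$ is not in the jump set of $\alpha$, to be the main technical obstacle. In that case $V_i$ has no $\alpha$-component for either $\alpha$ or $-\alpha$ simultaneously, since the jumps at $x$ for $\pm\alpha$ sum appropriately. This is essentially \cite[Lemma 11.1]{Yu_tamescusp}, which we would follow. Here $p$ odd is used for the alternating form and for $[\varphi X,\varphi Y]\equiv\varphi[X,Y]$.

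For ii), we use the Iwahori decompositions of Lemma \ref{Iwahori}. The spaces $(U\cap\Ku)/(U\cap\Kp)$, $(\bar U\cap\Ku)/(\bar U\cap\Kp)$ and $(M\cap\Ku)/(M\cap\Kp)$ correspond to the sums of root spaces whose roots $\alpha$ have $\langle\alpha,\lambda\rangle>0$, $<0$ and $=0$ respectively, where $\lambda\in X_*(Z(\mathbf{M})^{\circ})$. Since the pairing only couples $\alpha$ with $-\alpha$, both $U$-part and $\bar U$-part are isotropic, and the $M$-part is orthogonal to both. Perfectness of the full form from i) then forces the $U\times\bar U$ pairing and the pairing on the $M$-part to be perfect.
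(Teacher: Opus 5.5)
\noindent Your plan for i) follows the paper: decompose $\Ku_{\iota,x}/\Kp_{\iota,x}=\prod_{i=1}^{d}J^{i}_{\iota,x}/J^{i+}_{\iota,x}$, show that distinct levels are $\theta$-orthogonal, identify $\theta$ on the $i$-th level with $(a,b)\mapsto\psi^{+}_{i-1,x}(aba^{-1}b^{-1})$, and conclude with \cite[Lemma 11.1]{Yu_tamescusp}. Your within-level identification is correct.

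The genuine gap is the cross-level orthogonality. You only ``expect'' it, and your depth heuristic does not prove it. Write $\check\phi^{+}_{\iota,x}=\prod_{k}\psi^{+}_{k,x}|_{\Kp_{\iota,x}}$ and take $a\in J^{i}_{\iota,x}$, $b\in J^{j}_{\iota,x}$ with $i<j$.
\begin{itemize}
\item For $k\le i-1$, the factor $\psi^{+}_{k,x}$ is killed by depth.
\item For $k\geq j$, it is killed because $a,b\in G^{k}_{\iota,x,0+}$, where $\psi^{+}_{k,x}$ is the character $\psi_{k}$ of $G^{k}_{\iota}$.
\item For $i\le k\le j-2$, you must observe that $a$ and $b$ both lie in the domain $G^{k}_{\iota,x,0+}G_{x,(r_{k}/2)+}$ of $\psi^{+}_{k,x}$.
\item The factor $\psi^{+}_{j-1,x}$ is the problem. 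It has depth $r_{j-1}>(r_{i-1}+r_{j-1})/2$, and in general $b$ does not lie in its domain $G^{j-1}_{\iota,x,0+}G_{x,(r_{j-1}/2)+}$. So neither depth nor ``a character kills commutators'' applies.
\end{itemize}
The missing input is that $J^{k+1}_{\iota,x}$ normalizes each individual factor $\psi^{+}_{k,x}$, namely
\[[J^{k+1}_{\iota,x},G^{k}_{\iota,x,0+}G_{x,(r_{k}/2)+}]\subseteq(G^{k}_{\iota},G)_{x,r_{k}+,(r_{k}/2)+}\subseteq\ker\psi^{+}_{k,x}.\]
This is the computation in the proof of Proposition \ref{prop_intertwining} i), cf.\ \cite[Lemma 4.2]{Yu_tamescusp}. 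Combine it with the fact that every $J^{i}_{\iota,x}$ with $i\neq k+1$ lies in the domain of $\psi^{+}_{k,x}$. This gives $\psi^{+}_{k,x}([J^{i}_{\iota,x},J^{j}_{\iota,x}])=1$ unless $i=j=k+1$, which is exactly the orthogonality you need, and it also yields your formula on each level. The $\Ku_{\iota,x}$-invariance of the product $\check\phi^{+}_{\iota,x}$, which you invoke for bilinearity, is not enough here; you need invariance of the individual factors.

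For ii), your route differs from the paper's. You get isotropy of the $U$- and $\bar U$-parts, and their orthogonality to the $M$-part, from the root-space picture over $E$ (``$\alpha$ pairs only with $-\alpha$''). You established this only inside a single level, so it inherits the gap above. It also requires choosing the torus inside $\mathbf{M}\cap\mathbf{G}^{i-1}_{\iota}$ and redoing the descent to $F$.

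The paper avoids all of this. Since $\mathbf{U}$ is normal in $\mathbf{P}$ and $[\Ku_{\iota,x},\Ku_{\iota,x}]\subseteq\Kp_{\iota,x}$, one has $[U\cap\Ku_{\iota,x},P\cap\Ku_{\iota,x}]\subseteq U\cap\Kp_{\iota,x}$, and $\check\phi^{+}_{\iota,x}$ is trivial there by Lemma \ref{Iwahori}. Hence $U\cap\Ku_{\iota,x}$ is orthogonal to $P\cap\Ku_{\iota,x}$, and symmetrically for $\bar U$. The Iwahori decomposition of $\Ku_{\iota,x}$ and $\Kp_{\iota,x}$, together with i), then yields ii) by exactly the block-matrix argument you give at the end, with no level decomposition or base change needed.
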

\begin{proof}  i) By \cite[Lemma 1.3]{Yu_tamescusp}, we know that
  $\Ku_{\iota,x}/\Kp_{\iota,x}$ is isomorphic to its Lie algebra counterpart
  $\kG_{\iota,x}/\kG^{+}_{\iota,x}$. 
By construction, the latter decomposes as 
$$ \kG_{\iota,x}/\kG^{+}_{\iota,x} = \bigoplus_{i=1}^{d}
(\gG_{\iota}^{i}\cap\nG_{\iota}^{i-1})_{x, (r_{i-1}/2):(r_{i-1}/2)+} ,$$
which is a direct sum of vector spaces over the residue field of $F$, hence has exponent $p$. 
  The equality $[g,hh']=[g,h]\cdot{^{h}[g,h']}$ shows that the map $\theta$ is
  $\ZM$-bilinear, hence the image of this map is contained in the only subgroup $\mu_{p}$
  of $\mu_{p^{\infty}}$ of exponent $p$. Note that in the above decomposition of
  $\kG_{\iota,x}/\kG^{+}_{\iota,x}$ the summand
  $(\gG_{\iota}^{i}\cap\nG_{\iota}^{i-1})_{x, (r_{i-1}/2):(r_{i-1}/2)+} =
  \jG^{i}_{\iota,x}/\jG^{i+}_{\iota,x}$ identifies with the image of the subgroup $J^{i}_{\iota,x}$ in
  $\Ku_{\iota,x}/\Kp_{\iota,x}$, so that another way
  to write this   decomposition is as 
\ini\begin{equation}
	\label{decomposition-of-Vx}
 \Ku_{\iota,x}/\Kp_{\iota,x} = \prod_{i=1}^{d} \left( J_{\iota,x}^{i}/J_{\iota,x}^{i+}\right).
\end{equation}  
Now, recall the factorization
$\check\phi^{+}_{\iota,x}=\prod_{k=0}^{d}(\psi_{k,x}^{+})|_{\Kp_{\iota,x}}$ of
\ref{Yu_construction}
and let $i,j\in \{1,\hdots, d\}$ and  $k \in \{0,\hdots, d\}$.  
We claim that
$[J^{i}_{\iota,x},J^{j}_{\iota,x}]\subset \ker\, \psi_{k,x}^{+}$ unless $i=j=k+1$. 
When $k+1 \neq i, j$, this
follows from the fact that both $J_{\iota,x}^{i}$ and $J_{\iota,x}^{j}$ are contained in
the group $G_{\iota,x,0+}^{k}G_{x,r_{k}/2+}$ on which $\psi_{k,x}^{+}$ is defined. When $k+1=i$
and $i\neq j$,
this follows from the inclusion
$[J^{i}_{\iota,x},J^{j}_{\iota,x}]\subset [J^{k+1}_{\iota,x},
G_{\iota,x,0+}^{k}G_{x,r_{k}/2+}] \subset (G_\iota^k,G)_{x,r_{k}+, r_k/2+} \subset \ker(\psi_{k,x}^{+})$, and
similarily for $k+1=j\neq i$.

As a consequence, the last displayed decomposition is orthogonal for the bilinear
form $\theta$, and the restriction of $\theta$ to the summand
$J_{\iota,x}^{i}/J_{\iota,x}^{i+} $ is given  by
$\theta(\bar j,\bar h)= \psi_{i-1,x}^{+}(jhj^{-1}h^{-1})$. By \cite[Lemma 11.1]{Yu_tamescusp} the latter
bilinear form on $J_{\iota,x}^{i}/J_{\iota,x}^{i+}$ is non-degenerate, hence so is $\theta$. 

ii) We have $[(U\cap \Ku_{\iota,x}),(P\cap \Ku_{\iota,x})]\subset (U\cap
\Kp_{\iota,x})$. Since $\check\phi^{+}_{\iota,x}$ is trivial on $U\cap
\Kp_{\iota,x}$, we see that $P\cap \Ku_{\iota,x}$ is orthogonal to $U\cap
\Ku_{\iota,x}$ for 
the bilinear form $\theta$. 
From the Iwahori decomposition of $\Ku_{\iota,x}$ and $\Kp_{\iota,x}$ with respect to 
$({\mathbf{P}} ,\bar{\mathbf{P}} )$, see Lemma \ref{Iwahori}, we obtain a decomposition of $\FM_{p}$-vector
spaces
$$ \Ku_{\iota,x}/\Kp_{\iota,x} =
(U\cap \Ku_{\iota,x})/(U\cap \Kp_{\iota,x}) \oplus
(M\cap \Ku_{\iota,x})/(M\cap \Kp_{\iota,x}) \oplus
(U\cap \Ku_{\iota,x})/(U\cap \Kp_{\iota,x})$$
and ii) now follows from the non-degeneracy of $\theta$.
\end{proof}

\subsection{Systems of idempotents}\label{sec:systems-idempotents}

In this section we fix a wild inertia parameter $\phi$ 
and  a ${\mathbf{G}}(F)$-conjugacy class \index[notation]{I@$I$}$I\subseteq I_{\phi}$ of $F$-rational embeddings
$\mathbf{S}_{\phi}\injo {\mathbf{G}}$.  Recall that we assume that $C_{\hat{\mathbf{G}}}(\phi)$ is a Levi subgroup and that $p$ satisfies \eqref{H1} and \eqref{H2}.

\alin{An orthogonality property}
For $x\in \BT=\BT({\mathbf{G}},F)$, we put \index[notation]{Ix@$I_x$}$I_{x}:=\{\iota\in I, x\in \BT_{\iota}\}$. 
If
$\iota,\iota'\in I_{x}$, we declare that $\iota'\sim_{x}\iota$ if $\iota'\in \Ku_{\iota,x}\cdot\iota$. 
Using (\ref{Gaction}) we see that if $\iota'\in \Ku_{\iota,x}\cdot\iota$, then we have
$\Ku_{\iota',x}=\Ku_{\iota,x}$,  whence the transitivity and symmetry of the relation $\sim_{x}$, which is
thus an equivalence relation on $I_{x}$.  
\begin{lem} \label{ortho_idemp}
 For $\iota',\iota\in I_{x}$ we have 
$e_{\iota,x}e_{\iota',x} =\left\{
\begin{array}{ll}
e_{\iota,x} & \hbox{if } \iota\sim_{x}\iota' \\
0 & \hbox{else}.
\end{array}
\right.$
\end{lem}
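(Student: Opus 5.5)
The proof splits into the two cases of the statement, and in both we use that $e_{\iota,x}$ and $e_{\iota',x}$ are the averaging idempotents along the characters $\check\phi^{+}_{\iota,x}$ and $\check\phi^{+}_{\iota',x}$ of the pro-$p$ groups $\Kp_{\iota,x}$ and $\Kp_{\iota',x}$.

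\emph{First case: $\iota\sim_{x}\iota'$.} Write $\iota'=k\iota$ with $k\in\Ku_{\iota,x}$. By (\ref{Gaction}) we have $e_{\iota',x}=e_{k\iota,kx}={^{k}e_{\iota,x}}$, using $kx=x$, which holds because $\Ku_{\iota,x}\subseteq G_{x}$. Proposition \ref{prop_intertwining} i) says that $\K_{\iota,x}\supseteq\Ku_{\iota,x}$ centralizes $e_{\iota,x}$. Hence $e_{\iota',x}=e_{\iota,x}$, and the product equals $e_{\iota,x}^{2}=e_{\iota,x}$.

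\emph{Second case: $\iota\not\sim_{x}\iota'$.} We argue by contraposition. Suppose $e_{\iota,x}e_{\iota',x}\neq0$. Apply Proposition \ref{prop_intertwining} ii) to the triples $(\phi,\iota,x)$ and $(\phi,\iota',x)$. It gives some $\iota''\in\Ku_{\iota,x}\cdot\iota\cap\Ku_{\iota',x}\cdot\iota'$. Then $\iota''\sim_{x}\iota$ and $\iota''\sim_{x}\iota'$. Since $\sim_{x}$ is an equivalence relation, as established just before the lemma, we get $\iota\sim_{x}\iota'$, a contradiction. Note that $\iota''\in I_{x}$, because $\Ku_{\iota,x}$ fixes $x$ and $\BT_{k\iota}=k\BT_{\iota}$.

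The substantive input is entirely in Proposition \ref{prop_intertwining}, and the lemma itself is essentially formal. The only points to check are that elements of $\Ku_{\iota,x}$ fix $x$ and that the relation $\sim_{x}$ really is symmetric and transitive; the latter uses $\Ku_{k\iota,x}={^{k}\Ku_{\iota,x}}=\Ku_{\iota,x}$, by normality.
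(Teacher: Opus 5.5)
Your proof is correct and follows the paper's own argument: the equal case uses (\ref{Gaction}) together with Proposition \ref{prop_intertwining} i), and the orthogonal case uses Proposition \ref{prop_intertwining} ii) together with the fact that $\sim_{x}$ is an equivalence relation. The extra checks you include are exactly the details the paper leaves implicit, namely that $\Ku_{\iota,x}$ fixes $x$ and that the intermediate embedding $\iota''$ lies in $I_{x}$.
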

\begin{proof}
If $e_{\iota,x}e_{\iota',x}\neq 0$, then by ii) of Proposition \ref{prop_intertwining} we have that $\Ku_{\iota',x}\cdot
\iota'\cap \Ku_{\iota,x}\cdot\iota\neq\emptyset$, and hence
$\iota\sim_{x}\iota'$. Thus $e_{\iota,x}e_{\iota',x}= 0$ if we are not in the case $\iota\sim_{x}\iota'$.
If $\iota\sim_{x}\iota'$, then, by (\ref{Gaction}) and  Proposition \ref{prop_intertwining} i), 
we have $e_{\iota',x}=e_{\iota,x}$, hence  $e_{\iota,x}e_{\iota',x}=e_{\iota,x}$.
\end{proof}

\alin{A variant} \label{rem_ortho_idemp} The following strengthening of the above lemma is not
needed in Section 2 but will occasionally be useful in Section 3. Recall the open subgroup
$K'_{\iota',x}$ of $\Kp_{\iota',x}$ introduced in 
\ref{rem_intertw}, and denote by
$e'_{\iota',x}$ the idempotent associated to  $\check\phi^{+}_{\iota',x}|_{K'_{\iota',x}}$. It is
coarser than $e_{\iota',x}$, in the sense that $e_{\iota',x}e'_{\iota',x}=e_{\iota,x'}$, and the
analog of (\ref{Gaction}) still holds, namely $^{g}e'_{\iota,x}=e'_{g\iota,gx}$ for
all $g\in G$.

\begin{lem}
 For $\iota',\iota\in I_{x}$ we have  $e_{\iota,x}e'_{\iota',x} =\left\{
\begin{array}{ll}
e_{\iota,x} & \hbox{if } \iota\sim_{x}\iota' \\
0 & \hbox{else}.
\end{array}
\right.$
\end{lem}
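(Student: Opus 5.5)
The plan is to run the proof of Lemma \ref{ortho_idemp} again, using the variant of Proposition \ref{prop_intertwining} ii) from \ref{rem_intertw} in place of the full intertwining statement.

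First, the vanishing case. The idempotent $e_{\iota,x}$ averages along $\check\phi^{+}_{\iota,x}$ on $\Kp_{\iota,x}$, and $e'_{\iota',x}$ averages along the restriction of $\check\phi^{+}_{\iota',x}$ to $K'_{\iota',x}$. As in \eqref{eq:intertw_in_terms_of_idemp}, the product $e_{\iota,x}e'_{\iota',x}$ is nonzero if and only if these two characters agree on $\Kp_{\iota,x}\cap K'_{\iota',x}$.

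Since $K'_{\iota,x}\subseteq\Kp_{\iota,x}$, nonvanishing forces $\check\phi^{+}_{\iota,x}$ and $\check\phi^{+}_{\iota',x}$ to agree on $K'_{\iota,x}\cap K'_{\iota',x}$. The proposition of \ref{rem_intertw}, applied with $x'=x$, then gives $\Ku_{\iota,x}\iota\cap\Ku_{\iota',x}\iota'\neq\emptyset$. Now pick $k\iota=k'\iota'$ in this intersection. By \eqref{Gaction}, $\Ku_{k\iota,x}=\Ku_{\iota,x}$ and $\Ku_{k'\iota',x}=\Ku_{\iota',x}$, so both orbits coincide with the orbit of $k\iota$, hence with each other. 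This gives $\iota'\in\Ku_{\iota,x}\iota$, that is, $\iota\sim_{x}\iota'$. So if $\iota\not\sim_x\iota'$, the product vanishes.

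Second, the case $\iota\sim_x\iota'$. Write $\iota'=k\iota$ with $k\in\Ku_{\iota,x}$. By the $G$-equivariance $^{g}e'_{\iota,x}=e'_{g\iota,gx}$ and the fact that $k$ fixes $x$, we get $e'_{\iota',x}={}^{k}e'_{\iota,x}$. By Proposition \ref{prop_intertwining} i), $k$ centralizes $\check\phi^{+}_{\iota,x}$, hence also its restriction to the $k$-stable subgroup $K'_{\iota,x}$. Normality of $K'_{\iota,x}$ in $\K_{\iota,x}$ should follow as for the groups $J^{i+}$; failing that, apply \eqref{Gaction} directly to $K'$, using that it is defined from the same data. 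Therefore $e'_{\iota',x}=e'_{\iota,x}$.

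Finally, $e_{\iota,x}e'_{\iota,x}=e_{\iota,x}$. This holds because $K'_{\iota,x}\subseteq\Kp_{\iota,x}$ and $e'_{\iota,x}$ averages along the restriction of the same character. This is the coarseness relation noted before the lemma.

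The only delicate point is the intertwining input, and it has already been isolated in \ref{rem_intertw}. The remaining verification is that $K'_{\iota,x}$ is normalized by $\Ku_{\iota,x}$, which should follow from the commutator inclusions used in the proof of Proposition \ref{prop_intertwining} i).
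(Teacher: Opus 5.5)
Your vanishing argument is the paper's. Nonvanishing of $e_{\iota,x}e'_{\iota',x}$ gives agreement of the characters on $\Kp_{\iota,x}\cap K'_{\iota',x}\supseteq K'_{\iota,x}\cap K'_{\iota',x}$. Proposition \ref{rem_intertw} with $x'=x$ then yields $\iota\sim_x\iota'$.

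In the case $\iota\sim_x\iota'$, however, you aim for the stronger identity $e'_{\iota',x}=e'_{\iota,x}$, and that step is not justified. It requires $K'_{\iota,x}$ to be stable under conjugation by $k\in\Ku_{\iota,x}$. Proposition \ref{prop_intertwining} i) only says that $k$ normalizes $\Kp_{\iota,x}$ and fixes $\check\phi^{+}_{\iota,x}$. Your fallback is circular: the equivariance ${}^{g}e'_{\iota,x}=e'_{g\iota,gx}$ only gives ${}^{k}K'_{\iota,x}=K'_{k\iota,x}$, and identifying this with $K'_{\iota,x}$ when $k\notin G_{\iota}$ is exactly the normality in question. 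Nor is that normality formal, since the individual factors $J^{i+}_{\iota,x}\subseteq G^{i}_{\iota}$ are in general not stable under $J^{j}_{\iota,x}$ for $j>i$, so cross-commutators would have to be controlled.

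You do not need any of this. As in Lemma \ref{ortho_idemp}, \eqref{Gaction} and Proposition \ref{prop_intertwining} i) give $e_{\iota',x}=e_{\iota,x}$. Since $K'_{\iota',x}\subseteq\Kp_{\iota',x}$ carries the restricted character, you get $e_{\iota,x}e'_{\iota',x}=e_{\iota',x}e'_{\iota',x}=e_{\iota',x}=e_{\iota,x}$. Equivalently, $e_{\iota,x}\,{}^{k}e'_{\iota,x}={}^{k}(e_{\iota,x}e'_{\iota,x})={}^{k}e_{\iota,x}=e_{\iota,x}$, using only that $k$ centralizes $e_{\iota,x}$. This is what the paper's ``same proof as the last lemma'' amounts to, which is why the coarseness relation $e_{\iota',x}e'_{\iota',x}=e_{\iota',x}$ is recorded just before the statement. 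With this replacement your proof is complete.
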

\begin{proof}
  Same proof as the last lemma, with Proposition \ref{rem_intertw} instead of
  Proposition \ref{prop_intertwining}.
\end{proof}

\alin{The idempotents associated to $\phi$ and $I$}  \label{def_idemp_I}
Since there is  a finite number of idempotents $e \in\HC_{R}(G)$ that are supported
on $G_{x}$ and satisfy $e\cdot e_{G_{x,r_{\phi}+}}=e$,
Lemma \ref{ortho_idemp} shows that the
sum\index[notation]{ex@$e_x$}\index[notation]{eIx@$e_{I,x}$}\index[notation]{ephiIx@$e_{\phi,I,x}$} 
$$ e_{x} := e_{I,x} :=e_{\phi,I,x} := \sum_{\iota\in I_{x}/\sim_{x}} e_{\iota,x}$$
is finite and defines an idempotent of $\HC_{R}(G)$ supported on $G_{x,0+}$. 
It is non-zero if and only if $I_{x}$ is
non empty. By construction we have the following equivariance property:
\ini
\begin{equation}
  \label{Gequivariant}
\forall x\in \BT, \, \forall g\in G, \, e_{gx}= {^{g}e_{x}}.  
\end{equation}
In particular, $e_{x}$ is a central idempotent in $\HC_{R}(G_{x})$. 

\begin{lem}
  Fix $x,x'\in \BT$,  put $I_{x,x'}:=I_{x}\cap I_{x'}$ and endow this set with the
  equivalence relation $\iota\sim_{x,x'}\iota'\Leftrightarrow (\iota\sim_{x}\iota' 
\hbox{ and } \iota\sim_{x'}\iota')$. 
Then we have 
\ini\begin{equation}
e_{x}e_{x'}=\sum_{\iota\in I_{x,x'}/\sim_{x,x'}} e_{\iota,x}e_{\iota,x'}.\label{exex'}
\end{equation}
\end{lem}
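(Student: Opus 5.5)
We expand both sides by definition and prove that the surviving cross terms are exactly those indexed by $I_{x,x'}/\sim_{x,x'}$. By definition,
$$e_{x}e_{x'}=\sum_{[\iota]\in I_{x}/\sim_{x}}\;\sum_{[\iota']\in I_{x'}/\sim_{x'}} e_{\iota,x}e_{\iota',x'},$$
where we fix representatives; the result does not depend on this choice, since $e_{\iota,x}$ depends only on the $\sim_{x}$-class of $\iota$ by (\ref{Gaction}) and Proposition \ref{prop_intertwining} i). The plan has three steps: identify the pairs of classes giving non-zero products, rewrite those products as terms $e_{\iota'',x}e_{\iota'',x'}$ with $\iota''\in I_{x,x'}$, and check that the resulting map of index sets is a bijection onto $I_{x,x'}/\sim_{x,x'}$.

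\emph{Step 1 (non-vanishing forces a common representative).} Suppose $e_{\iota,x}e_{\iota',x'}\neq 0$. Proposition \ref{prop_intertwining} ii), applied with $\phi=\phi'$, gives some $\iota''\in \Ku_{\iota,x}\cdot\iota\cap\Ku_{\iota',x'}\cdot\iota'$. Write $\iota''=k\iota=k'\iota'$ with $k\in\Ku_{\iota,x}\subset G_{x}$ and $k'\in\Ku_{\iota',x'}\subset G_{x'}$. Since $\iota''$ is a $G$-conjugate of $\iota$, it lies in $I$. Moreover $\BT_{k\iota}=k\BT_{\iota}\ni kx=x$, so $\iota''\in I_{x}$, and likewise $\iota''\in I_{x'}$; hence $\iota''\in I_{x,x'}$. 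By construction $\iota''\sim_{x}\iota$ and $\iota''\sim_{x'}\iota'$, so
$$e_{\iota,x}=e_{\iota'',x}\quad\hbox{and}\quad e_{\iota',x'}=e_{\iota'',x'}.$$
Thus every non-zero term has the form $e_{\iota'',x}e_{\iota'',x'}$ with $\iota''\in I_{x,x'}$.

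\emph{Step 2 (the class of $\iota''$ is well defined).} The element $\iota''$ depends on the pair of classes $([\iota]_{x},[\iota']_{x'})$ only up to $\sim_{x,x'}$. Indeed, if $\iota''_{1}$ and $\iota''_{2}$ both lie in $[\iota]_{x}\cap[\iota']_{x'}$, then $\iota''_{1}\sim_{x}\iota''_{2}$ and $\iota''_{1}\sim_{x'}\iota''_{2}$, that is, $\iota''_{1}\sim_{x,x'}\iota''_{2}$. This gives an injective map from the set of pairs with non-zero product to $I_{x,x'}/\sim_{x,x'}$, sending $([\iota]_{x},[\iota']_{x'})$ to the class of the unique (up to $\sim_{x,x'}$) element of $[\iota]_{x}\cap[\iota']_{x'}$. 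Injectivity holds because $\iota''$ determines both classes.

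\emph{Step 3 (surjectivity, and the main obstacle).} Conversely, every $\iota''\in I_{x,x'}$ gives the pair $([\iota'']_{x},[\iota'']_{x'})$. The remaining point is that terms with $e_{\iota'',x}e_{\iota'',x'}=0$ can be included on the right-hand side of (\ref{exex'}) at no cost, since they contribute zero. Hence the full double sum is exactly the sum over $I_{x,x'}/\sim_{x,x'}$ of $e_{\iota,x}e_{\iota,x'}$.

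As it happens, by Lemma \ref{intertw_converse} these products are in fact non-zero, but the identity does not need this. The proof therefore hinges on Step 1, where Proposition \ref{prop_intertwining} ii) is used; the only care required is confirming that $\Ku$-conjugates of $\iota$ stay in $I_{x}$ and $I_{x'}$, which follows from the equivariance $\BT_{g\iota}=g\BT_{\iota}$.
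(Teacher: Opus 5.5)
Your proof is correct and follows the paper's argument: Proposition \ref{prop_intertwining} ii) forces the $\sim_{x}$- and $\sim_{x'}$-classes of any pair with non-zero product to intersect, and pairs of intersecting classes correspond bijectively to $\sim_{x,x'}$-classes in $I_{x,x'}$. The paper indexes directly by pairs of classes with non-empty intersection rather than by pairs with non-zero product, so your Step 3 handling of zero terms is not needed there. Your bookkeeping is nonetheless sound, and your explicit check that $\Ku_{\iota,x}\cdot\iota\subseteq I_{x}$ is a welcome detail that the paper leaves implicit.
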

\begin{proof}
Denote by $\bar\iota:= \Ku_{\iota,x}\cdot\iota$  
the $\sim_{x}$-equivalence class of $\iota\in I_{x}$, and similarly for $\iota'\in I_{x'}$.
 By Proposition \ref{prop_intertwining}.ii)
 we have 
$\bar\iota\cap\bar\iota'\neq\emptyset$ whenever $e_{\iota,x}e_{\iota',x'}\neq 0$.
Hence 
$e_{x}e_{x'}=\sum_{\bar\iota\cap\bar\iota'\neq\emptyset}e_{\bar\iota,x}e_{\bar\iota',x'}$.
Now,  the intersection $\bar\iota\cap\bar\iota'$ in $I$ is contained in
$I_{x,x'}$ and, if $\bar\iota\cap\bar\iota'$ is non-empty, it is actually a $\sim_{x,x'}$-equivalence class. We thus have a map
$(\bar\iota,\bar\iota')\mapsto \bar\iota\cap\bar\iota'$,
$$ \left\{(\bar\iota,\bar\iota')\in {I_{x}}_{/\sim_{x}}\times {I_{x'}}_{/\sim_{x'}},
\bar\iota\cap\bar\iota'\neq\emptyset\right\} \To{} {I_{x,x'}}_{/\sim_{x,x'}} ,$$
which is easily seen to be a bijection. 
\end{proof}

\alin{A telescopic identity} \label{telescopic_iota}
We will provide a telescopic identity when moving along a geodesic. In the following
lemma, we fix $\iota\in I$ and two points $x,x'\in \BT_{\iota}$. Since $\BT_{\iota}$ is
convex, it contains the segment $[x,x']$.
\begin{lem}
  Suppose $x''\in [x,x']$. Then
  $e_{\iota,x}e_{\iota,x'}=e_{\iota,x}e_{\iota,x''}e_{\iota,x'}$ (product in $\HC_{R}(G)$).
\end{lem}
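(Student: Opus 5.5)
We reduce the identity to a group-theoretic factorization. Write $e_{\iota,y}$ for the averaging idempotent along $\check\phi^{+}_{\iota,y}$ on $\Kp_{\iota,y}$. The standard criterion is this: if $H_1,H_2$ are pro-$p$ open subgroups carrying characters $\chi_1,\chi_2$ that agree on $H_1\cap H_2$, and $H_3$ with character $\chi_3$ satisfies
\[H_3\subseteq (H_1\cap H_3)(H_2\cap H_3)\quad\text{and}\quad \chi_3|_{H_1\cap H_3}=\chi_1|_{H_1\cap H_3},\ \chi_3|_{H_2\cap H_3}=\chi_2|_{H_2\cap H_3},\]
then $e_{\chi_1}e_{\chi_3}e_{\chi_2}=e_{\chi_1}e_{\chi_2}$. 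Indeed, factor $e_{\chi_3}$ along the product decomposition $H_3=(H_1\cap H_3)(H_2\cap H_3)$. Then $e_{\chi_3}$ is a multiple of $e_{\chi_1|H_1\cap H_3}\, e_{\chi_2|H_2\cap H_3}$ (up to normalization of measures), and $e_{\chi_1}$ and $e_{\chi_2}$ absorb the two factors. The compatibility of characters on pairwise intersections is supplied by Lemma~\ref{intertw_converse}, since $x,x',x''\in\BT_\iota$. So the whole proof reduces to showing
\[\Kp_{\iota,x''}\subseteq (\Kp_{\iota,x''}\cap\Kp_{\iota,x})(\Kp_{\iota,x''}\cap\Kp_{\iota,x'}).\]

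To obtain this inclusion we use Iwahori decompositions. Pick a tame maximal torus $\mathbf{S}\subseteq\mathbf{G}_\iota$ with $x,x'\in\BC(\mathbf{S},F)$; then $x''\in\BC(\mathbf{S},F)$ as well, by convexity. As in \ref{opposite_parabolic_sgps}, write $x'=x+\lambda$, which gives $(\mathbf{P},\bar{\mathbf{P}})=(\mathbf{P}_\lambda,\mathbf{P}_{-\lambda})$ with Levi $\mathbf{M}$. This pair satisfies i) and ii) of \ref{Iwahori} for all three points, since $x''=x+t\lambda$ with $0\le t\le1$. By Lemma~\ref{Iwahori},
\[\Kp_{\iota,x''}=(U\cap\Kp_{\iota,x''})(M\cap\Kp_{\iota,x''})(\bar U\cap\Kp_{\iota,x''}).\]
The $M$-part is the same for all points on the segment, because they differ by translations in $X_*(Z(\mathbf{M}))_\RM$; this is the argument in the proof of Lemma~\ref{intertw_converse}. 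It therefore lies in all three groups.

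For the unipotent parts, the key monotonicity claim is that moving from $x$ in direction $\lambda$ enlarges the filtration groups on one side and shrinks them on the other. For each root $a$ appearing in $\mathbf{U}$, the affine functions satisfy $a(x'')$ lying between $a(x)$ and $a(x')$ (with $\langle a,\lambda\rangle>0$). Hence for each $i$ and each depth $s$,
\[U\cap G^i_{\iota,x'',s}\subseteq U\cap G^i_{\iota,x',s},\qquad \bar U\cap G^i_{\iota,x'',s}\subseteq \bar U\cap G^i_{\iota,x,s},\]
up to checking the correct sign convention. Using the product description of $U\cap\Kp$ extracted in the proof of Lemma~\ref{Iwahori}, we get $U\cap\Kp_{\iota,x''}\subseteq\Kp_{\iota,x'}$ and $\bar U\cap\Kp_{\iota,x''}\subseteq\Kp_{\iota,x}$. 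The product $(U\cap\Kp_{\iota,x''})(M\cap\Kp_{\iota,x''})$ then lies in $\Kp_{\iota,x''}\cap\Kp_{\iota,x'}$, while the $\bar U$ factor lies in $\Kp_{\iota,x''}\cap\Kp_{\iota,x}$. Rearranging the order of factors, we write elements as $\bar u\cdot m u$ using the opposite Iwahori ordering, which Lemma~\ref{Iwahori} also permits. This yields the required inclusion in the form $(H_1\cap H_3)(H_2\cap H_3)$, with the order matching the product $e_{\iota,x}e_{\iota,x''}e_{\iota,x'}$.

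The main obstacle is this monotonicity step. Root subgroups of $U$ are defined over $E$ rather than $F$ in the twisted Levi setting, and the groups $U\cap G^i_{\iota,y,s}$ must be handled via Galois descent of the product of $E$-root subgroups. We would first prove the inclusion over $E$ apartment-wise, root by root in $\mathcal{A}(\mathbf{G},\mathbf{S},E)$, where the filtrations are given explicitly by affine functions. Taking $\Gamma_{E/F}$-fixed points then descends it, using tameness as in \cite[Lemma 2.1]{Yu_tamescusp}. Finally, one checks that the measure normalizations in the idempotent factorization cancel, which is routine.
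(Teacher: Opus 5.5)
Your proposal is correct and follows the paper's proof. The paper also uses the Iwahori decomposition with respect to the pair $(\mathbf{P},\bar{\mathbf{P}})$ attached to the segment, together with the equality of the $M$-parts and the opposite monotonicity of the $U$- and $\bar U$-parts, to obtain $\Kp_{\iota,x''}=(\Kp_{\iota,x}\cap\Kp_{\iota,x''})(\Kp_{\iota,x''}\cap\Kp_{\iota,x'})$; it then factors $e_{\iota,x''}$ accordingly and absorbs the two factors using Lemma~\ref{intertw_converse}. Your sign convention for which unipotent part shrinks is the reverse of the paper's displayed inclusions, but this does not matter: a product of two subgroups that is itself a group is independent of the order, and the normalizations cancel exactly since $e_{\chi|_A}e_{\chi|_B}=e_{\chi|_{AB}}$.
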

\begin{proof}
  As explained in 
	\ref{opposite_parabolic_sgps}, 
  the segment $(x,x')$ determines a pair $({\mathbf{P}} ,\bar{\mathbf{P}} )$ of $F$-rational
  opposite parabolic subgroups of ${\mathbf{G}}$ with common Levi subgroup $\mathbf{M}$ satisfying the conditions i) and ii) of
  \ref{Iwahori} for each of the points $x,x',x''$. The following equalities and inclusions
  \begin{itemize}
  \item $M\cap \Kp_{\iota,x}=M\cap \Kp_{\iota,x''}=M\cap\Kp_{\iota,x'}$
  \item $U\cap \Kp_{\iota,x}\supset U\cap \Kp_{\iota,x''}\supset U\cap\Kp_{\iota,x'}$
  \item $\bar U\cap \Kp_{\iota,x}\subset\bar U\cap \Kp_{\iota,x''}\subset\bar
    U\cap\Kp_{\iota,x'}$ 
  \end{itemize}
  follow from similar equalities and inclusions for each $G_{\iota,x,(r_{i-1}/2)+}^{i}$ in
  place of $\Kp_{\iota,x}$.
  Using the Iwahori decomposition of Lemma \ref{Iwahori}, we then deduce that
  $$\Kp_{\iota,x''} = (\Kp_{\iota,x}\cap\Kp_{\iota,x''})
  (\Kp_{\iota,x''}\cap \Kp_{\iota,x'}) .$$
Now denote by $e_{\iota,x,x''}$ and $e_{\iota,x'',x'}$ the idempotents associated
to $(\check\phi^{+}_{\iota,x''})|_{\Kp_{\iota,x}\cap\Kp_{\iota,x''}}$ and 
$(\check\phi^{+}_{\iota,x''})|_{\Kp_{\iota,x''}\cap\Kp_{\iota,x'}}$, respectively. We obtain a factorization
$e_{\iota,x''}=e_{\iota,x,x''}e_{\iota,x'',x'}$ in $\HC_{R}(G_{x''})$. By Lemma \ref{intertw_converse}, it follows that
$e_{\iota,x}e_{\iota,x''}e_{\iota,x'}=e_{\iota,x}e_{\iota,x,x''}e_{\iota,x'',x'}e_{\iota,x'}=e_{\iota,x}e_{\iota,x'}$.
 \end{proof}

\begin{pro} \label{telescopic2}
  Let $x,x'\in\BT$ and $x''\in [x,x']$. Suppose that there exists a facet of $\BT$ whose closure contains both $x$ and $x''$. Then $e_{x}e_{x''}e_{x'}=e_{x}e_{x'}$.
\end{pro}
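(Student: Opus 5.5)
Expand all three idempotents as sums over embeddings, kill the cross terms by orthogonality, and reduce to the single-embedding telescopic identity of Lemma \ref{telescopic_iota}. By (\ref{exex'}) we have $e_xe_{x'}=\sum_{\iota\in I_{x,x'}/\sim_{x,x'}}e_{\iota,x}e_{\iota,x'}$. Expanding the left-hand side gives
\[
e_xe_{x''}e_{x'}=\sum_{\bar\iota\in I_x/\sim_x}\ \sum_{\bar\iota''\in I_{x''}/\sim_{x''}}\ \sum_{\bar\iota'\in I_{x'}/\sim_{x'}} e_{\iota,x}e_{\iota'',x''}e_{\iota',x'} .
\]
The aim is to show that the nonzero terms are indexed exactly by $I_{x,x'}/\sim_{x,x'}$ and that each equals $e_{\iota,x}e_{\iota,x'}$.

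First use the facet hypothesis. Since $x$ and $x''$ lie in the closure of a common facet, we have $G_{x,0+}G_{x'',0+}\subseteq G_{x,0+}\cap G_{x'',0+}$-type inclusions. Concretely, both pro-$p$ radicals are contained in the parahoric of the smaller facet, and there is a point $y$ in that facet with $G_{y,0+}\subseteq G_{x,0+}\cap G_{x'',0+}$. The plan is to show $I_x\subseteq I_{x''}$ up to the relevant equivalence. More precisely, if $e_{\iota,x}e_{\iota'',x''}\neq 0$, Proposition \ref{prop_intertwining} ii) gives some $\iota_0\in\Ku_{\iota,x}\iota\cap\Ku_{\iota'',x''}\iota''$. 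Replacing $\iota$ and $\iota''$ by $\iota_0$ (which does not change the idempotents, by (\ref{Gaction}) and Proposition \ref{prop_intertwining} i)), we may assume $\iota=\iota''$ with $x,x''\in\BT_\iota$. Similarly, a nonzero product $e_{\iota'',x''}e_{\iota',x'}$ lets us take $\iota'=\iota''$. So each nonzero term has the form $e_{\iota,x}e_{\iota_1,x''}e_{\iota_2,x'}$ with $\iota\sim_{x''}\iota_1\sim_{x''}\iota_2$ after re-choosing representatives. The difficulty is to arrange a single embedding $\iota$ with $x,x'',x'\in\BT_\iota$, and this is where the facet hypothesis enters. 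I would show that $\Ku_{\iota,x''}\subseteq \Ku_{\iota,x}\cdot(\text{stabilizer of }\iota)$, or more simply that the $\sim_{x''}$-class of $\iota$ is contained in its $\sim_x$-class intersected with $I_{x''}$. Then, starting from $\iota_2$ (which lies in $I_{x''}\cap I_{x'}$), one gets $\iota_2\in I_x$ as well, since $\BT_{\iota_2}$ contains $x''$ and, via the facet condition and Lemma \ref{ortho_idemp}, also $x$. This yields $\iota_2\in I_{x,x'}$ with $e_{\iota,x}=e_{\iota_2,x}$.

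Once a common $\iota\in I_{x,x''}\cap I_{x'}$ represents all three factors, convexity of $\BT_\iota$ gives $x''\in\BT_\iota$, and Lemma \ref{telescopic_iota} yields $e_{\iota,x}e_{\iota,x''}e_{\iota,x'}=e_{\iota,x}e_{\iota,x'}$. It then remains to check that the assignment of triples of classes to $\sim_{x,x'}$-classes is bijective onto $I_{x,x'}/\sim_{x,x'}$. This should follow by the same counting as in the proof of (\ref{exex'}), with the middle class determined by the outer ones via $\bar\iota\cap\bar\iota''\cap\bar\iota'$. Conversely, every $\iota\in I_{x,x'}$ contributes, because $x''\in\BT_\iota$ by convexity.

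The main obstacle is the middle step: showing that when $x$ and $x''$ share a facet closure, an embedding in $I_{x''}$ that is $\sim_{x''}$-equivalent to one in $I_x$ can be moved by $\Ku_{\iota,x}$ so as to lie in $I_x\cap I_{x'}$ simultaneously. I would try first to exploit $\Ku_{\iota,x''}\subseteq G_{x'',0+}$ and the facet inclusion $G_{x'',0+}\subseteq G_{x,0}$, or its reverse depending on which point is more special. Combining this with Proposition \ref{prop_intertwining} i), which says that $\K_{\iota,x}$ fixes $e_{\iota,x}$, absorbs the conjugating element into the $x$-side idempotent.
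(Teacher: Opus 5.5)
There is a genuine gap, and it is exactly the step you flag as the main obstacle.

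Proposition \ref{prop_intertwining} ii) gives you $\iota_0\in I_x\cap I_{x''}$ and $\iota_2=k\iota_0\in I_{x''}\cap I_{x'}$ with $k\in\Ku_{\iota_0,x''}$. The facet hypothesis yields $k\in G_{x'',0+}\subseteq G_{x,0}$. So $k$ fixes $x$, and indeed $\iota_2\in I_x$.

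What you actually need is $e_{\iota_2,x}=e_{\iota_0,x}$. By (\ref{Gaction}) this says that $k$ centralizes $e_{\iota_0,x}$, i.e.\ $k\in\K_{\iota_0,x}$, which is the full centralizer of $e_{\iota_0,x}$ in $G_x$. Nothing forces this. The positive-depth factors $G^i_{\iota_0,x'',r_{i-1}/2}$ and $G^i_{\iota_0,x,r_{i-1}/2}$ are in general not comparable, so there is no reason for $\Ku_{\iota_0,x''}$ to lie in $\Ku_{\iota_0,x}G_{\iota_0}$. Consequently Proposition \ref{prop_intertwining} i) cannot absorb $k$. Worse, when $k\notin\K_{\iota_0,x}$, Lemma \ref{ortho_idemp} makes $e_{\iota_2,x}$ orthogonal to $e_{\iota_0,x}$.

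Independently of this, for a compatible pair of outer classes you would still have to show that every other middle class contributes zero. The counting behind (\ref{exex'}) only handles intersections of two classes and says nothing about this. Also, the inclusion you write for a point $y$ of the open facet goes the wrong way: $G_{x,0+}$ and $G_{x''',0+}$-type groups at points of the closure are contained in $G_{y,0+}$, not conversely.

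The idea you are missing makes the triple expansion unnecessary. The facet hypothesis yields $G_{x,0+}\subseteq G_{x''}$, and $e_{x''}$ is central in $\HC_R(G_{x''})$ by (\ref{Gequivariant}). Hence $e_{x''}$ commutes with $e_x$ and with each $e_{\iota,x}$, since these are supported on $G_{x,0+}$. Now sum over $\iota\in I_{x,x'}/\sim_{x,x'}$; for each such $\iota$ convexity gives $x''\in\BT_\iota$. One obtains
\[
e_xe_{x''}e_{x'}=e_{x''}e_xe_{x'}=\sum e_{x''}e_{\iota,x}e_{\iota,x''}e_{\iota,x'}=\sum e_{\iota,x}e_{x''}e_{\iota,x''}e_{\iota,x'}=\sum e_{\iota,x}e_{\iota,x''}e_{\iota,x'}=e_xe_{x'}.
\]
This uses (\ref{exex'}), Lemma \ref{telescopic_iota} twice, and $e_{x''}e_{\iota,x''}=e_{\iota,x''}$ from Lemma \ref{ortho_idemp}. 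Only $e_xe_{x'}$ is ever expanded, so no matching of middle classes is required.
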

\begin{proof}
Since there exists some facet whose closure contains $x$ and $x''$, 
the group
$G_{x''}$ contains $G_{x,0+}$,  and it follows
  that $e_{x''}$ commutes with all $e_{\iota,x}$ since the former is central in
  $\HC_{R}(G_{x''})$ and the latter are supported on $G_{x,0+}$. 
This commutation property provides the first and fourth equality in the following computation.
  \begin{eqnarray*}
 e_{x}e_{x''}e_{x'}=e_{x''}e_{x}e_{x'}
& = &e_{x''}\left(\sum_{\iota\in I_{x,x'}/\sim_{x,x'}}e_{\iota,x}e_{\iota,x'}\right)
=e_{x''}\left(\sum_{\iota\in I_{x,x'}/\sim_{x,x'}}e_{\iota,x}e_{\iota,x''}e_{\iota,x'}\right) \\ 
& =& \sum_{\iota\in I_{x,x'}/\sim_{x,x'}} e_{\iota,x}e_{x''}e_{\iota,x''}e_{\iota,x'}
= \sum_{\iota\in I_{x,x'}/\sim_{x,x'}} e_{\iota,x}e_{\iota,x''}e_{\iota,x'} \\
&= & \sum_{\iota\in I_{x,x'}/\sim_{x,x'}} e_{\iota,x}e_{\iota,x'}=e_{x}e_{x'}.
\end{eqnarray*}
In the second and the last equality, we used (\ref{exex'}).
In the third and sixth equality we used the last lemma, 
and in the fifth one we used Lemma \ref{def_idemp_I}. 
\end{proof}

\alin{$E$-facets} \label{facets}
From now on we also assume that $p$ is odd.
We aim at finding a polysimplicial $G$-equivariant structure on $\BT$
such that $e_{x}$ only depends on the facet it belongs
to. Simple examples show that the usual Bruhat--Tits structure on $\BT$
will not work. 
Instead, we will consider the intersection with $\BT$ of the Bruhat--Tits structure on
$\BT({\mathbf{G}},E)$ 
for a suitable tamely ramified extension $E$ over $F$.

\begin{lem} Assume $p$ is odd.
  There is a tamely ramified Galois field extension $E$ of $F$ that splits a maximal $F$-torus of
  ${\mathbf{G}}_{\iota}$ (for any $\iota\in I$) and such that $\{r_{0}/2,\hdots, r_{d-1}/2\}\subset v(E^{\times})$,
  where $v$ is the unique valuation on $E$ that extends the normalized valuation of $F$. 
\end{lem}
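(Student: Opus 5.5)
The plan is to build $E$ as a compositum of two tame Galois extensions: one that splits a maximal torus of every $\mathbf{G}_{\iota}$, and one that makes the values $r_i/2$ into valuations.

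\emph{Splitting field.} All $\iota\in I$ are $G$-conjugate, so the groups $\mathbf{G}_{\iota}$ for $\iota\in I$ are $G$-conjugate to one another. Hence it suffices to treat a single $\iota$: if $E$ splits a maximal $F$-torus $\mathbf{S}$ of $\mathbf{G}_{\iota}$, then for $g\in G$ it also splits ${}^{g}\mathbf{S}$, which is a maximal $F$-torus of $\mathbf{G}_{g\iota}$.

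By Lemma \ref{splittings_Levi} the group $\mathbf{G}$ is tamely ramified, and as recalled in \ref{defGphir} the twisted Levi subgroup $\mathbf{G}_{\iota}$ is tamely ramified too. By \cite[Lemma 2.1]{Yu_tamescusp}, or by the standard fact that a tamely ramified reductive group contains a maximal $F$-torus that is maximally split and splits over a tame extension, we may choose such a torus $\mathbf{S}\subseteq\mathbf{G}_{\iota}$. Its splitting field is a finite tame Galois extension $E_1/F$; this is in fact the choice already implicit in \ref{BTiota}.

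\emph{Denominators.} Next we control the denominators of the $r_i$. The jumps $r_i$ are jumps of the upper ramification filtration restricted to the finite group $\hat\phi(P_F)$. By Lemma \ref{rootsystemCr} they are also depths of the characters $\psi_i$, by Lemma \ref{generic}, on $\mathbf{G}^i_\iota(F)$. Since depths of characters of tame tori lie in $\frac1e\ZM$, where $e$ is the ramification index of a tame splitting field, each $r_i$ lies in $v(E_1'^{\times})$ for some tame extension $E_1'$; this is Yu's \cite[Theorem 7.10]{Yu_LLCtori} combined with Lemma \ref{depth_char}. Concretely, the depth of $\check\varphi_i\circ N_{E|F}\circ\alpha^\vee$ on $E^\times$ lies in $v(E^\times)$, so $r_i\in v(E_1^\times)$. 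Write $v(E_1^\times)=\frac1{e_1}\ZM$, so that $r_i\in\frac1{e_1}\ZM$ and $r_i/2\in\frac1{2e_1}\ZM$.

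\emph{Compositum.} Now let $E_2=F(\varpi^{1/(2e_1)})$, adjoined to the maximal unramified extension of sufficient degree to contain $\mu_{2e_1}$ so that the extension is Galois. Because $p$ is odd and $p\nmid e_1$ (tameness), $p\nmid 2e_1$, so $E_2$ is tame Galois. Take $E=E_1E_2$. It is tame Galois, it splits $\mathbf{S}$, and its value group contains $\frac1{2e_1}\ZM\ni r_i/2$. This is exactly where the hypothesis that $p$ is odd enters.

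The main obstacle is justifying rigorously that $r_i\in v(E_1^\times)$. I would argue via the Herbrand function: $r_i$ is an upper-numbering jump of $\hat\phi$, so it is a jump of a finite abelian extension. The cleaner route is Lemma \ref{rootsystemCr}. There, $r_i$ is the smallest $r$ at which the character $\check\varphi_i\circ N_{E_1|F}\circ\alpha^\vee$ of $E_1^\times$ has trivial restriction to $E^\times_{1,r+}$ for some root $\alpha$. Jumps of the filtration $E^\times_{1,r}$ occur only at $r\in v(E_1^\times)$, so $r_i\in v(E_1^\times)$.
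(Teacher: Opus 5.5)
Your proposal is correct and follows essentially the paper's argument: Lemma~\ref{rootsystemCr}, applied at $r_i$ and $r_i+$ to a root of $\mathbf{G}^{i+1}_{\iota}$ that is not a root of $\mathbf{G}^{i}_{\iota}$, makes $r_i$ a jump of the filtration of $E_1^{\times}$, so $r_i\in v(E_1^{\times})$, and a further tame ramified enlargement (tame because $p$ is odd) then puts $r_i/2$ into the value group. The differences are only cosmetic: you take the compositum with the explicit tame Galois extension $F(\varpi^{1/(2e_1)})$ enlarged by a suitable unramified extension, whereas the paper adjoins a ramified quadratic extension and passes to the Galois closure, and you spell out the reduction from all $\iota\in I$ to a single $\iota$ via $G$-conjugacy, which the paper leaves implicit.
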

\begin{proof}
  Let $E$ be a tamely ramified splitting field of some maximal $F$-torus $\mathbf{S}$ in ${\mathbf{G}}_{\iota}$. Lemma
  \ref{rootsystemCr} shows that for $0\leq i\leq d-1$, the real number $r_{i}$ is a jump
  of the filtration on $E^{\times}$, hence belongs to $v(E^{\times})=\frac 1e\ZM$ with $e$ the
  ramification index of $E/F$. After replacing $E$ by a quadratic ramified 
  extension if  necessary, we obtain that $r_{i}/2\in v(E^{\times})$ for each $i$, as desired. Then the Galois closure of
  this $E$ meets the requirements of the lemma.
\end{proof}

Let $E$ be as in the above lemma. Recall that the reduced Bruhat--Tits building $\BT^{\rm red}({\mathbf{G}},E)$ of the reductive group $\mathbf{G}_E$ over $E$
carries a polysimplicial structure. The inverse
image in $\BT({\mathbf{G}},E)$ of a polysimplex of $\BT^{\rm red}({\mathbf{G}},E)$ will be
called a facet of $\BT({\mathbf{G}},E)$. Recall also, e.g.,  from \cite[Thm 2.1.1]{Landvogt_functorial},
that there is a canonical embedding 
$\BT({\mathbf{G}},F)\injo \BT({\mathbf{G}},E)$, so we may and will identify $\BT$ with a subset
of $\BT({\mathbf{G}},E)$.  Since $\mathbf{G}$ is defined over $F$, the building $\BT({\mathbf{G}},E)$ is equipped with an action of ${\rm Gal}(E/F)$, and, by a result of Rousseau, see \cite{Prasad_Galois}, we have 
$\BT=\BT({\mathbf{G}},E)^{{\rm Gal}(E/F)}$, 
because $E/F$ is tamely ramified.
The  intersection of a facet of  $\BT({\mathbf{G}},E)$
with $\BT$
will be called
an ``$E$-facet of $\BT$''. We thus obtain a partition of $\BT$ into ``$E$-facets'', such that
the closure of an $E$-facet is a union of $E$-facets. We will
denote by $\FC_{E}(x)$ the $E$-facet of $\BT$ that contains $x \in \BT$.

\begin{prop}   \label{indep_facet}
  Let $E$ be as in the previous lemma. Then we have:
  \begin{enumerate}
  \item $\forall\iota\in I, \forall x\in\BT,\,\, x\in\BT_{\iota}\Rightarrow
    \FC_{E}(x)\subseteq\BT_{\iota}$
  \item $\forall\iota\in I,\forall x,x'\in\BT_{\iota},\;\;  \FC_{E}(x)=\FC_{E}(x')\Rightarrow
    (e_{\iota,x}=e_{\iota,x'} \hbox{ and } \Ku_{\iota,x}=\Ku_{\iota,x'})$
  \item $\forall x,x'\in \BT, \,\,\, \FC_{E}(x)=\FC_{E}(x')\Rightarrow e_{x}=e_{x'}.$
  \end{enumerate}
\end{prop}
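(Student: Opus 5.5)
For i), I will use the description of $\BT_{\iota}$ as the image of an admissible embedding $\BT(\mathbf{G}_{\iota},E)\injo\BT(\mathbf{G},E)$, which is $\mathrm{Gal}(E/F)$-equivariant. Here $\mathbf{G}_{\iota}$ is an $E$-Levi subgroup of $\mathbf{G}_E$. Its image $\BT(\mathbf{G}_{\iota},E)$ is then a union of (affine subspaces parallel to) apartments attached to maximal $E$-split tori of $\mathbf{G}_{\iota,E}$. I would show that it is a union of facets of $\BT(\mathbf{G},E)$, or at least that it contains the whole facet of any of its points. Concretely, take $x\in\BT_{\iota}$ and $y\in\FC_E(x)$. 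By the standard building axiom there is an apartment $\AC(\mathbf{G},\mathbf{T},E)$, with $\mathbf{T}\subseteq\mathbf{G}_{\iota}$ a maximal $E$-split torus, containing $x$. Since facets are convex, I would like this apartment to contain the whole facet of $x$. This is where the Levi property matters: the enlarged building of a Levi subgroup is the set of points fixed by the translations by $X_*(Z(\mathbf{G}_\iota))_{\RM}$ acting on the subset of $\BT(\mathbf{G},E)$ defined via the parabolic. More simply, $\BT(\mathbf{G}_\iota,E)$ equals the union of apartments of tori containing $Z(\mathbf{G}_\iota)^\circ$. The stabilizer $G(E)_{x,0}$ fixes $\FC_E(x)$ pointwise, and $\mathbf{G}_\iota(E)_{x,0}$ acts transitively on apartments of $\mathbf{G}_\iota$ through $x$. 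I would then show $y$ lies in one such apartment, using the Iwahori decomposition of $G(E)_{x,0}$ relative to $\mathbf{P}\supset\mathbf{G}_\iota$. Finally, Galois invariance of $y$ gives $y\in\BT_\iota=\BT(\mathbf{G}_\iota,E)\cap\BT$.

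For ii), the groups $\Kp_{\iota,x}$ and $\Ku_{\iota,x}$ are products of $G^i_{\iota,x,s}$ with $s\in\{0+,r_{i-1}/2,(r_{i-1}/2)+\}$. I would check that each Moy--Prasad group $\mathbf{G}^i_\iota(F)_{x,s}$ depends only on $\FC_E(x)$. Over $E$, where $\mathbf{G}^i_\iota$ is split by the choice of $E$, the filtration of the root subgroup $U_\alpha(E)$ at $x$ jumps exactly when $\alpha(x)+s\in v(E^\times)$. Since $s\in v(E^\times)$ or $s=t+$ with $t\in v(E^\times)$ (this is precisely the lemma's condition $r_i/2\in v(E^\times)$), the groups $G(E)_{x,s}$ are constant on $E$-facets. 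Taking Galois fixed points with tameness, $G_{x,s}=G(E)_{x,s}\cap G$ (Adler/Yu), yields the same for $F$. For the character, I would note that $\check\phi^+_{\iota,x}$ is built from the $\psi_i$ and the extensions $\wt\psi_i$. These depend on $x$ only through the groups, through the decomposition $\gG=\gG^i_\iota\oplus\nG^i_\iota$ (which is independent of $x$), and through the Moy--Prasad isomorphism, which is compatible for nested identical lattices. Alternatively and more cleanly, Lemma \ref{intertw_converse} shows that $\check\phi^+_{\iota,x}$ and $\check\phi^+_{\iota,x'}$ agree on $\Kp_{\iota,x}\cap\Kp_{\iota,x'}$, which equals both groups, so $e_{\iota,x}=e_{\iota,x'}$.

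For iii), i) gives $I_x=I_{x'}$, and ii) gives that $\sim_x$ and $\sim_{x'}$ coincide, since both are defined via $\Ku_{\iota,x}=\Ku_{\iota,x'}$. It also gives that the summands agree, hence $e_x=e_{x'}$. I expect the main obstacle to be i): showing that the Levi sub-building is a union of $E$-facets. I would first try the characterization of $\BT(\mathbf{G}_\iota,E)$ as the fixed points of the centre $Z(\mathbf{G}_\iota)(E)_0$, or as the subset where all affine roots from $\mathbf{P}$ vanish on the relevant translation classes, and check that this is stable under passing to the facet.
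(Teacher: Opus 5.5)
Your arguments for ii) and iii) are essentially the paper's. These are: constancy of $\mathbf{G}(E)_{x,s}$ on facets of $\BT(\mathbf{G},E)$ for $s\in v(E^\times)$ or $s=t+$ with $t\in v(E^\times)$; descent via $G_{x,s}=G\cap\mathbf{G}(E)_{x,s}$; Lemma \ref{intertw_converse} for the characters; and $I_x=I_{x'}$ from i). Your reduction of i) also matches the paper: show that the facet $\tilde\FC_E(x)$ of $\BT(\mathbf{G},E)$ containing $x$ lies in the image $\BT_\iota(E)$ of $\BT(\mathbf{G}_\iota,E)$, then take $\mathrm{Gal}(E/F)$-invariants. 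The gap is that you never prove this key containment. You flag it as the main obstacle and list several routes, carry none of them out, and the one you say you would try first does not work.

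The missing step is short, and you were one observation away from it. Since $E$ splits a maximal torus of $\mathbf{G}_\iota$, $\mathbf{G}_{\iota,E}$ is a split Levi subgroup of the split group $\mathbf{G}_E$. So the maximal $E$-split torus $\mathbf{T}\subseteq\mathbf{G}_\iota$ you already chose, with $x\in\AC(\mathbf{G},\mathbf{T},E)$, is a maximal $E$-split torus of $\mathbf{G}_E$. Hence $\AC(\mathbf{G},\mathbf{T},E)$ is an apartment of $\BT(\mathbf{G},E)$. Apartments are subcomplexes: any facet of $\BT(\mathbf{G},E)$ that meets an apartment is contained in it. Therefore $\tilde\FC_E(x)\subseteq\AC(\mathbf{G},\mathbf{T},E)\subseteq\BT_\iota(E)$, and so $\FC_E(x)=\tilde\FC_E(x)\cap\BT\subseteq\BT_\iota(E)^{\mathrm{Gal}(E/F)}=\BT_\iota$. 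This is exactly the content of the paper's one-line assertion that $\tilde\FC_E(x)$ lies in the image of the admissible embedding because $\mathbf{G}_{\iota,E}$ is a Levi subgroup.

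The relevant property is that apartments are unions of facets, not that facets are convex. No transitivity of $\mathbf{G}_\iota(E)_{x,0}$ on apartments and no Iwahori decomposition is needed. Moreover, characterizing $\BT_\iota(E)$ as the fixed-point set of $Z(\mathbf{G}_\iota)(E)_0$ is false in general. Take $\mathbf{G}=\SL_2$, $\mathbf{G}_\iota$ the diagonal torus, and $E$ with residue field $\FM_3$. Then every $\mathrm{diag}(u,u^{-1})$ with $u\in\OC_E^\times$ is $\pm1$ times an element of $\mathbf{T}(E)_{0+}$. Since $\mathbf{T}(E)_{0+}$ lies in $G(E)_{y,0+}$ for every vertex $y$ of the apartment, these elements fix every vertex adjacent to the apartment.
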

\begin{proof}
i) Denote by $\tilde\FC_{E}(x)$ the facet of $\BT({\mathbf{G}},E)$ that contains $x$. Since
  $({\mathbf{G}}_{\iota})_E$ is Levi subgroup of $\mathbf{G}_E$, the facet $\tilde\FC_{E}(x)$ is
  contained in the image $\BT_{\iota}(E)$ of any admissible embedding
  $\BT({\mathbf{G}}_{\iota},E)\injo\BT({\mathbf{G}},E)$. 
Therefore $\FC_{E}(x)=\tilde\FC_{E}(x)\cap
  \BT\subset \BT_{\iota}(E)^{{\rm Gal}(E/F)}=\BT_{\iota}$, as desired.

  ii) By \cite[Prop 1.1]{Vigsheaves}, the group ${\mathbf{G}}(E)_{x,r+}$ only depends on the
facet containing $x$ provided that $r\in v(E^{\times})$ (in \emph{loc.cit.} the valuation
is normalized by $v(E^{\times})=\ZM$). Recall also from \cite[\S 2]{Yu_tamescusp} that
$G_{x,r+}=G\cap {\mathbf{G}}(E)_{x,r+}$.
It follows that for each $i=0,\hdots, d$ the group
$G^{i}_{\iota,x,(r_{i-1}/2)+}=G^{i}_{\iota}\cap {\mathbf{G}}(E)_{x,(r_{i-1}/2)+}$ only
depends on the $E$-facet $\FC_{E}(x)$. Therefore we obtain $\Kp_{\iota,x}=\Kp_{\iota,x'}$.
A similar argument shows that $\Ku_{\iota,x}=\Ku_{\iota,x'}$.
Now Lemma \ref{intertw_converse} implies that
$\check\phi^{+}_{\iota,x}=\check\phi^{+}_{\iota,x'}$ hence also
$e_{\iota,x}=e_{\iota,x'}$.

iii)
 By ii), it suffices to show that 
$I_{x}=I_{x'}$, which follows from i).
\end{proof}

\begin{prop} \label{idemp_commut}
Let $E$ be an extension of $F$ as in Lemma \ref{facets} and let $x,x'\in\BT$. Suppose there exists an $E$-facet  of $\BT$ whose closure contains both $x$ and $x'$. Then for any
  $x''\in ]x,x'[$ we have $e_{x}e_{x'}=e_{x''}=e_{x'}e_{x}$.
\end{prop}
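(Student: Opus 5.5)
The plan is to reduce everything to two comparisons: between $x''$ and each endpoint separately, and then to glue them with the telescopic identity of Proposition \ref{telescopic2}. First, a geometric step. Let $\tilde\FC$ be a facet of $\BT({\mathbf{G}},E)$ whose closure contains $x$ and $x'$. By convexity of closed polysimplices, the whole open segment $]x,x'[$ lies in a single facet $\tilde\FC''$ of $\BT({\mathbf{G}},E)$, namely a face of $\o{\tilde\FC}$, and $x,x'\in\o{\tilde\FC''}$. Since $x,x'\in\BT$, the segment is ${\rm Gal}(E/F)$-fixed, so $]x,x'[\subseteq\FC_{E}(x'')$ and $x,x'$ lie in the closure of this $E$-facet. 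By Proposition \ref{indep_facet} iii), $e_{y}=e_{x''}$ for all $y\in]x,x'[$. We also need the hypothesis of Proposition \ref{telescopic2}, namely that $x$ and $x''$ lie in the closure of a Bruhat--Tits facet of $\BT$. I plan to get this from the fact that the $E$-facet structure on $\BT$ refines the $F$-facet structure, which holds by tame descent since $\BT=\BT({\mathbf{G}},E)^{{\rm Gal}(E/F)}$ with compatible filtrations.

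Next I will show $e_{x}e_{x''}=e_{x''}=e_{x''}e_{x}$, and likewise for $x'$; this is where the main work lies. Fix $\iota\in I_{x''}$. Then $x\in\o{\FC_{E}(x'')}\subseteq\BT_{\iota}$ by Proposition \ref{indep_facet} i) and closedness of $\BT_{\iota}$, so $I_{x''}\subseteq I_{x}$. Since $x$ is in the closure of the facet of $x''$ and all the relevant depths $r_{i-1}/2$ lie in $v(E^{\times})$, we have ${\mathbf{G}}(E)_{x,r+}\subseteq{\mathbf{G}}(E)_{x'',r+}$ for $r\in v(E^{\times})$; this is the standard monotonicity underlying \cite[Prop 1.1]{Vigsheaves}. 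Intersecting with $G^{i}_{\iota}$ then gives $\Kp_{\iota,x}\subseteq\Kp_{\iota,x''}$, and similarly $\Ku_{\iota,x}\subseteq\Ku_{\iota,x''}$. By Lemma \ref{intertw_converse}, $\check\phi^{+}_{\iota,x''}$ restricts to $\check\phi^{+}_{\iota,x}$ on $\Kp_{\iota,x}$. Averaging along a character of a larger group after averaging along its restriction to a subgroup gives $e_{\iota,x}e_{\iota,x''}=e_{\iota,x''}=e_{\iota,x''}e_{\iota,x}$.

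For the sums, I will use (\ref{exex'}) with $I_{x,x''}=I_{x''}$. I need that $\sim_{x,x''}$ coincides with $\sim_{x''}$, i.e.\ that $\iota\sim_{x''}\iota'$ implies $\iota\sim_{x}\iota'$. If $\iota\sim_{x''}\iota'$, then $e_{\iota,x''}=e_{\iota',x''}$ by (\ref{Gaction}) and Proposition \ref{prop_intertwining} i). Hence $e_{\iota,x}e_{\iota',x}\neq0$, because it absorbs $e_{\iota,x''}\neq0$, and Lemma \ref{ortho_idemp} then gives $\iota\sim_{x}\iota'$. Consequently $e_{x}e_{x''}=\sum_{\iota\in I_{x''}/\sim_{x''}}e_{\iota,x}e_{\iota,x''}=e_{x''}$, and the same argument in the other order gives $e_{x''}e_{x}=e_{x''}$. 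The identical argument applies with $x'$ in place of $x$.

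Finally, Proposition \ref{telescopic2} gives $e_{x}e_{x'}=e_{x}e_{x''}e_{x'}=e_{x''}e_{x'}=e_{x''}$, and by symmetry $e_{x'}e_{x}=e_{x''}$. I expect the delicate point to be the inclusion $\Kp_{\iota,x}\subseteq\Kp_{\iota,x''}$, i.e.\ the direction of monotonicity of the Moy--Prasad groups $G_{y,r+}$ as $y$ moves to the boundary of the $E$-facet, together with the verification that the $E$-facet closure condition really supplies the Bruhat--Tits facet hypothesis of Proposition \ref{telescopic2}. If the latter fails, I would instead rerun the proof of Proposition \ref{telescopic2} directly, using the commutation coming from $G_{x,0+}\subseteq G_{x''}$, which follows from the same monotonicity.
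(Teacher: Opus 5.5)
Your proof is correct, but it is organized differently from the paper's.

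\textbf{The paper's route.} It works one embedding at a time. For each $\iota\in I_{x,x'}=I_{x''}$, the inclusions $\Kp_{\iota,x},\Kp_{\iota,x'}\subseteq\Kp_{\iota,x''}$ and Lemma \ref{intertw_converse} give $e_{\iota,x}e_{\iota,x''}=e_{\iota,x''}=e_{\iota,x''}e_{\iota,x'}$. It then telescopes with Lemma \ref{telescopic_iota} to get $e_{\iota,x}e_{\iota,x'}=e_{\iota,x''}$, and sums via $(\ref{exex'})$. The only remaining point is that $\sim_{x,x'}$ and $\sim_{x''}$ coincide on $I_{x''}$.

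\textbf{Your route.} You first prove the one-sided identities $e_xe_{x''}=e_{x''}=e_{x''}e_x$ and the analogous ones for $x'$. Each comes from $(\ref{exex'})$ applied to the pair $(x,x'')$, together with $\sim_{x,x''}=\sim_{x''}$ on $I_{x''}$. You then glue them with Proposition \ref{telescopic2}. This gives the intermediate identities explicitly, and your absorption argument for $\sim_{x''}\Rightarrow\sim_x$ is sound.

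\textbf{The cost of your route.} You must supply the hypothesis of Proposition \ref{telescopic2}: a Bruhat--Tits facet whose closure contains $x$ and $x''$. The refinement of $F$-facets by $E$-facets that you invoke is true, but ``tame descent with compatible filtrations'' is a thin justification. Your fallback is the cleaner fix. The only use of that hypothesis in the proof of Proposition \ref{telescopic2} is $G_{x,0+}\subseteq G_{x''}$, which you already get from the monotonicity at $r=0$. The paper's route sidesteps the issue, since Lemma \ref{telescopic_iota} needs no facet hypothesis.

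\textbf{One incorrect aside.} The claim that ``similarly $\Ku_{\iota,x}\subseteq\Ku_{\iota,x''}$'' is false in general. When $x$ lies in the closure of the facet of $x''$ and $r\in v(E^{\times})$, the non-plus groups satisfy the reverse inclusion $\mathbf{G}(E)_{x'',r}\subseteq\mathbf{G}(E)_{x,r}$. Parahoric subgroups at $r=0$ already show this. The paper itself remarks, before defining $[\BT'_{\iota,\bullet/e}/\K_{\iota}]$, that $\Ku$ at a facet and at one of its faces are in general not contained in one another. Fortunately you never use this inclusion; only $\Kp_{\iota,x}\subseteq\Kp_{\iota,x''}$ enters your argument.
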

\begin{proof} 
Note first that in this situation we have $x,x'\in \o{\FC_{E}(x'')}$.

We claim that $I_{x,x'}=I_{x''}$.
Indeed, the inclusion $I_{x,x'}=I_x \cap I_{x'} \subseteq I_{x''}$ follows from the convexity of
  $\BT_{\iota}$ in $\BT$, while the other inclusion follows from $\BT_{\iota}$ being a closed subset of $\BT$ together with
    $\FC_{E}(x'')\subseteq\BT_{\iota}$ for any $\iota\in I_{x''}$ by Proposition \ref{indep_facet}.i).

Now, let us fix $\iota\in I_{x,x'}$.
 By \cite[Prop 1.1]{Vigsheaves}, the
groups ${\mathbf{G}}^{i}_{\iota}(E)_{x,r+}$ and ${\mathbf{G}}^{i}_{\iota}(E)_{x',r+}$ are contained in
${\mathbf{G}}^{i}_{\iota}(E)_{x'',r+}$ for each $i$, provided that $r\in v(E^{\times})$. By taking Galois-fixed elements, it
follows that $G^{i}_{\iota,x,(r_{i-1}/2)+}$ and $G^{i}_{\iota,x',(r_{i-1}/2)+}$ are
contained in $G^{i}_{\iota,x'',(r_{i-1}/2)+}$. 
We infer that 
$\Kp_{\iota,x}\subseteq \Kp_{\iota,x''}$ and
$\Kp_{\iota,x'}\subseteq \Kp_{\iota,x''}$. 
Lemma \ref{intertw_converse} then implies that $e_{\iota,x}e_{\iota,x''}=e_{\iota,x''}$ and
$e_{\iota,x''}e_{\iota,x'}=e_{\iota,x''}$. We conclude thanks to Lemma
\ref{telescopic_iota} that $e_{\iota,x}e_{\iota,x'}=e_{\iota,x}e_{\iota,x''}e_{\iota,x'}=e_{\iota,x''}$.
Using (\ref{exex'}) 
we thus obtain the formula
$$ e_{x}e_{x'}=\sum_{\iota\in I_{x''}/\sim_{x,x'}} e_{\iota,x''}, 
 \hbox{ to  compare with } e_{x''}=\sum_{\iota\in I_{x''}/\sim_{x''}} e_{\iota,x''}.$$
For $\iota_{1},\iota_{2}\in I_{x''}$, Lemma \ref{ortho_idemp} tells us that
$$
\begin{array}{l}
\iota_{1}\sim_{x''}\iota_{2}\Leftrightarrow e_{\iota_{1},x''}=e_{\iota_{2},x''} \\
\iota_{1}\sim_{x,x'}\iota_{2}\Leftrightarrow (e_{\iota_{1},x}=e_{\iota_{2},x} \hbox{  and }
e_{\iota_{1},x'}=e_{\iota_{2},x'})
\end{array}
$$
The equality $e_{\iota,x}e_{\iota,x'}=e_{\iota,x''}$ proved just above 
shows that
$\iota_{1}\sim_{x,x'}\iota_{2}\Rightarrow \iota_{1}\sim_{x''}\iota_{2}$. On the other
hand, Lemma \ref{ortho_idemp} also shows that 
$$
\begin{array}{l}
\iota_{1}\sim_{x''}\iota_{2}\Leftrightarrow e_{\iota_{1},x''}e_{\iota_{2},x''}\neq 0 \\
\iota_{1}\sim_{x,x'}\iota_{2}\Leftrightarrow (e_{\iota_{1},x}e_{\iota_{2},x}\neq 0 \hbox{  and }
e_{\iota_{1},x'}e_{\iota_{2},x'}\neq 0)
\end{array}
$$
This time, the equalities $e_{\iota,x}e_{\iota,x'}=e_{\iota,x''}=e_{\iota,x'}e_{\iota,x}$ 
show that
$\iota_{1}\sim_{x''}\iota_{2}\Rightarrow \iota_{1}\sim_{x,x'}\iota_{2}$.
\end{proof}

\subsection{The category $\Rep^{\phi,I}_{R}(G)$}\label{sec:category-repphi-i_rg}

We now construct the category attached to a wild inertia parameter
$\phi:\,P_{F}\To{}{^{L}{\mathbf{G}}}$ and a $G$-conjugacy class $I\subset I_{\phi}$
of $F$-rational embeddings $\iota : \mathbf{S}_{\phi}\injo {\mathbf{G}}$.
If $V$ is any smooth
$RG$-module, it has an action of the Hecke algebra $\HC_{R}(G)$ and in particular the
idempotents $e_{x}=e_{\phi,I,x}$ act on it.   This subsection is
mainly devoted to the proof of the following theorem.

\begin{theo} \label{thm_Serresubcat}
  The subcategory $\Rep^{\phi,I}_{R}(G)$ of $\Rep_{R}(G)$ defined by\index[notation]{RepphiIG@$\Rep^{\phi,I}_{R}(G)$} 
$$\Rep^{\phi,I}_{R}(G):=\left\{V\in\Rep_{R}(G),\, V=\sum_{x\in \BT} e_{\phi,I,x}V\right\}$$
is a Serre subcategory of $\Rep_{R}(G)$, stable under arbitrary colimits, and  generated
 by the following compact projective object of $\Rep_{R}(G)$
$$ P^{\phi,I}:=\bigoplus_{x\in \Delta_{0}} \bigoplus_{\iota\in I_{x}/\sim_{x}}
\cInd{\Kp_{\iota,x}}{G}(\check\phi^{+}_{\iota,x}),$$ 
where $\Delta_{0}$ denotes the set of $e$-vertices of a $1$-chamber $\Delta$ of $\BT$ (a notion that will be introduced in \ref{subsection-efacets} below). 
Moreover, any object $V\in\Rep^{\phi,I}_{R}(G)$ is functorially 
an extension 
\ini\begin{equation}\label{exactseq}
 V^{\phi,I}\injo V \twoheadrightarrow V_{\phi,I}
\end{equation}
where $V^{\phi,I} \in \Rep^{\phi,I}_{R}(G)$ 
and $V_{\phi,I}$ has no subquotient that belongs to $\Rep^{\phi,I}_{R}(G)$.
\end{theo}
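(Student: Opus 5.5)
The plan is to invoke the general machinery of \cite{MS1} and \cite{Lanard} on consistent systems of idempotents. Their main theorem says: if $(e_x)_{x}$ is a $G$-equivariant family of idempotents of $\HC_R(G)$, indexed by the vertices of a $G$-invariant polysimplicial structure on $\BT$, and satisfying suitable consistency conditions, then $\{V: V=\sum_x e_xV\}$ is a Serre subcategory. This subcategory is generated by $\bigoplus_{x\in\Delta_0}\cInd{G_x}{G}$-type objects and is equivalent to a category of equivariant coefficient systems. The conditions are, roughly:
\begin{enumerate}
\item $e_x$ depends only on the facet of $x$;
\item $e_xe_y=e_ye_x=e_\sigma$ when $x,y$ are vertices of a common facet $\sigma$ (where $e_\sigma$ is the idempotent at the barycenter, or any interior point);
\item $e_xe_ze_y=e_xe_y$ whenever $z$ lies in the polysimplicial hull of $x$ and $y$.
\end{enumerate}
So the first step is to fix the polysimplicial structure given by the $E$-facets of \ref{facets}, with $E$ chosen as in Lemma \ref{facets} using $p$ odd. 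I would define $e$-vertices and $1$-chambers accordingly, presumably in \ref{subsection-efacets}. We must check this is a genuine $G$-invariant polysimplicial structure on $\BT$ (the restriction of the $E$-structure to the Galois-fixed points, after possible subdivision). This is where care is needed, since $E$-facets of $\BT$ need not be simplices of $\BT({\mathbf{G}},E)$ themselves.

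Next I would verify the three conditions from the results above. Equivariance is (\ref{Gequivariant}). Condition 1 is Proposition \ref{indep_facet} iii). Condition 2 is Proposition \ref{idemp_commut}. Condition 3, the telescopic property, should follow from Proposition \ref{telescopic2} iterated along the geodesic $[x,y]$. The geodesic crosses finitely many $E$-facets, so one chooses successive points $x=z_0,z_1,\dots,z_n=y$ with consecutive points in the closure of a common facet. One then applies Proposition \ref{telescopic2} and Proposition \ref{idemp_commut} repeatedly, using the commutation of $e_z$ with idempotents supported in $G_{z',0+}$ for neighbouring $z'$. I expect this reduction to be the main technical obstacle. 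The issue is that \cite{MS1}/\cite{Lanard} phrase the condition for $z$ in the hull of $x,y$, while we only have it along geodesics with a facet condition. I would first try to check that Lanard's formulation only requires the geodesic version, as in \cite{Datequiv}, and otherwise deduce the hull version by induction on the combinatorial distance.

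Once the conditions hold, the Meyer--Solleveld/Lanard theorem gives the Serre property and stability under colimits. It also identifies the generator, since $e_{\iota,x}\HC_R(G)=\cInd{\Kp_{\iota,x}}{G}(\check\phi^+_{\iota,x})$. Summing over $\iota\in I_x/\sim_x$ gives $e_x\HC_R(G)$, and it suffices to take $x\in\Delta_0$ by $G$-equivariance and facet independence. Compactness and projectivity are clear because $\Kp_{\iota,x}$ is pro-$p$ and $p\in R^\times$. For (\ref{exactseq}), I would set $V^{\phi,I}:=\sum_x e_xV$, which is a subrepresentation by equivariance and lies in the subcategory since $e_x$ is idempotent. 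Functoriality is clear. If a subquotient $W'/W$ of $V_{\phi,I}=V/V^{\phi,I}$ were in the subcategory, it would be nonzero with some $e_x(W'/W)\neq0$. By projectivity of $e_x\HC_R(G)$ the nonzero element lifts to $e_xV/V^{\phi,I}$, which is $0$, a contradiction.
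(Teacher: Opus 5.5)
Your overall plan (verify a consistency property for $(e_x)$, then run the argument of \cite{MS1}) is the paper's. Your treatment of the generator and of the extension (\ref{exactseq}) is fine.

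The gap is the polysimplicial structure. \cite{MS1} is not a black box for an arbitrary $G$-invariant cell decomposition. Its Theorem 2.4 (acyclicity, on convex subcomplexes, of the chain complex of $\FC\mapsto e_{\FC}V$, with $H_0=\sum_x e_xV$) is what the Serre property and the generation statement rest on. That theorem uses the Bruhat--Tits combinatorics: walls and half-apartments, combinatorial convex hulls (\cite[Lemmas 2.8, 2.9, Prop.~2.2]{MS1}), splitting along walls, and retractions. The partition of $\BT$ into $E$-facets is not known to have this structure. Its walls are traces of walls of an apartment over $E$, and they refine the Bruhat--Tits walls by root-dependent factors $e(E/E_a)$ (cf.\ the proof of Lemma \ref{efacets}). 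Your ``after possible subdivision'' is exactly the missing idea.

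The paper passes to the reduced building $\BT'$, which is legitimate since $e_x$ only depends on the image of $x$ there, and uses the $e$-subdivision. In each apartment, its facets are the images of the Bruhat--Tits facets under the homothety of ratio $1/e$ centred at a special point. So it is a $G$-invariant polysimplicial structure with the same combinatorics as the Bruhat--Tits one. Lemma \ref{efacets}, via the description of wall sets in \cite{BT2}, shows it refines the $E$-facets once $e(E/F)$ divides $e$, so the idempotents are constant on $e$-facets. Only then can the proof of \cite[Thm.~2.4]{MS1} be rerun. Even so, the splitting step must be complemented when $\Sigma$ is not contained in one apartment, by choosing a $1$-chamber and using the corresponding retraction.

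Second, the condition you must verify is \cite[Def.~2.1]{MS1}: $e_xe_ze_y=e_xe_y$ for $z\in\HC(x,y)$ \emph{adjacent to} $x$. Such a $z$ is in general not on $[x,y]$, so iterating Proposition \ref{telescopic2} along the geodesic does not give it. Your fallback (``induction on combinatorial distance'') is not an argument.

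The paper reduces to the geodesic case via the analogue of \cite[Lemma~2.9]{MS1} for the $e$-subdivision. For $w\in\,]x,y[$ close to $x$, $\FC_e(w)$ is the largest $e$-facet whose closure lies in $\HC_e(x,y)$ and contains $x$. Hence $z\in\o{\FC_e(w)}$. Choose $w'\in\FC_e(w)$ with $w\in\,]z,w'[$. Then Propositions \ref{telescopic2} and \ref{idemp_commut}, together with facet-constancy, give $e_xe_y=e_xe_we_y=e_xe_ze_{w'}e_y=e_ze_xe_we_y=e_ze_xe_y=e_xe_ze_y$. This step, too, needs the $e$-subdivision to carry the Bruhat--Tits combinatorics.
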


The strategy is to put ourselves in a position where we can apply \cite[Thm
3.1]{MS1}, or at least closely follow its proof. This reference is concerned with systems of idempotents associated to
vertices  (more generally to polysimplices) in the reduced building
$\BT':=\BT({\mathbf{G}}_{\rm ad},F)$,\index[notation]{Bprime@$\BT'$} while we have constructed idempotents associated to points
of $\BT$. However, our idempotent $e_{x}$ only depends on the  image of $x$  in $\BT'$, so that we
actually have idempotents associated to points of $\BT'$. Unfortunately, these idempotents
are not constant on $F$-facets, but only on $E$-facets for some Galois extension $E$ of
$F$ as in \ref{facets}.

\alin{The $e$-subdivision of $\BT'$}\label{subsection-efacets} 
 Fix an integer $e\geq 1$. We define a subdivision of the  polysimplicial structure on
 $\BT'$ in the following way. 

Start with an apartment $A$ of $\BT'$ and define an $e$-wall to be
 an affine hyperplane of the form $\varphi^{-1}(\frac ke t_{1}+ \frac{e-k}e t_{2})$ where
 $\varphi$ is an affine root on $A$,  $t_{1},t_{2}\in \RM$ are such that $\varphi^{-1}(t_{1})$ and
 $\varphi^{-1}(t_{2})$ are walls of $A$, and $k$ is an integer between $0$ and $e$. In
 particular, $1$-walls are the usual walls and are also $e$-walls for any $e\geq 1$, and
 moreover any $e$-wall is parallel to some $1$-wall. We thus get an enlarged collection of
 hyperplanes, leading to a refined partition of $A$ into facets, that we call $e$-facets.
We note that if $o$ is a special point of $A$, then the $e$-walls of $A$ are the
 images of the walls by  the
 homothety of  ratio $1/e$ centered at $o$. Indeed, for any affine
 root $\varphi$ on $A$, it follows from \cite[(6.2.16)]{BT1} that 
 the set of all $t\in\RM$  such that
 $\varphi^{-1}(t+\varphi(o))$ is a wall is a discrete subgroup of $\RM$. 
 As a consequence, the $e$-facets are the images of
 the usual facets by the same homothety.  

If $A'$ is another apartment, we define $e$-walls and $e$-facets in the same way. 
Then for any $g\in G$, the action of $g$ on $\BT'$ takes an $e$-wall of $A$ to an $e$-wall
of $gA$. In particular, if $F$ is an $e$-facet of $A$ which intersects $A'$,
then $F\subset A'$ and $F$ is an $e$-facet of $A'$. Indeed, there is some $g\in G$ with
$A'=gA$ and such that $g$ fixes the $1$-facet $F_{1}\in \BT'$ that contains $F$.
This allows to define unambiguously the $e$-facets of  $\BT'$, and we get a
polysimplicial structure on $\BT'$ which is preserved by the action of $G$. We will call it the
$e$-subdivision of $\BT'$.

\begin{lem}\label{efacets}
Let $E$ be a tamely ramified Galois field extension of $F$ with ramification index $e=e(E/F)$.
Suppose that ${\mathbf{G}}$ is split over $E$ and quasi-split over the maximal unramified subextension $E_{0}$ of $E$. Then the
$e$-subdivision of $\BT'$ refines the partition of $\BT'$ into $E$-facets,
i.e., any $E$-facet is a union of $e$-facets.
\end{lem}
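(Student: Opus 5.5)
The plan is to reduce the comparison to a single apartment and then to a statement about affine roots. Since both partitions are $G$-equivariant and every $E$-facet meets an apartment of $\BT'$, it suffices to fix an apartment $A$ and show that each $e$-facet of $A$ lies in a single $E$-facet. As $\mathbf{G}$ is quasi-split over $E_{0}$, there is a maximal $E_{0}$-split $F$-torus $\mathbf{S}$ whose maximal $F$-split subtorus determines $A$. Because $\mathbf{S}$ is split over $E$, the apartment $\AC(\mathbf{G},\mathbf{S},E)$ of $\BT(\mathbf{G},E)$ contains $A$ as the fixed points of ${\rm Gal}(E/F)$. In particular $A$ is an affine subspace of this $E$-apartment. (For the reduced building the same holds after passing to $\mathbf{G}_{\rm ad}$.)

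The $E$-facets of $\BT(\mathbf{G},E)$ inside $\AC(\mathbf{G},\mathbf{S},E)$ are cut out by the $E$-walls. These are hyperplanes $\{y:\alpha(y-o)+t\in v(E^{\times})\}$ with $\alpha$ an absolute root. Since $\mathbf{G}_{E}$ is split, its affine root values lie in $\frac1e\ZM$ shifted by constants determined by a base point. Hence a point $y\in A$ lies on such a wall exactly when $\alpha|_{A}(y)$ takes a value in a coset of $\frac1e\ZM$. Restricting to $A$, each absolute root $\alpha$ restricts to a multiple of a relative root $a$ of $F$-rank. Therefore the intersection of an $E$-wall with $A$ is either all of $A$ or a hyperplane $\{a=c\}$ with $c$ in a coset $c_{0}+\frac1{e\,m}\ZM$, where $m\in\{1,2\}$ accounts for $\alpha|_{A}=a$ or $2a$ type situations.

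The main step is then to check that every such hyperplane is an $e$-wall in the sense of \ref{subsection-efacets}. For this I will use the description recalled from \cite[(6.2.16)]{BT1}: if $o$ is a special point, the $e$-walls are the images of walls under the homothety of ratio $1/e$ centered at $o$. Concretely, the $F$-affine roots with gradient $a$ have values forming a set $c_{0}+\Gamma_{a}$ with $\Gamma_{a}$ a discrete subgroup. The $e$-walls with gradient $a$ are the level sets at $c_{0}+\frac1e\Gamma_{a}$.

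The key point, and the expected obstacle, is to compare $\Gamma_{a}$ with the $E$-values. Because $E_{0}/F$ is unramified, quasi-splitness over $E_{0}$ reduces the question to the tamely ramified descent from $E_{0}$ to $F$ and from $E$ to $E_{0}$. By Bruhat--Tits/Prasad--Yu tame descent, the set of $E$-values along $a$, namely $\frac1e\ZM$ (suitably shifted), is contained in $\frac1e\Gamma_{a}$. This holds because $\Gamma_{a}\supseteq\ZM$ or $\frac12\ZM$, with the ramified non-reduced case for $p$ odd handled through the explicit valuation data of \cite{BT1}. Choosing $o$ special, also for the $E$-structure inside $A$, makes all base shifts compatible.

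I expect this value-set comparison to be the delicate step, especially for non-reduced or ramified-unitary relative roots. I would first treat it by reducing to quasi-simple, simply connected groups split over $E$ via restriction of scalars. Granting it, every $E$-wall restricted to $A$ is an $e$-wall. Hence each $e$-facet, being a connected region not crossed by any $e$-wall, lies in one $E$-facet.
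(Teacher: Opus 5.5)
Your reduction to a single apartment is fine. The problem is that the whole content of the lemma is the comparison of wall values, and you only ``grant'' it. Worse, the mechanism you sketch for it fails when $\mathbf{G}$ is not quasi-split over $F$.

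\textbf{The base point you want need not exist.} You propose to choose $o\in A$ ``special, also for the $E$-structure''. Take $\mathbf{G}=\GL_2(D)$ with $D$ a quaternion division algebra, and $E=E_0$ the unramified quadratic extension, so $e=1$. In coordinates on an apartment of $\GL_4(E_0)$, the $F$-apartment is (up to sign conventions) $A=\{(x_1,x_1-\tfrac12,x_3,x_3-\tfrac12)\}$. Since $x_1-x_2=\tfrac12\notin\ZM$ on $A$, no point of $A$ is even a vertex of $\BT(\mathbf{G},E)$.

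\textbf{Your description of the $E$-wall traces is also wrong.} You describe them as level sets $c_0+\frac1{em}\ZM$, with $m\in\{1,2\}$ read off from whether $\alpha|_A$ is $b$ or $2b$. In the example, every relevant absolute root restricts to $b=x_1-x_3$ up to a constant. Yet $\varepsilon_1-\varepsilon_3$ and $\varepsilon_1-\varepsilon_4$ have offsets differing by $\tfrac12$, so the $E$-walls cut $A$ along $\{b\in\tfrac12\ZM\}$, not along a coset of $\ZM$. The inclusion you actually need, $E$-values $\subseteq c_0+\frac1e\Gamma_b$, does hold here because $\Gamma_b=\tfrac12\ZM$ for $\GL_2(D)$. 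But it holds for a reason other than the one you give ($\Gamma_b\supseteq\ZM$). What is missing is a genuine computation relating $F$-wall values to $E$-wall values.

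\textbf{The missing idea: compare in the intermediate apartment over $E_0$.} There the base-point problem disappears; this is how the paper argues.
\begin{itemize}
\item Start from a maximal $F$-split torus $\mathbf{T}$, choose a maximal $E_0$-split $F$-torus $\mathbf{T}_0\supseteq\mathbf{T}$ \cite[Cor.~5.1.12]{BT2}, and set $\mathbf{S}=C_{\mathbf{G}}(\mathbf{T}_0)$. This gives $A\subset A_0=\AC(\mathbf{G},\mathbf{T}_0,E_0)\subset A_S=\AC(\mathbf{G},\mathbf{S},E)$.
\item Because $\mathbf{G}_{E_0}$ is quasi-split, a Chevalley--Steinberg origin $o\in A_0$ is special in $A_0$. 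Measured from $o$, the walls of $A_S$ are the level sets $\{\alpha=v\}$ with $v\in v(E^\times)$.
\item For each non-divisible root $a$ of $\mathbf{T}_0$, \cite[4.2.21]{BT2} gives $\Gamma_a=v(E_a^\times)$, or $\tfrac12 v(E_a^\times)$ if $2a$ is a root. The traces of $E$-walls give $\Gamma_{a,E}=v(E^\times)$, or $\tfrac12 v(E^\times)$ in the second case.
\item Hence $\Gamma_{a,E}=\frac1{e(E/E_a)}\Gamma_a\subseteq\frac1e\Gamma_a$, so the $e$-subdivision of $A_0$ refines the $E$-facets.
\item The unramified step $A\subset A_0$ needs no $E$-compatible base point in $A$. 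By \cite[Thm.~5.1.20(iii)]{BT2}, the walls of $A$ are exactly the non-trivial traces on $A$ of walls of $A_0$. So each $e$-wall $\{a=\frac ke t_1+\frac{e-k}e t_2\}$ of $A_0$ traces on $A$ either $\emptyset$, all of $A$, or an $e$-wall of $A$.
\end{itemize}
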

  \begin{proof}
Given a maximal $F$-split torus
${\mathbf{T}}$ of ${\mathbf{G}}$, \cite[Cor. 5.1.12]{BT2} ensures that we can find a maximal $E_{0}$-split $F$-torus
${\mathbf{T}}_{0}$ that contains ${\mathbf{T}}$. Then the centralizer $\mathbf{S}=C_{\mathbf{G}}(\mathbf{T_{0})}$ of
${\mathbf{T}}_{0}$ is a maximal $F$-torus of ${\mathbf{G}}$, and is split over $E$.
In this situation there are inclusions of apartments  $A=A({\mathbf{G}},{\mathbf{T}},F)\subset
A_{0}=A({\mathbf{G}},{\mathbf{T}}_{0},E_{0}) \subset A_{S}=A({\mathbf{G}},\mathbf{S},E)$ and
each subspace is obtained by taking suitable Galois invariants.
By \cite[Thm 5.1.20 iii)]{BT2}, the walls of the apartment $A$ are exactly the
non-trivial intersections of $A$ with the walls of the apartment $A_{0}$.
Moreover, by \cite[4.2.4]{BT2} each wall of $A_{0}$ is the intersection of $A_{0}$ with a wall of $A_{S}$.
   Conversely, the intersection of a wall of $A_{S}$ with $A_{0}$, when non-trivial, may not be a wall
of $A_{0}$ but, at least, is parallel to a wall of $A$. More precisely, fix an origin $o$ which is
a special point in
$A_{0}$ (e.g.\ that comes from a Chevalley--Steinberg system as in \cite[4.2.3]{BT2}) and
let $a$ be a non-divisible root of
${\mathbf{T}}_{0}$ in ${\mathbf{G}}$, and let $E_{0}\subset E_{a}\subset E$ be the associated
extension (denoted by $L_{a}$ in \emph{loc.\ cit.}).
 Denote by $\Gamma_{a}\subset \RM$ the set of real numbers $v$ such
that $\{x\in A_{0}, a(x)= v\}$ is a wall of $A_{0}$.
Then by 
\cite[4.2.21]{BT2} we have 
  $\Gamma_{a}=v(E_{a}^{\times})$ (the
valuation lattice of $E_{a}$) if $2a$ is not a root,  and  $\Gamma_{a}= \frac 12 v(E_{a}^{\times})$ if $2a$ is
a root. 
On the other hand, let  $\Gamma_{a,E}\subset \RM$ be the set of real numbers $v$ such
that $\{x\in A_{0}, a(x)= v\}$ is the intersection of $A_{0}$ with a wall of $A_{S}$.
If $2a$ is not a root and $v\in \Gamma_{a,E}$, then there is a root $\alpha$ of
$\mathbf{S}$ in ${\mathbf{G}}$ that restricts to $a$ and such that $\{x\in A_{S},
\alpha(x)=v\}$ is a wall of $A_{S}$, hence $v\in v(E^{\times})$. If $2a$ is a root, then
either there is $\alpha$ as above and then $v\in v(E^{\times})$, or there are
$\alpha,\alpha'$ as above with $\alpha+\alpha'$ a root, and $\{x\in A_{S},
(\alpha+\alpha')(x)=2v\}$ is a wall of $A_{S}$, in which case $v\in \frac 12 v(E)^{\times}$.
It follows that we have $\Gamma_{a,E}=v(E^{\times})$ if $2a$ is not a root, and
$\Gamma_{a,E}=\frac 12 v(E^{\times})$ if $2a$ is a root.  
In any case,  for all
non-divisible roots $a$ of ${\mathbf{T}}_{0}$ we have $\Gamma_{a,E}= \frac 1{e(E/E_{a})}\Gamma_{a}$.
 
Since $e=e(E/E_{0})$ is a common multiple of all $e(E/E_{\alpha})$, 
the above discussion
shows that the $e$-subdivision of the polysimplicial structure on $A_{0}$
refines the one that comes from $A_{S}$. Since the polysimplicial structure on $A_{0}$
induces the one on $A$ (again by Thm 5.1.20 iii) of \cite{BT2}),
it follows that the  $e$-subdivision
of the polysimplicial structure on $A$ refines its partition into $E$-facets.
\end{proof}

\ali \label{choice_e}
From now on, we pick an integer $e$ that is divisible by the ramification index of a
tamely ramified field extension $E$ of $F$ that fulfills the requirements of both
Lemmas \ref{facets} and \ref{efacets}. Then, for each
$\iota\in I$, the image $\BT'_{\iota}$\index[notation]{Bprimeiota@$\BT'_{\iota}$} of $\BT_{\iota}$ in $\BT'$ is a union of $e$-facets.

 For $x\in \BT'$ we
  denote by $\FC_{e}(x)$ the unique $e$-facet of $\BT'$ that contains $x$. 
  Further, we denote by $\BT'_{\bullet/e}$ the partially ordered set of all $e$-facets,
  with the order given by   $\FC'\preceq\FC\Leftrightarrow \FC'\subseteq \o\FC$.
 We will also write $\BT'_{d/e}$ for the set of $d$-dimensional $e$-facets . For $d=0$ we also speak of
  ``$e$-vertices''. A family $x_{1},\hdots, x_{r}$ of $e$-vertices are called ``adjacent''
  if there exits an $e$-facet whose closure contains all these $e$-vertices. Then there is a unique $e$-facet
  $\FC_{e}(x_{1},\hdots, x_{r})$ with this property and which is minimal for the order
  defined above.

If $x,x'$ are two points in $\BT$, they are contained in a common apartment $A$. The
intersection of all the half spaces associated to walls of $A$ that
contain $x$ and $x'$ is known to be independent of the choice of $A$. It is called the
``combinatorial convex hull'' of $x$ an $x'$ and we will denote it by
$\HC_{1}(x,x')$. It is a union of facets. Similarly we denote by $\HC_{e}(x,x')$ the
intersection of all $e$-half spaces (corresponding to $e$-walls) of $A$ that contain $x$
and $x'$. This is again independent of $A$ and a union of $e$-facets. Obviously
$[x,x']\subset \HC_{e}(x,x')\subset \HC_{1}(x,x')$.

\begin{lemme}\label{consistent_system}
  Let the integer $e$ be as in \ref{choice_e}.
  
  Then the idempotents $(e_{x})_{x\in \BT'}$ have the following properties.
  \begin{enumerate}
  \item for all $x,x'\in \BT'$ we have $\FC_{e}(x)=\FC_{e}(x')\Rightarrow e_{x}=e_{x'}$.
  \item If $x,x'$ are adjacent $e$-vertices and $x'' \in ]x, x'[$, then
    $e_{x}e_{x'}=e_{x'}e_{x}=e_{x''}$.
  \item If $x,x',x''$ are three $e$-vertices with $x'\in \HC_{e}(x,x'')$ and $x'$
    adjacent to $x$, then $e_{x}e_{x'}e_{x''}=e_{x}e_{x''}$.
\end{enumerate}
In particular the system $(e_{x})_{x\in \BT'_{0/e}}$ is consistent in the sense of
\cite[Def. 2.1]{MS1}.
\end{lemme}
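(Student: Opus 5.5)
The plan is to reduce each item to the corresponding statement already proved for $E$-facets, via Lemma \ref{efacets}: since $e$ is divisible by $e(E/F)$ for an $E$ as in \ref{choice_e}, every $E$-facet of $\BT'$ is a union of $e$-facets. First I note that $e_x$ only depends on the image of $x$ in $\BT'$, because translating $x$ by $X_*(Z(\mathbf{G}))^{W_F}_{\RM}$ changes neither the Moy--Prasad groups, nor the sets $I_x$, nor the characters. So all statements can be read on $\BT'$.

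Item i) is then immediate. If $\FC_e(x)=\FC_e(x')$, then $x,x'$ lie in the same $E$-facet, by Lemma \ref{efacets} (an $E$-facet being a union of $e$-facets, two points in one $e$-facet lie in one $E$-facet). Proposition \ref{indep_facet} iii) then gives $e_x=e_{x'}$.

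For ii), let $x,x'$ be adjacent $e$-vertices, so some $e$-facet $\FC$ has $x,x'\in\o\FC$. Since $\FC$ is contained in a single $E$-facet $\FC_E$, its closure lies in $\o{\FC_E}$. Hence $x,x'$ lie in the closure of the common $E$-facet $\FC_E$, and Proposition \ref{idemp_commut} yields $e_xe_{x'}=e_{x''}=e_{x'}e_x$ for every $x''\in\,]x,x'[$.

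For iii), the natural move is to apply Proposition \ref{telescopic2} with the triple $(x,x'',x')$ of that proposition replaced by $(x,x',x'')$. That proposition needs two inputs: that $x'$ lies on the geodesic $[x,x'']$, and that $x,x'$ lie in the closure of a common ($1$-)facet. Neither is given directly, since we only know $x'\in\HC_e(x,x'')$ and that $x'$ is $e$-adjacent to $x$. So I would first reduce to the geodesic case.

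\begin{itemize}
\item Pick a point $y\in\,]x,x'[$ close to $x$. By ii), $e_y=e_xe_{x'}$, so $e_xe_y=e_y$, and also $e_y=e_xe_{x'}e_x$ by commutation. Then $e_xe_{x'}e_{x''}=e_ye_{x''}=e_xe_ye_{x''}$.
\item The key geometric claim is that, because $x'\in\HC_e(x,x'')$ and $x'$ is adjacent to $x$, the segment $[x,x'']$ meets $\o{\FC_e(x,x')}$ near $x$. More precisely, one can choose $z\in[x,x'']$ close to $x$ with $\FC_e(z)\preceq\FC_e(x,x')$ and $\FC_e(z)\succeq\FC_e(x)$, i.e.\ $\FC_e(z)=\FC_e(y')$ for some $y'\in\,]x,x'[$ (or $z$ lies on a face of $\FC_e(x,x')$ containing $x$). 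I expect this combinatorial convexity step, checked in a common apartment via $e$-half-spaces exactly as in \cite[Prop.~2.2]{MS1} for usual facets, to be the main obstacle.
\item Given the claim, $e_z=e_xe_{x'}$ up to replacing $x'$ by the relevant vertex of that face, by i) and ii). Moreover $z\in[x,x'']$ and $x,z$ lie in the closure of one $E$-facet, hence of one $1$-facet. Proposition \ref{telescopic2} gives $e_xe_ze_{x''}=e_xe_{x''}$, and therefore $e_xe_{x'}e_{x''}=e_xe_ze_{x''}=e_xe_{x''}$.
\end{itemize}

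Finally, consistency in the sense of \cite[Def.~2.1]{MS1} consists exactly of the conditions i)–iii) for the polysimplicial structure given by $e$-facets, with $e$-vertices in place of vertices. The $G$-equivariance needed there comes from \eqref{Gequivariant}. This concludes the proof.
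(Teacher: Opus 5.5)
Your items i) and ii), and the final remark about \cite[Def.~2.1]{MS1}, are handled exactly as in the paper. The gap is in iii). The geometric claim in your second bullet is false in general: it asks for $z\neq x$ on $[x,x'']$ near $x$ with $\FC_e(z)\preceq\FC_e(x,x')$ (e.g.\ $\FC_e(z)=\FC_e(y')$ for some $y'\in\,]x,x'[$), but the inclusion actually goes the other way.

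Since the $e$-subdivision is homothetic to the usual structure, it suffices to look at ordinary facets. Take an apartment of type $\tilde A_2$, a vertex $x$, a chamber $C$ with vertices $x,x',w$, and $x''=x+2(x'-x)+(w-x)$. Write points as $x+a(x'-x)+b(w-x)$; the walls are then $a\in\ZM$, $b\in\ZM$, $a+b\in\ZM$. So $\HC_1(x,x'')=\{0\le a\le 2,\ 0\le b\le 1,\ 0\le a+b\le 3\}$ contains $x'$, and $x'$ is adjacent to $x$. However, every point of $]x,x''[$ has $b>0$, so $[x,x'']\cap[x,x']=\{x\}$, and near $x$ the segment runs through the open chamber $C$. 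Hence the identity $e_z=e_xe_{x'}$ with $z\in[x,x'']$, on which your last bullet rests, is unavailable. The fallback ``replace $x'$ by the relevant vertex of that face'' would change the identity being proved.

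What is true, and what the paper uses, is the analogue of \cite[Lemma~2.9]{MS1} for the $e$-subdivision. For $y\in\,]x,x''[$ close to $x$, $\FC_e(y)$ is the unique maximal $e$-facet among those whose closure lies in $\HC_e(x,x'')$ and contains $x$. Now $]x,x'[\,\subset\FC_e(x,x')$, and $\HC_e(x,x'')$ is closed, convex and a union of $e$-facets. So $\FC_e(x,x')$ is such a facet, and therefore $x'\in\o{\FC_e(y)}$: the facet of $y$ is larger than $\FC_e(x,x')$, not smaller.

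The algebra must then be reorganized. Pick $y'\in\FC_e(y)$ with $y\in\,]x',y'[$. Since $x',y'\in\o{\FC_e(y)}$, Proposition \ref{idemp_commut} together with i) gives $e_y=e_{x'}e_{y'}=e_{x'}e_y$. Then
\[
e_xe_{x''}=e_xe_ye_{x''}=e_xe_{x'}e_ye_{x''}=e_{x'}e_xe_ye_{x''}=e_{x'}e_xe_{x''}=e_xe_{x'}e_{x''}.
\]
The first and fourth equalities use Proposition \ref{telescopic2}, applicable since $y\in[x,x'']$ and $x\in\o{\FC_e(y)}$. The second uses $e_y=e_{x'}e_y$, and the third and fifth use the commutation from ii).
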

\begin{proof}
  Thanks to Lemma \ref{efacets}, Part i) follows from  Proposition \ref{indep_facet} iii),  and ii) follows from
  Proposition \ref{idemp_commut}. To prove statement iii),  let $y\in ]x,x''[$ be sufficiently closed to $x$ so
  that $x\in\o{\FC_{e}(y)}$. Then, 
  $\FC_{e}(y)$ is the unique $e$-facet that is maximal among those $e$-facets $\FC$ with $\o\FC\subset
  \HC_{e}(x,x'')$ and $x\in \o\FC$. Indeed, this is proved as Lemma 2.9 of \cite{MS1}, since
  the geometric properties of the polysimplicial structure of $\BT'$ used in the proof of that lemma
  are  satisfied by its $e$-subdivision. 
  In particular, we have $x'\in \o{\FC_{e}(y)}$, and we may  choose  $y' \in \FC_{e}(y)$ such that $y \in ]x', y'[$. By Part
  i) we have $e_y=e_{y'}$. 
  Using $e_y=e_{y'}$ and Propositions \ref{telescopic2}
  and  \ref{idemp_commut} repeatedly, we obtain $e_{x}e_{x''}=e_{x}e_{y}e_{x''}=e_{x}e_{x'}e_{y'}e_{x''}=e_{x}e_{x'}e_{y}e_{x''}=e_{x'}e_{x}e_{y}e_{x''}=e_{x'}e_{x}e_{x''}=e_{x}e_{x'}e_{x''}$.
\end{proof}

Based on Part i) of the above lemma, we can use the following notation.

\begin{nota}
	If $\cF$ is an $e$-facet of $\BT'$, we set $e_{\cF}:=e_x$ for any $x \in \cF$.
\end{nota}

By ii) of the last lemma, we have $e_{\FC}e_{\FC'}=e_{\FC}$ for any two $e$-facets $\FC,\FC'$ such that
$\FC'\subset\o\FC$.

We now check that the proof of Theorem 2.4 of \cite{MS1} can be adapted to
our setting.   Let $V\in\Rep_{R}(G)$ be a smooth $RG$-module. It defines a coefficient
system $\FC\mapsto \VC(\FC):=e_{\FC}V$ over $\BT'_{\bullet/e}$, in which the transition maps
$e_{\FC}V\To{}e_{\FC'}V$ for $\FC'\subset\o\FC$ are inclusions, thanks to the
above identities $e_{\FC}e_{\FC'}=e_{\FC}$. After choosing an orientation of
$\BT'_{\bullet/e}$ we may form the cellular chain complex $\CC(\BT'_{\bullet/e},\VC)$, whose
homology we denote by $H_{*}(\BT'_{\bullet/e},\VC)$. More generally, for any 
polysimplicial subcomplex $\Sigma$ of $\BT'_{\bullet/e}$ we have a chain complex and its
homology $H_{*}(\Sigma,\VC)$. We refer e.g. to \cite[\S 1.1.2]{MS1} for a brief introduction to these cellular chain complexes.

\begin{lemme}[{cf.\ \cite[Theorem~2.4]{MS1}}]\label{lemma_MS_e_subdivided}
For any convex polysimplicial subcomplex $\Sigma$ of $\BT'_{\bullet/e}$, we have 
$H_{0}(\Sigma,\VC)=\sum_{x\in \Sigma_{0}}e_{x}V$ and $H_{n}(\Sigma,\VC)=0$ for $n>0$.
\end{lemme}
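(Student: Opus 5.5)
We follow the proof of \cite[Thm.~2.4]{MS1} essentially verbatim, replacing the usual polysimplicial structure of $\BT'$ by its $e$-subdivision $\BT'_{\bullet/e}$. We only use two inputs: the consistency properties of Lemma \ref{consistent_system}, and the geometric properties of convex subcomplexes that Meyer--Solleveld use.

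The statement about $H_{0}$ is formal. By definition $H_{0}(\Sigma,\VC)$ is the cokernel of $\bigoplus_{\FC\in\Sigma_{1}}e_{\FC}V\to\bigoplus_{x\in\Sigma_{0}}e_{x}V$. The augmentation to $V$ has image $\sum_{x\in\Sigma_0}e_xV$. Injectivity of the induced map from $H_{0}$ follows once we know $H_{0}$ of the subcomplex is computed by the same inductive argument as the higher homology. So we treat all degrees simultaneously by induction on the number of $e$-facets of the finite convex subcomplex $\Sigma$. For infinite $\Sigma$ we pass to a filtered colimit of finite convex subcomplexes; homology commutes with filtered colimits.

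The inductive step is as follows. Following \cite[\S2]{MS1}, choose an $e$-vertex $x$ of $\Sigma$ that is extremal, for instance a vertex maximizing the distance from a fixed interior point. Then $\Sigma':=\Sigma\smallsetminus\{\text{open star of }x\}$ is again convex. The key geometric claim is that, for every $e$-vertex $x''\in\Sigma'$, the combinatorial convex hull $\HC_{e}(x,x'')$ meets the link of $x$ in a subcomplex. The $e$-vertices adjacent to $x$ lying in $\Sigma'$ then form a ``star'' around a distinguished $e$-facet, namely $\FC_{e}(y)$ for $y\in]x,x''[$ close to $x$, exactly as in the proof of Lemma \ref{consistent_system}~iii). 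This geometry transfers because $e$-facets are images of $1$-facets under a homothety centred at a special point, so the statements of \cite[Lemmas 2.8--2.10]{MS1} hold in every apartment.

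We then compare chain complexes via the relative complex $\CC(\Sigma,\VC)/\CC(\Sigma',\VC)$. This is a complex built from $e_{\FC}V$ over facets $\FC$ containing $x$ in their closure, and it is isomorphic to an augmented cellular complex of the link of $x$ with coefficients $\FC\mapsto e_{\FC}V\subseteq e_{x}V$. Using $e_{\FC}e_{\FC'}=e_{\FC}$ and property iii) of Lemma \ref{consistent_system}, one shows, as in \cite{MS1}, that $e_{x}\,(\sum_{x''\in\Sigma'_0}e_{x''}V)=\sum_{\FC\ni x,\ \FC\cap\Sigma'\neq\emptyset}e_{\FC}V$. One also shows that the relative complex is acyclic except in degree $0$, where it computes $e_xV/(e_xV\cap\sum_{\Sigma'}e_{x''}V)$. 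The long exact sequence together with the induction hypothesis for $\Sigma'$ then gives the result for $\Sigma$.

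We expect the main obstacle to be this acyclicity of the local (link) complex. In \cite{MS1} it uses the contractibility of the relevant part of the link, namely the set of facets around $x$ pointing towards $\Sigma'$, together with the commuting-idempotent structure. We will check that contractibility holds for $e$-subdivided links. This follows because, near $x$, the $e$-subdivision is locally a homothetic copy of the usual structure near a vertex, although $x$ need not be special, so this has to be argued apartment by apartment. We will also check that our idempotents $e_{\FC}$ are defined via Notation above and commute as needed; this is guaranteed by Proposition \ref{idemp_commut}.
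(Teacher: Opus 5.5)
Your inductive step fails at the claim that $\Sigma':=\Sigma\smallsetminus\{\text{open star of }x\}$ is again convex for a suitably extremal $e$-vertex $x$. This is false in rank $\geq 2$, so your induction hypothesis, which is only available for convex subcomplexes, cannot be applied to $\Sigma'$.

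It already fails for a single polysimplex that is not a simplex. Let $\Sigma$ be the closure of a square $e$-facet, as happens when $\BT'$ is a product of two trees. Removing the open star of a vertex $x$ leaves only the two edges through the opposite vertex. But $\HC_{e}(y,w)$, for the two neighbours $y,w$ of $x$, is the whole square.

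For $\GL_{3}$ (type $\tilde A_2$), let $\Sigma$ be the hexagon formed by the six $e$-chambers around an $e$-vertex $c$ in one apartment; it is an intersection of half-apartments, hence convex. Removing the open star of a boundary vertex $v$ creates a reflex angle at $c$: the segment joining the two boundary neighbours of $v$ crosses the removed open edge $]c,v[$. Removing the open star of $c$ leaves a circle. So no choice of vertex keeps you among convex subcomplexes.

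Even granting convexity, the acyclicity of the relative complex $\CC(\Sigma,\VC)/\CC(\Sigma',\VC)$ is not justified. That complex is the augmented complex of the link of $x$ with the non-constant coefficients $\FC\mapsto e_{\FC}V$. The idempotents attached to vertices of the link need not commute, since those vertices are not pairwise adjacent. Contractibility of the link settles only the case of constant coefficients. Proving acyclicity here is essentially the theorem again, on the link.

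This is also not the argument of \cite{MS1}, which the paper adapts in four steps.
\begin{enumerate}
\item A single $e$-polysimplex is treated directly, exactly as after Lemma 2.18 of \cite{MS1}, using only properties i)--iii) of Lemma \ref{consistent_system}.
\item For a finite convex $\Sigma=\Sigma_{+}\cup\Sigma_{-}$ with $\Sigma_{\pm}$ and $\Sigma_{+}\cap\Sigma_{-}$ convex, a Mayer--Vietoris sequence reduces the statement to the pieces. The identification of $H_{0}$ comes from \cite[Thm.~2.12]{MS1}: $e_{\Sigma}=\sum_{\FC\subseteq\Sigma}(-1)^{\dim\FC}e_{\FC}$ is an idempotent with $e_{\Sigma}e_{\FC}=e_{\FC}e_{\Sigma}=e_{\FC}$. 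Its proof uses only the geometry of combinatorial convex hulls in an apartment, so it survives the $e$-subdivision.
\item A finite convex $\Sigma$ that is not a polysimplex can be split in this way. If $\Sigma$ lies in one apartment, cut along an $e$-wall. Otherwise, cut along a wall supporting a face of a $1$-chamber whose closure meets but does not contain $\Sigma$, using the retraction centred at that chamber.
\item Pass to increasing unions for infinite $\Sigma$.
\end{enumerate}
Cutting by half-apartments is exactly what preserves convexity, and removing stars of vertices does not.
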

\begin{proof}
  We review the different steps of Meyer and Solleveld's proof of Theorem~2.4 in \cite{MS1}.

\emph{Step 1.} Prove it when $\Sigma$ is a polysimplex. The argument below Lemma 2.18
of \emph{loc.\ cit.}
relies directly on Properties i), ii) and iii) of Lemma \ref{consistent_system} 
and works without any change.

\emph{Step. 2.} Divide and conquer method: Suppose $\Sigma$ is \emph{finite} and is the union of two convex
subcomplexes $\Sigma_{+}$ and $\Sigma_{-}$ with convex intersection $\Sigma_0$. Then if the
statement holds for $\Sigma_{+}$, $\Sigma_{-}$ and $\Sigma_{0}$, it holds for $\Sigma$.
This reduction step follows from Theorem 2.12 of \emph{loc.\ cit.}, which asserts that the
distribution $e_{\Sigma}:=\sum_{\FC\subset \Sigma}(-1)^{\dim(\FC)}e_{\FC}$ is an
idempotent such that $e_{\Sigma}e_{\FC}=e_{\FC}e_{\Sigma}=e_{\FC}$ for all $\FC\subset\Sigma$. This
theorem in turn follows from Lemmas 2.8 and 2.9 and Proposition 2.2 of \emph{loc.\ cit.}
But, provided Properties i), ii) and iii) of Lemma \ref{consistent_system}, all these statements are concerned with the geometry of combinatorial convex hulls in
an apartment, hence they still hold for any  subdivision as in our case.

\emph{Step. 3.} Prove that if $\Sigma$ is \emph{finite} and not a polysimplex, then it can be split as in
Step 2. Here the argument has to be complemented a bit. Suppose first that $\Sigma$ is
contained in an apartment. Then there is an $e$-wall whose two associated open half-spaces
intersect $\Sigma$ non trivially. Simply take $\Sigma_{\pm}$ to be the intersection with
the closed half spaces, and $\Sigma_{0}$ the intersection with the wall. Now suppose that
$\Sigma$ is not contained in a single apartment. Then  we can find a $1$-chamber $\Delta$
whose closure intersects $\Sigma$ non-trivially but does not contain it. Pick an apartment $A$ that
contains $\Delta$ and a wall of $A$ that supports a face of $\Delta$ and intersects $\Sigma$
non-trivially.  It corresponds to some affine
root $a$ and we can use the retraction on $A$ centered at $\Delta$ exactly as on p.140 of
\emph{loc.cit.} 

\emph{Step. 4.} The statement is now known when $\Sigma$ is finite. It follows in the case that $\Sigma$ is
infinite by writing $\Sigma$ as the union of an increasing sequence of finite convex
subcomplexes $\Sigma_{n}$, which can always be done. Indeed, the chain complex $\Sigma$ is
the direct limit of the chain complexes of the $\Sigma_{n}$.
\end{proof}

\alin{Proof of Theorem \ref{thm_Serresubcat}}
If $x\in\BT'$, there is an $e$-vertex $y$ that lies in the closure of the $e$-facet
$\FC_{e}(x)$. Then it follows from Proposition~\ref{idemp_commut} that $e_{x}=e_{y}e_{x}$.
Thus we see
that $V\in\Rep_{R}^{\phi,I}(G)$ if and only if $V=\sum_{x\in\BT_{0/e}} e_{\phi,I,x}V$. 
Therefore,
thanks to the case $\Sigma=\BT'_{\bullet/e}$ of the previous lemma, the proof of Theorem 3.1 of
\cite{MS1} adapts verbatim to show that the category $\Rep_{R}^{\phi,I}(G)$ is a Serre
category that is stable under arbitrary colimits and generated as claimed in the theorem. 
Further let $V$ be any smooth $RG$-module and put $V^{\phi,I}:=
\sum_{x\in\BT}e_{x}V$. We certainly have $V^{\phi,I}\in\Rep^{\phi,I}_{R}(G)$, and
we see that the quotient $V_{\phi,I}:=V/V^{\phi,I}$ is killed by all $e_{x}$ so that no
non-zero subquotient of $V_{\phi,I}$ belongs to $\Rep^{\phi,I}_{R}(G)$.

\begin{rema}\label{esigma}
  Let $(\Sigma_{n})_{n\in\NM}$ be an increasing sequence of convex polysimplicial
  subcomplexes of $\BT'$ such that $\BT'=\bigcup_{n}\Sigma_{n}$. We have already recalled
  that $e_{\Sigma_{n}} := \sum_{\FC\subset\Sigma_{n}}(-1)^{\dim(\FC)}e_{\FC}$ is an idempotent such that
  $e_{\Sigma_{n}}e_{\FC}=e_{\FC}e_{\Sigma_{n}}=e_{\FC}$ for all
  $\FC\subset\Sigma_{n}$. It follows that $e_{\Sigma_{n-1}}e_{\Sigma_{n}}=e_{\Sigma_{n}}$
  and that for all $V\in\Rep_{R}(G)$ we have
$$ \sum_{x\in\BT}e_{x} V= \bigcup_{n}e_{\Sigma_{n}}V.$$
\end{rema}

\subsection{Some properties of $\Rep^{\phi,I}_{R}(G)$}\label{sec:some-prop-repphi}

\alin{The direct factor problem}
We strongly believe that the category $\Rep^{\phi,I}_{R}(G)$ is
actually a direct factor of $\Rep_{R}(G)$, but this does not follow
formally from the work of Meyer--Solleveld, nor from the one of Yu.
The problem is to show that the extension  (\ref{exactseq}) splits, and more
precisely that the 
subspace $\bigcap_{x}\ker(e_{x}|V)$ of $V$ maps onto $V_{\phi,I}$.

What is missing is the existence of injective cogenerators in $\Rep^{\phi,I}_{R}(G)$, or
equivalently, of sufficiently many projective representations killed by all $e_{\phi,I,x}$.
It seems that in order to bypass these problems, one needs some exhaustion result, e.g., as
the ones proved by Kim \cite{Kim} and the second-named author \cite{JF_exhaust}, or the one provided by the
Bushnell--Kutzko--Stevens type theory. However, if one restricts attention to admissible
objects over a complete local ring, then a duality trick implies the desired splitting.

\begin{pro}
  Let $\RC$ be a complete local commutative $R$-algebra, and let $V$ be an  admissible smooth
  $\RC G$-module (meaning that for any open, compact subgroup
$H$ of $G$, the $\RC$-module $V^{H}$ is noetherian). Then the extension (\ref{exactseq}) splits. In other words, the admissible
  category ${\rm Adm}_{\RC}^{\phi,I}(G)$ is a direct factor of   ${\rm Adm}_{\RC}(G)$.
\end{pro}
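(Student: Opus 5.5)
The plan is to use a contragredient (duality) argument. It exchanges the ``sub'' and ``quotient'' sides of (\ref{exactseq}), and admissibility over the complete local ring $\RC$ lets us return to $V$ by double duality. Let $\mathfrak m$ be the maximal ideal of $\RC$ and $k=\RC/\mathfrak m$. Let $E_{\RC}$ be an injective hull of $k$, and use Matlis duality $D=\Hom_{\RC}(-,E_{\RC})$. For admissible $V$, form the smooth dual $V^{\vee}:=\varinjlim_{H} D(V^{H})=\bigcup_{H}\Hom_{\RC}(V^{H},E_{\RC})$, which gives a smooth $\RC G$-module. Since the $V^{H}$ are noetherian and $H$ is pro-$p$ with $p\in R^{\times}$, we have $(V^{\vee})^{H}=D(V^{H})$; this is an artinian module, not a noetherian one. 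So the naive duality does not preserve admissibility. I would therefore work with the slightly larger class of smooth modules whose $H$-invariants are noetherian or artinian, on which $D$ is an exact anti-equivalence with $DD\simeq\id$. The pro-$p$ averaging idempotents are compatible with $D$, and on each $V^{H}$ the double dual returns $V$.

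The key compatibility is between the idempotents and duality. Write $\check e$ for the image of $e\in\HC_{R}(G)$ under $g\mapsto g^{-1}$. Then $e\,V^{\vee}$ is the dual of $\check e\,V$. Now $\check e_{\phi,\iota,x}$ is the idempotent attached to the character $(\check\phi^{+}_{\iota,x})^{-1}$, and this is the idempotent for the parameter $\phi^{-1}$, or the dual parameter. By Lemma \ref{lemme_indep} and the Borel construction, inverting all the $\hat\varphi_{i}$ inverts the characters. Genericity and Yu's construction are multiplicative. The twisted Levi sequence and $I$ depend only on centralizers, which are unchanged. Hence $\check e_{\phi,I,x}=e_{\phi^{-1},I,x}$, and dualizing exchanges $\Rep^{\phi,I}$ with $\Rep^{\phi^{-1},I}$.

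Now fix an admissible $V$. Set $W:=\bigcap_{x}\ker(e_{x}|V)$, where the $e_{x}=e_{\phi,I,x}$ are the idempotents attached to the $e$-vertices. Then $W$ is the largest submodule killed by all $e_{x}$, and $V^{\phi,I}\cap W=0$. To see this, note that $V^{\phi,I}=\bigcup_{n}e_{\Sigma_{n}}V$ by Remark \ref{esigma}. Since $e_{\Sigma_{n}}$ is a signed sum of the $e_{\FC}$, it kills $W$, while it fixes $e_{\Sigma_{n}}V$. It remains to show $V=V^{\phi,I}+W$. Apply (\ref{exactseq}) to $V^{\vee}$ with the parameter $\phi^{-1}$. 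The annihilator of $(V^{\vee})^{\phi^{-1},I}$ in $V$ is $W$: indeed, $v$ is orthogonal to every $e_{x}^{\phi^{-1}}V^{\vee}$ iff $e_{x}v=0$ for all $x$. Similarly, the annihilator of $\bigcap_{x}\ker(e^{\phi^{-1}}_{x}|V^{\vee})$ is $V^{\phi,I}$, where double duality is needed to identify annihilators of annihilators; this is done level by level on the noetherian/artinian $H$-invariants. Then $V^{\phi,I}+W$ has annihilator $\bigl((V^{\vee})^{\phi^{-1},I}\bigr)\cap\bigcap_{x}\ker(e^{\phi^{-1}}_{x}|V^{\vee})$. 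This is zero by the same sub-versus-kernel argument applied to $V^{\vee}$. By Matlis duality at each level $H$, a submodule with zero annihilator is everything. Hence $V=V^{\phi,I}\oplus W$, $W\simeq V_{\phi,I}$, and the splitting is functorial. This gives the direct factor statement for ${\rm Adm}^{\phi,I}_{\RC}(G)$.

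The main obstacle is making the duality bookkeeping rigorous: the sub-versus-kernel argument runs on $V^{\vee}$, which is not admissible in the noetherian sense. I would handle this by checking that Remark \ref{esigma}, and the fact that each $e_{\Sigma_{n}}$ lies in $\HC_{R}(G_{x,0+})$-type finite combinations, still give $(V^{\vee})^{\phi^{-1},I}\cap\bigcap\ker=0$ for arbitrary smooth modules; the argument above needed no finiteness. I would also work with finitely many $H$ at a time, using that each $e_{\Sigma_{n}}$ is supported on a compact set and so preserves $V^{H}$ for small enough $H$. If identifying the dual parameter causes trouble, the fallback is to avoid $\phi^{-1}$ altogether. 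In that case one works directly with the system $(\check e_{x})$, which satisfies Lemma \ref{consistent_system} by symmetry, so the Meyer--Solleveld results of Theorem \ref{thm_Serresubcat} apply to it verbatim.
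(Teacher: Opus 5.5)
Your proposal is correct and follows essentially the same route as the paper. The paper uses Matlis duality $V\mapsto \Hom_{\RC}(V,\EC)^{\infty}$, notes that the idempotents obtained via $g\mapsto g^{-1}$ form the system attached to the dual parameter (your $\phi^{-1}$ is the paper's $\bar\phi$), and then computes orthogonals and applies biduality to get $V=eV\oplus\bigcap_x\ker(e_x|V)$; you are more explicit than the paper about the trivial intersection (via Remark \ref{esigma}) and about needing biduality on the artinian side $V^{*}$ as well as on $V$.
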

\begin{proof} 
Let $\EC$ be a Matlis module over $\RC$ (i.e., an injective hull of the residue field), and
extend the  Matlis duality functor to smooth $\RC G$-modules
  by putting  $V^{*}:=\Hom_{\RC}(V,\EC)^{\infty}$. Since $p$ is invertible in $\RC$,
  we have $(V^{*})^{H}=\Hom_{\RC}(V^{H},\EC)$ for any open
  pro-$p$-subgroup. Therefore, the usual Matlis duality theorem  for
  noetherian $\RC$-modules implies that for an  admissible $\RC G$-module,
 the canonical map $V\To{}V^{**}$ is  an isomorphism, and induces isomorphisms $W\mapsto
 (W^{\perp})^{\perp}$ for each $\RC G$-submodule $W$ of $V$. 

Now, let $e_{x}^{*}$ be the image of $e_{x}$ by the anti-involution $g\mapsto g^{-1}$ on
$\HC_{R}(G)$.  Note that the system of idempotents $(e_{x}^{*})_{x\in\BT_{0/e}}$ is the
one attached to the pair $(\bar\phi,I)$ where $\bar{ }$ denotes the automorphism of
$^{L}{\mathbf{G}}$ induced by complex conjugation. Let us put $eV:=\sum_{x\in\BT}e_{x}V$ and
$e^{*}V^{*}:=\sum_{x\in\BT} e_{x}^{*}V^{*}$. Then we see that
$(e^{*}V^{*})^{\perp}=\bigcap_{x\in\BT}\ker(e_{x}|V)$ and
$(eV)^{\perp}=\bigcap_{x\in\BT}\ker(e_{x}^{*}|V^*)$. By biduality we have
$(\bigcap_{x\in\BT}\ker(e_{x}|V))^{\perp}=e^{*}V^{*}$ and it follows that $V=eV\oplus
\bigcap_{x\in\BT}\ker(e_{x}|V)$ as desired.
\end{proof}

\begin{prop}[Disjonction] \label{disjonction}
 Let $(\phi',I')$ and $(\phi,I)$ be two distinct pairs as in
 \ref{thm_Serresubcat}.
Then the categories $\Rep^{\phi',I'}_{R}(G)$ and
$\Rep^{\phi,I}_{R}(G)$ are orthogonal in the sense that for all
objects $V\in \Rep^{\phi,I}_{R}(G)$ and $V'\in
\Rep^{\phi',I'}_{R}(G')$ we have 
$ \Ext^{*}_{RG}(V,V')=\Ext^{*}_{RG}(V',V)=\{0\}.$
\end{prop}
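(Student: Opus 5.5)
The plan is to reduce the vanishing of $\Ext^{*}$ to an orthogonality statement between the idempotents $e_{\phi,I,x}$ and $e_{\phi',I',x'}$, and then exploit the projective generators of Theorem \ref{thm_Serresubcat}.

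\emph{Idempotent orthogonality.} We first show that $e_{\phi,I,x}\,e_{\phi',I',x'}=0$ for all $x,x'\in\BT$. Expand both idempotents as in \ref{def_idemp_I} into sums of terms $e_{\phi,\iota,x}$ with $\iota\in I_{x}$ and $e_{\phi',\iota',x'}$ with $\iota'\in I'_{x'}$. By Proposition \ref{prop_intertwining} ii), a nonzero product $e_{\phi,\iota,x}e_{\phi',\iota',x'}$ forces $\phi\simeq\phi'$ together with $\Ku_{\iota,x}\iota\cap\Ku_{\iota',x'}\iota'\neq\emptyset$. The latter gives elements $k\in\Ku_{\iota,x}$ and $k'\in\Ku_{\iota',x'}$ with $k\iota=k'\iota'$, so $\iota$ and $\iota'$ are $G$-conjugate, that is, $I=I'$. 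Since the pairs are distinct, this is excluded, so every term vanishes. Applying the anti-involution $g\mapsto g^{-1}$ (equivalently, swapping the roles of the two triples) gives $e_{\phi',I',x'}e_{\phi,I,x}=0$ as well.

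\emph{Hom vanishing.} Let $V\in\Rep^{\phi,I}_{R}(G)$. Then $e_{\phi',I',x'}$ kills $V$: indeed $V=\sum_{x}e_{\phi,I,x}V$, and $e_{\phi',I',x'}e_{\phi,I,x}=0$. So $V$ has no nonzero $e_{\phi',I',x'}$-fixed vectors, hence no nonzero subquotient lying in $\Rep^{\phi',I'}_{R}(G)$. Such a subquotient $W$ would satisfy $W=\sum e'_{x'}W$, and lifting vectors shows $e'_{x'}$ acts nontrivially on $V$. This gives $\Hom(V,V')=\Hom(V',V)=0$ in both directions, since the image of any map would be a common subquotient.

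\emph{Ext vanishing.} This is the main point. Use the compact projective generator $P^{\phi,I}$ of Theorem \ref{thm_Serresubcat}. Each summand $\cInd{\Kp_{\iota,x}}{G}(\check\phi^{+}_{\iota,x})$ is projective in all of $\Rep_{R}(G)$, because $\Kp_{\iota,x}$ is pro-$p$ and $p\in R^{\times}$. Moreover it lies in $\Rep^{\phi,I}_{R}(G)$, being generated by $e_{\iota,x}$-fixed vectors and $e_{x}e_{\iota,x}=e_{\iota,x}$.

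Since $\Rep^{\phi,I}_{R}(G)$ is generated by $P^{\phi,I}$ and is stable under colimits and subquotients, every $V$ in it admits a projective resolution $\cdots\to P_{1}\to P_{0}\to V$ whose terms are direct sums of copies of $P^{\phi,I}$. To build it, take the kernel at each step: that kernel is again in $\Rep^{\phi,I}_{R}(G)$ by the Serre property, so we can continue. Then
\[
\Ext^{n}_{RG}(V,V')=H^{n}\bigl(\Hom_{RG}(P_{\bullet},V')\bigr),
\]
and each term $\Hom_{RG}(P^{\phi,I},V')$ equals a product of the spaces $e_{\iota,x}$-isotypic parts of $V'$. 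These vanish because $e_{\iota,x}$ kills $V'$ by the orthogonality above.

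Symmetrically, resolving $V'$ by sums of $P^{\phi',I'}$ gives $\Ext^{*}_{RG}(V',V)=0$.

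The only delicate step is the orthogonality in the first paragraph, specifically the passage from "orbits intersect" to "$I=I'$". This requires noting that the embeddings involved all lie in $I_{\phi}$ and that $\Ku$-orbits are contained in $G$-orbits.
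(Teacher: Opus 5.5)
Your proof is correct and takes the same route as the paper. You derive $e_{\phi',I',x'}e_{\phi,I,x}=0$ from (\ref{intertw}), deduce that $e_{\phi',I',x'}$ kills every object of $\Rep^{\phi,I}_{R}(G)$ so the Hom spaces vanish, and then get $\Ext$-vanishing from projective resolutions inside the Serre subcategory by sums of the generator of Theorem \ref{thm_Serresubcat}. You simply spell out what the paper's three-line proof leaves implicit: the step from intersecting $\Ku$-orbits to $I=I'$, projectivity of the compactly induced summands, and Frobenius reciprocity. For the reverse product, swapping the two triples in Proposition \ref{prop_intertwining} ii) already suffices, so the anti-involution remark is unnecessary.
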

\begin{proof}
 By (\ref{intertw}) we have
$e_{\phi',I',x'}e_{\phi,I,x}=0$  for all $x',x\in\BT$. It follows that
$e_{\phi',I',x'}V=0$ for all $V\in \Rep^{\phi,I}_{R}(G)$, hence also $\Hom_{RG}(V',V)=0$
for all $V'\in\Rep^{\phi',I'}_{R}(G')$. Using projective resolutions inside
$\Rep^{\phi',I'}_{R}(G')$, we also obtain that $\Ext^{*}_{RG}(V',V)=0$.
\end{proof}

\alin{The ``essentially tame'' subcategory} 
We introduce the full subcategory 
$$ \Rep_{R}^{\rm et}(G) := \left\{ V\in \Rep_{R}(G), V
  =\sum_{\phi,\iota,x} e_{\phi,\iota,x}V\right\}.$$
Here ``et'' stands for ``essentially tame'' in order to stick to
Bushnell and Henniart's terminology, although we fear that this is a bit
misleading. Morally (and under the hypotheses \eqref{H1} and \eqref{H2}), this
subcategory should capture all the representations 
associated to Langlands parameters that are trivial on the derived
subgroup $[P_{F},P_{F}]$ of the wild inertia subgroup $P_F$. By construction, this  is a Serre
subcategory generated by projective objects and closed under arbitrary
colimits, and Proposition \ref{disjonction} tells us that it decomposes as a direct
product
$$ \Rep_{R}^{\rm et}(G) =\prod_{(\phi,I)} \Rep^{\phi,I}_{R}(G).$$

As mentioned above, we don't know in general whether it is a direct factor subcategory, but we
will prove that under a further hypothesis it is the entire category.
Recall that our construction of $\Rep^{\phi,I}_{R}(G)$ applies to any
$\phi$ such that $C_{\hat{\mathbf{G}}}(\phi)$ is a Levi subgroup of $\hat{\mathbf{G}}$,
 under the hypothesis that 
 $p$ is odd, and not a torsion prime of $\mathbf{G}$, nor of $\hat{\mathbf{G}}$.
With a further (and much stronger) hypothesis, one can actually ensure that
 \emph{all} centralizers $C_{\hat{\mathbf{G}}}(\phi)$ for
$\phi\in\Phi(P_{F},{\mathbf{G}})$ are Levi subgroups.  Indeed, by
Lemma \ref{centralizer_Levi} iv), this is the case whenever $p$ does not divide the order of the absolute  Weyl group of
   ${\mathbf{G}}$. 

Under this hypothesis, it is thus natural to expect that all
representations are essentially tame in the sense introduced  above. This is indeed a consequence
of the exhaustion results in \cite{JF_exhaust} and \cite{Fi-mod-ell}.

\begin{theo}\label{exhaust}
  Suppose that $p$ does not divide the order of the absolute  Weyl group of
  ${\mathbf{G}}$. 
Then $\Rep^{\rm et}_{R}(G)=\Rep_{R}(G)$.
\end{theo}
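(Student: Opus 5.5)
We have to show that every smooth $R$-representation $V$ of $G$ satisfies $V=\sum_{\phi,\iota,x}e_{\phi,\iota,x}V$. The plan is to reduce this to one statement, namely that every vector of $V$ is fixed by some open pro-$p$ subgroup. Set
\[ V^{\rm et}:=\sum_{\phi,\iota,x}e_{\phi,\iota,x}V . \]
This is a $G$-stable subrepresentation, by (\ref{Gaction}). It is enough to show that the quotient $W:=V/V^{\rm et}$ is zero. By construction, every idempotent $e_{\phi,\iota,x}$ kills $W$. Since $W$ is smooth and $p\in R^{\times}$, the claim reduces to the following. For every $x\in\BT$, every $r>0$ and every smooth $R$-representation $\rho$ of the finite $p$-group $G_{x,0+}/G_{x,r+}$, $\rho$ is spanned by its $\check\phi^{+}_{\iota,x'}$-isotypic parts, where $x'$ is a suitable $G$-translate of $x$. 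More precisely, we need: for every character $\chi$ of $G_{x,0+}/G_{x,r+}$, or more generally every irreducible summand, some $G$-conjugate of $G_{x,0+}$-data meets a pair $(\Kp_{\phi,\iota,x'},\check\phi^+_{\phi,\iota,x'})$ non-trivially. Because the $e_{\phi,\iota,x}$ have coefficients in $\ZM[\mu_{p^\infty},\frac 1p]$ and the representation theory of finite $p$-groups over such rings is semisimple with characters defined over $\ZM[\mu_{p^\infty},\frac 1p]$, it suffices to treat $R=\CM$. In that case the statement becomes: every irreducible smooth complex representation $\pi$ of $G$ satisfies $e_{\phi,\iota,x}\pi\neq0$ for some triple.

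For $R=\CM$ we invoke the exhaustion theorem of \cite{JF_exhaust}, in the form refined in \cite{Fi-mod-ell}. Under our hypothesis on $p$ (which implies that $p$ is odd and not a torsion prime, by Lemma \ref{centralizer_Levi} iv) and Steinberg's result), it says the following. Every irreducible $\pi$ contains a pair $(K^{+}_{x},\hat\phi^{+})$ built by Yu's procedure from a tame twisted Levi sequence $\vec{\mathbf{G}}'=(\mathbf{G}^{0}\subset\cdots\subset\mathbf{G}^{d}=\mathbf{G})$, a point $x\in\BT(\mathbf{G}^0,F)$, depths $r_0<\cdots<r_{d-1}\le r_d$, and characters $\phi_i$ of $G^i$ with $\phi_i$ being $G^{i+1}$-generic of depth $r_i$ for $i<d$. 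The pair here is $G^{0}_{x,0+}G^{1}_{x,(r_0/2)+}\cdots$ together with Yu's character, and the characters $\phi_i$ can be taken trivial on $G^i_{\rm sc}(F)$ up to depth zero, which is the ``datum'' form of \cite{Fi-mod-ell}.

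The key step is to match this datum with our construction. Using Remark \ref{Borel_proc}, each $\phi_i$ comes from a cocycle $\hat\varphi'_i\in H^{1}(W_F,Z(\hat{\mathbf{G}}^i))$. Put $\hat\varphi_i:=\prod_{j\ge i}\hat\varphi'_j$ and $\hat\phi:=\hat\varphi_0|_{P_F}$, pushed into $\hat{\mathbf{G}}$ through an $L$-embedding of $^{L}\mathbf{G}^0$, so that $\phi$ is a wild inertia parameter. We then have to check three things. First, $C_{\hat{\mathbf{G}}}(\phi)$ is dual to $\mathbf{G}^0$, and the filtration jumps of $\phi$ are exactly the $r_i$ with $\mathbf{G}^{i}=\mathbf{G}^{i}_{\iota}$. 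This follows from genericity through the converse direction of Lemma \ref{rootsystemCr}: a root $\alpha$ lies in $\mathbf{G}^{i+1}$ and not in $\mathbf{G}^{i}$ if and only if $\check\varphi_i\circ N\circ\alpha^\vee$ is nontrivial on $E^\times_{r_i}$ and trivial on $E^\times_{r_i+}$. Second, $\iota:=$ the inclusion of $Z(\mathbf{G}^0)^\circ$ lies in $I_\phi$. Third, Yu's character coincides with $\check\phi^+_{\iota,x}$, which follows from Lemma \ref{lemme_indep} together with the construction in \ref{Yu_construction}. Granting these, the pair contained in $\pi$ is $(\Kp_{\phi,\iota,x},\check\phi^+_{\phi,\iota,x})$, so $e_{\phi,\iota,x}\pi\ne0$. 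The hypothesis that $p\nmid|W|$ guarantees, via Lemma \ref{centralizer_Levi}, that the centralizer is a Levi subgroup, so that $\phi$ is allowed.

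We expect the main obstacle to be this matching step, and in particular the identification of $C_{\hat{\mathbf{G}}}(\phi)$ with the dual of $\mathbf{G}^0$, including connectedness, which uses (H1) and (H2). A second difficulty is the normalisation of the depth-zero part: the exhaustion datum fixes $\phi_0$ only up to depth-zero characters of $G^0$, and Lemma \ref{depth_char} shows these do not affect $\hat\phi$ on $P_F$. The remaining work is the passage from $\CM$ to general $R$, which should be the routine finite-$p$-group argument sketched above.
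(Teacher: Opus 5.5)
Your outer reduction (pass to $W=V/V^{\rm et}$, which is killed by every $e_{\phi,\iota,x}$, so it suffices to show that every nonzero smooth $R$-representation satisfies $e_{\phi,\iota,x}W\neq 0$ for some triple) agrees with the paper. So does your matching of a normalized generic Yu datum with a triple $(\phi,\iota,x)$, which is essentially Lemma \ref{from_Yu_data_to_parameters}.

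The gap is the step ``it suffices to treat $R=\CM$''. Your justification only concerns representations of the finite $p$-groups $G_{x,0+}/G_{x,r+}$, and the problem is not about those. The intermediate statement you formulate for arbitrary representations of $G_{x,0+}/G_{x,r+}$ discards the $G$-action. Yet the whole content of exhaustion is that a representation of $G$ containing a degenerate (e.g.\ nilpotent) stratum at some point also contains a generic one elsewhere in the building.

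For $G$ itself there is no formal descent from $\CM$ to a general $R$. Concretely, $V^{\rm et}=J_RV$, where $J_R$ is the two-sided ideal of $\HC_R(G)$ generated by the $e_{\phi,\iota,x}$. So the theorem for all $R$ amounts to $e_K\in J_\Lambda$, with $\Lambda=\ZM[\mu_{p^\infty},\frac1p]$, for all small open pro-$p$ subgroups $K$. The complex case only yields $Ne_K\in J_\Lambda$ for some nonzero $N\in\Lambda$. What is missing is the mod-$\ell$ case for primes $\ell$ dividing the pro-order of $G$. Nothing in your argument produces, from a nonzero $\o\FM_\ell G$-module killed by all $e_{\phi,\iota,x}$, a complex irreducible representation with the same property. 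There is no general mechanism to lift smooth $\o\FM_\ell$-representations of $G$ to characteristic $0$.

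The paper avoids this by never passing to $\CM$. Following the beginning of the proof of \cite[Thm.~4.1]{Fi-mod-ell}, it observes that the arguments in the proofs of \cite[Thm.~6.1, Lemma~7.6]{JF_exhaust}, together with \cite[Lemma~7.3]{JF_exhaust}, use only two things:
\begin{itemize}
\item isotypic components for characters of pro-$p$ groups, which take values in $\mu_{p^\infty}\subset R^\times$ and split off over any $\ZM[\mu_{p^\infty},\frac1p]$-algebra since $p\in R^\times$;
\item extensions of such characters, which exist by divisibility of $\mu_{p^\infty}$.
\end{itemize}
Run directly over $R$, these arguments show that every nonzero $RG$-module $V$ satisfies $e(\vec{\mathbf G},\vec\psi,x)V\neq 0$ for some normalized generic truncated Yu datum. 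If you replace your reduction to $\CM$ by this $R$-linear form of exhaustion, the rest of your argument goes through.
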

\begin{proof}
	In order to prove the theorem, we have to show that for every $V\in \Rep_{R}(G)$ there exists a wild inertia parameter $\phi$, an embedding $\iota \in I_\phi$ and $y \in \BT_\iota$ such that  $e_{\phi,\iota,y}V \neq 0$. We will deduce this from a similar result obtained by \cite{JF_exhaust} and \cite{Fi-mod-ell} when proving that the types constructed by Kim and Yu produce types for all Bernsteins blocks. We first recall the relevant result translated to our setting. 
Observe that in order to apply Yu's construction described in
\ref{Yu_construction}, all that is needed is a triple $(\vec
{\mathbf{G}}, \vec \psi, x)$, that we will refer to as a ``truncated Yu datum'', in which $\vec {\mathbf{G}}=({\mathbf{G}}^{0}\subset\hdots\subset
{\mathbf{G}}^{d})$ is a tame twisted Levi sequence, $x\in \BT$ lies in the image of an admissible embedding
$\BT({\mathbf{G}}^{0},F)\injo\BT$, and $\vec\psi=(\psi_{i})_{i=0,\hdots d}$ is a collection
of characters\footnote{In the literature the standard notation for these characters is
  $\phi_{i}$ but in this paper the letter $\phi$ has already been dedicated to parameters.} $\psi_{i}: {\mathbf{G}}^{i}(F)\To{} \CM^{\times}$ such that 

-- the depths of $\psi_0, \hdots, \psi_{d-1}$ form an increasing sequence $r_{-1}:=0<r_{0}<\hdots < r_{d-1}$, 

-- either $\psi_{d}=1$ (we then put $r_{d}=r_{d-1}$) or $\psi_{d}$ has depth $r_{d} > r_{d-1}$.  

\noindent To such a truncated Yu datum, the procedure of \ref{Yu_construction} attaches  a pair
$(\Kp_x ,\hat\psi_{x})$
consisting of an open pro-$p$-subgroup  and a character of this subgroup. This pair
defines in turn an idempotent $e(\vec{\mathbf{G}}, \vec \psi, x)$ in $\HC_{R}(G)$, supported
on $\Kp_x$.  

Analogous to the discussion at the beginning of the proof of \cite[Theorem~4.1]{Fi-mod-ell}, using the observation that the images of characters of pro-$p$ groups factor through $\mu_p^{\infty} \to{} R^\times$ and using the divisibility of $\mu_p^{\infty}$ to extend characters, we may apply the arguments of the proofs of \cite[Theorem~6.1]{JF_exhaust} (whose weaker form, the containment of a truncated datum in the notation of \textit{loc.\ cit.}, suffices here) and \cite[Lemma~7.6]{JF_exhaust} together with \cite[Lemma~7.3]{JF_exhaust} to obtain the following result:
  \emph{For any $RG$-module $V$, there is a normalized, generic
    truncated  Yu datum
    $(\vec{\mathbf{G}},\vec\psi,x)$ such that $e(\vec{\mathbf{G}},\vec\psi,x)V\neq 0$.}

Here,  a (truncated) Yu datum is called \emph{generic} if for all $i<d$, the restriction
$(\psi_{i})|_{G^{i}_{x,r_{i}}}$ is represented by a ${\mathbf{G}}^{i+1}$-generic element (as
in \ref{generic}), and is called \emph{normalized} if  $\psi_{i}$ is trivial on $({\mathbf{G}}_{i})_{\rm
	sc}(F)$ for all $i$. With this result in hand, we are left to prove that for any normalized, generic, truncated Yu
datum there is a wild inertia parameter $\phi$, an embedding $\iota\in I_{\phi}$ and a point $y\in
\BT_{\iota}$ such that $e_{\phi,\iota,y}=e(\vec{\mathbf{G}},\vec\psi,x)$. This follows from the next lemma.
\end{proof}

\begin{lemme} \label{from_Yu_data_to_parameters}
Here we only assume that $p$
 is odd and not a torsion prime of $\hat{\mathbf{G}}$ (weakening a bit our usual assumptions). 
  Let $(\vec{\mathbf{G}},\vec\psi,x)$ be a normalized, generic truncated Yu datum. Then there is a
  pair $(\phi, \iota)$ consisting of a wild inertia parameter $\phi$ of $\mathbf{G}$ and an
  $F$-rational embedding $\iota \in I_{\phi}$ such that $\mathbf{G}^0_\iota=\mathbf{G}^0$,
  $\Kp_{\iota, x}=\Kp_{x}$, and $\check\phi^{+}_{\iota,  x}=\hat \psi_x$.  
\end{lemme}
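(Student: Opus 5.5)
The plan is to build $\phi$ from the characters $\psi_i$ through Borel's correspondence, and then to check that the construction of Section~\ref{sec:char-idemp} applied to this $\phi$ returns the given datum. Since the datum is normalized, each $\psi_i$ is trivial on $({\mathbf{G}}^i)_{\rm sc}(F)$. Remark~\ref{Borel_proc} therefore gives cocycles $\hat\psi_i\in Z^1(W_F,Z(\hat{\mathbf{G}}^i))$, where $\hat{\mathbf{G}}^i$ is identified with a Levi subgroup of $\hat{\mathbf{G}}$ through a tame embedding $^L\mathbf{G}^i\injo{^L\mathbf{G}}$. Such embeddings exist by the tame splitting arguments of Lemma~\ref{splittings_tr}, applied to $Z(\hat{\mathbf{G}}^i)^\circ$ viewed as a Levi-center-embedding whose dual is the $F$-rational $Z(\mathbf{G}^i)^\circ\subseteq\mathbf{G}$. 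I would choose these embeddings compatibly along the sequence, by fixing one maximal tame torus $\mathbf{S}\subseteq\mathbf{G}^0$ with a dual $\hat{\mathbf{S}}$ and $L$-embeddings built from it. I would then set $\hat\varphi_i:=\prod_{j\geq i}\hat\psi_j$, viewed in $Z^1(W_F,Z(\hat{\mathbf{G}}^i))$ since $Z(\hat{\mathbf{G}}^j)\subseteq Z(\hat{\mathbf{G}}^i)$ for $j\ge i$. Finally I put $\phi:=(\hat\varphi_0\cdot\psi)|_{P_F}$, where $\psi$ is the tame splitting, so that $\hat\phi=\hat\varphi_0|_{P_F}$ takes values in $Z(\hat{\mathbf{G}}^0)$.

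The central step is to compute the ramification filtration of $\phi$: I want $C_{\hat{\mathbf{G}}}(\phi(I_F^{r}))=\hat{\mathbf{G}}^{i}$ for $r_{i-1}<r\le r_i$ (with the indices shifted appropriately), and in particular $C_{\hat{\mathbf{G}}}(\phi)=\hat{\mathbf{G}}^0$. On $I_F^{r}$ with $r>r_{i-1}$, the factors $\hat\psi_j$ with $j<i$ are trivial by Lemma~\ref{depth_char}, so $\hat\phi|_{I_F^r}=\prod_{j\ge i}\hat\psi_j|_{I_F^r}$ lands in $Z(\hat{\mathbf{G}}^{i})$, and its centralizer contains $\hat{\mathbf{G}}^i$. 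For the reverse inclusion, take a coroot $\alpha^\vee$ of $\hat{\mathbf{S}}$ in $\hat{\mathbf{G}}^{i+1}$ but not in $\hat{\mathbf{G}}^i$. Arguing as in Lemma~\ref{rootsystemCr} via class field theory for $\mathbf{S}$, the character $\alpha^\vee\circ\hat\psi_i$ is nontrivial on $I_F^{r_i}$ exactly when $\psi_i\circ N_{E|F}\circ\alpha^\vee$ is nontrivial on $E^\times_{r_i}$. Genericity of $\psi_i$ forces this, because $v(\langle X_i,H_\alpha\rangle)=-r_i$. The factors $\hat\psi_j$ with $j>i$ contribute trivially, since $\alpha^\vee$ lies in $(\mathbf{G}^{j})_{\rm sc}$. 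This gives the correct jumps, shows that $C_{\hat{\mathbf{G}}}(\phi)=\hat{\mathbf{G}}^0$ is a Levi subgroup, and identifies $\hat{\mathbf{G}}_{\phi,r}$ with the dual sequence. Relevance of $\phi$, meaning that it is a wild inertia parameter, then follows from Proposition~\ref{dual_embeddings_phi} ii), since $\iota:=$ the inclusion $Z(\mathbf{G}^0)^\circ\injo\mathbf{G}$ is an $F$-rational element of $I_\phi$ and hypothesis (H1) gives connectedness; here $\mathbf{S}_\phi\cong Z(\mathbf{G}^0)^\circ$. By construction $\mathbf{G}^i_\iota=\mathbf{G}^i$.

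With all this in place, the cocycles $\hat\varphi_i$ are valid choices in \ref{def_characters}. By Lemma~\ref{lemme_indep} I may use them, and the resulting characters satisfy $\check\varphi_i\check\varphi_{i+1}^{-1}=\psi_i$. Yu's construction in \ref{Yu_construction} then produces literally the same $\Kp_{\iota,x}=\Kp_x$ and $\check\phi^+_{\iota,x}=\hat\psi_x$. I need $x\in\BT_\iota$, which holds because $x$ lies in the image of $\BT(\mathbf{G}^0,F)$. I also need the convention $\psi_d=1$ versus depth $r_d>r_{d-1}$ to match the definition of $r_d={\rm depth}(\phi)$; this is handled by the same argument at the top level.

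I expect the main obstacle to be the compatibility of the $L$-embeddings $^L\mathbf{G}^i\injo{^L\mathbf{G}}$ across $i$, so that the product of the $\hat\psi_j$ makes sense and Borel's procedure commutes with restriction from $\mathbf{G}^j$ to $\mathbf{G}^i$. The different choices of tame splittings differ by tame (depth zero) cocycles, and these could change $\phi$ only by depth-zero data, so I would try to show they are irrelevant using Lemma~\ref{depth_char}. If the products are hard to control, the fallback is to argue purely through restrictions to $\mathbf{S}(F)$, where the class field theory comparison is transparent.
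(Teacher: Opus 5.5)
Your proposal is correct and follows essentially the paper's argument: you lift (products of) the $\psi_k$ via Remark~\ref{Borel_proc}, transport the result into ${}^L\mathbf{G}$ through a tame $L$-embedding attached to a tame maximal torus $\mathbf{S}\subseteq\mathbf{G}^0$, and use genericity together with class field theory for $\mathbf{S}$ (as in Lemma~\ref{rootsystemCr}) to identify the centralizers $C_{\hat{\mathbf{G}}}(\phi(I_F^r))$ with the duals of the $\mathbf{G}^i$; you then get relevance from Proposition~\ref{dual_embeddings_phi}~ii) and conclude with Lemma~\ref{lemme_indep}. The paper avoids the compatibility issue you flag exactly as in your fallback, by lifting only $\check\varphi_0=\prod_k\psi_k|_{\mathbf{G}^0(F)}$ and using a single tame $L$-embedding ${}^L\kappa:{}^L\mathbf{S}\injo{}^L\mathbf{G}$; note also that for the reverse inclusion you must treat roots that first appear in $\hat{\mathbf{G}}^{k+1}$ with $k>i$, and the same argument works there on $I_F^{r_k}\subseteq I_F^{r}$.
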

\begin{proof}
 The character
 $\check\varphi_{0}:=\prod_{k=0}^{d}  (\psi_{k})|_{{\mathbf{G}}^{0}}$ of ${\mathbf{G}}^{0}(F)$ is
 trivial on $({\mathbf{G}}^{0})_{\rm  sc}(F)$ since the datum is normalized.
By Remark \ref{Borel_proc},  $\check\varphi_{0}$ comes
from some element $\hat\varphi_{0}\in Z^{1}(W_{F},Z(\hat{\mathbf{G}}^{0}))$.

Let us choose a tamely ramified  maximal torus $\mathbf{S}$ in ${\mathbf{G}}^{0}$. There is a
canonical $W_{F}$-equivariant embedding 
$Z(\hat{\mathbf{G}}^{0})\injo \hat{\mathbf{S}}$ that allows us to pushforward $\hat\varphi_{0}$
into $Z^{1}(W_{F},\hat{\mathbf{S}})$, giving the Langlands parameter
$\varphi_{0}:=\hat\varphi_{0}\rtimes \id_{W_{F}}:W_{F}\To{}{^{L}\mathbf{S}}$
of the character $(\check\varphi_{0})|_{\mathbf{S}(F)}$.
Consider the
$W_{F}$-stable conjugacy class of embeddings $\hat{\mathbf{S}}\injo \hat{\mathbf{G}}$ that is dual
to $\mathbf{S}_{\ov F} \to{} \mathbf{G}_{\ov F}$. Any 
such embedding $\kappa: \hat{\mathbf{S}}\injo \hat{\mathbf{G}}$ can be extended to a tamely ramified $L$-embedding
$^{L}\kappa:\,{^{L}}\mathbf{S}\injo {^{L}}{\mathbf{G}}$ (see 
\cite[Lemma 5.2.6]{Kaletha}). We put $\varphi:={^{L}\kappa}\circ \varphi_{0} :
W_{F}\To{}{^{L}\mathbf{G}}$ and $\phi:= \varphi|_{P_{F}}$.

By construction, $\phi(P_{F})$ is contained in $\kappa(\hat{\mathbf{S}})$, hence it is abelian.
Since $p$ is not a torsion prime for $\hat{\mathbf{G}}$, the centralizer
$C_{\hat{\mathbf{G}}}(\phi)$ is a Levi subgroup of
$\hat{\mathbf{G}}$ by Lemma \ref{centralizer_Levi} iii). As usual, we write
$\hat{\mathbf{S}}_{\phi}:=C_{\hat{\mathbf{G}}}(\phi)_{\rm ab}$. From $\varphi$, we get an action of
$W_{F}$ on $\hat{\mathbf{S}}_{\phi}$, and the 
embedding $\kappa: \hat{\mathbf{S}}\injo C_{\hat{\mathbf{G}}}(\phi)$ induces a $W_{F}$-equivariant
epimorphism of tori
$\hat{\mathbf{S}}\twoheadrightarrow\hat{\mathbf{S}}_{\phi}$.
The dual of this epimorphism then provides an $F$-rational
embedding $\iota :\, \mathbf{S}_{\phi}\injo \mathbf{S}\injo {\mathbf{G}}$, whose mere existence
implies that $\phi$ is a wild inertia parameter of $\mathbf{G}$, by  Proposition
\ref{dual_embeddings_phi} ii).

What we have done so far is to associate a pair $(\phi,\iota)$ to any normalized truncated Yu datum
$(\vec{\mathbf{G}},\vec\psi,x)$. We now use that our given truncated Yu datum is also generic, 
and we choose a tame extension $E$ that splits
$\mathbf{S}$. Then we claim that for any $r\in \RM$ such that $r_{i-1}<r\leq r_{i}$ we have 
$$ \left\{\alpha\in \Sigma(\mathbf{S},{\mathbf{G}}), 
\check\varphi_{0}(N_{E/F}(\alpha^{\vee}(E^{\times}_{r})))=\{1\}\right\}=\Sigma(\mathbf{S},{\mathbf{G}}^{i}).$$
Indeed, the case $i=0$ is addressed in the proof of Lemma 3.6.9 of \cite{Kaletha} and the
same proof applies to $i<d$. As in the proof of Lemma \ref{rootsystemCr}, it follows that the identification between
$\Sigma(\mathbf{S},{\mathbf{G}})$ and $\Sigma(\hat{\mathbf{S}},\hat{\mathbf{G}})^{\vee}$
identifies $\Sigma(\mathbf{S},{\mathbf{G}}^{i})$ and 
$\Sigma(\hat{\mathbf{S}},C_{\hat{\mathbf{G}}}(\phi(I_{F}^{r})))^{\vee}$ for $r_{i-1}<r\leq
r_{i}$. 
In other words we have
${\mathbf{G}}^{i}= C_{\mathbf{G}}(\iota(\mathbf{S}_{\phi,r}))$ and, in particular
${\mathbf{G}}^{i}={\mathbf{G}}_{\iota}^{i}$. It also follows that $\Kp_{\iota,x}=K^{+}_{x}$ for any
point $x\in \BT_{\iota}$.

Finally, for each $i$, the character
$\check\varphi_{i}:=\prod_{k=i}^{d}  (\psi_{k})|_{{\mathbf{G}}_{i}(F)}$ of ${\mathbf{G}}_{i}(F)$ is
also trivial on $({\mathbf{G}}_{i})_{\rm  sc}(F)$ for all $i$, hence comes from some
some element $\hat\varphi_{i}\in Z^{1}(W_{F},Z(\hat{\mathbf{G}}^{i}))$ and, by construction,
we have
$(\hat\varphi_{0})|_{I_{F}^{r_{i}}} = (\hat\varphi_{i})|_{I_{F}^{r_{i}}}$ in
$H^{1}(I_{F}^{r_{i}},Z(\hat{\mathbf{G}}_{0}))$.
We find ourselves in the setting of \ref{def_characters} and we infer that
$\check\phi^{+}_{\iota,  x}=\hat \psi_x$.  
\end{proof}

\alin{Compatibility with isogenies}
To any isogeny $f:\,{\mathbf{G}}'\To{}{\mathbf{G}}$ is associated a canonical conjugacy class of dual isogenies $\hat f:\,
\hat{\mathbf{G}}\To{}\hat{\mathbf{G}}'$.  Moreover, if $f$ is defined over $F$, then any such
dual isogeny can be  extended to a morphism of
$L$-groups $^{L}{\mathbf{G}}\To{}{^{L}{\mathbf{G}}'}$. We thus get a well defined transfer map
$f^{*}:\,\Phi(P_{F},{\mathbf{G}})\mapsto \Phi(P_{F},{\mathbf{G}}')$. Note that our hypothesis
\eqref{H1} and \eqref{H2} hold for ${\mathbf{G}}'$ since  they are assumed to hold for ${\mathbf{G}}$. 
If we fix a dual isogeny $\hat f$ and a morphism $\hat\phi :P_{F}\To{}\hat{\mathbf{G}}$ such that $\phi = \hat \phi \times \id$ is a wild inertia parameter, 
then we obtain 
an isogeny of Levi subgroups $C_{\hat{\mathbf{G}}}(\phi)=C_{\hat{\mathbf{G}}}(\hat\phi)\To{}C_{\hat{\mathbf{G}}'}(\hat
f\circ\hat\phi)$ with kernel $\ker(\hat f)$, which dually provides a  conjugacy class of  $F$-rational isogenies
${\mathbf{G}}_{f^{*}\phi}\To{} {\mathbf{G}}_{\phi}$ with kernel 
$\ker(f)$ together with an $F$-rational isogeny $\mathbf{S}_{f^{*}\phi}
\To{}\mathbf{S}_{\phi}$. Now 
 any $\iota\in I_{\phi}$ induces an isomorphism $\mathbf{S}_{\phi}\simto
 Z({\mathbf{G}}_{\iota})^{\circ}$, and the discussion in \ref{dual_embeddings} shows that
 this
 isomorphism lifts uniquely to an
 isomorphism $\mathbf{S}_{f^{*}\phi}\simto Z(f^{-1}({\mathbf{G}}_{\iota}))^{\circ}$, thus
 providing an element $f^{*}\iota\in I_{f^{*}\phi}$. In this way we obtain a bijection
 $I_{\phi}=I_{f^{*}\phi}$ that respects $F$-rationality, but the $G'$-conjugacy is a
 priori coarser than the $G$-conjugacy. 

 \begin{pro}
  In this setting, the pull-back functor $f^{*}:\,\Rep_{R}(G)\To{}\Rep_{R}(G')$ takes
  $\Rep_{R}^{\phi,I}(G)$ into $\prod_{I'\subseteq f^{*}I}\Rep_{R}^{f^{*}\phi,I'}(G')$.
 \end{pro}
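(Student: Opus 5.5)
The plan is to compare idempotents: I will show that for every $\iota\in I$ and every $x\in\BT_{\iota}$ there is a point $x'\in\BT({\mathbf{G}}',F)$ lying over $x$ with $x'\in\BT_{f^{*}\iota}$, and that the pullback $f^{*}(e_{\phi,\iota,x})$ is a finite sum of idempotents $e_{f^{*}\phi,f^{*}\iota',x'}$ with $f^{*}\iota'$ in the $G$-orbit $I$. More precisely, $f$ maps $\Kp_{f^{*}\phi,f^{*}\iota,x'}$ into $\Kp_{\phi,\iota,x}$, and $\check\phi^{+}_{\iota,x}\circ f$ restricted to this subgroup is $\check{(f^{*}\phi)}^{+}_{f^{*}\iota,x'}$. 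Granting this, any $v\in e_{\phi,\iota,x}V$ is fixed under $f^{-1}(\Kp_{\phi,\iota,x})$ up to the character. Hence $v$ is an eigenvector for $\Kp_{f^{*}\phi,f^{*}\iota,x'}$ with the correct character, so $v\in e_{f^{*}\phi,f^{*}\iota,x'}f^{*}V$. Summing over $x$ and $\iota$ then shows that $f^{*}V=\sum_{x',\iota'} e_{f^{*}\phi,\iota',x'}f^{*}V$ with $\iota'$ ranging over $f^{*}I$. Grouping the $\iota'$ by $G'$-conjugacy classes $I'\subseteq f^{*}I$, and using the orthogonality of Proposition \ref{disjonction}, puts $f^{*}V$ in the claimed product.

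For the group comparison I will use that isogenies behave well on buildings and Moy--Prasad filtrations in the tame setting. The map $f$ induces a $G'$-equivariant identification of reduced buildings, so points correspond, and it sends $G'_{x',r}$ into $G_{x,r}$ for $r>0$. This is a standard fact for isogenies when $p$ does not divide $|\ker f|$, which holds here by \eqref{H2} and the centrality of the kernel. Moreover $f^{-1}({\mathbf{G}}^{i}_{\iota})={\mathbf{G}}'^{i}_{f^{*}\iota}$, since the twisted Levi sequence is defined by centralizers of the subtori $\iota(\mathbf{S}_{\phi,r})$, and these correspond under $\mathbf{S}_{f^{*}\phi,r}\to\mathbf{S}_{\phi,r}$ by the compatibility of $\hat f$ with the filtration $\phi(I_F^r)$. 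Consequently the jumps $r_i$ agree, and $f(\Kp_{f^*\iota,x'})\subseteq\Kp_{\iota,x}$.

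For the character comparison I will use Lemma \ref{lemme_indep}, which makes the construction independent of the choice of cocycles. I will choose the cocycles for $f^{*}\phi$ to be $\hat f\circ\hat\varphi_i$. Borel's construction is functorial for $f$, so the resulting characters are $\check\varphi_i\circ f$. The generic extensions $\wt\psi_i$ are compatible because $\mathrm{Lie}(f)$ respects the decomposition $\gG=\gG^i_\iota\oplus\nG^i_\iota$ and the Moy--Prasad isomorphisms.

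I expect the main obstacle to be the Borel functoriality step: checking that pulling back the character attached to $\hat\varphi_i$ along $f$ gives the character attached to $\hat f\circ\hat\varphi_i$. I would verify this first through $z$-extensions, as in the proof of Lemma \ref{depth_char}, by choosing compatible $z$-extensions of ${\mathbf{G}}'^{i}$ and ${\mathbf{G}}^{i}$ and reducing the claim to the functoriality of the Langlands correspondence for tori.
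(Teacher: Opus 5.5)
Your proposal is correct and is essentially the paper's argument. You identify the buildings so that $\BT_{\iota}=\BT_{f^{*}\iota}$, check that $f$ maps $\Kp_{f^{*}\phi,f^{*}\iota,x}$ into $\Kp_{\phi,\iota,x}$ with $\check{(f^{*}\phi)}^{+}_{f^{*}\iota,x}=\check\phi^{+}_{\iota,x}\circ f$, and read the conclusion off the definitions. You use only this inclusion where the paper asserts a surjection, which is all this direction requires. You also spell out the steps the paper leaves implicit: Borel functoriality, compatibility of the $\wt\psi_{i}$, and orthogonality.

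One small correction: hypothesis (H2) is about $\pi_{1}(\mathbf{G}_{\rm der})$ and does not force $p\nmid|\ker f|$ (e.g.\ $z\mapsto z^{p}$ on a central torus). However, the inclusion $f(G'_{x,r})\subseteq G_{x,r}$ you rely on holds for central isogenies without it, so nothing in your argument breaks.
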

 \begin{proof}
Let us identify the Bruhat--Tits buildings of ${\mathbf{G}}$ and $\mathbf{G'}$. Then, from the
definitions we see that $\BT_{\iota}=\BT_{f^{*}\iota}$ for any $\iota\in
I_{\phi}$. Moreover, for $x\in\BT_{\iota}$, we have a surjection 
${\vG'}^{+}_{f^{*}\phi,f^{*}\iota,x}\twoheadrightarrow \Kp_{\phi,\iota,x}$ and the construction of the
characters shows that $\check{(f^{*}\phi)}_{f^{*}\iota,x}^{+}$ is the pull back of
$\check\phi^{+}_{\iota,x}$. The claim then follows from the definition of the categories
under consideration. 
 \end{proof}

\newcommand{\phiM}{{\phi_{\scriptscriptstyle M}}}
\newcommand{\iotaM}{\iota_{\scriptscriptstyle M}}
\newcommand{\IM}{I_{\scriptscriptstyle M}}
\def\vM{{K}}

\alin{Compatibility with parabolic induction} \label{compat_parab_induc}
Suppose that $\phi$ comes from an $F$-Levi subgroup ${\mathbf{M}}$ of
${\mathbf{G}}$, in the sense that there is a dual embedding ${^{L}{\mathbf{M}}}\To{}{^{L}{\mathbf{G}}}$
and a factorization 
$\phi: P_{F}\To{\phiM} {^{L}{\mathbf{M}}}\To{}{^{L}{\mathbf{G}}}$, where $\phiM$ is a
wild inertia parameter of ${\mathbf{M}}$ (recall that this means that
 $\phiM$ admits an extension to a 
relevant Langlands parameter $W'_{F}\To{}{^{L}{\mathbf{M}}}$).  

In this context we have
$\hat{\mathbf{M}}_{\phiM}=\hat{\mathbf{M}}\cap\hat{\mathbf{G}}_{\phi}$, whence a
$W_{F}$-equivariant surjection $\hat{\mathbf{S}}_{\phiM}\twoheadrightarrow
\hat{\mathbf{S}}_{\phi}$, which on the dual side induces an injection of $F$-tori
$\mathbf{S}_{\phi}\injo \mathbf{S}_{\phiM}$. It follows that any $F$-rational
Levi-center-embedding $\iotaM:\,\mathbf{S}_{\phiM}\injo{\mathbf{M}}$ in the set
$I_{\phiM}$ induces an embedding $\iota:\,\mathbf{S}_{\phi}\injo{\mathbf{G}}$ in the set
$I_{\phi}$. Obviously, $M$-conjugate embeddings lead to $G$-conjugate embeddings, so that
any choice of an $M$-conjugacy class $\IM\subset I_{\phiM}$ points to a $G$-conjugacy
class $I\subset I_{\phi}$.  We will use the notation $(\phi_{M},I_{M})\mapsto (\phi,I)$ to express
the fact that $\phi$
comes from $\phi_{M}$ and $I$ from $I_{M}$.

\begin{thm}
  Let ${\mathbf{P}} $ be a parabolic
  $F$-subgroup of ${\mathbf{G}}$ with Levi component ${\mathbf{M}}$, and denote by $i_{P}$
  the associated (not normalized) parabolic induction functor, and by $r_{P}$ the corresponding Jacquet functor.
  \begin{enumerate}
  \item If the pair $(\phi,I)$ comes from the pair $(\phi_{M},I_{M})$, then  we have 
$$ i_{P}\left(\Rep^{\phiM,\IM}_{R}(M)\right) \subseteq \Rep^{\phi,I}_{R}(G).$$ 
\item Assuming that $p$ does not divide the order of the absolute Weyl group of ${\mathbf{G}}$, we have
$$r_{P}(\Rep^{\phi,I}_{R}(G))\subseteq \prod_{(\phiM,\IM)\mapsto
      (\phi,I)} \Rep^{\phiM,\IM}_{R}(M).$$
\item Assuming further that $C_{\hat{\mathbf{G}}}(\phi)\subseteq \mathbf{\hat{\mathbf{M}}}$, then $i_{P}$
    induces an equivalence of categories 
$$\Rep_{R}^{\phiM,\IM}(M)\simto\Rep_{R}^{\phi,I}(G)$$
with quasi inverse the composition of $r_{P}$ and
    the projection onto the $(\phiM,\IM)$-factor.
  \end{enumerate}
\end{thm}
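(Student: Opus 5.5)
The plan is to compare the idempotents $e_{\phiM,\iotaM,y}$ of $M$ with the idempotents $e_{\phi,\iota,x}$ of $G$ at points $x=y$ of $\BT(\mathbf{M},F)\subset\BT$. The Iwahori decomposition of Lemma \ref{Iwahori} is the bridge between the two groups.

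The first step is to take $\iotaM\in\IM$ and the induced $\iota\in I$, and check that the twisted Levi sequences are related by intersection: $\mathbf{M}^{i}_{\iotaM}=\mathbf{M}\cap\mathbf{G}^{i}_{\iota}$. I would do this by comparing root systems via Lemma \ref{rootsystemCr} for a maximal torus of $\mathbf{M}_{\iotaM}$, since the jumps of $\phiM$ are among those of $\phi$. I would then check that $(\mathbf{P},\bar{\mathbf{P}})$ satisfies conditions i) and ii) of \ref{Iwahori} at $y\in\BT_{\iotaM}$. From this, $M\cap\Kp_{\iota,y}=\Kp^{M}_{\iotaM,y}$, and $\check\phi^{+}_{\iota,y}$ restricted there equals $\check\phi^{+}_{\iotaM,y}$; both are built from the same Borel characters restricted to $M$, and the Lie algebra decomposition is compatible. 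Since $\check\phi^{+}_{\iota,y}$ is trivial on $U\cap\Kp_{\iota,y}$ and $\bar U\cap\Kp_{\iota,y}$, one gets $e_{\iota,y}=e_{\bar U}e^{M}_{\iotaM,y}e_{U}$ with the obvious notation.

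For i), I would use the standard argument as in \cite{MS1} or \cite{Datequiv}. For $W\in\Rep^{\phiM,\IM}_{R}(M)$ generated by $e^{M}_{\iotaM,y}W$, the functions in $i_{P}W$ supported on $P\Kp_{\iota,y}$ that are $(\Kp_{\iota,y},\check\phi^{+}_{\iota,y})$-equivariant identify with $e^{M}_{\iotaM,y}W$. These generate $i_{P}W$ after varying $y$ in a $G$-orbit. Via Theorem \ref{thm_Serresubcat}, it suffices to treat the projective generators $\cInd{}{M}$. There, $i_{P}\cInd{\Kp^{M}}{M}(\check\phi^{+}_{M})$ is a quotient of $\cInd{\Kp_{\iota,y}}{G}(\check\phi^{+}_{\iota,y})$ by the Iwahori factorization (the ``Bernstein second adjunction'' type identity for $G$-covers). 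For ii), I would use Jacquet's lemma: $r_{P}(e_{\iota,y}V)$ maps onto $e^{M}_{\iotaM,y}r_{P}(V)$ for $y$ in good position, and Theorem \ref{exhaust} applied to $M$ decomposes $r_{P}V$. Disjonction (Proposition \ref{disjonction}), together with Frobenius reciprocity and i), then kills all factors $(\phiM',\IM')$ not mapping to $(\phi,I)$: indeed $\Hom(r_{P}V,W')=\Hom(V,\bar i_{P}W')=0$ when $\bar i_{P}W'$ lies in another $(\phi',I')$-factor.

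For iii), when $C_{\hat{\mathbf{G}}}(\phi)\subseteq\hat{\mathbf{M}}$, every $\iota\in I$ has $\mathbf{G}_{\iota}\subseteq$ some conjugate of $\mathbf{M}$, and the pair $(\phiM,\IM)$ over $(\phi,I)$ is unique. The goal is to show that $\Kp_{\iota,y}$ with $\check\phi^{+}$ is a $G$-cover of its $M$-part in Bushnell--Kutzko's sense. This requires the strong intertwining bound: the intertwining of $e_{\iota,y}$ is contained in $\Ku_{\iota,y}N_{G}(\iota)\Ku_{\iota,y}\subseteq \Ku MK$. I would obtain this from Proposition \ref{prop_intertwining} ii), since $G_{\iota}\subseteq M$. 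With that, the cover theory, in its coefficient-free form via Dat's or Blondel's argument, gives that $i_{P}$ and $r_{P}$ composed with projection are mutually inverse on generators, hence equivalences. I expect this cover verification, in particular producing strongly $(P,\bar P)$-positive elements in $Z(M)$ supported on an invertible Hecke element over $R$, to be the main obstacle.
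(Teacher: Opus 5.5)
Your first step (matching $M\cap\Kp_{\iota,y}$ and $\check\phi^{+}_{\iota,y}$ with the $M$-data, and writing $e_{\iota,y}=e_{\bar U}e^{M}_{\iotaM,y}e_{U}$) is Lemma \ref{lemma_ind_parab}(i). Your ii) is the paper's argument: Frobenius reciprocity, Theorem \ref{exhaust} for $M$, and Proposition \ref{disjonction}. The Jacquet-lemma remark there is not needed. But ii) rests on i), and i) has a real gap.

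Frobenius reciprocity does give a map $\cInd{\Kp_{\iota,y}}{G}(\check\phi^{+}_{\iota,y})\to i_{P}\bigl(\cInd{\Kp_{\iotaM,y}}{M}(\check\phi^{+}_{\iotaM,y})\bigr)$ hitting the function supported on $P\Kp_{\iota,y}$ that you describe. Its surjectivity, equivalently the claim that $i_{P}W$ is generated by such functions, does not follow from the Iwahori factorization. It is a genuine cover-type statement, nontrivial already for $\phi=1$. In the paper this is Lemma \ref{comput_par_ind}: $i_{P}(\cInd{\Kp_{\iotaM,x}}{M}(\check\phi^{+}_{\iotaM,x}))\simeq\cInd{U_{\iota,x}\Kp_{\iota,x}}{G}(\tilde\phi^{+}_{\iota,x})$ with $U_{\iota,x}=U\cap\K_{\iota,x}$. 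The missing input is the Howlett--Lehrer-type property of Lemma \ref{lemma_ind_parab}(iv). That property is \cite[Prop.~9.3]{finitude}, obtained from \cite[Cor.~5.10]{finitude} applied to the Bruhat--Tits model of $G_{x,0}$ and Yu's model of $\Ko_{\iota,x}$. It feeds \cite[Cor.~3.6(ii)]{finitude}, and the image is then identified using the Heisenberg identity (ii) and the intertwining identity (v) of that lemma. Appealing to ``second adjunction for $G$-covers'' does not fill this gap: you have established no cover property, and i) carries no hypothesis on $C_{\hat{\mathbf G}}(\phi)$.

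In iii), your claim that only one pair $(\phiM,\IM)$ lies over $(\phi,I)$ is false. If $\hat w\in N_{\hat{\mathbf G}}(\hat{\mathbf M})\setminus\hat{\mathbf M}$ is compatible with the $W_F$-action, then $\Ad_{\hat w}\circ\phiM$ also lies over $\phi$. It is not $\hat{\mathbf M}$-conjugate to $\phiM$, precisely because $C_{\hat{\mathbf G}}(\phi)\subseteq\hat{\mathbf M}$. For an example, take $\mathbf G=\GL_{2}$, $\mathbf M$ the diagonal torus, and $\phi=(\chi_{1}\oplus\chi_{2})|_{P_{F}}$ with $\chi_{1}|_{P_{F}}\neq\chi_{2}|_{P_{F}}$. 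This is why the quasi-inverse involves a projection and why $r_P\circ i_P\not\simeq\id$.

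Handling these extra factors is the heart of the paper's proof. By the geometric lemma, each graded piece $\Ad_{\dot w}\circ i^{M^{w}}_{P\cap M^{w}}\circ r^{M}_{M\cap P^{w}}$ with $\dot w\notin M$ sends $\Rep^{\phiM,\IM}_{R}(M)$ into factors $\Ad_{\hat w^{-1}}(\phi_{w},I_{w})$; this uses i) and ii) for the Levi subgroup $M\cap M^{w}$. Such a factor equals $(\phiM,\IM)$ only if $\hat m\hat w^{-1}\in C_{\hat{\mathbf G}}(\phi_{w})\subseteq\hat{\mathbf M}$, which is impossible, so $\tilde r_{P}\circ i_{P}\simeq\id$. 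Conservativity of $\tilde r_P$ on $\Rep^{\phi,I}_R(G)$ then follows from $e_{\iota,x}\in R\K_{\iota,x}\,e_{U_x}e_{\bar U_{x,0+}}e_{\iotaM,x}\,R\K_{\iota,x}$ (Lemma \ref{lemma_ind_parab}(i), (iii), (v), (vi)) together with \cite[Prop.~3.1]{finitude}, again via (iv). No invertible Hecke elements are needed.

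Your cover route has three problems:
\begin{itemize}
\item It leaves open exactly the step you flag as the main obstacle.
\item It is set up with $(\Kp_{\iota,y},\check\phi^{+}_{\iota,y})$ rather than a pair like $(U_{\iota,y}\Kp_{\iota,y},\tilde\phi^{+}_{\iota,y})$, whose compact induction actually matches $i_P$ of the $M$-generator.
\item It does not explain how Hecke-algebra comparisons for individual pairs assemble into an equivalence of the whole Serre subcategory, which is generated by a family indexed by $x\in\Delta_0$ and $\iota\in I_x/\sim_x$.
\end{itemize}
As written, neither i) nor iii) is proved.
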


\ali \label{setup_par_ind}
We start with some preliminary observations to prepare for proving the theorem. First note that the equality
$\hat{\mathbf{M}}_{\phiM}=\hat{\mathbf{M}}\cap\hat{\mathbf{G}}_{\phi}$
provides two surjective maps
$\hat{\mathbf{S}}_{\phiM} \To{} \hat{\mathbf{S}}_{\phi}$
and $\hat{\mathbf{S}}_{\phiM} \To{} \hat{\mathbf{M}}_{\rm ab}$ whose
product $\hat{\mathbf{S}}_{\phiM} \To{}
\hat{\mathbf{S}}_{\phi}\times\hat{\mathbf{M}}_{\rm ab}$ has finite
kernel (since $Z(\hat{\mathbf{M}}_{\phiM})^{\circ}=Z(\hat{\mathbf{M}})^{\circ}Z(\hat{\mathbf{G}}_{\phi})^{\circ}$).
Dually, this provides two inclusions $\mathbf{S}_{\phi}\subset \mathbf{S}_{\phiM}$ and
$Z({\mathbf{M}})^{\circ}\subset \mathbf{S}_{\phiM}$
such that
$\mathbf{S}_{\phiM}=\mathbf{S}_{\phi}Z({\mathbf{M}})^{\circ}$. In particular, if $\iotaM\in
\IM$ induces $\iota\in I_{\phi}$ through the first embedding, we have
$\iotaM(\mathbf{S}_{\phiM})=\iota(\mathbf{S}_{\phi})Z({\mathbf{M}})^{\circ}$ and therefore 
${\mathbf{M}}_{\iotaM}={\mathbf{M}}\cap{\mathbf{G}}_{\iota}$.

Denote by $\BT_{M}$ the image of any admissible embedding
$\BT({\mathbf{M}},F)\injo\BT({\mathbf{G}},F)$.
Then we have $\BT_{\iotaM}=\BT_{\iota}\cap\BT_{M}$.
Now, given a point $x\in \BT_{\iotaM}$, the
constructions of the previous subsections provide us with two idempotents $e_{\iota,x}\in\HC_{R}(G)$
and $e_{\iotaM,x}\in\HC_{R}(M)$ respectively associated to pairs
$(\Kp_{\iota,x},\check\phi^{+}_{\iota,x})$ and $(\Kp_{\iotaM,x},\check\phi^{+}_{\iotaM,x})$
consisting of an open pro-$p$-group of $G$, resp., $M$, and a character of that group.
In order to compare these two idempotents, it is useful to enlarge the ring $\HC_{R}(G)$ to the ring
$RG$ consisting of all compactly supported distributions on $G$ (not necessarily locally
constant). We will use the analogous notation for any open, compact subgroup of $G$ as introduced in Section \ref{sec:notation}. In particular, we may
regard $e_{\iota,x}$ and $e_{\iotaM,x}$ as idempotents in the ring $RG_{x}$ of all distributions
on the stabilizer $G_{x}$ of $x$.
This ring also contains the averaging idempotents $e_{U_{x}}$,  $e_{U_{x,0+}}$, $e_{\Uu_{\iota,x}}$ and
$e_{\Up_{\iota,x}}$ of the closed
pro-$p$-subgroups $U_{x}:=U\cap G_{x}$,  $U_{x,0+}:=U\cap G_{x,0+}$,
$\Uu_{\iota,x}:=U\cap\Ku_{\iota,x}$ and $\Up_{\iota,x}:=U\cap\Kp_{\iota,x}$ of 
$G_{x}$, where ${\mathbf{U}}$ 
denotes the unipotent radical of ${\mathbf{P}} $. Similarly, we have
idempotents $e_{\bar U_{x}}$, $e_{\bar U_{x,0+}}$, $e_{\bUu_{\iota,x}}$  and $e_{\bUp_{\iota,x}}$,
where $\bar{\mathbf{U}}$
is the unipotent radical of  the opposite parabolic subgroup $\bar{\mathbf{P}} $ of ${\mathbf{P}} $ with
respect to ${\mathbf{M}}$.  

The following lemma contains the main  technical points of the proof of the above theorem. 

\begin{lemme} \label{lemma_ind_parab}
 For any $x\in\BT_{\iotaM}$ and with the foregoing notation, we have:
  \begin{enumerate}
  \item \label{item:Iwahori}
    $e_{\iota,x}=e_{\Up_{\iota,x}}e_{\iotaM,x}
    e_{\bUp_{\iota,x}}=e_{\Up_{\iota,x}}e_{\bUp_{\iota,x}}e_{\iotaM,x}=
e_{\bUp_{\iota,x}}e_{\iotaM,x}e_{\Up_{\iota,x}}$ in $R\Kp_{\iota,x}$.
\item \label{item:non-degenerate1}
  $(e_{\Uu_{\iota,x}}e_{\bUu_{\iota,x}}e_{\Uu_{\iota,x}})e_{\iota,x} =
  |\Uu_{\iota,x}/\Up_{\iota,x}|^{-1}e_{\Uu_{\iota,x}}e_{\iota,x}$ 
\item \label{item:non-degenerate2}
  $e_{\iota,x}\in R\Ku_{\iota,x}\,e_{\Uu_{\iota,x}}e_{\iotaM,x}e_{\bUu_{\iota,x}}R\Ku_{\iota,x}$
\item \label{item:HL-like}
  $e_{U_{x,0+}}e_{\bar U_{x}} e_{\iota_{M},x}\in RG_{x} e_{U_{x}}e_{\bar U_{x}}e_{\iota_{M},x}$  and 
   $e_{\iota_{M},x}e_{U_{x}}e_{\bar U_{x,0+}}\in  e_{\iota_{M},x}e_{U_{x}}e_{\bar U_{x}}
   RG_{x}$.
 \item \label{item:intertwining}
   $e_{\iota,x}e_{U_{x}}e_{\iota,x}= |U_{x}/U_{\iota,x}|^{-1}e_{U_{\iota,x}}e_{\iota,x}$ and
   $e_{\iota,x}e_{U_{x,0+}}e_{\iota,x}=|U_{x,0+}/\Uu_{\iota,x}|^{-1}e_{\Uu_{\iota,x}}e_{\iota,x}$. 
 \item \label{item:centralizer}
   If $C_{\hat{\mathbf{G}}}(\phi)\subseteq \hat{\mathbf{M}}$, then
   $U_{\iota,x}=\Uu_{\iota,x}$ and $\bar U_{\iota,x}=\bUu_{\iota,x}$.
  \end{enumerate}
\end{lemme}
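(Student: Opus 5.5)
We treat the six items in order; (i)--(iii) use only the Iwahori decomposition of Lemma~\ref{Iwahori} and the Heisenberg property~\ref{Heisenberg}, while (iv)--(vi) need finer information on Moy--Prasad groups. Throughout, $U_{\iota,x}:=U\cap \K_{\iota,x}$, and we use that, for $x\in\BT_{\iotaM}$, the pair $(\mathbf P,\bar{\mathbf P})$ satisfies conditions i) and ii) of~\ref{Iwahori}. Indeed ${\mathbf{M}}\supset \iotaM(\mathbf S_{\phiM})\supset\iota(\mathbf S_\phi)$ and $x\in\BT_M$.

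For (i), the first task is to show $M\cap \Kp_{\iota,x}=\Kp_{\iotaM,x}$ and $(\check\phi^+_{\iota,x})|_{M\cap\Kp_{\iota,x}}=\check\phi^+_{\iotaM,x}$. The twisted Levi sequence of $(\phiM,\iotaM)$ is obtained by intersecting that of $(\phi,\iota)$ with ${\mathbf{M}}$. This uses ${\mathbf{M}}_{\iotaM}={\mathbf{M}}\cap{\mathbf{G}}_\iota$ from~\ref{setup_par_ind} and $\hat{\mathbf{M}}_{\phiM,r}=\hat{\mathbf{M}}\cap\hat{\mathbf{G}}_{\phi,r}$; in the $M$-sequence some terms may repeat, which is harmless. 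The $\hat\varphi_i$ of~\ref{def_characters} for $\mathbf G$ push into $Z(\hat{\mathbf{M}}\cap\hat{\mathbf G}_{\phi,r})$, so they may serve for $\mathbf M$ as well. Then Yu's extension $\wt\psi_i$ restricted to $M$ is the extension relative to $\mG=\mG^i\oplus(\mG\cap\nG^i_\iota)$, exactly as in the proof of Lemma~\ref{intertw_converse}, and Lemma~\ref{lemme_indep} removes the dependence on choices. Once this identification is made, the Iwahori factorization $\Kp_{\iota,x}=\Up_{\iota,x}\,(M\cap\Kp_{\iota,x})\,\bUp_{\iota,x}$, together with the triviality of $\check\phi^+_{\iota,x}$ on $\Up$ and $\bUp$, gives the product formula for the averaging idempotent. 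The reorderings follow since $\Kp_{\iota,x}$ is a group admitting the decomposition in any order and $M\cap\Kp$ normalizes $\Up,\bUp$.

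For (ii) and (iii) we work in the finite Heisenberg group $\Ku_{\iota,x}/\ker$, with symplectic space $V=\Ku/\Kp$ split as $V_U\oplus V_M\oplus V_{\bar U}$ with $V_U,V_{\bar U}$ in perfect duality, by Proposition~\ref{Heisenberg}~ii).
\begin{itemize}
\item For (ii), on the $e_{\iota,x}$-isotypic part we compute $e_{\Uu}e_{\bUu}e_{\Uu}$ as a sum over $\bar u\in V_{\bar U}$. Perfectness of the pairing with $V_U$ makes the character sum collapse to the single term $\bar u=1$, giving the factor $|V_{\bar U}|^{-1}=|\Uu/\Up|^{-1}$.
\item For (iii), we note that $e_{\Uu}e_{\iotaM,x}e_{\bUu}$ is nonzero in the Heisenberg quotient by (ii). Since $e_{\iota,x}$ is central in $R\Ku_{\iota,x}$ and the corresponding block of $R\Ku_{\iota,x}e_{\iota,x}$ is a matrix algebra over $R$, using $p\in R^\times$, this nonzero element generates the unit two-sided ideal. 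The coefficient computed in (ii) is a unit, which makes this explicit.
\end{itemize}

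Item (iv) is a Howlett--Lehrer/Bernstein-type statement. We would reduce it modulo $G_{x,0+}$ to the finite reductive quotient $G_{x,0}/G_{x,0+}$ and its parabolic images. There the idempotents $e_{U_x}$ and $e_{U_{x,0+}}$ differ only by averaging over the image of $U_x$ in the finite group. Since $e_{\iotaM,x}$ is $M_x$-stable, the claim reduces to the finite-group identity $e_{\bar U}\,\cdot\,$(stuff)$\,\in RG\,e_Ue_{\bar U}$, following Dat's argument in \cite{Datequiv}.

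For (v) we use Proposition~\ref{prop_intertwining} with the intertwining set of $\check\phi^+_{\iota,x}$ inside $U_x$. An element $u\in U_x$ contributes to $e_{\iota,x}\,u\,e_{\iota,x}$ only if $u$ intertwines, and by~\eqref{intertw} this forces $u\in \Ku_{\iota,x}\,G_{\iota}\,\Ku_{\iota,x}\cap U$. We would then show that this set equals $U\cap\K_{\iota,x}$, using a Bruhat--Iwahori decomposition of $U$ relative to ${\mathbf{G}}_\iota$. Summing gives the stated formula, and the $U_{x,0+}$ version is identical with $G^0_{\iota,x,0+}$ in place of $G^0_{\iota,x}$.

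Item (vi) is the easiest. If $C_{\hat{\mathbf G}}(\phi)\subseteq\hat{\mathbf M}$, then ${\mathbf G}_\iota\subseteq{\mathbf M}$, so $U\cap G^0_\iota=1$ and $U\cap\K_{\iota,x}=U\cap\Ku_{\iota,x}$ via the Iwahori decomposition.

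The main obstacle will be (v), specifically controlling the intertwining in $U_x$ precisely enough to get equality of sets rather than mere containment; (iv) is second, because of bookkeeping in the finite reductive quotient.
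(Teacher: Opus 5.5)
Your treatment of (i), (ii) and (vi) is essentially the paper's, but your plan for (iv) does not work. You propose to reduce modulo $G_{x,0+}$ to the finite reductive quotient of $G$ at $x$, using that $e_{\iota_M,x}$ is $M_x$-stable. Neither step is available.

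First, $e_{\iota_M,x}$ averages a character of $K^{+}_{\iota_M,x}\subseteq G_{x,0+}$ that is non-trivial as soon as $\phi\neq 1$. So the elements in (iv) do not descend to $R[G_x/G_{x,0+}]$, and no identity in $G_{x,0}/G_{x,0+}$ can capture them; your argument only covers $\phi=1$.

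Second, $e_{\iota_M,x}$ is not $M_x$-stable. By \eqref{Gaction} and Proposition~\ref{prop_intertwining} applied to $\mathbf M$, $e_{\iota_M,x}\,m\,e_{\iota_M,x}\neq 0$ with $m\in M_x$ forces $m\in K_{\iota_M,x}$. Hence its centralizer in $M_x$ is $K_{\iota_M,x}$, usually a proper subgroup, and its conjugates $e_{m\iota_M,x}$ for $m\notin K_{\iota_M,x}$ are orthogonal to it (Lemma~\ref{ortho_idemp}).

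What is needed is the two-level comparison of \cite[Cor.~5.10, Prop.~9.3]{finitude} (not \cite{Datequiv}). On one side take $G_{x,0}$ with pro-$p$ radical $G_{x,0+}$; on the other, $K^{\circ}_{\iota,x}$ with pro-$p$ radical $K^{\dagger}_{\iota,x}$. The relevant finite reductive group is therefore that of $G_\iota$ at $x$, namely $K^{\circ}_{\iota,x}/K^{\dagger}_{\iota,x}$, not that of $G$. The hypotheses you must verify are:
\begin{itemize}
\item $K^{\circ}_{\iota,x}$ is the group of integral points of Yu's smooth model;
\item $(\mathbf P,\bar{\mathbf P})$ is admissible for that model, because $\mathbf A_{\mathbf M}$ lies in a maximal split torus of $\mathbf M_{\iota_M}=\mathbf M\cap\mathbf G_\iota$ whose apartment contains $x$, hence extends to a split subtorus;
\item $e_{\iota_M,x}$ is ``essentially of depth zero'', which is what the Heisenberg computation behind (ii) provides;
\item the factorization (i) holds;
\item the centralizer of $e_{\iota,x}$ in $G_x$ is $K_{\iota,x}$.
\end{itemize}
So (iv) is the deepest item and rests on (i), (ii) and Proposition~\ref{prop_intertwining}; it is not finite-group bookkeeping.

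By contrast, the step you single out as the main obstacle in (v) is immediate. Suppose $e_{\iota,x}\,u\,e_{\iota,x}=e_{\iota,x}e_{u\iota,x}u\neq 0$ for some $u\in U_x$. Proposition~\ref{prop_intertwining}~ii) gives $k,k'\in K^{\dagger}_{\iota,x}$ with $k^{-1}uk'$ fixing $\iota$, i.e.\ lying in $G_\iota$. Since $u,k,k'\in G_x$, this element lies in $G_{\iota,x}$, so $u\in K^{\dagger}_{\iota,x}G_{\iota,x}K^{\dagger}_{\iota,x}=K_{\iota,x}$. No Bruhat--Iwahori decomposition is needed, and the surviving terms are handled by Proposition~\ref{prop_intertwining}~i).

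In (iii), beware that over a general $R$ a non-zero element of $\End_R(\eta)$ need not generate the unit two-sided ideal. What works is the following. By (ii), $e_{U^{\dagger}_{\iota,x}}e_{\iota,x}=|U^{\dagger}_{\iota,x}/U^{+}_{\iota,x}|\,(e_{U^{\dagger}_{\iota,x}}e_{\iota,x}e_{\bar U^{\dagger}_{\iota,x}})\,e_{U^{\dagger}_{\iota,x}}$, so it lies in the ideal generated by $e_{U^{\dagger}_{\iota,x}}e_{\iota,x}e_{\bar U^{\dagger}_{\iota,x}}$. Its $\bar U^{\dagger}_{\iota,x}$-conjugates are the $e_\chi e_{\iota,x}$ for all characters $\chi$ of $U^{\dagger}_{\iota,x}/U^{+}_{\iota,x}$, by perfectness of the pairing in Proposition~\ref{Heisenberg}~ii), and these sum to $e_{\iota,x}$. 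This is essentially the paper's argument.
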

\begin{proof}
  \ref{item:Iwahori}) Since $\iota(\mathbf{S}_{\phi})\subset\iotaM(\mathbf{S}_{\phi_{M}})\subset {\mathbf{M}}$, we may
  apply \ref{Iwahori} on the Iwahori factorization of $(\Kp_{\iota,x},\check\phi^{+}_{\iota,x})$.
  Denoting by $e^{M}_{\iota,x}$ the idempotent of $RG_{x}$ associated to the character
  $(\check\phi^{+}_{\iota,x})|_{M\cap\Kp_{\iota,x}}$, this lemma implies the following
  decomposition in $RG_{x}$ (actually in $R\Kp_{\iota,x}$) :
  $$e_{\iota,x}=e_{\Up_{\iota,x}}e_{\iota,x}^{M}e_{\bUp_{\iota,x}}
  =e_{\Up_{\iota,x}}e_{\bUp_{\iota,x}}e_{\iota,x}^{M}=
  e_{\bUp_{\iota,x}}e_{\iota,x}^{M}e_{\Up_{\iota,x}}.$$
  So it remains to see that $e_{\iotaM,x}=e_{\iota,x}^{M}$. Equivalently, we need to show that
  \begin{center}
    $M\cap\Kp_{\iota,x} = \KpM_{\iotaM,x}$ and
    $(\check\phi^{+}_{\iota,x})|_{M\cap\Kp_{\iota,x}}=\check\phi^{+}_{\iotaM,x}$.
  \end{center}
  
  Let us first check the equality of groups. We have seen in \ref{setup_par_ind} that   $\iotaM(\mathbf{S}_{\phiM})=\iota(\mathbf{S}_{\phi})Z({\mathbf{M}})^{\circ}$ and therefore 
  ${\mathbf{M}}_{\iotaM}={\mathbf{M}}\cap{\mathbf{G}}_{\iota}$. 
Similarly, for any $r>0$ we have
$\iotaM(\mathbf{S}_{\phiM,r})= \iota(\mathbf{S}_{\phi,r})Z({\mathbf{M}})^{\circ}$ and
therefore $C_{\mathbf{M}}(\iotaM(\mathbf{S}_{\phiM,r}))={\mathbf{M}}\cap C_{\mathbf{G}}(\iota(\mathbf{S}_{\phi,r}))$.
It follows in particular that the set of jumps $r'_{-1}=0<r'_{0}<\hdots <r'_{d'-1}$ of the decrasing filtration
$(\mathbf{S}_{\phiM,r})_{r}$ is a subset of the set of jumps $r_{-1}=0<r_{0}<\hdots< r_{d-1}$ of
the filtration $(\mathbf{S}_{\phi,r})_{r}$. For $0\leq j< d'$ write $i_{j}$ for the
unique integer between $0$ and $d-1$ such that $r'_{j}=r_{i_{j}}$, and put $i_{-1}=-1$. 
Then, with the notation of  \ref{Yu_construction}, we have $M\cap G^{i}_{\iota,x,r+}
=M^{j}_{\iotaM,x,r+}$ for all $i=0,\hdots, d$ and $j$ such that $i_{j-1}<i\leq i_{j}$, and
in particular we see that $M\cap G^{i}_{\iota,x,(r_{i-1}/2)+}\subseteq
M^{j}_{\iotaM,x,(r'_{j-1}/2)+}$ with equality if $i=i_{j-1}+1$.
This implies the second equality in:
\begin{eqnarray*}
  M\cap \Kp_{\iota,x} & = & (M\cap G^{0}_{\iota,x,0+})(M\cap G^{1}_{\iota,x,(r_{0}/2)+})\cdots
                                (M\cap G^{d}_{\iota,x,(r_{d-1}/2)+})\\
& = & M^{0}_{\iotaM,0+} M^{1}_{\iotaM,(r'_{0}/2)+}\cdots
M^{d'}_{\iotaM,(r'_{d'-1}/2)+} = \KpM_{\iotaM,x}
\end{eqnarray*}
The first equality follows from the Iwahori decomposition of each $G^{i}_{\iota,x,(r_{i-1}/2)+}$
with respect to the pair $(\bar{\mathbf{P}} ,{\mathbf{P}} )$,  as
in the proof of Lemma \ref{Iwahori}.

Let us now turn to characters.
By definition, $\check\phi^{+}_{\iotaM,x}$ is a
product $\prod_{j=0}^{d'} (\psi_{M,j}^{+})|_{\KpM_{\iotaM,x}}$ with $\psi_{M,j}^{+}$
a certain character of $M_{\iotaM,x,0+}^{j}M_{x,(r'_{j}/2)+}$,
while
$(\check\phi^{+}_{\iota,x})|_{\KpM_{\iotaM,x}}$ is a product $\prod_{i=0}^{d}
(\psi_{i}^{+})|_{\KpM_{\iotaM,x}}$ with  $\psi_{i}^{+}$  a certain character of $G_{\iota,x,0+}^{i}G_{x,(r_{i}/2)+}$.
Note that if $i_{j-1}<i\leq i_{j}$, we have 
$$M\cap G_{\iota,x,0+}^{i}G_{x,(r_{i}/2)+} =
(M\cap G_{\iota,x,0+}^{i})M_{x,(r_{i}/2)+} = M^{j}_{\iotaM,x,0+}M_{x,(r_{i}/2)+} \supseteq
M^{j}_{\iotaM,x,0+}M_{x,(r'_{j}/2)+}.$$
Therefore, it will suffice to prove that $\psi_{M,j}^{+} = \prod_{i=i_{j-1}+1}^{i_{j}}
(\psi_{i}^{+})|_{M^{j}_{\iotaM,x,0+}M_{x,(r'_{j}/2)+}}$.
Recall that $(\psi_{i}^{+})|_{G_{\iota,x,0+}^{i}}$ is the restriction of a
character $\check\varphi_{i}\check\varphi_{i+1}^{-1}$ of $G_{\iota}^{i}$ that depends on
the choice of $\hat\varphi_{i}\in H^{1}(W_{F},Z(\hat{\mathbf{G}}_{\phi,r_{i-1}+}))$
extending $\hat\phi|_{I_{F}^{r_{i-1}+}}$ and the choice of $\hat\varphi_{i+1}\in H^{1}(W_{F},Z(\hat{\mathbf{G}}_{\phi,r_{i}+}))$
extending $\hat\phi|_{I_{F}^{r_{i}+}}$. Similarly, $(\psi_{M,j}^{+})|_{M_{\iotaM,x,0+}^{j}}$ is the restriction of a character
$\check\varphi_{M,j}\check\varphi_{M,j+1}^{-1}$ that depends on
the choice of $\hat\varphi_{M,j}\in H^{1}(W_{F},Z(\hat{\mathbf{M}}_{\phiM,r'_{j-1}+}))$
extending $\hat\phiM|_{I_{F}^{r'_{j-1}+}}$ and the choice of $\hat\varphi_{M,j+1}\in H^{1}(W_{F},Z(\hat{\mathbf{M}}_{\phiM,r'_{j}+}))$
extending $\hat\phiM|_{I_{F}^{r'_{j}+}}$.  By Lemma \ref{lemme_indep}, these choices
eventually do not matter, in the sense that $\prod_{j=0}^{d'} (\psi_{M,j}^{+})|_{\KpM_{\iotaM,x}}$  and $\prod_{i=0}^{d}
(\psi_{i}^{+})|_{\KpM_{\iotaM,x}}$ do not depend on them, so we may choose 
$\hat\varphi_{M,k}$ to be the  composition of
$\hat\varphi_{i_{k-1}+1}$ with the inclusion $Z(\hat{\mathbf{G}}_{\phi,r_{i_{k-1}}+})\subset
Z(\hat{\mathbf{M}}_{\phiM,r'_{k-1}+})$ for all $1 \leq k \leq d'$. In this way, 
 we ensure that the characters 
$\psi_{M,j}^{+}$ and  $\prod_{i=i_{j-1}+1}^{i_{j}} \psi_{i}^{+}$ coincide on
$M^{j}_{\iotaM,x,0+}$. It then remains to see that they also coincide on
$M_{x,(r'_{j}/2)+}$,
by checking that Yu's extension procedures over $M$ and $G$ are compatible. For $G$
and index $i$, this procedure rests on the decomposition
$\gG_{x,(r_{i}/2)+:r_{i}+}=\gG^{i}_{\iota,x,(r_{i}/2)+:r_{i}+}\oplus
\nG^{i}_{\iota,x,(r_{i}/2)+:r_{i}+}$. 
The key point is then that
whenever $i_{j-1}<i\leq i_{j}$,
intersecting this decomposition with  $\mG$ (which amounts to taking the weight-0 part
of the action of the maximal split central torus of $M$) yields back the
corresponding decomposition $\mG_{x,(r_{i}/2)+:r_{i}+}=\mG^{j}_{\iota, x,(r_{i}/2)+:r_{i}+}\oplus 
\nG^{M,j}_{\iota,x,(r_{i}/2)+:r_{i}+}$.

\ref{item:non-degenerate1}) and \ref{item:non-degenerate2})
Thanks to the Heisenberg property of Proposition \ref{Heisenberg}, 
the computation of  \cite[\S 5.28]{finitude} shows\footnote{indeed, we specialize  the notation of
  \emph{loc.\ cit.}  as follows:  
${G}^{\dag}:=\Ku_{\iota,x}$,
${G}^{*}:=\Kp_{\iota,x}$, ${U}^{\dag}:=\Uu_{\iota,x}$, ${U}^{*}:=\Up_{\iota,x}$, and $\theta:=\check\phi_{\iota,y}^{+}$.} 
that for each character $\chi : \Uu_{\iota,x}/\Up_{\iota,x}\To{} R^{\times}$, and denoting by
$e_{\chi}$ the associated idempotent in $R\Uu_{\iota,x}$, we have
$$ |\Uu_{\iota,x}/\Up_{\iota,x}| e_{\chi}e_{\bUu_{\iota,x}}e_{\chi}e_{\iota,x}= e_{\chi}e_{\iota,x}.$$
Taking $\chi=1$, we obtain \ref{item:non-degenerate1}). Summing over all $\chi$'s we get
$$  |\Uu_{\iota,x}/\Up_{\iota,x}| \sum_{\chi : \Uu_{\iota,x}/\Up_{\iota,x}\To{} R^{\times}}
e_{\chi}e_{\bUu_{\iota,x}}e_{\chi}e_{\iota,x}=e_{\iota,x},$$
showing in particular that  $e_{\iota,x}\in R\Ku_{\iota,x} e_{\bUu_{\iota,x}}R\Ku_{\iota,x}$. Similarly,
we have $e_{\iota,x}\in R\Ku_{\iota,x} e_{\Uu_{\iota,x}}R\Ku_{\iota,x}$, hence also
$e_{\iota,x}\in  R\Ku_{\iota,x} e_{\Uu_{\iota,x}}R\Ku_{\iota,x}e_{\bUu_{\iota,x}}R\Ku_{\iota,x}$.
But $\Ku_{\iota,x}$ also has the Iwahori decomposition property with respect to
$({\mathbf{P}} ,\bar{\mathbf{P}} )$, so that
$R\Ku_{\iota,x} e_{\Uu_{\iota,x}}R\Ku_{\iota,x}e_{\bUu_{\iota,x}}R\Ku_{\iota,x}=R\Ku_{\iota,x}
e_{\Uu_{\iota,x}}e_{\bUu_{\iota,x}}R\Ku_{\iota,x}$.
Finally, recall that $e_{\iota,x}$ is central in $R\Ku_{\iota,x}$ and observe that
$e_{\Uu_{\iota,x}}e_{\iota,x} e_{\bUu_{\iota,x}}=e_{\Uu_{\iota,x}}e_{\iotaM,x}e_{\bUu_{\iota,x}}$.

\ref{item:HL-like}) This is Proposition 9.3 of \cite{finitude}.  However, since the proof there leaves many details
to the reader, we supply these details here. That proposition is ultimately a consequence of
Corollary 5.10 of \emph{loc.\ cit.} applied to the following set up:
\begin{itemize}
\item $\underline{G}:=G_{x,0}$  (and therefore $\underline{G}^{\dag}=G_{x,0+}$, see below),
\item  $\underline{G'}:=\Ko_{\iota,x}$ (and therefore  $\underline{G'}^{\dag}=\Ku_{\iota,x}$ and
  $\underline{M'}=\Ku_{\iotaM,x}$),
\item $\varepsilon':=e_{\iotaM,x}$ and $\tilde\varepsilon':=e_{\iota,x}$.
\end{itemize}
In order to apply Corollary 5.10 of \emph{loc.\ cit.}, there are a number of properties to check.
First of all, both $\underline{G}$ and $\underline{G}'$ need to be groups of integral points of a
connected smooth model of ${\mathbf{G}}$ over $\OC_{F}$. Here, by definition $G_{x,0}$ is
the parahoric subgroup of $G_{x}$ and the desired model was constructed by Bruhat--Tits, while the
desired model for $\Ko_{\iota,x}$ was constructed by Yu in \cite[Prop~10.2]{Yu_models}. 
Then, the groups $\underline{G}^{\dag}$, resp.,
$\underline{G'}^{\dag}$ should be the integral points of the dilatation of the unipotent radical of
the special fiber of $\underline{G}$, resp., $\underline{G}'$, or more explicitly, the
pro-$p$-radical of $\underline{G}$, resp., $\underline{G}'$. Hence they are indeed given by
$G_{x,0+}$, resp., $\Ku_{\iota,x}$. Note that the hypothesis
$\underline{G}'\cap\underline{G}^{\dag}\supseteq \underline{G'}^{\dag}$ of \emph{loc.\ cit.} is indeed
satisfied (and even equality holds).
Next, the parabolic subgroups ${\mathbf{P}} $ and $\bar{\mathbf{P}} $
should be $\underline{G'}$-admissible in the sense of \cite[\S 5.1]{finitude}. For this, it is
enough to see that the maximal split central torus $\mathbf{A_{M}}$ of ${\mathbf{M}}$ extends to a subtorus of 
Yu's model of $\Ko_{\iota,x}$. But the latter contains the Bruhat--Tits  model of the parahoric
subgroup $G_{\iota,x,0}^{0}$ of $G_{\iota}$ by \cite[Prop 10.4 iv)]{Yu_models}. Hence any maximal
split torus $\mathbf{S}$ of ${\mathbf{G}}_{\iota}$ 
whose apartment in $\BT_{\iota}$ contains $x$ extends to a split torus of Yu's model. Since $x\in \BT_{\iotaM}
=\BT({\mathbf{M}}_{\iotaM})$ and ${\mathbf{M}}_{\iotaM}={\mathbf{M}}\cap{\mathbf{G}}_{\iota}$ is an $F$-Levi
subgroup of ${\mathbf{G}}_{\iota}$, we may choose such an $\mathbf{S}$ that is contained in
${\mathbf{M}}_{\iotaM}$. But $\mathbf{A_{M}}$ is contained in the center of ${\mathbf{M}}_{\iotaM}$,
hence also in $\mathbf{S}$, and we see that   $\mathbf{A_{M}}$ extends to a split subtorus of Yu's model.
There are three further requirements on the idempotents $\varepsilon'$ and
$\tilde\varepsilon'$. First, $\varepsilon'$ should be ``essentially of depth zero'' in the sense of
\cite[Lemma 5.6]{finitude}. Actually, we have checked this property in the proof of ii)
above. Next, hypothesis i) of \cite[Cor 5.10]{finitude} follows from i) of this lemma, while
hypothesis ii) of \emph{loc.\ cit.} follows from Proposition \ref{prop_intertwining} ii) (which
implies more generally that the centralizer of $e_{\iota,x}$ in $G_{x}$ is $\K_{\iota,x}$).
We may now apply Corollary 5.10 of \cite{finitude}, thus completing the proof of iii).

\ref{item:intertwining})  By Proposition \ref{prop_intertwining} ii) and \eqref{Gaction}, we have
$e_{\iota,x}ue_{\iota,x}=0$ for all $u\in U_{x}\setminus U_{\iota,x}$ while, by Proposition \ref{prop_intertwining} i), we have $e_{\iota,x}ue_{\iota,x}=ue_{\iota,x}$ if $u\in U_{\iota,x}$ . Writing
$e_{U_{x}}=|U_{x}/U_{\iota,x}|^{-1}\sum_{u\in U_{x}/U_{\iota,x}}ue_{U_{\iota,x}}$ we obtain
the first identity. The second one is proved in the same way.

\ref{item:centralizer})
The hypothesis $C_{\hat{\mathbf{G}}}(\phi)\subseteq \hat{\mathbf{M}}$ implies that
$\mathbf{S}_{\phi}=\mathbf{S}_{\phi_{M}}$. Since $\iotaM(\mathbf{S}_{\phi_{M}})\supseteq
Z({\mathbf{M}})^{\circ}$,  we obtain  that 
${\mathbf{G}}_{\iota}\subseteq {\mathbf{M}}$ (and actually ${\mathbf{G}}_{\iota}={\mathbf{M}}_{\iotaM}$).
In particular, we have
${\mathbf{U}}\cap{\mathbf{G}}_{\iota}=\bar {\mathbf{U}}\cap{\mathbf{G}}_{\iota}=\{1\}$ and it follows that
$U\cap \K_{\iota,x}=U\cap \Ku_{\iota,x}$ and  $\bar U\cap \K_{\iota,x}=\bar U\cap
\Ku_{\iota,x}$.
\end{proof}

\begin{lemme} \label{comput_par_ind}
  With the notation of \ref{setup_par_ind}, there is a $G\times \K_{\iotaM,x}$-equivariant isomorphism
  $$ i_{P}\left(\cInd{\KpM_{\iota_{M},x}}{M}\left(\check\phi^{+}_{\iota_{M},x}\right)\right)
  \simeq \cInd{U_{\iota,x}\Kp_{\iota,x}}{G}(\tilde\phi^{+}_{\iota,x})$$
  where $\tilde\phi^{+}_{\iota,x}$ is the unique character of $U_{\iota,x}\Kp_{\iota,x}$
  that extends $\check\phi^{+}_{\iota,x}$ and the trivial character of $U_{\iota,x}$.
\end{lemme}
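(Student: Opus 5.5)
We first rewrite the induced representation using transitivity of induction. Unnormalized parabolic induction is smooth induction from $P=MU$ of the inflation, so $i_{P}(\cInd{\KpM_{\iotaM,x}}{M}(\check\phi^{+}_{\iotaM,x}))$ is the space of smooth functions $f:G\to \cInd{\KpM_{\iotaM,x}}{M}(\check\phi^{+}_{\iotaM,x})$ that are left $U$-invariant and $M$-equivariant. This identifies with the space of smooth functions $G\to R$ that transform on the left under $U\KpM_{\iotaM,x}$ by the character $\check\phi^{+}_{\iotaM,x}$ inflated trivially along $U$, and are compactly supported modulo $P$ in the $M$-direction. Since $P\backslash G$ is compact, this is $\cInd{U\KpM_{\iotaM,x}}{G}(1\otimes\check\phi^{+}_{\iotaM,x})$, i.e.\ induction from the closed, non-compact subgroup $U\KpM_{\iotaM,x}$. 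The $\K_{\iotaM,x}$-action on the source, via right multiplication on $M$, corresponds to the right action of $\K_{\iotaM,x}\subseteq M$ normalizing $U\KpM_{\iotaM,x}$ and fixing the character. This gives the required $G\times\K_{\iotaM,x}$-equivariance.

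The core step is to compare $U\KpM_{\iotaM,x}$ with $H:=U_{\iota,x}\Kp_{\iota,x}$. By Lemma \ref{Iwahori} and the equality $M\cap\Kp_{\iota,x}=\KpM_{\iotaM,x}$ established in Lemma \ref{lemma_ind_parab} i), we have the Iwahori decomposition $H=U_{\iota,x}\,\KpM_{\iotaM,x}\,\bUp_{\iota,x}$. Here $U_{\iota,x}=U\cap \K_{\iota,x}$ is normalized by $\Kp_{\iota,x}$ modulo $\Kp_{\iota,x}$, since $\K_{\iota,x}$ normalizes $\Kp_{\iota,x}$. The character $\tilde\phi^{+}_{\iota,x}$ is well defined because $\K_{\iota,x}$ centralizes $\check\phi^{+}_{\iota,x}$ (Proposition \ref{prop_intertwining} i)) and $\check\phi^{+}_{\iota,x}$ is trivial on $\Up_{\iota,x}$ (Lemma \ref{Iwahori}). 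By Lemma \ref{lemma_ind_parab} i), its restriction to $M\cap H$ is $\check\phi^{+}_{\iotaM,x}$, and it is trivial on both $U\cap H$ and $\bar U\cap H$.

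We then construct an explicit map $\Theta:\cInd{H}{G}(\tilde\phi^{+}_{\iota,x})\to \cInd{U\KpM_{\iotaM,x}}{G}(\ldots)$ by $U$-averaging, $\Theta(f)(g)=\int_{U}f(ug)\,du$ (with Haar measure normalized on $U\cap H$). This is $G$-equivariant, and it is $\K_{\iotaM,x}$-equivariant because $\K_{\iotaM,x}$ normalizes $H$ and $U$. Left equivariance under $\KpM_{\iotaM,x}$ follows because $\KpM_{\iotaM,x}$ normalizes $U$ and preserves $du$, being compact. Convergence follows from compact support modulo $H$.

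Injectivity and surjectivity are the main obstacle. Both the source and the target are generated by a single function supported on the identity double coset, so we test them by applying the idempotent $e_{\KpM_{\iotaM,x}}$-type projections and compare. Alternatively, following the style of the paper, we can identify both sides through the Hom-functor: for $V\in\Rep_R(G)$, Frobenius reciprocity and the Jacquet module give $\Hom_G(\text{source},V)\simeq \Hom_{\KpM}(\check\phi^{+}_{\iotaM,x},V_U)$. This in turn equals $e_{\iotaM,x}(V_U)$, and by a Jacquet-type lemma this is $e_{\iotaM,x}e_{U_{x}}\ldots$ applied to $V$. Meanwhile $\Hom_G(\cInd{H}{G}\tilde\phi^{+},V)=e_{U_{\iota,x}}e_{\iota,x}V$.

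The key input for this comparison is the projection $V^{H,\tilde\phi}\to (V_U)^{\KpM,\check\phi}$. Bijectivity should follow from the Iwahori decomposition of $H$ with respect to $(P,\bar P)$, combined with the standard argument (Casselman/Bushnell-type) that $e_{\bUp}$ becomes invertible after conjugating by a strictly $P$-contracting central element $a$ of $M$. Such an $a\in Z(M)$ lies in $\K_{\iotaM,x}$-normalizing position, because it centralizes $\iotaM(\mathbf{S}_{\phiM})$ and the character. Using positive powers of $a$ to shrink $\bUp_{\iota,x}$ and expand $U_{\iota,x}$, we reduce to the classical statement that $V^{J}\to (V_U)^{J_M}$ is surjective and its kernel is killed by sufficiently positive $a$-translates. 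Once functorial isomorphisms of Hom-spaces compatible with $G\times\K_{\iotaM,x}$ are obtained, Yoneda gives the isomorphism. I expect the delicate point to be controlling the $a$-translates inside $\Kp_{\iota,x}$ while keeping the character intact; this uses the triviality of $\check\phi^{+}_{\iota,x}$ on $U\cap\Kp$ and $\bar U\cap\Kp$ from Lemma \ref{Iwahori}.
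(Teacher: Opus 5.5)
Your first steps are fine: $i_P(\cInd{\KpM_{\iotaM,x}}{M}(\check\phi^+_{\iotaM,x}))$ is indeed compact induction from the closed subgroup $U\KpM_{\iotaM,x}$ (by compactness of $P\backslash G$). The character $\tilde\phi^+_{\iota,x}$ is well defined on $H=U_{\iota,x}\Kp_{\iota,x}=U_{\iota,x}\KpM_{\iotaM,x}\bUp_{\iota,x}$. Your $U$-averaging map $\Theta$ is well defined and $G\times\K_{\iotaM,x}$-equivariant. But the bijectivity of $\Theta$ is the whole content of the lemma, and your route to it breaks down.

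First, $i_P$ is the \emph{right} adjoint of $r_P$. Frobenius reciprocity computes $\Hom_G(V,i_P\sigma)\simeq\Hom_M(V_U,\sigma)$, not $\Hom_G(i_P\sigma,V)$. Maps out of $i_P\sigma$ are governed by Bernstein's second adjointness (itself a deep theorem for general coefficient rings $R$). That adjunction involves the Jacquet module $V_{\bar U}$ for the opposite parabolic, up to a modulus character, not $V_U$. So the identification $\Hom_G(i_P(\ldots),V)\simeq e_{\iotaM,x}(V_U)$ on which your Yoneda argument rests is incorrect as stated. You would also still have to check that the resulting isomorphism of functors is the one induced by $\Theta$.

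Second, even after this correction you need a natural bijection between $e_{U_{\iota,x}}e_{\iota,x}V$ and the relevant isotypic part of a Jacquet module. The contracting-element argument does not give it. A strictly positive $a\in Z(M)$ yields Jacquet's lemma, i.e.\ surjectivity of $V^{(J,\lambda)}\to V_U^{(J_M,\lambda_M)}$, and shows that the kernel is annihilated by $e\,a^n$ for $n\gg0$. Injectivity, however, requires the Hecke operator supported on $HaH$ to act invertibly on $V^{(H,\tilde\phi)}$. This is a Bushnell--Kutzko cover-type property that you neither state nor prove. It is not formal here, since $H$ is very asymmetric: its $U$-part is $U_{\iota,x}\supseteq\Uu_{\iota,x}$, but its $\bar U$-part is only $\bUp_{\iota,x}$. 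Your alternative claim, that the target is generated by the single function supported on $P\bUp_{\iota,x}$, just restates the surjectivity of $\Theta$.

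What is missing is the input the paper uses to bypass positive elements altogether: the ``HL-like'' property \ref{item:HL-like}) of Lemma \ref{lemma_ind_parab}. This is a genuinely nontrivial statement, derived from \cite[Cor.~5.10]{finitude} using Yu's smooth models of $\Ko_{\iota,x}$. Via \cite[Cor.~3.6(ii)]{finitude}, it identifies $i_P(\cInd{\KpM_{\iotaM,x}}{M}(\check\phi^+_{\iotaM,x}))$ with $\cInd{G_x}{G}(\HC_R(G_x)e_{\bar U_{x,0+}}e_{U_x}e_{\iotaM,x})$. After that, parts \ref{item:intertwining}) and \ref{item:non-degenerate1}) of the same lemma show that right multiplication by $e_{U_x}$ is a $G_x\times\K_{\iotaM,x}$-isomorphism $\HC_R(G_x)e_{U_{\iota,x}}e_{\iota,x}\simto\HC_R(G_x)e_{\bar U_{x,0+}}e_{\iota,x}e_{U_x}$. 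Part \ref{item:non-degenerate1}) encodes the perfect Heisenberg pairing between $\Uu_{\iota,x}/\Up_{\iota,x}$ and $\bUu_{\iota,x}/\bUp_{\iota,x}$, which your sketch never uses.
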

Here the actions of $K_{\iotaM,x}$ come from the fact this group centralizes both the
character $\check\phi^{+}_{\iota_{M},x}$ of $\Kp_{\iotaM,x}$ and the character
$\tilde\phi^{+}_{\iota, x}$ of $U_{\iota,x}\Kp_{\iota,x}$. Explicitly, and through the identifications
\begin{center}
  $\cInd{\KpM_{\iota_{M},x}}{M}\left(\check\phi^{+}_{\iota_{M},x}\right)=\HC_{R}(M)e_{\iotaM,x}$
  and
  $\cInd{U_{\iota,x}\Kp_{\iota,x}}{G}(\tilde\phi^{+}_{\iota,x})=\HC_{R}(G)e_{U_{\iota,x}}e_{\iota,x}$
\end{center}
these actions are given by multiplication on the right. 
\begin{proof}
  Thanks to (\ref{item:HL-like}) of Lemma \ref{lemma_ind_parab}, 
  \cite[Cor.~3.6(ii)]{finitude} provides us with an $RG\otimes R\K_{\iotaM,x}$-linear
  isomorphism
$$ \cInd{G_{x}}{G}\left(\HC_{R}(G_{x})e_{\bar U_{x,0+}}e_{U_{x}}
  e_{\iota_{M},x}\right)  \simto
i_{P}\left(\cInd{\KpM_{\iota_{M},x}}{M}\left(\check\phi^{+}_{\iota_{M},x}\right)\right)
.$$
 Note that in \textit{loc.\ cit.}, the equivariance is for $RG\otimes_{R}(
 e_{\iotaM,x}RM_{x}e_{\iotaM,x})$, and here we compose the second action with the map
 $R\K_{\iotaM,x}\To{\times e_{\iotaM,x}}e_{\iotaM,x}RM_{x}e_{\iotaM,x}$. 
(Note also that replacing $i_P$ by the normalized induction provides an isomorphic co-domain of the above isomorphism.) 

 It thus remains to find an $RG_{x}\otimes_{R}RK_{\iotaM,x}$-linear isomorphism between 
  $$\HC_{R}(G_{x})e_{\bar U_{x,0+}}e_{U_{x}} e_{\iota_{M},x}=\HC_{R}(G_{x})e_{\bar U_{x,0+}}e_{\iota_{M},x}e_{U_{x}} =\HC_{R}(G_{x})e_{\bar U_{x,0+}}e_{\iota,x}e_{U_{x}},$$ 
  where the first equality follows from $\K_{\iotaM,x} \subset M_x$ normalizing $U_x$, and
  $$\HC_{R}(G_{x})e_{U_{\iota,x}}e_{\iota,x}=\cInd{U_{\iota,x}\Kp_{\iota,x}}{G_{x}}(\tilde\phi^{+}_{\iota,x}).$$
  By Lemma \ref{lemma_ind_parab} (\ref{item:intertwining}),
  the inclusions
  $$
  \HC_{R}(G_{x})e_{\iota,x}e_{\bar U_{x,0+}}e_{\iota,x}
  \subseteq \HC_{R}(G_{x})e_{\bar U_{x,0+}}e_{\iota,x}
  \subseteq \HC_{R}(G_{x})e_{\bUu_{\iota,x}}e_{\iota,x}$$
  are equalities. 
  For the analogous reason, i.e., because $e_{\iota,x}e_{U_{\iota,x}}$ is a scalar multiple of $e_{\iota,x}e_{U_{x}}e_{\iota,x}$, the maps
  $$ \HC_{R}(G_{x})e_{\bUu_{\iota,x}}e_{\iota,x}e_{U_{\iota,x}} \To{\times e_{U_{x}}}
  \HC_{R}(G_{x})e_{\bUu_{\iota,x}}e_{\iota,x}e_{U_{x}} \To{\times e_{\iota,x}}
  \HC_{R}(G_{x})e_{\bUu_{\iota,x}}e_{\iota,x}e_{U_{x}}e_{\iota,x}$$
  are isomorphisms. Similarly, by Lemma \ref{lemma_ind_parab} (\ref{item:non-degenerate1}), the inclusions 
  $$ \HC_{R}(G_{x})e_{\Uu_{\iota,x}}e_{\bUu_{\iota,x}}e_{\Uu_{\iota,x}}e_{\iota,x}
  \subseteq \HC_{R}(G_{x})e_{\bUu_{\iota,x}}e_{\Uu_{\iota,x}}e_{\iota,x}
  \subseteq \HC_{R}(G_{x})e_{\Uu_{\iota,x}}e_{\iota,x}$$ 
  are equalities. 
  From the last equality, and Proposition \ref{prop_intertwining}(i), we obtain using $e_{\Uu_{\iota,x}}e_{U_{\iota,x}}=e_{U_{\iota,x}}$ that
  \begin{eqnarray*}
  \HC_{R}(G_{x})e_{U_{\iota,x}}e_{\iota,x}&=&\HC_{R}(G_{x})e_{\Uu_{\iota,x}}e_{U_{\iota,x}}e_{\iota,x}=\HC_{R}(G_{x})e_{\Uu_{\iota,x}}e_{\iota,x}e_{U_{\iota,x}}=\HC_{R}(G_{x})e_{\bUu_{\iota,x}}e_{\Uu_{\iota,x}}e_{\iota,x}e_{U_{\iota,x}}\\
  &=&\HC_{R}(G_{x})e_{\bUu_{\iota,x}}e_{\iota,x}e_{U_{\iota,x}} .
\end{eqnarray*}
  Combining all the above observations, 
  we also have 
   \[\HC_{R}(G_{x})e_{\bar U_{x,0+}}e_{\iota,x}e_{U_{x}}=\HC_{R}(G_{x})e_{\bUu_{\iota,x}}e_{\iota,x}e_{U_{x}}=\HC_{R}(G_{x})e_{\bUu_{\iota,x}}e_{\iota,x}e_{U_{x}} ,\]
  and we see that the right multiplication by $e_{U_{x}}$
  induces an isomorphism
  $\HC_{R}(G_{x})e_{U_{\iota,x}} e_{\iota,x}\simto\HC_{R}(G_{x})e_{\bar U_{x,0+}}e_{\iota,x}e_{U_{x}}$,
  which is $G_{x}\times\K_{\iotaM,x}$-equivariant because $\K_{\iotaM,x} \subset M_x$ normalizes $U_x$.
\end{proof}

\alin{Proof of Theorem \ref{compat_parab_induc}}
i) In view of the projective generator given in Theorem \ref{thm_Serresubcat},
it is sufficient to prove that for each $\iotaM\in \IM$ and $x\in
  \BT_{\iota}\cap\BT_{M}$ we have 
$$i_{P}\left(\cInd{\KpM_{\iotaM,x}}{M}\left(\check\phi^{+}_{\iotaM,x}\right)\right)
\in \Rep^{\phi,I}_{R}(G).$$
This follows from Lemma \ref{comput_par_ind}.

ii) The hypothesis on $p$ is inherited by Levi subgroups of ${\mathbf{G}}$ so that, by Theorem
\ref{exhaust}, we  have
the two decompositions $\Rep_{R}(G)=\prod\Rep_{R}^{\phi,I}(G)$ and
$\Rep_{R}(M)=\prod\Rep_{R}^{\phiM,\IM}(M)$. Therefore ii) follows from i) by Frobenius reciprocity.

iii)  The equivalence can be checked after adjoining a square root of $p$ to the coefficient field, so we may assume without loss of generality that $R$ is a $\bZ[\mu_{p^\infty},\frac{1}{\sqrt{p}}]$-algebra. Now it suffices to prove the stated claim for the normalized parabolic induction and Jacquet functor as this normalization preserves the categories and equivalence. So for the remainder of the proof we denote by $r_P=r_P^G$ and $i_P=i_P^G$ the normalized Jacquet functor and the normalized parabolic induction, and we write $\tilde r_{P}$ for the composition of $r_{P}$ with the projection on
$\Rep_{R}^{\phiM,\IM}(M)$.  We will first show that $\tilde r_{P}\circ i_{P}$ is
isomorphic to the identity functor on $\Rep_{R}^{\phiM,\IM}(M)$ so that, in particular,
$i_{P}$ is fully faithful on $\Rep_{R}^{\phiM,\IM}(M)$.

To this aim, recall that Frobenius reciprocity is given by a natural transformation
$r_{P}\circ i_{P}\To{}\id$ which is an epimorphism in the category of additive endofunctors
of $\Rep_{R}(M)$ and whose kernel is described by the Mackey formula as follows: there is
a filtration indexed by double cosets $P\dot{w}P$ in $G\setminus P$ whose graded pieces are of
the form $\FC_{\dot w}:=\Ad_{\dot w}\circ i_{P\cap M^{w}}^{M^{w}}\circ   r_{M\cap P^{w}}^M$.  Here, we have chosen
representatives $\dot w$ in the rational normalizer $N_{G}(T)$ of a 
maximally split maximal torus ${\mathbf{T}}$ of ${\mathbf{M}}$ and $w$ is the image of $\dot
w$ in the absolute Weyl group $W_{\mathbf{G}}({\mathbf{T}})$. In this situation, ${\mathbf{M}}\cap
{\mathbf{P}} ^{w}$ is a parabolic 
$F$-subgroup of ${\mathbf{M}}$ with Levi component ${\mathbf{M}}\cap {\mathbf{M}}^{w}$, while
${\mathbf{P}} \cap {\mathbf{M}}^{w}$ is a parabolic 
subgroup of ${\mathbf{M}}^{w}$ with the same Levi component ${\mathbf{M}}\cap {\mathbf{M}}^{w}$. 
It then follows from the parts i) and ii)  that 
$$\FC_{\dot w}\left( \Rep_{R}^{\phiM,\IM}(M)\right) \subseteq
\prod_{(\phi_{w},I_{w})\mapsto (\phiM,\IM)} \Ad_{\dot w}\left(\Rep_{R}^{\phi_{w},I_{w}}(M^{w})\right)$$
where the product is over pairs $(\phi_{w},I_{w})$ relative to
${\mathbf{M}}\cap{\mathbf{M}}^{w}$ that map to $(\phiM,\IM)$, and whose pushforward to
${\mathbf{M}}^{w}$  we still denote by
$(\phi_{w},I_{w})$. 

Let us draw the dual picture. We may assume that $\hat{\mathbf{M}}$ contains a reference maximal torus $\hat{\mathbf{T}}$
in $\hat{\mathbf{G}}$ (part of a $W_{F}$-stable pinning of $\hat{\mathbf{G}}$). We
have a duality between ${\mathbf{T}}$ and $\hat{\mathbf{T}}$ that exchanges roots and
coroots. This induces an isomorphism $w\mapsto w,\, W_{\mathbf{G}}({\mathbf{T}})\simto
W_{\hat{\mathbf{G}}}(\hat{\mathbf{T}})$. 
Let us choose a lift $\hat w$ of $w$ in the normalizer
$N_{\hat{\mathbf{G}}}(\hat{\mathbf{T}})$. Then $\hat{\mathbf{M}}^{\hat w}$ is a Levi
subgroup of $\hat{\mathbf{G}}$ that is dual to ${\mathbf{M}}^{w}$ and $\Ad_{\hat w^{-1}}$ is a dual
isogeny (actually isomorphism) to $\Ad_{\dot w}$. Therefore the last inclusion can be
rewritten as
$$\FC_{\dot w}\left( \Rep_{R}^{\phiM,\IM}(M)\right) \subseteq
\prod_{(\phi_{w},I_{w})\mapsto (\phiM,\IM)} \left(\Rep_{R}^{\Ad_{\hat w^{-1}}(\phi_{w},I_{w})}(M)\right)$$
with the same convention as above.
Now let $\phi_{w}: P_{F}\To{} \hat{\mathbf{M}}\cap\hat{\mathbf{M}}^{\hat w}$ be a parameter
for ${\mathbf{M}}\cap {\mathbf{M}}^{w}$ whose pushforward to $\hat{\mathbf{M}}$ represents
$\phiM$. Assume that  $\Ad_{\hat w^{-1}}(\phi_{w})$ also represents $\phiM$. Then there is
some $\hat m\in\hat{\mathbf{M}}$ such that $\Ad_{\hat m\hat w^{-1}}(\phi_{ w})=\phi_{w}$, i.e., $\hat m\hat w^{-1}\in C_{\hat{\mathbf{G}}}(\phi_{w})$. By our
assumption, this implies that $\hat w\in\hat{\mathbf{M}}$, hence $\dot w\in {\mathbf{M}}$,
which contradicts the fact that $P\dot w P\neq P$. 
This means that  the projection of $\FC_{\dot w}\left(
  \Rep_{R}^{\phiM,\IM}(M)\right)$ on $\Rep^{\phiM,\IM}_{R}(M)$ is zero, and finally we
have proven that the natural transformation $r_{P}\circ i_{P}\To{}\id$ induces an
isomorphism
$\tilde r_{P}\circ i_{P}\simto \id_{\Rep^{\phiM,\IM}(M)}$.

Now, to conclude that $\tilde r_{P}$ and $i_{P}$ are quasi-inverse equivalences of
categories, it suffices to prove that $\tilde r_{P}$ is conservative on
$\Rep^{\phi,I}_{R}(G)$. So let $V$ be a non-zero object of $\Rep^{\phi,I}_{R}(G)$. By definition,
there is a point $x\in\BT_{\iota}$ such that $e_{\iota,x}V\neq 0$. Recall that, under the hypothesis
$C_{\hat{\mathbf{G}}}(\phi)\subseteq\hat{\mathbf{M}}$, we have
${\mathbf{G}}_{\iota}\subseteq {\mathbf{M}}$, so that
$\BT_{\iota}\subseteq \BT_{M}$ and $x$ actually lies in $\BT_{\iotaM}$.
We claim that $e_{U_{x}}e_{\bar U_{x,0+}}e_{\iotaM,x}V\neq 0$. Indeed,
by \eqref{item:Iwahori} and (\ref{item:intertwining}) of Lemma \ref{lemma_ind_parab}, we have 
$e_{\iota,x} e_{U_{x}} e_{\bar U_{x,0+}}e_{\iotaM,x} e_{\iota,x}
=e_{\iota,x} e_{U_{x}}e_{\iotaM,x}  e_{\bar U_{x,0+}}e_{\iota,x}
=e_{\iota,x} e_{U_{x}}e_{\Up_{\iota,x}}e_{\iotaM,x}e_{\bar \Up_{\iota,x}}  e_{\bar U_{x,0+}}e_{\iota,x}
=e_{\iota,x} e_{U_{x}}e_{\iota,x}  e_{\bar U_{x,0+}}e_{\iota,x}
=e_{U_{\iota,x}}e_{\bUu_{\iota,x}}e_{\iota,x}$, which is also equal to
$e_{\Uu_{\iota,x}}e_{\bUu_{\iota,x}}e_{\iota,x}=e_{\Uu_{\iota,x}}e_{\iota,x}e_{\bUu_{\iota,x}}=e_{\Uu_{\iota,x}}e_{\iota_M,x}e_{\bUu_{\iota,x}}$, by (\ref{item:centralizer}) of the same
lemma. 
So we deduce from Lemma \ref{lemma_ind_parab}(\ref{item:non-degenerate2}) that
$e_{\iota,x}\in RK_{\iota,x}e_{U_{x}} e_{\bar U_{x,0+}}e_{\iotaM,x}RK_{\iota,x}$ and
our claim $e_{U_{x}}e_{\bar U_{x,0+}}e_{\iotaM,x}V\neq 0$ follows from $e_{\iota,x}V\neq 0$.
Using now 
part (\ref{item:HL-like}) of  Lemma \ref{lemma_ind_parab},
 we may apply Prop~3.1 of \cite{finitude}, which tells us that the natural map $V\To{}
 r_{P}(V)=V_{U}$ induces an isomorphism
 $e_{U_{x}}e_{\bar U_{x,0+}}e_{\iotaM,x}V\simto e_{\iotaM,x} r_{P}(V)$.
 In particular, $e_{\iotaM,x} r_{P}(V)$ is non-zero, hence the projection of $r_{P}(V)$ on
 $\Rep^{\phiM,I_{M}}(M)$ is non-zero.

 \begin{prop}\label{indep_parab}
   In the setting of Theorem~\ref{compat_parab_induc} iii), the adjoint pair of equivalences
   $\tilde r_{P}:\Rep^{(\phi,I)}_{R}(G) \rightleftarrows \Rep^{(\phi_{M},I_{M})}_{R}(M):i_{P}$
 is independent of the choice of ${\mathbf{P}} $, up to isomorphism. 
 \end{prop}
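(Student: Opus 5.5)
Since $\tilde r_{P}$ is a quasi-inverse of $i_{P}$ on these categories (Theorem \ref{compat_parab_induc} iii)), it suffices to show that for two parabolic subgroups ${\mathbf{P}}$ and ${\mathbf{P}}'$ with common Levi component ${\mathbf{M}}$ the functors $i_{P}$ and $i_{P'}$ restricted to $\Rep^{\phiM,\IM}_{R}(M)$ are isomorphic. Equivalently, $\tilde r_{P'}\circ i_{P}$ should be isomorphic to the identity of $\Rep^{\phiM,\IM}_{R}(M)$. Indeed, if $\tilde r_{P'}\circ i_{P}\simeq\id$, then composing with the equivalence $i_{P'}$ gives $i_{P}\simeq i_{P'}\circ\tilde r_{P'}\circ i_{P}\simeq i_{P'}$, and the statement for the $\tilde r$'s follows by taking quasi-inverses.

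The plan is to compute $r_{P'}\circ i_{P}$ with the Mackey formula, in the same way as in the proof of iii), but with two different parabolics. This functor has a filtration indexed by double cosets $P'\backslash G/P$, with graded pieces $\Ad_{\dot w}\circ i\circ r$ passing through ${\mathbf{M}}\cap{\mathbf{M}}^{w}$. By the duality argument already used in the proof of iii), the hypothesis $C_{\hat{\mathbf{G}}}(\phi)\subseteq\hat{\mathbf{M}}$ forces every piece with $\dot w\notin M$ to have zero projection to the $(\phiM,\IM)$-factor. There is exactly one double coset $P'MP$, namely the open one. Its contribution is $i^{M}_{M\cap P}\circ r^{M}_{M\cap P'}=\id_{M}$ because $M\cap P=M\cap P'=M$, possibly up to a twist by a modulus character, which disappears since we work with normalized functors and $\sqrt p\in R$. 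One caveat: in a Mackey filtration the open cell is a \emph{sub}object or a \emph{quotient}, depending on the convention. After projecting to the block, the other pieces vanish, so $\tilde r_{P'}\circ i_{P}$ is isomorphic to this single piece, naturally in the representation. This gives the isomorphism $\tilde r_{P'}\circ i_{P}\simeq\id$.

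The main obstacle is the exact form of the Mackey formula when $P\neq P'$. The open piece naturally gives a map $i_{P}\to i_{P'}$, namely a standard intertwining operator, and for that operator to be an isomorphism one has to check that the open cell really contributes the identity functor and not only a map that is an isomorphism on the block. I would first use the geometric lemma of Bernstein–Zelevinsky over $R$, since only the order of double cosets and exactness of parabolic induction and Jacquet functors (with $p$ invertible) are needed. If that proves delicate, an alternative is to reduce to adjacent parabolics, namely ${\mathbf{P}}$ and ${\mathbf{P}}'$ sharing a maximal common parabolic. I would also check the composition of natural transformations $r_{P'}i_{P}\to\id$ carefully, so that functoriality holds in the whole category.
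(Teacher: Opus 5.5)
Your proposal is correct and essentially the paper's argument. Both use the geometric lemma and the duality argument from the proof of Theorem~\ref{compat_parab_induc} iii) to see that only the piece indexed by the identity double coset $P'P$ survives projection to the $(\phi_M,I_M)$-factor, which gives $\tilde r_{P'}\circ i_P\simeq\mathrm{id}$. The only difference is the last step: the paper gets a morphism $i_{P'}\to i_P$ by adjunction and uses conservativity of $\tilde r_P$, while you use $i_{P'}\circ\tilde r_{P'}\simeq\mathrm{id}$ on $\Rep^{\phi,I}_R(G)$, which works equally well.

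One small correction: $P'P$ is not the open double coset in general. It is closed when $P'=P$ and open only when $P'=\bar P$. As you note, its position in the filtration is irrelevant once the other pieces vanish, and no intertwining operators are needed.
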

 \begin{proof}
 	As in the proof of Theorem \ref{compat_parab_induc}iii) we may adjoin a square root of $p$ to our coefficient field and assume that $\tilde r_P$ and $i_P$ are normalized.
   Let $\mathbf{Q}$ be another parabolic $F$-subgroup of ${\mathbf{G}}$ with Levi component
   ${\mathbf{M}}$, yielding another adjoint pair of inverse equivalences $(\tilde r_{Q}, i_{Q})$.
   By the geometric lemma, the composition $r_{P}\circ i_{Q}$ has a filtration indexed by double
   cosets $Q\dot{w}P$ in $G$ with graded pieces of
   the form $\Ad_{\dot w}\circ i_{P\cap M^{w}}\circ   r_{M\cap Q^{w}}$. In particular, the graded
   piece associated to the coset $QP$ is the identity endofunctor of $\Rep_{R}(M)$.
   As in the proof of Theorem \ref{compat_parab_induc}iii), only that graded piece survives
   when restricting the domain
   to $\Rep^{(\phi_{M},I_{M})}_{R}(M)$ and projecting onto this factor. So we obtain an isomorphism
   $\tilde r_{P}\circ i_{Q}\simto \id_{\Rep^{(\phi_{M},I_{M})}_{R}(M)}$. By adjunction, we obtain a
   morphism
   $(i_{Q})|_{\Rep^{(\phi_{M},I_{M})}_{R}(M)}\To{}(i_{P})|_{\Rep^{(\phi_{M},I_{M})}_{R}(M)}$ whose
   composition with $\tilde r_{P}$ is an isomorphism. Since $\tilde r_{P}$ is conservative (as we saw in the proof of Theorem \ref{compat_parab_induc}iii)), that
   morphism is an isomorphism too.
 \end{proof}

%%%%---------------------

\section{Equivalences of categories and reduction to depth zero}
Recall that we assume that $p$ is odd and satisfies hypotheses \eqref{H1} and \eqref{H2}, i.e., $p$ is not a torsion prime for $\mathbf{G}$ nor for $\hat{\mathbf{G}}$.
To a pair $(\phi,I)$ consisting of a  wild
inertia  parameter  $\phi: P_{F}\To{}\hat{\mathbf{G}}$ whose centralizer
$C_{\hat{\mathbf{G}}}(\phi)$ is a Levi subgroup of $\hat{\mathbf{G}}$, and
a $G$-conjugacy class $I$ of rational Levi-center-embeddings $\mathbf{S}_{\phi}\injo{\mathbf{G}}$, we
have associated in Section \ref{sec:category-repphi-i_rg} a Serre subcategory 
$\Rep^{(\phi,I)}_{R}(G)$ of $\Rep_{R}(G)$, where $R$ is any commutative $\Rmin$-algebra.
Picking $\iota\in I$, we can also view $\phi$ as a
 wild inertia parameter for ${\mathbf{G}}_{\iota}$ with the same associated torus
$\mathbf{S}_{\phi}$, for which the corresponding set of 
Levi-center-embeddings is just the singleton $\{\iota\}$, whence a Serre subcategory
$\Rep^{\phi}_{R}(G_{\iota})$. The aim of this section is to construct some ``natural''
equivalences 
\begin{equation*}
  \label{desired_equivalences}
  \Rep^{\phi}_{R}(G_{\iota}) \simto \Rep^{(\phi,I)}_{R}(G).
\end{equation*}
As explained in \ref{subsec:twisted_depth_zero}, $\Rep^{\phi}_{R}(G_{\iota})$ is a twist of the depth-$0$
subcategory $\Rep^{1}_{R}(G_{\iota})$ by any character $\check\varphi:G_{\iota}\To{}R^{\times}$ 
associated to an extension of $\phi$ to a $1$-cocycle
$\varphi:\,W_{F}\To{}Z(\hat{\mathbf{G}}_{\iota})=Z(C_{\hat{\mathbf{G}}}(\phi))$. So, these
equivalences of categories fall in the general paradigm of ``reduction to depth $0$'' and, actually,
what we are going to construct are equivalences of categories
\begin{equation*}
  \label{desired_equivalences_bis}
  \Rep^{1}_{R}(G_{\iota}) \simto \Rep^{(\phi,I)}_{R}(G).
\end{equation*}

Unless specified otherwise, $R$  denotes a commutative $\Rmin$-algebra $R$. However, 
our main results will need $R$ to contain a $4^{th}$ root of unity and a square root of $p$. 

\subsection{Main result} \label{sec:main-result-strategy-1}
We fix a pair $(\phi,I)$ and an embedding $\iota\in I$ as above. 
Recall that $\BT'_\iota$ denotes the image of the extended Bruhat--Tits building of $\mathbf{G}_\iota$ in the reduced Bruhat--Tits building $\BT'$ of $\mathbf{G}$, the group $G_{\iota,x}$ is the stabilizer of a point $x \in \BT'_\iota$ in $G_\iota$ and we define the group $K_{\iota, x}$\index[notation]{Kaiotax@$\K_{\iota,x}$}  to be $G_{\iota,x}\Ku_{\iota,x}$, where $\Ku_{\iota,x}:=\Ku_{\iota,\hat{x}}$\index[notation]{Kdagger@$\Ku_{\iota,x}$} denotes the group defined in \ref{sec:K-def} for $\hat{x}$ any preimage of $x$ in $\BT_\iota$. Similarly we write $\Kp_{\iota,x}:=\Kp_{\iota,\hat{x}}$\index[notation]{Kplus@$\Kp_{\iota,x}$} 
  for $\hat{x}$ any preimage of $x$ in $\BT_\iota$.

\begin{theo}\label{thm-main}
 If $R$ is a $\Rmintwo$-algebra, there exists an equivalence of categories
	\[ \cI: \Rep^{1}_{R}(G_{\iota}) \simto \Rep^{(\phi,I)}_{R}(G) \]
and an explicitly constructed family $\{\cW_{\iota,
  x}\}_{x \in \BT'_\iota}$ of $R\K_{\iota,x}$-modules $\cW_{\iota,x}$
such that, for any $x \in \BT'_\iota$ and any $R$-representation 
$\rho$ of $G_{\iota,x}$ that is trivial on $G_{\iota,x,0+}$, there is an isomorphism
of $RG$-modules 
\[ \cI (\cind_{G_{\iota,x}}^{G_{\iota}}(\rho))\simeq
	\cind_{\K_{\iota,x}}^{G}(\WC_{\iota,x}\otimes_{R}\rho)
	\]
through which $\cI$  induces an isomorphism of $R$-algebras
\[\End_{RG_{\iota}}(\cind_{G_{\iota,x}}^{G_{\iota}}(\rho))
\simto \End_{RG}(\cind_{\K_{\iota,x}}^{G}(\WC_{\iota,x}\otimes_{R}\rho)).\]
\end{theo}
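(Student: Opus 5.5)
The strategy is to pass from both categories to categories of equivariant coefficient systems on buildings and then to build a ``Morita-type'' equivalence between these using a bridge object, as in \cite{Datequiv}. On the depth-zero side, $\Rep^{1}_{R}(G_{\iota})$ is equivalent, by \cite{MS1} and \cite{Lanard}, to the category of $G_{\iota}$-equivariant coefficient systems $\FC\mapsto \VC(\FC)$ on a suitable subdivision of $\BT'_{\iota}$. These systems have $\VC(\FC)$ a $G_{\iota,\FC}$-module trivial on $G_{\iota,\FC,0+}$ and satisfy the usual transition and homological conditions. On the other side, Theorem \ref{thm_Serresubcat} and Lemma \ref{lemma_MS_e_subdivided} give the analogous description of $\Rep^{\phi,I}_{R}(G)$: its objects are the $G$-equivariant coefficient systems $\FC\mapsto e_{\FC}V$ on the $e$-subdivision $\BT'_{\bullet/e}$. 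Since $e_{\FC}=\sum_{\iota'\in I_x/\sim_x}e_{\iota',x}$ and the $\iota'$ are $G$-conjugate to $\iota$, we can restrict to the $e$-facets contained in $\BT'_{\iota}$. Doing so identifies such systems with $G_{\iota}$-equivariant systems $\FC\mapsto e_{\iota,\FC}V$, regarded as $\K_{\iota,\FC}$-modules on which $\Kp_{\iota,\FC}$ acts by $\check\phi^{+}_{\iota,\FC}$. This identification uses Proposition \ref{prop_intertwining}, Lemma \ref{ortho_idemp} and the fact that $\K_{\iota,x}=G_{\iota,x}\Ku_{\iota,x}$.

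Next I would construct a Heisenberg--Weil coefficient system $\{\WC_{\iota,x}\}$ on $\BT'_{\iota}$. For each $x$, the quotient $\Ku_{\iota,x}/\Kp_{\iota,x}$ is a symplectic $\FM_p$-space with respect to $\theta$ (Proposition \ref{Heisenberg}). This gives a Heisenberg representation $\eta_x$ of $\Ku_{\iota,x}$ over $R$ with central character $\check\phi^{+}_{\iota,x}$, which is unique up to isomorphism because $p$ is invertible and $R\supset\mu_{p^\infty}$. I would then extend $\eta_x$ to $\K_{\iota,x}$ through the Weil representation of $G_{\iota,x}\to \Sp(\Ku/\Kp)$. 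Over $R$, this extension needs $\sqrt{p}$ and $\mu_4$ in order to normalize the Weil representation, and it is twisted by the sign characters of \cite{FKS} so that it becomes a genuine representation.

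The key step, and the main obstacle, is \emph{consistency}. For $e$-facets $\FC'\preceq\FC$ in $\BT'_{\iota}$, we need $\K_{\iota,\FC}\cap\K_{\iota,\FC'}$-equivariant isomorphisms
\[ e_{\iota,\FC}\,\WC_{\iota,\FC'}\;\simeq\;\WC_{\iota,\FC}\otimes(\text{inflation from } G_{\iota,\FC',0+}\text{-invariants}),\]
and these must satisfy the cocycle condition on chains of facets. I would prove this with the Iwahori decompositions of Lemma \ref{Iwahori}, relative to the parabolic subgroups of \ref{opposite_parabolic_sgps}, together with the computations in Lemma \ref{lemma_ind_parab}(ii),(iii). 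These show that $e_{\iota,\FC}$ cuts the Heisenberg module at $\FC'$ down to the one at $\FC$, tensored with the fixed vectors of a unipotent part. The delicate part is matching the Weil twists across facets, that is, checking that the chosen signs are compatible under restriction to Levi-type subgroups of the finite symplectic groups; I expect this to require most of the work.

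Given the system $\WC$, I would define $\cI$ facet by facet as $\VC\mapsto(\FC\mapsto \WC_{\iota,\FC}\otimes_{R}\VC(\FC))$, then pass to $0$-homology, extending from $\BT'_{\iota}$ to all of $\BT'$ by $G$-induction. Full faithfulness and essential surjectivity follow from three facts. First, $\Hom_{\Ku}(\WC_{\iota,x},-)$ gives an inverse on each facet, because $\WC_{\iota,x}|_{\Ku}$ is the unique irreducible with the given central character, so the relevant module categories are Morita equivalent. Second, the consistency isomorphisms carry transition maps to transition maps. Third, the homological vanishing of Lemma \ref{lemma_MS_e_subdivided} holds on both sides. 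Finally, applying $\cI$ to the coefficient system of $\cind_{G_{\iota,x}}^{G_\iota}(\rho)$, which is generated at $x$, gives $\cind_{\K_{\iota,x}}^{G}(\WC_{\iota,x}\otimes\rho)$. Because $\cI$ is an equivalence, the stated isomorphism of endomorphism algebras follows.
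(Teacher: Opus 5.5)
Your architecture matches the paper's. You localize both sides to Cartesian coefficient systems on the $e$-subdivision (Meyer--Solleveld, Lanard). You use a Heisenberg--Weil coefficient system $\WC_\iota$ on $[\BT'_{\iota,\bullet/e}/\K_{\iota}]$ as a kernel for $\IC_{\WC_\iota}$ and for $\RC_{\WC_\iota}(\VC)_\FC=\Hom_{R\Ku_{\iota,\FC}}(\WC_{\iota,\FC},\VC_\FC)$, and you compute on compact inductions. Your restriction of $G$-systems to $\BT'_\iota$ is fine in substance; the paper builds it into $\IC_{\WC_\iota}$ and checks it through $I_\FC/{\sim_\FC}=\bigsqcup_{\FC_\iota\in\BT'_{\iota,\bullet/e}/G_\iota}I_{\FC\FC_\iota}/{\sim_\FC}$ in the proof of Theorem \ref{main_under_assumption}.

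The gap is in the step you defer, the existence of $\WC_\iota$ (Theorem \ref{thm_WH_coef_system-general}), which is where the content lies. Your sketch of it goes wrong in identifiable places.

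First, Heisenberg representations over a general $R$ are not unique up to isomorphism. By Lemma \ref{lem_Heisenberg} they are unique only up to twisting by an invertible $R$-module. The right replacement for ``unique irreducible'' is ``projective generator with endomorphism ring $R$'', which still gives your Morita step. Likewise $\Hom_{R(\Ku_{\iota,\FC}\cap\Ku_{\iota,\FC'})}(\WC_{\iota,\FC},\WC_{\iota,\FC'})$ is only invertible (Lemma \ref{intertwining_Heisenberg}). So the isomorphisms $e_{\iota,\FC}\WC_{\iota,\FC'}\simeq\WC_{\iota,\FC}$ you need exist only after normalizing the choices, as in \ref{basic-choices}(iii). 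Note also that this local statement carries no extra tensor factor (Proposition \ref{rem_WH_coef_syst}).

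Second, the twist by $\eps_x$ from \cite{FKS} is not what makes the extension a representation of $\K_{\iota,x}$. That is done by the twist by $\check\varphi_0|_{G_{\iota,x}}$ in Definition \ref{Def-HeisWeil-non-twisted}, which descends along $G_{\iota,x}\ltimes\Ku_{\iota,x}\to\K_{\iota,x}$. The role of $\eps_x$ is exactly your ``delicate part''. Restricting the Weil representation to the parabolic subgroup of $\Sp(\Ku_{\iota,x}/\Kp_{\iota,x})$ stabilizing $(U\cap\Ku_{\iota,x})/(U\cap\Kp_{\iota,x})$ produces the sign character $\delta^x_{x'}$ (Lemma \ref{Lemma-restriction-of-Weilrep}), and \eqref{eqn-eps-delta} compensates it. This yields Proposition \ref{Hom_HW_H}: every $\Ku_{\iota,x}\cap\Ku_{\iota,x'}$-map between twisted Heisenberg--Weil representations is automatically $\K_{\iota,x}\cap\K_{\iota,x'}$-equivariant. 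That is what lets face maps commute with $G_{\iota,\FC}\subseteq\K_{\iota,\FC}\cap\K_{\iota,\FC'}$. The relevant Iwahori input is Lemma \ref{intertwining_Heisenberg}, not Lemma \ref{lemma_ind_parab}.

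Third, the cocycle condition is not verified directly on chains of facets. The paper uses Lemma \ref{composition_Heisenberg}: if $(\Ku_{\iota,x}\cap\Ku_{\iota,x'}\cap\Ku_{\iota,x''})(\Kp_{\iota,x}\cap\Kp_{\iota,x''})=\Ku_{\iota,x}\cap\Ku_{\iota,x''}$, then composing these rank-one Hom modules is an isomorphism, since by Lemma \ref{intertw_converse} the composite is nonzero modulo every maximal ideal. Via Corollary \ref{coro_comp_Heisenberg}, one fixes generators $\alpha_{\FC,\CC}:\WC_{\iota,\FC}\to\WC_{\iota,\FC_\CC}$ towards the barycentric $e$-vertex of a chamber $\CC$. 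One then \emph{defines} $\beta_{\FC,\FC'}$ by $\alpha_{\FC,\CC}=\alpha_{\FC',\CC}\circ\beta_{\FC,\FC'}$, so transitivity is automatic. Independence of $\CC$ follows from transitivity of $G_{\iota,\FC}$ on chambers through $\FC$ together with Proposition \ref{Hom_HW_H}. Everything is then transported from one chamber by $G_\iota$.

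This requires $\BT'_\iota$ to be the reduced building of $G_\iota$, i.e.\ Assumption \ref{assumption}. In general the paper passes to $\mathbf{M}=C_{\mathbf{G}}(\iota(\mathbf{S}_\phi^{\rm split}))$ and sets $\WC_{\iota,\FC}=\cind_{\Uu_{\iota,\FC}\K_{\iota_M,\FC}\bUp_{\iota,\FC}}^{\K_{\iota,\FC}}\WC_{\iota_M,\FC}$. It then pins down the face maps by $pr_{\FC'}\circ\beta_{\FC,\FC'}\circ i_\FC=\beta^{M}_{\FC,\FC'}$ (Lemma \ref{Lemma-beta-without-assumption}).

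So the central step remains unproven in your plan. The mechanisms you suggest for it do not work: uniqueness of Heisenberg modules, the FKS twist as a genuineness fix, and a direct cocycle check.

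Finally, the compact-induction formula follows directly only when $x$ lies in an $e$-vertex, because a system supported on the orbit of a higher-dimensional $e$-facet with zero face maps has zero colimit. For general $x$ the paper refines $e$ so that the barycenter becomes a vertex, and uses independence of $e$ (Proposition \ref{prop_indep_e}, Corollary \ref{coro_indep_e}).
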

The proof 
 of this theorem will be the content of most of the remainder of this
paper.
We refer to Theorem \ref{thm-main-2} and
Corollary \ref{coro_indep_e} for more details on the  $R\K_{\iota,x}$-modules $\cW_{\iota, x}$,
which require more preparation before being properly defined.
Suffice it to say,  they 
will be constructed via a suitable theory of Heisenberg--Weil
representations over $R$ that we introduce in  \ref{section-HW-coef-prep}.
That description will allow us to deduce an isomorphism between positive-depth and depth-zero Hecke algebras attached to types, see \ref{sec:hecke-types}, in particular Corollary \ref{cor-Hecke-alg-isom}, for details.

\subsection{Localization on the buildings} \label{sec:main-result-strategy-2}

\emph{In the remainder of the paper, the integer $e$\index[notation]{e@$e$} is  always
  supposed to satisfy the conditions of \ref{choice_e}, 
  and be such that barycenters of $1$-facets are $e$-vertices
(this ensures that stabilizers of $e$-facets in $G$ are also fixators of these $e$-facets)}.

\alin{$G$-equivariant coefficient systems on $\BC'$} \label{def_coef_syst}
We define a category $[\BT'_{\bullet/e}/G]$ as follows:
\begin{itemize}
  \item Its set of objects is the set $\BT'_{\bullet/e}$\index[notation]{B'bullete@$\BT'_{\bullet/e}$} of $e$-facets of  the reduced building of
    $G$.
  \item For $\FC,\FC'\in\BT'_{\bullet/e}$, we set $\Hom(\FC,\FC'):=\{g\in G,
    \ov{g\FC}\supseteq\FC'\}$,
    with composition given by multiplication in $G$. 
\end{itemize}

	A \emph{$G$-equivariant coefficient system of $R$-modules} is a functor 
	$\VC:\, [\BT'_{\bullet/e}/G]\To{} R-\Mod$. 	
Concretely, it is given by a collection of $R$-modules
$(\VC_{\FC})_{\FC\in\BT'_{\bullet/e}}$ with face maps
$\VC_{\FC}\To{\beta_{\cV,\FC,\FC'}}\VC_{\FC'}$\index[notation]{betaVFF'@$\beta_{\cV,\cF,cF'}$}
for each pair of 
facets such that $\o\FC\supseteq\FC'$ (corresponding to $g=1 \in \Hom(\cF, \cF')$), and
isomorphisms $g_{\VC,\FC}:\VC_{\FC}\simto 
\VC_{g\FC}$\index[notation]{gVF@$g_{\cV,\cF}$} subject to appropriate transitivity and
compatibility relations. We might also simply write $\beta_{\FC,\FC'}$\index[notation]{betaFF'@$\beta_{\cF,cF'}$} for $\beta_{\cV,\FC,\FC'}$ and $g_{\FC}$\index[notation]{gF@$g_{\cF}$} for $g_{\VC,\FC}$ if $\cV$ is clear from the context. In particular, each $\VC_{\FC}$ carries an $R$-linear
action of the stabilizer $G_{\FC}$ of $\FC$ in $G$. We say that the $G$-equivariant coefficient system is
\emph{smooth} if these actions are smooth. 
Smooth \emph{$G$-equivariant coefficient systems} form an abelian category $\Coef_{R}(\BT'_{\bullet/e}/G)$\index[notation]{CoefR@$\Coef_{R}(\BT'_{\bullet/e}/G)$}
where the morphisms are given by
natural transformations of functors.

\def\cart{{\rm cart}}

\alin{Coefficient systems and representations} \label{reps-G-coeff-sys}
There is a projection functor $[\BT'_{\bullet/e}/G]\To{\pi}[*/G]$ to
 the category  $[*/G]$ with $1$ object $*$ with set of endomorphisms $G$.
Note that a representation $V$ of $G$ on an $R$-module is given by a
functor $[*/G]\To{} R-\Mod$. Composition with $\pi$
then provides an exact functor
$$ \pi^{*}: \Rep_{R}(G)\To{} \Coef_{R}(\BT'_{\bullet/e}/G),$$ which
takes $V$ to the constant coefficient system $\FC\mapsto V$ with
$G$-equivariant structure given by the action of $G$  on $V$. The functor $\pi^{*}$ has a left adjoint
that we denote by $\pi_{!}$. Explicitly we have
$$\application{\pi_{!}:}{\Coef_{R}(\BT'_{\bullet/e}/G)}{\Rep_{R}(G)}{\VC}{\colim_{\BT'_{\bullet/e}}\VC},$$ where
the colimit is taken over the functor $\VC$ restricted to the category
associated with the poset $\BT'_{\bullet/e}$,  and the action of $G$ arises from 
its action on that poset and the
$G$-equivariant structure on $\cV$.

\alin{$(\phi,I)$-coefficient systems}  \label{phi_I_coef_systems}
Recall from Proposition \ref{indep_facet} iii) that for $x\in\BT$, the idempotent
$e_{\phi,I,x}$ only depends on the $e$-facet $\FC\in\BT'$ that contains the image of $x$
in $\BT'$, so we may denote this idempotent by $e_{\phi,I,\FC}$.
We define:\index[notation]{CoefphiIR@$\Coef^{\phi,I}_{R}(\BT'_{\bullet/e}/G)$}
\begin{center}
  \begin{tabular}[c]{lcl}
  $\Coef^{\phi,I}_{R}(\BT'_{\bullet/e}/G) $ & $:=$ &
                                        \begin{minipage}[c]{0.6\linewidth}
                                          the full subcategory of $\Coef_{R}(\BT'_{\bullet/e}/G)$ that consists
                                          of all smooth $G$-equivariant coefficient systems $\VC$
                                          such that, for any  facet $\cF$, we have
                                          $\VC_{\FC} = e_{\phi,I,\FC}\VC_{\FC}$.
                                        \end{minipage}
  \end{tabular}
\end{center}
It follows from the definition of $\pi_{!}$ that
$$\pi_{!}(\Coef^{\phi,I}_{R}(\BT'_{\bullet/e}/G))\subset \Rep^{\phi,I}_{R}(G).$$
By Proposition \ref{idemp_commut}, we have equalities
$e_{\phi,I,\FC}e_{\phi,I,\FC'}=e_{\phi,I,\FC}$
whenever $\o\FC\supseteq\FC'$. Therefore, for any smooth
$G$-equivariant coefficient system $\VC$ on $\BT'_{\bullet/e}$, we
have $\beta_{\VC,\FC,\FC'}(e_{\phi,I,\FC}\VC_{\FC})\subseteq e_{\phi,I,\FC'}\VC_{\FC'}$.
On the other hand, the fact that $e_{\phi,I,g\FC}=g e_{\phi,I,\FC} g^{-1}$ implies that
$g_{\VC,\FC}(e_{\phi,I,\FC}\VC_{\FC})=e_{\phi,I,g\FC}\VC_{g\FC}$. We thus get a functor 
$$\application{(-)^{\phi,I}:}{\Coef_{R}(\BT'_{\bullet/e}/G)}
{\Coef^{\phi,I}_{R}(\BT'_{\bullet/e}/G)}{\VC}{\VC^{\phi,I}
  :\,\FC\mapsto e_{\phi,I,\FC}\VC_{\FC} }$$
which is  right adjoint to the inclusion functor
$\Coef^{\phi,I}_{R}(\BT'_{\bullet/e}/G)\subset \Coef_{R}(\BT'_{\bullet/e}/G)$. 
Composing with $\pi^{*}$, we get a functor
$$ (\pi^{*})^{\phi,I} :\, \Rep_{R}(G)\To{} \Coef^{\phi,I}_{R}(\BT'_{\bullet/e}/G)$$
that is right adjoint to $\pi_{!}$ restricted to $\Coef^{\phi,I}_{R}(\BT'_{\bullet/e}/G)$.
Explicitly, for $V\in\Rep_{R}(G)$, the coefficient system $(\pi^*)^{\phi, I}(V)$ is given by the data
\begin{itemize}
\item $\forall\FC\in\BT'_{\bullet/e},\,(\pi^{*})^{\phi,I}(V)_{\FC}= e_{\phi,I,\FC}V$
\item $\forall \FC,\FC'$ with $\o\FC\supseteq\FC'$, the map $\beta_{\FC,\FC'}$ is the inclusion
  $e_{\phi,I,\FC}V=e_{\phi,I,\FC}e_{\phi,I,\FC'}V\subseteq e_{\phi,I,\FC'}V$
\item $\forall\FC\in \BT'_{\bullet/e},\forall g\in G$, the map
  $g_{\FC}:\, e_{\phi,I,\FC}V\To{g} e_{\phi,I,g\FC}V=ge_{\phi,I,\FC}g^{-1}V$ is given by the action of $g$.
\end{itemize}

We observe that $(\pi^{*})^{\phi,I}$ factors through the following full subcategory: 
\index[notation]{CoefphiIRcart@$\Coef^{\phi,I}_{R}(\BT'_{\bullet/e}/G)^{\cart}$}
\begin{center}
  \begin{tabular}[c]{lcl}
  $\Coef^{\phi,I}_{R}(\BT'_{\bullet/e}/G)^{\cart}$ & $:=$ &
                                        \begin{minipage}[c]{0.6\linewidth}
                                          the full subcategory of $\Coef_{R}(\BT'_{\bullet/e}/G)$ that consists
                                          of all smooth $G$-equivariant coefficient systems $\VC$
                                          such that, for any pair of facets $(\cF, \cF')$
                                          satisfying
                                          $\o\FC\supseteq\FC'$, the map $\beta_{\VC,\FC,\FC'}$
                                          induces an isomorphism
                                          $\VC_{\FC}\simto e_{\phi,I,\FC}\VC_{\FC'}$.
                                        \end{minipage}
  \end{tabular}
\end{center}
We refer to coefficient systems in $\Coef^{\phi,I}_{R}(\BT'_{\bullet/e}/G)^{\cart}$ as \emph{Cartesian $(\phi,I)$-coefficient systems}.

\begin{pro} 
  The above functors  
  induce quasi-inverse equivalences of categories
\ini \begin{equation}\label{thm_equiv_repr_coef} 
  \xymatrix{
    \pi_{!}:\, \Coef^{\phi,I}_{R}(\BT'_{\bullet/e}/G)^{\cart}
    \ar@<0.5ex>[r]^-{\sim} &
    \Rep^{\phi,I}_{R}(G):\, (\pi^{*})^{\phi,I} \ar@<0.5ex>[l]^-{\sim}
  }\end{equation}
\end{pro}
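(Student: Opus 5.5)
The plan is to check the two compositions separately, relying on the homological input of Lemma~\ref{lemma_MS_e_subdivided} and on the Cartesian condition.

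\emph{First composition: $\pi_{!}\circ(\pi^{*})^{\phi,I}\simeq\id$ on $\Rep^{\phi,I}_{R}(G)$.} For $V\in\Rep^{\phi,I}_{R}(G)$, the system $\VC:=(\pi^{*})^{\phi,I}(V)$ has $\VC_{\FC}=e_{\FC}V$ with inclusions as transition maps. Its colimit over the poset $\BT'_{\bullet/e}$ is the cokernel of the last differential of the cellular chain complex, which is $H_{0}(\BT'_{\bullet/e},\VC)$. By Lemma~\ref{lemma_MS_e_subdivided} this equals $\sum_{x}e_{x}V$, and since $V$ lies in $\Rep^{\phi,I}_{R}(G)$, that sum is $V$. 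This identification is the counit of the adjunction, and it is visibly $G$-equivariant. So the counit is an isomorphism.

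\emph{Second composition: the unit is an isomorphism on Cartesian systems.} Let $\VC$ be Cartesian and put $V:=\pi_{!}\VC$. I must show that the natural map $\VC_{\FC}\to e_{\FC}V$ is bijective for every $\FC$. The approach follows \cite[Thm~3.1/Prop.~2.x]{MS1}, namely the argument that Cartesian (``consistent'') coefficient systems are determined by their homology.
\begin{itemize}
\item Fix $\FC$ and write $V=\colim_{n}H_{0}(\Sigma_{n},\VC)$ for an exhausting increasing sequence of finite convex subcomplexes $\Sigma_{n}\ni\FC$.
\item Apply $e_{\FC}$ and compare with the finite convex hulls.
\item The key identity is that, for convex $\Sigma\ni\FC$, $e_{\FC}H_{0}(\Sigma,\VC)=\VC_{\FC}$. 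To prove it, run the divide-and-conquer induction of Steps~1--3 of Lemma~\ref{lemma_MS_e_subdivided} again, now for the system $\FC'\mapsto\VC_{\FC'}$. The Cartesian property says $\VC_{\FC'}\cong e_{\FC'}\VC_{\FC''}$, so on a single polysimplex $\VC$ looks like $(\pi^{*})^{\phi,I}$ of the module $\VC_{\FC''}$ at a minimal face, and Step~1 applies. Mayer--Vietoris along an $e$-wall then propagates the statement, using the idempotents $e_{\Sigma}$ and the properties of Lemma~\ref{consistent_system}.
\item Conclude with $e_{\FC}e_{\Sigma}=e_{\FC}$ together with the fact that $\sum_{x}e_{x}V=V$.
\end{itemize}
Injectivity comes along as well, because the inductive argument identifies $H_{0}$ canonically at each stage.

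An alternative for this step is to reduce to the case $\VC=(\pi^{*})^{\phi,I}(W)$. Any Cartesian system is determined by its values on $e$-vertices, and those values come from projectives $\cind(\VC_{x})$. Both functors preserve colimits: $\pi_{!}$ does, and $(\pi^{*})^{\phi,I}$ does because the $e_{\FC}$ are idempotents. Resolving $\VC$ by systems of this form, the first composition already handles them, and the five lemma finishes.

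\textbf{Main obstacle.} The hard part is the second composition, showing that the localization $e_{\FC}\pi_{!}\VC$ recovers $\VC_{\FC}$. Here the Cartesian hypothesis must substitute for $\VC$ being a restriction of a representation, and the $e$-subdivision has to be checked to carry the convexity and retraction arguments of \cite{MS1} through. I would first attempt the direct inductive argument on convex subcomplexes.
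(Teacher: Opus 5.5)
Your treatment of the counit is correct and standard. The colimit over the poset $\BT'_{\bullet/e}$ is $H_0$ of the cellular complex, and Lemma~\ref{lemma_MS_e_subdivided} identifies it with $\sum_x e_xV=V$.

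The gap is in the unit, i.e.\ in proving $\VC_{\FC}\simto e_{\FC}\pi_!\VC$ for Cartesian $\VC$. Your ``key identity'' $e_{\FC}H_0(\Sigma,\VC)=\VC_{\FC}$ is not well posed for an abstract coefficient system. Here $H_0(\Sigma,\VC)$ is a bare $R$-module, and $e_{\FC}$ acts on it only if the compact groups carrying $e_{\FC}$ stabilize $\Sigma$. You can choose the exhausting $\Sigma_n$ to be $G_{\FC}$-stable, but the pieces $\Sigma_{\pm},\Sigma_0$ of the divide-and-conquer step are not, so Mayer--Vietoris cannot carry the statement. (Even Step 1 needs rephrasing, since a closed polysimplex has several minimal faces, not one.) More fundamentally, Steps 1--3 of Lemma~\ref{lemma_MS_e_subdivided} work because every $H_0(\Sigma,\VC)$ sits inside one module $V$ as $e_{\Sigma}V$, with $e_{\Sigma}\in\HC_R(G)$. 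For a general Cartesian $\VC$ there is no ambient module, and the Cartesian condition only relates $\VC$ at a facet to $\VC$ at its own faces. Your last step, using $e_{\FC}e_{\Sigma}=e_{\FC}$ on $\pi_!\VC$, presupposes that $H_0(\Sigma,\VC)\to\pi_!\VC$ is injective with image $e_{\Sigma}\pi_!\VC$, which is exactly what must be shown.

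What is missing is a way to compare the values of $\VC$ at \emph{non-adjacent} $e$-facets. One needs maps $\VC_{\FC'}\to\VC_{\FC}$ for arbitrary $\FC,\FC'$, built along a chain of adjacent facets in the combinatorial convex hull from face maps, the idempotents $e_{\FC_i}$ and the inverses of the Cartesian isomorphisms. One also needs a proof that the result does not depend on the chain; this is where the consistency properties of Lemma~\ref{consistent_system} and the geometry of convex hulls enter. Assembling these maps kills the relations defining $\pi_!\VC$ and yields the inverse of $\VC_{\FC}\to e_{\FC}\pi_!\VC$.

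This is the content of Lanard's ``local maps'' along ``admissible paths'', which the paper invokes. The paper only has to check that admissible paths depend solely on the facet geometry of an apartment, and the $e$-subdivision is homothetic to the usual one.

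Your alternative route hits the same wall. An epimorphism $(\pi^{*})^{\phi,I}(W)\to\VC$, say with $W=\bigoplus_x\cind_{G_x}^{G}\VC_x$, requires $G_{\FC}$-equivariant maps $e_{\FC}\cind_{G_x}^{G}\VC_x\to\VC_{\FC}$ compatible with face maps. But $e_{\FC}\cind_{G_x}^{G}\VC_x$ involves translates $g\VC_x$ with $gx$ arbitrarily far from $\FC$, so you need the same non-local comparison maps. Equivalently, this route presupposes that such objects corepresent evaluation on Cartesian systems, which is essentially the proposition itself.
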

\begin{proof}
  This is Theorem (4.11) of \cite{Lanard} applied to the system of idempotents
  $(e_{\phi,I,x})_{x\in\BT'_{0/e}},$ except that our facet decomposition is the $e$-subdivision
  of  the one used in \emph{loc.\ cit.}, and in particular our vertices are not necessarily vertices
  in the context of \emph{loc.\ cit.}  Beyond the original techniques of \cite{MS1}
  that we
  have already adapted to our context in the proof of Lemma~\ref{lemma_MS_e_subdivided}, the main
  point of the argument of Lanard in \cite{Lanard} is the construction of ``local maps'' in $\S 3$ there.
  This relies in turn on the notion of ``admissible path'' between vertices ($\S 2$ of
  \emph{loc.cit.}), which only depends on the geometry of the facet decomposition of an
  apartment. But our facet decomposition is homothetic to that used in \cite{Lanard}, therefore the
  notion of admissible path works the same, and the argument goes through in our setting.   
\end{proof}

\alin{$G_{\iota}$-equivariant coefficient systems on $\BT'_{\iota}$}
Fix $\iota\in I$. Recall from \ref{choice_e} that, due to our choice of integer $e$, the
image $\BT_{\iota}'$ of $\BT_{\iota}$ in $\BT'$ 
is stable under the $e$-facet decomposition of $\BT'$, in the sense that an $e$-facet of $\BT'$ is
either contained in, or disjoint from, $\BT'_{\iota}$.
We denote by $\BT'_{\iota,\bullet/e}$\index[notation]{B'iotabullete@$\BT'_{\iota, \bullet/e}$}
the set of $e$-facets of $\BT'$ contained in $\BT_{\iota}'$.
\footnote{Note that there is also an intrinsic notion of $e$-facet for
$\BT'_{\iota}$, which is a priori coarser, and that we \emph{do not}
consider here. 
So, by ``$e$-facet'' we always mean an $e$-facet for $\BT'$.} 
Since the action of $G_{\iota}$ on $\BT'_{\iota}$ preserves the $e$-facet
decomposition, we may define a category $[\BT'_{\iota,\bullet/e}/G_{\iota}]$ and
an abelian $R$-linear category $\Coef_{R}(\BT'_{\iota,\bullet/e}/G_{\iota})$ of
smooth $G_{\iota}$-equivariant coefficient systems on $\BT'_{\iota,\bullet/e}$ as in
\ref{def_coef_syst}. We then have a projection functor
$[\BT'_{\iota,\bullet/e}/G_{\iota}]\To{\pi_{\iota}} [*/G_{\iota}]$ that induces a pair of
  adjoint functors 
  $$ (\pi_{\iota,!},\pi_{\iota}^{*}): \Coef_{R}(\BT'_{\iota,\bullet/e}/G_{\iota}) \rightleftarrows
\Rep_{R}(G_{\iota}),$$
as in \ref{reps-G-coeff-sys}. Moreover, since $G_{\iota,x,0+}$ only depends on the $e$-facet $\FC_{e}(x)$ containing
the point $x$, we may also consider the category of ``depth $0$'' coefficients system
defined as
\begin{center}
  \begin{tabular}[c]{lcl}
  $\Coef^{1}_{R}(\BT'_{\iota,\bullet/e}/G_{\iota})$ & $:=$ &
                                        \begin{minipage}[c]{0.6\linewidth}
                                          the full subcategory of
                                          $\Coef_{R}(\BT'_{\iota,\bullet/e}/G_{\iota})$ that
                                          consists 
                                          of all smooth $G_{\iota}$-equivariant coefficient systems $\VC$
                                          such that, for any facet $\cF$, we have 
                                          $\VC_{\FC} = e_{G_{\iota,\FC,0+}}\VC_{\FC}$,
                                        \end{minipage}
  \end{tabular}
\end{center}
where $e_{G_{\iota,\FC,0+}}$ is the idempotent averaging over $G_{\iota,\FC,0+}$ and thereby projecting onto the $G_{\iota,\FC,0+}$-fixed vectors. 
We observe that $\pi_{\iota,!}(\Coef^{1}_{R}(\BT'_{\iota,\bullet/e}/G_{\iota}))\subset \Rep^{1}_{R}(G_{\iota})$.

As in \ref{phi_I_coef_systems}, the equalities
 $e_{\iota,\FC,0+}e_{\iota,\FC',0+}=e_{\iota,\FC,0+}$ whenever $\o\FC\supseteq
\FC'$  and $e_{\iota,g\FC,0+}=ge_{\iota,\FC,0+}g^{-1}$ whenever $g\in G_{\iota}$ allow us to
construct  a right adjoint $\VC\mapsto \VC^{1}$ to the inclusion of
$\Coef^{1}_{R}(\BT'_{\iota,\bullet/e}/G_{\iota})$ into
$\Coef_{R}(\BT'_{\iota,\bullet/e}/G_{\iota})$, by setting
$\VC^{1}_{\FC}:=e_{\iota,\FC,0+}\VC_{\FC}$ for all $\FC\in\BT'_{\iota,\bullet/e}$.
Precomposing with $\pi_{\iota}^{*}$ then provides us with a pair of adjoint functors
  $$ (\pi_{\iota,!},(\pi_{\iota}^{*})^{1}): \Coef^{1}_{R}(\BT'_{\iota,\bullet/e}/G_{\iota}) \rightleftarrows
\Rep^{1}_{R}(G_{\iota}).$$
As in \ref{phi_I_coef_systems}, $(\pi_{\iota}^{*})^{1}$ factors through the following full
 subcategory
\index[notation]{Coef1RBT'@$\Coef^{1}_{R}(\BT'_{\iota,\bullet/e}/G_{\iota})$}
\begin{center}
  \begin{tabular}[c]{lcl}
  $\Coef^{1}_{R}(\BT'_{\iota,\bullet/e}/G_{\iota})^{\cart}$ & $:=$ &
                                        \begin{minipage}[c]{0.6\linewidth}
                                          the full subcategory of
                                          $\Coef_{R}(\BT'_{\iota,\bullet/e}/G_{\iota})$ that
                                          consists 
                                          of all smooth $G_{\iota}$-equivariant coefficient systems $\VC$
                                          such that, for any pair of facets $(\cF, \cF')$ satisfying
                                          $\o\FC\supseteq\FC'$, the map $\beta_{\VC,\FC,\FC'}$
                                          induces an isomorphism
                                          $\VC_{\FC}\simto e_{G_{\iota,\FC,0+}}\VC_{\FC'}$,
                                        \end{minipage}
  \end{tabular}
\end{center}
to whose objects we refer to as \emph{Cartesian depth-$0$} coefficient
systems.

\label{reps-Giota-coeff-sys}

\begin{pro} 
  The two above functors
  induce quasi-inverse equivalences of categories
  \ini
\begin{equation} \label{thm_equiv_repr_coef_iota}
  \xymatrix{
    \pi_{\iota,!}:\, \Coef^{1}_{R}(\BT'_{\iota,\bullet/e}/G_{\iota})^{\cart}
    \ar@<0.5ex>[r]^-{\sim} &
    \Rep^{1}_{R}(G_{\iota}):\, (\pi^{*}_{\iota})^{1} \ar@<0.5ex>[l]^-{\sim}
  }
\end{equation}
\end{pro}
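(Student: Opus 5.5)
This equivalence is the depth-zero analogue, for $G_{\iota}$, of the equivalence \eqref{thm_equiv_repr_coef} established just above for $G$. I would deduce it in the same way, from Lanard's theorem \cite[Thm.~4.11]{Lanard} applied to the system of idempotents $(e_{G_{\iota,x,0+}})$, now indexed by the $e$-vertices of $\BT'$ lying in $\BT'_{\iota}$.

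The first step is to place ourselves in the setting of \cite{MS1,Lanard}. The idempotents $e_{G_{\iota,\FC,0+}}$ are indexed by $e$-facets of $\BT'$ contained in $\BT'_{\iota}$. Since $\BT'_{\iota}$ is the image of the building of $\mathbf{G}_{\iota}$, it is a union of $e$-facets. The induced decomposition is a subdivision of the $e'$-subdivision of the intrinsic facet structure of $\BT({\mathbf{G}}_{\iota},F)$, for a suitable $e'$: apply Lemma \ref{efacets} to $\mathbf{G}_{\iota}$ together with the same tame extension $E$.

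Next I would check the three consistency properties of Lemma \ref{consistent_system} for $x\mapsto e_{G_{\iota,x,0+}}$ on this decomposition.
\begin{enumerate}
\item Constancy on $e$-facets holds because $G_{\iota,x,0+}$ only depends on the $E$-facet of $x$, by \cite[Prop~1.1]{Vigsheaves} and Galois descent, exactly as in Proposition \ref{indep_facet} ii).
\item The relation $e_xe_{x'}=e_{x''}$ for adjacent $x,x'$ and $x''\in]x,x'[$ follows from $G_{\iota,x'',0+}=G_{\iota,x,0+}G_{\iota,x',0+}$. This is the standard Moy--Prasad statement (Iwahori decomposition along the segment), as in Lemma \ref{telescopic_iota} with trivial characters.
\item The telescopic property follows from the same Iwahori-decomposition argument as in Proposition \ref{telescopic2}, with all characters trivial.
\end{enumerate}
These are the classical properties that make the depth-zero system consistent in \cite{MS1}; the only difference is that the facets are subdivided.

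With consistency in hand, the homological statement of Lemma \ref{lemma_MS_e_subdivided} holds verbatim for convex subcomplexes of $\BT'_{\iota,\bullet/e}$. The reason is that the proof there used only the affine geometry of $e$-walls in apartments, which is homothetic to the usual geometry. Consequently $\Rep^{1}_R(G_\iota)$ is generated by the images of the $e_{G_{\iota,x,0+}}$ for $x$ among $e$-vertices in $\BT'_\iota$. This agrees with the usual definition of depth zero, since every point lies in the closure of an $e$-facet having an $e$-vertex. Then Lanard's construction of local maps via admissible paths applies, since it depends only on the facet geometry of apartments of $\BT'_{\iota}$ and is insensitive to homothetic subdivision. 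This gives the quasi-inverse equivalence \eqref{thm_equiv_repr_coef_iota}.

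The main obstacle is this last step. The decomposition of $\BT'_{\iota}$ induced from the $e$-subdivision of $\BT'$ need not literally be a homothetic subdivision of the intrinsic one. It is a refinement: $e$-walls of $\BT'$ restrict to possibly non-wall hyperplanes of $\BT'_\iota$. So one must argue that admissible paths and combinatorial convex hulls still behave well. To handle this, I would first try to show that this refinement is cut out by a family of affine hyperplanes parallel to walls of $\BT'_\iota$ and invariant under translations by a lattice, so that Lanard's and Meyer--Solleveld's arguments, which only use such hyperplane arrangements, still go through.
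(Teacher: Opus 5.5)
Your scaffolding (consistency of $x\mapsto e_{G_{\iota,x,0+}}$ for the $e$-structure, then the Meyer--Solleveld homology argument and Lanard's local maps) matches the paper's. However, the step you yourself single out as the main obstacle is left open, and the way you propose to close it does not work.

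The traces on $\BT'_\iota$ of the $e$-walls of $\BT'$ come from all roots of $\mathbf{G}$, including those outside $\mathbf{G}_\iota$. So they are in general not parallel to any intrinsic walls of $\BT'_\iota$. For the Levi $\mathbf{G}_\iota=\GL_2\times\GL_1\subset\GL_3$, the traces $x_1-x_3\in\frac1e\ZM$ are not parallel to the only intrinsic walls $x_1-x_2\in\ZM$. For a split maximal torus there are no intrinsic walls at all. Moreover, without Assumption \ref{assumption} the set $\BT'_\iota$ is not the reduced building of $\mathbf{G}_\iota$, so its intrinsic facets are not even bounded. And even granting some hyperplane-arrangement description, you would still have to redo Lanard's combinatorics of admissible paths and the Meyer--Solleveld retraction step for a non-standard arrangement. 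The proposal does not do this.

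The missing idea is that no comparison with an intrinsic structure on $\BT'_\iota$ is needed. Facets, combinatorial convex hulls and admissible paths should all be taken in the ambient $e$-subdivided building $\BT'$. Lanard's argument was already adapted to that setting in the proof of \eqref{thm_equiv_repr_coef}. One then only needs these constructions never to leave $\BT'_\iota$, where the idempotents $e_{G_{\iota,\FC,0+}}$ are defined.

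This is the paper's argument. For $e$-facets $\FC,\FC'\subseteq\BT'_\iota$ one has $\HC_e(\FC,\FC')\subseteq\BT'_\iota$. Hence every admissible path from $\FC$ to $\FC'$ stays in $\BT'_\iota$. Also $\BT'_{\iota,\bullet/e}$ is itself convex, so your version of Lemma \ref{lemma_MS_e_subdivided} applies to it.

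One way to see the inclusion:
\begin{enumerate}
\item $\mathbf{G}_{\iota,E}$ is a Levi subgroup, so the combinatorial convex hull in $\BT(\mathbf{G},E)$ of two points of $\BT_\iota$ lies in an apartment of $\BT(\mathbf{G}_\iota,E)$.
\item By the proof of Lemma \ref{efacets}, the walls of $\BT(\mathbf{G},E)$ cut an $F$-apartment along $e$-walls, so $\HC_e$ is contained in that $E$-hull.
\item Hence $\HC_e$ lies in $\BT(\mathbf{G}_\iota,E)\cap\BT=\BT_\iota$.
\end{enumerate}
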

\begin{proof}
This again follows from \cite{Lanard} with the same adaptation as in the proof of Proposition \ref{reps-G-coeff-sys}, namely
one needs to define ``local maps'' using ``admissible paths''. But for two $e$-facets $\FC,\FC'$
contained in $\BT'_{\iota}$, the combinatorial convex hull (for the
$e$-facet decomposition) $\HC_{e}(\FC,\FC')$ is contained in $\BT'_{\iota}$ and, therefore, any admissible
path from $\FC$ to $\FC'$ is contained in $\BT'_{\iota}$.
\end{proof}

In view of the equivalences (\ref{thm_equiv_repr_coef}) and (\ref{thm_equiv_repr_coef_iota}), our
aim will now be to construct a pair of adjoint equivalences of categories:
\ini\begin{equation} 
  \label{equiv_coef}
  \xymatrix{
    \Coef^{\phi,I}_{R}(\BT'_{\bullet/e}/G) ^{\cart}   \ar@<0.5ex>[r]^-{\sim} &
    \Coef^{1}_{R}(\BT'_{\iota,\bullet/e}/G_{\iota})^{\cart} \ar@<0.5ex>[l]^-{\sim} ,
  }
\end{equation}
which we discuss in the next subsection.

\subsection{Heisenberg--Weil coefficient systems} \label{sec:main-result-strategy-3}

In order to construct an equivalence as in \eqref{equiv_coef} we will use an auxiliary coefficient system that we call a \emph{Heisenberg--Weil coefficient system} and introduce below. To define such a coefficient system, we will rely on some results that we will prove in Section \ref{sec:heisenberg-rep}. We will also state some related results in this subsection that we will prove in Sections \ref{sec:heis-weil-rep} and \ref{sec:heis-weil-coeff}. In Section \ref{sec:constr-equiv} we then prove Theorem \ref{thm-main} assuming the results we stated in this subsection, Section \ref{sec:main-result-strategy-3}. Since Sections \ref{sec:heisenberg-rep}, \ref{sec:heis-weil-rep} and \ref{sec:heis-weil-coeff} do not rely on results from Sections \ref{sec:main-result-strategy-1}, \ref{sec:main-result-strategy-2}, \ref{sec:main-result-strategy-3} and \ref{sec:constr-equiv}, there is no circular reasoning. Presenting the results in this order allows readers interested in the results rather than the technical details of the proof to faster reach their goal and at the same time provides readers interested in all the details with a motivation for Sections \ref{sec:heisenberg-rep}, \ref{sec:heis-weil-rep} and \ref{sec:heis-weil-coeff}.

\alin{Heisenberg and Weil representations}\label{section-HW-coef-prep}
We will need the notion of Heisenberg representation, which we have to adapt to our
setting of a general commutative $\Rmin$-algebra $R$. According to Proposition (\ref{indep_facet})
ii), for  $x\in\BT_{\iota}$,     
the groups $\Ku_{\iota,x}$ and
$\Kp_{\iota,x}$ and the character $\check\phi^{+}_{\iota,x}$ 
depend only on the $e$-facet $\cF$ containing $x$. 
We may thus  denote them
by\index[notation]{KdaggeriotaF@$\Ku_{\iota,\FC}$}\index[notation]{KplusiotaF@$\Kp_{\iota,\FC}$}\index[notation]{phihatplusiotaF@$\check\phi^{+}_{\iota,\cF}$}
$\Ku_{\iota,\FC}$, $\Kp_{\iota,\cF}$ and $\check\phi^{+}_{\iota,\FC}$.

\begin{defn}
For  $\FC\in\BT'_{\iota,\bullet/e}$, a Heisenberg representation $\Heis_{\iota,\FC}$ for the triple
$(\Ku_{\iota,\FC},\Kp_{\iota,\cF},\check\phi^{+}_{\iota,\FC})$ is a finitely generated, projective $R\Ku_{\iota,\FC}$-module
that is $\check\phi^{+}_{\iota,\FC}$-isotypic when restricted to $R\Kp_{\iota,\FC}$ and that has $R\Ku_{\iota,\FC}$-endomorphism ring $R$.
\end{defn}

We refer to Lemma \ref{lem_Heisenberg} for a proof of existence, a discussion of
uniqueness, and a proof of the main property of interest to us, which is that such a
representation is a projective generator of the category   
$\HC_{R}(\Ku_{\iota,\FC})e_{\iota,\FC}-\Mod$ of all $\check\phi^{+}_{\iota,\FC}$-isotypic
smooth $R\Ku_{\iota,\FC}$-modules.
In particular, we have a pair of adjoint equivalences of categories:
\ini\begin{equation}
\label{Heisenberg-equivalence-of-cat}  
\Hom_{R\Ku_{\iota,\FC}}(\Heis_{\iota,\FC},-) :\,\,
  \HC_{R}(\Ku_{\iota,\FC})e_{\iota,\FC}-\Mod \rightleftarrows 
  R-\Mod
  \,\, : \Heis_{\iota,\FC}\otimes_{R} -.
\end{equation}

Also, as in the case where the coefficient ring is $\bC$, we will show in Section
\ref{sec:heis-weil-rep} how the theory of Weil 
representations produces  extensions of Heisenberg representations to
the bigger group\index[notation]{KaiotaF@$\K_{\iota, \cF}$} 
 \[\K_{\iota,\FC}:=G_{\iota,\FC}\Ku_{\iota,\FC} ,\]
 where\index[notation]{GiotaF@$G_{\iota,\FC}$} $G_{\iota,\FC}$ is the stabilizer of
 $\FC$ in $G_{\iota}$. We call such representations ``Heisenberg--Weil representations''. 
Any Heisenberg--Weil representation
$\kappa_{\iota,\FC}$ provides an enhancement of the above pair of adjoint equivalences to the following pair of adjoint equivalences:
\ini\begin{equation}
  \label{local_equivalences}  \;\;\;\;\;\;
  \Hom_{R\Ku_{\iota,\FC}}(\kappa_{\iota,\FC},-) :\,\,
  \HC_{R}(\K_{\iota,\FC})e_{\iota,\FC}-\Mod \rightleftarrows 
  R[G_{\iota,\FC}/G_{\iota,\FC,0+}]-\Mod
  \,\, : \kappa_{\iota,\FC}\otimes_{R} -.
\end{equation}
Here, the category on the right hand side is justified by the isomorphism
$G_{\iota,\FC}/G_{\iota,\FC,0+}\simto\K_{\iota,\FC}/\Ku_{\iota,\FC}$ arising from the
inclusion $G_{\iota,\FC}\subseteq\K_{\iota,\FC}$. 

So, a natural strategy to construct equivalences as in (\ref{equiv_coef}) is to try and arrange the
``local equivalences'' (\ref{local_equivalences}) in a sufficiently coherent way so that they
respect face maps and actions.
To this aim, we introduce the notion of a Heisenberg--Weil coefficient
system below. This will be a coefficient system on a category that will serve as an
intermediate between $[\BT'_{\bullet/e}/G]$ and $[\BT'_{\iota,\bullet/e}/G_{\iota}]$.
At this point, it might be helpful to note that when $\o\FC\supseteq
\FC'$, we have the following inclusions: 
$\Kp_{\iota,\FC'}\subseteq\Kp_{\iota,\FC}\subseteq \K_{\iota,\FC}\subseteq\K_{\iota,\FC'}$ (the
last one because $e$ was chosen so that the stabilizers of $e$-facets
coincide with their fixators).
On the other hand, $\Ku_{\iota,\FC}$ and $\Ku_{\iota,\FC'}$ are not contained in one
another, but we have $\Ku_{\iota,\FC}=(\Ku_{\iota,\FC}\cap\Ku_{\iota,\FC'})G_{\iota,\FC,0+}$.
These facts allow us to make the following definition:
\begin{DEf} We denote by $[\BT'_{\iota,\bullet/e}/\K_{\iota}]$ the following subcategory of
  $[\BT'_{\bullet/e}/G]$ :
  \begin{itemize}
  \item Its set of objects is the subset $\BT'_{\iota,\bullet/e}$ of $e$-facets of $\BT'$ contained
    in $\BT'_{\iota}$.
  \item For $\FC,\FC'\in\BT'_{\iota,\bullet/e}$, morphisms are given by
    $$\Hom(\FC,\FC'):=
    \{g\in \K_{\iota,\FC'}\,G_{\iota},\; \ov{g\FC}\supseteq\FC'\}=
    \{g\in \Ku_{\iota,\FC'}\,G_{\iota},\; \ov{g\FC}\supseteq\FC'\},$$ with
    composition given by multiplication in $G$.
  \end{itemize}
\end{DEf}
As usual, a coefficient system $\WC$ on $[\BT'_{\iota,\bullet/e}/K_{\iota}]$ is just a functor to
$R-\Mod$. Note that this is equivalent to giving a $G_{\iota}$-equivariant coefficient
system on $\BT'_{\iota,\bullet/e}$ together with for every $\FC\in\BT'_{\iota,\bullet/e}$
a smooth action of $\Ku_{\iota,\FC}$ on $\WC_{\iota,\FC}$ such that, 
for each pair $\FC,\FC'$ such that $\o\FC\supseteq\FC'$, the face map
$\beta_{\WC,\FC,\FC'}$ is $\Ku_{\iota,\FC}\cap \Ku_{\iota,\FC'}$-equivariant and, 
 for each $g\in G_{\iota}$, the action map $g_{\WC,\FC}$ induces 
a $\Ku_{\iota,\FC}$-equivariant isomorphism $g_{\WC,\FC}: \WC_{\FC}\simto g^{*}\WC_{g\FC}$ 
that  coincides with the action of $g$ on  $\WC_{\FC}$ whenever $g\in G_{\iota,\FC}$.
Note that since $\beta_{\WC,\FC,\FC'}$ is also $G_{\iota, \cF}=(G_{\iota, \cF} \cap G_{\iota, \cF'})$-equivariant, as part of the definition of  a $G_{\iota}$-equivariant coefficient
system on $\BT'_{\iota,\bullet/e}$, it follows that $\beta_{\WC,\FC,\FC'}$ is also $K_{\iota, F}$-equivariant.
\begin{DEf} \label{def_WH_coef_syst}
  A coefficient system   $\WC_{\iota}:\, [\BT'_{\iota,\bullet/e}/\K_{\iota}]\To{} R-\Mod$
  is called a \emph{Heisenberg--Weil} coefficient system if :
  \begin{enumerate}
  \item[1.]  For each $\FC\in\BT'_{\iota,\bullet/e}$, the $R\Ku_{\iota,\FC}$-module
    $\WC_{\iota,\FC}$ is a Heisenberg representation for the triple
    $(\Ku_{\iota,\FC},\Kp_{\iota,\cF},\check\phi^{+}_{\iota,\FC})$.    
  \item[2.]  For each pair of facets  $\FC'\subseteq\o\FC$ in $\BT'_{\iota,\bullet/e}$, the map
    $\beta_{\WC_{\iota},\FC,\FC'}$ is a generator of the $R$-module
    $\Hom_{R(\Ku_{\iota,\FC}\cap\Ku_{\iota,\FC'})}(\WC_{\iota,\FC},\WC_{\iota,\FC'})$.
  \end{enumerate}
\end{DEf}

\begin{prop} \label{rem_WH_coef_syst} Let $\WC_{\iota}$ be a Heisenberg--Weil coefficient system.
Then the $R$-module $\Hom_{R(\Ku_{\iota,\FC}\cap\Ku_{\iota,\FC'})}(\WC_{\iota,\FC},\WC_{\iota,\FC'})$ is free of rank one, and 
  $\beta_{\WC_{\iota},\FC,\FC'}$ induces
  \begin{enumerate}
  \item \label{rem_WH_coef_system-i} a $\Ku_{\iota,\FC'}$-equivariant isomorphism
    $ {\rm ind}_{\Ku_{\iota,\FC}\cap\Ku_{\iota,\FC'}}^{\Ku_{\iota,\FC'}}
    \WC_{\iota,\FC}\simto\WC_{\iota,\FC'}$, and
  \item
    \label{rem_WH_coef_system-ii}
    a $\Ku_{\iota,\FC}\cap\Ku_{\iota,\FC'}$-equivariant isomorphism
    $\WC_{\iota,\FC}\simto e'_{\iota,\FC}\WC_{\iota,\FC'}$, where $e'_{\iota,\FC}$ is the
    idempotent associated to the restriction of $\check\phi_{\iota,\FC}$ to
    $\Kp_{\iota,\FC}\cap\Ku_{\iota,\FC'}$.
  \end{enumerate}
Since $\beta_{\WC_{\iota},\FC,\FC'}$ is also compatible with the action of $G_{\iota}$,
    the isomorphism in (\ref{rem_WH_coef_system-ii}) is actually $\K_{\iota,\FC}$-equivariant and
     $e'_{\iota,\FC}\WC_{\iota,\FC'}=e_{\iota,\FC}\WC_{\iota,\FC'}$.
\end{prop}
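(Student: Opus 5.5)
The plan is to reduce everything to Frobenius reciprocity together with the Mackey formula for the pair of pro-$p$ groups $H:=\Ku_{\iota,\FC}\cap\Ku_{\iota,\FC'}$ and $\Ku_{\iota,\FC'}$. The only other inputs are the properties of Heisenberg representations from Lemma \ref{lem_Heisenberg} and the intertwining results of Section \ref{sec:intertwining}.

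\emph{Step 1: the Hom module is free of rank one.} Write $\Heis:=\WC_{\iota,\FC}$ and $\Heis':=\WC_{\iota,\FC'}$. By Frobenius reciprocity for compact induction from the open subgroup $H$,
$$\Hom_{RH}(\Heis,\Heis')\simeq\Hom_{R\Ku_{\iota,\FC'}}\bigl(\cind_{H}^{\Ku_{\iota,\FC'}}\Heis,\Heis'\bigr).$$
It therefore suffices to show that $\cind_{H}^{\Ku_{\iota,\FC'}}\Heis$ is itself a Heisenberg representation for $\FC'$. Granting this, the right-hand side is $\End_{R\Ku_{\iota,\FC'}}(\Heis')\simeq R$ via the equivalence (\ref{Heisenberg-equivalence-of-cat}), since $\Heis'$ is a projective generator with endomorphism ring $R$, and uniqueness of Heisenberg representations up to twisting by an invertible $R$-module gives an isomorphism $\cind\Heis\simeq\Heis'\otimes L$. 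I would avoid the twist issue by directly computing $\Hom_{R\Ku_{\iota,\FC'}}(\Heis',\cind\Heis)$ via (\ref{Heisenberg-equivalence-of-cat}).

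\emph{Step 2: the induced module is a Heisenberg representation.} Restrict $\Heis$ from $\Ku_{\iota,\FC}$ to $H$. Since $\Ku_{\iota,\FC}=H\,G_{\iota,\FC,0+}$ and $G_{\iota,\FC,0+}\subseteq\K_{\iota,\FC}$ normalizes the character, the restriction is $\check\phi^+_{\iota,\FC}$-isotypic on $\Kp_{\iota,\FC}\cap H$. Pushing along $\Kp_{\iota,\FC'}\subseteq\Kp_{\iota,\FC}$, Lemma \ref{intertw_converse} shows that $\check\phi^+_{\iota,\FC}$ and $\check\phi^+_{\iota,\FC'}$ agree on $\Kp_{\iota,\FC'}$, using $e_{\iota,\FC}e_{\iota,\FC'}=e_{\iota,\FC}$ from Proposition \ref{idemp_commut}. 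Mackey then shows that $\cind\Heis$ is $\check\phi^+_{\iota,\FC'}$-isotypic on $\Kp_{\iota,\FC'}$, because $\Kp_{\iota,\FC'}$ is normal in $\Ku_{\iota,\FC'}$ and $\Ku_{\iota,\FC'}$ centralizes the character (Proposition \ref{prop_intertwining} i)). The module is finitely generated projective because the index is finite and a power of $p$, hence invertible in $R$.

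For the endomorphism ring, apply Mackey:
$$\End(\cind\Heis)=\bigoplus_{g\in H\backslash\Ku_{\iota,\FC'}/H}\Hom_{H\cap{}^gH}(\Heis,{}^g\Heis).$$
I would show that only $g\in H$ contributes, using the variant of intertwining in Proposition \ref{rem_intertw} and Lemma \ref{rem_ortho_idemp}. The term for $g=1$ is $\End_{RH}(\Heis)$, which reduces to $R$ via the Heisenberg property of Proposition \ref{Heisenberg}: the image of $H$ in $\Ku_{\iota,\FC}/\Kp_{\iota,\FC}$ should be a Lagrangian-type subspace, with $\Kp$ the corresponding radical. This computation is the main obstacle. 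The cleanest route I see is a rank count. Using the decomposition (\ref{decomposition-of-Vx}) and Iwahori decompositions along the segment joining $\FC$ and $\FC'$ (pair $(\mathbf P,\bar{\mathbf P})$ from \ref{opposite_parabolic_sgps}), $H/(\Kp_{\iota,\FC}\cap\Kp_{\iota,\FC'})$ splits as a $U$-part, an $\bar U$-part and an $M$-part. Proposition \ref{Heisenberg} ii) then gives
$$[\Ku_{\iota,\FC'}:H]^2=|\Ku_{\iota,\FC'}/\Kp_{\iota,\FC'}|\,/\,|\Ku_{\iota,\FC}/\Kp_{\iota,\FC}|,$$
which matches the ranks $\sqrt{|\Ku/\Kp|}$ of the two Heisenberg modules and forces $\cind\Heis\simeq\Heis'$ after Step 1.

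\emph{Step 3: deducing (i) and (ii).} A generator $\beta$ corresponds under Frobenius to an isomorphism $\cind\Heis\simto\Heis'$, which is (i). For (ii), $\beta$ is injective, being the restriction of the isomorphism in (i) to the summand indexed by the trivial coset. Its image lies in $e'_{\iota,\FC}\Heis'$ by $H$-equivariance. Equality follows from Mackey again: applying $e'_{\iota,\FC}$ to $\cind\Heis$ kills all nontrivial double-coset summands, by Lemma \ref{rem_ortho_idemp}. Finally, $G_{\iota,\FC}$-equivariance of $\beta$ together with $\K_{\iota,\FC}=G_{\iota,\FC}\Ku_{\iota,\FC}$ upgrades the equivariance to $\K_{\iota,\FC}$. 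Since $\Heis$ is $e_{\iota,\FC}$-isotypic, this gives $e'_{\iota,\FC}\Heis'=e_{\iota,\FC}\Heis'$.
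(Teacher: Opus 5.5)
Your plan is sound: use Frobenius reciprocity, show that $\cind_{H}^{\Ku_{\iota,\FC'}}\WC_{\iota,\FC}$ is a Heisenberg representation for $\FC'$, then use $\beta_{\WC_\iota,\FC,\FC'}$. However, Step~1 does not prove freeness, and this is a genuine gap.

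\textbf{The gap in Step~1.} Once the induced module is Heisenberg, $\Hom_{R\Ku_{\iota,\FC'}}(\cind_{H}^{\Ku_{\iota,\FC'}}\WC_{\iota,\FC},\WC_{\iota,\FC'})$ is only an invertible $R$-module by Lemma~\ref{lem_Heisenberg}. It is not $\End(\WC_{\iota,\FC'})$. Computing $\Hom(\WC_{\iota,\FC'},\cind\WC_{\iota,\FC})$ instead just gives its inverse, so the twist cannot be avoided.
\begin{itemize}
\item Property~1 of Definition~\ref{def_WH_coef_syst} alone can never give freeness. Replacing $\WC_{\iota,\FC'}$ by $\WC_{\iota,\FC'}\otimes_R L$, with $L$ nontrivial in $\mathrm{Pic}(R)$, keeps it Heisenberg but tensors the Hom module with $L$.
\item Freeness has to come from property~2. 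The invertible module is generated by the single element $\beta_{\WC_\iota,\FC,\FC'}$, hence is free. This is exactly the paper's argument.
\item Your Step~3 tacitly needs the same fact when it says that $\beta$ corresponds to an isomorphism.
\end{itemize}

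\textbf{Two smaller slips.}
\begin{itemize}
\item The image of $H$ in $\Ku_{\iota,\FC}/\Kp_{\iota,\FC}$ is the whole space, not a Lagrangian, because $\Ku_{\iota,\FC}=H\,G_{\iota,\FC,0+}$ with $G_{\iota,\FC,0+}\subseteq\Kp_{\iota,\FC}$. That is why $\End_{RH}(\WC_{\iota,\FC})=R$. Over a Lagrangian preimage the restriction would split into distinct characters.
\item $\beta$ is only $H$-equivariant, so the upgrade to $\K_{\iota,\FC}$ should use $\K_{\iota,\FC}=G_{\iota,\FC}H$.
\end{itemize}

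\textbf{Comparison with the paper.} With these repairs your route works and differs from the paper's. The paper derives everything from Lemma~\ref{intertwining_Heisenberg}:
\begin{itemize}
\item Invertibility of the Hom module holds for arbitrary pairs of points. It comes from the Iwahori decomposition along $[x,x']$, which identifies it with a Hom between the $M$-level Heisenberg representations $(\WC_{\iota,\FC})_{U\cap\Ku_{\iota,\FC}}$ and $(\WC_{\iota,\FC'})^{\bar U\cap\Ku_{\iota,\FC'}}$.
\item Part~(i) comes from explicit models. Start with a Heisenberg representation for $\FC$ induced from the preimage of a Lagrangian $W_M$ of the $M$-part. Induced to $\Ku_{\iota,\FC'}$, it becomes induction from the preimage of the Lagrangian $W_M\oplus(\bar U\cap\Ku_{\iota,\FC'})/(\bar U\cap\Kp_{\iota,\FC'})$.
\item Part~(ii) comes from factoring the evaluation map through $(\WC_{\iota,\FC'})^{\bar U\cap\Ku_{\iota,\FC'}}=e'_{\iota,\FC}\WC_{\iota,\FC'}$.
\end{itemize}

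\textbf{Your rank count.} It replaces those models: isotypy plus rank $\sqrt{[\Ku_{\iota,\FC'}:\Kp_{\iota,\FC'}]}$ forces a Heisenberg representation via (\ref{Heisenberg-equivalence-of-cat}). You should say where $\FC'\subseteq\o\FC$ enters:
\begin{itemize}
\item $\Ku_{\iota,\FC}=H\,G_{\iota,\FC,0+}$ kills the $U$- and $\bar U$-parts of $\Ku_{\iota,\FC}/\Kp_{\iota,\FC}$.
\item $\Kp_{\iota,\FC'}\subseteq\Kp_{\iota,\FC}$, together with $U\cap\Ku_{\iota,\FC}\subseteq U\cap\Kp_{\iota,\FC'}$, gives $[\Ku_{\iota,\FC'}:H]=[U\cap\Ku_{\iota,\FC'}:U\cap\Kp_{\iota,\FC'}]$.
\end{itemize}

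\textbf{Your Mackey claims.} The claims that only $g\in H$ contributes, and that $e'_{\iota,\FC}$ kills the nontrivial double cosets, are also correct. Fix $x\in\FC$.
\begin{itemize}
\item Every $g\in\Ku_{\iota,\FC'}\subseteq G_{x,0+}$ fixes $x$.
\item $K'_{\iota,x}\cap K'_{g\iota,x}$ lies in $H\cap{}^gH$, so Proposition~\ref{rem_intertw} applies to $(\iota,x)$ and $(g\iota,x)$.
\item $g\iota\sim_x\iota$ forces $g\in\Ku_{\iota,x}$, since $G_\iota\cap G_{x,0+}\subseteq\Ku_{\iota,x}$. Hence $g\in H$.
\end{itemize}

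So your approach rests on the genericity and intertwining results of Section~2. The paper's approach is purely structural, and it gives invertibility for arbitrary pairs of points, which the paper reuses in Lemma~\ref{composition_Heisenberg} and in the uniqueness statement.
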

\begin{proof}
	  We show in Lemma \ref{intertwining_Heisenberg} \ref{intertwining_Heisenberg-i}) that the $R$-module $\Hom_{R(\Ku_{\iota,\FC}\cap\Ku_{\iota,\FC'})}(\WC_{\iota,\FC},\WC_{\iota,\FC'})$
	is in general invertible, granted Property 1 of Definition \ref{def_WH_coef_syst}. In the presence of a generator, it is thus free of
	rank $1$. We show in Lemma \ref{intertwining_Heisenberg} \ref{intertwining_Heisenberg-iii}) that then a generator like   $\beta_{\WC_{\iota},\FC,\FC'}$ induces the isomorphisms in \ref{rem_WH_coef_system-i}) and \ref{rem_WH_coef_system-ii}). That $\beta_{\WC_{\iota},\FC,\FC'}$  is  $\K_{\iota,\FC}$-equivariant was already observed above, and hence $e'_{\iota,\FC}\WC_{\iota,\FC'}=e_{\iota,\FC}\WC_{\iota,\FC'}$.
\end{proof}

\begin{theo} \label{thm_WH_coef_system-general}
	Let $R$ be a commutative $\Rmintwo$-algebra. Then
	there exists a Heisenberg--Weil coefficient system $\WC_{\iota}$ in $\Coef_{R}(\BT'_{\iota,\bullet/e}/K_{\iota})$.
\end{theo}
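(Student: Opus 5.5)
The plan is to build $\WC_{\iota}$ from Heisenberg--Weil representations $\kappa_{\iota,\FC}$ of $\K_{\iota,\FC}$, chosen $G_{\iota}$-equivariantly and made compatible along face maps. First I pick a set of representatives $\FC$ of the $G_{\iota}$-orbits on $\BT'_{\iota,\bullet/e}$. For each representative I take a Heisenberg representation $\Heis_{\iota,\FC}$ of $(\Ku_{\iota,\FC},\Kp_{\iota,\FC},\check\phi^{+}_{\iota,\FC})$, which exists by Lemma~\ref{lem_Heisenberg}. Using the Weil representation theory over $R$ of Section~\ref{sec:heis-weil-rep}, I extend it to a representation $\kappa_{\iota,\FC}$ of $\K_{\iota,\FC}=G_{\iota,\FC}\Ku_{\iota,\FC}$. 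This step is where $\sqrt{p}$ and $\mu_4$ enter: they are needed for the Weil representation of the relevant symplectic group over $\FM_p$, twisted as in \cite{FKS}. I then transport by $g\in G_{\iota}$ to define $\WC_{\iota,g\FC}:=g^{*}\kappa_{\iota,\FC}$. This is well defined because $\kappa_{\iota,\FC}$ is a representation of the stabilizer $\K_{\iota,\FC}\supseteq G_{\iota,\FC}$, and Property~1 of Definition~\ref{def_WH_coef_syst} holds by construction.

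Next I construct the face maps. For $\FC'\subseteq\o\FC$, Lemma~\ref{intertwining_Heisenberg} says that $\Hom_{R(\Ku_{\iota,\FC}\cap\Ku_{\iota,\FC'})}(\WC_{\iota,\FC},\WC_{\iota,\FC'})$ is an invertible $R$-module. I must choose a generator $\beta_{\FC,\FC'}$ that is $G_{\iota,\FC}$-equivariant; by Proposition~\ref{rem_WH_coef_syst}, equivalently $\K_{\iota,\FC}$-equivariant onto $e_{\iota,\FC}\WC_{\iota,\FC'}$. Since $G_{\iota,\FC}$ acts on this rank-one module, a priori through a character, equivariance means this character is trivial. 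This is exactly the condition that the restriction of $\kappa_{\iota,\FC'}$, projected by $e_{\iota,\FC}$, agrees with $\kappa_{\iota,\FC}$ on $\K_{\iota,\FC}$.

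I would therefore not choose the $\kappa$'s independently. Instead I would use a canonical normalization, namely the twisted Heisenberg--Weil representations of \cite{FKS} with their sign characters, which should satisfy the compatibility $e_{\iota,\FC}\,\mathrm{Res}\,\kappa_{\iota,\FC'}\simeq\kappa_{\iota,\FC}$ as $\K_{\iota,\FC}$-modules. I plan to prove this by reduction to the finite Weil representation. The restriction corresponds to restricting the Weil representation of $\Sp(V_{\FC'})$ to the stabilizer of a parabolic-type decomposition $V_{\FC'}=U\oplus V_{\FC}\oplus \bar U$, coming from the Iwahori decomposition in Proposition~\ref{Heisenberg} ii). There the Weil representation restricts to the Weil representation of $\Sp(V_{\FC})$ tensored with a quadratic-type character of $\GL(U)$. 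The twist in \cite{FKS} is designed so that such characters cancel on $G_{\iota,\FC}$.

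The remaining work is to fix the scalars of the $\beta$'s so that they satisfy the functoriality (cocycle) relations $\beta_{\FC',\FC''}\beta_{\FC,\FC'}=\beta_{\FC,\FC''}$ and $g$-compatibility. Since each Hom space is free of rank one, a composite of generators is a generator, and its ratio to the direct generator is a unit. I would kill these units by defining all $\beta$ through a single source: take each $\WC_{\iota,\FC}$ inside the module associated to an $e$-vertex and realize $\beta$ as inclusions $e_{\iota,\FC}\WC_{\iota,\FC'}\subseteq\WC_{\iota,\FC'}$, via Proposition~\ref{rem_WH_coef_syst} ii). The obstruction is then concentrated on chains starting from vertices and is settled by the compatibility above. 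I expect the main obstacle to be this compatibility of normalized Heisenberg--Weil representations under restriction, i.e.\ that the character of $G_{\iota,\FC}$ on the rank-one Hom space is trivial, since it requires the precise sign and Weil-character computations of Sections~\ref{sec:heisenberg-rep}--\ref{sec:heis-weil-coeff}.
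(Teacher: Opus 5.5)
Your first half is sound and matches the paper. The twisted Heisenberg--Weil representations (Weil representation over $\Rmintwo$, the uniform character $\check\varphi_{0}$, the quadratic twist of \ref{section-quadratic-twist}) satisfy Proposition~\ref{Hom_HW_H}: $\Hom_{R(\K_{\iota,x}\cap\K_{\iota,x'})}(\kappa_{\iota,x},\kappa_{\iota,x'})=\Hom_{R(\Ku_{\iota,x}\cap\Ku_{\iota,x'})}(\eta_{\iota,x},\eta_{\iota,x'})$. The paper proves this by exactly your parabolic restriction computation (Lemma~\ref{Lemma-restriction-of-Weilrep}). It holds for \emph{any} choice of Heisenberg representations, so you need no coordination of the $\kappa$'s for equivariance. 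But it only shows that every generator of the Hom module is automatically $\K_{\iota,\FC}$-equivariant. It says nothing about which unit multiple to take, and that is where your argument has a genuine gap.

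The relation $\beta_{\FC',\FC''}\circ\beta_{\FC,\FC'}=\beta_{\FC,\FC''}$, together with $G_{\iota}$-compatibility, is a coherence problem for units. If $\mathrm{Pic}(R)\neq 0$ it is also a problem of making the Hom modules free, which you assert but must arrange by twisting coherently. Your device $\WC_{\iota,\FC}=e_{\iota,\FC}\WC_{\iota,v}$, with inclusions as face maps, is coherent only on the star of one vertex $v$. An $e$-facet has several vertices $v,w$, and the models $e_{\iota,\FC}\WC_{\iota,v}$ and $e_{\iota,\FC}\WC_{\iota,w}$ live in different modules. Identifying them compatibly over all overlapping stars, $G_{\iota}$-equivariantly, is exactly the unit-cocycle problem you started with. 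Equivariance does not fix these units, so ``settled by the compatibility above'' does not follow.

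The missing idea is a common target that is \emph{not} a face. Under Assumption~\ref{assumption}, $\BT'_{\iota}$ is the reduced building of $G_{\iota}$, and the paper proceeds as follows.
\begin{enumerate}
\item Fix a chamber $\CC_{0}$, the $e$-vertex $\FC_{\CC_{0}}$ at its barycenter, and representatives of the $e$-facets in $\overline{\CC_{0}}$.
\item Choose Heisenberg--Weil representations twisted so that $\Hom(\kappa_{\iota,\FC_{0}},\kappa_{\iota,\FC_{\CC_{0}}})$ is free, and pick generators $\alpha_{\FC_{0},\CC_{0}}$.
\item Transport these to every chamber $\CC$, and define $\beta_{\FC,\FC'}$ as the unique map with $\alpha_{\FC,\CC}=\alpha_{\FC',\CC}\circ\beta_{\FC,\FC'}$. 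It exists and is a generator by the composition isomorphism of Corollary~\ref{coro_comp_Heisenberg}, which needs the target to lie in the open chamber.
\end{enumerate}
Transitivity is then automatic. Independence of $\CC$ holds because $G_{\iota,\FC}$ acts transitively on the chambers containing $\FC$ and $\beta$ is $\K_{\iota,\FC}\cap\K_{\iota,\FC'}$-equivariant by Proposition~\ref{Hom_HW_H}.

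When Assumption~\ref{assumption} fails, $\BT'_{\iota}$ has no such chambers, so adopting this fix also requires the paper's reduction step. It passes to the minimal Levi $\mathbf{M}$ with $\mathbf{M}_{\iota_{M}}=\mathbf{G}_{\iota}$ and sets $\WC_{\iota,\FC}:={\rm ind}_{U^{\dagger}_{\iota,\FC}\K_{\iota_{M},\FC}\bar U^{+}_{\iota,\FC}}^{\K_{\iota,\FC}}\WC_{\iota_{M},\FC}$. The face maps are then characterized by $pr_{\FC'}\circ\beta_{\FC,\FC'}\circ i_{\FC}=\beta_{\WC_{\iota_{M}},\FC,\FC'}$.
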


We postpone the proof of this theorem to Section
\ref{sec:heis-weil-coeff}, since it requires a long technical preparation done in Section
\ref{sec:heis-weil-rep}. This theorem will be the main ingredient in the construction of the equivalence
(\ref{equiv_coef}).
We will also show the following  uniqueness result in
\ref{uniqueness_WH_coef}.

\begin{prop} \label{uniqueness_WH_coef-system}
  If $\WC_{\iota}$ and $\WC'_{\iota}$ are two Heisenberg--Weil coefficient systems,
  then there is an invertible $R$-module
$L$ and a depth-$0$ character $\theta:G_{\iota}\To{}R^{\times}$ such that
$\WC'_{\iota}\simeq\WC_{\iota}\otimes_{R}L_{\theta}$. 
\end{prop}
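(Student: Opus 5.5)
We need to prove uniqueness: two Heisenberg--Weil coefficient systems differ by an invertible $R$-module twisted by a depth-zero character of $G_\iota$.

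We first compare the two systems facet by facet. For each $\FC\in\BT'_{\iota,\bullet/e}$, both $\WC_{\iota,\FC}$ and $\WC'_{\iota,\FC}$ are Heisenberg representations for the same triple. By the equivalence (\ref{Heisenberg-equivalence-of-cat}), the $R$-module $L_{\FC}:=\Hom_{R\Ku_{\iota,\FC}}(\WC_{\iota,\FC},\WC'_{\iota,\FC})$ is invertible, and evaluation gives an isomorphism $\WC_{\iota,\FC}\otimes_R L_{\FC}\simto\WC'_{\iota,\FC}$; this uses that the endomorphism ring is $R$, so the functor sends $\WC'_{\iota,\FC}$ to a rank-one projective. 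Since both modules carry compatible actions of $\K_{\iota,\FC}$, the group $\K_{\iota,\FC}$ acts on $L_{\FC}$ by conjugation. This action factors through $\K_{\iota,\FC}/\Ku_{\iota,\FC}\simeq G_{\iota,\FC}/G_{\iota,\FC,0+}$, and since $\End_R(L_\FC)=R$ it is given by a character $\theta_{\FC}$ of $G_{\iota,\FC}$ trivial on $G_{\iota,\FC,0+}$.

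Next we show that these local data glue. The structure maps of $\WC$ and $\WC'$ define a system $\FC\mapsto L_{\FC}$ on $[\BT'_{\iota,\bullet/e}/\K_\iota]$. For $\o\FC\supseteq\FC'$, Proposition~\ref{rem_WH_coef_syst}(\ref{rem_WH_coef_system-i}) identifies $\WC_{\iota,\FC'}$ with the induction of $\WC_{\iota,\FC}$, and likewise for $\WC'$. Hence transport through $\beta$ and $\beta'$ gives an isomorphism $L_{\FC}\simto L_{\FC'}$; explicitly, $f\mapsto$ the unique $f'$ with $\beta'\circ f=f'\circ\beta$, which exists by the freeness of rank one of the relevant Hom-space. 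This isomorphism is $\K_{\iota,\FC}$-equivariant. Together with the $G_\iota$-transport maps $g_{\WC,\FC}$, we obtain a $G_\iota$-equivariant coefficient system of invertible $R$-modules on $\BT'_{\iota,\bullet/e}$ in which all face maps are isomorphisms.

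We then pass from this system to a single module and character. Because $\BT'_\iota$ is contractible and connected, and all face maps are isomorphisms, the system is canonically constant: we set $L:=L_{\FC_0}$ for a fixed facet $\FC_0$. Transport along chains of facets then defines an action of $G_\iota$ on $L$. Path-independence holds because any two chains between facets are related through the combinatorics of adjacent facets, which are contractible via the convexity arguments in the proof of Lemma~\ref{lemma_MS_e_subdivided}, and the face maps are composable and functorial. The action of $G_\iota$ on $L$ is a character $\theta:G_\iota\to R^\times$, since $\End_R(L)=R$. Its restriction to each $G_{\iota,\FC}$ is $\theta_\FC$, so $\theta$ is trivial on every $G_{\iota,\FC,0+}$, i.e.\ it has depth zero. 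The evaluation maps $\WC_{\iota,\FC}\otimes L_\FC\to\WC'_{\iota,\FC}$ are compatible with $\beta$ and with the $G_\iota$-action by construction, which yields $\WC'_\iota\simeq\WC_\iota\otimes_R L_\theta$.

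The main obstacle is the gluing step: showing that the transported isomorphisms between the $L_\FC$ are well defined and independent of the path, so that the trivialization is globally consistent and the resulting $G_\iota$-action is a genuine character. We will do this by reducing to simply-connectedness of the facet complex of $\BT'_\iota$. Concretely, it suffices to check the cocycle condition on triples of mutually adjacent facets, where it follows from functoriality $\beta_{\FC',\FC''}\circ\beta_{\FC,\FC'}=\beta_{\FC,\FC''}$ in both systems.
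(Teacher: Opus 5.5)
Your proposal is correct and follows the paper's proof. You form the invertible modules $\Hom_{R\Ku_{\iota,\FC}}(\WC_{\iota,\FC},\WC'_{\iota,\FC})$, organize them into a depth-zero $G_\iota$-equivariant coefficient system whose face maps are isomorphisms, and trivialize this system globally to obtain $L_\theta$. The paper gets the inverse of each face map from part (ii) of Proposition~\ref{rem_WH_coef_syst}, whereas you only assert that transport is an isomorphism. For the global step, the paper sets $L:=\colim_{\BT'_{\iota,\bullet/e}}H$ and asserts that all canonical maps $H_\FC\to L$ are isomorphisms. This tacitly uses the connectedness and simple connectedness of $\BT'_\iota$, which you make explicit through path transport.
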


\subsection{Construction of the equivalences} \label{sec:constr-equiv}
In this section, we associate to any Heisenberg--Weil coefficient system a pair
of equivalences as in (\ref{equiv_coef}) satisfying the extra properties of Theorem
\ref{thm-main}. More precisely, we prove the more precise version of Theorem, \ref{thm-main}, which is Theorem 
\ref{thm-main-2} below, contingent on the proofs of the results stated in Section \ref{sec:main-result-strategy-3}, which in turn are provided in Sections \ref{sec:heisenberg-rep}, \ref{sec:heis-weil-rep} and \ref{sec:heis-weil-coeff} (which are independent of the present section, so there is no circular reasoning).

\alin{Overview} \label{geom_interp}
To a Heisenberg--Weil coefficient system $\WC_{\iota}$ as in Definition
\ref{def_WH_coef_syst}, we will associate a pair of quasi-inverse equivalences of categories
$$  \xymatrix{
    \IC_{\WC_{\iota}} :\, \Coef^{1}_{R}(\BT'_{\iota,\bullet/e}/G_{\iota})  \ar@<0.5ex>[r]^-{\sim} &
   \Coef^{\phi,I}_{R}(\BT'_{\bullet/e}/G)\,:\RC_{\WC_{\iota}} \ar@<0.5ex>[l]^-{\sim}
 }$$
 that respect cartesian objects. Since the construction of these functors, given in
 \ref{first-direction} and \ref{sec:seconddirection}, might appear
 quite technical at first sight, we try to offer here a geometric intuition. This section
 is logically independent from the sequel and can be skipped.

  Recall that $\WC_{\iota}$ is a  coefficient system on the category
  $[\BC'_{\iota,\bullet/e}/K_{\iota}]$ introduced above Definition \ref{def_WH_coef_syst}. We have functors
  $$ [\BC'_{\iota,\bullet/e}/\o G_{\iota}] \overset{\hbox{\tiny $p$}}{\longleftarrow}
  [\BC'_{\iota,\bullet/e}/K_{\iota}] \To{q}  [\BC'_{\bullet/e}/G].$$
  Here $q$ is the natural inclusion. On the other side, 
  $[\BC'_{\iota,\bullet/e}/\o G_{\iota}]$ denotes the category with set of objects $\BT'_{\iota,\bullet/e}$
  and morphisms given by
  $\Hom(\FC,\FC')=\{g\in G_{\iota}, \o{g\FC}\supseteq\FC'\}/G_{\iota,\FC,0+}$.
  The functor  $p$ is the identity on objects and is given on  morphisms by the (well
  defined) maps
  $$ g=kg_{\iota}\in \Ku_{\iota,\FC'}G_{\iota} \hbox{(such that $g\o\FC\supseteq\FC'$)}
  \mapsto \o{g_\iota}\in G_{\iota}/G_{\iota,\FC,0+}.$$
  Note that $\Coef_{R}([\BC'_{\iota,\bullet/e}/\o G_{\iota}])$ identifies to
  $\Coef_{R}^{1}([\BC'_{\iota,\bullet/e}/G_{\iota}])$ via the pullback $r^{*}$ along the
  projection functor $[\BC'_{\iota,\bullet/e}/G_{\iota}]\To{r}[\BC'_{\iota,\bullet/e}/\o
  G_{\iota}]$.

  Associated with $p$ and $q$ are two pull-back functors (given by precomposition)
 $$ \Coef_{R}(\BC'_{\iota,\bullet/e}/\o G_{\iota}]) \To{p^{*}}
  \Coef_{R}(\BC'_{\iota,\bullet/e}/K_{\iota}) \overset{\hbox{\tiny $q^{*}$}}{\longleftarrow}  \Coef_{R}(\BC'_{\bullet/e}/G).$$
  The functor $p^{*}$  has a right adjoint $p_{*}$ given by
$p_{*}(\WC)_{\FC}:=\{w\in\WC_{\FC}, \forall\FC'\subseteq\o\FC, \beta_{\WC,\FC,\FC'}(w)\in
(\WC_{\FC'})^{\Ku_{\iota,\FC'}}\}$ for $\FC\in\BT'_{\iota,\bullet/e}$. The 
functor $q^{*}$ has a left adjoint $q_{!}$, a description of which is done in \ref{sec:seconddirection}.
Now, on $\Coef_{R}(\BC'_{\iota,\bullet/e}/K_{\iota})$ we have tensor products,
given on facets by $(\WC\otimes_{R}\WC')_{\FC}=\WC_{\FC}\otimes_{R}\WC'_{\FC}$ with
obvious face maps and action maps.  We also have internal homs defined on facets by
$\HC om(\WC,\WC')_{\FC} := \Hom(\WC|_{\o\FC},\WC'|_{\o\FC})$, where the $\Hom$ on the right hand side
denotes the $R$-module of morphisms between coefficient systems $\WC$ and $\WC'$
restricted to $\o\FC$. As in sheaf theory, these constructions satisfy the usual
Hom-tensor adjunctions.

It now becomes  natural to consider the adjoint pair of pull-push functors with kernel $\WC_{\iota}$ : 
  $$ \IC_{\WC_{\iota}}:\, \VC_{\iota} \mapsto  q_{!} (\WC_{\iota}\otimes_{R} p^{*}\VC_{\iota})
  \hbox{ and }
  \RC_{\WC_{\iota}}:\, \VC \mapsto  p_{*} ({\rm \mathcal{H}om}_{R}(\WC_{\iota}, q^{*}\VC)).$$

  In the next three sections we provide more explicit definitions of these functors
  and a proof  that they induce the desired equivalences between our categories of interest. However, we
   leave it to the reader to check that these explicit definitions indeed describe the above functors.

\alin{A functor from $\Coef^{\phi,I}_{R}(\BT'_{\bullet/e}/G)$ to $\Coef^{1}_{R}(\BT'_{\iota,\bullet/e}/G_{\iota})$}
\label{first-direction}
This is the easier direction, since we start from a coefficient system defined on the target building.
As above, we fix a Heisenberg--Weil coefficient system $\WC_{\iota}$ as in Definition \ref{def_WH_coef_syst}, and let
$\VC$ be a coefficient system in $\Coef^{\phi,I}_{R}(\BT'_{\bullet/e}/G)$. We define a
coefficient system $\VC_{\iota}$ on $\BT'_{\iota}$ as follows.

\begin{itemize}
\item $\forall\FC\in\BT'_{\iota,\bullet/e}$, set
  $\VC_{\iota,\FC}:=\Hom_{R\Ku_{\iota,\FC}}(\WC_{\iota,\FC},\VC_{\FC})$.
\item If $\o\FC\supseteq\FC'$, define $\beta_{\VC_{\iota},\FC,\FC'}$ as the unique map that makes the
  following diagram commute:
  $$\xymatrix{
    \VC_{\iota,\FC}=\Hom_{R\Ku_{\iota,\FC}}(\WC_{\iota,\FC},\VC_{\FC})
    \ar[d]_{u\mapsto \beta_{\VC,\FC,\FC'}\circ u} \ar@{..>}[rr]^{\beta_{\VC_{\iota},\FC,\FC'}}
    && \VC_{\iota,\FC'}=\Hom_{R\Ku_{\iota,\FC'}}(\WC_{\iota,\FC'},\VC_{\FC'})
    \ar[d]^{u\mapsto u\circ\beta_{\WC,\FC,\FC'}} \\
    \Hom_{R(\Ku_{\iota,\FC}\cap\Ku_{\iota,\FC'})}(\WC_{\iota,\FC},\VC_{\FC'})
    \ar@{=}[rr] && \Hom_{R(\Ku_{\iota,\FC}\cap\Ku_{\iota,\FC'})}(\WC_{\iota,\FC},\VC_{\FC'})
  } $$
  Existence and uniqueness follow from \ref{rem_WH_coef_system-i}) of Proposition \ref{rem_WH_coef_syst}  and Frobenius reciprocity.
\item If $g\in G_{\iota}$, define $g_{\VC_{\iota},\FC}:\VC_{\iota,\FC}\To{\sim}\VC_{\iota,g\FC}$ as
  $$\xymatrix{ \VC_{\iota,\FC}=\Hom_{R\Ku_{\iota,\FC}}(\WC_{\iota,\FC},\VC_{\FC})
  \ar[rrr]^{u\mapsto g_{\VC,\FC}\circ u\circ (g_{\WC,\FC})^{-1}}
 &&& \Hom_{R\Ku_{\iota,g\FC}}(\WC_{\iota,g\FC},\VC_{g\FC})}=\VC_{\iota,g\FC}$$
\end{itemize}

\begin{lem}
  The above data define an object $\VC_{\iota}$ of
  $\Coef^{1}_{R}(\BT'_{\iota,\bullet/e}/G_{\iota})$, that is Cartesian if $\VC$ is Cartesian.
\end{lem}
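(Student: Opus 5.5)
We have to check four things in turn: functoriality, smoothness, the depth-zero condition, and Cartesianness when $\VC$ is Cartesian. We use throughout the isomorphisms of Proposition \ref{rem_WH_coef_syst} and the Heisenberg equivalence (\ref{Heisenberg-equivalence-of-cat}).

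\emph{Functoriality and the $G_{\iota}$-action.} First, $\beta_{\VC_{\iota},\FC,\FC'}$ is well defined. By \ref{rem_WH_coef_system-i}) of Proposition \ref{rem_WH_coef_syst} and Frobenius reciprocity, precomposition with $\beta_{\WC,\FC,\FC'}$ gives an isomorphism
\[\Hom_{R\Ku_{\iota,\FC'}}(\WC_{\iota,\FC'},\VC_{\FC'})\simto\Hom_{R(\Ku_{\iota,\FC}\cap\Ku_{\iota,\FC'})}(\WC_{\iota,\FC},\VC_{\FC'}).\]
Hence the right vertical arrow is bijective and $\beta_{\VC_{\iota},\FC,\FC'}$ exists and is unique. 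The transitivity $\beta_{\FC',\FC''}\circ\beta_{\FC,\FC'}=\beta_{\FC,\FC''}$ follows from the same transitivity for $\VC$ and $\WC_{\iota}$, together with the uniqueness just established. The action map $g_{\VC_{\iota},\FC}$ lands in $\Ku_{\iota,g\FC}$-homomorphisms, because $g_{\WC,\FC}$ and $g_{\VC,\FC}$ intertwine $\Ku_{\iota,\FC}$ with ${}^g\Ku_{\iota,\FC}=\Ku_{\iota,g\FC}$; this uses (\ref{Gaction}). The cocycle relations and the compatibility of the action maps with the face maps come from the corresponding relations for $\VC$ and $\WC_{\iota}$, again checked after the injective right vertical arrow.

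\emph{Smoothness and depth zero.} The stabilizer $G_{\iota,\FC}$ acts on $\VC_{\iota,\FC}$ by $u\mapsto g u g^{-1}$. For $g\in G_{\iota,\FC,0+}\subseteq\Ku_{\iota,\FC}$, the map $u$ is $\Ku_{\iota,\FC}$-equivariant and $g_{\WC,\FC}$ coincides with the action of $g\in\Ku_{\iota,\FC}$ on $\WC_{\iota,\FC}$. Therefore $gug^{-1}=u$, so $G_{\iota,\FC,0+}$ acts trivially. This gives both smoothness and the identity $\VC_{\iota,\FC}=e_{G_{\iota,\FC,0+}}\VC_{\iota,\FC}$.

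\emph{Cartesianness.} This is the main step. Assume that $\beta_{\VC,\FC,\FC'}$ induces $\VC_{\FC}\simto e_{\phi,I,\FC}\VC_{\FC'}$. We must show that $\beta_{\VC_{\iota},\FC,\FC'}$ is an isomorphism onto $\VC_{\iota,\FC'}^{G_{\iota,\FC,0+}}$. Through the vertical identifications, the image of $\beta_{\VC_{\iota},\FC,\FC'}$ is $\Hom_{R(\Ku_{\iota,\FC}\cap\Ku_{\iota,\FC'})}(\WC_{\iota,\FC},\beta_{\VC}(\VC_{\FC}))$. By the Cartesian hypothesis and $e_{\phi,I,\FC}e_{\iota,\FC}=e_{\iota,\FC}$, this can be rewritten as $\Hom(\WC_{\iota,\FC},e_{\iota,\FC}\VC_{\FC'})$. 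On the other side, $u\in\VC_{\iota,\FC'}$ is $G_{\iota,\FC,0+}$-invariant if and only if $u\circ\beta_{\WC}$ is $\Ku_{\iota,\FC}=(\Ku_{\iota,\FC}\cap\Ku_{\iota,\FC'})G_{\iota,\FC,0+}$-equivariant.

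The key inputs are \ref{rem_WH_coef_system-ii}), which identifies $\WC_{\iota,\FC}\simeq e_{\iota,\FC}\WC_{\iota,\FC'}$ $\K_{\iota,\FC}$-equivariantly, and the fact that any $\Ku_{\iota,\FC}\cap\Ku_{\iota,\FC'}$-map from a $\check\phi^+_{\iota,\FC}$-isotypic module lands in $e'_{\iota,\FC}\VC_{\FC'}$. Combining these, a $\Ku_{\iota,\FC}\cap\Ku_{\iota,\FC'}$-map $\WC_{\iota,\FC}\to\VC_{\FC'}$ lands in $e_{\iota,\FC}\VC_{\FC'}$. We expect to have to check carefully that this subspace lies in $e_{\phi,I,\FC}\VC_{\FC'}$, which is the image of $\VC_{\FC}$; this uses Lemma \ref{rem_ortho_idemp} and the expression of $e_{\phi,I,\FC}$ as a sum over $\iota'\sim\iota$. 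Granting this, the map factors through $\VC_{\FC}$. It is then $\Ku_{\iota,\FC}$-equivariant precisely when it is $G_{\iota,\FC,0+}$-equivariant, and $G_{\iota,\FC,0+}$-equivariance corresponds to $G_{\iota,\FC,0+}$-invariance of $u$. This yields the bijection onto the invariants, and hence Cartesianness.
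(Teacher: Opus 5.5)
Your treatment of the face maps, the action maps, their compatibilities, and the depth-zero condition is correct and is what the paper does. The Cartesian step, however, rests on a false claim at exactly the point where the work lies. You assert that every $(\Ku_{\iota,\FC}\cap\Ku_{\iota,\FC'})$-equivariant map $\WC_{\iota,\FC}\to\VC_{\FC'}$ lands in $e_{\iota,\FC}\VC_{\FC'}$. Equivariance for $\Ku_{\iota,\FC}\cap\Ku_{\iota,\FC'}$ only makes the image isotypic for $\Kp_{\iota,\FC}\cap\Ku_{\iota,\FC'}$, i.e.\ puts it in $e'_{\iota,\FC}\VC_{\FC'}$. The group $\Kp_{\iota,\FC}=G_{\iota,\FC,0+}(\Kp_{\iota,\FC}\cap\Ku_{\iota,\FC'})$ is in general strictly bigger, so $e'_{\iota,\FC}\VC_{\FC'}$ can be strictly bigger than $e_{\iota,\FC}\VC_{\FC'}$.

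Already for trivial $\phi$ your claim fails. There $\mathbf{G}_\iota=\mathbf{G}$, $\WC_{\iota,\FC}=R$ and $\Ku_{\iota,\FC}=\Kp_{\iota,\FC}=G_{\iota,\FC,0+}$. For $\VC=(\pi^*)^{\phi,I}(V)$ your claim reads $V^{G_{\iota,\FC',0+}}\subseteq V^{G_{\iota,\FC,0+}}$ whenever $\FC'\subseteq\o\FC$. This is false: take $\FC$ inside a chamber, $\FC'$ a vertex of that chamber, and $V=\cind_{G_{\iota,\FC',0+}}^{G}(R)$.

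Neither \ref{rem_WH_coef_system-ii}) nor the lemma of \ref{rem_ortho_idemp} rescues this. The latter gives $e_{\phi,I,\FC}e'_{\iota,\FC}=e_{\iota,\FC}$, which says nothing about $e'_{\iota,\FC}\VC_{\FC'}$, because $e_{\phi,I,\FC}$ does not act trivially on $\VC_{\FC'}$. Conversely, the inclusion you single out as delicate, $e_{\iota,\FC}\VC_{\FC'}\subseteq e_{\phi,I,\FC}\VC_{\FC'}$, is immediate from $e_{\phi,I,\FC}e_{\iota,\FC}=e_{\iota,\FC}$.

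The repair is to impose the invariance before the isotypy, which is how the paper argues.
\begin{enumerate}
\item Take $u\in\VC_{\iota,\FC'}$ fixed by $G_{\iota,\FC,0+}$. Then $u\circ\beta_{\WC,\FC,\FC'}$ is $\Ku_{\iota,\FC}$-equivariant, in particular $\Kp_{\iota,\FC}$-equivariant. The paper phrases this via $\Kp_{\iota,\FC}=G_{\iota,\FC,0+}(\Kp_{\iota,\FC}\cap\Ku_{\iota,\FC'})$.
\item Hence its image lies in $e_{\iota,\FC}\VC_{\FC'}\subseteq e_{\phi,I,\FC}\VC_{\FC'}=\beta_{\VC,\FC,\FC'}(\VC_{\FC})$.
\item Set $v:=\beta_{\VC,\FC,\FC'}^{-1}\circ u\circ\beta_{\WC,\FC,\FC'}$. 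It is $\Ku_{\iota,\FC}$-equivariant, since $\beta_{\VC,\FC,\FC'}$ is injective and $G_{\FC}$-equivariant (stabilizers of $e$-facets fix their closures). By construction $\beta_{\VC_\iota,\FC,\FC'}(v)=u$.
\end{enumerate}
You should also state explicitly that $\beta_{\VC_\iota,\FC,\FC'}$ is injective (from injectivity of $\beta_{\VC,\FC,\FC'}$ and the defining diagram) and that its image consists of $G_{\iota,\FC,0+}$-invariants.

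Relatedly, the defining diagram identifies the image of $\beta_{\VC_\iota,\FC,\FC'}$ with $\Hom_{R\Ku_{\iota,\FC}}(\WC_{\iota,\FC},\beta_{\VC}(\VC_{\FC}))$, with full $\Ku_{\iota,\FC}$-equivariance. Writing $\Ku_{\iota,\FC}\cap\Ku_{\iota,\FC'}$ there needs an extra argument that you neither give nor need.
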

\begin{proof} The transitivity of the action maps, i.e., the identities
  $(hg)_{\VC_{\iota},\FC}=h_{\VC_{\iota},g\FC}\circ g_{\VC_{\iota},\FC}$, follows from that of the
  action maps for $\VC$ and $\WC$.
  
  Let us check the transitivity of the face maps. Given $\FC,\FC',\FC''$ such that
  $\o\FC\supseteq\o\FC'\supseteq\o\FC''$, we see that
  $\beta_{\VC_{\iota},\FC',\FC''}\circ\beta_{\VC_{\iota},\FC,\FC'}$ makes the diagram that defines
  $\beta_{\VC_{\iota},\FC,\FC''}$ commute. Hence, by uniqueness, both maps are equal.

  Let us now check compatibility of the face maps with the action. Given $\FC,\FC'$ such that
  $\o\FC\supseteq\FC'$ and $g\in G_{\iota}$, the map
  $g_{\VC_{\iota},\FC'}\circ\beta_{\VC_{\iota},\FC,\FC'}\circ (g_{\VC_{\iota},\FC})^{-1}$ makes the
  following diagram commute :
    $$\xymatrix{
      \VC_{\iota,g\FC}=\Hom_{R\Ku_{\iota,g\FC}}(\WC_{\iota,g\FC},\VC_{g\FC})
      \ar[d]_{u\mapsto  (g_{\VC,\FC})^{-1}\circ u\circ g_{\WC,\FC}}
    \ar[rrr]^{g_{\VC_{\iota},\FC'}\circ\beta_{\VC_{\iota},\FC,\FC'}\circ (g_{\VC_{\iota},\FC})^{-1}} 
    &&& \VC_{\iota,g\FC'}=\Hom_{R\Ku_{\iota,g\FC'}}(\WC_{\iota,g\FC'},\VC_{g\FC'})
    \ar[d]^{u\mapsto (g_{\VC,\FC'})^{-1}\circ u\circ g_{\WC,\FC'}}
    \\
    \VC_{\iota,\FC}=\Hom_{R\Ku_{\iota,\FC}}(\WC_{\iota,\FC},\VC_{\FC})
    \ar[d]_{u\mapsto \beta_{\VC,\FC,\FC'}\circ u} \ar[rrr]^{\beta_{\VC_{\iota},\FC,\FC'}}
    &&& \VC_{\iota,\FC'}=\Hom_{R\Ku_{\iota,\FC'}}(\WC_{\iota,\FC'},\VC_{\FC'})
    \ar[d]^{u\mapsto u\circ\beta_{\WC,\FC,\FC'}} \\    
    \Hom_{R(\Ku_{\iota,\FC}\cap\Ku_{\iota,\FC'})}(\WC_{\iota,\FC},\VC_{\FC'})
    \ar@{=}[rrr]  \ar[d]_{u\mapsto g_{\VC,\FC'}\circ u \circ (g_{\WC,\FC})^{-1}}
    &&& \Hom_{R(\Ku_{\iota,\FC}\cap\Ku_{\iota,\FC'})}(\WC_{\iota,\FC},\VC_{\FC'})
    \ar[d]^{u\mapsto g_{\VC,\FC'}\circ u \circ (g_{\WC,\FC})^{-1}} \\
     \Hom_{R(\Ku_{\iota,g\FC}\cap\Ku_{\iota,g\FC'})}(\WC_{\iota,g\FC},\VC_{g\FC'})
    \ar@{=}[rrr]
    &&& \Hom_{R(\Ku_{\iota,g\FC}\cap\Ku_{\iota,g\FC'})}(\WC_{\iota,g\FC},\VC_{g\FC'})
  } $$
Since $\VC$ is a coefficient system, we have $\beta_{\VC,\FC,\FC'}\circ (g_{\VC,\FC})^{-1}=
(g_{\VC,\FC'})^{-1}\circ\beta_{\VC,g\FC,g\FC'}$, hence the composition of the left vertical maps is
$u\mapsto \beta_{\VC,g\FC,g\FC'}\circ u$. Similarly, $\WC$ being a $G_\iota$-equivariant coefficient system, we have
$g_{\WC,\FC'}\circ\beta_{\WC,\FC,\FC'}=\beta_{\WC,g\FC,g\FC'}\circ g_{\WC,\FC}$, and the composition
of the right vertical maps is $u\mapsto u\circ\beta_{\WC,g\FC,g\FC'}$. It follows that the top map
makes the defining diagram of $\beta_{\VC_{\iota},g\FC,g\FC'}$  commute, hence $\beta_{\VC_{\iota},g\FC,g\FC'}=  g_{\VC_{\iota},\FC'}\circ\beta_{\VC_{\iota},\FC,\FC'}\circ (g_{\VC_{\iota},\FC})^{-1}$ as desired.

At this point, we have proved that $\VC_{\iota}$ is a $G_{\iota}$-equivariant coefficient
system on $\BT'_{\iota,\bullet/e}$. Since $G_{\iota,\FC,0+}$ visibly acts trivially on
$\VC_{\iota,\FC}$, this coefficient system lies in $\Coef^{1}_{R}(\BT'_{\iota,\bullet/e}/G_{\iota})$.

Let us now assume that $\VC$ is Cartesian. It remains to check that $\beta_{\VC_{\iota},\FC,\FC'}$
induces an isomorphism $\VC_{\iota,\FC}\simto e_{G_{\iota,\FC,0+}}\VC_{\iota,\FC'}$ for all
pairs of facets $\FC'\subset\o\FC\subset\BT'_{\iota,\bullet/e}$.
The injectivity of $\beta_{\VC_{\iota},\FC,\FC'}$ follows from
its defining diagram and the injectivity of $\beta_{\VC,\FC,\FC'}$. The fact that the image of
$\beta_{\VC_{\iota},\FC,\FC'}$ is invariant under the subgroup $G_{\iota,\FC,0+}$ of
$G_{\iota,\FC'}$ follows from its compatibility with the action maps and the fact that
$G_{\iota,\FC,0+}\subseteq \Ku_{\iota,\FC}$ acts trivially on $\VC_{\iota,\FC}$.  Now, let $u\in
\VC_{\iota,\FC'}=\Hom_{R\Ku_{\iota,\FC'}}(\WC_{\iota,\FC'},\VC_{\FC'})$. The image
$u(\beta_{\cW,\cF,\cF'}(\WC_{\iota,\FC}))$ of the subspace
$\beta_{\cW,\cF,\cF'}(\WC_{\iota,\FC})=e'_{\iota,\cF}\cW_{\iota,\cF'} \subseteq \cW_{\iota,\cF'}$ is contained in the $(\check\phi_{\iota,\FC})|_{\Kp_{\iota,\FC}\cap\Ku_{\iota,\FC'}}$-isotypic
submodule of $\VC_{\FC'}$. If, additionally, $u$ is $G_{\iota,\FC,0+}$-invariant, then  also
$u(\beta_{\cW,\cF,\cF'}(\WC_{\iota,\FC}))\subseteq (\VC_{\FC'})^{G_{\iota,\FC,0+}}$, because
$G_{\iota,\FC,0+} \subset G_{\iota,\FC} \cap G_{\iota,\FC}$ acts trivially on $\WC_{\iota,\FC}$ and hence also on $\beta_{\cW,\cF,\cF'}(\WC_{\iota,\FC})$. Since
$G_{\iota,\FC,0+}.(\Kp_{\iota,\FC}\cap\Ku_{\iota,\FC'})= \Kp_{\iota,\FC}$ and
$\check\phi_{\iota,\FC}$ is trivial on $G_{\iota,\FC,0+}$, this means that
$u(\beta_{\cW,\cF,\cF'}(\WC_{\iota,\FC}))\subseteq e_{\iota,\FC}\VC_{\iota,\FC'}$.  But,
$\VC\in\Coef^{\phi,I}_{R}(\BT'_{\bullet/e}/G)$, so we know that $\beta_{\VC,\FC,\FC'}$ is an
isomorphism $\VC_{\FC}\simto e_{I,\FC}\VC_{\FC'}$. Applying the idempotent
$e_{\iota,\FC}$ on both sides, we see that $\beta_{\VC,\FC,\FC'}$ induces an
isomorphism $e_{\iota,\FC}\VC_{\FC}\simto e_{\iota,\FC}\VC_{\FC'}$.
Composing $u \circ \beta_{\cW,\cF,\cF'}$ with the inverse of
that isomorphism
yields a morphism $v : \WC_{\iota,\FC}\To{}e_{\iota,\FC}\VC_{\FC}$, which is both 
$\Ku_{\iota,\FC}\cap\Ku_{\iota,\FC'}$-equivariant and $G_{\iota,\FC,0+}$-equivariant.
Since $G_{\iota,\FC,0+}.(\Ku_{\iota,\FC}\cap\Ku_{\iota,\FC'})= \Ku_{\iota,\FC}$, we see that $v$
belongs to $\VC_{\iota,\FC}=\Hom_{R\Ku_{\iota,\FC}}(\WC_{\iota,\FC},\VC_{\FC})$. By construction, we
have $\beta_{V_{\iota},\FC,\FC'}(v)=u$.
\end{proof}

The assignment $\VC\mapsto\VC_{\iota}$ is clearly functorial in $\VC$, providing a functor
$$ \RC_{\WC_{\iota}} :\, \Coef^{\phi,I}_{R}(\BT'_{\bullet/e}/G)\To{} \Coef^{1}_{R}(\BT'_{\iota,\bullet/e}/G_{\iota}),$$ 
that maps $\Coef^{\phi,I}_{R}(\BT'_{\bullet/e}/G)^{\cart}$ into $\Coef^{1}_{R}(\BT'_{\iota,\bullet/e}/G_{\iota})^{\cart}$.

\alin{A functor from $\Coef^{1}_{R}(\BT'_{\iota,\bullet/e}/G_{\iota})$ to $\Coef^{\phi,I}_{R}(\BT'_{\bullet/e}/G)$}\label{sec:seconddirection}
This direction carries additional complications since we start from a coefficient
system only defined on a subset of the target building. So, let
$\VC_{\iota}$ be an object of $\Coef^{1}_{R}(\BT'_{\iota,\bullet/e}/G_{\iota})$.
We define the following objects.

\begin{itemize}
\item For any $\FC\in \BT'_{\bullet/e}$, define
   $$ \VC_{\FC}:= \left( \bigoplus_{\FC_{\iota}\in\BT'_{\iota,\bullet/e}}
     \VC_{\FC,\FC_{\iota}}     \right)_{\!\!G_{\iota}}
\;\hbox{ with }\;
\VC_{\FC,\FC_{\iota}} := \CC^{\infty}_{c}(G_{\FC\FC_{\iota}})\otimes_{R\K_{\iota,\FC_{\iota}}} (\WC_{\iota,\FC_{\iota}}
      \otimes_{R}\VC_{\iota,\FC_{\iota}}).
             $$  
  Here, \index[notation]{GFFiota@$G_{\FC\FC_{\iota}}$}$G_{\FC\FC_{\iota}}:=\{g\in G, g\FC_{\iota}=\FC\}$ is an open subset of  $G$ and
  $\CC^{\infty}_{c}$ denotes smooth, compactly supported $R$-valued functions\footnote{We
    set $\CC^{\infty}_{c}(G_{\FC\FC_{\iota}})=\{0\}$ when $G_{\FC\FC_{\iota}}$ is empty.}. In
  particular, $\CC^{\infty}_{c}(G_{\FC\FC_{\iota}})$ has a right $R\K_{\iota,\FC_{\iota}}$-module
  structure arising from  multiplication
   in $G$, i.e., $g \in G$ acts on a function by right-translation by $g^{-1}$, while $\WC_{\iota,\FC_{\iota}}$ is a left
  $R\K_{\iota,\FC_{\iota}}$-module by definition, and so is
  $\VC_{\iota,\FC_{\iota}}$ through the
  projection map $\K_{\iota,\FC_{\iota}}\twoheadrightarrow G_{\iota,\FC_{\iota}}$. Finally,
  $h\in G_{\iota}$ acts as the sum of all maps $\VC_{\FC,\FC_{\iota}}\To{}\VC_{\FC,h\FC_{\iota}}$,
  $f\otimes w\otimes v\mapsto fh^{-1}\otimes  h_{\WC_{\iota},\FC_{\iota}}(w)\otimes
  h_{\VC_{\iota},\FC_{\iota}}(v)$.
  
\item For $g\in G$, let  $g_{\VC,\FC}:\VC_{\FC}\To{}\VC_{g\FC}$ be induced by the sum over $\cF_{\iota}\in\BT'_{\iota,\bullet/e}$
  of all maps $\VC_{\FC,\FC_{\iota}}\To{}\VC_{g\FC,\FC_{\iota}}$,
  $f\otimes w\otimes v\mapsto gf\otimes w\otimes v$, where $gf(h)=f(g^{-1}h)$ for $h \in G_{(g\cF)\cF_\iota}$.

\item For $\FC,\FC'$ such that $\o\FC\supseteq\FC'$, we let
  $\beta_{\VC,\FC,\FC'}:\VC_{\FC}\To{}\VC_{\FC'}$ be the map induced by the sum  over $\cF_{\iota}, \cF'_{\iota} \in\BT'_{\iota,\bullet/e}$ with $\o\FC_{\iota}\supseteq\FC'_{\iota}$ of all maps
  \[p_{\FC\FC_{\iota},\FC'\FC'_{\iota}}\otimes
  \beta_{\WC_{\iota},\FC_{\iota},\FC'_{\iota}}\otimes\beta_{\VC_{\iota},\FC_{\iota},\FC'_{\iota}}
:\,  \VC_{\FC,\FC_{\iota}}\To{}\VC_{\FC',\FC'_{\iota}} \]
 where
 $p_{\FC\FC_{\iota},\FC'\FC'_{\iota}}:
\CC^{\infty}_{c}(G_{\FC\FC_{\iota}})\To{}\CC^{\infty}_{c}(G_{\FC'\FC'_{\iota}}) $
denotes restriction of functions to $G_{\FC\FC_{\iota}}\cap G_{\FC'\FC'_{\iota}}$ followed
by extension by zero to $G_{\FC'\FC'_{\iota}}$. These maps descend to
  $G_{\iota, \cF_\iota}$-coinvariants since $G_{\iota,\FC_{\iota}} \subseteq \K_{\iota,\FC_{\iota}}\subseteq\K_{\iota,\FC'_{\iota}}$
  (by our choice of $e$), and then the sum
  descends to $G_{\iota}$-coinvariants.
\end{itemize}

\begin{lem}
  The above data define an object $\VC$ in $\Coef^{\phi,I}_{R}(\BT'_{\bullet/e}/G)$, that
  is Cartesian if $\VC_{\iota}$ is cartesian. 
\end{lem}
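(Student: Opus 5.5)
I would first check that the data are well defined, then verify the coefficient-system axioms one at a time, then the isotypy condition $\VC_{\FC}=e_{\phi,I,\FC}\VC_{\FC}$, and finally the Cartesian property, which I expect to be the real difficulty.

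\textbf{Well-definedness.} The group $G_{\iota}$ acts on $\bigoplus_{\FC_{\iota}}\VC_{\FC,\FC_{\iota}}$ by the stated maps. I would check that this action is well defined on the balanced tensor products. The point is that for $h\in G_{\iota}$, conjugation by $h$ carries $\K_{\iota,\FC_{\iota}}$ to $\K_{\iota,h\FC_{\iota}}$, and the action maps $h_{\WC_{\iota},\FC_{\iota}}$ and $h_{\VC_{\iota},\FC_{\iota}}$ are $\K_{\iota,\FC_{\iota}}$-equivariant after twisting by $h$. Transitivity of the action maps then gives a genuine $G_{\iota}$-action, so the coinvariants make sense. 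The maps $g_{\VC,\FC}$ are induced by left translation on $\CC^{\infty}_{c}(G_{\FC\FC_{\iota}})$. Left translation commutes with the right $\K_{\iota,\FC_{\iota}}$-action and with the right $G_{\iota}$-action, so these maps descend to coinvariants and are transitive.

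\textbf{Face maps.} Here $p_{\FC\FC_{\iota},\FC'\FC'_{\iota}}$ is restriction followed by extension by zero. It is right $\K_{\iota,\FC_{\iota}}$-linear, since $G_{\FC\FC_{\iota}}\cap G_{\FC'\FC'_{\iota}}$ is stable under right multiplication by $G_{\FC_{\iota}}\cap G_{\FC'_{\iota}}\supseteq\K_{\iota,\FC_{\iota}}$. The face maps $\beta_{\WC_{\iota}}$ are $\K_{\iota,\FC_{\iota}}$-equivariant by the remark after Definition \ref{def_WH_coef_syst}, and $\beta_{\VC_{\iota}}$ is $G_{\iota,\FC_{\iota}}$-equivariant. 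Hence the tensor product of the three maps is well defined on the $\otimes_{R\K_{\iota,\FC_{\iota}}}$ products. Summing over $\FC_{\iota}$ and using $G_{\iota}$-compatibility of all three factors gives a map on $G_{\iota}$-coinvariants. Two sub-checks remain:
\begin{itemize}
\item For transitivity $\beta_{\FC',\FC''}\beta_{\FC,\FC'}=\beta_{\FC,\FC''}$, I would note that the composite of two restrict-then-extend maps is restriction to a triple intersection. Terms with $\FC_{\iota}\to\FC'_{\iota}\to\FC''_{\iota}$ must then be matched with direct terms $\FC_{\iota}\to\FC''_{\iota}$. If $g\FC_{\iota}=\FC$ and $g\FC'_{\iota}=\FC'$, then $\FC'_{\iota}$ is the unique facet with this property for a given $g$. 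So for fixed $g$ the intermediate facet is determined, and the sums agree.
\item Compatibility with $g_{\VC}$ is immediate, because left translation commutes with restriction.
\end{itemize}

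\textbf{Isotypy.} To show that $e_{\phi,I,\FC}$ acts as the identity on $\VC_{\FC}$, I would work on a pure tensor $f\otimes w\otimes v$ with $f$ supported on $g\K_{\iota,\FC_{\iota}}$. Then $g^{-1}e_{\phi,I,\FC}g=e_{\phi,I,\FC_{\iota}}$ by \eqref{Gequivariant}. Lemma \ref{ortho_idemp} gives $e_{\phi,I,\FC_{\iota}}e_{\iota,\FC_{\iota}}=e_{\iota,\FC_{\iota}}$, and $\WC_{\iota,\FC_{\iota}}$ is $e_{\iota,\FC_{\iota}}$-isotypic, so the idempotent fixes the element. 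Smoothness holds because $\CC^{\infty}_{c}$ consists of locally constant functions.

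\textbf{Cartesian property (main obstacle).} Assume $\VC_{\iota}$ is Cartesian. I must show that $\beta_{\VC,\FC,\FC'}$ induces $\VC_{\FC}\simto e_{\phi,I,\FC}\VC_{\FC'}$. I would do this in four steps.
\begin{enumerate}
\item Rewrite the $G_{\iota}$-coinvariants as a sum over $G_{\iota}$-orbits of facets $\FC_{\iota}$ with $G_{\FC\FC_{\iota}}\neq\emptyset$. Equivalently, index by $G_{\iota}$-classes of pairs $(g,\FC_{\iota})$, obtaining a decomposition $\VC_{\FC}\simeq\bigoplus_{\iota'} \cind$-type pieces indexed by the embeddings $\iota'=g\iota\in I_{\FC}$ modulo $\Ku_{\iota',\FC}$, i.e.\ by $I_{x}/\!\sim_{x}$. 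Each piece is $e_{\iota',\FC}$-isotypic, and $e_{\phi,I,\FC}=\sum e_{\iota',\FC}$ by \ref{def_idemp_I}.
\item Apply $e_{\iota',\FC}$ to both sides. This reduces the claim to a single $\iota'$, and after conjugating, to $\iota'=\iota$ with $\FC,\FC'\subset\BT'_{\iota}$.
\item For $\FC'\subset\o\FC$ the morphisms in $[\BT'_{\iota,\bullet/e}/\K_{\iota}]$ lie in $\Ku_{\iota,\FC'}G_{\iota}$. The $e_{\iota,\FC}$-component of $\VC_{\FC'}$ then becomes $\cind_{\K_{\iota,\FC}}^{\K_{\iota,\FC'}}$-type data built from $\WC_{\iota,\FC'}\otimes\VC_{\iota,\FC'}$, cut by $e_{\iota,\FC}$.
\item Use Proposition \ref{rem_WH_coef_syst} (ii), $\WC_{\iota,\FC}\simeq e_{\iota,\FC}\WC_{\iota,\FC'}$, together with the Cartesian property $\VC_{\iota,\FC}\simeq e_{G_{\iota,\FC,0+}}\VC_{\iota,\FC'}$. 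Combined with $\Ku_{\iota,\FC}=(\Ku_{\iota,\FC}\cap\Ku_{\iota,\FC'})G_{\iota,\FC,0+}$, these should identify $e_{\iota,\FC}(\WC_{\iota,\FC'}\otimes\VC_{\iota,\FC'})$ with $\WC_{\iota,\FC}\otimes\VC_{\iota,\FC}$.
\end{enumerate}
The delicate point is the bookkeeping in step 3: for the given $g$, I need that terms indexed by facets $\FC_{\iota}'$ that are not faces of the relevant $\FC_{\iota}$ are killed by $e_{\iota,\FC}$. I would prove this with Lemma \ref{rem_ortho_idemp} and the intertwining bound \eqref{intertw}.
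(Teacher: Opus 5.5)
Your proposal is correct. The descent to coinvariants, the transitivity of face maps (for fixed $g$ the intermediate facet $g^{-1}\FC'$ is forced and lies in $\o\FC_{\iota}\subseteq\BT'_{\iota}$) and the isotypy via $e_{I,\FC_{\iota}}e_{\iota,\FC_{\iota}}=e_{\iota,\FC_{\iota}}$ are exactly as in the paper. For the Cartesian property, however, you take a different route.

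\textbf{The paper's route.} The paper never splits into embedding classes. It rewrites $\VC_{\FC}$ with tensor products over $R\Ku_{\iota,\FC_{\iota}}$ and indexes it by pairs $\FC'_{\iota}\subseteq\o\FC_{\iota}$, giving \eqref{VF}. It then decomposes $e_{I,\FC}\VC_{\FC'}$ according to the facet $g^{-1}\FC$ and discards the summands with $g^{-1}\FC\not\subseteq\BT'_{\iota}$ using \eqref{intertw}. Next it trades $\WC_{\iota,\FC'_{\iota}}$ for $\WC_{\iota,\FC_{\iota}}$ through the induction isomorphism of Proposition \ref{rem_WH_coef_syst} i), and replaces $e_{I,\FC_{\iota}}$ by $e_{\iota,\FC_{\iota}}$ via Remark \ref{rem_ortho_idemp}. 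Finally it matches the result with \eqref{VF} term by term.

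\textbf{Your route.} You project onto $e_{\iota',\FC}$-components. This is legitimate: $e_{\iota',\FC}$ is supported in $G_{\FC}\subseteq G_{\FC'}$, and $\beta_{\VC,\FC,\FC'}$ commutes with that action. You then transport to $\iota'=\iota$ by $G$-equivariance and finish with Proposition \ref{rem_WH_coef_syst} ii).

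\textbf{Comparison.} Your route turns the last step into a single-component computation, essentially the mechanism the paper uses for $\IC\circ\RC\simeq\id$ in Theorem \ref{main_under_assumption}. The price is that you must know exactly one summand of $\VC_{\FC}$, namely $\WC_{\iota,\FC}\otimes_{R}\VC_{\iota,\FC}$, is $e_{\iota,\FC}$-isotypic, and likewise for $\FC'$. That needs the counting argument from that proof: $\K_{\iota,\FC_{\iota}}$ is the centralizer of $e_{\iota,\FC_{\iota}}$ in $G_{\FC_{\iota}}$, and distinct $G_{\iota}$-orbits of $\FC_{\iota}$ give distinct $\sim_{\FC}$-classes. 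The paper's route avoids setting up this indexing, and its final formula makes visible that the isomorphism is $\beta_{\VC,\FC,\FC'}$.

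\textbf{Tightening step 3.} As written, step 3 is imprecise: no induction from $\K_{\iota,\FC}$ to $\K_{\iota,\FC'}$ actually occurs. It does close quickly.
\begin{itemize}
\item Since $\Ku_{\iota,\FC}=(\Ku_{\iota,\FC}\cap\Ku_{\iota,\FC'})G_{\iota,\FC,0+}$ and $G_{\iota}$ fixes $\iota$, you have $\Ku_{\iota,\FC}\cdot\iota\subseteq\Ku_{\iota,\FC'}\cdot\iota$.
\item Then \eqref{intertw} gives $e_{\iota,\FC}e_{\iota'',\FC'}=0$ unless $\iota''\sim_{\FC'}\iota$. That is, $e_{\iota,\FC}e_{I,\FC'}=e_{\iota,\FC}e_{\iota,\FC'}$.
\item Hence $e_{\iota,\FC}\VC_{\FC'}=e_{\iota,\FC}(\WC_{\iota,\FC'}\otimes_{R}\VC_{\iota,\FC'})$, with $e_{\iota,\FC}$ acting through $\Kp_{\iota,\FC}\subseteq\K_{\iota,\FC'}$.
\item Write $\Kp_{\iota,\FC}=(\Kp_{\iota,\FC}\cap\Ku_{\iota,\FC'})G_{\iota,\FC,0+}$ and use the $\K_{\iota,\FC}$-equivariance in Proposition \ref{rem_WH_coef_syst}. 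This idempotent then acts as $e'_{\iota,\FC}\otimes e_{G_{\iota,\FC,0+}}$.
\end{itemize}
Your step 4, with the Cartesian property of $\VC_{\iota}$, now shows that $\beta_{\WC_{\iota},\FC,\FC'}\otimes\beta_{\VC_{\iota},\FC,\FC'}$ is an isomorphism onto this component; it is the restriction of $\beta_{\VC,\FC,\FC'}$ to the $\iota$-component. In particular, Remark \ref{rem_ortho_idemp} is not needed on your route.
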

\begin{proof}
  Both the transitivity of the action maps and their compatibility with face maps are
  straightforward. To prove transitivity for face maps, let $\FC,\FC'$ and $\FC''$ such
  that $\o\FC\supseteq\o\FC'\supseteq\FC''$. Using transitivity of face maps for the
  coefficient systems $\WC_{\iota}$ and $\VC_{\iota}$, we are left to prove that for any
  two facets $\FC_{\iota},\FC''_{\iota} \in\BT'_{\iota,\bullet/e}$ such that
  $\o\FC_{\iota}\supseteq\FC''_{\iota}$, we have
  $$ p_{\FC\FC_{\iota},\FC''\FC''_{\iota}} = \sum_{\FC'_{\iota}, \, \o\FC_{\iota}\supseteq\o\FC'_{\iota}\supseteq\FC''_{\iota}}
  p_{\FC'\FC'_{\iota},\FC''\FC''_{\iota}}\circ p_{\FC\FC_{\iota},\FC'\FC'_{\iota}},$$
  or equivalently, that
  $$ G_{\FC\FC_{\iota}}\cap G_{\FC''\FC''_{\iota}} =
  \bigsqcup_{\FC'_{\iota}, \, \o\FC_{\iota}\supseteq\o\FC'_{\iota}\supseteq\FC''_{\iota}}
  G_{\FC\FC_{\iota}}\cap G_{\FC'\FC'_{\iota}}\cap G_{\FC''\FC''_{\iota}}.$$
  This follows from the observation that  for any $g\in G_{\FC\FC_{\iota}}\cap G_{\FC''\FC''_{\iota}}$,
  the facet $\FC'_{\iota}(g):=g^{-1}\FC'$ satisfies
  $\o\FC_{\iota}\supseteq\o{\FC'_{\iota}(g)}\supseteq\FC''_{\iota}$ and, in particular,
  belongs to $\BT'_{\iota,\bullet/e}$.

    At this point, we know that $\VC$ is a smooth $G$-equivariant coefficient system on
    $\BT'_{\bullet/e}$. To show that it belongs to $\Coef_{R}^{\phi,I}(\BT'_{\bullet/e}/G)$,
    it suffices to prove that $e_{I,\FC}\VC_{\FC,\FC_{\iota}}=\VC_{\FC,\FC_{\iota}}$ for
    all facets $\FC_{\iota}$. We may assume that  $\FC_{\iota}$ is  $G$-conjugate to $\FC$
    since otherwise both sides vanish.
    Observe that
    $\VC_{\FC,\FC_{\iota}} \simfrom
    \CC^{\infty}_{c}(G_{\FC\FC_{\iota}})e_{\iota,\FC_{\iota}}\otimes_{R\K_{\iota,\FC_{\iota}}}
    (\WC_{\iota,\FC_{\iota}} \otimes_{R}\VC_{\iota,\FC_{\iota}})$ since $e_{\iota,\FC_{\iota}}$
    acts as identity on $\WC_{\iota,\FC_{\iota}} \otimes_{R}\VC_{\iota,\FC_{\iota}}$.
    Similarly $e_{I,\FC}\VC_{\FC,\FC_{\iota}} \simfrom
    e_{I,\FC}\CC^{\infty}_{c}(G_{\FC\FC_{\iota}})e_{\iota,\FC_{\iota}}\otimes_{R\K_{\iota,\FC_{\iota}}}
    (\WC_{\iota,\FC_{\iota}} \otimes_{R}\VC_{\iota,\FC_{\iota}})$. But
    $e_{I,\FC}\CC^{\infty}_{c}(G_{\FC\FC_{\iota}})e_{\iota,\FC_{\iota}}=
    \CC^{\infty}_{c}(G_{\FC\FC_{\iota}})e_{I,\FC_{\iota}}e_{\iota,\FC_{\iota}}= 
    \CC^{\infty}_{c}(G_{\FC\FC_{\iota}})e_{\iota,\FC_{\iota}}$
    since $e_{I,\FC_{\iota}}e_{\iota,\FC_{\iota}}=e_{\iota,\FC_{\iota}}$ by Lemma
    \ref{ortho_idemp}.
    
    Let us now assume that $\VC_{\iota}$ is Cartesian. It then remains to show that $\beta_{\VC,\FC,\FC'}$ induces an
  isomorphism $\VC_{\FC}\simto e_{I,\FC}\VC_{\FC'}$ whenever $\o\FC\supseteq\FC'$. To this
  aim, it is useful to note that $\VC_{\FC}$ can also be written as 
     $$ \VC_{\FC}= \left( \bigoplus_{\FC_{\iota}\in\BT'_{\iota,\bullet/e}}
     \VC_{\FC,\FC_{\iota}}^{\dag}     \right)_{G_{\iota}}
\;\hbox{ with }\;
\VC_{\FC,\FC_{\iota}}^{\dag} := \CC^{\infty}_{c}(G_{\FC\FC_{\iota}})\otimes_{R\Ku_{\iota,\FC_{\iota}}} (\WC_{\iota,\FC_{\iota}}
      \otimes_{R}\VC_{\iota,\FC_{\iota}}).
             $$  
Indeed, the $G_{\iota}$-coinvariants on the sum take care of the difference between
$\Ku_{\iota,\FC_{\iota}}$ and $\K_{\iota,\FC_{\iota}}$ coinvariants on each summand.
Since $g^{-1}\FC'\in\BT'_{\iota,\bullet/e}$ for any $g\in G_{\FC\FC_{\iota}}$, we have a
further decomposition
$\VC^{\dag}_{\FC,\FC_{\iota}} =
\bigoplus_{\FC'_{\iota}\in\BT'_{\bullet/e},\o\FC_{\iota}\supseteq\FC'_{\iota}}
\CC^{\infty}_{c}(G_{\FC\FC_{\iota}}\cap G_{\FC'\FC'_{\iota}})
\otimes_{R\Ku_{\iota,\FC_{\iota}}} 
(\WC_{\iota,\FC_{\iota}}\otimes_{R}\VC_{\iota,\FC_{\iota}})$,
which yields the following expression :
\ini\begin{equation}
  \label{VF}
  \VC_{\FC} = \left(\bigoplus_{\cF_{\iota}, \cF_{\iota}' \in \BT'_{\bullet/e}, \, \o\FC_{\iota}\supseteq\FC_{\iota}'}
    \CC^{\infty}_{c}(G_{\FC\FC_{\iota}}\cap G_{\FC'\FC'_{\iota}})
    \otimes_{R\Ku_{\iota,\FC_{\iota}}} 
    (\WC_{\iota,\FC_{\iota}}\otimes_{R}\VC_{\iota,\FC_{\iota}})\right)_{\G_{\iota}}.
\end{equation}
  On the other hand, for $g\in G_{\FC'\FC'_{\iota}}$, the facet $g^{-1}\FC$ need not be
  contained in $\BT'_{\iota,\bullet/e}$, but
  we still  have a decomposition
    $$ \VC^{\dag}_{\FC',\FC'_{\iota}} = \bigoplus_{\GC\in\BT'_{\bullet/e}, \, \o\GC\supseteq\FC'_{\iota}} 
  \CC^{\infty}_{c}(G_{\FC'\FC'_{\iota}}\cap G_{\FC\GC})\otimes_{R\Ku_{\iota,\FC'_{\iota}}}
  (\WC_{\iota,\FC'_{\iota}}\otimes_{R}\VC_{\iota,\FC'_{\iota}}) ,$$
  hence also
  \begin{eqnarray*}
   e_{I,\FC}\,\VC^{\dag}_{\FC',\FC'_{\iota}}
   & =& \bigoplus_{\GC\in\BT'_{\bullet/e}, \, \o\GC\supseteq\FC'_{\iota}} 
  e_{I,\FC}\CC^{\infty}_{c}(G_{\FC'\FC'_{\iota}}\cap G_{\FC\GC})\otimes_{R\Ku_{\iota,\FC'_{\iota}}}
  (\WC_{\iota,\FC'_{\iota}}\otimes_{R}\VC_{\iota,\FC'_{\iota}}) \\
  &=& \bigoplus_{\GC\in\BT'_{\bullet/e}, \, \o\GC\supseteq\FC'_{\iota}} 
  \CC^{\infty}_{c}(G_{\FC'\FC'_{\iota}}\cap G_{\FC\GC})e_{I,\GC}\otimes_{R\Ku_{\iota,\FC'_{\iota}}}
  (\WC_{\iota,\FC'_{\iota}}\otimes_{R}\VC_{\iota,\FC'_{\iota}}).
\end{eqnarray*}
The summand associated to $\GC$ above is non-zero only if $G_{\FC'\FC'_{\iota}}\cap
  G_{\FC\GC}\neq\emptyset$ and $e_{I,\GC}e_{\iota,\FC'_{\iota}}\neq 0$.  By Proposition
  \ref{prop_intertwining} ii), the latter
  condition implies the existence of a $\iota'\in I$ such that $\GC\in \BT'_{\iota'}$ and
  $\iota'\in \Ku_{\iota,\FC'_{\iota}}.\iota$. Since $\Ku_{\iota,\FC'_{\iota}}\subseteq
  G_{\FC_{\iota},0+}$ fixes $\GC$, this implies that $\GC\in \BT'_{\iota}$, 
  so we can
  rewrite the last sum as
  $$ e_{I,\FC}\,\VC^{\dag}_{\FC',\FC'_{\iota}} = \bigoplus_{\FC_{\iota}\in\BT'_{\iota,\bullet/e},\o\FC_{\iota}\supseteq\FC'_{\iota}} 
  \CC^{\infty}_{c}(G_{\FC'\FC'_{\iota}}\cap G_{\FC\FC_{\iota}})e_{I,\FC_{\iota}}\otimes_{R\Ku_{\iota,\FC'_{\iota}}}
  (\WC_{\iota,\FC'_{\iota}}\otimes_{R}\VC_{\iota,\FC'_{\iota}}).$$
  Using \ref{rem_WH_coef_system-i}) of Proposition \ref{rem_WH_coef_syst}, we may rewrite it further as
  \[ 
  e_{I,\FC}\,\VC^{\dag}_{\FC',\FC'_{\iota}} 
  \xleftarrow[\oplus(\mathrm{id} \otimes \beta_{\cW,\cF_\iota,\cF'_\iota} \otimes \mathrm{id})]{\simeq}
   \bigoplus_{\FC_{\iota}\in\BT'_{\bullet/e},\o\FC_{\iota}\supseteq\FC'_{\iota}} 
  \CC^{\infty}_{c}(G_{\FC'\FC'_{\iota}}\cap  G_{\FC\FC_{\iota}})e_{I,\FC_{\iota}}
  \otimes_{R(\Ku_{\iota,\FC'_{\iota}}\cap\Ku_{\iota,\FC_{\iota}})}
  (\WC_{\iota,\FC_{\iota}}\otimes_{R}\VC_{\iota,\FC'_{\iota}}) 
  \]
  Recall the idempotent $e'_{\iota,\FC}$ of Proposition \ref{rem_WH_coef_syst} \ref{rem_WH_coef_system-ii}),
  which is supported on $\Ku_{\iota,\FC'_{\iota}}\cap\Kp_{\iota,\FC_{\iota}} \subseteq \Ku_{\iota,\FC'_{\iota}}\cap\Ku_{\iota,\FC_{\iota}}$ and acts
  trivially on $\WC_{\iota,\FC_{\iota}}\otimes\VC_{\iota,\FC'_{\iota}}$. 
  Since $\Ku_{\iota,\FC'_{\iota}}\cap\Kp_{\iota,\FC_{\iota}}$ contains the group
  $K'_{\iota,\FC_{\iota}}$ introduced in Remark \ref{rem_intertw},
   Remark \ref{rem_ortho_idemp} tells us that
  $e_{I,\FC_{\iota}}e'_{\iota,\FC_{\iota}}= e_{\iota,\FC_{\iota}}$, hence each
  $e_{I,\FC_{\iota}}$ in the last sum can be replaced by $e_{\iota,\FC_{\iota}}$. 
  We thus get the following expression of $e_{I,\FC}\VC_{\FC'}$:
\[
e_{I,\FC}\VC_{\FC'} 
\xleftarrow[(\oplus(\mathrm{id} \otimes \beta_{\cW,\cF_\iota,\cF'_\iota} \otimes \mathrm{id}))_{G_\iota}]{\simeq} 
 \left(\bigoplus_{\cF_{\iota}, \cF'_{\iota} \in \BT'_{\iota,\bullet/e}, \atop \o\FC_{\iota}\supseteq\FC'_{\iota}}
    \CC^{\infty}_{c}(G_{\FC'\FC'_{\iota}}\cap  G_{\FC\FC_{\iota}})e_{\iota,\FC_{\iota}}
    \otimes_{R(\Ku_{\iota,\FC'_{\iota}}\cap\Ku_{\iota,\FC_{\iota}})}
    (\WC_{\iota,\FC_{\iota}}\otimes_{R}\VC_{\iota,\FC'_{\iota}})\right)_{G_{\iota}}.
  \] 
Now, since $\Ku_{\iota,\FC_{\iota}}=(\Ku_{\iota,\FC'_{\iota}}\cap\Ku_{\iota,\FC_{\iota}})G_{\iota,\FC_{\iota},0+}$,
  we also have
\[
e_{I,\FC}\VC_{\FC'} 
\xleftarrow[(\oplus(\mathrm{id} \otimes \beta_{\cW,\cF_\iota,\cF'_\iota} \otimes \mathrm{id}))_{G_\iota}]{\simeq} 
 \left(\bigoplus_{\cF_{\iota}, \cF'_{\iota} \in \BT'_{\iota,\bullet/e}, \,\o\FC_{\iota}\supseteq\FC'_{\iota}}
    \CC^{\infty}_{c}(G_{\FC'\FC'_{\iota}}\cap  G_{\FC\FC_{\iota}})e_{\iota,\FC_{\iota}}
    \otimes_{R\Ku_{\iota,\FC_{\iota}}}
    (\WC_{\iota,\FC_{\iota}}\otimes_{R}\VC_{\iota,\FC'_{\iota}})\right)_{G_{\iota}}.
  \]
Moreover,  since $e_{\iota,\FC_{\iota}}$ acts trivially on $\WC_{\iota,\FC_{\iota}}$ and through
  $e_{G_{\iota,\FC_{\iota},0+}}$ on $\VC_{\iota,\FC'_{\iota}}$, we have
  \[e_{\iota,\FC_{\iota}}(\WC_{\iota,\FC_{\iota}}\otimes_{R}\VC_{\iota,\FC'_{\iota}})
  =  \WC_{\iota,\FC_{\iota}}\otimes_{R} e_{G_{\iota,\FC_{\iota},0+}}\VC_{\iota,\FC'_{\iota}}
 \xleftarrow[\mathrm{id} \otimes \beta_{\VC_{\iota},\FC_{\iota},\FC_{\iota}'}]{\simeq} \WC_{\iota,\FC_{\iota}}\otimes_{R}\VC_{\iota,\FC_{\iota}}.\]
Combining this with (\ref{VF}) we obtain
  \ini\begin{equation}
  \label{eVF'}
  e_{I,\FC}\VC_{\FC'} \simfrom  \left(\bigoplus_{\cF_{\iota}, \cF'_{\iota} \in \BT'_{\iota,\bullet/e}, \, \o\FC_{\iota}\supseteq\FC'_{\iota}}
    \CC^{\infty}_{c}(G_{\FC'\FC'_{\iota}}\cap  G_{\FC\FC_{\iota}})
    \otimes_{R\Ku_{\iota,\FC_{\iota}}}
    (\WC_{\iota,\FC_{\iota}}\otimes_{R}\VC_{\iota,\FC_{\iota}})\right)_{G_{\iota}} = \cV_\cF,
\end{equation}
where the isomorphism agrees by construction with the map $\beta_{\VC,\FC,\FC'}$. This finishes the proof of the lemma.
\end{proof}

Again, the assignment $\VC_{\iota}\mapsto\VC$ is easily seen to be functorial in $\VC_{\iota}$, providing a functor
$$ \IC_{\WC_{\iota}} :\, \Coef^{1}_{R}(\BT'_{\iota,\bullet/e}/G_{\iota})\To{}\Coef^{\phi,I}_{R}(\BT'_{\bullet/e}/G)$$ 
that maps $\Coef^{1}_{R}(\BT'_{\iota,\bullet/e}/G_{\iota})^{\cart}$ into $\Coef^{\phi,I}_{R}(\BT'_{\bullet/e}/G)^{\cart}$.

\begin{theo}\label{main_under_assumption}
  The functors $\RC_{\WC_{\iota}}$ and $\IC_{\WC_{\iota}}$ are quasi-inverse equivalences of categories
\[ \xymatrix{
    \IC_{\WC_{\iota}} :\, \Coef^{1}_{R}(\BT'_{\iota,\bullet/e}/G_{\iota})  \ar@<0.5ex>[r]^-{\sim} &
   \Coef^{\phi,I}_{R}(\BT'_{\bullet/e}/G)\,:\RC_{\WC_{\iota}} \ar@<0.5ex>[l]^-{\sim} }
    \]
  that respect cartesian objects on both sides.
\end{theo}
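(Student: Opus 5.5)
We will construct explicit unit and counit natural transformations and check, facet by facet, that they are isomorphisms. The composite $\RC_{\WC_{\iota}}\circ\IC_{\WC_{\iota}}$ is treated first. For $\VC_{\iota}\in\Coef^{1}_{R}(\BT'_{\iota,\bullet/e}/G_{\iota})$ and $\FC\in\BT'_{\iota,\bullet/e}$, the value of $\VC:=\IC_{\WC_{\iota}}(\VC_{\iota})$ at $\FC$ contains the summand indexed by $\FC_{\iota}=\FC$, namely $\CC^{\infty}_{c}(G_{\FC\FC})\otimes_{R\K_{\iota,\FC}}(\WC_{\iota,\FC}\otimes\VC_{\iota,\FC})$. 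This gives a natural map $\WC_{\iota,\FC}\otimes_{R}\VC_{\iota,\FC}\to\VC_{\FC}$, $w\otimes v\mapsto 1_{\K_{\iota,\FC}}\otimes w\otimes v$, which is $\Ku_{\iota,\FC}$-equivariant. Its adjoint under (\ref{Heisenberg-equivalence-of-cat}) is $\VC_{\iota,\FC}\to\Hom_{R\Ku_{\iota,\FC}}(\WC_{\iota,\FC},\VC_{\FC})$. The plan is to show this adjoint map is an isomorphism onto $(\RC_{\WC_{\iota}}\VC)_{\FC}$. Since $\WC_{\iota,\FC}$ is a projective generator of the $\check\phi^{+}_{\iota,\FC}$-isotypic category with endomorphism ring $R$, it suffices to show that $e_{\iota,\FC}\VC_{\FC}$ equals the image of $\WC_{\iota,\FC}\otimes\VC_{\iota,\FC}$. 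After taking $G_{\iota}$-coinvariants, $\VC_{\FC}$ decomposes as a sum over double cosets $\K_{\iota,\FC}\backslash G_{\FC\FC_{\iota}}/\K_{\iota,\FC_{\iota}}$, with $\FC_{\iota}$ running over $G_{\iota}$-orbit representatives. We then apply $e_{\iota,\FC}$ to a term $g\otimes(\cdots)$ with $g\FC_{\iota}=\FC$: this multiplies by $e_{\iota,\FC}\,g\,e_{\iota,\FC_{\iota}}=g\,e_{g^{-1}\iota,\FC_{\iota}}e_{\iota,\FC_{\iota}}$. By Lemma \ref{ortho_idemp} this vanishes unless $g^{-1}\iota\sim_{\FC_{\iota}}\iota$, that is, $g\in\Ku_{\iota,\FC}G_{\iota}$ (up to stabilizers). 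Hence only the trivial double coset survives, and there $\K_{\iota,\FC}$-coinvariants give back $\WC_{\iota,\FC}\otimes\VC_{\iota,\FC}$. Compatibility with face maps and $G_{\iota}$-actions follows by unwinding definitions. This already gives $\RC\IC\simeq\id$ on all objects, not only Cartesian ones.

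For the composite $\IC_{\WC_{\iota}}\circ\RC_{\WC_{\iota}}$, let $\VC\in\Coef^{\phi,I}_{R}(\BT'_{\bullet/e}/G)$ and set $\VC_{\iota}=\RC_{\WC_{\iota}}\VC$. The counit is defined on each summand by evaluation:
\[
f\otimes w\otimes u\;\longmapsto\;\sum_{g\in G_{\FC\FC_{\iota}}/\K_{\iota,\FC_{\iota}}} f(g)\,g_{\VC,\FC_{\iota}}(u(w)),
\]
where $u\in\Hom_{R\Ku_{\iota,\FC_{\iota}}}(\WC_{\iota,\FC_{\iota}},\VC_{\FC_{\iota}})$. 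This map is well defined on $\K_{\iota,\FC_{\iota}}$-tensors and on $G_{\iota}$-coinvariants, and it is $G$-equivariant and compatible with face maps. To check bijectivity at a facet $\FC$, we first use $G$-equivariance to reduce to facets $\FC$ that meet $G\cdot\BT'_{\iota}$. If $\FC$ is not $G$-conjugate into $\BT'_{\iota}$, then $e_{I,\FC}=0$, so $\VC_{\FC}=0$, and the source vanishes as well. If $\FC$ is conjugate into $\BT'_{\iota}$, we may assume $\FC\subset\BT'_{\iota}$. In that case
\[
\VC_{\FC}=e_{I,\FC}\VC_{\FC}=\bigoplus_{\iota'\in I_{\FC}/\sim_{\FC}}e_{\iota',\FC}\VC_{\FC}.
\]
The classes $\iota'$ in $I_{\FC}/\sim_{\FC}$ are $G_{\FC}$-conjugate to $\iota$ up to $\sim_{\FC}$, and the representatives are indexed exactly by $G_{\FC}/\K_{\iota,\FC}$; this uses that $I$ is a single $G$-orbit together with Lemma \ref{dual-embedding-classif}-type arguments, and it is the step needing care. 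Each summand $e_{g\iota,\FC}\VC_{\FC}$ is the $g$-translate of $e_{\iota,\FC}\VC_{\FC}\simeq\WC_{\iota,\FC}\otimes\Hom(\WC_{\iota,\FC},\VC_{\FC})$ by (\ref{Heisenberg-equivalence-of-cat}). This matches the source, where the computation of the previous paragraph shows that only the summand $\FC_{\iota}=g^{-1}\FC$ contributes, indexed by $g\in G_{\FC}/\K_{\iota,\FC}$.

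Both functors preserve Cartesian objects, by the two lemmas above, so the equivalence restricts as claimed. The main obstacle is the orbit bookkeeping in the second composite: showing that $G_{\FC}$ acts transitively on $I_{\FC}/\sim_{\FC}$ with stabilizer $\K_{\iota,\FC}$. Our first attempt will use Proposition \ref{prop_intertwining} together with (\ref{Gaction}): an element $\iota'\in I_{\FC}$ equals $h\iota$ for some $h\in G$ with $\FC\subset\BT'_{h\iota}$. We then need to move $h$ into $G_{\FC}$ modulo $G_{\iota}$. For this we will invoke the transitivity of $G_{\iota}$-type actions on facets in $\BT'_{\iota}\cap h^{-1}\BT'_{\iota}$, which should follow from the Bruhat--Tits fact that $G_{\iota}$ acts transitively on apartments of $\BT_{\iota}$ containing a given facet. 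If that fails, we fall back on intertwining: since $e_{\iota',\FC}$ and $e_{\iota,\FC}$ are $G_{\FC}$-conjugate, because both arise from the same $\phi$, Proposition \ref{prop_intertwining} ii) forces the conjugating element to lie in $\Ku_{\iota,\FC}\cdot G_{\iota,\FC}=\K_{\iota,\FC}$.
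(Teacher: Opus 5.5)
\textbf{Verdict.} Your treatment of $\RC_{\WC_{\iota}}\circ\IC_{\WC_{\iota}}$ is essentially the paper's argument, and the preservation of Cartesian objects is indeed already given by the two preceding lemmas. The gap is in the second composite $\IC_{\WC_{\iota}}\circ\RC_{\WC_{\iota}}$. There you assert that $G_{\FC}$ acts transitively on $I_{\FC}/\sim_{\FC}$ with stabilizer $\K_{\iota,\FC}$, and this is false in general.

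\textbf{Counterexample.} Take $\mathbf{G}=\GL_{2}$ and $\phi$ with $C_{\hat{\mathbf{G}}}(\phi)$ the diagonal torus. Then $\mathbf{G}_{\iota}=\mathbf{T}$ is the split diagonal torus, and $\BT'_{\iota}$ is the standard apartment, on which $T$ acts by integral translations. Let $w$ be the non-trivial Weyl element and $\FC$ an $e$-facet of the apartment with $w\FC\notin T\cdot\FC$; for example, when $e\geq 3$, the $e$-vertex at distance $1/e$ from a vertex.
\begin{itemize}
\item Since $w\iota\in I$ and $\BT'_{w\iota}=w\BT'_{\iota}=\BT'_{\iota}$, both $\iota$ and $w\iota$ lie in $I_{\FC}$.
\item For $g\in G_{\FC}$, the $\sim_{\FC}$-class of $g\iota$ is contained in $G_{\FC}\cdot\iota$, because $\Ku_{g\iota,\FC}\subseteq G_{\FC}$.
\item The stabilizer of $\iota$ in $G$ is $G_{\iota}=T$, so $w\iota\in G_{\FC}\cdot\iota$ would force $w\in G_{\FC}T$, i.e.\ $w\FC\in T\cdot\FC$.
\end{itemize}
Hence the class of $w\iota$ is not in the $G_{\FC}$-orbit of the class of $\iota$.

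Your fallback does not repair this. Proposition \ref{prop_intertwining} identifies the stabilizer of $e_{\iota,\FC}$ in $G_{\FC}$, but it says nothing about transitivity. Moreover, the premise that $e_{\iota',\FC}$ and $e_{\iota,\FC}$ are $G_{\FC}$-conjugate is exactly what fails here: $e_{w\iota,\FC}$ is only $G$-conjugate to $e_{\iota,\FC}$. On the source side, the vanishing you import from the first composite relied on the projector $e_{\iota,\FC}$ and does not apply. In $\IC\RC(\VC)_{\FC}$, every $G_{\iota}$-orbit of $e$-facets $\FC_{\iota}\subset\BT'_{\iota}$ that is $G$-conjugate to $\FC$ contributes. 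In the example, $\FC_{\iota}=w^{-1}\FC$ contributes the piece matching $e_{w\iota,\FC}\VC_{\FC}$. So you have established neither surjectivity onto $\VC_{\FC}=e_{I,\FC}\VC_{\FC}$ nor injectivity on the extra summands.

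\textbf{What is needed.} Do the bookkeeping over all these orbits, as the paper does.
\begin{enumerate}
\item For each $\FC_{\iota}$, set $I_{\FC\FC_{\iota}}:=G_{\FC\FC_{\iota}}\cdot\iota$. The action map identifies $\CC^{\infty}_{c}(G_{\FC\FC_{\iota}})\otimes_{R\K_{\iota,\FC_{\iota}}}e_{\iota,\FC_{\iota}}\VC_{\FC_{\iota}}$ with $e_{I_{\FC\FC_{\iota}},\FC}\VC_{\FC}$. This uses Lemma \ref{ortho_idemp}: $\K_{\iota,\FC_{\iota}}$ is the centralizer of $e_{\iota,\FC_{\iota}}$ in $G_{\FC_{\iota}}$, and its $G_{\FC_{\iota}}$-translates are pairwise orthogonal.
\item Show that $I_{\FC}=\bigsqcup_{\FC_{\iota}\in\BT'_{\iota,\bullet/e}/G_{\iota}}I_{\FC\FC_{\iota}}$. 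This follows because $I$ is a single $G$-orbit and $G_{\iota}$ is the stabilizer of $\iota$.
\item Show that this partition is a union of $\sim_{\FC}$-classes. Suppose $g_{i}\in G_{\FC\FC_{i}}$ and $g_{2}\iota=kg_{1}\iota$ with $k\in\Ku_{g_{1}\iota,\FC}$. Then $g_{2}g_{\iota}=kg_{1}$ for some $g_{\iota}\in G_{\iota}$. Since $k$ fixes $\FC$, this gives $\FC_{1}=g_{\iota}^{-1}\FC_{2}$.
\end{enumerate}
Only then do the idempotents $e_{I_{\FC\FC_{\iota}},\FC}$ become pairwise orthogonal with sum $e_{I,\FC}$, and the counit becomes an isomorphism.
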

\begin{proof}
	\inisub
	\begin{subequations}
  Let us simplify our notation $\IC:=\IC_{\WC_{\iota}}$ and $\RC:=\RC_{\WC_{\iota}}$  in this proof. We first construct an isomorphism
  $\id_{\Coef^{1}_{R}(\BT'_{\iota,\bullet/e}/G_{\iota})}\simto \RC\circ\IC$. So let us start with an
  object $\VC_{\iota}\in \Coef^{1}_{R}(\BT'_{\iota,\bullet/e}/G_{\iota})$. For any
  $\FC\in\BT'_{\iota,\bullet/e}$, we have
$$   \RC\circ\IC(\VC_{\iota})_{\FC}=\Hom_{R\Ku_{\iota,\FC}}\left(\cW_{\iota,\FC},
      \left(\bigoplus_{\FC_{\iota}\in\BT'_{\iota,\bullet/e}}\CC^{\infty}_{c}(G_{\FC\FC_{\iota}})
      \otimes_{R\K_{\iota,\FC_{\iota}}}(\WC_{\iota,\FC_{\iota}}\otimes_{R}\VC_{\iota,\FC_{\iota}})\right)_{G_{\iota}}\right).$$
We thus can define an $R$-linear map 
\begin{equation} \label{auxilliarymap1}\VC_{\iota,\FC}\To{} \Hom_{R\Ku_{\iota,\FC}}\left(\cW_{\iota,\FC},\CC^{\infty}_{c}(G_{\FC\FC})
  \otimes_{R\K_{\iota,\FC}}(\WC_{\iota,\FC}\otimes_{R}\VC_{\iota,\FC})\right)
  \To{} \RC\circ\IC(\VC_{\iota})_{\FC},
\end{equation}  
where the first map sends $v\in\VC_{\iota}$ to $(w\mapsto e_{\iota,\FC}\otimes (w\otimes v))$. Let us prove that this map is an isomorphism. We have the following sequence of isomorphisms explained below:  
  \begin{eqnarray*}
    \RC\circ\IC(\VC_{\iota})_{\FC}
 & =&
      \Hom_{R\Ku_{\iota,\FC}}\left(\cW_{\iota,\FC},
      \left(\bigoplus_{\FC_{\iota}\in\BT'_{\iota,\bullet/e}}\CC^{\infty}_{c}(G_{\FC\FC_{\iota}})
      \otimes_{R\K_{\iota,\FC_{\iota}}}(\WC_{\iota,\FC_{\iota}}\otimes_{R}\VC_{\iota,\FC_{\iota}})\right)_{G_{\iota}}\right)\\ 
 & \simfrom&
      \left(
      \bigoplus_{\FC_{\iota}\in\BT'_{\iota,\bullet/e}}
      \Hom_{R\Ku_{\iota,\FC}}\left(\cW_{\iota,\FC},\CC^{\infty}_{c}(G_{\FC\FC_{\iota}})  
      \otimes_{R\K_{\iota,\FC_{\iota}}}
      (\WC_{\iota,\FC_{\iota}}\otimes_{R}\VC_{\iota,\FC_{\iota}})\right)
      \right)_{G_{\iota}} \\
& \simfrom&
      \left(
      \bigoplus_{\FC_{\iota}\in\BT'_{\iota,\bullet/e}}
      \Hom_{R\Ku_{\iota,\FC}}\left(\cW_{\iota,\FC},e_{\iota,\FC}\,\CC^{\infty}_{c}(G_{\FC\FC_{\iota}})e_{\iota,\FC_{\iota}}  
      \otimes_{R\K_{\iota,\FC_{\iota}}}
      (\WC_{\iota,\FC_{\iota}}\otimes_{R}\VC_{\iota,\FC_{\iota}})\right)
     \right)_{G_{\iota}} \\
& =&
      \left(
      \bigoplus_{\FC_{\iota}\in G_{\iota}\cdot\FC}
      \Hom_{R\Ku_{\iota,\FC}}\left(\cW_{\iota,\FC},e_{\iota,\FC}\,\CC^{\infty}_{c}(G_{\FC\FC_{\iota}})e_{\iota,\FC_{\iota}}  
      \otimes_{R\K_{\iota,\FC_{\iota}}}
      (\WC_{\iota,\FC_{\iota}}\otimes_{R}\VC_{\iota,\FC_{\iota}})\right)
     \right)_{G_{\iota}} \\
 &\simfrom&
     \left(
     \Hom_{R\Ku_{\iota,\FC}}\left(\cW_{\iota,\FC},e_{\iota,\FC}\,\CC^{\infty}_{c}(G_{\FC\FC})e_{\iota,\FC}
     \otimes_{R\K_{\iota,\FC}}(\WC_{\iota,\FC}\otimes_{R}\VC_{\iota,\FC})\right)
     \right)_{G_{\iota,\FC}}\\
&\simto&
    \Hom_{R\Ku_{\iota,\FC}}\left(\cW_{\iota,\FC},
    \left(\CC^{\infty}_{c}(\K_{\iota,\FC})
     \otimes_{R\K_{\iota,\FC}}(\WC_{\iota,\FC}\otimes_{R}\VC_{\iota,\FC})
    \right)_{G_{\iota,\FC}}      \right)\\
&=&     \Hom_{R\Ku_{\iota,\FC}}\left(\cW_{\iota,\FC},
    \WC_{\iota,\FC}\otimes_{R}\VC_{\iota,\FC}\right).
  \end{eqnarray*}
The first natural map (between the second and first line in the above sequence of isomorphism) is an isomorphism since
$\Hom_{R\Ku_{\iota,\FC}}(\WC_{\iota,\FC},-)$ commutes with colimits 
because  $\WC_{\iota,\FC}$ is a finitely generated
projective $R\Ku_{\iota,\FC}$-module.  The second map is 
induced by the inclusions
$e_{\iota,\FC}\,\CC^{\infty}_{c}(G_{\FC\FC_{\iota}})e_{\iota,\FC_{\iota}}\subseteq
\CC^{\infty}_{c}(G_{\FC\FC_{\iota}})$. It is an isomorphism because
the central idempotent $e_{\iota,\FC}\in\HC_{R}(\Ku_{\iota,\FC})$ acts trivially on
$\WC_{\iota,\FC}$ and, similarly, the central idempotent
$e_{\iota,\FC_{\iota}}\in\HC_{R}(\K_{\iota,\FC_{\iota}})$ acts trivially on
$(\WC_{\iota,\FC_{\iota}}\otimes\VC_{\iota,\FC_{\iota}})$. The second equality says that a
facet $\FC_{\iota}$ provides a non-zero contribution to the big sum only if it is a
$G_{\iota}$ translate of $\FC$. To see this, observe that, for $g\in G_{\FC\FC_{\iota}}$, 
$e_{\iota,\FC}ge_{\iota,\FC_{\iota}}=
e_{\iota,\FC}e_{g\iota,\FC}g$
is non-zero in $RG$ only if
$g\iota\in \Ku_{\iota,\FC}\iota$, due to Lemma \ref{ortho_idemp}. In this case, $g$
can be written $g=kg_{\iota}$ with $k\in\Ku_{\iota,\FC}$ and $g_{\iota}\in G_{\iota}$, and
it follows that $\FC_{\iota}=g^{-1}\FC=(g_{\iota})^{-1}\FC\in G_{\iota}\cdot\FC$. The
third map is well defined and an isomorphism since $G_{\iota}$ permutes transitively the summands and
$G_{\iota,\FC}$ is the stabilizer of the summand
associated to $\FC_{\iota}=\FC$. To see that the fourth map is an isomorphism, we first use again the projectivity of $W_{\FC,\iota}$ as an $RK_{\iota,\FC}$-module to exchange the $\Hom$ and the coinvariants
(recall that $G_{\iota,\FC}$ only acts on  the target of the $\Hom$). Then, as above, we
observe that, for $g\in G_{\FC}$, the product $e_{\iota,\FC}ge_{\iota,\FC}$ is non-zero if
and only if $g\in \Ku_{\iota,\FC}G_{\iota,\FC}=\K_{\iota,\FC}$. This allows to rewrite
$e_{\iota,\FC}\CC^{\infty}_{c}(G_{\FC})e_{\iota,\FC}=\CC^{\infty}_{c}(K_{\iota,\FC})e_{\iota,\FC}$.
Then the idempotent $e_{\iota,\FC}$ can be moved to the right hand side of the
$\otimes_{R_{\K_{\iota,\FC}}}$, where it acts trivially.

The precomposition of this isomorphism with the map $\VC_{\iota,\FC}\To{}\RC\circ\IC(\VC_{\iota})_{\FC}$ in \eqref{auxilliarymap1}
is just $v\mapsto (w\mapsto w\otimes v)$ and it is an isomorphism of $R$-modules by the pair of adjoint equivalences \eqref{Heisenberg-equivalence-of-cat} because $\WC_{\iota,\FC}$
is a Heisenberg representation (and it is even an isomorphism of $RG_{\iota,\FC}$-modules by \eqref{local_equivalences}).
 It is easy to see that this isomorphism is compatible with
the action maps, and a bit more tedious, but still straightforward, to check it is compatible
with face maps.
Also it is clearly functorial in $\VC_{\iota}$, so
that we have just constructed an isomorphism
  $\id_{\Coef^{1}_{R}(\BT'_{\iota,\bullet/e}/G_{\iota})}\simto \RC\circ\IC$.

  Let us now construct an isomorphism $\IC\circ\RC\simto
  \id_{\Coef^{\phi,I}_{R}(\BT'_{\bullet/e}/G)}$. Let $\VC$ be an object in
  $\Coef^{\phi,I}_{R}(\BT'_{\bullet/e}/G)$ and $\FC$ a facet in $\BT'_{\bullet/e}$. By
  construction, we have
  \[ \IC\circ\RC(\VC)_{\FC} =
  \left(\bigoplus_{\FC_{\iota}\in\BT'_{\iota,\bullet/e}}
    \CC^{\infty}_{c}(G_{\FC\FC_{\iota}})\otimes_{R\K_{\iota,\FC_{\iota}}}
  \left(\WC_{\iota,\FC_{\iota}}\otimes_{R}\Hom_{R\Ku_{\iota,\FC_{\iota}}}(\WC_{\iota,\FC_{\iota}},\VC_{\FC_{\iota}})
  \right)\right)_{G_{\iota}}.
  \]
  For fixed $\FC_{\iota}$, consider the map
  $$\application{}
  {\CC^{\infty}_{c}(G_{\FC\FC_{\iota}})\otimes_{R}
  \left(\WC_{\iota,\FC_{\iota}}\otimes_{R}\Hom_{R\Ku_{\iota,\FC_{\iota}}}(\WC_{\iota,\FC_{\iota}},\VC_{\FC_{\iota}})
  \right)}{\VC_{\FC}}{h\otimes (w\otimes \theta)}{ h_{\VC,\FC_{\iota}}(\theta(w))}.
  $$
  By definition of the actions, it descends to a map
  $$\CC^{\infty}_{c}(G_{\FC\FC_{\iota}})\otimes_{R\K_{\iota,\FC_{\iota}}}
  \left(\WC_{\iota,\FC_{\iota}}\otimes_{R}\Hom_{R\Ku_{\iota,\FC_{\iota}}}(\WC_{\iota,\FC_{\iota}},\VC_{\FC_{\iota}})
  \right)\To{}\VC_{\FC}
  $$
  and the sum of all these maps descends to the $G_{\iota}$-coinvariants,  providing a map
\ini
\begin{equation}
  \label{map}
  \IC\circ\RC(\VC)_{\FC}\To{} \VC_{\FC}.
\end{equation}
  Let us prove that this map is an isomorphism. To this aim, observe first that each
  evaluation map 
  $\WC_{\iota,\FC_{\iota}}\otimes_{R}\Hom_{R\Ku_{\iota,\FC_{\iota}}}(\WC_{\iota,\FC_{\iota}},\VC_{\FC_{\iota}})\To{}\VC_{\FC_{\iota}}$
  induces an isomorphism of $R$-modules
\ini
\begin{equation}
  \label{eval}
    \WC_{\iota,\FC_{\iota}}\otimes_{R}\Hom_{R\Ku_{\iota,\FC_{\iota}}}(\WC_{\iota,\FC_{\iota}},\VC_{\FC_{\iota}})\simto
    e_{\iota,\FC_{\iota}}\VC_{\FC_{\iota}}
\end{equation}
because $\WC_{\iota,\FC_{\iota}}$ is a Heisenberg representation  with central
   character $\check\phi_{\iota,\FC_{\iota}}$. This is even an isomorphism
of $R\K_{\iota,\FC}$-modules by (\ref{local_equivalences}).

  On the other hand, we know from Lemma \ref{ortho_idemp} that
  $K_{\iota,\FC_{\iota}}=\Ku_{\iota,\FC_{\iota}}G_{\iota,\FC_{\iota}}$ is the centralizer of $e_{\iota,\FC_{\iota}}$ in
  $G_{\FC_{\iota}}$, and that the various idempotents  $e_{h\iota,\FC_{\iota}}$ for
  $h\in G_{\FC_{\iota}}/K_{\iota,\FC_{\iota}}$ are pairwise orthogonal. Therefore, the
  action map
  $ \CC^{\infty}_{c}(G_{\FC_{\iota}})\otimes_{R\K_{\iota,\FC_{\iota}}}
  e_{\iota,\FC_{\iota}}\VC_{\FC_{\iota}} \To{}\VC_{\FC_{\iota}}$ induces an isomorphism
  $$ \CC^{\infty}_{c}(G_{\FC_{\iota}})\otimes_{R\K_{\iota,\FC_{\iota}}}
  e_{\iota,\FC_{\iota}}\VC_{\FC_{\iota}} \simto
  \bigoplus_{h\in G_{\FC_{\iota}}/K_{\iota,\FC_{\iota}}}e_{h\iota,\FC_{\iota}}\VC_{\FC_{\iota}}
  =\sum_{h\in G_{\FC_{\iota}}/K_{\iota,\FC_{\iota}}}e_{h\iota,\FC_{\iota}}\VC_{\FC_{\iota}}\subseteq\VC_{\FC_{\iota}}
  .$$
  This in turn induces an isomorphism
\ini\begin{equation}
    \label{act}
    \CC^{\infty}_{c}(G_{\FC\FC_{\iota}})\otimes_{R\K_{\iota,\FC_{\iota}}}
    e_{\iota,\FC_{\iota}}\VC_{\FC_{\iota}} \simto
    \bigoplus_{\iota'\in (G_{\FC\FC_{\iota}}.\iota)/\sim_{\FC}}e_{\iota',\FC}\VC_{\FC}
    = e_{I_{\FC\FC_{\iota}},\FC}\VC_{\FC}, 
\end{equation}
  where the equivalence $\sim_{\FC}$ on the subset $I_{\FC\FC_{\iota}}:=G_{\FC\FC_{\iota}}.\iota$ of $I_{\FC}$ is
  that of \ref{ortho_idemp}, and where we have written
  $e_{I_{\FC\FC_{\iota}},\FC}:=\sum_{\iota'\in (I_{\FC\FC_{\iota}})/\sim_{\FC}}e_{\iota',\FC}$,
  which is an idempotent in $\HC_{R}(G_{\FC})$.
  Combining (\ref{eval}) and (\ref{act}) yields the following isomorphism
 $$\IC\circ\RC(\VC)_{\FC} \simto
   \left(\bigoplus_{\FC_{\iota}\in\BT'_{\iota,\bullet/e}}
    e_{I_{\FC\FC_{\iota}},\FC} \VC_{\FC}
   \right)_{G_{\iota}}.
  $$  
  Since  $G_{\iota}$ acts only on the index set,  we can rewrite the right hand
  side as 
  $\bigoplus_{\FC_{\iota}\in\BT'_{\iota,\bullet/e}/G_{\iota}}e_{I_{\FC\FC_{\iota}},\FC}\VC_{\FC},$
  and the map (\ref{map}) factors as
  $$ \IC\circ\RC(\VC)_{\FC} \simto \bigoplus_{\FC_{\iota}\in\BT'_{\iota,\bullet/e}/G_{\iota}}e_{I_{\FC\FC_{\iota}},\FC}\VC_{\FC}
  \To{\sum} \VC_{\FC}.$$
  So, in order to show that this map is an isomorphism, it suffices to show that the
  idempotents $e_{I_{\FC\FC_{\iota}},\FC}$ are pairwise orthogonal in
  $\HC_{R}(G_{\FC})$ and that their sum is
  $e_{I,\FC}$. Equivalently, we need to show that
  $I_{\FC}/_{\sim_{\FC}}=\bigsqcup_{\FC_{\iota}\in\BT'_{\iota,\bullet/e}/G_{\iota}}
  I_{\FC\FC_{\iota}}/_{\sim_{\FC}}$.
  Obviously, we have $I_{\FC}=\bigsqcup_{\FC_{\iota}\in\BT'_{\iota,\bullet/e}/G_{\iota}}
  I_{\FC\FC_{\iota}}$, so it is a matter of showing that this partition splits the
  equivalence relation $\sim_{\FC}$. In other word, given
   $\FC_{1},\FC_{2}\in\BT'_{\iota,\bullet/e}$ and $g_{i}\in G_{\FC\FC_{i}}$ such that
   $e_{g_{1}\iota,\FC}=e_{g_{2}\iota,\FC}$, we need to show that $\FC_{1}$ and $\FC_{2}$
   are $G_{\iota}$-conjugate. But, as in Lemma \ref{ortho_idemp}, there is $k$ in
   $\Ku_{g_{1}\iota,\FC}$ such that $g_{2}\iota=kg_{1}\iota$, whence an element
   $g_{\iota}\in G_{\iota}$ such that $g_{2}g_{\iota}=kg_{1}$. Since $k$ fixes $\FC$, it
   follows that $\FC_{1}=g_{1}^{-1}\FC= g_{1}^{-1}k^{-1}\FC=
   g_{\iota}^{-1}g_{2}^{-1}\FC=g_{\iota}^{-1}\FC_{2}$. Hence, $\FC_{1}$ and $\FC_{2}$ are
   $G_{\iota}$-conjugate, and we have proved that the natural map~(\ref{map}) is an isomorphism.

   As in the other direction, the above identifications are natural enough to be
   compatible with face maps and action maps on both sides, and we have thus constructed
   an isomorphism $\IC\circ\RC\simto
  \id_{\Coef^{\phi,I}_{R}(\BT'_{\bullet/e}/G)}$, as desired. 
\end{subequations}
\end{proof}

\ali \label{def-functors} We now reap the rewards of the above constructions by setting
\begin{eqnarray*}
&  \IC_{\WC_{\iota}} :\, \Rep^{1}_{R}(G_{\iota})\To{(\pi_{\iota}^{*})^{1}}
  \Coef_{R}^{1}(\BT'_{\iota,\bullet/e}/G_{\iota})^{\cart} \To{\IC_{\WC_{\iota}}}
  \Coef_{R}^{\phi,I}(\BT'_{\bullet/e}/G)^{\cart} \To{\pi_{!}} \Rep_{R}^{\phi,I}(G)  \\
&  \RC_{\WC_{\iota}} :\, \Rep_{R}^{\phi,I}(G) \To{(\pi^{*})^{\phi,I}}
  \Coef_{R}^{\phi,I}(\BT'_{\bullet/e}/G)^{\cart} \To{\RC_{\WC_{\iota}}}
  \Coef_{R}^{1}(\BT'_{\iota,\bullet/e}/G_{\iota})^{\cart} \To{\pi_{\iota,!}} \Rep^{1}_{R}(G_{\iota}) 
\end{eqnarray*}

\begin{theo} \label{thm-main-2}
  Let $\WC_{\iota}$ be a Heisenberg--Weil coefficient system as in Definition \ref{def_WH_coef_syst}.
  The two above functors are quasi-inverse equivalences of categories :
  \[  \xymatrix{
    \IC_{\WC_{\iota}} :\, \Rep^{1}_{R}(G_{\iota})  \ar@<0.5ex>[r]^-{\sim} &
   \Rep^{\phi,I}_{R}(G)\,:\RC_{\WC_{\iota}} \ar@<0.5ex>[l]^-{\sim}.
 }\]
Moreover, for any $e$-vertex $\FC_{0}\in\BT'_{\iota,\bullet/e}$ and any representation 
$\rho$ of $G_{\iota,\FC_{0}}$ that is trivial on $G_{\iota,\FC_{0},0+}$, there is an isomorphism
of $RG$-modules
$$ \IC_{\WC_{\iota}}(\cind_{G_{\iota,\FC_{0}}}^{G_{\iota}}(\rho))\simeq
\cind_{\K_{\iota,\FC_{0}}}^{G}(\WC_{\iota,\FC_{0}}\otimes_{R}\rho)$$
through which $\IC_{\WC_{\iota}}$  induces an isomorphism of $R$-algebras
$$\End_{RG_{\iota}}(\cind_{G_{\iota,\FC_{0}}}^{G_{\iota}}(\rho))
\simto \End_{RG}(\cind_{\K_{\iota,\FC_{0}}}^{G}(\WC_{\iota,\FC_{0}}\otimes_{R}\rho)).$$
\end{theo}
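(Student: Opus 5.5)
The equivalence part is formal. By definition in \ref{def-functors}, $\IC_{\WC_{\iota}}$ and $\RC_{\WC_{\iota}}$ are composites of three functors each. The outer ones are the quasi-inverse equivalences (\ref{thm_equiv_repr_coef_iota}) and (\ref{thm_equiv_repr_coef}). The middle ones are the quasi-inverse equivalences of Theorem \ref{main_under_assumption}, which respect Cartesian objects. Composing these gives the quasi-inverse pair. The remaining work is to compute $\IC_{\WC_{\iota}}$ on $\cind_{G_{\iota,\FC_{0}}}^{G_{\iota}}(\rho)$. Once the isomorphism of $RG$-modules is established, the statement about endomorphism algebras follows from full faithfulness of $\IC_{\WC_{\iota}}$ by transport of structure.

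For the computation I would not compute $(\pi_{\iota}^{*})^{1}$ of the induced representation directly. Instead I would use adjunction. Set $V_{\iota}:=\cind_{G_{\iota,\FC_{0}}}^{G_{\iota}}(\rho)$. Then $V_{\iota}=\pi_{\iota,!}(\mathcal{V}_{\iota}^{\rho})$, where $\mathcal{V}_{\iota}^{\rho}\in\Coef^{1}_{R}(\BT'_{\iota,\bullet/e}/G_{\iota})$ is the ``skyscraper'' system supported on the $G_{\iota}$-orbit of the vertex $\FC_{0}$. Concretely, it has value $\bigoplus_{g\in G_{\iota,\FC\FC_{0}}/G_{\iota,\FC_{0}}}\rho$ at vertices $\FC\in G_{\iota}\FC_{0}$ and zero elsewhere. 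The colimit over the poset of $e$-facets of such a system is the compact induction, since $\FC_{0}$ is a vertex and no face maps are involved.

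This system is not Cartesian, so I would avoid Theorem \ref{main_under_assumption} on it and argue via Hom functors instead. For $V\in\Rep^{\phi,I}_{R}(G)$ I would compute
\[\Hom_{G}(\IC_{\WC_{\iota}}(V_{\iota}),V)\simeq\Hom_{G_{\iota}}(V_{\iota},\RC_{\WC_{\iota}}(V))\simeq\Hom_{G_{\iota,\FC_{0}}}\bigl(\rho,\Hom_{R\Ku_{\iota,\FC_{0}}}(\WC_{\iota,\FC_{0}},e_{\phi,I,\FC_{0}}V)\bigr).\]
The first isomorphism holds because the two functors are quasi-inverse equivalences, hence adjoint. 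For the second, I use that $\RC_{\WC_{\iota}}(V)=\pi_{\iota,!}$ of a Cartesian system whose value at $\FC_{0}$ is $\Hom_{R\Ku_{\iota,\FC_{0}}}(\WC_{\iota,\FC_{0}},e_{I,\FC_{0}}V)$. By (\ref{thm_equiv_repr_coef_iota}), its $G_{\iota,\FC_{0},0+}$-invariants recover this value, so Frobenius reciprocity for compact induction applies because $\rho$ is trivial on $G_{\iota,\FC_{0},0+}$.

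Rewriting $\Hom_{R\Ku_{\iota,\FC_{0}}}(\WC_{\iota,\FC_{0}},-)$ together with $\Hom_{G_{\iota,\FC_{0}}}(\rho,-)$ as $\Hom_{\K_{\iota,\FC_{0}}}(\WC_{\iota,\FC_{0}}\otimes_{R}\rho,-)$ uses $\K_{\iota,\FC_{0}}=G_{\iota,\FC_{0}}\Ku_{\iota,\FC_{0}}$. Since $e_{\iota,\FC_{0}}e_{I,\FC_{0}}=e_{\iota,\FC_{0}}$ (Lemma \ref{ortho_idemp}), the last Hom equals
\[\Hom_{\K_{\iota,\FC_{0}}}(\WC_{\iota,\FC_{0}}\otimes\rho,V)\simeq\Hom_{G}\bigl(\cind_{\K_{\iota,\FC_{0}}}^{G}(\WC_{\iota,\FC_{0}}\otimes\rho),V\bigr).\]
The compact induction lies in $\Rep^{\phi,I}_{R}(G)$, because it is generated by vectors fixed by $e_{\iota,\FC_{0}}$. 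So Yoneda inside $\Rep^{\phi,I}_{R}(G)$ gives the claimed isomorphism. The subtle step, which I expect to be the main obstacle, is justifying the middle identification. One must check that the natural $G_{\iota,\FC_{0}}$-action on $\Hom_{R\Ku_{\iota,\FC_{0}}}(\WC_{\iota,\FC_{0}},e_{I,\FC_{0}}V)$ coming from the coefficient-system structure of $\WC_{\iota}$ agrees with the one defining the $\K_{\iota,\FC_{0}}$-module $\WC_{\iota,\FC_{0}}\otimes\rho$. This should follow from the definition of $g_{\VC_{\iota},\FC}$ in \ref{first-direction} and (\ref{local_equivalences}).
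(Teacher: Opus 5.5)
Your proof is correct. The equivalence part is the same as the paper's, but you compute $\IC_{\WC_{\iota}}(\cind_{G_{\iota,\FC_{0}}}^{G_{\iota}}(\rho))$ by a different route.

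The paper works at the level of coefficient systems. Passing to left adjoints in $(\pi_{\iota}^{*})^{1}\circ\RC_{\WC_{\iota}}\simeq\RC_{\WC_{\iota}}\circ(\pi^{*})^{\phi,I}$, it obtains a $1$-commutative square $\IC_{\WC_{\iota}}\circ\pi_{\iota,!}\simeq\pi_{!}\circ\IC_{\WC_{\iota}}$ on the full, non-Cartesian categories of coefficient systems. It then evaluates the coefficient-system functor $\IC_{\WC_{\iota}}$ explicitly on the skyscraper system supported on $G_{\iota}\FC_{0}$ with zero face maps, and identifies $\pi_{!}$ of the result with $\cind_{\K_{\iota,\FC_{0}}}^{G}(\WC_{\iota,\FC_{0}}\otimes_{R}\rho)$.

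You never evaluate $\IC_{\WC_{\iota}}$ on anything. You use only:
\begin{itemize}
\item the value of $\RC_{\WC_{\iota}}((\pi^{*})^{\phi,I}V)$ at the single facet $\FC_{0}$;
\item Frobenius reciprocity for both compact inductions;
\item Yoneda inside $\Rep^{\phi,I}_{R}(G)$.
\end{itemize}
In particular your skyscraper paragraph plays no role and can be dropped.

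The step you flag as the main obstacle is immediate. For $g\in G_{\iota,\FC_{0}}$, the map $g_{\WC_{\iota},\FC_{0}}$ is by definition the action of $g\in\K_{\iota,\FC_{0}}$ on $\WC_{\iota,\FC_{0}}$. So equivariance for $u\mapsto g\circ u\circ g^{-1}$ is exactly what makes the corresponding map $\WC_{\iota,\FC_{0}}\otimes_{R}\rho\to V$ equivariant for $\K_{\iota,\FC_{0}}=G_{\iota,\FC_{0}}\Ku_{\iota,\FC_{0}}$.

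What the paper's route buys is an explicit description at the level of coefficient systems, matching the pull-push picture of \ref{geom_interp}. What yours buys is that no step uses that $\FC_{0}$ is an $e$-vertex. The paper needs this hypothesis because the skyscraper system with zero face maps has the compact induction as colimit only when $\FC_{0}$ is a vertex. Your argument therefore gives Corollary \ref{coro_indep_e} directly for every $e$-facet, without refining $e$ via Proposition \ref{prop_indep_e}.
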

\begin{proof}
The first statement is now a consequence of  Theorem \ref{main_under_assumption},
(\ref{thm_equiv_repr_coef}) and (\ref{thm_equiv_repr_coef_iota}).
To prove the second statement, we first note that the following diagrams are
$1$-commutative (i.e., commutative up to isomorphism of  functors).
  $$\xymatrix{
    \Coef^{\phi,I}_{R}(\BT'_{\bullet/e}/G) \ar@<1ex>[d]^{\RC_{\WC_{\iota}}}
    \ar[r]^-{\pi_{!}} & \Rep^{\phi,I}_{R}(G) \ar@<1ex>[d]^{\RC_{\WC_{\iota}}}
    \\
    \Coef^{1}_{R}(\BT'_{\iota,\bullet/e}/G_{\iota})
    \ar@<1ex>[u]^{\IC_{\WC_{\iota}}} \ar[r]^-{\pi_{\iota,!}}
    & \Rep^{1}_{R}(G_{\iota}) \ar@<1ex>[u]^{\IC_{\WC_{\iota}}}
  }$$
Indeed, this follows by adjunction from the  1-commutativity of the
  following diagram, which holds by construction.
  $$\xymatrix{
    \Coef^{\phi,I}_{R}(\BT'_{\bullet/e}/G) \ar@<1ex>[d]^{\RC_{\WC_{\iota}}}
    & \Rep^{\phi,I}_{R}(G) \ar@<1ex>[d]^{\RC_{\WC_{\iota}}} \ar[l]_-{(\pi^{*})^{\phi,I}} 
    \\
    \Coef^{1}_{R}(\BT'_{\iota,\bullet/e}/G_{\iota})
    \ar@<1ex>[u]^{\IC_{\WC_{\iota}}} 
    & \Rep^{1}_{R}(G_{\iota}) \ar@<1ex>[u]^{\IC_{\WC_{\iota}}} \ar[l]_-{(\pi_{\iota}^{*})^{1}}
  }$$
Now, denote by $\VC_{\iota,\FC_{0},\rho}$
  the coefficient system on $\BT'_{\iota}$ defined by
  \begin{align*}
  &  \VC_{\iota,\FC_{0},\rho}(\FC):=\CC^{\infty}(G_{\iota,\FC\FC_{0}})\otimes_{RG_{\iota,\FC_{0}}}\rho,
    \;\;\forall\FC\in\BT'_{\iota,\bullet/e}  \\
  &  g_{\FC} :\, \VC_{\iota,\FC_{0},\rho}(\FC)\To{g\otimes\id}\VC_{\iota,\FC_{0},\rho}(g\FC),
    \;\;\forall\FC\in\BT'_{\iota,\bullet/e},\forall g\in G \\
    & \beta_{\FC,\FC'}=0,\,\, \forall\FC,\FC'\hbox{ s.t. } \o\FC'\subsetneq\o\FC
  \end{align*}
  Observe that   $\VC_{\iota,\FC_{0},\rho}(\FC)=\{0\}$ unless $\FC$ is
  $G_{\iota}$-conjugate to $\FC_{0}$, hence it follows that
$$ \pi_{\iota,!}(\VC_{\iota,\FC_{0},\rho}) = \cind_{G_{\iota,\FC_{0}}}^{G_{\iota}}(\rho).$$
By definition, $\VC_{\iota,\FC_{0},\rho}$ belongs to
$\Coef^{1}_{R}(\BT'_{\iota,\bullet/e}/G_{\iota})$
and $\IC_{\WC_{\iota}}(\VC_{\iota,\FC_{0},\rho})$ is again a coefficient system with
trivial face maps, that vanishes outside the $G$-orbit of $\FC_{0}$ and  is given by
$$ \IC_{\WC_{\iota}}(\VC_{\iota,\FC_{0},\rho})(\FC)=
\CC^{\infty}(G_{\FC\FC_{0}})\otimes_{R\K_{\iota,\FC_{0}}}(\WC_{\iota,\FC_{0}}\otimes_{R}\rho)$$
for all facets $\FC\in\BT'_{\bullet/e}$.  It follows that
$$\pi_{!}(\IC_{\WC_{\iota}}(\VC_{\iota,\FC_{0},\rho}))=\cind_{\K_{\iota,\FC_{0}}}^{G}(\WC_{\iota,\FC_{0}}\otimes_{R}\rho).$$
By the first $1$-commutative diagram above, we conclude that
$$ \IC_{\WC_{\iota}}(\cind_{G_{\iota,\FC_{0}}}^{G_{\iota}}(\rho))
=\IC_{\WC_{\iota}}(\pi_{\iota,!}(\VC_{\iota,\FC_{0},\rho}) 
\simeq \pi_{!}(\IC_{\WC_{\iota}}(\VC_{\iota,\FC_{0},\rho}))=
\cind_{\K_{\iota,\FC_{0}}}^{G}(\WC_{\iota,\FC_{0}}\otimes_{R}\rho).$$

Finally, the last statement follows from the second one and the fact that
$\IC_{\WC_{\iota}}$ is an equivalence of categories.
\end{proof}

\alin{Dependence on the choices}
Our construction  depends on two choices: the integer $e$ and the
Heisenberg--Weil coefficient system $\WC_{\iota}$.
Let us first fix $e$, and rename
$ \IC_{\WC_{\iota}}^{e} :\, \Rep_{R}^{1}(G_{\iota}) \leftrightarrows
\Rep^{\phi,I}_{R}(G):\,\RC_{\WC_{\iota}}^{e}$
the pair of inverse equivalences of Theorem \ref{thm-main-2}, to emphasize
dependence on $e$. 
The effect of changing $\WC_{\iota}$ is clear from Proposition \ref{uniqueness_WH_coef-system}.  Given another
Heisenberg--Weil coefficient system $\WC'_{\iota}$, and with $L$, $\theta$ as in that
statement, we have
\ini
\begin{equation} \label{indep_HW}
  \IC_{\WC_{\iota}'}^{e}= \IC_{\WC_{\iota}}^{e}\circ (L_{\theta}\otimes_{R} -) \hbox{ and }
  \RC_{\WC_{\iota}'}^{e}= (L_{\theta})^{\otimes -1}\otimes_{R} -)\circ\RC_{\WC_{\iota}}^{e}.
\end{equation}

Now, let us change $e$ to some multiple $e'$. There is a functor
$[\BT'_{\bullet/e'}/G]\To{}[\BT'_{\bullet/e}/G]$ that takes
an $e'$-facet $\GC$ to the unique $e$-facet $\FC_{e}(\GC)$ that contains $\GC$. Composing
with this functor induces a linear exact functor
$\Coef_{R}^{\phi,I}([\BT'_{\bullet/e}/G])\To{\CC_{e,e'}}\Coef_{R}^{\phi,I}([\BT'_{\bullet/e'}/G])$,
which satisfies $\CC_{e,e'}\circ (\pi^{*})^{\phi,I}_{e}=(\pi^{*})^{\phi,I}_{e'}$ (here we use
the notation of (\ref{thm_equiv_repr_coef}) with an extra index emphasizing dependence on
$e$). In particular, $\CC_{e,e'}$ is an equivalence of categories.
Similarly, we have a functor $[\BT'_{\iota,\bullet/e'}/G_{\iota}]\To{}[\BT'_{\iota,\bullet/e}/G_{\iota}]$
that  induces an equivalence of categories
$\Coef_{R}^{1}([\BT'_{\iota,\bullet/e}/G_{\iota}])\To{\CC_{\iota,e,e'}}\Coef_{R}^{1}([\BT'_{\iota,\bullet/e'}/G_{\iota}])$
such that $\CC_{\iota,e,e'}\circ (\pi_{\iota}^{*})^{1}_{e}=(\pi_{\iota}^{*})^{1}_{e'}$. Finally, 
composing with
$[\BT'_{\iota,\bullet/e'}/K_{\iota}]\To{}[\BT'_{\iota,\bullet/e}/K_{\iota}]$, any
Heisenberg--Weil coefficient system $\WC_{\iota}$ for the $e$-facet subdivision induces 
a Heisenberg--Weil coefficient system $\CC_{\iota,e,e'}(\WC_{\iota})=:\WC'_{\iota}$ for the $e'$-facet subdivision.

 \begin{pro} \label{prop_indep_e}
 With the above notation, the
 equivalences of categories $\RC_{\WC_{\iota}}^{e}, \IC_{\WC_{\iota}}^{e}$, resp.
 $\RC_{\WC'_{\iota}}^{e'},\IC_{\WC'_{\iota}}^{e'}$ of Theorem
   \ref{thm-main-2} respectively associated to  $(e,\WC_{\iota})$ and $(e',\WC_{\iota}')$
   satisfy
 $$ \RC_{\WC_{\iota}}^{e}\simeq\RC_{\WC'_{\iota}}^{e'} \hbox{ and }\IC_{\WC_{\iota}}^{e} \simeq \IC_{\WC'_{\iota}}^{e'}.$$   
 \end{pro}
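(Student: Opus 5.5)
Since $\IC^{e}$ and $\RC^{e}$ are quasi-inverse (Theorem \ref{thm-main-2}), it suffices to compare the $\RC$'s; the statement for the $\IC$'s then follows by uniqueness of quasi-inverses. By construction, $\RC^{e}_{\WC_{\iota}}=\pi_{\iota,!}^{e}\circ\RC_{\WC_{\iota}}\circ(\pi^{*})^{\phi,I}_{e}$. The key claim will be the intertwining relation
\[\CC_{\iota,e,e'}\circ\RC_{\WC_{\iota}}\simeq\RC_{\WC'_{\iota}}\circ\CC_{e,e'}\]
of functors $\Coef^{\phi,I}_{R}(\BT'_{\bullet/e}/G)\to\Coef^{1}_{R}(\BT'_{\iota,\bullet/e'}/G_{\iota})$. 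We will combine it with the relation $\pi^{e'}_{\iota,!}\circ\CC_{\iota,e,e'}\simeq\pi^{e}_{\iota,!}$ on Cartesian objects. Given both, and using $\CC_{e,e'}\circ(\pi^{*})^{\phi,I}_{e}=(\pi^{*})^{\phi,I}_{e'}$, we obtain
\[\RC^{e'}_{\WC'_{\iota}}=\pi^{e'}_{\iota,!}\RC_{\WC'_{\iota}}\CC_{e,e'}(\pi^*)^{\phi,I}_{e}\simeq\pi^{e'}_{\iota,!}\CC_{\iota,e,e'}\RC_{\WC_{\iota}}(\pi^*)^{\phi,I}_{e}\simeq\RC^{e}_{\WC_{\iota}}.\]

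For the intertwining relation, evaluate both sides on an $e'$-facet $\GC\subseteq\BT'_{\iota}$ and write $\FC=\FC_{e}(\GC)$. The left side is $\Hom_{R\Ku_{\iota,\FC}}(\WC_{\iota,\FC},\VC_{\FC})$. The right side is $\Hom_{R\Ku_{\iota,\GC}}(\WC'_{\iota,\GC},\VC_{\FC})$ with $\WC'_{\iota,\GC}=\WC_{\iota,\FC}$. By Proposition \ref{indep_facet} ii), applied with $E$ as in \ref{choice_e} (which still works for $e'$, as $e|e'$), we have $\Ku_{\iota,\GC}=\Ku_{\iota,\FC}$, so the two modules are literally equal. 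Face maps for $\GC'\subseteq\o\GC$ are defined by the same commutative square; when $\FC_e(\GC')=\FC_e(\GC)$ the face map of $\WC'$ is the identity, so both sides give the identity. Action maps visibly agree. So the relation holds with equality.

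For the relation $\pi^{e'}_{\iota,!}\CC_{\iota,e,e'}\simeq\pi^{e}_{\iota,!}$, I would not compute colimits directly. The argument uses that $\pi^{e}_{\iota,!}$ is quasi-inverse to $(\pi^*_{\iota})^1_e$ on Cartesian objects by (\ref{thm_equiv_repr_coef_iota}), and likewise for $e'$. First, $\CC_{\iota,e,e'}$ preserves Cartesian objects, since $G_{\iota,\GC,0+}=G_{\iota,\FC_e(\GC),0+}$ and the Cartesian condition only involves these groups. Second, $\CC_{\iota,e,e'}\circ(\pi^*_{\iota})^1_e=(\pi^*_{\iota})^1_{e'}$. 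Hence on Cartesian objects
\[\pi^{e'}_{\iota,!}\CC_{\iota,e,e'}\simeq\pi^{e'}_{\iota,!}\CC_{\iota,e,e'}(\pi^*_\iota)^1_e\pi^e_{\iota,!}=\pi^{e'}_{\iota,!}(\pi^*_\iota)^1_{e'}\pi^e_{\iota,!}\simeq\pi^{e}_{\iota,!}.\]
Since $\RC_{\WC_{\iota}}$ sends Cartesian objects to Cartesian objects (Theorem \ref{main_under_assumption}), this relation may be inserted in the chain above.

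The main obstacle I expect is bookkeeping rather than substance. One must confirm that the index-$e'$ face map of $\WC'_{\iota}$ between two $e'$-facets inside the same $e$-facet is the identity and still satisfies Definition \ref{def_WH_coef_syst}. This holds because the relevant Hom module is $\End_{R\Ku_{\iota,\FC}}(\WC_{\iota,\FC})=R$, which the identity generates. One must also check that the isomorphisms above are natural, so that they assemble into isomorphisms of functors.
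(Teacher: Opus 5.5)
Your proposal is correct and follows the paper's route. The paper's proof rests on the commutation $\CC_{\iota,e,e'}\circ\RC_{\WC_{\iota}}=\RC_{\WC'_{\iota}}\circ\CC_{e,e'}$ combined with $\CC_{e,e'}\circ(\pi^{*})^{\phi,I}_{e}=(\pi^{*})^{\phi,I}_{e'}$ and its $G_{\iota}$-analogue. Beyond that, you spell out the step $\pi^{e'}_{\iota,!}\circ\CC_{\iota,e,e'}\simeq\pi^{e}_{\iota,!}$ on Cartesian objects, which the paper leaves implicit, and you obtain the $\IC$ statement from uniqueness of quasi-inverses instead of from the second relation $\CC_{e,e'}\circ\IC_{\WC_{\iota}}=\IC_{\WC'_{\iota}}\circ\CC_{\iota,e,e'}$ that the paper asserts, which spares you checking that less literal identification.
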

 \begin{proof}
It follows from their construction  that the functors of Theorem \ref{main_under_assumption} satisfy
the commutation relations
$\CC_{\iota,e,e'}\circ\RC_{\WC_{\iota}}=\RC_{\WC'_{\iota}}\circ\CC_{e,e'}$ and
$\CC_{e,e'}\circ\IC_{\WC_{\iota}}= \IC_{\WC'_{\iota}}\circ \CC_{\iota,e,e'}$,
from which the proposition follows.
 \end{proof}

 \begin{coro} \label{coro_indep_e}
   Let $\WC_{\iota}$ be a Heisenberg--Weyl coefficient system and $\IC_{\WC_{\iota}}$ the
   equivalence of categories of Theorem \ref{thm-main-2}. Then
   for any $x \in\BT'_{\iota}$ and any $R$-representation
$\rho$ of $G_{\iota,x}$ that is trivial on $G_{\iota,x,0+}$, there is an isomorphism
of $RG$-modules
\[ \IC_{\WC_{\iota}}(\cind_{G_{\iota,x}}^{G_{\iota}}(\rho))\simeq
\cind_{\K_{\iota,x}}^{G}(\WC_{\iota,x}\otimes_{R}\rho).\]
Here $\WC_{\iota,x}$ denotes the $R\K_{\iota, x}$-module $\cW_{\iota, \cF}$ where $\cF$ is the $e$-facet that contains $x$.
 \end{coro}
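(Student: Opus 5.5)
The plan is to reduce to Theorem \ref{thm-main-2} by refining the subdivision until $x$ becomes an $e'$-vertex, and then to use Proposition \ref{prop_indep_e} to transport the result back to the original $e$.

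\emph{Choosing $e'$.} Let $\FC$ be the $e$-facet containing $x$. First I would choose a multiple $e'$ of $e$ such that $x$ is an $e'$-vertex. This needs some care. The $e$-facet containing $x$ is a polysimplex whose affine structure is rational with respect to the $e$-walls, but $x$ itself might be an arbitrary point of it. If $x$ is not rational, no subdivision makes it a vertex. So the argument should not go through $x$ directly. Instead I would use that every group and idempotent in sight depends only on the $e$-facet $\FC$:
\begin{itemize}
\item $G_{\iota,x}=G_{\iota,\FC}$, because stabilizers of $e$-facets are fixators and the stabilizer of a point in the $e$-subdivision stabilizes its facet;
\item $G_{\iota,x,0+}=G_{\iota,\FC,0+}$ and $\Ku_{\iota,x}=\Ku_{\iota,\FC}$, by Proposition \ref{indep_facet} ii);
\item hence $\K_{\iota,x}=\K_{\iota,\FC}$.
\end{itemize}
I would then replace $x$ by the barycenter $b$ of $\FC$. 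This is a rational point, and it is an $e'$-vertex for $e'$ a suitable multiple of $e$: by the homothety description of $e'$-subdivisions in \ref{subsection-efacets}, barycenters of $e$-facets are $e'$-vertices once $e'$ is divisible enough, for example $e'=e\cdot N$ with $N$ divisible by the relevant dimensions plus one. We may also impose the conditions of \ref{choice_e}, since they are stable under taking multiples.

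\emph{Applying the vertex case.} With $\FC_{0}$ the $e'$-vertex $\{b\}$, I would apply Theorem \ref{thm-main-2} to $(e',\WC'_{\iota})$, where $\WC'_{\iota}=\CC_{\iota,e,e'}(\WC_{\iota})$. By definition of $\CC_{\iota,e,e'}$, we have $\WC'_{\iota,\FC_{0}}=\WC_{\iota,\FC}$, since $\FC$ is the $e$-facet containing $b$. The $e'$-facet data also match: the stabilizer of $b$ in $G_{\iota}$ is $G_{\iota,\FC}$, and $\K_{\iota,b}=\K_{\iota,\FC}$. Theorem \ref{thm-main-2} then gives
\[
\IC^{e'}_{\WC'_{\iota}}\bigl(\cind_{G_{\iota,\FC}}^{G_{\iota}}\rho\bigr)\simeq\cind_{\K_{\iota,\FC}}^{G}\bigl(\WC_{\iota,\FC}\otimes_R\rho\bigr).
\]
Finally, Proposition \ref{prop_indep_e} identifies $\IC^{e'}_{\WC'_{\iota}}$ with $\IC^{e}_{\WC_{\iota}}$, which gives the claim for $x$.

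\emph{Expected obstacle.} The main point to verify is the equality $G_{\iota,x}=G_{\iota,b}$. This amounts to checking that a group element fixing $x$ stabilizes the $e$-facet $\FC$, which holds because $G$ preserves the $e$-subdivision. It then fixes $\FC$ pointwise by the choice of $e$, and in particular fixes $b$. Conversely, anything fixing $b$ stabilizes $\FC$ and hence fixes it. I expect the remaining identifications to be bookkeeping.
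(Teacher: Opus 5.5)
Your proposal is correct and follows the paper's proof. You replace $x$ by the barycenter $y$ of its $e$-facet $\FC$, pass to a multiple $e'$ for which $y$ is an $e'$-vertex, apply Theorem \ref{thm-main-2} to $\CC_{\iota,e,e'}(\WC_{\iota})$ and return via Proposition \ref{prop_indep_e}. You then conclude from $G_{\iota,x}=G_{\iota,y}$, $G_{\iota,x,0+}=G_{\iota,y,0+}$, $\K_{\iota,x}=\K_{\iota,y}$ and $\WC_{\iota,x}=\WC_{\iota,\FC}$, and you justify these equalities more carefully than the paper does.

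The only slip is your sample recipe for $e'$: divisibility of $N$ by the dimensions plus one is not enough in general, since the barycenter of an alcove of split $G_{2}$ lies on a single $3$-wall. This does not affect the argument, because only the existence of some suitable $e'$ is used. In fact $e'=e^{2}$ always works: $e$-facets are homothetic images of $1$-facets, and barycenters of $1$-facets are $e$-vertices by the choice of $e$.
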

 \begin{proof}
 The case where $\FC$ is an $e$-vertex of $\BT'_{\iota,\bullet/e}$ is done in
 Theorem \ref{thm-main-2}. In general, let $y$ be the
 barycenter of $\FC$. Then $y$ becomes a vertex in
 $\BT'_{\iota,\bullet/e'}$ for some multiple $e'$ of $e$. Thanks to the last
 proposition, we deduce from Theorem \ref{thm-main-2}
 that $\IC_{\WC_{\iota}}(\cind_{G_{\iota,y}}^{G_{\iota}}(\rho))\simeq
\cind_{\K_{\iota,y}}^{G}(\WC_{\iota,y}\otimes_{R}\rho),$
and we conclude since $\K_{\iota,x}=\K_{\iota,\FC}=\K_{\iota,y}$,  
$\WC_{\iota,x}=\WC_{\iota,\FC}=\WC_{\iota,y}$ and accordingly for
$G_{\iota,x}$ and $G_{\iota,x,0+}$.
 \end{proof}

\begin{rema}
  We will see in Corollary \ref{cor-choice-of-WH} below that given a
  point $x \in\BT'_{\iota}$ and any twisted
  $R\K_{\iota, x}$-Heisenberg--Weil representation $\kappa_{\iota,x}$
  (as defined in Definition \ref{Def-HeisWeil} below) whose
  restriction to $\Kp_{\iota,x}$ is
  $\check{\phi}^+_{\iota, x}$-isotypic, there exists a
  Heisenberg--Weil coefficient system such that
  $\cW_{\iota,x} \simeq \kappa_{\iota,x}$.
\end{rema}

\alin{Proof of Theorem \ref{thm-main} (admitting the results of Section \ref{sec:main-result-strategy-3})} \label{proof-main}  That theorem, and actually a
  more precise version of it, now follows from Theorem \ref{thm-main-2}
  and Corollary \ref{coro_indep_e}. Of course, it still remains to prove the
  results claimed in Section \ref{sec:main-result-strategy-3} 
 which we have assumed in the above proof. This is the subject of the next subsections.

\subsection{Heisenberg representations}\label{sec:heisenberg-rep}

In this subsection, we adapt the theory of Heisenberg representations to
 coefficients in any commutative $\Rmin$-algebra $R$. 
 As in the previous subsections, we have fixed an embedding $\iota \in I$. 
 According to \ref{Heisenberg}, for any $x\in\BT_{\iota}$,
 the triple $(\Ku_{\iota,x},\Kp_{\iota,x},\check\phi_{\iota,x}^{+})$ is
a ``Heisenberg triple'' in the following sense.

\begin{DEf} \label{def_Heis_triple}
  A triple $(\Ku,\Kp,\check\phi)$ consisting of a pro-$p$-group $\Ku$, an open normal
  subgroup $\Kp\subset\Ku$ and a continuous homomorphism $\check\phi:\,\Kp\To{}\mu_{p^{\infty}}$
  centralized by $\Ku$, is called a \emph{Heisenberg triple}\index[terminology]{Heisenberg triple} if  $\Ku/\Kp$ is an
  $\FM_{p}$-vector space  and the map $(k,k')\mapsto 
  \check\phi(kk'k^{-1}(k')^{-1})$ defines a ($\mu_{p}$-valued) perfect pairing on $\Ku/\Kp$. 
\end{DEf}

Note that the pairing is necessarily symplectic since $p\neq 2$. For any commutative $\Rmin$-algebra
$R$, we still denote by $\check\phi$ the character $\Ku\To{}\mu_{p^{\infty}}\subset
R^{\times}$, and we denote by $e\in\HC_{R}(\Ku)$ the corresponding idempotent, which is
central and supported on $\Kp$. 
The category of smooth $R\Ku$-modules whose
restriction to $\Kp$ is $\check\phi$-isotypic
identifies with the category  $ e\HC_R(\Ku)-\Mod$
of modules over the finite $R$-algebra $e \HC_R(\Ku)$.

\begin{lemme} \label{lem_Heisenberg}
  Let $(\Ku,\Kp,\check\phi)$ be a Heisenberg triple.
  For a commutative $\Rmin$-algebra $R$,  there is a
  smooth $R \Ku$-module $\Heis$ such that 
  \begin{enumerate}
  \item $\Heis$ is finitely generated and projective over $R$.
  \item $\Heis|_{\Kp}$ is $\check\phi$-isotypic.
  \item $\End_{R\Ku}(\Heis)=R$.
  \end{enumerate}
Any such  $\Heis$ has $R$-rank $\sqrt{[\Ku:\Kp]}$ and
is a projective generator of 
    $e  \HC_R(\Ku)-\Mod$.
     In particular,
    we have adjoint  equivalences of categories  
    $$ \Hom_{R\Ku}(\Heis, -):\,e \HC_R(\Ku)-\Mod
    \rightleftarrows R-\Mod :\, \Heis \otimes_R - $$
  Moreover, if $\Heis'$ is
  another $R\Ku$-module satisfying (i), (ii) and (iii), then
  $\Hom_{R\Ku}(\Heis,\Heis')$ is an
  invertible $R$-module (i.e. finitely generated projective of
  rank $1$).
\end{lemme}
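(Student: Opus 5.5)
The plan is to reduce everything to the structure of the finite algebra $A:=e\HC_R(\Ku)$. Since $\Kp$ is open and normal, $A\simeq e R[\Ku/\ker\check\phi]$, where the central idempotent $e$ makes $\Kp$ act through $\check\phi$. Choosing a set-theoretic section of $V:=\Ku/\Kp$, the algebra $A$ is a twisted group algebra $R^{c}[V]$: it is free over $R$ with basis $(t_v)_{v\in V}$ and relations $t_vt_w=c(v,w)t_{v+w}$. Here $c$ is a $\mu_{p^\infty}$-valued 2-cocycle whose commutator form $c(v,w)c(w,v)^{-1}$ is the perfect symplectic pairing $\theta$. The key claim is that $A\simeq \mathrm{M}_{N}(R)$ with $N^2=|V|$, already over $\Rmin$ and hence over any $R$ by base change.

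To see this, first pick a Lagrangian decomposition $V=L\oplus L'$ for $\theta$. Next, modify the section so that $c$ is trivial on $L$. This is possible because the restriction of $c$ to $L$ is symmetric, hence a coboundary on the elementary abelian $p$-group $L$ with values in the divisible group $\mu_{p^\infty}$ (using $p$ odd). Then $t_\ell$ for $\ell\in L$ spans a commutative subalgebra $R[L]$. Because $\mu_p\subset R$ and $p\in R^\times$, this subalgebra is the product of copies of $R$ indexed by the characters of $L$. Set $\Heis:=A\otimes_{R[L]}R_{1}$, the module induced from the trivial character of $L$. It is free of rank $|L'|=\sqrt{|V|}$, with basis $t_{\ell'}$ for $\ell'\in L'$, so (i) and (ii) hold. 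Moreover, $A$ acts on $\Heis$ through the usual Schrödinger-type matrices. Its elements $t_\ell$ act diagonally with distinct characters, thanks to the perfectness of $\theta$, and the $t_{\ell'}$ permute the basis transitively. From this we get $A\to\End_R(\Heis)$, a map between free $R$-modules of the same rank $|V|$. It is surjective because the diagonal idempotents and the permutation matrices generate all matrix units. Hence it is an isomorphism, so $A\simeq\End_R(\Heis)$.

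All remaining assertions now follow by Morita theory. Since $A=\End_R(\Heis)$ with $\Heis$ free, $\Heis$ is a projective generator of $A$-$\Mod$ and $\End_A(\Heis)=R$, which is (iii). The stated adjoint equivalences are the standard Morita ones. For an arbitrary $\Heis'$ satisfying (i)–(iii), apply the equivalence: $\Heis'\simeq\Heis\otimes_R M$ with $M=\Hom_{A}(\Heis,\Heis')$. Since $\Heis'$ is finitely generated projective over $R$, so is $M$, because $M$ is a direct summand of $\Hom_R(\Heis,\Heis')$. Condition (iii) gives $\End_R(M)=R$. Locally $M$ is free of some rank $n$ with $\End=\mathrm{M}_n$, which forces $n=1$ wherever $M$ is nonzero, and $M$ is nowhere zero. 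So $M$ is invertible, $\Heis'$ has rank $\sqrt{[\Ku:\Kp]}$ as well, and $\Heis'$ is also a projective generator.

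The main obstacle is the first reduction: trivializing the cocycle on $L$ integrally over $\Rmin$, together with checking that $e\HC_R(\Ku)$ really is the twisted group algebra with commutator form $\theta$. If the cocycle argument is awkward, the fallback is to work directly with the preimage $\tilde L\subset\Ku$ of $L$. This is a subgroup with $[\tilde L,\tilde L]\subset\ker\check\phi$, since $L$ is isotropic, so $\check\phi$ extends to a character $\tilde\phi$ of $\tilde L$. Extending uses that $\tilde L/\ker\check\phi$ is abelian and that $\mu_{p^\infty}$ is divisible. One then sets $\Heis:=\cInd{\tilde L}{\Ku}\tilde\phi$ and verifies the matrix-algebra isomorphism by the same rank count.
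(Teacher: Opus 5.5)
Your proof is correct. It reaches the result by a different mechanism from the paper's, even though the module is the same. After you trivialize $c$ on $L$, your $A\otimes_{R[L]}R_1$ is exactly the paper's $\Heis=\cInd{\Ku_W}{\Ku}R_{\neweta}$, where $\neweta$ is a character of the preimage $\Ku_W$ of a Lagrangian extending $\check\phi$; this is also your fallback.

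The paper never identifies $e\HC_R(\Ku)$. It proceeds in three steps:
\begin{enumerate}
\item It obtains (iii) from Mackey's formula $\Heis|_{\Ku_W}\simeq\bigoplus_{\neweta'}R_{\neweta'}$ (each of the $\sqrt{[\Ku:\Kp]}$ extensions $\neweta'$ of $\check\phi$ occurs once) together with Frobenius reciprocity.
\item It shows $\cInd{\Ku_W}{\Ku}R_{\neweta'}\simeq\Heis$ for every $\neweta'$, by the same ``surjection between free modules of equal rank'' argument you use.
\item It gets projective generation by inducing the regular module $\bigoplus_{\neweta'}R_{\neweta'}$ of the split commutative algebra $e\HC_R(\Ku_W)$.
\end{enumerate}

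You instead prove the stronger statement $e\HC_R(\Ku)\simeq\End_R(\Heis)\simeq\mathrm{M}_N(R)$ through explicit Schr\"odinger matrix units. The idempotents of $R[L]\simeq R^{\hat L}$ separate the basis because $\theta$ pairs $L$ and $L'$ perfectly, and the $t_{\ell'}$ act by monomial matrices. After this, (iii), projective generation, the equivalence and the rank count are textbook Morita theory for matrix algebras.

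The cost of your route is the twisted-group-algebra and cocycle bookkeeping, plus character orthogonality over $R$ (valid since $1-\zeta$ divides the unit $p$). The paper's Mackey/Frobenius approach is the one it reuses later, with Lagrangians adapted to Iwahori decompositions, in Lemma \ref{intertwining_Heisenberg}. Your treatment of an arbitrary $\Heis'$ (a finitely generated projective $M$ with $\End_R(M)=R$, hence locally of rank one) is essentially the paper's.

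A small remark: making $c|_{L\times L}$ a coboundary only uses that symmetric cocycles classify abelian extensions and that $\mu_{p^{\infty}}$ is divisible. The hypothesis that $p$ is odd plays no role there.
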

\begin{proof}
  We first construct a representation $\Heis$ that satisfies (i), (ii) and
  (iii). Set $V:=\Ku/\Kp$, pick a Lagrangian subspace
  $W\subseteq V$ and denote by $\Ku_{W}\subseteq \Ku$ the preimage of $W$ in $\Ku$. We thus have
  $[\Ku:\Ku_{W}]=[\Ku_{W}:\Kp]=\sqrt{[\Ku:\Kp]}$, and we denote
  this integer by $a$.
  there are exactly $a$ distinct characters $\neweta:\Ku_{W}\To{}
  \mu_{p^{\infty}}\subseteq R^{\times}$ that extend $\check\phi$ and
  they are transitively permuted under the conjugation action of
  $\Ku$. Now pick such a character $\neweta$, denote by $R_{\neweta}$ the
  associated $R\Ku_{W}$-module with underlying $R$-module $R$, and put
  $$ \Heis:= \cind_{\Ku_{W}}^{\Ku}R_{\neweta}.$$
  This certainly satisfies (i) and (ii). Moreover, we have
  $(\cind_{\Ku_{W}}^{\Ku}R_{\neweta})|_{\Ku_{W}}=\bigoplus_{\neweta'}
  R_{\neweta'}$ by the Mackey formula, so (iii) follows from Frobenius
  reciprocity. For the same reason, the $R$-module
  $\Hom_{R\Ku}(\Heis,\cind_{\Ku_{W}}^{\Ku}R_{\neweta'})$  is free of rank $1$ for any character $\neweta':\Ku_{W}\To{}
  \mu_{p^{\infty}}\subseteq R^{\times}$ that extends $\check\phi$. Pick
  a generator $f$ of this $R$-module. Its image contains the $\Ku_{W}$-$\neweta$-eigenspace of
  $\cind_{\Ku_{W}}^{\Ku}R_{\neweta'}$, which generates the latter $R\Ku$-module, hence $f$
  is surjective. Being a morphism of free $R$-modules of the same rank, it is also
  injective, hence an isomorphism. Now observe that each $R_{\neweta'}$ is a projective
  $R\Ku_{W}$-module since $\Ku_{W}$ is a pro-$p$-group and $p\in R^{\times}$. Therefore
  $\bigoplus_{\neweta'}R_{\neweta'}$ is a projective
  generator of $(e \HC_R(\Ku_{W}))-\Mod$, and Frobenius reciprocity implies that $\Heis$ is
  a projective generator of $(e \HC_R(\Ku))-\Mod$. This formally
  implies that the adjoint pair of functors
  $$ \Hom_{R\Ku}(\Heis,-):\, (e\HC_R(\Ku))-\Mod \rightleftarrows
  R-\Mod:\, \Heis\otimes_{R}- $$
  are equivalences of categories. 
  Finally, let $\Heis'$ be another smooth $R\Ku$-module satisfying (i), (ii) and
  (iii). Since it is $R$-projective, it is $R\Ku$-projective. Therefore, the $R$-module 
  $P:=\Hom_{R\Ku}(\Heis,\Heis')$ is finitely generated projective and satisfies 
  $\End_{R}P=R$. So $P$ has rank $1$ and $\Heis'\simeq \Heis\otimes_{R}P$ is also a
  projective generator of $e\HC_{R}(\Ku)-\Mod$.
\end{proof}

\begin{DEf}\label{Def-Heisenberg}
  A representation $\Heis$ as in the previous lemma will be called a \textit{Heisenberg
    representation}\index[terminology]{Heisenberg representation} for the Heisenberg triple  $(\Ku,\Kp,\check\phi)$.
\end{DEf}

\ali \label{section-different-heisenberg}
Note that, when $R$ is a field or when $R=\R$, any invertible
$R$-module is free, so $\Heis$ is \emph{uniquely determined}
by properties (i), (ii) and (iii) up to isomorphism. In general, $\eta$ is only unique up
to isomorphism and twisting by an invertible $R$-module.
In any case,  we always have a canonical
evaluation isomorphism
$$ \Heis\otimes_{R}\Hom_{R\Ku}(\Heis,\Heis')\simto
\Heis'$$ for any two choices of
Heisenberg representations $\Heis$ and $\Heis'$.

\medskip
As already noted before, for any $x\in\BC_{\iota}$, the triple
$(\Ku_{\iota,x},\Kp_{\iota,x},\check\phi_{\iota,x}^{+})$ is a Heisenberg triple, by
\ref{Heisenberg}. We will generally denote by $\Heis_{\iota,x}$ a Heisenberg
representation for this triple.
Note that  this notion  only depends on the e-facet $\FC_{e}(x)$ of
$\BT_{\iota}$ that contains $x$. This allows us to  use the notation $\Heis_{\iota,\FC}$ instead of $\Heis_{\iota,x}$ for the facet
$\FC\in\BT'_{\iota,\bullet/e}$ that contains $x$, whenever we need it.

\begin{lemme} \label{intertwining_Heisenberg}
  Let  $x,x' \in\BT_{\iota}$ and choose Heisenberg
  representations $\Heis_{\iota,x}$ and $\Heis_{\iota,x'}$.
  \begin{enumerate}
  \item \label{intertwining_Heisenberg-i} The $R$-module
    $\H_{x,x'}:=\Hom_{R(\Ku_{\iota,x}\cap \Ku_{\iota,x'})}(\Heis_{\iota,x},\Heis_{\iota,x'})$ is
    invertible.
  \item The evaluation map
    $\Heis_{\iota,x}\otimes_{R}\H_{x,x'}\To{}\Heis_{\iota,x'}$ factors
    as:
    $$\Heis_{\iota,x}\otimes_{R}\H_{x,x'} \twoheadrightarrow
    (e_{\iota,x',x}\,\Heis_{\iota,x})\otimes_{R}\H_{x,x'} \simto
    e_{\iota,x,x'}\Heis_{\iota,x'}\injo \Heis_{\iota,x'},$$
    where $e_{\iota,x,x'}$ is the idempotent associated to the restriction of
      $\check\phi^{+}_{\iota,x}$ to $\Kp_{\iota,x}\cap\Ku_{\iota,x'}$,
      and the first map is the projection obtained by applying this idempotent.
  \item \label{intertwining_Heisenberg-iii}  If $\o{\FC_{e}(x)}\supseteq \FC_{e}(x')$, then the evaluation maps induce:
    \begin{enumerate}
    \item an $R\Ku_{\iota,x'}$-isomorphism
      $\cind_{\Ku_{\iota,x}\cap \Ku_{\iota,x'}}^{\Ku_{\iota,x'}}\Heis_{\iota,x} \otimes_{R}
      \H_{x,x'}\simto \Heis_{\iota,x'}$
    \item an $R(\Ku_{\iota,x}\cap \Ku_{\iota,x'})$-isomorphism
      $\Heis_{\iota,x}\otimes_{R}\H_{x,x'}\simto e_{\iota,x,x'}\Heis_{\iota,x'}$.
    \end{enumerate}
  \end{enumerate}
\end{lemme}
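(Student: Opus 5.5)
The plan is to reduce everything to the structure of the restriction of $\Heis_{\iota,x'}$ to the open subgroup $H:=\Ku_{\iota,x}\cap\Ku_{\iota,x'}$, using the Iwahori decompositions of \ref{Iwahori} with respect to the pair $(\mathbf P,\bar{\mathbf P})$ attached to $x,x'$ in \ref{opposite_parabolic_sgps}, together with the intertwining Lemma \ref{intertw_converse}, which says that $\check\phi^{+}_{\iota,x}$ and $\check\phi^{+}_{\iota,x'}$ agree on $\Kp_{\iota,x}\cap\Kp_{\iota,x'}$. We may assume $x\neq x'$.

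\emph{Step 1 (structure of $H$ and of the restriction).} Using the Iwahori factorizations of $\Ku_{\iota,x},\Kp_{\iota,x}$ and their $x'$-analogues (Lemma \ref{Iwahori}), write $H=(U\cap H)(M\cap H)(\bar U\cap H)$, where $M\cap\Ku_{\iota,x}=M\cap\Ku_{\iota,x'}$ and the $U$- and $\bar U$-parts are nested in opposite directions. The image of $H$ in $V'=\Ku_{\iota,x'}/\Kp_{\iota,x'}$ is then a subspace $W'$, and I claim that $\Kp_{\iota,x}\cap\Ku_{\iota,x'}$ maps onto $W'^{\perp}$ for the pairing $\theta'$ of Proposition \ref{Heisenberg}. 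This uses the perfect pairing between the $U$- and $\bar U$-parts from Proposition \ref{Heisenberg} ii), applied at both points, and it is the main computation. Symmetrically for $x$. The character $\check\phi^{+}_{\iota,x}|_{\Kp_{\iota,x}\cap\Ku_{\iota,x'}}$ then plays, inside the Heisenberg group over $W'$, the role of a character on the radical.

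\emph{Step 2 (invertibility, part i).} Decompose $\Heis_{\iota,x'}|_{H}$ as in the proof of Lemma \ref{lem_Heisenberg}: the characters of $(\Kp_{\iota,x}\cap\Ku_{\iota,x'})\Kp_{\iota,x'}$ extending $\check\phi^+_{\iota,x'}$ are permuted transitively by $\Ku_{\iota,x'}$, and their isotypic parts are finite projective. The part $e_{\iota,x,x'}\Heis_{\iota,x'}$ is $H$-stable and is a Heisenberg representation for the induced triple on $H$ modulo the radical. Lemma \ref{intertw_converse} guarantees that this triple is compatible with $\check\phi^{+}_{\iota,x}$ on $\Kp_{\iota,x}\cap\Kp_{\iota,x'}$. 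Restricting $\Heis_{\iota,x}$ to $H$ and applying $e_{\iota,x',x}$ gives a Heisenberg representation for the same triple. Indeed, both realize the unique, up to an invertible twist, Heisenberg module for $(H,\text{radical},\text{character})$, by the last assertion of Lemma \ref{lem_Heisenberg}. Every $H$-map from $\Heis_{\iota,x}$ to $\Heis_{\iota,x'}$ factors through these isotypic projections, because $H$-maps preserve isotypic components and the components meet only along the common character. Hence $\H_{x,x'}$ is the Hom-space between two Heisenberg modules for the same triple, and so it is invertible. This gives i) and the factorization in ii), the middle arrow being an isomorphism by Lemma \ref{lem_Heisenberg}.

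\emph{Step 3 (part iii).} When $\o{\FC_e(x)}\supseteq\FC_e(x')$, the proof of Proposition \ref{idemp_commut} shows $\Kp_{\iota,x'}\subseteq\Kp_{\iota,x}$, so $e_{\iota,x',x}$ acts trivially on $\Heis_{\iota,x}$. Then ii) gives (b) directly. For (a), Frobenius reciprocity turns the evaluation into a map $\cind_H^{\Ku_{\iota,x'}}\Heis_{\iota,x}\otimes\H_{x,x'}\to\Heis_{\iota,x'}$. This map is surjective since its image contains $e_{\iota,x,x'}\Heis_{\iota,x'}$, which generates because $\Heis_{\iota,x'}$ is irreducible in the Morita sense (a projective generator with endomorphisms $R$). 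Comparing $R$-ranks, $[\Ku_{\iota,x'}:H]\cdot\sqrt{[\Ku_{\iota,x}:\Kp_{\iota,x}]}=\sqrt{[\Ku_{\iota,x'}:\Kp_{\iota,x'}]}$, which follows from the index bookkeeping in Step 1. Surjectivity between projective modules of equal rank gives an isomorphism. The main obstacle is the symplectic bookkeeping in Step 1, namely identifying the radical and the index equality; I would do it summand by summand along the decomposition \eqref{decomposition-of-Vx}.
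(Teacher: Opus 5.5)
Your proposal is correct. Its skeleton is the same as the paper's: Iwahori decompositions for the pair $(\mathbf P,\bar{\mathbf P})$ of \ref{opposite_parabolic_sgps}, reduction of $\H_{x,x'}$ to a Hom-space between two Heisenberg modules for a common Levi-type triple, and the uniqueness part of Lemma \ref{lem_Heisenberg}. The difference lies in how the Heisenberg property is checked.

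The paper shows that every $(\Ku_{\iota,x}\cap\Ku_{\iota,x'})$-map factors through the coinvariants $(\Heis_{\iota,x})_{U\cap\Ku_{\iota,x}}$ and lands in the invariants $(\Heis_{\iota,x'})^{\bar U\cap\Ku_{\iota,x'}}$. It then verifies, on explicit models $\cind_{\Ku_{W}}^{\Ku_{\iota,x}}R_{\nu}$ with Lagrangians of the form $W_{\bar U}\oplus W_M$, that these are Heisenberg representations for $(M\cap\Ku_{\iota,x},M\cap\Kp_{\iota,x},\check\phi^{+}_{\iota,x})$. Only afterwards does it identify them with $e_{\iota,x',x}\Heis_{\iota,x}$ and $e_{\iota,x,x'}\Heis_{\iota,x'}$. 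For iii)(a) it again uses a model, induced from the preimage of the Lagrangian $W_M\oplus(\bar U\cap\Ku_{\iota,x'})/(\bar U\cap\Kp_{\iota,x'})$.

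You instead pass to the subgroup $N$ generated by $\Kp_{\iota,x}\cap H$ and $\Kp_{\iota,x'}\cap H$. You note that $H/N\simeq W'/(W')^{\perp}$, which is the Levi part $V_M$, and then use Clifford theory. This is model-free, symmetric in $x,x'$, and gives ii) directly in its idempotent form. In iii)(a) you replace the model by generation plus a rank count. Your index identity is right: it reduces to $U\cap\Kp_{\iota,x}=U\cap\Kp_{\iota,x'}$. That equality follows from $\Kp_{\iota,x'}\subseteq\Kp_{\iota,x}$ together with $U\cap\Ku_{\iota,x}\subseteq U\cap\Kp_{\iota,x'}$. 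So you do not need the paper's geometric remark that $[x,x']$ meets $\FC_{e}(x)$ in an open set.

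Two points should be written out.
\begin{itemize}
\item The equality $\End_{RH}(e_{\iota,x,x'}\Heis_{\iota,x'})=R$ needs an argument. The stabilizer in $\Ku_{\iota,x'}$ of your character of $Y=(\Kp_{\iota,x}\cap\Ku_{\iota,x'})\Kp_{\iota,x'}$ is $H\Kp_{\iota,x'}$, the preimage of $W'=((W')^{\perp})^{\perp}$. Hence $\Heis_{\iota,x'}\simeq\cind_{H\Kp_{\iota,x'}}^{\Ku_{\iota,x'}}(e_{\iota,x,x'}\Heis_{\iota,x'})$. Frobenius reciprocity, plus the fact that $\Kp_{\iota,x'}$ acts by scalars, then identifies the endomorphism rings.
\item The generation claim in iii)(a) should be read off from this same decomposition, $\Heis_{\iota,x'}=\bigoplus_{g}g\,e_{\iota,x,x'}\Heis_{\iota,x'}$ with $g\in\Ku_{\iota,x'}/H\Kp_{\iota,x'}$. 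It should not rest on ``irreducibility'': over a general $R$, the subobjects of $\Heis_{\iota,x'}$ are the modules $J\Heis_{\iota,x'}$ for ideals $J\subseteq R$.
\end{itemize}
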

\begin{proof}
	\inisub
	\begin{subequations}
		 i) By the previous lemma, we may assume $x\neq x'$.
  Let $({\mathbf{P}} ,\bar{\mathbf{P}} )$ be the pair of $F$-rational
  opposite parabolic subgroups of ${\mathbf{G}}$ constructed in \ref{opposite_parabolic_sgps}.
  As usual, we denote by ${\mathbf{M}}$
  their common Levi component and by ${\mathbf{U}}$ and $\bar{\mathbf{U}}$
  the respective unipotent radicals.   
  Then both $\Ku_{\iota,x}$ and $\Ku_{\iota,x'}$ have the Iwahori
  decomposition with respect to this pair, and the 
  following equalities and inclusions hold:
\begin{equation}
    \label{eq:inclusions}
    M\cap \Ku_{\iota,x}=M\cap \Ku_{\iota,x'}, \,\,\,\,
    \bar U\cap \Kp_{\iota,x} \supseteq \bar U\cap \Ku_{\iota,x'}, \,\,\,\,
    U\cap\Ku_{\iota,x} \subseteq U\cap\Kp_{\iota,x'}.
  \end{equation}
 By Lemma \ref{Iwahori}, $\bar U\cap \Kp_{\iota,x}$ acts trivially on $\Heis_{\iota,x}$ and
 $U\cap \Kp_{\iota,x'}$ acts trivially on $\Heis_{\iota,x'}$. Hence any
 $\Ku_{\iota,x}\cap\Ku_{\iota,x'}$-equivariant map $\Heis_{\iota,x}\To{}\Heis_{\iota,x'}$ factors
 through $(\Heis_{\iota,x})_{U\cap \Ku_{\iota,x}}$ and lands in $(\Heis_{\iota,x'})^{\bar U\cap \Ku_{\iota,x'}}$.
 In view of the Iwahori decomposition
 \[ \Ku_{\iota,x}\cap\Ku_{\iota,x'}=(U\cap\Ku_{\iota,x}) (M\cap\Ku_{\iota,x}) (\bar U\cap
 \Ku_{\iota,x'}),\]
this means that
\begin{equation}
  \label{Hom_Heis}
  \Hom_{R(\Ku_{\iota,x}\cap \Ku_{\iota,x'})}\left(\Heis_{\iota,x},\Heis_{\iota,x'}\right)
  = \Hom_{R(M\cap \Ku_{\iota,x})}\left((\Heis_{\iota,x})_{U\cap \Ku_{\iota,x}},(\Heis_{\iota,x'})^{\bar U\cap \Ku_{\iota,x'}}\right).
\end{equation}
 By Proposition \ref{Heisenberg}
 ii), the restriction to $(M\cap \Ku_{\iota,x})/(M\cap \Kp_{\iota,x})$ of the bilinear form $\theta$ associated to
 $\check\phi_{\iota,x}^{+}$ is non-degenerate, so that
$(M\cap \Ku_{\iota,x},M\cap \Kp_{\iota,x},(\check\phi_{\iota,x}^{+})|_{M\cap
  \Kp_{\iota,x}})$ is a Heisenberg triple.
 We claim that both
 $(\Heis_{\iota,x})_{U\cap \Ku_{\iota,x}}$ and $(\Heis_{\iota,x'})^{\bar U\cap \Ku_{\iota,x'}}$ are
 such Heisenberg representations for this triple. According to the last statement of Lemma
 \ref{lem_Heisenberg}, this concludes our proof.

 To prove the claim, we may argue with any particular choice of  $\Heis_{\iota,x}$, and in
 particular with a model 
 $\Heis_{\iota,x}=\cind_{\Ku_{W}}^{\Ku_{\iota,x}}R_{\neweta}$ as in  the proof of Lemma
 \ref{lem_Heisenberg}, for a suitable Lagrangian  $W\subseteq \Ku_{\iota,x}/\Kp_{\iota,x}$ and character
 $\nu$.  Namely,  choose first
 $W$ of the form $W_{\bar U}\oplus W_{M}$ where $W_{\bar U}=(\bar U\cap
 \Ku_{\iota,x})/(\bar U\cap \Kp_{\iota,x})$ and $W_{M}$ is a Lagrangian in
 $(M\cap \Ku_{\iota,x})/(M\cap \Kp_{\iota,x})$, and  pick any 
 $\neweta$ that is trivial on $W_{U}$. Then using Mackey's formula we see 
 that  $ (\cind_{\Ku_{W}}^{\Ku_{\iota,x}}R_{\neweta})_{U\cap\Ku_{\iota,x}} =
(\cind_{P\cap\Ku_{W}}^{P\cap\Ku_{\iota,x}}R_{\neweta})_{U\cap\Ku_{\iota,x}}\simto
 \cind_{M\cap\Ku_{W}}^{M\cap\Ku_{\iota,x}}R_{\neweta}$, which is indeed a Heisenberg
 representation for  $(M\cap\Ku_{\iota,x},\check\phi_{\iota,x}^{+})$.  We argue similarly
 for $(\Heis_{\iota,x'})^{\bar U\cap\Ku_{\iota,x'}}$, by computing with a Lagrangian of
 the form $W_{U}\oplus W_{M}$ in $\Ku_{\iota,x'}/\Kp_{\iota,x'}$. 

 \medskip
 
 ii) By the above discussion, the evaluation map
 $\Heis_{\iota,x}\otimes_{R}\H_{x,x'}\To{}\Heis_{\iota,x'}$  factorizes as  
\begin{equation}
  \label{Hom_Heis_eval}
  \Heis_{\iota,x}\otimes_{R}\H_{x,x'} \twoheadrightarrow
  (\Heis_{\iota,x})_{U\cap\Ku_{\iota,x}}\otimes_{R}\H_{x,x'}\simto
  (\Heis_{\iota,x'})^{\bar U\cap\Ku_{\iota,x'}}
  \injo \Heis_{\iota,x'},
\end{equation}
where the map in the middle is an isomorphism of
$R(M\cap\Ku_{\iota,x})$-modules and is also an evaluation map
through the identification (\ref{Hom_Heis}).

On the other hand, by (\ref{eq:inclusions})
we have 
$ \Kp_{\iota,x}\cap\Ku_{\iota,x'}=(\Kp_{\iota,x}\cap \Kp_{\iota,x'})(\bar U\cap \Ku_{\iota,x'})$.
Since $\check\phi_{\iota,x}^{+}$ is trivial on $\bar U\cap\Kp_{\iota,x}$ and coincides
with $\check\phi_{\iota,x'}^{+}$ on $\Kp_{\iota,x}\cap \Kp_{\iota,x'}$, we have in
$\HC_{R}(\Ku_{\iota,x'})$ the equality
$e_{\iota,x,x'}e_{\iota,x'}=e_{\bar U\cap\Ku_{\iota,x'}}e_{\iota,x'}$, and therefore $(\Heis_{\iota,x'})^{\bar U\cap\Ku_{\iota,x'}}=e_{\iota,x,x'}\Heis_{\iota,x'}$.
Exchanging the roles of $x$ and $x'$, we also have
$e_{\iota,x',x}e_{\iota,x}=e_{U\cap\Ku_{\iota,x}}e_{\iota,x}$
in $\HC_{R}(\Ku_{\iota,x})$. Therefore, the projection
$\Heis_{\iota,x}\twoheadrightarrow (\Heis_{\iota,x})_{U\cap\Ku_{\iota,x}}$ factors as
$\Heis_{\iota,x}\twoheadrightarrow e_{\iota,x',x}\,\Heis_{\iota,x}\simto
(\Heis_{\iota,x})_{U\cap\Ku_{\iota,x}}$,
where the first map is given by the action of $e_{\iota,x',x}$. This completes the proof
of ii).

iii)
Under the assumption that  $\FC_{e}(x')\subseteq\o{\FC_{e}(x)}$,
 the segment $[x,x']$ has a non-empty open intersection with the facet
 $\FC_{e}(x)$. By definition of the $e$-facet decomposition, it
 follows that for each $i=0,\hdots, d$, we have
 $U\cap G^{i}_{\iota,x,(r_{i-1}/2)+}=U\cap G^{i}_{\iota,x,(r_{i-1}/2)}$ and
 $\bar U\cap G^{i}_{\iota,x,(r_{i-1}/2)+}=\bar U\cap G^{i}_{\iota,x,(r_{i-1}/2)}$,
 from which we deduce that
 $ U\cap \Ku_{\iota,x}=U\cap \Kp_{\iota,x} \hbox{ and }
 \bar U\cap \Ku_{\iota,x}=\bar U\cap \Kp_{\iota,x}.$
 In particular, we have 
$\Ku_{\iota,x}=(M\cap\Ku_{\iota,x})\Kp_{\iota,x}$ and
 $(\Heis_{\iota,x})^{U\cap\Ku_{\iota,x}}=\Heis_{\iota,x}=(\Heis_{\iota,x})_{U\cap\Ku_{\iota,x}}$,
   so that iii)(b) follows from ii).

   Now, pick a Lagrangian subspace $W_{M}$ in
   $(M\cap \Ku_{\iota,x})/(M\cap\Kp_{\iota,x})=\Ku_{\iota,x}/\Kp_{\iota,x}$, denote by
   $\Ku_{\iota,x,W_{M}}$ its preimage in $\Ku_{\iota,x}$, and pick a character $\nu$ of
   $\Ku_{\iota,x,W_{M}}$ that extends $\check\phi^{+}_{\iota,x}$. Then we know that
   $\cind_{\Ku_{\iota,x,W_{M}}}^{\Ku_{\iota,x}}\nu$ is a Heisenberg representation for
   $\check\phi^{+}_{\iota,x}$, and we observe that 
   $$(\cind_{\Ku_{\iota,x,W_{M}}}^{\Ku_{\iota,x}}\nu)|_{\Ku_{\iota,x}\cap\Ku_{\iota,x'}}=
   \cind_{\Ku_{\iota,x,W_{M}}\cap\Ku_{\iota,x'}}^{\Ku_{\iota,x}\cap \Ku_{\iota,x'}}\nu$$
   because $\Ku_{\iota,x}=\Kp_{\iota,x}(\Ku_{\iota,x}\cap \Ku_{\iota,x'})$
   and $\Ku_{\iota,x,W_{M}}=\Kp_{\iota,x}(\Ku_{\iota,x,W_{M}}\cap \Ku_{\iota,x'})$. Now, the point
   is that the group
   $\Ku_{\iota,x,W_{M}}\cap\Ku_{\iota,x'}$ is the preimage of the Lagrangian subspace
   $W_{M}\oplus (\bar U\cap\Ku_{\iota,x'})/(\bar U\cap\Kp_{\iota,x'})$ of
   $\Ku_{\iota,x'}/\Kp_{\iota,x'}$, while the restriction of $\nu$ to $\Kp_{\iota,x'}$
   coincides with $\check\phi^{+}_{\iota,x'}$ (by Lemma \ref{Iwahori}). It follows that
   $$ \cind_{\Ku_{\iota,x}\cap\Ku_{\iota,x'}}^{\Ku_{\iota,x'}}
   (\cind_{\Ku_{\iota,x,W_{M}}}^{\Ku_{\iota,x}}\nu)
   =\cind_{\Ku_{\iota,x,W_{M}}\cap\Ku_{\iota,x'}}^{\Ku_{\iota,x'}}\nu$$
   is a Heisenberg representation for $\check\phi^{+}_{\iota,x'}$. This proves 
   iii)(a)  when
   $\Heis_{\iota,x}=\cind_{\Ku_{\iota,x,W_{M}}}^{\Ku_{\iota,x}}\nu$ and
   $\Heis_{\iota,x'}=\cind_{\Ku_{\iota,x,W_{M}}\cap\Ku_{\iota,x'}}^{\Ku_{\iota,x'}}\nu$, and
   the general case
   follows from the last statement of Lemma \ref{lem_Heisenberg}.
   
\end{subequations}
\end{proof}

\begin{lemme} \label{composition_Heisenberg}
  Suppose given three points  $x,x', x'' \in\BT_{\iota}$ and Heisenberg representations $\Heis_{\iota,x}$, $\Heis_{\iota,x'}$, and
  $\Heis_{\iota, x''}$ of $\Ku_{\iota, x}$, $\Ku_{\iota, x'}$, and $\Ku_{\iota, x''}$, respectively. Assume that
$$ (\Ku_{\iota,x}\cap\Ku_{\iota,x'}\cap\Ku_{\iota,x''})(\Kp_{\iota,x}\cap\Kp_{\iota,x''})=\Ku_{\iota,x}\cap\Ku_{\iota,x''}.$$
Then the inclusion
$\Hom_{R(\Ku_{\iota,x}\cap \Ku_{\iota,x''})}(\Heis_{\iota,x},\Heis_{\iota,x''})
\subseteq \Hom_{R(\Ku_{\iota,x}\cap\Ku_{\iota,x'}\cap \Ku_{\iota,x''})}(\Heis_{\iota,x},\Heis_{\iota,x''})$
is an equality, and the resulting
composition map 
  $$\Hom_{R(\Ku_{\iota,x}\cap \Ku_{\iota,x'})}(\Heis_{\iota,x},\Heis_{\iota,x'}) \otimes_{R}
  \Hom_{R(\Ku_{\iota,x'}\cap \Ku_{\iota,x''})}(\Heis_{\iota,x'},\Heis_{\iota,x''})\To{}
  \Hom_{R(\Ku_{\iota,x}\cap \Ku_{\iota,x''})}(\Heis_{\iota,x},\Heis_{\iota,x''})$$
 is an isomorphism. 
\end{lemme}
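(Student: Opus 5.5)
The plan has two parts: first prove the equality of Hom spaces, then show that the composition map is an isomorphism by a generator argument on invertible modules.

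\emph{The equality of Hom spaces.} Put $H:=\Ku_{\iota,x}\cap\Ku_{\iota,x'}\cap\Ku_{\iota,x''}$ and $H^{+}:=\Kp_{\iota,x}\cap\Kp_{\iota,x''}$. By hypothesis $HH^{+}=\Ku_{\iota,x}\cap\Ku_{\iota,x''}$, so an $RH$-linear map $f:\Heis_{\iota,x}\to\Heis_{\iota,x''}$ is $R(\Ku_{\iota,x}\cap\Ku_{\iota,x''})$-linear as soon as it is $RH^{+}$-linear. On $H^{+}$, the group acts on $\Heis_{\iota,x}$ through $\check\phi^{+}_{\iota,x}$ and on $\Heis_{\iota,x''}$ through $\check\phi^{+}_{\iota,x''}$. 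These two characters agree on $\Kp_{\iota,x}\cap\Kp_{\iota,x''}$ by Lemma \ref{intertw_converse}. Hence every $R$-linear map is automatically $H^{+}$-equivariant, and the equality follows.

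\emph{Setting up the isomorphism.} All three Hom modules $\H_{x,x'}$, $\H_{x',x''}$ and $\H_{x,x''}$ are invertible by Lemma \ref{intertwining_Heisenberg}\ref{intertwining_Heisenberg-i}). A map between invertible $R$-modules is an isomorphism if it is so after base change to every residue field, or equivalently locally. So I would localize to make all three modules free, and then show that the composite of generators $a\in\H_{x,x'}$ and $b\in\H_{x',x''}$ is again a generator of $\H_{x,x''}$. By Lemma \ref{intertwining_Heisenberg} ii), $a$ is the map $\Heis_{\iota,x}\twoheadrightarrow e_{\iota,x',x}\Heis_{\iota,x}\simto e_{\iota,x,x'}\Heis_{\iota,x'}\injo\Heis_{\iota,x'}$, and $b$ has the analogous form. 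So $b\circ a$ is determined by how $b$ acts on $e_{\iota,x,x'}\Heis_{\iota,x'}$.

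\emph{Main obstacle: showing $b\circ a$ is a generator.} This is the hard step. My plan is to reduce to the case where the composite is nonzero and lies in an invertible module whose fibres are one-dimensional. For this I would check the claim fibrewise over a field $k=R/\mathfrak m$. There each $\H$ is a line, and it suffices to show $b\circ a\neq 0$.

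To show nonvanishing, I would compute with explicit models $\cind_{\Ku_W}^{\Ku}R_\nu$ as in the proof of Lemma \ref{lem_Heisenberg}. The idea is to choose Lagrangians compatible with the group $H$, so that a single vector, a $\nu$-eigenvector for $H\cap\Ku_W$, is carried through nonzero by both maps.

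Alternatively, I would use an averaging argument. The composite $b\circ a$ is $H$-equivariant, hence by the first step it lies in $\H_{x,x''}$. I would then compare it with the map $c$ obtained from a generator of $\H_{x,x''}$ through the factorization of Lemma \ref{intertwining_Heisenberg} ii). Concretely, I would show that $\mathrm{Im}(b\circ a)$ contains a nonzero vector, using the idempotent identity $e_{\iota,x}e_{\iota,x'}e_{\iota,x''}\neq0$ on the relevant subgroups. That identity comes from the factorization $HH^{+}=\Ku_{\iota,x}\cap\Ku_{\iota,x''}$, together with the fact that the characters $\check\phi^{+}$ agree on pairwise intersections (again Lemma \ref{intertw_converse}).

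If this nonvanishing holds in every fibre, then the composition map $\H_{x,x'}\otimes\H_{x',x''}\to\H_{x,x''}$ is a map of invertible modules that is surjective on fibres, hence an isomorphism.
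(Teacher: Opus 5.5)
Your proof of the equality of $\Hom$-spaces, and your reduction of the isomorphism claim to the non-vanishing of $b\circ a$ modulo every maximal ideal, are exactly the paper's. The gap is that non-vanishing itself. It is the real content of the second assertion, and you leave it as two undeveloped alternatives.

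You are one step from the paper's argument. By Lemma \ref{intertwining_Heisenberg} ii), the image of $a$ is $e_{\iota,x,x'}\Heis_{\iota,x'}$, and $b$ is the projection by $e_{\iota,x'',x'}$ followed by injective maps. Hence $b\circ a\neq 0$ if and only if $e_{\iota,x'',x'}e_{\iota,x,x'}\Heis_{\iota,x'}\neq 0$.

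The missing idea is to replace the particular module $\Heis_{\iota,x'}$ by the regular one. By Lemma \ref{lem_Heisenberg}, $\Heis_{\iota,x'}$ is a projective generator of $\HC_{R}(\Ku_{\iota,x'})e_{\iota,x'}$-modules with endomorphism ring $R$. So evaluation gives $\Heis_{\iota,x'}\otimes_{R}H\simto\HC_{R}(\Ku_{\iota,x'})e_{\iota,x'}$, with $H$ projective of constant positive rank. The condition therefore becomes $e_{\iota,x'',x'}e_{\iota,x,x'}e_{\iota,x'}\neq 0$ in $\HC_{R}(\Ku_{\iota,x'})$. This is a statement about averaging idempotents of characters of subgroups of $\Ku_{\iota,x'}$, and the paper settles it by the agreement of the characters $\check\phi^{+}$ on the relevant intersections (Lemma \ref{intertw_converse}). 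This step uses no hypothesis: the factorization assumption only serves to make $b\circ a$ land in $\H_{x,x''}$.

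Neither of your routes supplies this.

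\emph{Route (b).} The idempotents $e_{\iota,x}$ and $e_{\iota,x''}$ are supported on $\Kp_{\iota,x}$ and $\Kp_{\iota,x''}$, which are not in general contained in $\Ku_{\iota,x'}$. So they do not act on the module $\Heis_{\iota,x'}$ through which $b\circ a$ factors. A non-zero product $e_{\iota,x}e_{\iota,x'}e_{\iota,x''}$ in $\HC_{R}(G)$ gives no control on the composite of the two specific intertwiners $a$ and $b$. You also only assert that this product is non-zero, and you attribute it to the factorization hypothesis, which plays no role in non-vanishing.

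\emph{Route (a).} The parabolic pairs attached to $(x,x')$ and to $(x',x'')$ single out isotropic subspaces of $\Ku_{\iota,x'}/\Kp_{\iota,x'}$ that need not lie in a common Lagrangian. For instance, when $x'\in\,]x,x''[$, both segments give the same pair $(P,\bar P)$. Then the image of $a$ is $(\Heis_{\iota,x'})^{\bar U\cap\Ku_{\iota,x'}}$, while $b$ factors through $(\Heis_{\iota,x'})_{U\cap\Ku_{\iota,x'}}$. The images of $\bar U\cap\Ku_{\iota,x'}$ and $U\cap\Ku_{\iota,x'}$ are perfectly paired by Proposition \ref{Heisenberg} ii). So there is no single model in which one eigenvector is visibly carried through by both maps, and an argument of the idempotent type above is needed anyway.
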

\begin{proof}
  By definition, $(\Heis_{\iota,x})|_{\Kp_{\iota,x}\cap\Kp_{\iota,x''}}$ is
  $\check\phi^{+}_{\iota,x}$-isotypic, while $(\Heis_{\iota,x''})|_{\Kp_{\iota,x}\cap\Kp_{\iota,x''}}$ is
  $\check\phi^{+}_{\iota,x''}$-isotypic. By lemma \ref{intertw_converse}, both characters
  $\check\phi^{+}_{\iota,x}$ and $\check\phi^{+}_{\iota,x''}$ agree on
  $\Kp_{\iota,x}\cap\Kp_{\iota,x''}$. Therefore, any $R$-linear map
  $\Heis_{\iota,x}\To{}\Heis_{\iota,x''}$ is actually $\Kp_{\iota,x}\cap\Kp_{\iota,x''}$-equivariant,
  and the claimed equality of the lemma follows immediately from its hypothesis.

  In particular, the composition map of the lemma is well defined. Let us first prove that
  it is non-zero. To this aim, according to the factorization in point ii) of Lemma
  \ref{intertwining_Heisenberg}, it is enough to prove that the map
  $e_{\iota,x,x'}\,\Heis_{\iota,x'}\xrightarrow{e_{\iota,x'',x'}*-}
  e_{\iota,x'',x'}\Heis_{\iota,x'}$ is non-zero. Since $\Heis_{\iota,x'}$ is a projective
  generator of  $\HC_{R}(\Ku_{\iota,x'})e_{\iota,x'}-\Mod$ with endomorphism ring $R$, the evaluation map 
  \begin{center}
    $\Heis_{\iota,x'}\otimes_{R} H \simto \HC_{R}(\Ku_{\iota,x'})e_{\iota,x'}$, with
    $H:=\Hom_{R\Ku_{\iota,x'}}(\Heis_{\iota,x'},\HC_{R}(\Ku_{\iota,x'})e_{\iota,x'})$
  \end{center}
  is  an  isomorphism of $R\Ku_{\iota,x'}$-modules. Therefore, it is enough to show that
  the product $e_{\iota,x'',x'}e_{\iota,x,x'}$ is non-zero in
  $\HC_{R}(\Ku_{\iota,x'})e_{\iota,x'}$. As in (\ref{eq:intertw_in_terms_of_idemp}), this
  is equivalent to $\check\phi^{+}_{\iota,x}$ and $\check\phi^{+}_{\iota,x''}$ agreeing on
  $(\Kp_{\iota,x}\cap\Kp_{\iota,x'})\cap(\Kp_{\iota,x'}\cap\Kp_{\iota,x''})$. But this
  follows from Lemma \ref{intertw_converse}.

  We have just proved that the composition map of the lemma is non-zero, for any commutative
  $\Rmin$-algebra $R$. Since it is compatible with change of scalars, this means it
  remains non-zero after any such change of scalars, in particular after reducing modulo
  any maximal ideal. Now, this composition map is an $R$-linear map between two invertible
  $R$-modules. Since it is non-zero modulo any maximal ideal, it is an isomorphism.
\end{proof}

\begin{coro} \label{coro_comp_Heisenberg}
Let $C$ be a chamber in $\BT'_{\iota}$ (i.e., facet of maximal dimension for the usual
Bruhat--Tits polysimplicial structure on $\BT'_{\iota}$), and let $x,x',x''$ be three points in the
closure $\bar C$ of $C$, such that  $x''$ is in $C$ and
$\FC_{e}(x')\subseteq\o{\FC_{e}(x)}$.
Then the composition map 
  $$\Hom_{R(\Ku_{\iota,x}\cap \Ku_{\iota,x'})}(\Heis_{\iota,x},\Heis_{\iota,x'}) \otimes_{R}
  \Hom_{R(\Ku_{\iota,x'}\cap \Ku_{\iota,x''})}(\Heis_{\iota,x'},\Heis_{\iota,x''})\To{}
  \Hom_{R(\Ku_{\iota,x}\cap \Ku_{\iota,x''})}(\Heis_{\iota,x},\Heis_{\iota,x''})$$
 is well defined and is an isomorphism. 
\end{coro}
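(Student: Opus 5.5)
The plan is to deduce the corollary from Lemma \ref{composition_Heisenberg}, so everything reduces to checking its group-theoretic hypothesis
$$(\Ku_{\iota,x}\cap\Ku_{\iota,x'}\cap\Ku_{\iota,x''})(\Kp_{\iota,x}\cap\Kp_{\iota,x''})=\Ku_{\iota,x}\cap\Ku_{\iota,x''}$$
for the configuration described. Once this equality holds, that lemma shows that the composition map is well defined and an isomorphism. Only the inclusion ``$\supseteq$'' needs proof.

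I would work with the pair of opposite parabolic subgroups $(\mathbf{P},\bar{\mathbf{P}})$ attached by \ref{opposite_parabolic_sgps} to the segment $[x,x'']$, with common Levi $\mathbf{M}$. Since $x''\in C$ and $x\in\bar C$, we have $x\in\o{\FC_1(x'')}$ for the usual facet structure. The chamber $C$ lies in an apartment of $\BT'_{\iota}$ coming from a maximally split tame torus $\mathbf{S}$ of $\mathbf{G}_{\iota}$, and $x'$ lies in it as well. By the Iwahori decomposition of Lemma \ref{Iwahori}, together with the inclusions \eqref{eq:inclusions} from the proof of Lemma \ref{intertwining_Heisenberg}, we get
$$\Ku_{\iota,x}\cap\Ku_{\iota,x''}=(U\cap\Ku_{\iota,x})(M\cap\Ku_{\iota,x})(\bar U\cap\Ku_{\iota,x''}),$$
with $U\cap\Ku_{\iota,x}\subseteq U\cap\Kp_{\iota,x''}$ and $\bar U\cap\Ku_{\iota,x''}\subseteq\bar U\cap\Kp_{\iota,x}$. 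It therefore suffices to show two things. First, $M\cap\Ku_{\iota,x}\subseteq\Ku_{\iota,x'}$ modulo $M\cap\Kp_{\iota,x}$. Second, the $U$- and $\bar U$-factors lie in the left-hand side.

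I would handle these factors by working root subgroup by root subgroup inside each $G^{i}_{\iota}$, using the filtration jumps of the affine root functions on the closed chamber $\bar C$. Since $x''\in C$, no affine root of $\mathbf{G}_{\iota}^{i}$ at level $r_{i-1}/2$ vanishes at $x''$. Consider a root $a$ with $U_{a}\cap G^{i}_{\iota,x,r_{i-1}/2}$ strictly bigger than $U_a\cap G^i_{\iota,x,(r_{i-1}/2)+}$. Such an $a$ has an affine root vanishing at $x$, and by convexity its value on $\bar C$ has a fixed sign. That sign should decide whether the corresponding piece of $\Ku_{\iota,x}\cap\Ku_{\iota,x''}$ already lies in $\Kp_{\iota,x''}$ or lies in $\Ku_{\iota,x'}$. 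The condition $\FC_e(x')\subseteq\o{\FC_e(x)}$ serves to control the latter: every $e$-wall through $x$ also contains $x'$, so $G^i_{\iota,x,s}\subseteq G^i_{\iota,x',s}$ at the relevant depths $s$, as in the proof of Proposition \ref{idemp_commut}.

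I expect the main obstacle to be exactly this root-by-root bookkeeping. One has to check that the pieces that fail to lie in $\Kp_{\iota,x}\cap\Kp_{\iota,x''}$ do lie in $\Ku_{\iota,x'}$, which means comparing the $1$-facet structure (used for $x''\in C$) with the $e$-facet structure (used for $x,x'$). I would first try to reduce to the $\mathbf{M}$-part: the $U$- and $\bar U$-parts already sit in $\Kp_{\iota,x''}$, respectively $\Kp_{\iota,x}$, and it remains to intersect them with $\Ku_{\iota,x'}$ using convexity of $[x,x']\subseteq\bar C$.
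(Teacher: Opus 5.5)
Your reduction to Lemma~\ref{composition_Heisenberg} is the right one, and it is also what the paper does. But you never verify its hypothesis
$$(\Ku_{\iota,x}\cap\Ku_{\iota,x'}\cap\Ku_{\iota,x''})(\Kp_{\iota,x}\cap\Kp_{\iota,x''})=\Ku_{\iota,x}\cap\Ku_{\iota,x''},$$
and you leave it as the ``main obstacle''. Worse, the route you sketch rests on a false claim. $C$ is a chamber for $\mathbf{G}_{\iota}$ at depth $0$ only. The hyperplanes where the groups $G^{i}_{\iota,\cdot,r_{i-1}/2}$ jump are $e$-walls, and these in general cut through the interior of $C$; that is the reason for the $e$-subdivision. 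For instance, take $\mathrm{Res}_{E/F}\mathrm{GL}_{2}\subset\mathrm{GL}_{4}$ with $E/F$ unramified and $r_{0}=1$: the level-$\tfrac12$ walls pass through the midpoint of each edge $C$. So the assertion that no affine root of $\mathbf{G}^{i}_{\iota}$ at level $r_{i-1}/2$ vanishes at $x''\in C$ is wrong. Consequently, sorting root pieces by signs on $\bar C$ cannot work as stated.

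The missing idea is short, and you already wrote down its main ingredient. For $i\ge 1$ the depths $r_{i-1}/2$ lie in $v(E^{\times})$, and $x'$ lies in the closure of the $E$-facet of $x$. Hence $G^{i}_{\iota,x,r_{i-1}/2}\subseteq G^{i}_{\iota,x',r_{i-1}/2}$. This is the non-plus monotonicity, which runs opposite to the $r+$ inclusions used in Proposition~\ref{idemp_commut}. It immediately gives
$$\Ku_{\iota,x}=(\Ku_{\iota,x}\cap\Ku_{\iota,x'})\,G^{0}_{\iota,x,0+}.$$
The chamber hypothesis is needed only at depth $0+$ for $\mathbf{G}_{\iota}$ itself. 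Since $x\in\bar C$ and $x''\in C$, we get $G^{0}_{\iota,x,0+}\subseteq G^{0}_{\iota,x'',0+}$, and therefore $G^{0}_{\iota,x,0+}\subseteq\Kp_{\iota,x}\cap\Kp_{\iota,x''}$. Now write $g\in\Ku_{\iota,x}\cap\Ku_{\iota,x''}$ as $g=kh$ with $k\in\Ku_{\iota,x}\cap\Ku_{\iota,x'}$ and $h\in G^{0}_{\iota,x,0+}$. Since $h\in\Ku_{\iota,x''}$, also $k\in\Ku_{\iota,x''}$, which gives the required equality. No parabolic subgroup or Iwahori decomposition is needed. If you do use one, the pieces must be sorted by the index $i$, not by signs on $\bar C$: the $i\ge1$ pieces lie in $\Ku_{\iota,x'}$, and the $i=0$ piece lies in $\Kp_{\iota,x}\cap\Kp_{\iota,x''}$.
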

\begin{proof} Since $\FC_{e}(x')\subseteq\o{\FC_{e}(x)}$, we have for
  all $i>0$, the inclusion
  $G^{i}_{\iota,x,r_{i-1}/2}\subseteq G^{i}_{\iota,x',r_{i-1}/2}$,
  from which we get the equality
  $\Ku_{\iota,x}=(\Ku_{\iota,x}\cap\Ku_{\iota,x'})G_{\iota,x,0+}$. In particular, an
  element $g\in\Ku_{\iota,x}\cap\Ku_{\iota,x''}$ can be written $g=kg_{\iota}$ with
  $k\in \Ku_{\iota,x}\cap\Ku_{\iota,x'}$ and $g_{\iota}\in G_{\iota,x,0+}$.
  On the other hand,
  since $x\in\bar C$ and $x''\in C$, we have $G_{\iota,x,0+}\subseteq
  G_{\iota,C,0+}=G_{\iota, x'',0+}$, hence also
  $G_{\iota,x,0+}\subseteq \Kp_{\iota,x}\cap\Kp_{\iota,x''}\subseteq \Ku_{\iota,x''}$.
  It follows that $g_{\iota}\in \Ku_{\iota,x''}$, hence also $k\in \Ku_{\iota,x''}$. So we
  have proved
  $\Ku_{\iota,x}\cap\Ku_{\iota,x''}=(\Ku_{\iota,x}\cap\Ku_{\iota,x'}\cap\Ku_{\iota,x''})G_{\iota,x,0+}$,
  which implies
  $\Ku_{\iota,x}\cap\Ku_{\iota,x''}=(\Ku_{\iota,x}\cap\Ku_{\iota,x'}\cap\Ku_{\iota,x''})(\Kp_{\iota,x}\cap\Kp_{\iota,x''}).$
  It remains to apply the previous lemma.
\end{proof}

\subsection{A family of Heisenberg--Weil representations}  \label{sec:heis-weil-rep}
 \def\Noy{N}
\def\V{V}
\def\C{C}
\def\PSp{P^{\Sp}}

The aim of this section is to construct a family $(\kappa_{\iota,x})_{x\in\BT'_{\iota}}$
of smooth $R\K_{\iota,x}$-modules
such that :
\begin{enumerate}
\item for all $x\in\BT'_{\iota}$, the restriction $(\kappa_{\iota,x})|_{\Ku_{\iota,x}}$ is
  a Heisenberg representation for $\check\phi^{+}_{\iota,x}$,
\item for any two $x,x'\in\BT'_{\iota}$, the inclusion
  $\Hom_{R(\K_{\iota,x}\cap\K_{\iota,x'})}(\kappa_{\iota,x},\kappa_{\iota,x'})
  \subseteq \Hom_{R(\Ku_{\iota,x}\cap\Ku_{\iota,x'})}(\kappa_{\iota,x},\kappa_{\iota,x'})$
  is an equality.
\end{enumerate}
Working over the base ring $\o\ZM[\frac 1p]$, it would not be
difficult to get a non-constructive proof of  existence of $R\K_{\iota,x}$-modules satisfying i). However, the
lack of uniqueness of such extensions makes it difficult to choose them such that ii) is also
satisfied. Following \cite[\S 11]{Yu_tamescusp} and \cite{Gerardin}, the theory of Heisenberg--Weil
representations produces explicit extensions that are even already defined over the base ring 
$\Rmintwo$, although they still do not satisfy ii). To ensure property ii), we will twist
these extensions by the quadratic characters introduced in \cite{FKS} and \cite{AFMO2}.

\alin{Generalized Heisenberg--Weil representation over $\Rmintwo$-algebras -- an explicit model}
\label{Section_generalized_weil-explicit}
\inisub
\begin{subequations}
  Let $C$ be a cyclic group of order $p^N$
  for some positive integer $N$ and denote by $\mu_p$ its subgroup of order $p$. Let $V$ be an $\bF_p$-vector space with a non-degenerate symplectic form $\theta$ valued in $\mu_p \subseteq C$. We define the group $C \wt{\boxtimes} V$ to have underlying set $C \times V$ and group law given by 
$$(a_1, v_1)\cdot(a_2,v_2)=(a_1\cdot a_2 \cdot \sqrt{\theta(v_1,v_2)}, v_1+v_2). $$ Note that the special case of $N=1$ yields the usual Heisenberg $\bF_p$-group $\mu_{p} \wt{\boxtimes} V$.

If $\nu:\,C\To{}\mu_{p^{\infty}}$ is an injective homomorphism, the triple
$(C \wt{\boxtimes} V,C,\nu)$ is a Heisenberg triple in the sense of Definition \ref{def_Heis_triple}.
Pick a Lagrangian subspace $W \subseteq V$ and, for any commutative $\Rmin$-algebra $R$,  set 
\[\Heis_{\neweta, W, R}:= \ind{C \times W}{C \wt{\boxtimes} V}{\neweta \times 1},\]
where $\nu$ is seen as a character $C\To{}\mu_{p^{\infty}}\subset R^{\times}$. This is a
Heisenberg representation in the sense of Definition \ref{Def-Heisenberg}.
We drop the subscript $R$ and simply write $\Heis_{\neweta, W}$ instead of
$\Heis_{\neweta, W, R}$ if $R$ is clear from the context.
We let $\Sp(V)$ act on $C \wt{\boxtimes} V$ by acting on the second coordinate, i.e., for $g \in \Sp(V)$ and $(a, v) \in C \wt{\boxtimes} V$, we have $g(a,v)=(a,gv)$. 
Note that this action preserves the character $\neweta$ of the center $C=C \times \{0\} \subseteq C \wt{\boxtimes} V$ and it preserves the subgroup $\mu_{p} \wt{\boxtimes} V$.

If $R= \bC$, the restriction of $\Heis_{\neweta, W,\bC}$ to $\mu_{p} \wt{\boxtimes} V$ is a
Heisenberg representation in the classical sense, with central character $\neweta|_{\mu_p}$. 
According to \cite[Theorem~2.4]{Gerardin}, there is a representation
$\Weil_{\CM}$ of $\Sp(V)$ on the vector space $V_{\Heis_{\neweta, W,\bC}}$
underlying $\Heis_{\neweta, W,\bC}$, such that the product map
$$\Weil_{\bC}\ltimes \Heis_{\nu,W,\bC} :\, \Sp(V) \ltimes
(\mu_{p}\wt{\boxtimes} V) \To{} \Aut_{\CM}(V_{\Heis_{\neweta,W,\bC}})$$ is a
homomorphism. Since $\End_{\CM(\mu_{p}\wt{\boxtimes}
  V)}(\Heis_{\neweta,W,\bC})=\CM$, this property makes $\omega_{\CM}$
unique up to a twist by a character, and therefore unique, since
$\Sp(V)=[\Sp(V),\Sp(V)]$, unless $p=3$ and $\dim_{\FM_{p}} V=2$, in
which case, we follow the choice made in \cite[Theorem~2.4]{Gerardin}.

\begin{lem}
The submodule  $V_{\Heis_{\neweta, W,\Rmintwo}}$ of $V_{\Heis_{\neweta, W,\bC}}$ is stable
under $\omega_{\bC}(\Sp(V))$.
\end{lem}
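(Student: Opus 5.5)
We will prove the lemma by showing that the Weil representation $\omega_{\bC}$ is given, in the explicit model $\Heis_{\neweta,W}=\ind{C\times W}{C\wt{\boxtimes}V}{\neweta\times 1}$, by explicit formulas on a generating set of $\Sp(V)$. We will then check that these formulas have coefficients in $\Rmintwo$. The lattice $V_{\Heis_{\neweta,W,\Rmintwo}}$ is the $\Rmintwo$-span of the functions supported on single cosets of $C\times W$ and taking values $\neweta(c)$ there; it is stable under an operator as soon as that operator has matrix coefficients in $\Rmintwo$ with respect to this basis. We will use the generation of $\Sp(V)$ by the Siegel parabolic $P_W=\mathrm{Stab}(W)$ together with one element $w$ exchanging $W$ and a complementary Lagrangian $W'$ (Bruhat decomposition $\Sp(V)=P_W\cup P_W w P_W$).

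For $g\in P_W$, write $g=m\,n$ with $m\in\GL(W)$ acting diagonally on $W\oplus W'$ and $n$ unipotent with $n|_W=\id$. The operator $f\mapsto f\circ g^{-1}$ preserves the induced module, since $g$ fixes $C$ and stabilises $C\times W$. By uniqueness of $\omega_{\bC}$ up to characters of $P_W$, $\omega_{\bC}(g)$ equals this operator times a scalar. Gérardin's formulas give that scalar as a quadratic character of $\det(m)$, i.e.\ a value of the Legendre symbol, which is $\pm1$ and hence lies in $\Rmintwo$. The unipotent part acts through permutation composed with multiplication by values $\sqrt{\theta(\cdot,\cdot)}$, which are $p$-power roots of unity. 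So $P_W$ preserves the lattice.

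For $w$, $\omega_{\bC}(w)$ is a finite Fourier transform over $W'\simeq V/W$:
\[ (\omega(w)f)(v)=\gamma\, p^{-\dim W/2}\sum_{u\in W'} f(u)\,\neweta(\text{pairing}(u,v)), \]
with $\gamma$ a Weil index. The kernel takes values in $\mu_p$, the normalisation $p^{-\dim W/2}$ lies in $\Rmintwo$ because $\frac1{\sqrt p}$ is inverted, and $\gamma$ is a fourth root of unity, using $\mu_4$ and $\sqrt{\pm p}\in \ZM[\mu_{4p}]$. This normalisation is exactly what forces $\mu_4$ and $\frac 1{\sqrt p}$ into the ring. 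The main obstacle is pinning down these scalars precisely rather than up to an undetermined character. For this we will appeal to Gérardin's explicit construction \cite[Theorem~2.4]{Gerardin}, which identifies the representation on $P_W$ and $w$. The exceptional case $p=3$, $\dim V=2$ is handled the same way, since Gérardin's normalisation is given by the same formulas. Once the generators preserve the $\Rmintwo$-lattice, the whole group $\Sp(V)$ does.
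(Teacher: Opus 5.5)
Your proposal is correct and follows essentially the same route as the paper. You generate $\Sp(V)$ by the Siegel parabolic $P_W$ together with one element exchanging $W$ with a complementary Lagrangian. You then read off from G\'erardin's explicit formulas that the Levi part acts by a sign times a permutation, the unipotent radical acts by multiplication by $p$-power roots of unity, and the exchanging element acts by a Fourier transform normalised by a factor in $\mu_4\cdot p^{-\dim W/2}$; all of these preserve the $\Rmintwo$-lattice.

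One small correction: the decomposition $\Sp(V)=P_W\cup P_WwP_W$ is false once $\dim W>1$, since there are $\dim W+1$ double cosets in $P_W\backslash\Sp(V)/P_W$. The only fact you actually use, namely that $P_W$ and $w$ generate $\Sp(V)$, is still true.
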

\begin{proof}
  Pick  a Lagrangian subspace  $W^{-} \subset V$ that satisfies $V = W \oplus W^{-}$.
  Restriction of functions from $C\wt{\boxtimes}V$ to $\{1\}\times W^{-}$ provides an
  isomorphism from the $\bC$-vector space $V_{\Heis_{\neweta, W,\bC}}$ to the
  space $\CM[W^{-}]$ of $\bC$-valued functions on $W^{-}$, which also sends the
  $\Rmintwo$-submodule $V_{\Heis_{\neweta, W,\Rmintwo}}$ onto the submodule
  $\Rmintwo[W^{-}]$ of $\Rmintwo$-valued functions on $W^{-}$.
  The proof of \cite[Theorem~2.4]{Gerardin} contains an explicit description of
  the representation $\Weil_{\bC}$ transported on $\CM[W^{-}]$ via the above isomorphism.
  The group $\Sp(V)$ is generated
  by the parabolic subgroup $P$ that stabilizes $W \subset V$ and an element
  $s \in \Sp(V)$ that exchanges $W$ and $W^{-}$.  Gérardin (\cite[(2.7) and
  (2.8)]{Gerardin}) shows that the action of $P$ is given by
  \begin{align*}
    (\Weil_{\bC}(p)(F))(y)&=\text{sgn}(\det{}_{\bF_p}(p|_{W}))F(p^{-1}(y))   & \text{ if } &p \in P \text{ stabilizes } W^{-} \\
    (\Weil_{\bC}(p)(F))(y)&=\chi'(p)F(y)& \text{ if } &p \in P \text{ fixes } W \text{ pointwise, }  
  \end{align*}
  for $F \in \bC[W^{-}]$ and some explicit quadratic character $\chi'$.  Moreover, by
  \cite[(2.18)]{Gerardin}, we have for $F \in \bC[W^{-}]$ and $y \in W^{-}$
  \begin{equation}
    \label{explicit-Weil-3}
    (\Weil_{\bC}(s)(F))(y) \in \mu_4 \cdot p^{-\dim W /2}\sum_{z \in W^{-}} F(z) \cdot \neweta \circ \theta(z, (s|_{W})^{-1}y) dz.
  \end{equation}
Since $\Sp(V)$ is generated by $P$ and $s$, these formulas show that the submodule
$\Rmintwo[W^{-}] \subset \bC[W^{-}]$ is preserved 
by the action of the full $\Sp(V)$. Going back through the above isomorphism, we deduce
that $V_{\Heis_{\neweta,W, \Rmintwo}}$ is stable under $\omega_{\bC}(\Sp(V))$.
\end{proof}
\end{subequations}

Let us denote by
$\omega_{\nu,\Rmintwo}: \Sp(V) \To{} \Aut_{\Rmintwo}V_{\Heis_{\neweta, W,\Rmintwo}}$
the representation provided by the above lemma.
By definition of $\omega_{\bC}$, we have
$$\Heis_{\neweta,W,\Rmintwo}({^{g}k})= \omega_{\nu,\Rmintwo}(g)\Heis_{\neweta,W,\Rmintwo}(k)\omega_{\nu,\Rmintwo}(g)^{-1}$$
for all $g\in \Sp(V)$ and $k\in \mu_{p}\wt{\boxtimes}V$. Since $C=C \times \{0\}$ is
centralized by $\Sp(V)$ and acts via a character on $V_{\Heis_{\neweta, W,\Rmintwo}}$, the
same equality holds for $g\in \Sp(V)$ and $k\in C\wt{\boxtimes}V$.
We thus get a representation
$\omega_{\nu,\Rmintwo}\ltimes \Heis_{\neweta,W,\Rmintwo}$  of the full
group $\Sp(V)\ltimes (C\wt{\boxtimes}V)$ on $V_{\Heis_{\neweta, W,\Rmintwo}}$.

\begin{defn}\label{Def-Weilrep}
  Let $R$ be a commutative $\Rmintwo$-algebra. We denote by $\Weil_{\neweta, W,R} \ltimes
  \Heis_{\neweta, W,R}$ (or simply $\Weil_{\neweta, W} \ltimes \Heis_{\neweta, W}$ if the
  context is clear), the $R(\Sp(V) \ltimes (C \wt{\boxtimes} V))$-module obtained by base
  change from the above $\Rmintwo(\Sp(V) \ltimes (C \wt{\boxtimes} V))$-module.
  We call 
  it the \textit{$(\eta, W)$-Heisenberg--Weil representation} and we call its restriction $\Weil_{\neweta, W}$ to $\Sp(V)$ the \textit{$(\eta, W)$-Weil representation}.
\end{defn}
Note that the restriction of the $(\eta, W)$-Heisenberg--Weil representation $\Weil_{\neweta, W} \ltimes
\Heis_{\neweta, W}$ to $C \wt{\boxtimes} V$ is the
Heisenberg representation $\Heis_{\neweta, W}$ we started with.

From now on $R$ denotes a commutative $\Rmintwo$-algebra.
 
\begin{rak}
	If $p \equiv 1 \text{ mod } 4$, then by \cite[(2.17) and (2.18)]{Gerardin} the $\mu_4$ in \eqref{explicit-Weil-3} can be replaced by $\mu_2 =\{\pm1 \}$ and we could work with the ring $\Rminthree$ instead of $\Rmintwo$ throughout.
\end{rak}

\begin{lemme}\label{Lemma-restriction-of-Weilrep}
	\inisub
	\begin{subequations}
	Let $V^+$ be a subspace of $W$, and write $V^+ \oplus V^0$ for the orthogonal complement of $V^+$ in $V$. Then the restriction of $\theta$ to $V^0$ is non-degenerate and we may form $C \wt{\boxtimes} V^0$. Let $P$ be the subgroup of $\Sp(V)$ that stabilizes $V^+$.
	Then the restriction of the $(\eta, W)$-Heisenberg--Weil representation to $P \ltimes (C\wt{\boxtimes} V)$ satisfies:
	\begin{equation}\label{eqn-restriction-of-Weilrep}
	 (\Weil_{\neweta, W} \ltimes \Heis_{\neweta, W})|_{P \ltimes (C\wt{\boxtimes} V)} 
	\simeq
	\Ind{P \ltimes ((C\wt{\boxtimes}V^0)\times V^+)}{P \ltimes (C \wt{\boxtimes} V)} (\Weil_{\neweta, W\cap V^0} \ltimes \Heis_{\neweta, W \cap V^0}) \otimes (\chi^{V^+} \ltimes 1),
	\end{equation}
	where $\Weil_{\neweta, W\cap V^0} \ltimes \Heis_{\neweta, W \cap V^0}$ is the $(\eta, W \cap V^0)$-Heisenberg--Weil representation of  $\Sp(V^0) \ltimes (C\wt{\boxtimes} V^0)$ on which $P \ltimes ((C\wt{\boxtimes}V^0) \times V^+)$ acts via the projection to $\Sp(V^0) \ltimes ((C\wt{\boxtimes}V^0) \times \{0\})$, and $\chi^{V^+}$ denotes the character $P \ra \{\pm 1\} \subset R^\times$ given by $p \mapsto \mathrm{sgn}(\det_{\bF_p}(p|_{V^+}))$. 
\end{subequations}
\end{lemme}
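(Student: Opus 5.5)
Since both sides of \eqref{eqn-restriction-of-Weilrep} are defined over $\Rmintwo$ and compatible with base change, the plan is to reduce to $R=\CM$ and argue as in the classical case (G\'erardin, \cite[Thm.~2.4]{Gerardin}), then descend. Non-degeneracy of $\theta|_{V^0}$ is immediate: $V^+$ is isotropic, so $(V^+)^\perp\supseteq V^+$, and $V^0$ is a complement of $V^+$ in $(V^+)^\perp$. Hence the radical of $\theta$ on $(V^+)^\perp$ is exactly $V^+$, and $\theta$ induces a non-degenerate form on $(V^+)^\perp/V^+\simeq V^0$. The group $P$ preserves $V^+$ and therefore $(V^+)^\perp$, so it acts on $(V^+)^\perp/V^+\simeq V^0$ through a map $P\to\Sp(V^0)$. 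This map is what defines the action on the first factor of the induced representation, and it also shows that $(C\wt{\boxtimes}V^0)\times V^+$, interpreted as the preimage of $(V^+)^\perp$ in $C\wt{\boxtimes}V$, is stable under $P$.

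Next I would check the Heisenberg part, i.e. the restriction to $C\wt{\boxtimes}V$. The right-hand side is $\Ind{(C\wt{\boxtimes}V^0)\times V^+}{C\wt{\boxtimes}V}(\Heis_{\nu,W\cap V^0}\otimes 1)$. By transitivity of induction it equals the induction of $\nu\times 1$ from the preimage of $(W\cap V^0)\oplus V^+$. Since $V^+\subseteq W$ and $W\subseteq (V^+)^\perp=V^0\oplus V^+$, we get $W=(W\cap V^0)\oplus V^+$, so this preimage is exactly $C\times W$ and the result is $\Heis_{\nu,W}$. The isomorphism is therefore explicit (transitivity of induction) and defined over $\Rmintwo$.

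It remains to compare the $P$-actions. Both sides give representations of $P\ltimes(C\wt{\boxtimes}V)$ whose restrictions to $C\wt{\boxtimes}V$ are the same Heisenberg representation, which has endomorphism ring $R$ by Lemma~\ref{lem_Heisenberg}. Hence they differ by a character $\chi$ of $P$ valued in $\Rmintwo^\times$, and over $\CM$ it is enough to identify this character. For this I would use G\'erardin's explicit formulas from the proof of the previous lemma, in the Schr\"odinger model $\CM[W^-]$ with $W^-$ chosen compatibly as $(W^-\cap V^0)\oplus V^-$, where $V^-$ is dual to $V^+$. The Levi $\mathrm{GL}(V^+)\times\Sp(V^0)$ of $P$, together with the unipotent radical, generates $P$. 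On the unipotent radical and on $\Sp(V^0)$ both sides agree by the uniqueness of the Weil representation; for $\Sp(V^0)$ one also needs the small exceptional case $p=3$, $\dim V^0=2$, handled through G\'erardin's normalization. On $\mathrm{GL}(V^+)$ the formula $\mathrm{sgn}(\det(p|_W))$ splits off the factor $\mathrm{sgn}(\det(p|_{V^+}))=\chi^{V^+}(p)$.

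The step I expect to be the main obstacle is pinning down this character without any sign ambiguity, especially when $\Sp(V)$ or $\Sp(V^0)$ fails to be perfect. I would first try to settle it by computing traces of elements of $P$ in both models, and fall back on the explicit formulas \eqref{explicit-Weil-3} together with G\'erardin's (2.7) and (2.8) if the trace computation does not decide it.
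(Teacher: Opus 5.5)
Your argument is correct, and its first two steps coincide with the paper's proof. You identify the Heisenberg parts by transitivity of induction using $W=V^+\oplus(W\cap V^0)$. You then observe that, since $\End_{R(C\wt{\boxtimes}V)}(\eta_{\nu,W})=R$, the two $P$-actions differ by a root-of-unity valued character of $P$, which can be computed after base change to $\CM$.

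The difference is the last step. The paper simply invokes \cite[Theorem~2.4(b)]{Gerardin}, whose printed statement omits the factor $\chi^{V^+}$, as the paper's footnote points out. You instead recompute the character on the generators $\GL(V^+)$, $\Sp(V^0)$ and the unipotent radical $N$ of $P$. This gives an independent check of the corrected statement, at the price of a case analysis that the citation avoids.

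If you go that way, one justification needs fixing. Triviality of the twisting character $\chi$ on $N$ does not follow from uniqueness of the Weil representation, since $N$ is a $p$-group and far from perfect. The correct reason is that $\chi|_N$ is invariant under conjugation by the Levi factor, and for odd $p$ this forces $\chi|_N=1$:
\begin{itemize}
\item if $V^0\neq 0$, the scalar $-1\in\GL(V^+)$ acts by $-1$ on $N^{\rm ab}\simeq\Hom(V^0,V^+)$, so any invariant character is trivial;
\item if $V^0=0$, there are no nontrivial $\GL(V^+)$-invariant characters of $N$, except when $p=3$ and $\dim V=2$.
\end{itemize}
You flagged the exceptional case $p=3$, $\dim V^0=2$ for $\Sp(V^0)$, but not this exception for $N$.

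In both exceptional cases the relevant elements lie in the stabilizer of $W$, so G\'erardin's formulas (2.7)--(2.8) settle the question and no trace computation is needed. For $N$, this holds because $P$ itself is then the stabilizer of $W$. For $\Sp(V^0)\simeq\SL_2(\FM_3)$, its abelianization is detected on the unipotent radical of the stabilizer of $W\cap V^0$, and that radical fixes $W$ pointwise.

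Everywhere else, $\chi$ is determined by its restriction $\mathrm{sgn}\det$ to $\GL(V^+)$ given by (2.7), so $\chi=\chi^{V^+}$.
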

\begin{proof}
	Note that $W=V^+ \oplus (W \cap V^0)$ and that $W \cap V^0$ is a Lagrangian of $V^0$. Hence we have by definition and transitivity of induction
	\[\Heis_{\neweta, W}= \ind{C \times W}{C \wt{\boxtimes} V}{\neweta \times 1}
	 \simeq \ind{(C \wt{\boxtimes} V^0) \times V^+}{C \wt{\boxtimes} V}{\ind{C \times (W \cap V^0)}{C \wt{\boxtimes} V^0}{\neweta \times 1} \times 1} 
	 = \ind{(C \wt{\boxtimes} V^0) \times V^+}{C \wt{\boxtimes} V}{\Heis_{\neweta, W \cap V^0}}  \]
	This means the two sides of \eqref{eqn-restriction-of-Weilrep} are isomorphic as
        representations of $C \wt{\boxtimes} V$, and we use this isomorphism to identify
        the underlying $R$-modules on both side. Since both sides are representations of
        $P \ltimes (C \wt{\boxtimes} V)$, the action of $p \in P$ on each side provides an
        isomorphism between $\Heis_{\neweta, W}$ and $\Heis_{\neweta, pW}$. Since
        $\End_{R(C\wt{\boxtimes}V)}(\Heis_{\neweta,W})= R$, we therefore conclude that the
        action of $P$ on both sides of \eqref{eqn-restriction-of-Weilrep} agrees up to
        twist by a character valued in the roots of unity contained in $R^\times$. It is
        enough to determine this character for $R=\Rmintwo$, where we can 
        base change to $\bC$ and apply \cite[Theorem~2.4.(b)]{Gerardin}\footnote{The statement of
          \cite[Theorem~2.4.(b)]{Gerardin} omits the factor $(\chi^{V^+} \ltimes 1)$,
          which is a typo. See also \cite[Lemma~3.2]{Fi-mod-ell}.}
        to see that the desired character is $\chi^{V^+}$ as claimed.
        \end{proof}

\begin{DEflem}\label{Def-Lemma-Weil-rep}
Let $\Heis$ be a Heisenberg representation for the Heisenberg triple
 $(C\wt{\boxtimes} V,C,\nu)$.
	Then the evaluation isomorphism $\Heis_{\neweta, W} \otimes_R \Hom_{R(C\wt{\boxtimes} V)}(\Heis_{\neweta, W}, \Heis) \xra{\sim} \Heis$ turns the representation $(\Weil_{\neweta, W} \ltimes \Heis_{\neweta, W}) \otimes 1$ on the $R$-module $V_{\Heis_{\neweta, W}} \otimes_R \Hom_{R(C\wt{\boxtimes} V)}(\Heis_{\neweta, W}, \Heis)$ into a representation $\Weil \ltimes \Heis$ of $\Sp(V) \ltimes (C\wt{\boxtimes} V)$ on $V_{\Heis}$ that is independent of the choice of Lagrangian $W \subset V$. We call the representation $\Weil \ltimes
	\Heis$ the \textit{Heisenberg--Weil representation associated to $\Heis$} and $\Weil$ the \textit{Weil representations associated to the Heisenberg representation $\Heis$}.
\end{DEflem}
\begin{proof}
	Let $W$ and $W'$ be two Lagrangian subspaces of $V$. Then there exists $g \in
        \Sp(V)$ such that $gW=W'$. 
        The action of $g$ on functions on $C\wt{\boxtimes}V$ provides an $R$-linear isomorphism
        $I_{g}:\,V_{\Heis_{W,\nu}}\simto V_{\Heis_{W',\nu}}$ that intertwines
        $\Heis_{W,\nu}$ with $(\Heis_{W',\nu})^{g}:=\Heis_{W',\nu}\circ {\rm act}_{g}$.
        Therefore, composing with $\Weil_{\neweta, W'}(g)^{-1}$, we get 
        an isomorphism $\Heis_{\neweta, W} \xra{\sim} \Heis_{\neweta, W'}$, which is
        a generator of the invertible $R$-module
        $\Hom_{R(C\wt{\boxtimes} V)}(\Heis_{\neweta, W}, \Heis_{\neweta, W'} )$.
        From the uniqueness of the  Weil representation or from the explicit formulas in the proof
        of Lemma  \ref{Section_generalized_weil-explicit}, we see that this isomorphism
        has to be $\Sp(V)$-equivariant, giving
        an isomorphism $\Weil_{\neweta, W} \ltimes \Heis_{\neweta, W} \xra{\sim}
        \Weil_{\neweta, W'} \ltimes \Heis_{\neweta, W'}$. 
	It follows that the evaluation isomorphism $\Heis_{\neweta, W} \otimes_R \Hom_{R(C\wt{\boxtimes} V)}(\Heis_{\neweta, W}, \Heis_{\neweta, W'} ) \xra{\sim} \Heis_{\neweta, W'}$ sends $(\Weil_{\neweta, W} \ltimes \Heis_{\neweta, W}) \otimes 1$ to $\Weil_{\neweta, W'} \ltimes \Heis_{\neweta, W'}$, and hence the transport of $(\Weil_{\neweta, W} \ltimes \Heis_{\neweta, W}) \otimes 1$ via 
	\[ \Heis_{\neweta, W} \otimes_R \Hom_{R(C\wt{\boxtimes} V)}(\Heis_{\neweta, W}, \Heis) \xleftarrow{\sim} \Heis_{\neweta, W} \otimes_R \Hom_{R(C\wt{\boxtimes} V)}(\Heis_{\neweta, W}, \Heis_{\neweta, W'} ) \otimes_R \Hom_{R(C\wt{\boxtimes} V)}(\Heis_{\neweta, W'}, \Heis) \xra{\simeq} \Heis\]
	agrees with the transport of 
	$(\Weil_{\neweta, W'} \ltimes \Heis_{\neweta, W'}) \otimes 1$ via 
	\( \Heis_{\neweta, W'} \otimes_R \Hom_{R(C\wt{\boxtimes} V)}(\Heis_{\neweta, W'}, \Heis) \xra{\simeq} \Heis .\)
\end{proof}

\begin{lemme}\label{lemme-WeilHeis-hom-abstract}
	Let $\Heis$ and $\Heis'$ be two Heisenberg representations for $(C \wt{\boxtimes}
        V,C,\nu)$ with associated Weil representations $\Weil$ and $\Weil'$. Then the inclusion
	\[ \Hom_{R(\Sp(V) \ltimes (C \wt{\boxtimes} V))}(\Weil \ltimes \Heis,\Weil' \ltimes \Heis')\subseteq\Hom_{R(C \wt{\boxtimes} V)}(\Heis,\Heis')\]
        is an equality.
      \end{lemme}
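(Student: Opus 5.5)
Both $\Weil\ltimes\Heis$ and $\Weil'\ltimes\Heis'$ are, by Definition/Lemma \ref{Def-Lemma-Weil-rep}, transports of the single model $(\Weil_{\neweta,W}\ltimes\Heis_{\neweta,W})\otimes 1$ along evaluation isomorphisms $\Heis_{\neweta,W}\otimes_R L\simto\Heis$ and $\Heis_{\neweta,W}\otimes_R L'\simto\Heis'$, where $L:=\Hom_{R(C\wt{\boxtimes}V)}(\Heis_{\neweta,W},\Heis)$ and $L'$ is defined analogously. Here $\Sp(V)$ acts trivially on the second tensor factor. The plan is therefore to reduce everything to this model and to show that every $C\wt{\boxtimes}V$-equivariant map is automatically $\Sp(V)$-equivariant.

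\textbf{Step 1.} Using the equivalence of Lemma \ref{lem_Heisenberg} (with $\Heis_{\neweta,W}$ as projective generator), identify
\[
\Hom_{R(C\wt{\boxtimes}V)}(\Heis,\Heis')\simeq \Hom_{R(C\wt{\boxtimes}V)}(\Heis_{\neweta,W}\otimes_R L,\Heis_{\neweta,W}\otimes_R L')\simeq\Hom_R(L,L').
\]
Under this identification a morphism $f:L\to L'$ corresponds to $\id_{\Heis_{\neweta,W}}\otimes f$, transported back to $\Heis\to\Heis'$.

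\textbf{Step 2.} Observe that $\id\otimes f$ visibly commutes with $\Weil_{\neweta,W}(g)\otimes\id_L$ and $\Weil_{\neweta,W}(g)\otimes\id_{L'}$ for every $g\in\Sp(V)$. Since the transported actions $\Weil$ and $\Weil'$ are defined precisely as these, every element of $\Hom_{R(C\wt{\boxtimes}V)}(\Heis,\Heis')$ intertwines $\Weil$ and $\Weil'$. It is therefore equivariant for the semidirect product, giving the reverse inclusion and hence equality.

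\textbf{Main obstacle.} The only subtle point is the first isomorphism in Step 1, namely that $\Hom_{R(C\wt{\boxtimes}V)}(\Heis_{\neweta,W}\otimes L,\Heis_{\neweta,W}\otimes L')=\Hom_R(L,L')$, i.e.\ that $\Heis_{\neweta,W}\otimes_R-$ is fully faithful. This follows from the equivalence in Lemma \ref{lem_Heisenberg} applied to the Heisenberg triple $(C\wt{\boxtimes}V,C,\nu)$. Alternatively, since $\Heis_{\neweta,W}$ is finitely generated projective over $R$ with $\End=R$, one has $\Hom(\Heis_{\neweta,W},\Heis_{\neweta,W}\otimes L')=L'$. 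No independence-of-$W$ issue arises, because Definition/Lemma \ref{Def-Lemma-Weil-rep} lets us use the same $W$ for both $\Heis$ and $\Heis'$.
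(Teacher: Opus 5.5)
Your argument is correct and is essentially the paper's proof. The paper uses $\Heis$ itself as the reference object, writes $\Weil'\ltimes\Heis'\simeq(\Weil\ltimes\Heis)\otimes_R\Hom_{R(C\wt{\boxtimes}V)}(\Heis,\Heis')$, and reduces to the case $\Heis=\Heis'$, where $\End_{R(C\wt{\boxtimes}V)}(\Heis)=R$ consists of scalars; you instead transport both sides from the common model $\Heis_{\neweta,W}$ and use full faithfulness of $\Heis_{\neweta,W}\otimes_R-$, but in both versions the point is that $\Sp(V)$ acts only on the Heisenberg tensor factor while every $C\wt{\boxtimes}V$-equivariant map lives in the multiplicity factor.
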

\begin{proof}
  By Definition/Lemma \ref{Def-Lemma-Weil-rep},  evaluation induces an isomorphism
  $$(\Weil \ltimes \Heis) \otimes_R \Hom_{R(C \wt{\boxtimes} V)}(\Heis,\Heis')
  \simto \Weil' \ltimes \Heis'.$$ 
  Since $\Hom_{R(C \wt{\boxtimes} V)}(\Heis,\Heis')$ is a flat $R$-module, this induces in turn  an isomorphism
  $$\Hom_{R(\Sp(V) \ltimes (C \wt{\boxtimes} V))}(\Weil \ltimes \Heis,\Weil' \ltimes \Heis')
  \simto
  \End_{R(\Sp(V) \ltimes (C \wt{\boxtimes} V))}(\Weil \ltimes \Heis)
  \otimes_{R}\Hom_{R(C \wt{\boxtimes} V)}(\Heis,\Heis').$$
  This reduces the lemma to the case where $\Heis=\Heis'$, where it follows from       
$$ R =\Hom_{R(C \wt{\boxtimes} V)}(\Heis,\Heis) \supseteq \Hom_{R(\Sp(V) \ltimes (C \wt{\boxtimes} V))}(\Weil \ltimes \Heis,\Weil \ltimes \Heis) \supseteq R.$$        
\end{proof}

\alin{From $p$-adic groups to the abstract Heisenberg--Weil setting}
We take up our usual setup associated to a point $x$ in $\BC_{\iota}$. 
We denote by  \index[notation]{Cx@$\C_x$}$\C_{x}$ the image $\check\phi_{\iota,x}^{+}(\Kp_{\iota,x})$ of the character
$\check\phi_{\iota,x}^{+}$ in $\mu_{p^{\infty}}$.
Recall from \ref{Heisenberg} that the map $(a,b) \mapsto
[a,b]:=\check\phi_{\iota,x}^{+}(aba^{-1}b^{-1})$ yields a non-degenerate symplectic form $V_x \times V_x \ra \C_x$ on
\index[notation]{Vx@$\V_x$}$\V_{x}:=\Ku_{\iota, x}/\Kp_{\iota, x}$ that factors through the subgroup $\mu_p \subseteq \C_x$ of order $p$. By construction,
the conjugation action of $G_{\iota,x}$ on $\Ku_{\iota, x}$ induces an action on
$V_{x}$  that preserves this non-degenerate symplectic form. We denote by
$\sigma:\,G_{\iota,x}\To{}\Sp(V_{x})$ the corresponding morphism.

\begin{lem} \label{lemma-special-morphism} 
  There exists a homomorphism
  $f:  \Ku_{\iota, x}  \ra  \C_{x}\widetilde{\boxtimes}\V_{x}$
  such that
  \begin{enumerate}
  \item The following diagram is commutative
    $$\xymatrix{
      \Kp_{\iota, x} \ar@{->>}[d]_{\check\phi_{\iota,x}^{+}} \ar@{^{(}->}[r] & \Ku_{\iota,x} \ar@{->>}[r] \ar[d]^{f} & \V_{x} \ar@{=}[d]\\
      \C_{x} \ar@{^{(}->}[r] & \C_{x}\widetilde{\boxtimes}\V_{x} \ar@{->>}[r] & \V_{x}
      }$$
    \item The product map
      $\sigma\ltimes f : G_{\iota,x}\ltimes\Ku_{\iota, x}  \ra  \Sp(\V_{x})\ltimes(\C_{x}\widetilde{\boxtimes}\V_{x})$
      is a homomorphism.
 \end{enumerate}
 Moreover, such a $f$ is unique up to post-composition by conjugation by an element in
 $\C_{x}\widetilde{\boxtimes}\V_{x}$. 
 More precisely, if $f'$ is another homomorphism satisfying i) and ii), then there is
 $w\in(\V_{x})^{\sigma(G_{\iota,x})}$ such that
 $\forall k\in \Kp_{\iota,x},\, f'(k)=(c(k).[w, v(k)],v(k))$, 
 where $f(k)=(c(k),v(k))$.
  
\end{lem}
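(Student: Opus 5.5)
We will build $f$ by reducing to a splitting problem for the extension $\Kp_{\iota,x}/\ker(\check\phi^{+}_{\iota,x})$. Put $N:=\ker(\check\phi^{+}_{\iota,x})$; it is normal in $\Ku_{\iota,x}$ because $\Ku_{\iota,x}$ centralizes $\check\phi^{+}_{\iota,x}$. It is also normalized by $G_{\iota,x}$, since by \eqref{Gaction} $G_{\iota,x}$ fixes $e_{\iota,x}$. The quotient $H:=\Ku_{\iota,x}/N$ is then a central extension $1\to C_x\to H\to V_x\to 1$, with commutator pairing $\theta$ by Proposition \ref{Heisenberg}. Our goal is an isomorphism $\bar f:H\simto C_x\wt\boxtimes V_x$ inducing the identity on $C_x$ and on $V_x$, and equivariant for $G_{\iota,x}$ acting on $H$ by conjugation and on the target through $\sigma$. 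Composing with $\Ku_{\iota,x}\to H$ gives $f$, and i) is then automatic.

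\emph{Step 1 (existence of some $\bar f$).} The group $C_x\wt\boxtimes V_x$ is itself such a central extension with commutator pairing $\theta$; the key computation is $[(a,v),(b,w)]=\sqrt{\theta(v,w)}^{2}=\theta(v,w)$. Both extensions therefore have the same commutator form. Since $p$ is odd and $V_x$ has exponent $p$, an extension of $V_x$ by the cyclic $p$-group $C_x$ is determined by its commutator form up to a symmetric part, namely a class in $\Ext^{1}(V_x,C_x)$. That part is killed if we show $H$ has a ``symmetric'' set-theoretic section $v\mapsto s(v)$ satisfying $s(v)s(w)=\sqrt{[s(v),s(w)]}\,s(v+w)$. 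We would produce this section concretely by setting $s(v)$ to be the unique lift $h$ of $v$ with $h^{p}=1$ in $H$. Uniqueness should follow since two lifts differ by $c\in C_x$ and $(hc)^p=h^pc^p$; existence uses that $\Ku_{\iota,x}/\Kp_{\iota,x}$ has exponent $p$, which requires $h^p\in C_x$ to be a $p$-th power in $C_x$. This is the point we expect to be delicate. It might instead be handled as in \cite[\S 11]{Yu_tamescusp}, via the Moy--Prasad isomorphism $\kG_{\iota,x}/\kG^{+}_{\iota,x}\simeq V_x$ and the depth structure of the $J^i$. Our first attempt will be to replace $C_x$ by a large enough cyclic $p$-group if necessary: since $\mu_{p^\infty}$ is divisible, the relevant class in $H^2$ can be made trivial in the symmetric part. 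Given such a canonical $s$, the map $(c,v)\mapsto c\,s(v)$ is the desired isomorphism.

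\emph{Step 2 (equivariance).} Because the canonical lift $s(v)$ (order $p$, or otherwise defined intrinsically) is determined by $v$ and the group structure, conjugation by $g\in G_{\iota,x}$ maps $s(v)$ to $s(\sigma(g)v)$ and fixes $C_x$ pointwise. This gives ii). If Step 1 only yields a non-canonical $\bar f$, we would instead note that ${}^{g}\bar f\circ \bar f^{-1}$ is an automorphism of $C_x\wt\boxtimes V_x$ that is trivial on $C_x$ and on $V_x$, hence of the form $(c,v)\mapsto (c\,\chi_g(v),v)$ with $\chi_g\in\Hom(V_x,\mu_p)$. By non-degeneracy $\chi_g=[w_g,\cdot]$, so $g\mapsto w_g$ is a 1-cocycle in $V_x$. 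It is a coboundary because $G_{\iota,x}$ acts through a finite quotient of $\Sp(V_x)$, and the image of the pro-$p$ part $G_{\iota,x,0+}$ acts trivially on $V_x$; we would handle the remaining $H^1$ via the earlier non-degeneracy argument, and if needed average.

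\emph{Uniqueness.} Two such maps differ by an automorphism of the above shape, $(c,v)\mapsto(c[w,v],v)$, which is conjugation by an element lifting $w$. Equivariance for both forces $[w,\cdot]$ to be $\sigma(G_{\iota,x})$-invariant, so $w\in V_x^{\sigma(G_{\iota,x})}$; this gives the stated formula. We expect the main obstacle to be Step 1, namely the canonical symmetric section.
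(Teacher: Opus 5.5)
Your uniqueness argument is correct and is the same as the paper's. The existence part has a genuine gap, and it starts in Step~1: the ``canonical'' section does not exist. In $C_x\widetilde{\boxtimes}V_x$ itself one has $(a,v)^k=(a^k,kv)$, so every $(a,v)$ with $a\in\mu_p$ has order $p$. Your own identity $(hc)^p=h^pc^p$ shows the same thing in general: lifts of $v$ of order $p$, when they exist, form a $\mu_p$-torsor, not a single element. Consequently the main argument of Step~2, which derives equivariance from canonicity, has nothing to rest on.

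Even the existence of some $\bar f$ is left open. It requires $\check\phi^{+}_{\iota,x}(h^p)\in C_x^{p}$ for all $h\in K^{\dagger}_{\iota,x}$; this is the $\Ext^1(V_x,C_x)$-obstruction. You flag it but do not prove it, and enlarging $C_x$ would change the statement.

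The fallback cocycle argument does not close the gap either. $G_{\iota,x}$ acts on $V_x$ through $G_{\iota,x}/G_{\iota,x,0+}$. In general its image in $\Sp(V_x)$ contains non-trivial unipotent elements coming from the reductive quotient of the parahoric $G_{\iota,x,0}$, so its order is divisible by $p$. Averaging is therefore unavailable, and there is no formal reason for $H^1(G_{\iota,x},V_x)$ to vanish. This equivariance is the real content of the lemma.

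The paper obtains it from the group structure, not from abstract cohomology:
\begin{enumerate}
\item By \cite[Corollary~7.2]{JF_exhaust}, $K^{\dagger}_{\iota,x}=G_{\iota,x,0+}J^{\rm der}$, where $J^{\rm der}$ is the product of the $J^{i}_{\iota,x}\cap\mathbf{G}^{i}_{\iota,\rm der}(F)$.
\item On $G_{\iota,x,0+}\subset K^{+}_{\iota,x}$, the map $f$ is forced to equal $\check\phi^{+}_{\iota,x}$.
\item On $J^{\rm der}_{+}$ the character takes values in $\mu_p$, so $J^{\rm der}$ modulo the kernel is a genuine Heisenberg $p$-group.
\item Yu's proof of \cite[Proposition~11.4]{Yu_tamescusp}, a computation with root subgroups $U_\alpha\cap G_{x,r_i/2}$, then produces a $G_{\iota,x}$-equivariant special isomorphism $f^{\rm Yu}$ onto $\mu_p\widetilde{\boxtimes}V_x$.
\item Gluing $f(gj):=\check\phi^{+}_{\iota,x}(g)\,f^{\rm Yu}(j)$ settles both the $\Ext^1$ issue and property ii) at once.
\end{enumerate}
Without this input, or some substitute proof that the relevant cocycle $g\mapsto w_g$ is a coboundary, your proposal does not establish existence.
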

\begin{proof}
  Recall the groups $J^{i}_{\iota,x}$ and $J^{i+}_{\iota,x}$ introduced in \ref{auxiliary_groups}
  and the equality $\Ku_{\iota,x}=G_{\iota,x,0+} \prod_{i=1}^d J^{i}_{\iota,x}$. Set
  $$
  J^{\mathrm{der}}:=\prod_{i=1}^d \left(J^{i}_{\iota,x}\cap \mathbf{G}^{i}_{\iota,\rm der}(F)\right)
  \hbox{ and }
  J^{\mathrm{der}}_{+}:=\prod_{i=1}^d \left(J^{i+}_{\iota,x}\cap \mathbf{G}^{i}_{\iota,\rm  der}(F)\right)$$

    By \cite[Corollary~7.2]{JF_exhaust} we have  $\Ku_{\iota,x}=G_{\iota,x,0+} J^{\rm der}.$
   The desired morphism $f$ has to
   coincide with  $\check\phi_{i}^{+}$ on $G_{\iota,x,0+}$
   since  $ G_{\iota,x,0+} \subset \Kp_{\iota, x}$.
  We will follow Yu to define $f$ on   $J^{\mathrm{der}}$.
  Note that    $J^{\mathrm{der}}/J^{\mathrm{der}}_+ \simto \Ku_{\iota,x}/\Kp_{\iota,x} =  \V_x $ and
  $\check\phi_{\iota,x}^{+}(J^{\rm der})=\mu_{p}\subset C_{x}$.
  Therefore, setting $N:=\ker(\check\phi_{\iota,x}^{+}|_{J^{\rm der}})$, the group $J/N$ is a
  Heisenberg $p$-group in the sense of \cite[\S 10]{Yu_tamescusp}.
  We can now apply the proof of \cite[Proposition~11.4]{Yu_tamescusp} to
        our setting by replacing Yu's $J$ by our $J^{\mathrm{der}}$, his $J_+$ by our
        $J^{\mathrm{der}}_+$, his $N$ by ours, his $G'$ by our
        $G_\iota$, his $y$ by our $x$,
        his $(G', G)(F)_{y,(r+,s+)}$ by our $\prod_{i=1}^d (G^{i-1}_\iota \cap
        (G^i_\iota)^{\mathrm{der}},(G^i_\iota)^{\mathrm{der}})_{x,r_{i-1}+,r_{i-1}/2+}$,
        and his $G_a(F)_{y,s}$ by our $U_{\alpha}\cap G(F)_{x,r_i/2}$,
        where $i$ is the lowest index 
        such that the root $\alpha$
        occurs in $\mathfrak{g}_{\iota}^{i}$.      
        His argument goes through and provides us with a group homomorphism $f^{\mathrm{Yu}}$ that fits in a 
        commutative  diagram
    $$\xymatrix{
      J^{\rm der}_{+} \ar@{->>}[d]_{\check\phi_{\iota,x}^{+}} \ar@{^{(}->}[r]
      & J^{\rm der}  \ar@{->>}[r] \ar[d]^{f^{\rm Yu}} & \V_{x} \ar@{=}[d]\\
      \mu_{p} \ar@{^{(}->}[r] & \mu_{p}\widetilde{\boxtimes}\V_{x} \ar@{->>}[r] & \V_{x}
      }$$
      and such that
      $\sigma\ltimes f^{\rm Yu}:\,G_{\iota, x} \ltimes J^{\mathrm{der}}\To{}
      \Sp(V_{x})\ltimes (\mu_{p}\wt{\boxtimes}V_{x})$
      is a homomorphism.
      Since $f^{\rm Yu}$ coincides with $\check\phi_{\iota,x}^{+}$ on $G_{\iota,x,0+}\cap
      J^{\rm der}\subset J^{\rm der}_{+}$, we can define a morphism
      $f : \Ku_{\iota,x}\To{} C_{x}\wt{\boxtimes}V_{x}$ by setting
      $f(k):=\check\phi_{\iota,x}^{+}(g)f^{\rm Yu}(j)$ for any $k=gj$ with $g\in
      G_{\iota,x,0+}$ and $j\in J^{\rm der}$. The commutativity of the diagram in i) of
      the lemma follows from that of the diagram above. Property ii) follows from the same
      property for $f^{\rm Yu}$ and the fact that $G_{\iota,x}$ centralizes $\check\phi_{\iota,x}^{+}$.

      Now, let $f'$ be another morphism satisfying i) and ii). There is an
      automorphism $\alpha$ of the extension
      $C_{x}\injo C_{x}\wt{\boxtimes}V_{x}\twoheadrightarrow V_{x}$ such that $f'=\alpha\circ
      f$. Any such automorphism has the form $(c,v)\mapsto (c\Lambda(v),v)$ for some
      morphism $\Lambda : V_{x}\To{} C_{x}$. Since $\V_x$ is an $\bF_p$-vector space, $\Lambda$
      factors through $\mu_p$ and is a linear form. Hence there exists $w \in \V_x$ such
      that $\Lambda(v)=[w, v]$ for all $v \in \V_x$.  It follows that,
      writing $f(k)=(c(k),v(k))$ and $f'(k)=(c'(k),v'(k))$  for
      $k\in\Ku_{\iota,x}$, we have $v'(k)=v(k)$ and $c'(k)=c(k)[w,v(k)]$. 
      Now, property ii) for $f$, resp. $f'$,  means that
      $f(gkg^{-1})=(c(k),\sigma(g).v(k))$, resp. $f'(gkg^{-1})=(c'(k),\sigma(g).v'(k))$, from
      which we deduce that $c'(k)=c(k)[w,\sigma(g).v(k)]$, for all $g\in\G_{\iota,x}$ and $k\in\Ku_{\iota,x}$.
      It follows that $[\sigma(g)^{-1}w,v(k)]=[w,\sigma(g).v(k)]=[w,v(k)]$ for all
      $k\in\Ku_{\iota,x}$. Since $k\mapsto v(k)$ is surjective onto $V_{x}$, we deduce
      that $w\in (V_{x})^{\sigma(G_{x})}$.
\end{proof}

\alin{A family of non-twisted Heisenberg--Weil representations}
Let $\Heis_{\iota,x}$ be a Heisenberg representation for the triple
$(\Ku_{\iota,x},\Kp_{\iota,x},\check\phi_{\iota,x}^{+})$. We will
extend this representation to a
representation of $\K_{\iota, x}$ using the theory of Heisenberg--Weil representations.

Pick a morphism $f: \Ku_{\iota, x}  \ra \C_{x}\widetilde{\boxtimes}\V_{x}$ as in Lemma \ref{lemma-special-morphism}.
Note that for any Heisenberg representation $\Heis$ of
$\C_{x}\widetilde{\boxtimes}\V_{x}$, the pullback representation $\Heis\circ f$ is
a Heisenberg representation of $\Ku_{\iota,x}$. It then follows from the classification of
Heisenberg representations that all Heisenberg representations of $\Ku_{\iota,x}$ arise by
pullback from $f$. In particular, there is a unique Heisenberg
representation $\Heis_{f}$ of $\C_{x}\widetilde{\boxtimes}\V_{x}$ on the module $V_{\Heis_{\iota,x}}$
such that $\Heis_{\iota,x}=\Heis_{f}\circ f$.
Denote by $\Weil_{f}$ the Weil representation $\Sp(V_{x})\To{}\Aut_{R}(V_{\Heis_{\iota,x}})$ associated to
$\Heis_{f}$ by Definition/Lemma \ref{Def-Lemma-Weil-rep}.

\begin{lem}
  The representation $\Weil_{\iota,x}:=\Weil_{f}\circ \sigma :
  G_{\iota,x}\To{}\Aut_{R}(V_{\Heis_{\iota,x}})$ is independent of $f$.
\end{lem}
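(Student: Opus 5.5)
The plan is to compare two choices $f,f'$ through the uniqueness part of Lemma \ref{lemma-special-morphism}. That lemma gives $w\in(\V_x)^{\sigma(G_{\iota,x})}$ such that $f'=\alpha_w\circ f$, where $\alpha_w(c,v)=(c[w,v],v)$. The first step is to observe that $\alpha_w$ is an inner automorphism of $\C_x\wt{\boxtimes}\V_x$. Conjugation by the element $h:=(1,w')$ with $2w'=\pm w$ (possible since $p$ is odd) multiplies the central coordinate by $\theta(w',v)^{\pm1}$ up to the square-root convention. So we may write $\alpha_w=\mathrm{Ad}_{h}$ for some $h=(1,u)$ with $u$ a scalar multiple of $w$, hence with $u$ fixed by $\sigma(G_{\iota,x})$. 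The sign to choose is a routine check with the group law.

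With this in hand, I would express $\Heis_{f'}$ in terms of $\Heis_f$. From $\Heis_{\iota,x}=\Heis_f\circ f=\Heis_{f'}\circ f'$ and $f'=\mathrm{Ad}_h\circ f$, together with the surjectivity of $f$ onto $\C_x\wt{\boxtimes}\V_x$ (which holds because $f$ is onto $\V_x$ and onto $\C_x$ through $\Kp_{\iota,x}$), we get $\Heis_{f'}=\Heis_f\circ\mathrm{Ad}_h^{-1}$ on the same module. Let $\Weil_f\ltimes\Heis_f$ be the Heisenberg--Weil representation of Definition/Lemma \ref{Def-Lemma-Weil-rep}. Then $(\Weil_f\ltimes\Heis_f)\circ\mathrm{Ad}_{(1,h)^{-1}}$ is a representation of $\Sp(\V_x)\ltimes(\C_x\wt{\boxtimes}\V_x)$ whose restriction to the Heisenberg group is $\Heis_{f'}$. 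Its Weil part is $g\mapsto \Heis_f(h)^{-1}\Weil_f(g)\Heis_f(h)\,\Heis_f(g^{-1}hg\,h^{-1})$ up to the precise ordering, which I would compute directly. I would then invoke the uniqueness in Definition/Lemma \ref{Def-Lemma-Weil-rep}, or equivalently Lemma \ref{lemme-WeilHeis-hom-abstract} applied to the identity map. Since $\Heis_{f'}$ and $\Heis_f\circ\mathrm{Ad}_h^{-1}$ coincide, that lemma says the conjugated Heisenberg--Weil representation equals $\Weil_{f'}\ltimes\Heis_{f'}$. This gives $\Weil_{f'}(g)=\Heis_f(h)^{-1}\Weil_f(g)\Heis_f(g^{-1}(h))$, with $g$ acting on $h$ through its action on $\V_x$.

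Finally, I would specialize to $g=\sigma(k)$ for $k\in G_{\iota,x}$. Since $u$ is $\sigma(G_{\iota,x})$-invariant, $g^{-1}(h)=h$. Moreover $\Weil_f(g)$ commutes with $\Heis_f(h)$ because $h$ is fixed by $g$: this is the semidirect product relation $\Weil_f(g)\Heis_f(h)\Weil_f(g)^{-1}=\Heis_f(g h)$. Hence $\Weil_{f'}(\sigma(k))=\Weil_f(\sigma(k))$, which is the claim.

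The main obstacle I expect is the bookkeeping in the middle step: verifying that $\alpha_w$ really is conjugation by an element over $u$ with the correct scalar (using the $\sqrt{\theta}$ group law and $p$ odd), and getting the conjugation formula for the Weil part with consistent conventions. The invariance argument at the end is then immediate.
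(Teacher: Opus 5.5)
Your proposal is correct and follows the paper's proof. You write $f'=\mathrm{Ad}_{(1,w)}\circ f$, so that $\Heis_f(1,w)^{-1}$ is a $\C_x\wt{\boxtimes}\V_x$-isomorphism $\Heis_f\to\Heis_{f'}$. Lemma \ref{lemme-WeilHeis-hom-abstract} then turns it into $\Weil_{f'}(g)=\Heis_f(1,w)^{-1}\Weil_f(g)\Heis_f(1,w)$, and the $\sigma(G_{\iota,x})$-invariance of $w$ finishes the argument.

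Three small corrections. First, with the $\sqrt{\theta}$ group law, conjugation by $(1,w)$ itself already gives $(c,v)\mapsto(c\,\theta(w,v),v)$, so no halving of $w$ is needed. Second, the lemma should be applied to this intertwiner $\Heis_f(1,w)^{-1}$, not to the identity map. Third, your displayed formula ending in $\Heis_f(g^{-1}(h))$ would force $\Weil_{f'}=\Weil_f$ on all of $\Sp(\V_x)$. The correct formula is the conjugation formula above, which agrees with yours only when $g$ fixes $h$.
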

\begin{proof}
  Let $f'$ be another morphism as in Lemma \ref{lemma-special-morphism} and let $w$ be an
  element of $V_{x}$ as in the last sentence of that lemma.  Then
  $\Heis_{f'}=\Heis_{f}\circ {\rm Inn}_{(1,w)}$ where
  ${\rm Inn}_{(1,w)}$ denotes inner conjugation by $(1,w)$ in $C_{x}\wt{\boxtimes}V_{x}$.
  It follows that the $R$-linear automorphism $\Heis_{f}(1,w)$ of $V_{\Heis_{\iota,x}}$ induces
  an isomorphism of representations $\Heis_{f}\simto\Heis_{f'}$.
  By Lemma \ref{lemme-WeilHeis-hom-abstract}, it also induces an isomorphism $\Weil_{f}\simto\Weil_{f'}$.
  Hence for any $u\in\Sp(V_{x})$, we have
  $$\Weil_{f'}(u)=\Heis_{f}(1,w)\circ \Weil_{f}(u)\circ \Heis_{f}(1,w)^{-1}
  =  \Weil_{f}(u) \circ \Heis_{f}(1,u^{-1}.w)\circ \Heis_{f}(1,w)^{-1}.$$
 Since $w$ is fixed under $\sigma(G_{\iota,x})$, we get $\omega_{f'}\circ
 \sigma=\omega_{f}\circ \sigma$, as desired.
\end{proof}

  We call $\omega_{\iota,x}$ the Weil representation associated to $\Heis_{\iota,x}$.
  By construction, the product map
$\Weil_{\iota,x}\ltimes\Heis_{\iota,x}:\, G_{\iota,x}\ltimes \Ku_{\iota,x}\To{} \Aut_{R}(V_{\Heis_{\iota,x}})$
is a group homomorphism. However, this representation does not descend along the
multiplication map $G_{\iota,x}\ltimes\Ku_{\iota,x}\To{}\K_{\iota,x}$. To be able to
descend along this map, we need to twist $\omega_{\iota,x}$ by a character of
$G_{\iota,x}$ that extends $\check\phi_{\iota,x}^{+}$. We will use the restriction of the
character $\check\varphi_{0} : G_{\iota}\To{}\mu_{p^{\infty}}\subset R^{\times}$ chosen in
\ref{def_characters}. This choice is independent of $x$ and $\Heis_{\iota,x}$.

\begin{defn}\label{Def-HeisWeil-non-twisted}
  We define the representation $\HeisWeil^{\rm nt}_{\iota, x}$ of $\K_{\iota, x}$ to be the unique
  $R$-representation such that the composition of the morphism 
	\begin{eqnarray*}
		G_{\iota,x} \ltimes \Ku_{\iota, x}  & \twoheadrightarrow & \K_{\iota, x} \\
		(k_1, k_2) & \mapsto & k_1k_2
	\end{eqnarray*}
	with $\HeisWeil^{\rm nt}_{\iota, x}$ yields the representation
        $(\check\varphi_{0}|_{G_{\iota,x}} \Weil_{\iota, x})  \ltimes
        \Heis_{\iota,x}$.  We call $\HeisWeil^{\rm nt}_{\iota, x}$ the \emph{non-twisted $R\K_{\iota,x}$-Heisenberg--Weil representation associated with $\Heis_{\iota,x}$}. 
\end{defn}

\begin{cor}\label{cor-WeilHeis-hom-non-twisted}
	Let $\Heis_{\iota,x}$ and $\Heis'_{\iota,x}$ be two Heisenberg representations of
        $\Ku_{\iota, x}$ to which we associate the representations $\HeisWeil^{\rm
          nt}_{\iota, x}$ and ${\HeisWeil'}^{\rm nt}_{\iota, x} $ of $\K_{\iota, x}$ via
        Definition \ref{Def-HeisWeil-non-twisted}. Then
	\[ \Hom_{R\K_{\iota, x}}(\HeisWeil_{\iota, x}^{\rm nt} ,{\HeisWeil'}^{\rm nt}_{\iota, x})
          =\Hom_{R\Ku_{\iota, x}}(\Heis_{\iota, x},\Heis'_{\iota, x})\]
\end{cor}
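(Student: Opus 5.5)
The plan is to reduce the statement to the abstract Lemma~\ref{lemme-WeilHeis-hom-abstract} by pulling back along the surjection $G_{\iota,x}\ltimes\Ku_{\iota,x}\twoheadrightarrow \K_{\iota,x}$ and then along $\sigma\ltimes f$. The inclusion $\Hom_{R\K_{\iota,x}}(\HeisWeil^{\rm nt}_{\iota,x},{\HeisWeil'}^{\rm nt}_{\iota,x})\subseteq\Hom_{R\Ku_{\iota,x}}(\Heis_{\iota,x},\Heis'_{\iota,x})$ is clear, since the restrictions to $\Ku_{\iota,x}$ are $\Heis_{\iota,x}$ and $\Heis'_{\iota,x}$. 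For the reverse inclusion, I will work with the pulled-back representations of $G_{\iota,x}\ltimes\Ku_{\iota,x}$: a map is $\K_{\iota,x}$-equivariant if and only if it is equivariant for the pullbacks $(\check\varphi_0\Weil_{\iota,x})\ltimes\Heis_{\iota,x}$ and $(\check\varphi_0\Weil'_{\iota,x})\ltimes\Heis'_{\iota,x}$. Because the character $\check\varphi_0|_{G_{\iota,x}}$ twists both sides identically, it cancels, and it suffices to show that any $\Ku_{\iota,x}$-equivariant $u:\Heis_{\iota,x}\to\Heis'_{\iota,x}$ intertwines $\Weil_{\iota,x}$ and $\Weil'_{\iota,x}$.

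Next, I fix one morphism $f$ as in Lemma~\ref{lemma-special-morphism} and use the same $f$ for both representations. By the preceding lemma, $\Weil_{\iota,x}$ and $\Weil'_{\iota,x}$ do not depend on this choice. We then have $\Heis_{\iota,x}=\Heis_f\circ f$ and $\Heis'_{\iota,x}=\Heis'_f\circ f$, together with $\Weil_{\iota,x}=\Weil_f\circ\sigma$ and $\Weil'_{\iota,x}=\Weil'_f\circ\sigma$. The key check is that $f$ is surjective onto $\C_x\wt{\boxtimes}\V_x$. This follows from the commutative diagram in Lemma~\ref{lemma-special-morphism}(i): the map $f$ hits all of $\C_x=\check\phi^+_{\iota,x}(\Kp_{\iota,x})$ and surjects onto $\V_x$. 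Consequently,
\[\Hom_{R\Ku_{\iota,x}}(\Heis_{\iota,x},\Heis'_{\iota,x})=\Hom_{R(\C_x\wt{\boxtimes}\V_x)}(\Heis_f,\Heis'_f).\]

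Lemma~\ref{lemme-WeilHeis-hom-abstract} now says that every such map is $\Sp(\V_x)\ltimes(\C_x\wt{\boxtimes}\V_x)$-equivariant for $\Weil_f\ltimes\Heis_f$ and $\Weil'_f\ltimes\Heis'_f$. Pulling this back along the homomorphism $\sigma\ltimes f$ of Lemma~\ref{lemma-special-morphism}(ii) shows that $u$ intertwines $\Weil_{\iota,x}\ltimes\Heis_{\iota,x}$ with $\Weil'_{\iota,x}\ltimes\Heis'_{\iota,x}$, which concludes the argument.

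The only delicate point is the step that allows a single common $f$ for both Heisenberg representations. This requires checking that $\Heis'_{\iota,x}$ also factors through the same $f$, which the text asserts follows from the classification of Heisenberg representations, together with the surjectivity of $f$ used above. I expect no further obstacle.
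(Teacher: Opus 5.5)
Your proposal is correct and is essentially the paper's own argument. The paper proves this corollary in one line by invoking Lemma~\ref{lemme-WeilHeis-hom-abstract} together with the surjectivity of $K^{\dagger}_{\iota,x}$ onto $C_x\widetilde{\boxtimes}V_x$, which is exactly what you spell out (pulling back along $\sigma\ltimes f$ and cancelling the common twist by $\check\varphi_0$). The step you flag as delicate is harmless: $\ker f=\ker\check\phi^{+}_{\iota,x}$ acts trivially on every $\check\phi^{+}_{\iota,x}$-isotypic module, so both Heisenberg representations factor through the same $f$.
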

\begin{proof}
	This follows from Lemma \ref{lemme-WeilHeis-hom-abstract} since $\Ku_{\iota, x}$ surjects onto $\C_x \wt{\boxtimes} \V_x$.
\end{proof}

\alin{The quadratic twist}\label{section-quadratic-twist}
We write  $\eps_x^{\vec \bfG_\iota}$, or also simply $\eps_x$, for the quadratic character of $\K_{\iota,x}$ defined in \cite[Notation~3.6.4]{AFMO2} arising from \cite{FKS} attached to the sequence $\vec \bfG_\iota$ and the generic characters $\psi_i$, i.e, 
\[ \eps_x = \prod_{i=1}^d \eps_x^{G^i_\iota/G^{i-1}_\iota}, \]
where $\eps_x^{G^i_{\iota}/G^{i-1}_{\iota}}$ is trivial on $\Ku_{\iota,x}$, and on
$\K_{\iota,x}/\Ku_{\iota,x} \simeq G_{\iota,x}/G_{\iota,x,0+}$ it is given by the
restriction of the quadratic character $\eps_x^{G^i_{\iota}/G^{i-1}_{\iota}}$ on
$G^i_{\iota,x}$ defined in \cite[Lemma~4.1.2]{FKS} for $r=r_{i-1}$. 
The only property about the quadratic character $\eps_x$ that we use is that it satisfies the following.
\begin{lem}[{\cite{FKS}}]
	Let $x, x' \in \BT'_{\iota}$. Then
\inisub
\begin{subequations}
	\begin{equation}\label{eqn-eps-delta}
	\left(\eps_x \right)|_{\K_{\iota,x} \cap \K_{\iota,x'}} \cdot \delta_{x'}^x=
        \left(\eps_{x'}\right)|_{\K_{\iota,x} \cap \K_{\iota, x'}} \cdot \delta_x^{x'},
\end{equation}
\end{subequations}
where $\delta_{x'}^x(g)=\mathrm{sgn}(\det_{\bF_p}(u_g))$ with $u_g$ denoting the action induced via conjugation by $g \in \K_{\iota,x}\cap\K_{\iota,x'}$ on the $\bF_p$-vector space 
\[((\Ku_{\iota,x}\cap \Kp_{\iota, x'})\Kp_{\iota,x})/\Kp_{\iota,x}.\] 
\end{lem}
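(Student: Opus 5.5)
This identity is, in essence, the main technical result of \cite{FKS} on the transition between quadratic characters at different points, so the plan is to reduce to that statement one level at a time and then multiply.

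First, split both sides over the twisted Levi sequence. Since $\eps_x=\prod_{i=1}^d \eps_x^{G^i_\iota/G^{i-1}_\iota}$, it suffices to factor $\delta^x_{x'}$ compatibly. By the orthogonal decomposition \eqref{decomposition-of-Vx},
\[
\Ku_{\iota,x}/\Kp_{\iota,x}=\prod_{i=1}^d J^i_{\iota,x}/J^{i+}_{\iota,x}.
\]
We would check, using the Iwahori decompositions of Lemma \ref{Iwahori} relative to the pair $(\mathbf P,\bar{\mathbf P})$ of \ref{opposite_parabolic_sgps}, that the subspace $((\Ku_{\iota,x}\cap\Kp_{\iota,x'})\Kp_{\iota,x})/\Kp_{\iota,x}$ is compatible with this decomposition. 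It is the image of the root subgroups, for roots in $\mathfrak n^{i-1}_\iota\cap\mathfrak g^i_\iota$, whose filtration jumps between $x$ and $x'$. The conjugation action of $g\in \K_{\iota,x}\cap\K_{\iota,x'}$ preserves each graded piece. This gives $\delta^x_{x'}=\prod_i \delta^{x,(i)}_{x'}$, where $\delta^{x,(i)}_{x'}$ is the sign of the determinant on the $i$-th piece. Since all characters are trivial on $\Ku_{\iota,x}\cap\Ku_{\iota,x'}$, we may reduce to elements of $G_{\iota,x}\cap G_{\iota,x'}$, acting through the reductive quotients.

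Second, for each fixed $i$, the required identity
\[
\eps_x^{G^i_\iota/G^{i-1}_\iota}\,\delta^{x,(i)}_{x'}=\eps_{x'}^{G^i_\iota/G^{i-1}_\iota}\,\delta^{x',(i)}_{x}
\quad\text{on } G^{i-1}_{\iota,x}\cap G^{i-1}_{\iota,x'}
\]
is exactly the statement in \cite{FKS}, applied to the pair $(G^{i-1}_\iota\subset G^i_\iota)$ at depth $r=r_{i-1}$. It arises there from computing the sign character of the Weil representation restricted to a parabolic, as in Lemma \ref{Lemma-restriction-of-Weilrep}. We would cite that result after matching our space $((J^i_{\iota,x}\cap J^{i+}_{\iota,x'})J^{i+}_{\iota,x})/J^{i+}_{\iota,x}$ with the corresponding space in \emph{loc.\ cit.} through the Moy--Prasad isomorphism $\mathfrak j^i_{\iota,x}/\mathfrak j^{i+}_{\iota,x}\simeq J^i_{\iota,x}/J^{i+}_{\iota,x}$. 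Multiplying over $i$ then gives \eqref{eqn-eps-delta}.

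The main obstacle is this matching step. We need to confirm that the intersection with $\Kp_{\iota,x'}$ really decomposes level by level, so that the cross terms between distinct $J^i$ contribute nothing, and that the normalization of FKS's $\eps$ (including the convention of using $r_{i-1}$) agrees with the one in \cite[Notation~3.6.4]{AFMO2}. We would handle the first point as in the proof of Lemma \ref{intertwining_Heisenberg}, using the inclusions \eqref{eq:inclusions}.
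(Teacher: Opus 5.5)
Your proposal is correct and is essentially the paper's argument: the paper's proof consists only of applying \cite[Lemma~4.1.2]{FKS} to each factor $\eps_x^{G^i_\iota/G^{i-1}_\iota}$ (at depth $r_{i-1}$) and then taking the product over $i$. Your additional steps spell out bookkeeping that the paper leaves implicit: the level-by-level factorization of $\delta^x_{x'}$ through the Iwahori decomposition, the reduction to $G_{\iota,x}\cap G_{\iota,x'}$, and the matching of the relevant spaces with those in \cite{FKS}.
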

\begin{proof}
	This follows from applying \cite[Lemma~4.1.2]{FKS} to each
        $\eps_x^{G^i_\iota/G^{i-1}_\iota}$ and taking the product.
\end{proof}

\begin{DEf}\label{Def-HeisWeil}
	Given a Heisenberg representation $\Heis_{\iota,x}$ of $\Ku_{\iota, x}$,
	we define  the \emph{twisted $R\K_{\iota,x}$-Heisenberg--Weil representation associated with $\Heis_{\iota,x}$} 
        as $$\kappa_{\iota,x}:=\varepsilon_{x} \cdot \kappa^{\rm nt}_{\iota,x}.$$
	More generally, an $R\K_{\iota,x}$-module is called a \emph{twisted
          $R\K_{\iota,x}$-Heisenberg--Weil representation}\index[terminology]{twisted
          $R\K_{\iota,x}$-Heisenberg--Weil representation} if it is the twisted
        $R\K_{\iota,x}$-Heisenberg--Weil representation associated to some Heisenberg
        representation of $\Ku_{\iota, x}$.
\end{DEf}

\begin{rema}
  If $R$ is an algebraically closed field, the representation $\HeisWeil_{\iota,x}$
  agrees with the following representation constructed in \cite{AFMO2}: if
  $r_{d-1} = r_{d}$, resp. $r_{d-1} < r_{d}$, set
 	\begin{align*} \!
        &  \textnormal{HW}_x=
          \bigl( 
		\left(
		\bfG_\iota=\bfG^0 \subsetneq \bfG^1 \subsetneq \ldots \subsetneq \bfG^d
		\right),
		(r_0, \ldots , r_{d-1}), x, \bfG^{0}(F)_{x}, (\psi_0, \ldots , \psi_{d-1})
		\bigr), \, \hbox{ resp.} \\
         & \textnormal{HW}_x=
          \bigl(
		\left(
		\bfG_\iota=\bfG^0 \subsetneq \bfG^1 \subsetneq \ldots \subsetneq \bfG^d \subseteq \bfG^{d+1} := \bfG^{d}
		\right),
		(r_0, \ldots , r_{d}), x, \bfG^{0}(F)_{x}, (\psi_0, \ldots , \psi_{d})
		\bigr) 
	\end{align*}
    for $x \in \BT_\iota$.
	Then $\textnormal{HW}_x$ is a Heisenberg--Weil datum as defined in
        \cite[Definition~3.6.1]{AFMO2} by Lemma \ref{generic}. Note that by our construction we have fixed
        embeddings $\BT_\iota=\BT(\bfG^0, F) \subseteq \BT(\bfG^1, F)\subseteq \hdots \subseteq \BT(\bfG,F)$ so we do not record them as part of the Heisenberg--Weil datum. 
	Then it follows from \cite[Corollary~2.5]{Gerardin} and \cite[Lemma~3.6.8]{AFMO2} that $(\HeisWeil_x, V_{\HeisWeil_x})$ is isomorphic to the representation of $K_{\iota,x}$ obtained from $\textnormal{HW}_x$ via the twisted Heisenberg--Weil construction as in \cite[Notation~3.6.3]{AFMO2}. 	
\end{rema}

\begin{coro}\label{cor-WeilHeis-hom}
	Let $\Heis_{\iota,x}$ and $\Heis'_{\iota,x}$ be two Heisenberg representations of $\Ku_{\iota, x}$ with associated twisted $R\K_{\iota,x}$-Heisenberg--Weil representations $\HeisWeil_{\iota,x}$ and $\HeisWeil'_{\iota,x}$. Then
	\[\Hom_{R\K_{\iota, x}}(\HeisWeil_{\iota, x} ,\HeisWeil'_{\iota, x})=\Hom_{R\Ku_{\iota, x}}(\Heis_{\iota, x},\Heis'_{\iota, x})\]
\end{coro}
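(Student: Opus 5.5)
The plan is to reduce this statement to Corollary \ref{cor-WeilHeis-hom-non-twisted}: twisting both sides by the same character $\eps_x$ of $\K_{\iota,x}$ should not change the space of intertwiners. By Definition \ref{Def-HeisWeil} we have $\HeisWeil_{\iota,x}=\eps_x\cdot\HeisWeil^{\rm nt}_{\iota,x}$ and $\HeisWeil'_{\iota,x}=\eps_x\cdot{\HeisWeil'}^{\rm nt}_{\iota,x}$, with the same quadratic character $\eps_x$ of $\K_{\iota,x}$ in both cases. This character depends only on $x$ and the twisted Levi sequence $\vec\bfG_\iota$ (with the characters $\psi_i$), not on the chosen Heisenberg representation.

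For an $R$-linear map $u\colon V_{\Heis_{\iota,x}}\to V_{\Heis'_{\iota,x}}$, the condition $u\circ(\eps_x(g)\HeisWeil^{\rm nt}_{\iota,x}(g))=\eps_x(g){\HeisWeil'}^{\rm nt}_{\iota,x}(g)\circ u$ for all $g\in\K_{\iota,x}$ is equivalent to the same condition without the scalar $\eps_x(g)\in\{\pm1\}\subset R^\times$. Since $\eps_x(g)$ is a unit in $R$, it can simply be cancelled from both sides. This gives
\[\Hom_{R\K_{\iota,x}}(\HeisWeil_{\iota,x},\HeisWeil'_{\iota,x})=\Hom_{R\K_{\iota,x}}(\HeisWeil^{\rm nt}_{\iota,x},{\HeisWeil'}^{\rm nt}_{\iota,x})\]
as submodules of $\Hom_R(V_{\Heis_{\iota,x}},V_{\Heis'_{\iota,x}})$.

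Next, $\eps_x$ is trivial on $\Ku_{\iota,x}$, so the restrictions of $\HeisWeil_{\iota,x}$ and $\HeisWeil^{\rm nt}_{\iota,x}$ to $\Ku_{\iota,x}$ are both $\Heis_{\iota,x}$, and similarly with primes. Corollary \ref{cor-WeilHeis-hom-non-twisted} then identifies the right-hand side above with $\Hom_{R\Ku_{\iota,x}}(\Heis_{\iota,x},\Heis'_{\iota,x})$, which proves the claim. There is no substantial obstacle here; the only point to check is that the same twist $\eps_x$ is used for both representations, and this holds by construction.
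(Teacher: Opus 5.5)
Your proposal is correct and follows the paper's own proof: the paper simply says the claim follows from Corollary \ref{cor-WeilHeis-hom-non-twisted} and Definition \ref{Def-HeisWeil}. Your argument fills in exactly the implicit step, namely that both twisted representations are obtained by twisting with the same character $\eps_x$, which is trivial on $\Ku_{\iota,x}$, so this twist cancels in the intertwining condition.
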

\begin{proof}
	This follows from Corollary \ref{cor-WeilHeis-hom-non-twisted} and Definition \ref{Def-HeisWeil}.
\end{proof}

\begin{prop}\label{Hom_HW_H}
	Let $x, x' \in \BT'_{\iota}$ and let $\Heis_{\iota,x}$ and $\Heis_{\iota,x'}$ be two Heisenberg representations  with associated twisted $R\K_{\iota,x}$-Heisenberg--Weil representations $\HeisWeil_{\iota,x}$ and $\HeisWeil_{\iota,x'}$. Then 
	\[ \Hom_{R(\K_{\iota,x}\cap\K_{\iota,x'})}(\HeisWeil_{\iota,x},\HeisWeil_{\iota,x'})
	= \Hom_{R(\Ku_{\iota,x}\cap\Ku_{\iota,x'})}(\Heis_{\iota,x},\Heis_{\iota,x'}) . \]
\end{prop}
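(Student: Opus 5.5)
The inclusion $\subseteq$ is clear: restrict to $\Ku_{\iota,x}\cap\Ku_{\iota,x'}$, which lies in $\K_{\iota,x}\cap\K_{\iota,x'}$. For the reverse inclusion, note that $\K_{\iota,x}\cap\K_{\iota,x'}=(\Ku_{\iota,x}\cap\Ku_{\iota,x'})\cdot(G_{\iota,x}\cap G_{\iota,x'})$, since both groups are products $G_{\iota,\cdot}\Ku_{\iota,\cdot}$ and we may use an Iwahori decomposition. It therefore suffices to show that every $f\in \Hom_{R(\Ku_{\iota,x}\cap\Ku_{\iota,x'})}(\Heis_{\iota,x},\Heis_{\iota,x'})$ commutes with $g\in G_{\iota,x}\cap G_{\iota,x'}$ acting through $\HeisWeil_{\iota,x}$ and $\HeisWeil_{\iota,x'}$.

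By Lemma \ref{intertwining_Heisenberg} i) this Hom module is invertible. For $g$ as above, $\HeisWeil_{\iota,x'}(g)\circ f\circ\HeisWeil_{\iota,x}(g)^{-1}$ is again an element of it, because $g$ normalizes $\Ku_{\iota,x}\cap\Ku_{\iota,x'}$ and the twisted Heisenberg--Weil actions are compatible with conjugation. Hence $g$ acts on this invertible module by a character $\chi:G_{\iota,x}\cap G_{\iota,x'}\to R^\times$, and the claim is that $\chi$ is trivial. It suffices to compute $\chi$ after base change to $\bC$ (or to the universal case over $\Rmintwo$ inside $\bC$, where invertible modules are free), since $\chi$ takes values in roots of unity.

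To compute $\chi$, take the pair $(\mathbf{P},\bar{\mathbf{P}})$ from \ref{opposite_parabolic_sgps} attached to $x,x'$. As in (\ref{Hom_Heis}), $f$ factors as $\Heis_{\iota,x}\twoheadrightarrow(\Heis_{\iota,x})_{U\cap\Ku_{\iota,x}}\simto(\Heis_{\iota,x'})^{\bar U\cap\Ku_{\iota,x'}}\injo\Heis_{\iota,x'}$, where the middle module is the Heisenberg representation of $M\cap\Ku_{\iota,x}$. Apply Lemma \ref{Lemma-restriction-of-Weilrep} to $V_x$ with $V^+$ the image of $\bar U\cap\Ku_{\iota,x}$ (isotropic). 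This identifies the $U$-coinvariants with the Heisenberg--Weil representation of the middle piece $V^0=(M\cap\Ku_{\iota,x})/(M\cap\Kp_{\iota,x})$, twisted by $\chi^{V^+}=\mathrm{sgn}\det(g|_{V^+})$. Do the same at $x'$ with the roles of $U$ and $\bar U$ exchanged, taking invariants instead of coinvariants. This uses the dual version of the lemma, obtained via the contragredient or the induced-model description.

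The middle pieces at $x$ and $x'$ coincide, and their Weil representations agree by uniqueness (Definition/Lemma \ref{Def-Lemma-Weil-rep}), since the homomorphisms $f$ of Lemma \ref{lemma-special-morphism} can be chosen compatibly. The factor $\check\varphi_0$ is common to both sides. Therefore $\chi$ equals $(\eps_x\,\chi^{V^+_x})(\eps_{x'}\chi^{V^+_{x'}})^{-1}$. Identifying $V^+_x$ with $((\Ku_{\iota,x}\cap\Kp_{\iota,x'})\Kp_{\iota,x})/\Kp_{\iota,x}$ gives $\chi^{V^+_x}=\delta^x_{x'}$, and symmetrically $\chi^{V^+_{x'}}=\delta^{x'}_x$ (sign conventions to be checked). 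Then (\ref{eqn-eps-delta}) gives exactly $\chi=1$.

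The main obstacle is this sign bookkeeping: matching the invariants side at $x'$ with a $\chi^{V^+}$ factor, and matching the $V^+$ spaces with the quotients defining $\delta$. A secondary point is checking that the Weil representations of the common middle Heisenberg group agree, which follows from Lemma \ref{lemme-WeilHeis-hom-abstract}.
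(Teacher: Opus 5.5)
Your overall route is the paper's: the pair $(\mathbf{P},\bar{\mathbf{P}})$ attached to $(x,x')$, then Lemma \ref{Lemma-restriction-of-Weilrep} at both points, then comparison of the common middle piece, then cancellation of signs by \eqref{eqn-eps-delta}. The paper compares the middle pieces using $\eps^{\vec{\bfM}_\iota}_x=\eps^{\vec{\bfM}_\iota}_{x'}$ and Corollary \ref{cor-WeilHeis-hom} for $\vec{\bfM}_\iota$. Packaging the problem as a character of $G_{\iota,x}\cap G_{\iota,x'}$ acting on the invertible Hom-module is only a reformulation of the paper's diagram of inclusions.

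However, the step where you invoke Lemma \ref{Lemma-restriction-of-Weilrep} fails as written, because you have swapped $U$ and $\bar U$. That lemma only computes the action of elements whose image in $\Sp(V_x)$ stabilizes $V^+$. Also, the module it identifies with the middle Heisenberg--Weil representation is the $V^+$-(co)invariants.

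With $V^+$ the image of $\bar U\cap\Ku_{\iota,x}$, neither condition holds. First, you take $U$-coinvariants, not $\bar U$-(co)invariants. Second, $G_{\iota,x}\cap G_{\iota,x'}$ does not stabilize this $V^+$ in general. Indeed, it contains $U\cap G_{\iota,x}\subseteq U\cap\Kp_{\iota,x'}$. Decompose $V_x=V_U\oplus V_M\oplus V_{\bar U}$ by the weights of the $\lambda$ with $x'=x+\lambda$ from \ref{opposite_parabolic_sgps}. The group $U\cap G_{\iota,x}$ acts on $V_x$ by weight-raising unipotent maps, through the unipotent radical of a parabolic of the reductive quotient of $G_{\iota,x,0}$. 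These maps preserve $V_U$ and $V_U\oplus V_M$, but in general they move $V_{\bar U}$. The same problem occurs at $x'$ with the roles exchanged.

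The repair is the paper's choice:
\begin{itemize}
\item at $x$, take $V^+=V_U=(U\cap\Ku_{\iota,x})/(U\cap\Kp_{\iota,x})$ together with $U$-coinvariants;
\item at $x'$, take $V^+$ to be the image of $\bar U\cap\Ku_{\iota,x'}$ together with $\bar U$-invariants.
\end{itemize}
Then the whole group $\K_{\iota,x}\cap\K_{\iota,x'}=(U\cap\K_{\iota,x})(M\cap\K_{\iota,x})(\bar U\cap\K_{\iota,x'})$ maps into both stabilizers. The last factor lies in $\Kp_{\iota,x}$ and so acts trivially on $V_x$; the first factor lies in $\Kp_{\iota,x'}$ and acts trivially on $V_{x'}$.

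No dual or contragredient version of the lemma is needed. In its induced model, both the $V^+$-invariants and the $V^+$-coinvariants are identified with the summand of functions supported on $(C\wt{\boxtimes}V^0)\times V^+$, and this summand is stable under the parabolic.

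The twists are then exactly $\delta^x_{x'}$ and $\delta^{x'}_x$, because $((\Ku_{\iota,x}\cap\Kp_{\iota,x'})\Kp_{\iota,x})/\Kp_{\iota,x}$ is precisely $V_U$. This is the identification you wrote at the end, and it contradicts your earlier choice of $V^+$. With this correction, your sign computation coincides with the paper's and yields $\chi=1$.
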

\begin{proof}
	\inisub
	\begin{subequations}
          If $x = x'$, the claimed equality holds since
          both sides are equal to $R$. So let us assume $x\neq x'$, and
         let $(\mathbf{P},\bar{\mathbf{P}})$ be the pair of opposite parabolic subgroups of $\mathbf{G}$ with unipotent radicals $\mathbf{U}$ and $\bar{\mathbf{U}}$, respectively, that are attached to the segment $(x,x')$ as in the proof of Lemma \ref{intertw_converse} and Lemma \ref{intertwining_Heisenberg}, i.e., we have
\begin{equation*}
M\cap \K_{\iota,x}=M\cap \K_{\iota,x'}, \quad
\bar U\cap \Kp_{\iota,x} \supseteq \bar U\cap \K_{\iota,x'}, \quad
U\cap\K_{\iota,x} \subseteq U\cap\Kp_{\iota,x'} ,
\end{equation*}	
\[  \K_{\iota,x}\cap\K_{\iota,x'}=(U\cap\K_{\iota,x}) (M\cap\K_{\iota,x}) (\bar U\cap
\K_{\iota,x'}) \]
and
\begin{equation} 
\Ku_{\iota,x}\cap\Ku_{\iota,x'}=(U\cap\Ku_{\iota,x}) (M\cap\Ku_{\iota,x}) (\bar U\cap
\Ku_{\iota,x'}) 
\end{equation}
Hence the conjugation action of $\K_{\iota,x}\cap\K_{\iota,x'}$ preserves the subspace \[\V_U:=(U \cap \Ku_{\iota, x})/(U \cap \Kp_{\iota,x})=((\Ku_{\iota, x}\cap \Kp_{\iota, x'})\Kp_{\iota, x})/\Kp_{\iota, x}\]
 of $\Ku_{\iota, x}/\Kp_{\iota, x}=\V_x$.
Therefore the image of $\K_{\iota,x}\cap\K_{\iota,x'}$ in $\Sp(\V_x)$ is contained in the parabolic subgroup $\PSp$ that is the stabilizer of $\V_U$ in $\Sp(\V_x)$. 
We write $\V_M:=(M \cap \Ku_{\iota, x})/(M \cap \Kp_{\iota,x})$.
Then by Lemma \ref{Lemma-restriction-of-Weilrep} the restriction of the Heisenberg--Weil representation $\Weil \ltimes \Heis_{\iota,x}$ associated to $\Heis_{\iota,x}$ to $\PSp \ltimes (\C_{x}\wt{\boxtimes}\V_x)$ satisfies: 
\[ (\Weil \ltimes \Heis_{\iota,x})|_{\PSp \ltimes (\C_{x}\wt{\boxtimes}\V_x)} 
\simeq
\Ind{\PSp \ltimes ((\C_{x}\wt{\boxtimes}\V_M)\times \V_U)}{\PSp \ltimes (\C_{x}\wt{\boxtimes}\V_x)} (\Weil^M \ltimes \Heis^M_{\iota,x}) \otimes (\chi^U \ltimes 1),
 \]
 where $\Weil^M \ltimes \Heis^M_{\iota,x}$ denotes a Heisenberg--Weil representation of  $\Sp(\V_M) \ltimes (\C_{x}\wt{\boxtimes}\V_M)$ with central character $\check\phi_{\iota,x}^{+}$ on which $\PSp \ltimes (\V_U \times (\C_{x}\wt{\boxtimes}\V_M))$ acts via the projection to $\Sp(\V_M) \ltimes (\{0\} \times (\C_{x}\wt{\boxtimes}\V_M))$, and $\chi^U$ denotes the character $\PSp \ra \{\pm 1\}$ given by $p \mapsto \text{sgn}(\det_{\bF_p}(p|_{\V_U}))$. Note that the composition of $\K_{\iota,x}\cap\K_{\iota,x'} \ra \PSp$ with $\chi^U$ is the character $\delta_{x'}^{x}$ defined in \ref{section-quadratic-twist}.
 Thus  we obtain
\[ (\HeisWeil_{\iota, x})|_{\K_{\iota,x}\cap\K_{\iota,x'}} 
\simeq
\left(\Ind{(U\cap\K_{\iota,x}) (M\cap\K_{\iota,x}) (\bar U\cap
	\Kp_{\iota,x})}{(\K_{\iota,x}\cap\K_{\iota,x'})\Ku_{\iota, x}}    \eps_x^{\vec{\bfG}_\iota}  \eps_x^{\vec{\bfM}_\iota} \HeisWeil^M_{\iota, x} \otimes \delta_{x'}^{x}\right)|_{\K_{\iota,x}\cap\K_{\iota,x'}}  ,
\]
where $\HeisWeil^M_{\iota, x}$ denotes the representation of $M \cap \K_{\iota,x}$ constructed analogously to $\HeisWeil_{\iota, x}$ in Definition \ref{Def-HeisWeil} replacing $\vec{\bfG}_{\iota}$ by $\vec{\bfM}_{\iota}$ and $\Heis_{\iota,x}$ by $\Heis^M_{\iota,x}$ in the construction, and $\eps_x^{\vec{\bfM}_\iota} \HeisWeil^M_{\iota, x}$ is viewed as a representation of $(U\cap\K_{\iota,x}) (M\cap\K_{\iota,x}) (\bar U\cap \Kp_{\iota,x})$ by letting $(U\cap\K_{\iota,x})$ and $(\bar U\cap
\Kp_{\iota,x})$ act trivially.
From this, we deduce that 
\begin{equation}\label{eqn-HeisWeil1} 
	(\HeisWeil_{\iota, x}|_{\K_{\iota,x}\cap\K_{\iota,x'}})_{U\cap\Ku_{\iota,x}} 
\simeq
\left(\eps_x^{\vec{\bfG}_\iota}  \eps_x^{\vec{\bfM}_\iota} \HeisWeil^M_{\iota, x} \otimes \delta_{x'}^{x}\right)|_{\K_{\iota,x}\cap\K_{\iota,x'}}  ,
\end{equation}
and that the surjection $V_{\HeisWeil_{\iota, x}}  \twoheadrightarrow (V_{\HeisWeil_{\iota, x}})_{U\cap\Ku_{\iota,x}} $ is $\K_{\iota,x}\cap\K_{\iota,x'}$-equivariant.
Similarly, we obtain
\begin{equation}\label{eqn-HeisWeil2} 
(\HeisWeil_{\iota, x'}|_{\K_{\iota,x}\cap\K_{\iota,x'}})^{\bar U\cap\Ku_{\iota,x'}} 
\simeq
\left(\eps_{x'}^{\vec{\bfG}_\iota}  \eps_{x'}^{\vec{\bfM}_\iota} \HeisWeil^M_{\iota, x'} \otimes \delta_{x}^{x'}\right)|_{\K_{\iota,x}\cap\K_{\iota,x'}}  ,
\end{equation}
and in particular that the subspace $V_{\HeisWeil_{\iota, x'}}^{\bar U\cap\Ku_{\iota,x'}}$ of $V_{\HeisWeil_{\iota, x'}}$ is $\K_{\iota,x}\cap\K_{\iota,x'}$-stable.
We can thus consider the following commutative diagram of inclusions
$$
\xymatrix{
\Hom_{R(\K_{\iota,x} \cap \K_{\iota,x'})}\left((\HeisWeil_{\iota,x})_{U\cap\Ku_{\iota,x}},(\HeisWeil_{\iota,x'})^{\bar U\cap\Ku_{\iota,x'}}\right)
\ar@{}[d]|-*[@]{\subset} \ar@{}[r]|-*[@]{\subset} & 
\Hom_{R(\K_{\iota,x} \cap \K_{\iota,x'})}\left(\HeisWeil_{\iota,x},\HeisWeil_{\iota,x'}\right)
\ar@{}[d]|-*[@]{\subset}
\\
\Hom_{R(\Ku_{\iota,x} \cap \Ku_{\iota,x'})}\left((\Heis_{\iota,x})_{U\cap\Ku_{\iota,x}},(\Heis_{\iota,x'})^{\bar U\cap\Ku_{\iota,x'}}\right)
\ar@{}[r]|-*[@]{\subset} &
\Hom_{R(\Ku_{\iota,x}\cap \Ku_{\iota,x'})}\left(\Heis_{\iota,x},\Heis_{\iota,x'}\right)
}$$
where both horizontal inclusions are obtained by precomposition with projection and
composition with inclusion. By \eqref{Hom_Heis}, the bottom inclusion is an equality.
On the other hand, since the image of $x$ and $x'$ in the reduced building of $M$ agree,
we have $\eps_x^{\vec{\bfM}_\iota}=\eps_{x'}^{\vec{\bfM}_\iota}$ and  
\[ \Hom_{R(M \cap \K_{\iota, x})}(\HeisWeil^M_{\iota, x} ,\HeisWeil^M_{\iota, x'})=\Hom_{R(M \cap \Ku_{\iota, x})}(\Heis^M_{\iota, x},\Heis^M_{\iota, x'}) \]
by Corollary \ref{cor-WeilHeis-hom}. Combining this equality with \eqref{eqn-HeisWeil1},
\eqref{eqn-HeisWeil2} and \eqref{eqn-eps-delta}, we obtain that the left vertical
inclusion in the above diagram is an equality.
It follows that the two remaining inclusions are equalities too.
\end{subequations}
\end{proof}

\subsection{Construction of a Heisenberg--Weil coefficient system}  \label{sec:heis-weil-coeff}

In this section we finally prove Theorem \ref{thm_WH_coef_system-general}, so we let $R$ be a commutative
$\Rmintwo$-algebra.

We will first prove the result under the following assumption, which we assume from now on until the end of \ref{sec:HW-underassumption-3}.
\begin{assumption}\label{assumption}
	We have  $Z(C_{\hat{\mathbf{G}}}(\phi))^{\varphi(W_{F}),\circ}=Z(\hat{\mathbf{G}})^{W_{F},\circ}$. Equivalently,
	the maximal split central torus of ${\mathbf{G}}_{\iota}$ coincides
	with that of ${\mathbf{G}}$,
	i.e., $Z(\mathbf{G}_{\iota})/Z(\mathbf{G})$ is anisotropic.
\end{assumption}

The role of
this assumption is to ensure that the subset $\BT'_{\iota}$ of $\BT'$ is the \emph{reduced}
building of $G_{\iota}$. In particular, it is equipped with the canonical  Bruhat--Tits
polysimplicial structure. Note that each Bruhat--Tits polysimplex is  the union of the $e$-facets it
contains. 
However, beware that our $e$-facets were defined with respect to $G$, and they may not be
$e$-facets with respect to $G_{\iota}$.

\begin{no}
	For every chamber $\cC$ (i.e., polysimplex of maximal dimension) of
        $\BT'_{\iota}$, we denote by $\cF_{\cC}$ the unique $e$-facet
        that contains the barycenter of $\cC$. By our choice of $e$,
        this is actually an $e$-vertex, whose stabilizer $G_{\iota,\FC_{\CC}}$ coincides
        with the stabilizer $G_{\iota,\CC}$ of $\CC$. 
\end{no}

\alin{Basic choices} \label{basic-choices} We choose and fix the following data :
\begin{enumerate}
\item a chamber $\cC_0$ of $\BT'_{\iota}$, whose closure we denote by $\ov\CC_{0}$.
\item  a set $\sS$ of representatives of $G_{\iota,\CC_{0}}$-orbits of $e$-facets of $\BT'$ lying in
  $\ov\CC_{0}$. Note that this set must contain $\FC_{\CC_{0}}$ and 
 is also a set of representatives of   
  $G_\iota$-orbits of $e$-facets lying in $\BT'_\iota$.

\item  for every $\cF_{0} \in \sS$, a twisted $R\K_{\iota, \cF_{0}}$-Heisenberg--Weil
  representation $(\HeisWeil_{\iota,\cF_{0}},\WC_{\iota,\FC_{0}}^{0})$ such that the invertible $R$-module
  $\Hom_{R(\K_{\iota,\cF_{0}}\cap\K_{\iota,\FC_{\CC_{0}}})}
  (\HeisWeil_{\iota, \cF_{0}},\HeisWeil_{\iota, \cF_{\CC_{0}}})=:H_{\FC_{0},\FC_{\CC_{0}}}$
 is free. This can be achieved as follows: Start with an arbitrary choice
  $(\kappa'_{\iota,\FC_{0}})_{\FC_{0}\in\sS}$ and set
  $\kappa_{\iota,\FC_{0}}:=  \kappa'_{\iota,\FC_{0}}\otimes_{R}
  (H_{\FC_{0},\FC_{\CC_{0}}})^{*}$ where
  $(H_{\FC_{0},\FC_{\CC_{0}}})^{*}:=\Hom_{R}(H_{\FC_{0},\FC_{\CC_{0}}},R)$
  denotes the inverse $R$-module.

\item   a generator $\alpha_{\cF_{0},\cF_{\cC_0}}$ of the $R$-module
    $\Hom_{R(\K_{\iota,\cF_{0}}\cap\K_{\iota,\FC_{\CC_{0}}})}
  (\HeisWeil_{\iota, \cF_{0}},\HeisWeil_{\iota, \cF_{\CC_{0}}})$.
\end{enumerate}

\alin{Construction of a Heisenberg--Weil coefficient system under Assumption \ref{assumption}, step 1: modules and actions}
For any $\cF \in \BT'_{\iota,\bullet/e}$, we let $\cF_0 \in \sS$ be the unique element in
the $G_\iota$-orbit of $\cF$. As before, we denote by $G_{\iota,\cF \cF_0}:=\{g\in G_\iota, g\cF_0=\cF\}$
the transporter of $\FC_{0}$ to $\FC$, an open subset of $G_{\iota}$, and we set
$$ \K_{\iota,\FC\FC_{0}}:=
G_{\iota,\FC\FC_{0}}\K_{\iota,\FC_{0}}=\K_{\iota,\FC}G_{\iota,\FC\FC_{0}},$$
which is an open subset of $G$. Note that, since $G_{\iota,\FC}$ is contained in
$\K_{\iota,\FC}$, the set $\K_{\iota,\FC\FC_{0}}$ is a right 
$\K_{\iota,\FC_{0}}$-coset, and a left  $\K_{\iota,\FC}$-coset. Now we set
\[ \cW_{\iota, \cF}:=\cC_c^\infty(\K_{\iota, \cF \cF_{0}})\otimes_{R\K_{\iota, \cF_0}} \WC^{0}_{\iota,\cF_0}.\]
For any facet $\FC'\in\BT'_{\iota,\bullet/e}$ in the $G_{\iota}$-orbit of $\FC$, the set
$\K_{\iota,\FC'\FC}$ is the hom set $\Hom(\FC,\FC')$ in $[\BT'_{\iota,\bullet/e}/K_{\iota}]$.
Therefore, the maps
$$\application{\WC_{\iota,\FC,\FC'}:\,}{\K_{\iota,\FC'\FC}}{\Hom_{R}(\WC_{\iota,\FC},\WC_{\iota,\FC'})}{g}{\lambda_{g}\otimes\id}, $$ 
where $\lambda_{g} : \cC_c^\infty(\K_{\iota, \cF \cF_{0}})\To{}\cC_c^\infty(\K_{\iota, \cF'  \cF_{0}})$ 
is induced by left translation by
  $g^{-1}$,
 define a coefficient system on the full
subcategory of $[\BT'_{\iota,\bullet/e}/K_{\iota}]$ given by the orbit of $\FC_{0}$.
In particular, $\WC_{\iota,\FC,\FC}$ defines a representation of $\K_{\iota,\FC}$ on $\WC_{\iota,\FC}$ 
and, by construction, its restriction to $\Ku_{\iota,\FC}$ is a
Heisenberg representation for $\check\phi^{+}_{\iota,\FC}$.
To maintain consistency of our notation with that of $G_{\iota}$-equivariant coefficient
systems, we also write
$$ g_{\WC,\FC} := \WC_{\iota,\FC,g\FC}(g),\;  \hbox{ for any } g\in G_{\iota}$$

\alin{Construction of a Heisenberg--Weil coefficient system under Assumption \ref{assumption}, step 2: face maps}
With $\FC$ and $\FC_{0}$ as in the previous paragraph, pick also a chamber $\CC$ such
that $\FC\subseteq \ov\CC$. We can then find $g\in G_{\iota}$ such that $\CC=g\CC_{0}$ and
$\FC=g\FC_{0}$. Such a $g$ is unique modulo right multiplication by $G_{\iota,\FC_{0}}\cap
G_{\iota,\CC_{0}}$, and we have
$\K_{\iota, \cF \cF_{0}}\cap\K_{\iota, \FC_{\CC},\cF_{\CC_{0}}}
=g(\K_{\iota,\cF_{0}}\cap\K_{\iota,\cF_{\CC_{0}}})
=(\K_{\iota,\cF}\cap\K_{\iota,\cF_{\CC}})g.$
We thus can define a generator $\alpha_{\FC,\CC}$ of 
$\Hom_{R(\K_{\iota,\cF}\cap\K_{\iota,\FC_{\CC}})}(\WC_{\iota, \cF},\WC_{\iota, \cF_{\CC}})$
by requiring the commutativity of the following diagram of $R(\K_{\iota,\cF}\cap\K_{\iota,\FC_{\CC}})$-modules (where ${can}$ denotes
canonical inclusions):
$$
\xymatrix{
\WC_{\iota,\FC}=\cC_c^\infty(\K_{\iota, \cF \cF_{0}})\otimes_{R\K_{\iota, \cF_0}} \WC^{0}_{\iota,\cF_0}
\ar@{-->}[rr]^{\alpha_{\FC,\CC}}
&&
\WC_{\iota,\FC_{\CC}}=\cC_c^\infty(\K_{\iota, \cF_{\CC} \cF_{\CC_{0}}})\otimes_{R\K_{\iota, \cF_{\CC_{0}}}}\WC^{0}_{\iota,\cF_{\CC_{0}}}
\\ \\
\cC_c^\infty(\K_{\iota, \cF \cF_{0}}\cap\K_{\iota, \FC_{\CC}\cF_{\CC_{0}}})
\otimes_{R(\K_{\iota, \cF_0}\cap\K_{\iota, \cF_{\CC_{0}}})} \WC^{0}_{\iota,\cF_0}
\ar[uu]^{\simeq}_{{\rm can}\otimes\id}
\ar[uurr]_{\,{\rm can}\,\otimes\,\alpha_{\FC_{0},\CC_{0}}} &&
}.
$$
By construction, these maps satisfy the following compatibility with the action maps :
$$\forall g\in G_{\iota},\; g_{\WC,\FC_{\CC}}\circ\alpha_{\FC,\CC}=\alpha_{g\FC,g\CC}\circ g_{\WC,\FC}.$$
Now, let $\FC'\subseteq\o\FC$ be another $e$-facet in the closure of $\FC$. It is also
contained in $\ov\CC$, so we have a map $\alpha_{\FC',\CC}$ as above. By Corollary 
\ref{coro_comp_Heisenberg}, there is a unique
$R(\Ku_{\iota,\FC}\cap\Ku_{\iota,\FC'})$-linear map
$$ \beta_{\FC,\FC',\CC}:\, \WC_{\iota,\FC}\To{}\WC_{\iota,\FC'}$$
such that $\alpha_{\FC,\CC}=\alpha_{\FC',\CC}\circ \beta_{\FC,\FC',\CC}$.
It is a generator of the $R$-module
$\Hom_{R(\Ku_{\iota,\FC}\cap\Ku_{\iota,\FC'})}(\WC_{\iota,\FC},\WC_{\iota,\FC'})$
and,  by
Proposition \ref{Hom_HW_H}, it actually belongs to
$\Hom_{R(\K_{\iota,\FC}\cap\K_{\iota,\FC'})}(\WC_{\iota,\FC},\WC_{\iota,\FC'})$.
By uniqueness, we have
$\beta_{\FC',\FC'',\CC}\circ\beta_{\FC,\FC',\CC}=\beta_{\FC,\FC'',\CC}$ and,
by construction again, we have the following compatibility with the action maps :
$$\forall g\in G_{\iota},\; g_{\WC,\FC'}\circ\beta_{\FC,\FC',\CC}=\beta_{g\FC,g\FC',g\CC}\circ g_{\WC,\FC}.$$

\begin{lem}
  $\beta_{\FC,\FC',\CC}$ is independent of $\CC$. We will denote it simply by $\beta_{\FC,\FC'}$
\end{lem}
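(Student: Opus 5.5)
We need to show that $\beta_{\FC,\FC',\CC}=\beta_{\FC,\FC',\CC'}$ whenever $\CC,\CC'$ are two chambers of $\BT'_{\iota}$ whose closures contain $\FC$ (hence also $\FC'$). The plan is first to reduce to adjacent chambers, and then to compare the two resulting face maps through a third facet.

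For the reduction, we use that $\BT'_{\iota}$ is the reduced building of $G_{\iota}$ (Assumption \ref{assumption}). The chambers containing $\FC$ in their closure correspond to the chambers of the spherical residue building at the Bruhat--Tits facet containing $\FC$. This residue is gallery connected, so we can join $\CC$ to $\CC'$ by a gallery $\CC=\CC_{1},\dots,\CC_{n}=\CC'$ in which every chamber contains $\FC$ in its closure and consecutive chambers share a codimension-one panel. It therefore suffices to treat $\CC$ and $\CC'$ adjacent.

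In the adjacent case, the natural tool is a uniqueness argument over an intermediate $e$-facet. Every map involved is a generator of an invertible $R$-module by Lemma \ref{intertwining_Heisenberg} i). Since $\beta_{\FC,\FC',\CC}$ and $\beta_{\FC,\FC',\CC'}$ both generate $\Hom_{R(\Ku_{\iota,\FC}\cap\Ku_{\iota,\FC'})}(\WC_{\iota,\FC},\WC_{\iota,\FC'})$, they differ by a unit $u\in R^{\times}$, and we must show $u=1$.

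Transitivity gives $\beta_{\FC,\FC',\CC}=\beta_{\GC,\FC',\CC}\circ\beta_{\FC,\GC,\CC}$ whenever $\FC'\subseteq\o\GC\subseteq\o\FC$, and similarly for $\CC'$. So it is enough to prove the independence for pairs of the form $(\FC,\FC')$ with $\FC$ maximal among $e$-facets in $\o\CC\cap\o{\CC'}$, together with $(\GC,\FC')$ for $\GC$ in that common closure. In other words, we may assume that $\FC$ lies in the relative interior of the common panel $\PC:=\o\CC\cap\o{\CC'}$.

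For such $\FC$ the two defining relations read $\alpha_{\FC,\CC}=\alpha_{\FC',\CC}\circ\beta_{\FC,\FC',\CC}$ and $\alpha_{\FC,\CC'}=\alpha_{\FC',\CC'}\circ\beta_{\FC,\FC',\CC'}$. The step I expect to be the main obstacle is comparing $\alpha_{\cdot,\CC}$ with $\alpha_{\cdot,\CC'}$. The plan is to find $g\in G_{\iota}$ with $g\CC=\CC'$ that fixes $\PC$ pointwise. The stabilizer of $\PC$ acts transitively on the chambers containing it, and by our choice of $e$ stabilizers of $e$-facets are fixators, so $g$ fixes $\FC$ and $\FC'$. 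The compatibility $g_{\WC,\FC_{\CC}}\circ\alpha_{\FC,\CC}=\alpha_{g\FC,g\CC}\circ g_{\WC,\FC}$ then gives $\alpha_{\FC,\CC'}=g_{\WC,\FC_{\CC}}\circ\alpha_{\FC,\CC}\circ g_{\WC,\FC}^{-1}$, and likewise for $\FC'$. Combining these with the two defining relations and the uniqueness in Corollary \ref{coro_comp_Heisenberg}, we get
\[ \beta_{\FC,\FC',\CC'}=g_{\WC,\FC'}\circ\beta_{\FC,\FC',\CC}\circ g_{\WC,\FC}^{-1}. \]
Finally, $\beta_{\FC,\FC',\CC}$ is $\K_{\iota,\FC}\cap\K_{\iota,\FC'}$-equivariant by Proposition \ref{Hom_HW_H}. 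Since $g\in G_{\iota,\FC}\cap G_{\iota,\FC'}\subseteq\K_{\iota,\FC}\cap\K_{\iota,\FC'}$, and $g_{\WC,\FC}$, $g_{\WC,\FC'}$ are the actions of $g$, conjugating by $g$ leaves $\beta_{\FC,\FC',\CC}$ unchanged. Hence $\beta_{\FC,\FC',\CC'}=\beta_{\FC,\FC',\CC}$, as required.

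If the transitivity of the stabilizer of $\PC$ fails to provide such a $g$ inside $G_{\iota}$, I would fall back on a direct scalar computation. This would reduce modulo maximal ideals and use the non-vanishing composition argument of Lemma \ref{composition_Heisenberg}, applied to the triple $(\FC_{\CC},\FC,\FC_{\CC'})$.
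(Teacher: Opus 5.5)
Your proof is correct, and its core is exactly the paper's argument: take $g\in G_{\iota}$ fixing $\FC$ and $\FC'$ with $g\CC=\CC'$, then combine the compatibility $g_{\WC,\FC'}\circ\beta_{\FC,\FC',\CC}=\beta_{g\FC,g\FC',g\CC}\circ g_{\WC,\FC}$ with the $\K_{\iota,\FC}\cap\K_{\iota,\FC'}$-equivariance of Proposition~\ref{Hom_HW_H}. The gallery reduction is not needed, because the paper simply uses that $G_{\iota,\FC}$, which contains the parahoric of the Bruhat--Tits facet through $\FC$, already acts transitively on all chambers whose closure contains $\FC$. Your reduction to $\FC$ in the interior of the panel is likewise unnecessary, since an element fixing the panel pointwise fixes every $e$-facet in its closure; it is also misjustified as written, because composing face maps through smaller facets cannot enlarge $\FC$.
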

\begin{proof}
Recall that $G_{\iota,\FC}$ acts transitively on the set of chambers that contain
$\FC$. Moreover, by our choice of $e$, we have $G_{\iota,\FC}\subseteq G_{\iota,\FC'}$ and
$\K_{\iota,\FC}\subseteq \K_{\iota,\FC'}$, hence also $G_{\iota,\FC}\subseteq \K_{\iota,\FC}\cap\K_{\iota,\FC'}$.
Now, when $g\in G_{\iota,\FC}$, the last displayed property reads
$g_{\WC,\FC'}\circ\beta_{\FC,\FC',\CC}=\beta_{\FC,\FC',g\CC}\circ g_{\WC,\FC}$.
On the other hand, the $\K_{\iota,\FC}\cap\K_{\iota,\FC'}$-equivariance of
$\beta_{\FC,\FC',\CC}$ means that 
$g_{\WC,\FC'}\circ\beta_{\FC,\FC',\CC}=\beta_{\FC,\FC',\CC}\circ g_{\WC,\FC}$.
It follows that $\beta_{\FC,\FC',\CC}=\beta_{\FC,\FC',g\CC}$.
\end{proof}

\alin{Construction of a Heisenberg--Weil coefficient system under Assumption \ref{assumption}, step 3: morphisms}\label{sec:HW-underassumption-3}
Let now $\FC$ and $\FC'$ be arbitrary and
recall that $\Hom(\FC,\FC')=\{g\in K_{\iota,\FC'}G_{\iota},\, \o{g\FC}\supseteq \FC'\}$.
$$ \hbox{For } g\in\Hom(\FC,\FC'),\,\hbox{ set }\,\WC_{\iota,\FC,\FC'}(g):= \WC_{\iota,g^{-1}\FC',\FC'}(g)\circ \beta_{\FC,g^{-1}\FC'}.$$
This is consistent with our previous definition when $g\FC=\FC'$ since $\beta_{\FC,\FC}=\id$.

\begin{lem}
  We have
  $ \WC_{\iota,\FC',\FC''}(h)\circ\WC_{\iota,\FC,\FC'}(g)=\WC_{\iota,\FC,\FC''}(hg)$
  for all $g\in\Hom(\FC,\FC')$ and  $h\in\Hom(\FC',\FC'')$.
\end{lem}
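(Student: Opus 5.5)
We unwind the definition and reduce everything to three facts already established: transitivity of the maps $\beta$ ($\beta_{\FC',\FC'',\CC}\circ\beta_{\FC,\FC',\CC}=\beta_{\FC,\FC'',\CC}$, now independent of $\CC$), the functoriality of the orbit coefficient system $g\mapsto \WC_{\iota,\FC,g\FC}(g)$ on each $K_\iota$-orbit (which holds because these maps are induced by left translations on $\cC^\infty_c(\K_{\iota,\FC\FC_0})$), and a compatibility of $\beta$ with the action of \emph{all} elements of $\Hom$-sets, not only those of $G_\iota$. Writing $g\in\Hom(\FC,\FC')$, $h\in\Hom(\FC',\FC'')$ and setting $\FC_1:=g^{-1}\FC'$, $\FC_2:=h^{-1}\FC''$, the left-hand side is
\[
\WC_{\iota,\FC_2,\FC''}(h)\circ\beta_{\FC',\FC_2}\circ\WC_{\iota,\FC_1,\FC'}(g)\circ\beta_{\FC,\FC_1},
\]
while the right-hand side is $\WC_{\iota,(hg)^{-1}\FC'',\FC''}(hg)\circ\beta_{\FC,(hg)^{-1}\FC''}$. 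Note that $(hg)^{-1}\FC''=g^{-1}\FC_2$, and $g^{-1}\FC_2\subseteq\ov{g^{-1}\FC'}=\ov{\FC_1}\subseteq\ov\FC$.

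The key intermediate claim is the equivariance
\[
\beta_{\FC',\FC_2}\circ\WC_{\iota,\FC_1,\FC'}(g)=\WC_{\iota,g^{-1}\FC_2,\FC_2}(g)\circ\beta_{\FC_1,g^{-1}\FC_2}
\]
for $g\in\K_{\iota,\FC'}G_\iota$ with $g\FC_1=\FC'$. Granting it, the left-hand side becomes $\WC(h)\circ\WC(g)\circ\beta_{\FC_1,g^{-1}\FC_2}\circ\beta_{\FC,\FC_1}$. Functoriality on orbits combines the first two factors into $\WC_{\iota,g^{-1}\FC_2,\FC''}(hg)$, and transitivity combines the last two into $\beta_{\FC,g^{-1}\FC_2}$, which gives the right-hand side.

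To prove the equivariance, we write $g=kg_\iota$ with $k\in\Ku_{\iota,\FC'}$ and $g_\iota\in G_\iota$. For $g_\iota$ the identity is the already established relation $g_{\WC,\FC'}\circ\beta_{\FC,\FC'}=\beta_{g\FC,g\FC'}\circ g_{\WC,\FC}$. This reduces the claim to $k\in\Ku_{\iota,\FC'}$ fixing $\FC'$. Such a $k$ lies in $G_{\FC',0+}$ and hence also fixes $\FC_2\subseteq\ov{\FC'}$. Its action on $\WC_{\iota,\FC'}$ is the Heisenberg action, and the claim becomes the $\Ku_{\iota,\FC'}$-equivariance of $\beta_{\FC',\FC_2}$. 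This is the main obstacle, since $\beta_{\FC',\FC_2}$ is only known to be $\K_{\iota,\FC'}\cap\K_{\iota,\FC_2}$-equivariant, and by construction $\K_{\iota,\FC'}\subseteq\K_{\iota,\FC_2}$. Since $\ov{\FC'}\supseteq\FC_2$ and $e$ was chosen so that stabilizers are fixators, we get $\Ku_{\iota,\FC'}\subseteq\K_{\iota,\FC'}\subseteq\K_{\iota,\FC_2}$, and the required equivariance is therefore exactly Proposition \ref{Hom_HW_H} applied to $\beta_{\FC',\FC_2}$. We also have to check that $\WC_{\iota,\FC',\FC'}(k)$ agrees with the orbit-system map, which holds because both are given by left translation.
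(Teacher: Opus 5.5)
Your proof is correct and follows essentially the same route as the paper. Both arguments split $g$ into a $\K_{\iota,\FC'}$-part and a $G_\iota$-part, move the face map past each piece using its $\K_{\iota,\FC'}\cap\K_{\iota,\FC_2}$-equivariance (with $\K_{\iota,\FC'}\subseteq\K_{\iota,\FC_2}$) and the $G_\iota$-compatibility, and then conclude by functoriality on orbits and transitivity of the $\beta$'s. The step you flag as the main obstacle is already immediate from the equivariance obtained when the $\beta_{\FC,\FC'}$ were constructed, so citing Proposition \ref{Hom_HW_H} again is harmless but redundant.
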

\begin{proof}
  Choose decompositions  $g=kg_{\iota}$ and $h=lh_{\iota}$ with $k\in K_{\iota,\FC'}$,
  $l\in K_{\iota,\FC''}$ and $g_{\iota},h_{\iota}\in G_{\iota}$. We have
  \begin{eqnarray*}
  &&  \WC_{\iota,\FC,\FC'}(g) = \WC_{\iota,\FC',\FC'}(k)\circ
    (g_{\iota})_{\WC,g_{\iota}^{-1}\FC'}\circ\beta_{\FC,g_{\iota}^{-1}\FC'} \\
  &&\WC_{\iota,\FC',\FC''}(h) = \WC_{\iota,\FC'',\FC''}(l)\circ
    (h_{\iota})_{\WC,h_{\iota}^{-1}\FC''}\circ\beta_{\FC',h_{\iota}^{-1}\FC''}
  \end{eqnarray*}
Since $\beta_{\FC',h_{\iota}^{-1}\FC''}$ is $\K_{\iota,\FC'}\cap
K_{\iota,h_{\iota}^{-1}\FC''}$-equivariant and $\K_{\iota,\FC'}\subset K_{\iota,h_{\iota}^{-1}\FC''}$,
we have
$$\beta_{\FC',h_{\iota}^{-1}\FC''}\circ \WC_{\iota,\FC',\FC'}(k)
=\WC_{\iota,h_{\iota}^{-1}\FC'',h_{\iota}^{-1}\FC''}(k)\circ  \beta_{\FC',h_{\iota}^{-1}\FC''}.$$ 
On the other hand, we have already seen that
$$\beta_{\FC',h_{\iota}^{-1}\FC''}\circ (g_{\iota})_{\WC,g_{\iota}^{-1}\FC'}
=(g_{\iota})_{\WC,(h_{\iota}g_{\iota})^{-1}\FC''}\circ  \beta_{g_{\iota}^{-1}\FC',(h_{\iota}g_{\iota})^{-1}\FC''}.$$ 
From the properties of $\WC_{\iota}$ on $G_{\iota}$-orbits, we have
$$\WC_{\iota,\FC'',\FC''}(l)\circ(h_{\iota})_{\WC,h_{\iota}^{-1}\FC''}\circ
\WC_{\iota,h_{\iota}^{-1}\FC'',h_{\iota}^{-1}\FC''}(k)\circ(g_{\iota})_{\WC,(h_{\iota}g_{\iota})^{-1}\FC''}
=\WC_{\iota,(hg)^{-1}\FC'',\FC''}(hg).
$$
Finally, the transitivity of face maps implies
$$ \beta_{g_{\iota}^{-1}\FC',(h_{\iota}g_{\iota})^{-1}\FC''}\beta_{\FC,g_{\iota}^{-1}\FC'}
=\beta_{\FC,(h_{\iota}g_{\iota})^{-1}\FC''}=\beta_{\FC,(hg)^{-1}\FC''}.$$
The four last displayed equalities imply the lemma.
\end{proof}

\begin{pro} \label{thm_WH_coef_system-withassumption}
	Let $R$ be a commutative $\Rmintwo$-algebra, and assume Assumption \ref{assumption}. Then
	there exists a Heisenberg--Weil coefficient system $\WC_{\iota}$ in $\Coef_{R}(\BT'_{\iota,\bullet/e}/K_{\iota})$.
\end{pro}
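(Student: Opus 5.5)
The plan is to check that the data built in the three steps above form a functor $[\BT'_{\iota,\bullet/e}/\K_{\iota}]\To{}R-\Mod$, and then to verify conditions 1 and 2 of Definition \ref{def_WH_coef_syst}. The objects are the modules $\WC_{\iota,\FC}$ and the morphisms are the maps $\WC_{\iota,\FC,\FC'}(g)$ of step 3. Two properties of a coefficient system must be checked. Composition is the last lemma. Identities hold because $\WC_{\iota,\FC,\FC}(1)=\beta_{\FC,\FC}=\id$.

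The first point to settle is well-definedness of $\WC_{\iota,\FC,\FC'}(g)$ for $g\in\Hom(\FC,\FC')$. Write $g=kg_{\iota}$ with $k\in\K_{\iota,\FC'}$ and $g_\iota\in G_\iota$. Then $g^{-1}\FC'=g_\iota^{-1}\FC'$ because $k$ fixes $\FC'$, so $g^{-1}\FC'$ is an $e$-facet of $\BT'_{\iota}$. It lies in $\o\FC$, so $\beta_{\FC,g^{-1}\FC'}$ is defined. Also $g\in\K_{\iota,\FC'(g^{-1}\FC')}$, so $\WC_{\iota,g^{-1}\FC',\FC'}(g)$ is defined on the orbit of $g^{-1}\FC'$. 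Neither expression depends on the chosen decomposition of $g$.

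Next, the module $\WC_{\iota,\FC}$ carries the $\K_{\iota,\FC}$-action given by $\WC_{\iota,\FC,\FC}$. Since $\K_{\iota,\FC\FC_0}$ is a single $\K_{\iota,\FC_0}$-coset, this module is $h^*\WC^0_{\iota,\FC_0}$ for any $h\in G_{\iota,\FC\FC_0}$. As noted in step 1, its restriction to $\Ku_{\iota,\FC}$ is a Heisenberg representation for $\check\phi^+_{\iota,\FC}$; this uses the equivariance (\ref{Gaction}). This gives condition 1.

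For condition 2 we need the face map $\beta_{\WC,\FC,\FC'}=\WC_{\iota,\FC,\FC'}(1)=\beta_{\FC,\FC'}$. By construction it is a generator of $\Hom_{R(\Ku_{\iota,\FC}\cap\Ku_{\iota,\FC'})}(\WC_{\iota,\FC},\WC_{\iota,\FC'})$. The reason is as follows. The maps $\alpha_{\FC,\CC}$ and $\alpha_{\FC',\CC}$ are generators, being transports of the chosen generators $\alpha_{\FC_0,\CC_0}$. Corollary \ref{coro_comp_Heisenberg} applies to the points: $x'' $ the barycenter of $\CC$, which lies in the open chamber; $x$ in $\FC$; and $x'$ in $\FC'$. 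It says that composition of the relevant invertible Hom-modules is an isomorphism. Hence the factor $\beta_{\FC,\FC',\CC}$ is a generator. Freeness of $H_{\FC_0,\FC_{\CC_0}}$, arranged in \ref{basic-choices}, is exactly what makes these generators exist.

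The remaining task is to read off the equivariance properties a coefficient system on $[\BT'_{\iota,\bullet/e}/\K_\iota]$ must have. Each face map must be $\K_{\iota,\FC}\cap\K_{\iota,\FC'}$-equivariant; this is Proposition \ref{Hom_HW_H}. Face maps must be compatible with the $G_\iota$-action; this is the displayed compatibility together with the lemma on independence of $\CC$.

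The main obstacle is the independence of $\beta_{\FC,\FC'}$ from the auxiliary chamber, together with the composition lemma. Both have already been established, so the proof reduces to assembling these facts. The step needing the most care is the Hom-set bookkeeping in the well-definedness check.
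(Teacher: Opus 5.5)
Your proposal is correct and follows the paper's argument. The paper also gets functoriality from the composition lemma of step 3, notes that each $\WC_{\iota,\FC}|_{\Ku_{\iota,\FC}}$ is a Heisenberg representation by construction, and obtains that $\beta_{\FC,\FC'}=\WC_{\iota,\FC,\FC'}(1)$ is a generator from its definition through Corollary \ref{coro_comp_Heisenberg}, with $\K$-equivariance from Proposition \ref{Hom_HW_H}. Your version only spells out the bookkeeping the paper leaves implicit: well-definedness of $\WC_{\iota,\FC,\FC'}(g)$, the identity morphisms, and the choice of the points $x,x',x''$.
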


\begin{proof}
	By Lemma \ref{sec:HW-underassumption-3} the maps $\FC\mapsto \WC_{\iota,\FC}$ and  $g\in\Hom(\FC,\FC')\mapsto
	\WC_{\iota,\FC,\FC'}(g)$ define a functor $[\BT'_{\iota,\bullet/e}/K_{\iota}]\To{}R-\Mod$.
	We have already seen that the action of $\Ku_{\iota,\FC}$ on $\WC_{\iota,\FC}$ through
	$\WC_{\iota,\FC,\FC}$ is a Heisenberg representation, and each face map
	$\beta_{\FC,\FC'}=\WC_{\iota,\FC,\FC'}(1)$ is a generator of
	$\Hom_{R(\Ku_{\iota,\FC}\cap\Ku_{\iota,\FC'})}(\WC_{\iota,\FC},\WC_{\iota,\FC'})$ by construction. 
\end{proof}

We now stop imposing Assumption \ref{assumption}.

\alin{Minimal factorization through a rational Levi subgroup}
Consider the centralizer $\MC:=C_{^{L}{\mathbf{G}}}(Z(C_{\hat{\mathbf{G}}}(\phi))^{\varphi(W_{F}),\circ})$ in
$^{L}{\mathbf{G}}$ of the maximal $W_{F}$-invariant central torus in $C_{\hat{\mathbf{G}}}(\phi)$. This
is a Levi subgroup of $^{L}{\mathbf{G}}$ in the sense of Borel \cite[\S 3.4]{BorelCorvallis},
so, after conjugating $\phi$, we may assume 
$\MC$ is of the form $^{L}{\mathbf{M}}$ for some (non-twisted) $F$-rational Levi subgroup ${\mathbf{M}}$ of
${\mathbf{G}}$. 
By construction, both $\phi$ and $\varphi$ factor through $\MC$, providing
us with a wild inertia parameter  $\phiM\in \Phi(P_{F},\mathbf{M})$ (since any relevant
extension $\varphi'$ for $G$ factors through $\MC$ and thus provides a relevant extension
for $M$). Note that, by definition,
$\hat{\mathbf{M}}=\MC^{\circ}$ is the centralizer in $\hat{\mathbf{G}}$ of
$(\hat{\mathbf{S}}_{\phi})^{W_{F},\circ}$.
On the other side, pick a $\iota : \mathbf{S}_{\phi}\injo\mathbf{G}$ in $I$ and consider the centralizer
$C_{\mathbf{G}}(\iota(\mathbf{S}_{\phi}^{\rm split}))$ of the image
of the maximal split subtorus of $\mathbf{S}_{\phi}$. This is a rational  Levi subgroup of
$\mathbf{G}$ in the $G$-conjugacy class of $\mathbf{M}$. So, after conjugating $\iota$, we
may assume it factors through $\mathbf{M}$, so that
$\mathbf{M}=C_{\mathbf{G}}(\iota(\mathbf{S}_{\phi}^{\rm split}))$. By construction of
$\MC$, we have 
$C_{\hat{\mathbf{G}}}(\phi)\subseteq \hat{\mathbf{M}}$, hence
$\mathbf{S}_{\phi}=\mathbf{S}_{\phiM}$, and $\iota$ determines an embedding $\iota_{M}$ in
$I_{\phi_{M}}$, for which we have ${\mathbf{M}}_{\iotaM}={\mathbf{G}}_{\iota}$.

By construction, Assumption \ref{assumption} is satisfied for $(\hat{\mathbf{M}}, \phi_M,I_M)$. 
So, Proposition \ref{thm_WH_coef_system-withassumption} applies and provides us with a
Heisenberg--Weil coefficient system related to these data. We need to compare the buildings at stake.

Let us take up the notations $\BT_{M}$ and $\BT_{\iota_{M}}$ of \ref{setup_par_ind}. These are
subsets of $\BT$, and we denote by $\BT'_{M}$ and $\BT'_{\iota_{M}}$ their images in $\BT'$. In our
setup here, we actually have $\BT'_{\iota}=\BT'_{\iota_{M}}$.  These subsets are stable under the
$e$-facet decomposition of $\BT'$, and we reserve the notation $\BT'_{M,\bullet/e}$, resp. $\BT'_{\iota_{M},\bullet/e}$,
for the corresponding posets. On the other hand, the reduced Bruhat--Tits building
$\BT'(\mathbf{M},F)$ of $\mathbf{M}$ also comes with its own $e$-facet decomposition, and its subset
$\BT'(\mathbf{M},F)_{\iota_{M}}$ is stable under this decomposition.
We denote by $\BT'(\mathbf{M},F)_{\bullet/e}$, resp., $\BT'(\mathbf{M},F)_{\iota_{M},\bullet/e}$,
the associated posets.
There is a canonical projection $\BT'_{M}\To{}\BT'(\mathbf{M},F)$ that takes any $e$-facet of
$\BT'_{M}$ into an $e$-facet of $\BT'(\mathbf{M},F)$. This projection thus induces a functor
$$\pi:\,[\BT'_{\iota,\bullet/e}/(M\cap G_{\iota})]=[\BT'_{\iota_{M},\bullet/e}/M_{\iota_{M}}]\To{}
[\BT'(\mathbf{M},F)_{\iota_{M},\bullet/e}/M_{\iota_{M}}].$$
Denote by $\K_{\iota_{M},\pi(\FC)}$,  $\Ku_{\iota_{M},\pi(\FC)}$, etc., the
objects analogous to $K_{\iota, \cF}$, $\Ku_{\iota, \cF}$, 
etc., constructed for $(\mathbf M, \phi_M, \iota_M \in I_M)$ and the facet $\pi(\FC)$ in place of $(\mathbf G,
\phi, \iota \in I)$ and the facet $\FC$.
Then we have an inclusion $M\cap \K_{\iota,\FC}\subset \K_{\iota_{M},\pi(\FC)}$ and an equality $M\cap \Ku_{\iota,\FC}=\Ku_{\iota_{M},\pi(\FC)}$. So our functor
$\pi$ can be upgraded to
$$\pi:\,[\BT'_{\iota,\bullet/e}/(M\cap K_{\iota})]\To{}
[\BT'(\mathbf{M},F)_{\iota_{M},\bullet/e}/K_{\iota_{M}}].$$
Now, the Heisenberg--Weil coefficient system $\cW_{\iota_M}$ provided by
Proposition \ref{thm_WH_coef_system-withassumption}, is 
a functor from
$[\BT'(\mathbf{M},F)_{\iota_{M},\bullet/e}/K_{\iota_{M}}]$ to $R-\Mod$, so we may and will compose it
with $\pi$ to get a coefficient system on $[\BT'_{\iota,\bullet/e}/(M\cap K_{\iota})]$ that we
still denote by $\WC_{\iota_{M}}$. In the next paragraphs, we start from $\WC_{\iota_{M}}$ to construct
a Heisenberg--Weil coefficient system $\WC_{\iota}$ on $[\BT'_{\iota,\bullet/e}/K_{\iota}]$.

We set $K^{?}_{\iota_{M},\FC}:=M\cap K^{?}_{\iota,\FC}$ for $?=\dag,+,\emptyset$. We have  $\Ku_{\iota_{M},\FC}=\Ku_{\iota_{M},\pi(\FC)}$,
$\Kp_{\iota_{M},\FC}=\Kp_{\iota_{M},\pi(\FC)}$, and
$K_{\iota_{M},\pi(\FC)}^{\circ}\subset\K_{\iota_{M},\FC}\subset \K_{\iota_{M},\pi(\FC)}$.

\alin{Construction of a Heisenberg--Weil coefficient system 1: modules}
We choose a parabolic subgroup $\mathbf P$ of $\mathbf G$ whose Levi subgroup is $\mathbf
M$ and we denote its unipotent radical by $\mathbf U$ and follow the notation from
\ref{compat_parab_induc} and \ref{setup_par_ind}, in particular $\bar{\mathbf P}$ denotes
the opposite parabolic subgroup of $\mathbf P$ with respect to $\mathbf M$ with unipotent
radical $\bar{\mathbf U}$, and $\Uu_{\iota,\cF}=\Ku_{\iota,\cF} \cap U$,
$\Up_{\iota,\cF}=\Kp_{\iota,\cF} \cap U$, $\bUu_{\iota,\cF}=\Ku_{\iota,\cF} \cap \bar U$,
and $\bUp_{\iota,\cF}=\Kp_{\iota,\cF} \cap \bar U$.
In the setting here, we have
$\K_{\iota,\cF}=\Uu_{\iota,\cF}\K_{\iota_M,\cF}\bUu_{\iota,\cF}$. Indeed, this follows from
$\Ku_{\iota,\FC}=\Uu_{\iota,\cF}\Ku_{\iota_M,\cF}\bUu_{\iota,\cF}$ and $G_{\iota,\FC}=M_{\iota_{M},\FC}$.
Recall from Lemma \ref{lemma_ind_parab} \ref{item:Iwahori}) that $\check\phi_{\iota,\cF}^+$ is the character on $\Kp_{\iota, F}=\Up_{\iota,\cF}\Kp_{\iota_M,\cF}\bUp_{\iota,\cF}$ whose restriction to $\Up_{\iota,\cF}\bUp_{\iota,\cF}$ is trivial and whose restriction to $\Kp_{\iota_M, \cF}$ is $\check\phi_{\iota_M,\cF}^+$.

For every $\cF \in \BT'_{\iota,\bullet/e}$, we extend $\cW_{\iota_M, \cF}$ to a representation of $\Uu_{\iota,\cF}\K_{\iota_M,\cF}\bUp_{\iota,\cF}$ by requiring $\Uu_{\iota,\cF}$ and $\bUp_{\iota,\cF}$ to act trivially. Then we set 
\[ \cW_{\iota, \cF}:= \cInd{\Uu_{\iota,\cF}\K_{\iota_M,\cF}\bUp_{\iota,\cF}}{\K_{\iota,\cF}}{\cW_{\iota_M, \cF}}, \]
which comes with the two $\Uu_{\iota,\cF}\K_{\iota_M,\cF}\bUp_{\iota,\cF}$-equivariant maps corresponding to the identity via Frobenius reciprocity:
\[ i_{\FC}: \cW_{\iota_M, \cF} \hookrightarrow \cW_{\iota, \cF} \quad \text{and} \quad pr_{\FC}: \cW_{\iota, \cF} \twoheadrightarrow \cW_{\iota_M, \cF} .\]
Note that $pr_{\cF} \circ i_{\cF}$ is the identity and $i_{\cF} \circ pr_{\cF}$ is the projection $e_{\Uu_{\iota, \cF}}$ on the submodule on which $\Uu_{\iota,\cF }$ acts trivially.

\begin{lem} \label{Lemma-Wiota-Heisenberg}
	$\cW_{\iota,\cF}|_{\Ku_{\iota,\FC}}$ is a Heisenberg representation for the triple
	$(\Ku_{\iota_{M},\FC},\Kp_{\iota_{M},\FC},\check\phi^{+}_{\iota_{M},\FC})$
\end{lem}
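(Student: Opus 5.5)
The statement should be read with the triple $(\Ku_{\iota,\FC},\Kp_{\iota,\FC},\check\phi^{+}_{\iota,\FC})$; its $M$-part is $(\Ku_{\iota_{M},\FC},\Kp_{\iota_{M},\FC},\check\phi^{+}_{\iota_{M},\FC})$. The plan is to reduce to the explicit model of a Heisenberg representation used in the proof of Lemma \ref{lem_Heisenberg}, namely compact induction of a character from the preimage of a Lagrangian.

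\emph{Step 1: reduce to the model.} By Lemma \ref{lem_Heisenberg}, $\WC_{\iota_{M},\FC}|_{\Ku_{\iota_{M},\FC}}\simeq \cind_{\Ku_{\iota_M,\FC,W_M}}^{\Ku_{\iota_{M},\FC}}(R_\nu)\otimes_R L$. Here $W_M$ is a Lagrangian in $V_M:=\Ku_{\iota_M,\FC}/\Kp_{\iota_M,\FC}$, $\nu$ extends $\check\phi^{+}_{\iota_{M},\FC}$, and $L$ is an invertible $R$-module. Twisting by $L$ commutes with all the inductions below, so we may assume $L=R$.

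\emph{Step 2: restrict the induction to $\Ku_{\iota,\FC}$.} Put $H:=\Uu_{\iota,\FC}\K_{\iota_M,\FC}\bUp_{\iota,\FC}$. Since $\K_{\iota,\FC}=\Uu_{\iota,\FC}\K_{\iota_M,\FC}\bUu_{\iota,\FC}$, we have $\Ku_{\iota,\FC}H=\K_{\iota,\FC}$: indeed $\bUu_{\iota,\FC}\subset\Ku_{\iota,\FC}$, and $\K_{\iota_M,\FC}=G_{\iota,\FC}\Ku_{\iota_M,\FC}$ is normalized by $\Ku_{\iota,\FC}$ modulo $\Ku_{\iota,\FC}$. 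Mackey's formula therefore has a single double coset, and
\[ \WC_{\iota,\FC}|_{\Ku_{\iota,\FC}}\simeq \cind_{\Ku_{\iota,\FC}\cap H}^{\Ku_{\iota,\FC}}\WC_{\iota_M,\FC}, \qquad \Ku_{\iota,\FC}\cap H=\Uu_{\iota,\FC}\Ku_{\iota_M,\FC}\bUp_{\iota,\FC}, \]
the last equality by the Iwahori decomposition of Lemma \ref{Iwahori}. By transitivity of induction this becomes $\cind_{K_{W}}^{\Ku_{\iota,\FC}}(\tilde\nu)$. Here $K_W:=\Uu_{\iota,\FC}\Ku_{\iota_M,\FC,W_M}\bUp_{\iota,\FC}$, and $\tilde\nu$ extends $\nu$ trivially on $\Uu_{\iota,\FC}$ and $\bUp_{\iota,\FC}$.

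\emph{Step 3: identify the image of $K_W$ as a Lagrangian and check the character.} The image of $K_W$ in $\Ku_{\iota,\FC}/\Kp_{\iota,\FC}$ is $V_U\oplus W_M$, with $V_U:=\Uu_{\iota,\FC}/\Up_{\iota,\FC}$. By Proposition \ref{Heisenberg} ii), the subspace $V_U$ is isotropic, is orthogonal to $V_M$, and is in perfect duality with $\bUu_{\iota,\FC}/\bUp_{\iota,\FC}$. Hence $\dim V_U=\dim(\bUu_{\iota,\FC}/\bUp_{\iota,\FC})$, and $V_U\oplus W_M$ is a Lagrangian. Next I check that $\tilde\nu$ is a character of $K_W$. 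Commutators of $\Uu_{\iota,\FC}$, resp. $\bUp_{\iota,\FC}$, with $K_W$ land in $U\cap\Kp$, resp. $\bar U\cap\Kp$, modulo terms on which Lemma \ref{Iwahori} shows $\check\phi^+_{\iota,\FC}$ is trivial. Moreover, on $\Kp_{\iota,\FC}$ the character $\tilde\nu$ equals $\check\phi^{+}_{\iota,\FC}$ by Lemma \ref{lemma_ind_parab} i). Once these are in place, the proof of Lemma \ref{lem_Heisenberg} applies to $\cind_{K_W}^{\Ku_{\iota,\FC}}\tilde\nu$: it is $R$-free of rank $\sqrt{[\Ku_{\iota,\FC}:\Kp_{\iota,\FC}]}$, it is $\check\phi^{+}_{\iota,\FC}$-isotypic, and its endomorphism ring is $R$. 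Thus it is a Heisenberg representation.

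I expect the main obstacle to be the character check in Step 3. One must verify carefully, using the Iwahori factorization of $\Kp_{\iota,\FC}$ from Lemma \ref{Iwahori}, that $K_W$ is a group and that $\tilde\nu$ is multiplicative on it. This is where the triviality of $\check\phi^+_{\iota,\FC}$ on $U\cap\Kp_{\iota,\FC}$ and $\bar U\cap\Kp_{\iota,\FC}$ is essential.
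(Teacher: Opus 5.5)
Your proposal is correct and follows the paper's own argument. You restrict the compact induction to $\Ku_{\iota,\FC}$ (a single Mackey double coset), reduce up to an invertible twist to the Lagrangian-induced model for the $M$-triple, and observe that the result is induced from the Lagrangian $\Uu_{\iota,\FC}/\Up_{\iota,\FC}\oplus W_M$ via a character restricting to $\check\phi^{+}_{\iota,\FC}$ on $\Kp_{\iota,\FC}$. You are also right that the intended triple is $(\Ku_{\iota,\FC},\Kp_{\iota,\FC},\check\phi^{+}_{\iota,\FC})$, which is what the paper's proof actually establishes.
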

\begin{proof}
	We have 
	$\WC_{\iota,\FC}|_{\Ku_{\iota,\cF}}=
	\cInd{\Uu_{\iota,\cF}\Ku_{\iota_M,\cF}\bUp_{\iota,\cF}}{\Ku_{\iota,\cF}}
	{\cW_{\iota_M,\cF}|_{\Ku_{\iota_{M},\FC}}}$ with 
	$\cW_{\iota_M,\cF}|_{\Ku_{\iota_{M},\FC}}$  a Heisenberg representation for the triple
	$(\Ku_{\iota_{M},\FC},\Kp_{\iota_{M},\FC},\check\phi^{+}_{\iota_{M},\FC})$.
	Since the restriction of $\check\phi_{\iota,\cF}^+$  to $\Up_{\iota,\cF}$ and
	$\bUp_{\iota,\cF}$ is trivial and to $\Kp_{\iota_M, \cF}$ is $\check\phi_{\iota_M,\cF}^+$,
	we can prove that $\WC_{\iota,\FC}|_{\Ku_{\iota,x}}$  is a Heisenberg representation for the triple
	$(\Ku_{\iota,\FC},\Kp_{\iota,\FC},\check\phi^{+}_{\iota,\FC})$ 
	as in Lemma \ref{intertwining_Heisenberg} iii)(a). Namely, up to twisting by an invertible
	$R$-module, we may assume that
	$\cW_{\iota_M, \cF}|_{\Ku_{\iota_{M},\FC}}$ is induced from some Lagrangian $W\subset\Ku_{\iotaM,x}/\Kp_{\iotaM,x}$, in
	which case, $\cW_{\iota, \cF}|_{\Ku_{\iota,\FC}}$ is induced from the Lagrangian
	$\Uu_{\iota,x}/\Up_{\iota,x}\oplus W \subset \Ku_{\iota,x}/\Kp_{\iota,x}$.
\end{proof}

\alin{Construction of a Heisenberg--Weil coefficient system 2: action and face maps}

Let $\cF, \cF' \in \BT'_{\iota,\bullet/e}$ and $g \in G_{\iota, \cF, \cF'} \subset M$,
then $g\Uu_{\iota,\cF}g^{-1}=\Uu_{\iota,g\cF}$, $g\K_{\iota_M,\cF}g^{-1}=\K_{\iota_M,g\cF}$,  $g\bUp_{\iota,\cF}g^{-1}=\bUp_{\iota,g\cF}$ and $g\K_{\iota,\cF}g^{-1}=\K_{\iota,g\cF}$. Hence $g_{\cW_{\iota_M},\cF}$ induces a unique $\K_{\iota, \cF}$-equivariant isomorphism $g_{\cW_{\iota}, \cF}: \cW_{\iota,\cF} \simto g^*\cW_{\iota, g\cF}$, which agrees with the action of $g$ on $ \cW_{\iota,\cF}$ if $g \in G_{\iota, \cF}$.

\begin{lem}\label{Lemma-beta-without-assumption}
	For each pair of e-facets $\cF, \cF' \in \BT'_{\iota,\bullet/e}$ with $\o\cF\supseteq \cF'$, 
	there exists a unique generator $\beta_{\cW_\iota,\cF,\cF'} \in
	\Hom_{R(\Ku_{\iota, \cF} \cap \Ku_{\iota,\cF'})}(\cW_{\iota, \cF},\cW_{\iota, \cF'})$ such that $pr_{\cF'} \circ \beta_{\cW_\iota, \cF, \cF'} \circ i_\cF  = \beta_{\cW_{\iota_M}, \cF, \cF'}$.
\end{lem}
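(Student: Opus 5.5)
The plan is to reduce the statement to the $M$-level face map $\beta_{\cW_{\iota_M},\cF,\cF'}$ by describing the source and target Hom modules explicitly through the Iwahori decompositions with respect to $(\mathbf P,\bar{\mathbf P})$. The Hom module $H:=\Hom_{R(\Ku_{\iota,\cF}\cap\Ku_{\iota,\cF'})}(\cW_{\iota,\cF},\cW_{\iota,\cF'})$ is already known to be invertible by Lemma \ref{intertwining_Heisenberg} i), since both sides are Heisenberg representations by Lemma \ref{Lemma-Wiota-Heisenberg}. I will therefore study the $R$-linear map
$$\Phi:\ H\To{} \Hom_{R(\Ku_{\iota_M,\cF}\cap\Ku_{\iota_M,\cF'})}(\cW_{\iota_M,\cF},\cW_{\iota_M,\cF'}),\qquad \beta\mapsto pr_{\cF'}\circ\beta\circ i_{\cF}.$$
The target is again invertible, by Lemma \ref{intertwining_Heisenberg} i) applied to $\mathbf M$, and $\beta_{\cW_{\iota_M},\cF,\cF'}$ is a generator of it. The lemma follows once $\Phi$ is shown to be an isomorphism: the desired $\beta_{\cW_\iota,\cF,\cF'}$ is then $\Phi^{-1}(\beta_{\cW_{\iota_M},\cF,\cF'})$, which is automatically a generator and is unique.

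The first step is to check that $\Phi$ is well defined. Since $\Ku_{\iota_M,\cF}\cap\Ku_{\iota_M,\cF'}=M\cap\Ku_{\iota,\cF}\cap\Ku_{\iota,\cF'}$, and both $i_\cF$ and $pr_{\cF'}$ are $M\cap\K$-equivariant by construction, equivariance of $\Phi(\beta)$ is immediate.

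The second step is to show that $\Phi$ is an isomorphism. Both sides are invertible $R$-modules and $\Phi$ commutes with base change, so, as in the proof of Lemma \ref{composition_Heisenberg}, it suffices to show that $\Phi$ is nonzero modulo every maximal ideal. For this I will use the model from Lemma \ref{Lemma-Wiota-Heisenberg}: up to twisting by an invertible module, $\cW_{\iota_M,\cF}|_{\Ku}$ is induced from a Lagrangian $W$, and $\cW_{\iota,\cF}$ is induced from $W\oplus\Uu_{\iota,\cF}/\Up_{\iota,\cF}$. The image $i_{\cF}(\cW_{\iota_M,\cF})$ is the $\Uu_{\iota,\cF}$-invariants, and $pr_{\cF'}$ is the $\Uu_{\iota,\cF'}$-coinvariants, which are identified with invariants via $e_{\Uu_{\iota,\cF'}}$. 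Since $\o\cF\supseteq\cF'$, I expect $\Uu_{\iota,\cF}\subseteq\Uu_{\iota,\cF'}$ (or a comparable relation, up to depth-$0$ factors lying in $G_{\iota}\subset M$), and $\bUp_{\iota,\cF'}\subseteq\bUp_{\iota,\cF}$. Under these containments, a nonzero intertwiner $\beta$ should send the $\Uu_{\iota,\cF}$-fixed part onto a part that survives $e_{\Uu_{\iota,\cF'}}$. Concretely, I would compute $e_{\Uu_{\iota,\cF'}}\beta\, e_{\Uu_{\iota,\cF}}$ by an argument like that of Lemma \ref{lemma_ind_parab} ii) (Heisenberg non-degeneracy between $U$ and $\bar U$ parts, Proposition \ref{Heisenberg} ii)), which gives $e_{\Uu}e_{\bUu}e_{\Uu}e=c\,e_{\Uu}e$ with $c$ a power of $p$, hence invertible. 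Alternatively, I would construct $\beta$ directly as the map induced by $\beta_{\cW_{\iota_M},\cF,\cF'}$ through $\cInd{}{}$, by Mackey/Frobenius reciprocity together with Lemma \ref{intertwining_Heisenberg} iii)(a), and then verify that $\Phi(\beta)=\beta_{\cW_{\iota_M},\cF,\cF'}$.

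The main obstacle is the precise relation between the $U$ and $\bar U$ parts of $\Ku_{\iota,\cF}$ and $\Ku_{\iota,\cF'}$. The chosen parabolic $\mathbf P$ is not adapted to the segment joining $\cF$ and $\cF'$, so $\Uu_{\iota,\cF}$ and $\Uu_{\iota,\cF'}$ need not be nested. My fallback for this step is to avoid nesting altogether and argue by reduction modulo maximal ideals: first show $\Phi\neq0$ over a field by exhibiting the $W\oplus U$-induced models at $\cF$ and $\cF'$ with compatible Lagrangians, as in Lemma \ref{intertwining_Heisenberg} iii)(a), and then conclude by invertibility of both sides.
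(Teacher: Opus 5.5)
You correctly reformulate the lemma: $\Phi(\beta)=pr_{\cF'}\circ\beta\circ i_{\cF}$ is well defined and both Hom modules are invertible. The lemma is then equivalent to $\Phi$ being an isomorphism, and by base change it suffices to show $\Phi\neq 0$ over every residue field. \textbf{But that non-vanishing is the whole content of the lemma, and your proposal does not prove it.}

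\begin{itemize}
\item The identity $e_{\Uu}e_{\bUu}e_{\Uu}e=c\,e_{\Uu}e$ from Lemma \ref{lemma_ind_parab} lives inside the Heisenberg triple at a single point. It says nothing about an intertwiner between the triples at $\cF$ and $\cF'$.
\item The ``direct construction via Mackey/Frobenius'' is not actually specified.
\item The fallback via compatible Lagrangian models is only a hope. The Lagrangians in Lemma \ref{intertwining_Heisenberg} iii) are adapted to the parabolic attached to the segment joining $\cF$ and $\cF'$, not to $\mathbf P$.
\end{itemize}

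Also, the obstacle you name does not exist. By the choice of $e$ you have $\K_{\iota,\cF}\subseteq\K_{\iota,\cF'}$. Since $\mathbf G_\iota\subseteq\mathbf M$, you also have $\K_{\iota,\cF'}=\Uu_{\iota,\cF'}\K_{\iota_M,\cF'}\bUu_{\iota,\cF'}$. Together these give $\Uu_{\iota,\cF}\subseteq\Uu_{\iota,\cF'}$. Likewise $\bUp_{\iota,\cF'}\subseteq\bUp_{\iota,\cF}$ follows from $\Kp_{\iota,\cF'}\subseteq\Kp_{\iota,\cF}$.

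\textbf{How the paper closes the gap.} It uses auxiliary $e$-facets $\cF_0$ and $\cF'_0$ with $\cF\subseteq\o{\cF_0}$ and $\cF'\subseteq\o{\cF'_0}$. These are obtained by pushing points of $\cF$ and $\cF'$ slightly along a cocharacter of the split centre of $\mathbf M$. They satisfy $\Ku_{\iota,\cF_0}=\Uu_{\iota,\cF}\Ku_{\iota_M,\cF}\bUp_{\iota,\cF}$, so $\cW_{\iota,\cF_0}\simeq\cW_{\iota_M,\cF}$ with no induction, and likewise for $\cF'_0$. Lemma \ref{composition_Heisenberg} then makes composition
$\Hom(\cW_{\iota,\cF_0},\cW_{\iota,\cF})\otimes H\otimes\Hom(\cW_{\iota,\cF'},\cW_{\iota,\cF'_0})\to\Hom(\cW_{\iota,\cF_0},\cW_{\iota,\cF'_0})$ an isomorphism. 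By Frobenius reciprocity the outer factors are free, generated by $i_\cF\circ\beta_{\cW_{\iota_M},\cF_0,\cF}$ and $\beta_{\cW_{\iota_M},\cF',\cF'_0}\circ pr_{\cF'}$. The target equals the corresponding $M$-Hom space. Comparing with the $M$-version of the composition isomorphism shows precisely that $\Phi$ is an isomorphism.

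\textbf{How to finish within your own setup.} The following gives a somewhat more direct proof that avoids the auxiliary facets.
\begin{itemize}
\item Use four facts: $i_{\cF'}\circ pr_{\cF'}=e_{\Uu_{\iota,\cF'}}$; the image of $i_\cF$ is $e_{\Uu_{\iota,\cF}}\cW_{\iota,\cF}$; $\beta$ commutes with $\Uu_{\iota,\cF}$; and $\Uu_{\iota,\cF}\subseteq\Uu_{\iota,\cF'}$. Together they give $\Phi(\beta)=0$ if and only if $e_{\Uu_{\iota,\cF'}}\circ\beta=0$.
\item Over a field, Lemma \ref{intertwining_Heisenberg} iii)(b) gives $\beta(\cW_{\iota,\cF})=e_{\iota,x,x'}\cW_{\iota,\cF'}$ for $x\in\cF$ and $x'\in\cF'$.
\item So, exactly as in the proof of Lemma \ref{composition_Heisenberg}, you need two characters to agree on the intersection of their domains. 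The first is the character of $\Uu_{\iota,\cF'}\Kp_{\iota,\cF'}$ that is trivial on $\Uu_{\iota,\cF'}$ and equals $\check\phi^+_{\iota,x'}$ on $\Kp_{\iota,x'}$. The second is $\check\phi^+_{\iota,x}$ on $\Kp_{\iota,x}\cap\Ku_{\iota,x'}$.
\item By the Iwahori decompositions, this intersection is $\Up_{\iota,\cF}\,\Kp_{\iota_M,\cF'}\,\bUp_{\iota,\cF'}$.
\item The characters agree on the unipotent factors by Lemma \ref{Iwahori}, and on the Levi factor by Lemma \ref{intertw_converse}.
\end{itemize}
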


\begin{proof}
	\inisub
	\begin{subequations}
Let $\cF_0$ and $\cF_0'$ be two $e$-facets whose closures contain $\cF$ and $\cF'$, respectively, and such that
\[\Uu_{\iota,\cF_0}=\Uu_{\iota,\cF} \, , \,\, \bUu_{\iota,\cF_0}=\bUp_{\iota,\cF}=\bUp_{\iota,\cF_0} \, , \, \,
\Uu_{\iota,\cF_0'}=\Uu_{\iota,\cF'} \, , \,\, \bUu_{\iota,\cF_0'}=\bUp_{\iota,\cF'}=\bUp_{\iota,\cF_0'} \, ,
\]
and
\[\K_{\iota_M, \cF_0}=\K_{\iota_M, \cF} \, , \,\, \Ku_{\iota_M, \cF_0}=\Ku_{\iota_M, \cF} \, , \,\, \K_{\iota_M, \cF_0'}=\K_{\iota_M, \cF'} \, , \, \,\Ku_{\iota_M, \cF_0'}=\Ku_{\iota_M, \cF'}. 	\] 
This can be achieved by letting $\cF_0$ be the $e$-facet containing $x+ \eps \lambda$
for $x$ a point
in $\cF$, $\eps>0$
sufficiently small and $\lambda$ an appropriate cocharacter of 
the center of $\mathbf M$ (recall 
that the maximal split torus of the center of $\mathbf M$ is contained in $\mathbf M_\iota$).
The $e$-facet $\cF_0'$ can be obtained analogously.           
Then we have
\begin{eqnarray}
&&  \Ku_{\iota,\cF_0}=\Uu_{\iota,\cF_{0}}\Ku_{\iota_M, \cF_{0}}\bUp_{\iota, \cF_{0}}=\Uu_{\iota,\cF}\Ku_{\iota_M, \cF}\bUp_{\iota, \cF}
   \hbox{ and } \cW_{\iota, \cF_0}= \cW_{\iota_M, \cF_0} \simeq \cW_{\iota_M, \cF} \label{choice_F0}
  \\
&& \Ku_{\iota,\cF'_0}=\Uu_{\iota,\cF'_{0}}\Ku_{\iota_M, \cF'_{0}}\bUp_{\iota,
  \cF'_{0}}=\Uu_{\iota,\cF'}\Ku_{\iota_M, \cF'}\bUp_{\iota, \cF'} 
  \hbox{ and } \cW_{\iota, \cF'_0}= \cW_{\iota_M, \cF'_0} \simeq \cW_{\iota_M, \cF'}  \label{choice_F'0}
\end{eqnarray}

Applying Lemma \ref{composition_Heisenberg} to points $(x,x',x'') \in \cF_0 \times \cF \times \cF'$ and to points $(x,x',x'') \in \cF_0 \times \cF' \times \cF'_0$ yields an isomorphism arising from the composition:
		\begin{eqnarray}
		&&	\Hom_{R(\Ku_{\iota, \cF_0} \cap \Ku_{\iota, \cF})}(\cW_{\iota, \cF_0}, \cW_{\iota,\cF}) \otimes 
			\Hom_{R(\Ku_{\iota, \cF} \cap \Ku_{\iota, \cF'})}(\cW_{\iota, \cF},
                   \cW_{\iota,\cF'})\otimes \label{proof-HW-general-eqn2}
			\\ 
		&&\otimes	\Hom_{R(\Ku_{\iota, \cF'} \cap \Ku_{\iota, \cF'_0})}(\cW_{\iota, \cF'}, \cW_{\iota,\cF'_0})
			\simto
			\Hom_{R(\Ku_{\iota, \cF_0} \cap \Ku_{\iota, \cF'_0})}(\cW_{\iota, \cF_0}, \cW_{\iota,\cF'_0})	\notag
		\end{eqnarray}
		and likewise
		\begin{eqnarray}
			& & \qquad \Hom_{R(\Ku_{\iota_M, \cF_0} \cap \Ku_{\iota_M, \cF})}(\cW_{\iota_M, \cF_0}, \cW_{\iota_M,\cF}) \otimes 
			\Hom_{R(\Ku_{\iota_M, \cF} \cap \Ku_{\iota_M, \cF'})}(\cW_{\iota_M, \cF}, \cW_{\iota_M,\cF'}) \otimes\notag \\
			& & \qquad  \otimes
			\Hom_{R(\Ku_{\iota_M, \cF'} \cap \Ku_{\iota_M, \cF'_0})}(\cW_{\iota_M, \cF'}, \cW_{\iota_M,\cF'_0})		
				\simto
			\Hom_{R(\Ku_{\iota_M, \cF_0} \cap \Ku_{\iota_M, \cF'_0})}(\cW_{\iota_M, \cF_0}, \cW_{\iota_M,\cF'_0}).	
			\label{proof-HW-general-eqn3}
		\end{eqnarray}	
Since $\cW_{\iota_M}$ is a Heisenberg--Weil coefficient system, all the Hom-space on the left hand side of \eqref{proof-HW-general-eqn3}  are free $R$-modules of rank 1, and hence so is $	\Hom_{R(\Ku_{\iota_M, \cF_0} \cap \Ku_{\iota_M, \cF'_0})}(\cW_{\iota_M, \cF_0}, \cW_{\iota_M,\cF'_0})$. We will show that the same applies to all the Hom-spaces in \eqref{proof-HW-general-eqn2}.
		
First, we have 
$\Hom_{R(\Ku_{\iota, \cF_0} \cap \Ku_{\iota, \cF'_0})}(\cW_{\iota, \cF_0}, \cW_{\iota,\cF'_0})= 	\Hom_{R(\Ku_{\iota_M, \cF_0} \cap \Ku_{\iota_M, \cF'_0})}(\cW_{\iota_M, \cF_0}, \cW_{\iota_M,\cF'_0}),$
		which we just saw is a free $R$-module of rank 1.
		
 Next, recalling that
 $(\cW_{\iota,
   \cF})|_{\Ku_{\iota,\FC}}=\cInd{\Uu_{\iota,\cF}\Ku_{\iota_M,\cF}\bUp_{\iota,\cF}}{\Ku_{\iota,\cF}}{\cW_{\iota_M,
     \cF}}$ we obtain from (\ref{choice_F0}) that
 \begin{eqnarray*}
   \Hom_{R(\Ku_{\iota, \cF_0} \cap \Ku_{\iota, \cF})}(\cW_{\iota, \cF_0}, \cW_{\iota,\cF})
   &\simeq& 
   \Hom_{R(\Uu_{\iota,\cF}\Ku_{\iota_M, \cF}\bUp_{\iota, \cF})}(\cW_{\iota_M,\cF}, \cW_{\iota,\cF}) 
   \\ &\simeq&  \Hom_{R\Ku_{\iota,\cF}}(\cW_{\iota, \cF},\cW_{\iota, \cF}), 
 \end{eqnarray*}
which is a free $R$-module of rank 1 by 
Lemmas	\ref{Lemma-Wiota-Heisenberg} and \ref{lem_Heisenberg}.
Note that a generator of
		$	\Hom_{R(\Ku_{\iota, \cF_0} \cap \Ku_{\iota, \cF})}(\cW_{\iota,
			\cF_0}, \cW_{\iota,\cF})$ is given by $i_F \circ
		\beta_{\cW_{\iota_M},\cF_0, \cF}$, where we 
		view the isomorphism\footnote{Note that if we choose $\cF_0$ such that $\cF$ and $\cF_0$ map to the same facet in $\BT'_{\iota_M, \bullet/e}$, then $\beta_{\cW_{\iota_M},\cF_0, \cF}$ is just the identity.}
		$ \beta_{\cW_{\iota_M},\cF_0, \cF} \in \Hom_{R(\Ku_{\iota_M, \cF_0} \cap \Ku_{\iota_M, \cF})}(\cW_{\iota_M, \cF_0}, \cW_{\iota_M,\cF}) = \Hom_{R(\Ku_{\iota_M, \cF})}(\cW_{\iota_M, \cF_0}, \cW_{\iota_M,\cF}) $
		also as an isomorphism in $\Hom_{R(\Uu_{\iota,\cF}\Ku_{\iota_M, \cF}\bUp_{\iota,\cF})}(\cW_{\iota_M, \cF_0}, \cW_{\iota_M,\cF})$.
		Similarly, we have 
                \begin{eqnarray*}
                  \Hom_{R(\Ku_{\iota, \cF'} \cap \Ku_{\iota, \cF'_0})}(\cW_{\iota, \cF'}, \cW_{\iota,\cF'_0})
                  &\simeq& 	 
                  \Hom_{R(\Uu_{\iota,\cF'}\Ku_{\iota_M, \cF'}\bUp_{\iota, \cF'})}(\cW_{\iota,\cF'}, \cW_{\iota_M,\cF'},) 
           \\       &\simeq&  \Hom_{R(\K_{\iota,\cF'})}(\cW_{\iota, \cF'},\cW_{\iota, \cF'}), 
                \end{eqnarray*}
		which is a free $R$-module of rank 1, and  a generator of $\Hom_{R(\Ku_{\iota, \cF'}
                  \cap \Ku_{\iota, \cF'_0})}(\cW_{\iota, \cF'}, \cW_{\iota,\cF'_0})$ is given by $\beta_{\cW_{\iota_M},\cF', \cF_0'} \circ pr_{\cF'}$.
		
		So we have shown that all the Hom-spaces in \eqref{proof-HW-general-eqn2} other than
                the invertible module
                $\Hom_{R(\Ku_{\iota, \cF} \cap \Ku_{\iota, \cF'})}(\cW_{\iota, \cF},
                \cW_{\iota,\cF'})$ are free $R$-modules of rank 1. It follows that
                the latter 
                is also free of rank 1 and,
		 combining the isomorphisms \eqref{proof-HW-general-eqn2} and
                 \eqref{proof-HW-general-eqn3},
                 that there exists a unique generator $\beta_{\cW_\iota, \cF, \cF'}$ of $ \Hom_{R(\Ku_{\iota, \cF} \cap \Ku_{\iota,\cF'})}(\cW_{\iota, \cF},\cW_{\iota, \cF'})$ such that 
		\[\beta_{\cW_{\iota_M}, \cF', \cF'_0} \circ pr_{\cF'} \circ \beta_{\cW_\iota, \cF, \cF'} \circ i_\cF \circ  \beta_{\cW_{\iota_M}, \cF_0, \cF} =\beta_{\cW_{\iota_M}, \cF', \cF'_0} \circ \beta_{\cW_{\iota_M}, \cF, \cF'} \circ \beta_{\cW_{\iota_M}, \cF_0, \cF} .\]
		Since $\beta_{\cW_{\iota_M}, \cF', \cF'_0}$ and $\beta_{\cW_{\iota_M}, \cF_0, \cF}$ are isomorphisms, this implies that there exists a unique generator $\beta_{\cW_\iota, \cF, \cF'}$ of $ \Hom_{R(\Ku_{\iota, \cF} \cap \Ku_{\iota,\cF'})}(\cW_{\iota, \cF},\cW_{\iota, \cF'})$ such that 
		$ pr_{\cF'} \circ \beta_{\cW_\iota, \cF, \cF'} \circ i_\cF  = \beta_{\cW_{\iota_M}, \cF, \cF'}  .$		
	\end{subequations}
\end{proof}

For each pair of e-facets $\cF, \cF' \in \BT'_{\iota,\bullet/e}$ with $\o\cF\supseteq \cF'$, set $\beta_{\cW_{\iota},\cF, \cF'}$ to be as in Lemma \ref{Lemma-beta-without-assumption}.

\alin{Construction of a Heisenberg--Weil coefficient system 3: morphisms} \label{sec:HW-withoutassumption-3}
We first observe that the face maps have the following desired properties that will allow us to use them to define a desired Heisenberg--Weil coefficient system.
\begin{lem}
	\begin{enumerate} Let $\cF, \cF' \in \BT'_{\iota, \bullet/e}$ with  $\overline \cF\supseteq \cF'$ . 
		\item \label{facemap-action-properties-i}
		If $\cF'' \in \BT'_{\iota, \bullet/e}$ with $\overline \cF' \supseteq \cF''$, then we have 
		$ \beta_{\cW_\iota, \cF', \cF''} \circ \beta_{\cW_\iota, \cF, \cF'} = \beta_{\cW_\iota, \cF, \cF''}$.
		\item \label{facemap-action-properties-ii} If $g \in G_\iota$, then 
		$ g_{\cW_\iota, \cF'} \circ \beta_{\cW_\iota, \cF, \cF'} =  \beta_{\cW_\iota, g\cF, g\cF'} \circ g_{\cW_\iota, \cF} $.
		\item \label{facemap-action-properties-iii} The map $\beta_{\cW_\iota, \cF, \cF'}$ is $\K_{\iota,\cF} \cap \K_{\iota, \cF'}$-equivariant.
	\end{enumerate}
\end{lem}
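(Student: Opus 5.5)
The strategy for all three items is the same: reduce to the characterization of $\beta_{\cW_\iota,\cF,\cF'}$ in Lemma \ref{Lemma-beta-without-assumption}. That lemma pins down $\beta_{\cW_\iota,\cF,\cF'}$ as the unique generator of the free rank-one $R$-module $\Hom_{R(\Ku_{\iota,\cF}\cap\Ku_{\iota,\cF'})}(\cW_{\iota,\cF},\cW_{\iota,\cF'})$ satisfying $pr_{\cF'}\circ\beta_{\cW_\iota,\cF,\cF'}\circ i_\cF=\beta_{\cW_{\iota_M},\cF,\cF'}$. The proof of that lemma shows more: the map $u\mapsto pr_{\cF'}\circ u\circ i_\cF$ is an isomorphism of $R$-modules from this Hom space onto $\Hom(\cW_{\iota_M,\cF},\cW_{\iota_M,\cF'})$, since both are free of rank one and the generator goes to a generator. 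So in each item I only need to show that both sides lie in the relevant Hom space and have the same image under $u\mapsto pr\circ u\circ i$. The needed properties of $\cW_{\iota_M}$ are available because it is a genuine Heisenberg--Weil coefficient system, by Proposition \ref{thm_WH_coef_system-withassumption}.

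For \ref{facemap-action-properties-ii}), conjugation by $g\in G_\iota\subset M$ carries $\Uu_{\iota,\cF},\K_{\iota_M,\cF},\bUp_{\iota,\cF}$ to the corresponding groups at $g\cF$. Hence $g_{\cW_\iota,\cF}$ is the map induced from $g_{\cW_{\iota_M},\cF}$, and it satisfies
\[ g_{\cW_\iota,\cF}\circ i_\cF=i_{g\cF}\circ g_{\cW_{\iota_M},\cF}\quad\text{and}\quad pr_{g\cF}\circ g_{\cW_\iota,\cF}=g_{\cW_{\iota_M},\cF}\circ pr_\cF. \]
Then $g_{\cW_\iota,\cF'}^{-1}\circ\beta_{\cW_\iota,g\cF,g\cF'}\circ g_{\cW_\iota,\cF}$ lies in the correct Hom space and has $pr\circ-\circ i$ image $g^{-1}\circ\beta_{\cW_{\iota_M},g\cF,g\cF'}\circ g=\beta_{\cW_{\iota_M},\cF,\cF'}$, using the equivariance of $\cW_{\iota_M}$. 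Uniqueness gives \ref{facemap-action-properties-ii}).

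For \ref{facemap-action-properties-i}), the composite $\beta_{\cW_\iota,\cF',\cF''}\circ\beta_{\cW_\iota,\cF,\cF'}$ is $\Ku_{\iota,\cF}\cap\Ku_{\iota,\cF'}\cap\Ku_{\iota,\cF''}$-equivariant. By the argument of Corollary \ref{coro_comp_Heisenberg} (or Lemma \ref{composition_Heisenberg}, whose hypothesis holds for nested $e$-facets via $\Ku_{\iota,\cF}=(\Ku_{\iota,\cF}\cap\Ku_{\iota,\cF'})G_{\iota,\cF,0+}$), it is $\Ku_{\iota,\cF}\cap\Ku_{\iota,\cF''}$-equivariant, and composition gives an isomorphism of invertible modules. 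So the composite is a generator, and it remains to compute $pr_{\cF''}\circ\beta'\circ\beta\circ i_\cF$. The natural idea is to insert $i_{\cF'}\circ pr_{\cF'}=e_{\Uu_{\iota,\cF'}}$ in the middle. For this I will use the Iwahori-type factorizations of Lemma \ref{intertwining_Heisenberg}: the image of $\beta\circ i_\cF$ is $U$-invariant at the relevant level, and $pr_{\cF''}\circ\beta'$ factors through $U$-coinvariants. This step needs checking that $\Uu_{\iota,\cF'}$ acts compatibly, using inclusions like $\Uu_{\iota,\cF'}\subseteq\Uu_{\iota,\cF}\Up_{\iota,\cF''}$, which are analogous to \eqref{eq:inclusions}. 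I expect this to be the main obstacle. A fallback is to pass through the auxiliary facets $\cF_0,\cF'_0,\cF''_0$ of the previous proof, where $\cW_\iota=\cW_{\iota_M}$ and composition is transparent via \eqref{proof-HW-general-eqn2}. Once the insertion is justified, the computation gives $\beta_{\cW_{\iota_M},\cF',\cF''}\circ\beta_{\cW_{\iota_M},\cF,\cF'}=\beta_{\cW_{\iota_M},\cF,\cF''}$, and uniqueness concludes.

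For \ref{facemap-action-properties-iii}), write $\K_{\iota,\cF}\cap\K_{\iota,\cF'}=G_{\iota,\cF}(\Ku_{\iota,\cF}\cap\Ku_{\iota,\cF'})$; this holds because $G_{\iota,\cF}\subseteq G_{\iota,\cF'}$ by the choice of $e$ and $\Ku_{\iota,\cF}=(\Ku_{\iota,\cF}\cap\Ku_{\iota,\cF'})G_{\iota,\cF,0+}$. Equivariance for the second factor holds by construction. For $g\in G_{\iota,\cF}$, \ref{facemap-action-properties-ii}) with $g\cF=\cF$ and $g\cF'=\cF'$ gives exactly $g$-equivariance.
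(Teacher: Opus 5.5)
Your reduction of all three items to the uniqueness statement in Lemma \ref{Lemma-beta-without-assumption} (equivalently, to injectivity of $u\mapsto pr_{\cF'}\circ u\circ i_\cF$) is the paper's strategy, and your (ii) is the paper's computation. Your (iii) is correct and slightly more economical than the paper's. Since $\K_{\iota,\cF}\subseteq\K_{\iota,\cF'}$, one has $\K_{\iota,\cF}\cap\K_{\iota,\cF'}=G_{\iota,\cF}(\Ku_{\iota,\cF}\cap\Ku_{\iota,\cF'})$, and for $g\in G_{\iota,\cF}\subseteq G_{\iota,\cF'}$ item (ii) is literally $g$-equivariance. The paper instead reruns the uniqueness argument for $k\in\K_{\iota_M,\cF}\cap\K_{\iota_M,\cF'}$.

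The gap is in (i), exactly at the step you flag as the main obstacle. You do not justify inserting $i_{\cF'}\circ pr_{\cF'}=e_{\Uu_{\iota,\cF'}}$ between the two face maps, and the inclusion you suggest goes the wrong way. From $\Kp_{\iota,\cF''}\subseteq\Kp_{\iota,\cF}$ one gets $\Up_{\iota,\cF''}\subseteq\Uu_{\iota,\cF}$, so $\Uu_{\iota,\cF}\Up_{\iota,\cF''}=\Uu_{\iota,\cF}$. This group is contained in $\Uu_{\iota,\cF'}$ but is in general not equal to it. Likewise, the image of $\beta_{\cW_\iota,\cF,\cF'}\circ i_\cF$ is only known to be $\Uu_{\iota,\cF}$-invariant.

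The missing input is the opposite, easy inclusion $\Uu_{\iota,\cF'}\subseteq\Uu_{\iota,\cF''}$. It follows from $\K_{\iota,\cF'}\subseteq\K_{\iota,\cF''}$ together with $U\cap\K_{\iota,\cdot}=U\cap\Ku_{\iota,\cdot}$ (Lemma \ref{lemma_ind_parab}(\ref{item:centralizer}), since $\mathbf{G}_\iota\subseteq\mathbf{M}$). Write $\beta:=\beta_{\cW_\iota,\cF,\cF'}$ and $\beta':=\beta_{\cW_\iota,\cF',\cF''}$. The inclusion makes $\beta'$ commute with $e_{\Uu_{\iota,\cF'}}$. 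Combined with $pr_{\cF''}=pr_{\cF''}\circ e_{\Uu_{\iota,\cF''}}$ and $e_{\Uu_{\iota,\cF''}}e_{\Uu_{\iota,\cF'}}=e_{\Uu_{\iota,\cF''}}$, this gives
\[ pr_{\cF''}\circ\beta'\circ e_{\Uu_{\iota,\cF'}}\circ\beta\circ i_\cF=pr_{\cF''}\circ e_{\Uu_{\iota,\cF''}}e_{\Uu_{\iota,\cF'}}\circ\beta'\circ\beta\circ i_\cF=pr_{\cF''}\circ\beta'\circ\beta\circ i_\cF . \]
The left-hand side is $\beta_{\cW_{\iota_M},\cF',\cF''}\circ\beta_{\cW_{\iota_M},\cF,\cF'}=\beta_{\cW_{\iota_M},\cF,\cF''}$. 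This is the paper's argument, and your fallback through $\cF_0,\cF'_0,\cF''_0$ is not needed.

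There is also a smaller issue. The argument of Corollary \ref{coro_comp_Heisenberg} does not transfer to this configuration: it uses that $x''$ lies in an open chamber, so that $G_{\iota,x,0+}\subseteq\Kp_{\iota,x''}$, and this fails when $\cF''$ is the smallest of the three facets. You do not need it, though. For nested facets one has directly $\Ku_{\iota,\cF}\cap\Ku_{\iota,\cF''}\subseteq\Ku_{\iota,\cF'}$, because the factor $G_{\iota,\cdot,0+}$ shrinks and the factors $G^i_{\iota,\cdot,r_{i-1}/2}$, $i\geq 1$, grow as one passes to a facet in the closure. So $\beta'\circ\beta$ lies in the correct Hom-space, and injectivity of $u\mapsto pr_{\cF''}\circ u\circ i_\cF$ forces $\beta'\circ\beta=\beta_{\cW_\iota,\cF,\cF''}$ once the images agree, with no separate generator check.
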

\begin{proof}
	\ref{facemap-action-properties-i}) By Lemma \ref{Lemma-beta-without-assumption} and since $\cW_{\iota_M}$ is a Heisenberg--Weil coefficient system, it suffices to show that 
	$  pr_{\cF''} \circ \beta_{\cW_\iota, \cF', \cF''} \circ i_{\cF'}  \circ pr_{\cF'} \circ \beta_{\cW_\iota, \cF, \cF'} \circ i_\cF  =  pr_{\cF''} \circ \beta_{\cW_\iota, \cF', \cF''} \circ \beta_{\cW_\iota, \cF, \cF'}  \circ i_\cF  .$
	Using that $i_{\cF'} \circ pr_{\cF'}=e_{\Uu_{\iota, \cF'}}$,  $pr_{\cF''}=pr_{\cF''} \circ i_{\cF''} \circ pr_{\cF''}=pr_{\cF''} \circ e_{\Uu_{\iota, \cF''}}$,  $\Uu_{\cF'} \subseteq \Uu_{\cF''}$, and that $ \beta_{\cW_\iota, \cF', \cF''}$ is $\Uu_{\cF'} \cap \Uu_{\cF''}= \Uu_{\cF'}$-equivariant, we have 
	\begin{eqnarray*}
		pr_{\cF''} \circ \beta_{\cW_\iota, \cF', \cF''} \circ i_{\cF'}  \circ pr_{\cF'} \circ \beta_{\cW_\iota, \cF, \cF'} \circ i_\cF
		&=& pr_{\cF''} \circ e_{\Uu_{\iota,\cF''}} \circ \beta_{\cW_\iota, \cF', \cF''} \circ e_{\Uu_{\iota,\cF'}} \circ \beta_{\cW_\iota, \cF, \cF'} \circ i_\cF  \\
		&=& pr_{\cF''} \circ e_{\Uu_{\iota,\cF''}} e_{\Uu_{\iota,\cF'}} \circ \beta_{\cW_\iota, \cF', \cF''} \circ \beta_{\cW_\iota, \cF, \cF'} \circ i_\cF  \\
		&=& pr_{\cF''} \circ e_{\Uu_{\iota,\cF''}} \circ \beta_{\cW_\iota, \cF', \cF''} \circ \beta_{\cW_\iota, \cF, \cF'} \circ i_\cF  \\
		&=& pr_{\cF''} \circ \beta_{\cW_\iota, \cF', \cF''}\circ \beta_{\cW_\iota, \cF, \cF'} \circ i_\cF  .
	\end{eqnarray*}
	
	\ref{facemap-action-properties-ii}) We have 
	\begin{eqnarray*}
		pr_{\cF'} \circ g_{\cW_\iota, \cF'} \circ \beta_{\cW_\iota, \cF, \cF'} \circ g_{\cW_\iota, \cF}^{-1} \circ i_{g\cF} 
		& = & 	  g_{\cW_{\iota_M}, \cF'} \circ pr_{\cF'} \circ \beta_{\cW_\iota, \cF, \cF'} \circ i_{g\cF} \circ g_{\cW_{\iota_M}, \cF}^{-1} \\
		&= & g_{\cW_{\iota_M}, \cF'} \beta_{\cW_{\iota_M}, \cF, \cF'}  \circ g_{\cW_{\iota_M}, \cF}^{-1} 
		= 	\beta_{\cW_{\iota_M}, g\cF, g\cF'}  ,
	\end{eqnarray*}
	from which the claim follows by the definition of $ \beta_{\cW_\iota, g\cF, g\cF'} $.
	
	\ref{facemap-action-properties-iii})
	Since $\K_{\iota,\cF} \cap \K_{\iota, \cF'}=(\K_{\iota_M,\cF} \cap \K_{\iota_M, \cF'})(\Ku_{\iota,\cF} \cap \Ku_{\iota, \cF'})$, it suffices to show that $\beta_{\cW_\iota, \cF, \cF'} $ is $\K_{\iota_M,\cF} \cap \K_{\iota_M, \cF'}$-equivariant. Let $k \in \K_{\iota_M,\cF} \cap \K_{\iota_M, \cF'}$, then
	\begin{eqnarray*}
		pr_{\cF'} \circ k \beta_{\cW_\iota, \cF, \cF'} k^{-1} \circ i_{\cF} 
		= 	  k \circ pr_{\cF'} \circ \beta_{\cW_\iota, \cF, \cF'} \circ i_{\cF} \circ k^{-1}
		=  k \beta_{\cW_{\iota_M}, \cF, \cF'} k^{-1}= 	\beta_{\cW_{\iota_M}, \cF, \cF'}  ,
	\end{eqnarray*}
	from which the claim follows by the definition of $ \beta_{\cW_\iota, \cF, \cF'} $.	
\end{proof}

Analogous to \ref{sec:HW-underassumption-3} we can complete the definition of the functor $[\BT'_{\iota, \bullet/e}/\K_\iota] \To{}R-\Mod$ that sends $\cF$ to $\cW_{\iota, \cF}$ as follows. 
For  $\cF, \cF' \in \BT'_{\iota, \bullet/e}$, the functor sends an element $g\in\Hom(\FC,\FC')=\{g \in \K_{\iota, \cF' }G_\iota, \overline{g\cF} \supseteq \cF' \}$ to \[\WC_{\iota,\FC,\FC'}(g):= g_{\cW, g^{-1}\cF'}\circ \beta_{\FC,g^{-1}\FC'}.\]

\begin{cor}\label{cor:HW-withoutassumption-3}
	We have
	$ \WC_{\iota,\FC',\FC''}(h)\circ\WC_{\iota,\FC,\FC'}(g)=\WC_{\iota,\FC,\FC''}(hg)$
	for all $g\in\Hom(\FC,\FC')$ and  $h\in\Hom(\FC',\FC'')$.
\end{cor}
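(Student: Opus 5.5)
The argument follows the proof of Lemma \ref{sec:HW-underassumption-3} closely, now using the properties just established in the preceding lemma in place of the analogous facts available under Assumption \ref{assumption}. Write $g=kg_{\iota}$ and $h=lh_{\iota}$ with $k\in\K_{\iota,\FC'}$, $l\in\K_{\iota,\FC''}$ and $g_{\iota},h_{\iota}\in G_{\iota}$. The first step is to check that $\WC_{\iota,\FC,\FC'}(g)$ is well defined, i.e.\ independent of such a decomposition. Two decompositions differ by an element of $\K_{\iota,\FC'}\cap G_{\iota}$, which lies in $G_{\iota,\FC'}\subseteq \K_{\iota,\FC'}$. The action maps $g_{\WC_{\iota},\cdot}$ were defined by transporting $g_{\WC_{\iota_M},\cdot}$ through induction, so they are compatible with the $\K_{\iota,\FC}$-actions, and they restrict to the given action on stabilizers. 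Here I read $g_{\WC,g^{-1}\FC'}$ as the composite $\WC_{\iota,\FC',\FC'}(k)\circ (g_\iota)_{\WC,g_\iota^{-1}\FC'}$.

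With these decompositions,
\[ \WC_{\iota,\FC,\FC'}(g)=\WC_{\iota,\FC',\FC'}(k)\circ (g_{\iota})_{\WC,g_{\iota}^{-1}\FC'}\circ\beta_{\FC,g_{\iota}^{-1}\FC'}, \]
and similarly for $h$. In the composite $\WC_{\iota,\FC',\FC''}(h)\circ\WC_{\iota,\FC,\FC'}(g)$, I would move $\beta_{\FC',h_{\iota}^{-1}\FC''}$ to the right in two stages.

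\begin{enumerate}
\item Part \ref{facemap-action-properties-iii}) of the previous lemma gives $\K_{\iota,\FC'}\cap\K_{\iota,h_{\iota}^{-1}\FC''}$-equivariance. By the choice of $e$ this intersection is $\K_{\iota,\FC'}$, since $\o{\FC'}\supseteq h_\iota^{-1}\FC''$. Hence $\beta_{\FC',h_{\iota}^{-1}\FC''}$ commutes with $\WC(k)$, which becomes $\WC_{\iota,h_\iota^{-1}\FC'',h_\iota^{-1}\FC''}(k)$.
\item Part \ref{facemap-action-properties-ii}) lets it pass through $(g_{\iota})_{\WC}$, turning it into $\beta_{g_{\iota}^{-1}\FC',(h_{\iota}g_{\iota})^{-1}\FC''}$.
\end{enumerate}

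Then part \ref{facemap-action-properties-i}) (transitivity) collapses the two face maps into $\beta_{\FC,(h_{\iota}g_{\iota})^{-1}\FC''}$. One must check that $(hg)^{-1}\FC''=(h_\iota g_\iota)^{-1}\FC''$; this holds because $k$ fixes $\FC'$ and $l$ fixes $\FC''$, and $h_\iota^{-1}\FC''\subseteq\o{\FC'}$ is then also fixed by $k$ via $\K_{\iota,\FC'}\subseteq\K_{\iota,h_\iota^{-1}\FC''}$.

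It remains to identify the remaining product of action maps with $\WC_{\iota,(hg)^{-1}\FC'',\FC''}(hg)$, that is, to prove the composition law on $G_\iota$-orbits:
\[ \WC(l)\circ(h_\iota)_{\WC}\circ\WC(k)\circ(g_\iota)_{\WC}=\WC(l\,h_\iota k h_\iota^{-1})\circ(h_\iota g_\iota)_{\WC}. \]
Since $h_\iota$ normalizes the relevant groups and maps $\K_{\iota,h_\iota^{-1}\FC''}$ to $\K_{\iota,\FC''}$, this reduces to two facts. The first is that the transported action maps $g_{\WC_\iota}$ satisfy $(h_\iota)_{\WC}\circ\WC(k)=\WC(h_\iota k h_\iota^{-1})\circ (h_\iota)_{\WC}$. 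The second is transitivity in $G_\iota$. Both are inherited from the corresponding properties of $\WC_{\iota_M}$, which is a genuine functor on $[\BT'(\mathbf M,F)_{\iota_M,\bullet/e}/K_{\iota_M}]$ composed with $\pi$, together with functoriality of compact induction.

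I expect this last bookkeeping step to be the main obstacle, specifically checking that induced isomorphisms are compatible with the $\Uu\K_{\iota_M}\bUp$-structure under conjugation. I would handle it by testing equalities after composing with $i_{\FC}$, since the image of $i_\FC$ generates $\WC_{\iota,\FC}$ as a $\K_{\iota,\FC}$-module, and then reducing to the same identities for $\WC_{\iota_M}$.
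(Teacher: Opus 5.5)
Your proposal is correct and follows essentially the same route as the paper, whose proof just says that the argument for Lemma~\ref{sec:HW-underassumption-3} goes through verbatim, now using the three properties of the face maps established in Lemma~\ref{sec:HW-withoutassumption-3}: transitivity, compatibility with the action maps, and $\K_{\iota,\FC}\cap\K_{\iota,\FC'}$-equivariance. The extra bookkeeping you supply is left implicit in the paper but is consistent with its construction. That bookkeeping consists of the well-definedness of $\WC_{\iota,\FC,\FC'}(g)$ and the composition law on $G_\iota$-orbits. The paper obtains the latter directly from the fact that each induced map $g_{\WC_\iota,\FC}\colon\WC_{\iota,\FC}\simto g^{*}\WC_{\iota,g\FC}$ is a $\K_{\iota,\FC}$-equivariant isomorphism and is transitive in $g$ (inherited from $\WC_{\iota_M}$ by uniqueness).
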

\begin{proof}
	The proof is the same as the proof of Lemma \ref{sec:HW-underassumption-3} using Lemma \ref{sec:HW-withoutassumption-3}.
\end{proof}

\begin{proof}[Proof of Theorem \ref{thm_WH_coef_system-general}]
	 By Corollary \ref{cor:HW-withoutassumption-3} the maps $\FC\mapsto \WC_{\iota,\FC}$ and  $g\in\Hom(\FC,\FC')\mapsto
	\WC_{\iota,\FC,\FC'}(g)$ define a functor $[\BT'_{\iota,\bullet/e}/K_{\iota}]\To{}R-\Mod$. By Lemma \ref{Lemma-Wiota-Heisenberg} the action of $\Ku_{\iota,\FC}$ on $\WC_{\iota,\FC}$ through
	$\WC_{\iota,\FC,\FC}$ is a Heisenberg representation, and by construction, i.e., Lemma \ref{Lemma-beta-without-assumption}, each face map
	$\beta_{\FC,\FC'}=\WC_{\iota,\FC,\FC'}(1)$ is a generator of
	$\Hom_{R(\Ku_{\iota,\FC}\cap\Ku_{\iota,\FC'})}(\WC_{\iota,\FC},\WC_{\iota,\FC'})$.
\end{proof}

The following proposition is not needed for the construction of the equivalence itself, but is a
result of independent interest and allows us to deduce Corollary \ref{cor-choice-of-WH} and
Corollary \ref{cor-Hecke-alg-isom}.
\begin{prop} \label{prop-W-is-twisted-HW}
	$\cW_{\iota, \cF}$ is a twisted $R\K_{\iota, \cF}$-Heisenberg--Weil
	representation whose restriction to $\Kp_{\iota,\cF}$ is  $\check\phi_{\iota,\cF}^+$-isotypic.
\end{prop}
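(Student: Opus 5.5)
The statement is that the module $\cW_{\iota,\cF}$, built in \ref{sec:heis-weil-coeff}, is a twisted Heisenberg--Weil representation. Both $\cW_{\iota,\cF}$ and twisted Heisenberg--Weil representations restrict to Heisenberg representations of $\Ku_{\iota,\cF}$. So the plan is to produce a twisted Heisenberg--Weil representation $\kappa_{\iota,\cF}$ (Definition \ref{Def-HeisWeil}) whose restriction to $\Ku_{\iota,\cF}$ agrees with $\cW_{\iota,\cF}|_{\Ku_{\iota,\cF}}$. We then compare the actions of the complementary group $G_{\iota,\cF}$.

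The restriction to $\Kp_{\iota,\cF}$ is $\check\phi^+_{\iota,\cF}$-isotypic by Lemma \ref{Lemma-Wiota-Heisenberg}, respectively by property 1 of Definition \ref{def_WH_coef_syst} in the case of Assumption \ref{assumption}. So only the $\K_{\iota,\cF}$-structure needs checking. I treat two cases.

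\textbf{Case 1: Assumption \ref{assumption} holds.} Write $\cF=g\cF_0$ with $\cF_0\in\sS$ and $g\in G_\iota$. Then $\cW_{\iota,\cF}$ is the transport of $\cW^0_{\iota,\cF_0}$ by $g$. Now $\cW^0_{\iota,\cF_0}$ is a twisted Heisenberg--Weil representation by choice (iii) of \ref{basic-choices}. The construction of $\kappa_{\iota,x}$ is $G_\iota$-equivariant: the data $\check\varphi_0$, $\varepsilon_x$, $f$ and $\sigma$ all transport under conjugation by $G_\iota$. Hence the conjugate is again twisted Heisenberg--Weil, and this case is done.

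\textbf{Case 2: general case.} Here $\cW_{\iota,\cF}=\cInd{\Uu_{\iota,\cF}\K_{\iota_M,\cF}\bUp_{\iota,\cF}}{\K_{\iota,\cF}}{\cW_{\iota_M,\cF}}$. By Case 1 applied to $\mathbf M$, $\cW_{\iota_M,\cF}$ is the pullback along $\pi$ of a twisted Heisenberg--Weil representation of $\K_{\iota_M,\pi(\cF)}$.

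Pick any twisted Heisenberg--Weil representation $\kappa_{\iota,\cF}$ of $\K_{\iota,\cF}$, which exists by Definition \ref{Def-HeisWeil}. After twisting by an invertible $R$-module, $\kappa_{\iota,\cF}$ and $\cW_{\iota,\cF}$ agree on $\Ku_{\iota,\cF}$, by Lemma \ref{lem_Heisenberg}. The proof then reduces to computing the restriction of $\kappa_{\iota,\cF}$ to the group $H:=\Uu_{\iota,\cF}\K_{\iota_M,\cF}\bUp_{\iota,\cF}$, with the goal of identifying it with an induced representation.

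For this I would run the argument of Proposition \ref{Hom_HW_H}.
\begin{itemize}
\item Since $G_{\iota,\cF}\subset M$, the image of $G_{\iota,\cF}$ in $\Sp(\V_\cF)$ lies in the parabolic subgroup stabilising $\V_U:=\Uu_{\iota,\cF}/\Up_{\iota,\cF}$.
\item Lemma \ref{Lemma-restriction-of-Weilrep} then gives
\[\kappa_{\iota,\cF}\simeq \cInd{H}{\K_{\iota,\cF}}\bigl(\varepsilon^{\vec\bfG_\iota}_\cF\,\varepsilon^{\vec\bfM_\iota}_\cF\,\kappa^M_{\iota,\cF}\otimes\chi^U\bigr).\]
\end{itemize}
It then suffices to show that $\varepsilon^{\vec\bfG_\iota}_\cF\,\varepsilon^{\vec\bfM_\iota}_\cF\,\chi^U$ is trivial on $G_{\iota,\cF}$, or at least a depth-zero character that can be absorbed by changing the choice of the $M$-level representation. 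The $\check\varphi_0$-factors match, because $\mathbf G_\iota=\mathbf M_{\iota_M}$ and the characters chosen in \ref{def_characters} can be taken compatibly, as in the proof of Lemma \ref{lemma_ind_parab}(i).

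\textbf{Main obstacle.} The sign computation is the hard step. I would try to derive it from \eqref{eqn-eps-delta}, applied to $x\in\cF$ and to a point $x'=x+\epsilon\lambda$ pushed off along a cocharacter of $Z(\mathbf M)$, as in the proof of Lemma \ref{Lemma-beta-without-assumption}. This should turn $\chi^U=\delta^{x}_{x'}$ into a ratio of $\varepsilon$'s. The facets at $x$ and $x'$ have the same image in the building of $M$, which is what should make the remaining factors cancel. The freedom to change the $M$-level representation by a depth-zero twist (Proposition \ref{uniqueness_WH_coef-system}) serves as a fallback if an extra character remains.
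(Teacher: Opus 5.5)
Your reduction is correct, and in substance it is the paper's argument with the proof of Proposition \ref{Hom_HW_H} written out. Lemma \ref{Lemma-restriction-of-Weilrep} (with $V^+=V_U$) gives $\kappa_{\iota,\cF}\simeq\cInd{H}{\K_{\iota,\cF}}\bigl(\varepsilon^{\vec\bfG_\iota}_x\varepsilon^{\vec\bfM_\iota}_x\kappa^M_{\iota,\cF}\otimes\chi^U\bigr)$. For $x'=x+\epsilon\lambda$ one indeed has $\delta^x_{x'}=\chi^U$ and $\delta^{x'}_x=1$ on $\K_{\iota,x}\cap\K_{\iota,x'}=H$. The latter holds because $\Uu_{\iota,x'}=\Up_{\iota,x'}$ and $\bUu_{\iota,x'}=\bUp_{\iota,x'}$, so $\Ku_{\iota,x'}/\Kp_{\iota,x'}$ is just the $M$-part. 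However, \eqref{eqn-eps-delta} together with $\varepsilon^{\vec\bfM_\iota}_x=\varepsilon^{\vec\bfM_\iota}_{x'}$ only turns what you need into
\[\varepsilon^{\vec\bfG_\iota}_{x'}=\varepsilon^{\vec\bfM_\iota}_{x'}\quad\hbox{on } H=\K_{\iota,x'}.\]
This compares the quadratic characters of two \emph{different} twisted Levi sequences at the same point, and nothing you invoke gives it. The identity \eqref{eqn-eps-delta} relates one sequence at two points. The fact that $x,x'$ have the same image in $\BT'(\mathbf M,F)$ has already been used up on the $\vec\bfM_\iota$-factor. So your step ``the remaining factors cancel'' is not justified by the reason you give, and it is exactly where the content lies.

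The missing input is this: at the $M$-generic $e$-facet $\cC\ni x'$, the inflation $\cW_{\iota,\cC}=\cW_{\iota_M,\cC}$ of the $M$-level twisted Heisenberg--Weil representation is already the twisted Heisenberg--Weil representation of $\K_{\iota,\cC}$ for $\vec\bfG_\iota$. Since $V_{x'}=V_{M,x'}$, the non-twisted parts agree, so this is precisely the displayed equality. It has to be extracted from the definition of the characters of \cite{FKS} themselves; heuristically, at $x'$ no root space of $U$ or $\bar U$ contributes to the symplectic spaces. It does not follow from \eqref{eqn-eps-delta}.

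This identification at $\cC$ is what the paper's proof relies on when it applies Proposition \ref{Hom_HW_H} to the pair $(\cC,\cF)$. That upgrades the $\Ku_{\iota,\cF}\cap\Ku_{\iota,\cC}$-map $\cW_{\iota,\cC}\to\cW'_{\iota,\cF}$ coming from $i_\cF$ to a $\K_{\iota,\cC}$-map. Frobenius reciprocity and Lemma \ref{intertwining_Heisenberg}\,iii) then give the isomorphism. Once you supply it, your direct computation also closes.

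Your fallback does not work, for three reasons:
\begin{itemize}
\item The proposition concerns the specific $\cW_{\iota,\cF}$ built with the fixed $\check\varphi_0$ of Definition \ref{Def-HeisWeil-non-twisted}; twisting $\cW_{\iota_M}$ by a depth-zero character changes that object.
\item Such a twist also destroys the twisted Heisenberg--Weil property of the $M$-components for this $\check\varphi_0$.
\item The discrepancy is a character of $G_{\iota,\cF}$ that a priori depends on $\cF$, with no reason to be the restriction of a single character of $G_\iota$.
\end{itemize}
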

\begin{proof}
	Let $\cW'_{\iota, \cF}$ denote the
	twisted $R\K_{\iota, \cF}$-Heisenberg--Weil 
	representation attached to the Heisenberg representation $\cW_{\iota, \cF}|_{\Ku_{\iota,\cF}}$, we need to show that 	$\cW_{\iota, \cF} \simeq 	\cW'_{\iota, \cF}$.
	Let $\cC$  be an e-facet of maximal dimension whose closure contains $\cF$ and such
	that
     $\Uu_{\iota,\cF}\K_{\iota_M,\cF}\bUp_{\iota,\cF}=\Uu_{\iota,\cC}\K_{\iota_M,\cC}\bUp_{\iota,\cC}$. Then
	by the above observation we have $\cW_{\iota_M, \cF}\simeq \cW_{\iota, \cC}$ viewed as
	representations of $\Uu_{\iota,\cF}\K_{\iota_M,\cF}\bUp_{\iota,\cF}=\K_{\iota, \cC}$ via
	appropriate inflation when needed. Combined with the restriction of $i_{\cF}$ to
	$\Ku_{\iota, \cF}$, we obtain a $\Ku_{\iota, \cF} \cap \Ku_{\iota, \cC}$-equivariant
	homomorphism $\cW_{\iota_M, \cF}\simeq \cW_{\iota, \cC} \ra \cW'_{\iota, \cF}$, which by
	Proposition \ref{Hom_HW_H} is also $\K_{\iota, \cF} \cap \K_{\iota,
		\cC}$-equivariant. Since $\K_{\iota, \cF} \cap \K_{\iota, \cC}=\K_{\iota,
		\cC}=\Uu_{\iota,\cF}\K_{\iota_M,\cF}\bUp_{\iota,\cF}$, Frobenius reciprocity provides us
	with a non-trivial $\K_{\iota, \cF}$-morphism $\cW_{\iota, \cF} \ra \cW'_{\iota,
		\cF}$, which is an isomorphism of $R(\Ku_{\iota, \cF})$-modules by Lemma \ref{intertwining_Heisenberg}.\ref{intertwining_Heisenberg-iii}), hence an isomorphism of $R(\K_{\iota, \cF})$-modules.
\end{proof}

\begin{coro}\label{cor-choice-of-WH}
	Given a point $x \in\BT'_{\iota}$ and any twisted $R\K_{\iota, x}$-Heisenberg--Weil representation $\kappa_{\iota,x}$ whose restriction to $\Kp_{\iota,x}$ is $\check{\phi}^+_{\iota, x}$-isotypic, there exists a Heisenberg--Weil coefficient system $\cW_\iota$ on $\BT'_{\iota, \bullet/e}$ such that $\kappa_{\iota,x} \simeq \cW_{\iota,\cF}$, where $\cF$ denotes the $e$-facet that contains $x$.	
\end{coro}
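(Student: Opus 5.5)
The plan is to start from an arbitrary Heisenberg--Weil coefficient system and correct it by twisting. Let $\cF$ be the $e$-facet containing $x$. By Theorem \ref{thm_WH_coef_system-general} there is a Heisenberg--Weil coefficient system $\cW_\iota$ on $[\BT'_{\iota,\bullet/e}/K_\iota]$. By Proposition \ref{prop-W-is-twisted-HW}, $\cW_{\iota,\cF}$ is a twisted $R\K_{\iota,\cF}$-Heisenberg--Weil representation whose restriction to $\Kp_{\iota,\cF}$ is $\check\phi^+_{\iota,\cF}$-isotypic. By Definition \ref{Def-HeisWeil}, $\kappa_{\iota,x}$ is the twisted Heisenberg--Weil representation attached to some Heisenberg representation $\eta$ of $\Ku_{\iota,x}=\Ku_{\iota,\cF}$. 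Since $\kappa_{\iota,x}|_{\Kp_{\iota,x}}$ is $\check\phi^+_{\iota,x}$-isotypic, $\eta$ is a Heisenberg representation for the same triple as $\cW_{\iota,\cF}|_{\Ku_{\iota,\cF}}$.

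Next we compare $\kappa_{\iota,x}$ and $\cW_{\iota,\cF}$. Set $L:=\Hom_{R\Ku_{\iota,\cF}}(\cW_{\iota,\cF},\kappa_{\iota,x})$. By Lemma \ref{lem_Heisenberg}, $L$ is an invertible $R$-module, and evaluation gives an isomorphism $\cW_{\iota,\cF}\otimes_R L\simto \kappa_{\iota,x}$ of $R\Ku_{\iota,\cF}$-modules. Corollary \ref{cor-WeilHeis-hom} says that every $\Ku_{\iota,\cF}$-equivariant map between two twisted Heisenberg--Weil representations is automatically $\K_{\iota,\cF}$-equivariant, so this isomorphism upgrades to one of $R\K_{\iota,\cF}$-modules. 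Here $\K_{\iota,\cF}$ acts trivially on $L$. This is the only step with real content, and it rests entirely on Corollary \ref{cor-WeilHeis-hom}.

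Finally we define $\cW'_\iota:=\cW_\iota\otimes_R L$, with face maps $\beta\otimes\id_L$ and action maps $g\otimes\id_L$. This is again a functor $[\BT'_{\iota,\bullet/e}/K_\iota]\to R\text{-}\Mod$, so it remains to check the two conditions of Definition \ref{def_WH_coef_syst}. For the first, $\cW_{\iota,\cF'}\otimes_R L$ restricted to $\Ku_{\iota,\cF'}$ is still finitely generated projective and $\check\phi^+$-isotypic, and its endomorphism ring is $\End_R(L)=R$; so it is a Heisenberg representation for every $\cF'$. For the second, tensoring with $L$ gives $\Hom(\cW_{\iota,\cF'}\otimes L,\cW_{\iota,\cF''}\otimes L)=\Hom(\cW_{\iota,\cF'},\cW_{\iota,\cF''})$ because $L$ is invertible, so $\beta\otimes\id_L$ is still a generator. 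Hence $\cW'_\iota$ is a Heisenberg--Weil coefficient system with $\cW'_{\iota,\cF}\simeq\kappa_{\iota,x}$.

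The main point to be careful about is the twist by $L$: it must be applied uniformly to all facets and cannot be absorbed facet-by-facet. With that in place, the argument needs nothing beyond Corollary \ref{cor-WeilHeis-hom} and Proposition \ref{prop-W-is-twisted-HW}.
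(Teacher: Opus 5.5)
Your argument is correct and is essentially the paper's: the paper tensors the basic choices of \ref{basic-choices} with the invertible module $\Hom_{R\Ku_{\iota,x}}(\cW_{\iota,\cF},\kappa_{\iota,x})$ and reruns the construction, whereas you tensor the finished coefficient system with that module, check the axioms of Definition \ref{def_WH_coef_syst} directly, and use Corollary \ref{cor-WeilHeis-hom} to get $\K_{\iota,\cF}$-equivariance. One small precision: Proposition \ref{prop-W-is-twisted-HW} is about the specific system built in Section \ref{sec:heis-weil-coeff}, not an arbitrary Heisenberg--Weil coefficient system (those may differ from it by a depth-zero character of $G_\iota$), so you should take $\cW_\iota$ to be that constructed one.
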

\begin{proof}
	Since by Definition \ref{Def-HeisWeil} a twisted $R\K_{\iota, x}$-Heisenberg--Weil representation is determined by the underlying Heisenberg representation of $\Ku_{\iota,x}$, it suffices by Proposition \ref{prop-W-is-twisted-HW} to prove that any Heisenberg representation of $\Ku_{\iota,x}$ whose restriction to $\Kp_{\iota,x}$ is $\check{\phi}^+_{\iota, x}$-isotypic can occur as the restriction of $\cW_{\iota,\cF}$ for some  Heisenberg--Weil coefficient system $\cW_\iota$. 
	Let $\cW'_\iota$ be some Heisenberg--Weil coefficient system
        as constructed above where we choose $\cC_0$ and $\sS$ in
        \ref{basic-choices} so that $\sS$ contains the image of $\cF$
        in $\BT'_{\iota_M, \bullet/e}$.
        Then, by \ref{section-different-heisenberg}, we have $\kappa_{\iota,x}|_{\Ku_{\iota,x}} \simeq \cW'_{\iota,\cF}|_{\Ku_{\iota,x}} \otimes_R R'$ where $R'$ denotes the invertible $R$-module $\Hom_{\Ku_{\iota,x}}(\cW'_{\iota,\cF}|_{\Ku_{\iota,x}}, \kappa_{\iota,x}|_{\Ku_{\iota,x}})$. Replacing all the twisted Heisenberg--Weil representations in the basic choices in \ref{basic-choices} by their $R$-tensor product with $R'$, we obtain from the above construction a new Heisenberg--Weil coefficient system $\cW_\iota$ with the property that $\kappa_{\iota,x}|_{\Ku_{\iota,x}} \simeq \cW'_{\iota,\cF}|_{\Ku_{\iota,x}} \otimes_R R' \simeq \cW_{\iota, \cF}|_{\Ku_{\iota,x}}$.
\end{proof}

\alin{Uniqueness -- proof of Proposition \ref{uniqueness_WH_coef-system}} \label{uniqueness_WH_coef}
Let $\WC_{\iota}$ and $\WC'_{\iota}$ be two Heisenberg--Weil
coefficient systems as in Definition \ref{def_WH_coef_syst}.
For any $e$-facet $\FC\in\BT'_{\iota,\bullet/e}$, the $R$-modules $\WC_{\iota,\FC}$ and
$\WC_{\iota,\FC'}$ are both equipped with an action of $\Ku_{\iota,\FC}$ that turns them into
Heisenberg representations for $(\Ku_{\iota,\FC},\Kp_{\iota,\FC},\check\phi_{\iota,\FC}^{+})$.
By Lemma \ref{lem_Heisenberg}, the $R$-module
$H_{\FC}:=\Hom_{R\Ku_{\iota,\FC}}(\WC_{\iota,\FC},\WC'_{\iota,\FC})$ is invertible. We equip the collection $(H_{\FC})_{\FC\in\BT'_{\iota,\bullet/e}}$ with the following structure of
a smooth $G_{\iota}$-equivariant coefficient system whose transition maps are all isomorphisms.
\begin{itemize}
\item If $\FC'$ is an $e$-facet contained in the closure $\o\FC$, then from part \ref{rem_WH_coef_system-i}) of Proposition \ref{rem_WH_coef_syst}, we get an $R$-linear map
  $H_{\FC}\To{\gamma_{\FC,\FC'}}H_{\FC'}$, $\alpha_{\FC}\mapsto \alpha_{\FC'}$,
  characterized by the property that
  $\alpha_{\FC'}\circ\beta_{\WC_{\iota},\FC,\FC'}=\beta_{\WC'_{\iota,\FC,\FC'}}\circ
  \alpha_{\FC}$. This characterization implies the transitivity
  $\gamma_{\FC'',\FC'}\circ\gamma_{\FC',\FC}=\gamma_{\FC'',\FC}$ for $\cF'' \subset \overline{\cF'}$, due to the same
  transitivity properties for the maps
  $\beta_{\WC_{\iota},-}$ and $\beta_{\WC'_{\iota},-}$. 

  Conversely, from point \ref{rem_WH_coef_system-ii}) of Proposition \ref{rem_WH_coef_syst}, we get an
  $R$-linear map $H_{\FC'}\To{}H_{\FC}$, $\alpha_{\FC'}\mapsto\alpha_{\FC}$ characterized
  by the same equality as above. In particular this map is an inverse of $\gamma_{\FC,\FC'}$. 

\item 
  If $g\in G_{\iota}$, we get an isomorphism of $R$-modules $g_{\FC}:H_{\FC}\simto H_{g\FC}$
  by sending $\alpha_{\FC}$ to 
  $g_{\WC'_{\iota},\FC}\circ \alpha_{\FC}\circ (g_{\WC_{\iota},\FC})^{-1}$. The equalities
  $h_{g\FC}\circ g_{\FC}=(hg)_{\FC}$ for any $h,g$ and $\FC$ follow from the same type of
  equalities for $\WC_{\iota}$ and $\WC'_{\iota}$. Compatibility with the face maps
  $\gamma_{\FC,\FC'}$ again follows from the same compatibilities for $\WC_{\iota}$ and
  $\WC'_{\iota}$.
  Finally, since $\alpha_{\FC}$ is $\Ku_{\iota,\FC}$-equivariant,  we have $g_{\FC}=\id$ whenever
  $g\in G_{\iota,\FC,0+}$. 
\end{itemize}
It follows that $(H_{\FC},g_{\FC},\gamma_{\FC,\FC'})_{\FC,\FC'}$ defines a depth-$0$
object in $\Coef_{R}([\BT'_{\iota,\bullet/e}/G_{\iota}])$, that we can also see as an
object in $\Coef_{R}([\BT'_{\iota,\bullet/e}/\bar G_{\iota}])$ and then in
$\Coef_{R}([\BT'_{\iota,\bullet/e}/K_{\iota}])$, as in \ref{geom_interp}.
By construction, the evaluation maps
$\WC_{\iota,\FC}\otimes_{R}H_{\FC}\simto\WC'_{\iota,\FC}$ induce an
isomorphism $\WC_{\iota}\otimes_{R}H\simto\WC'_{\iota}$ in $\Coef_{R}([\BT'_{\iota,\bullet/e}/K_{\iota}])$.

Let us now set $L:=\colim_{\BT'_{\iota,\bullet/e}}H$. Since all face maps of $H$ are
isomorphisms, $L$ is an invertible $R$-module and all canonical maps $H_{\FC}\To{{\rm can}_{\FC} }L$ are
isomorphisms. The $G_{\iota}$-equivariant structure induces a smooth action of $G_{\iota}$
on $L$, given by a character $\theta:G_{\iota}\To{} R^{\times}$, which has depth $0$ since
it is trivial on each subgroup $G_{\iota,\FC,0+}$.  Let us denote by
$L_{\theta}$ the $R$-module $L$ with its action of $G_{\iota}$ through $\theta$. We see that the
collection of maps $({\rm can}_{\FC})_{\FC\in\BT'_{\iota,\bullet/e}}$ provides an
isomorphism from $H$ to the constant $G_{\iota}$-equivariant system associated to $L_{\theta}$. In
other word, we have constructed an isomorphism
$ \WC_{\iota}\otimes_{R}L_{\theta}\simto \WC'_{\iota}$.
\qed

\section{Sample applications}

\subsection{Reduction to depth zero for Hecke algebras of types}\label{sec:hecke-types}

\alin{The setup}
Let $R=\cC$ be an algebraically closed field of characteristic different from $p$, e.g., the complex numbers.
Let $((\overrightarrow{\mathbf{G}}, \mathbf{M}^0), \overrightarrow{r}, x, (K_{M^0}, \rho_{M^0}), \overrightarrow{\psi})$ be a $\mathbf{G}$-datum as in \cite[Definition~4.1.1]{AFMO2} that follows \cite[7.2]{Kim_Yu}. Contrary to \cite{AFMO2} and \cite{Kim_Yu} we do not record the embeddings of the extended Bruhat--Tits buildings of the twisted Levi subgroups $\mathbf{M}^i \subseteq \mathbf{G}^i \subseteq \mathbf{G}$, where $\mathbf{M}^i$ is the centralizer in $\mathbf{G}^i$ of the maximal split torus $\mathbf{A}_{\mathbf{M}^0}$ in the center of $\mathbf{M}^0$, as part of the datum because here we fix such embeddings and then choose $x \in \BT(\mathbf{M}^0) \subseteq \BT(\mathbf{G})$ so that all the conditions of a $\mathbf{G}$-datum are satisfied. From such a $\mathbf G$-datum, the construction in \cite{AFMO2} that follows the construction of types by Kim and Yu (\cite{Kim_Yu}) but includes a twist by the quadratic character of \cite{FKS}, provides us with a compact, open subgroup $K_x \subset G$ and a representation $\rho_x$ thereof. Moreover, 
$((\mathbf{G}^0, \mathbf{M}^0), x, (K_{M^0}, \rho_{M^0}))$ is a depth-zero datum to which there is attached a compact, open subgroup $K^0_x \supseteq K_{M^0}$ of $G^0$ with representation $\rho_x^0$ whose restriction to $K_{M^0}$ is $\rho_{M^0}$, see \cite[5.1]{AFMO1} or \cite[4.1]{AFMO2}. Note that $K^0_x$ is contained in $G^0_x$ and contains the parahoric subgroup $G^0_{x,0}$.
We write $\cH(G, K_x, \rho_{x}):=\End_G(\cind_{K_x}^{G}\rho_x)$ and $\cH(G, K_x^0, \rho_x^0):=\End_G(\cind_{K_x^0}^{G^0}\rho_x^0)$ for the Hecke algebras attached to $(K_x, \rho_x)$ and  $(K_x^0, \rho_x^0)$.
If $\cC=\bC$, then $(K_x^0, \rho_x^0)$ is a depth-zero type for $G^0$
and $(K_x, \rho_x)$ is a type for $G$. If, in addition, $K_{M^0}$ is
chosen to equal $M^0_x$, then these types describe single Bernstein
blocks. In that case, the  Bernstein blocks are equivalent to the
category of right unital complex $\cH(G, K_x, \rho_{x})$-modules and
$\cH(G, K_x^0, \rho_x^0)$-modules, respectively. If $p$ does not
divide the order of the Weyl group of $\mathbf G$, then every
Bernstein block admits a type of this form (\cite{JF_exhaust}).
\begin{coro}\label{cor-Hecke-alg-isom}
	Suppose that $p$ is odd and not a torsion prime of  $\hat{\mathbf G}$, nor of $\mathbf{G}$. 
	We have an isomorphism of $\cC$-algebras $\cH(G, K_x,\rho_{x}) \simeq \cH(G^0, K_x^0, \rho_{x}^0)$.
\end{coro}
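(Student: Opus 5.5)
The plan is to translate the $\mathbf{G}$-datum into the language of wild inertia parameters and then read off the Hecke algebra isomorphism from Theorem \ref{thm-main-2} and Corollary \ref{coro_indep_e}.

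First I pass to a normalized datum. Twisting each $\psi_i$ by a depth-zero character does not change $\cH(G,K_x,\rho_x)$; one only redistributes depth-zero characters between $\rho_{M^0}$ and $\psi_d$. Using the standard refactorization (as in \cite{JF_exhaust}, \cite{Fi-mod-ell}), I may therefore assume that each $\psi_i$ is trivial on $(\mathbf{G}^i)_{\rm sc}(F)$. The truncated datum $(\vec{\mathbf{G}},\vec\psi,x)$ is then normalized and generic. Lemma \ref{from_Yu_data_to_parameters} produces a wild inertia parameter $\phi$ and $\iota\in I_\phi$ such that:
\begin{itemize}
\item $\mathbf{G}_\iota=\mathbf{G}^0$;
\item $\Kp_{\iota,x}=K^+_x$ and $\check\phi^+_{\iota,x}=\hat\psi_x$.
\end{itemize}
Since $\mathbf{G}^i=\mathbf{G}^i_\iota$ for all $i$, the groups $K_{\iota,x}=G_{\iota,x}\Ku_{\iota,x}$ and $\Ku_{\iota,x}$ agree with the Kim--Yu groups. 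Hence $K_x=K^0_x\Ku_{\iota,x}\subseteq K_{\iota,x}$, and $K^0_x$ is an open subgroup of $G_{\iota,x}$ containing $G_{\iota,x,0+}$. Let $I$ be the class of $\iota$. Because $\cC$ is an algebraically closed field of characteristic $\neq p$, it is a $\Rmintwo$-algebra, so Theorem \ref{thm_WH_coef_system-general} applies. Corollary \ref{cor-choice-of-WH} then gives a Heisenberg--Weil coefficient system $\WC_\iota$ with $\WC_{\iota,x}\simeq\kappa_{\iota,x}$, for any chosen twisted Heisenberg--Weil representation $\kappa_{\iota,x}$.

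Second, I identify $\rho_x$ with $(\kappa_{\iota,x}\otimes\rho^0_x)|_{K_x}$, where $\rho^0_x$ is inflated to $K_x$ through $K_x\to K_x^0/K^0_x\cap G_{\iota,x,0+}$. This is where the comparison with \cite{AFMO2} enters. By the Remark after Definition \ref{Def-HeisWeil}, our $\kappa_{\iota,x}$ restricted to $K_x$ agrees with the twisted Heisenberg--Weil representation of \cite[Notation~3.6.3]{AFMO2}. The construction there sets $\rho_x=\kappa\otimes\rho^0_x$, with the character $\check\varphi_0|_{G_{\iota,x}}$ absorbing $\prod\psi_i|_{K^0_x}$. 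I expect this matching of normalizations to be the most delicate point: one must check that the product of the $\psi_i$ restricted to $K^0_x$ is exactly the twist built into Definition \ref{Def-HeisWeil-non-twisted}, and that the quadratic characters agree. If they differ by a depth-zero character, I absorb it into $\rho^0_x$; this is harmless because it changes the depth-zero Hecke algebra only by an isomorphic twist.

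Finally, set $\rho:=\Ind_{K^0_x}^{G_{\iota,x}}\rho^0_x$, which is trivial on $G_{\iota,x,0+}$. Transitivity of compact induction gives
\[
\cind_{K_x}^{G}\rho_x\simeq\cind_{K_{\iota,x}}^{G}\bigl(\WC_{\iota,x}\otimes\rho\bigr).
\]
This uses $K_{\iota,x}=G_{\iota,x}K_x$ and $\WC_{\iota,x}\otimes\Ind_{K_x}^{K_{\iota,x}}(\rho^0_x)\simeq\Ind_{K_x}^{K_{\iota,x}}(\WC_{\iota,x}|_{K_x}\otimes\rho^0_x)$. Similarly, $\cind_{K^0_x}^{G^0}\rho^0_x\simeq\cind_{G_{\iota,x}}^{G_\iota}\rho$. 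Corollary \ref{coro_indep_e}, together with the fact that $\IC_{\WC_\iota}$ is an equivalence (Theorem \ref{thm-main-2}), sends the second module to the first and induces isomorphisms of their endomorphism algebras. This yields
\[
\cH(G^0,K^0_x,\rho^0_x)\simeq\cH(G,K_x,\rho_x).
\]
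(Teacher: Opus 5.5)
Your proposal is correct and follows the paper's proof essentially step by step. You refactor via the construction around \cite[Lemma~7.3]{JF_exhaust} to a normalized generic truncated datum, apply Lemma~\ref{from_Yu_data_to_parameters} and Corollary~\ref{cor-choice-of-WH} to write $\rho_x\simeq\rho^0_x\otimes\kappa_{\iota,x}|_{K_x}$ with $\kappa_{\iota,x}\simeq\WC_{\iota,x}$, and then transport $\cind_{G_{\iota,x}}^{G_\iota}\rho$, where $\rho=\cind_{K^0_x}^{G_{\iota,x}}\rho^0_x$, through Corollary~\ref{coro_indep_e}. The only difference is bookkeeping: the paper absorbs the leftover depth-zero character into $\psi_0$ (a character $\delta$ of $G^0$ chosen so that $\prod_i\psi_i|_{G^0}$, and hence the type, is unchanged), whereas you absorb it into $\rho^0_x$, which only replaces $\cH(G^0,K^0_x,\rho^0_x)$ by an isomorphic algebra.
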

\begin{proof}
	We denote by $\bar x$ the image of $x$ in $\BT'_\iota$.
	The type $(K_x, \rho_x)$ only depends on the product $\psi:=\prod \psi_i|_{G^0}$ and not on the single characters $\psi_i$. Applying the construction of the characters from directly before \cite[Lemma~7.3]{JF_exhaust} (there the characters are called $\phi_i$, here we call them $\psi_i'$) to an irreducible subrepresentation of the restriction $\rho_x|_{K^0_x\Kp_{\iota, \bar x}}$ of $\rho_x$ to $K^0_x\Kp_{\iota, \bar x} \subseteq K_x$ (using that the roots of unity are divisible rather than Pontryagin duality to extend characters) and using \cite[Lemma~7.3]{JF_exhaust}, we obtain $\mathbf{G}^{i+1}$-generic characters $\psi_i'$ of $G^i$ that are trivial on the derived subgroup of $G^i$ such that $\prod \psi_i|_{G^0_{x,0+}}=\prod \psi_i'|_{G^0_{x,0+}}$. Setting $\psi_i''=\psi_i'$ for $i>0$ and $\psi_0''=\psi_0' \delta$ for an appropriate character $\delta$ of $G^0$ that is trivial on $G^0_{x,0+}$, we obtain that $\prod \psi_i|_{G^0}=\prod \psi_i''|_{G^0}$, and hence the types constructed from $((\overrightarrow{\mathbf{G}}, \mathbf{M}^0), \overrightarrow{r}, x, (K_{M^0}, \rho_{M^0}), \overrightarrow{\psi})$ and $((\overrightarrow{\mathbf{G}}, \mathbf{M}^0), \overrightarrow{r}, x, (K_{M^0}, \rho_{M^0}), \overrightarrow{\psi''})$ agree. Moreover, also the pairs $(\Kp_{x}, \hat \psi_x)$ attached to the three truncated Yu data $(\overrightarrow{\mathbf{G}}, \overrightarrow{\psi}, x)$,  $(\overrightarrow{\mathbf{G}}, \overrightarrow{\psi'}, x)$ and $(\overrightarrow{\mathbf{G}}, \overrightarrow{\psi''}, x)$ by Yu as in \ref{exhaust} agree. 
        According to Lemma \ref{from_Yu_data_to_parameters}, there exists a pair $(\phi, \iota)$ consisting of a wild inertia parameter $\phi$ and $\iota \in I \subseteq I_{\phi}$ such that $\mathbf{G}^0_\iota=\mathbf{G}^0$,  $\Kp_{\iota, \bar x}=\Kp_{x}$, and $\check\phi^{+}_{\iota, \bar x}=\hat \psi_x$.
	Now 
	the group $K^0_x$ is contained in $G_{\iota,\bar x}$ and by the construction of the types $K_x=K^0_x\Ku_{\iota, \bar x}$ and $\rho_x = \rho_x^0 \otimes \kappa_{\iota, x}|_{K_x}$
	for a twisted $\cC\K_{\iota,\bar x}$-Heisenberg--Weil representation whose restriction to $\Kp_{\iota, x}=\Kp_{\iota, \bar x}$ is $\hat\psi_x=\check\phi^{+}_{\iota, \bar x}$-isotypic.
	Let $\rho:= \cind_{K^0_x}^{G_{\iota,\bar x}}\rho_x^0$, and let $\cW_\iota$ be a Heisenberg--Weil coefficient system on $\BT'_{\iota, \bullet/e}$ such that $\kappa_{\iota, x}=\cW_{\iota, \cF}$, where $\cF$ denotes the $e$-facet that contains $\bar x$, which exists by Corollary \ref{cor-choice-of-WH}. By Corollary \ref{coro_indep_e}, under the equivalence $\cI_{\cW_\iota}$ of categories between $\Rep_{R}^1(G_\iota)$ and $\Rep_R^{\phi, I}(G)$, the representation $\cind_{K_x^0}^{G^0}\rho_x^0 = \cind_{G_{\iota,\bar x}}^{G_\iota}\rho$ gets send to a representation isomorphic to $\cind_{K_{\iota, \bar x}}^{G}(\cW_{\iota, \cF} \otimes_\cC \rho)=\cind_{K_{\iota, \bar x}}^{G}(\kappa_{\iota,x} \otimes_\cC \cind_{K_x}^{K_{\iota,\bar x}}\rho_x^0)=\cind_{K_x}^{G}(\kappa_{\iota,x} \otimes \rho_x^0)\simeq \cind_{K_x}^{G}\rho_x$, where we used that $K_{\iota, \bar x}=G_{\iota,\bar x}\Ku_{\iota, \bar x}$ and $K_x=K^0_x\Ku_{\iota, \bar x}$. Hence $\cH(G, K_x^0, \rho_x^0)=\End_G(\cind_{K_x^0}^{G^0}\rho_x^0)\simeq \End_G(\cind_{K_x}^{G}\rho_x)=\cH(G, K_x, \rho_{x})$.
\end{proof}

This result was previously proven in \cite[Theorem~4.4.1]{AFMO2} (under the assumption that $K_{M^0}$ is normalized by $N_{G^0}(M^0)(F)_{[x]_{M^0}}$ in the notation of \textit{loc.\ cit.}), where a more explicit isomorphism of Hecke algebras has been obtained.

\subsection{Projective generators, and connection with a result of Chinello}

\alin{Projective generators} We return tro the general setting that $R$ may be any commutative $\Rmintwo$-algebra, and we fix a pair $(\phi,I)$ and an embedding $\iota \in I$ as above.
Let $\CC_{0}$ be a chamber of $\BT'(\mathbf{G}_{\iota},F)$ (for the usual Bruhat--Tits
polysimplicial structure), and let  $\bar S$ 
be a set  of representatives of $G_{\iota}$-orbits of vertices of $\BT'(\mathbf{G}_{\iota},F)$ that
is contained in the closure of $\CC_{0}$. 
For each $\bar x \in \bar S$, pick an $e$-vertex $x$ in $\BT'_{\iota,\bullet/e}$ above $\bar x$, and
denote by $S$ the subset of $\BT'_{\iota,\bullet/e}$ formed by these lifts. 
The depth-$0$ category $\Rep_{R}^{1}(G_{\iota})$ is generated by the direct sum
$\bigoplus_{x\in S}\cind_{G_{\iota,x,0+}}^{G_{\iota}}(R)$. Hence, according to Theorem \ref{thm-main}, the $RG$-module
$$\IC_{\WC_{\iota}}\left(\bigoplus_{x\in S}\cind_{G_{\iota,x,0+}}^{G_{\iota}}(R)\right)
\simeq \bigoplus_{x\in S} \cind_{\Ku_{\iota,x}}^{G}(\WC_{\iota,x}|_{\Ku_{\iota,x}})$$
is a projective generator of $\Rep^{\phi,I}_{R}(G)$. Recall from our explicit construction of $\{ \WC_{\iota,x}\}$ that the restriction
$\WC_{\iota,x}|_{\Ku_{\iota,x}}$ is just a Heisenberg representation for
$(\Ku_{\iota,x},\Kp_{\iota,x},\check\phi_{\iota,x}^{+})$, whose definition is quite simple
and does not need the more subtle theory of Heisenberg--Weil representations.  However, the
isomorphism of $R$-algebras
$$\End_{RG_{\iota}}\left(\bigoplus_{x\in S}\cind_{G_{\iota,x,0+}}^{G_{\iota}}(R)\right)
\simto \End_{RG}\left(\bigoplus_{x\in S} \cind_{\Ku_{\iota,x}}^{G}(\WC_{\iota,x}|_{\Ku_{\iota,x}})\right)$$
induced by $\IC_{\WC_{\iota}}$ a priori uses the full force of Theorem \ref{thm_WH_coef_system-general}.

\alin{Example: $\mathbf{G}=\GL_{n}$} \label{example_gln}
Suppose $\mathbf{G}=\GL_{n}$. 
Following \cite[2.4.6]{Datfuncto}, 
the centralizer $C_{\hat{\mathbf{G}}}(\phi)$ together with its canonical outer action of $W_{F}$ is of the form
$\prod_{i=1}^{r}{\rm Ind}_{W_{F_{i}}}^{W_{F}}\GL_{e_{i}}$ for some  tamely ramified extensions $F_{i}/F$
of degree $d_{i}$ satisfying $\sum_{i=1}^{r}e_{i}d_{i}=n$. In other words, we have
$\mathbf{G}_{\phi}=\prod_{i=1}^{r} {\rm Res}_{F_{i}|F}\GL_{e_{i}}$.  Accordingly, we have
$\mathbf{S}_{\phi}=\prod_{i=1}^{r} {\rm Res}_{F_{i}|F}\GL_{1}$. Moreover, there is a unique
$\mathbf{G}(F)$-conjugacy class $I$ of $F$-rational Levi-center-embeddings $\mathbf{S}_{\phi}\injo \mathbf{G}$,
so we may suppress $I$ from the discussion, and all $\mathbf{G}_{\iota}$, $\iota\in I$, are
isomorphic to $\mathbf{G}_{\phi}$. 

In this setting, we know that the depth-$0$ category $\Rep^{1}_{R}(G_{\iota})$ is pro-generated by
$\cind_{G_{\iota,x,0+}}^{G_{\iota}}(R)$ for any point $x$ in $\BT'_{\iota}$ that maps to a vertex of
$\BT'(\mathbf{G}_{\iota},F)$. 
It follows that $\Rep^{\phi}_{R}(G)$ is pro-generated by
$\cind_{\Ku_{\iota,x}}^{G}(\Heis_{\iota,x})$ where
 $\Heis_{\iota,x}$ is any Heisenberg representation for $(\Ku_{\iota,x},\Kp_{\iota,x},\check\phi_{\iota,x}^{+})$.
As above, our equivalence $\IC_{\WC_{\iota}}$  induces an isomorphism of intertwining $R$-algebras
$$\End_{RG_{\iota}}(\cind_{G_{\iota,x,0+}}^{G_{\iota}}(R))
\simto \End_{RG}(\cind_{\Ku_{\iota,x}}^{G}(\Heis_{\iota,\FC_{0}}))$$
and, actually, $\IC_{\WC_{\iota}}$ coincides with the equivalence $\Rep^{1}_{R}(G_{\iota})\simto \Rep^{\phi}_{R}(G)$
induced by that isomorphism  between the  endomorphism algebras of the respective
pro-generators. In this setup, such an isomorphism of algebras was already obtained by 
Chinello in \cite{Chinello}, via explicit computation of generators and relations.

\subsection{Reduction to depth zero and the local Langlands correspondence}
Our reduction-to-depth-$0$ process on the representation theoretic
side of the Langlands program is parallel to a similar process on the Galois side that was
described, e.g., in \cite[\S 3]{DHKM1}. It is thus reasonable to try and leverage both processes to
expand what is known about the local
Langlands correspondence and its recent enhancements in depth $0$ to higher depth.

\alin{Example: about the categorification of the  local Langlands
  correspondence}\label{subsec-CLLC}
Let us assume $\mathbf{G}$ quasi-split. Starting from $\phi\in\Phi(P_{F},\mathbf{G})$
whose centralizer is a Levi subgroup, we might want to fill in the upper arrow in the following
diagram :

\ini\begin{equation}
  \label{diagram_cllc}
  \xymatrix{
    D\Rep^{\phi}_{R}(G) \ar@{^{(}-->}[r] &
    {\rm Ind.Coh}\left(Z^{1}(W_{F}^{0},\hat{\mathbf{G}})/\hat{\mathbf{G}}\right)_{[\phi],R} \\
    \prod_{I\in \IC_{\phi}} D\Rep^{1}_{R}(G_{\phi,I})\ar@{^{(}->}[r]\ar[u]^{\simeq}_{\IC_{\check\varphi_{0}}} &
    {\rm Ind.Coh}\left(Z^{1}(W_{F}^{0}/P_{F},\hat{\mathbf{G}}_{\phi})/\hat{\mathbf{G}}_{\phi}\right)_{R}
    \ar[u]_{\simeq}^{\IC^{\rm spec}_{\varphi_{0}}}
  }
\end{equation}

  In this diagram, we have departed a bit from  our general notation by considering
  $\hat{\mathbf{G}}$ as a group scheme over $\o\ZM[\frac 1p]$.
  We know from \cite[Thm 3.1]{DHKM1} that $\phi$ can be conjugate so as to factor through
  $\hat{\mathbf{G}}(\o\ZM[\frac 1p])$, so in particular $\hat{\mathbf{G}}_{\phi}=C_{\hat{\mathbf{G}}}(\phi)$
  is also a group scheme over $\o\ZM[\frac 1p]$. 
  Let us explain what the solid arrows of the diagram above are about.
  \begin{itemize}
  \item The bottom horizontal arrow denotes the fully faithful embedding that  would follow from the
    ``tame'' categorical local Langlands 
    correspondence introduced in \cite[\S 1.2.2]{Zhu} applied to the quasi-split group
    $\mathbf{G}_{\phi}$. In particular, \emph{a choice  of $I_{0}\in\IC_{\phi}$ with
      $\mathbf{G}_{\phi,I_{0}}\simeq \mathbf{G}_{\phi}$ has to be
      made to get such an embedding.} Once this choice is made,
    from the map $\IC_{\phi}\To{} H^{1}(F,\mathbf{G}_{\phi,I_{0}})$ of Proposition
    \ref{dual_embeddings_phi} iii), we get  a map $\IC_{\phi}\injo B(\mathbf{G}_{\phi})_{{\rm
        basic}}$, $I\mapsto b(I)$. We can then identify $D\Rep_{R}(G_{\phi,I})$ with the
    subcategory of sheaves on ${\rm Isoc}_{G_{\phi}}$ supported on the stratum $b=b(I)$, and
    the claimed embedding then follows from Zhu's statement.
    Note that, at the time of writing these lines, Zhu's statement has been proved only  for
    coefficients  $R=\o\QM_{\ell}$ and assuming that the group is 
    \emph{unramified}, see \cite[Thm 1.6]{Zhu}.
    In contrast, our $\mathbf{G}_{\phi}$ may be tamely ramified. However, in the case
    $\mathbf{G}=\GL_{n}$, the latter issue can be bypassed since each $\mathbf{G}_{\phi}$ is a product
    of restriction of scalars of $\GL_{m}$'s.

  \item The left vertical arrow is provided by this paper. Again, \emph{several choices are possible}.
    Assuming that ${\rm Pic}(R)=\{0\}$, e.g., $R=\o\ZM[\frac 1p]$ or $R=\o\FM_{\ell}$ for $\ell\neq p$, 
    Propositions \ref{uniqueness_WH_coef-system} and \ref{prop_indep_e} imply that the set of
    equivalences $\Rep_{R}(G_{\phi,I})\simto\Rep_{R}^{\phi,I}(G)$ constructed in this paper is a
    torsor under the group of characters $G_{\phi,I}\To{}R^{\times}$ that are trivial on $(G_{\phi,I})_{x,0+}$ for all $x \in \BT(\mathbf{G}_{\phi, I},F)$.
     Actually, among the
    choices made to construct these equivalences (including, for example, the choice of a  $\iota\in
    I$), the only significant one is the choice of a character $\check\varphi_{0}: G_{\iota}\To{}
    R^{\times}$ that extends each
    $(\check\phi_{\iota,x}^{+})_{|G_{\iota,x,0+}}$, made in the definition  of
    Heisenberg--Weil representations \ref{Def-HeisWeil-non-twisted}. In diagram (\ref{diagram_cllc}),
     we have denoted by  $\IC_{\check\varphi_{0}}$ the product of the equivalences associated
    with the Heisenberg--Weil coefficient systems stemming from this choice.

  \item The right vertical arrow is essentially provided by \cite{DHKM1}. By Lemma
    \ref{splittings_Levi}, there exists an extension $\varphi_{0}:W_{F}\To{}{^{L}\mathbf{G}}(\CM)$ of
    $\phi$ such that $\varphi_{0}(W_{F})$ normalizes a pinning $\varepsilon_{\phi}$ of
    $C_{\hat{\mathbf{G}}}(\phi)$. It is not clear if we can choose $\varphi_{0}$ such that 
    $\varphi_{0}(W_{F})\subset{^{L}\mathbf{G}}(\o\ZM[\frac 1{p}])$ but the argument of \cite[Thm.~4.6]{DatIHES}
    shows that we can find one such that $\varphi_{0}(W_{F})\subset{^{L}\mathbf{G}}(\o\ZM[\frac 1{pN}])$,
    whenever the center $Z(\hat{\mathbf{G}})$ is smooth over $\o\ZM[\frac 1{pN}]$.    
    For example, one can set $N=|\pi_{1}(\mathbf{G}_{\rm der})|$.
    \emph{Once $\varphi_{0}$ is chosen},
    the map $(c,w)\mapsto c\varphi_{0}(w)$ provides an $L$-homomorphism
 $$^{L}\varphi_{0}:\,{^{L}\mathbf{G}_{\phi}}=C_{\hat{\mathbf{G}}}(\phi)\rtimes_{\Ad_{\varphi_{0}}}W_{F}\To{}{^{L}\mathbf{G}}.$$
 By construction, the induced map on $1$-cocycles 
    $Z^{1}(W^{0}_{F},\hat{\mathbf{G}}_{\phi})_{R}\simto Z^{1}(W_{F}^{0},\hat{\mathbf{G}})_{R}$
    restricts to  an isomorphism
    $Z^{1}(W^{0}_{F}/P_{F},\hat{\mathbf{G}}_{\phi})_{R}\simto
    Z^{1}(W_{F}^{0},\hat{\mathbf{G}})_{\phi,R}$ for any $\o\ZM[\frac 1{pN}]$-algebra $R$, 
    where the
    right hand side is the moduli space of extensions of $\phi$ to $W_{F}^{0}$. It induces in turn an
    isomorphism of $R$-stacks
    $$ \left(Z^{1}(W^{0}_{F}/P_{F},\hat{\mathbf{G}}_{\phi})/\hat{\mathbf{G}}_{\phi}\right)_{R}
\simto \left(Z^{1}(W_{F}^{0},\hat{\mathbf{G}})/\hat{\mathbf{G}}\right)_{[\phi],R},$$
where the right hand side denotes the summand associated with $\phi$. The symbol $\IC^{\rm
  spec}_{\varphi_{0}}$ of the above diagram is the associated equivalence on ${\rm Ind.Coh}$.   
  \end{itemize}

\emph{How to make these choices ?}  From the above discussion, each one of the solid arrows of the
above diagram depends on choices. In order to get the ``correct'' dashed arrow, one should carefully
make these choices. Here are our expectations, that we hope to verify in future work.
\begin{itemize}
\item The choice of $I_{0}\in \IC_{\phi}$ will depend on a choice of a Whittaker datum $(U,\psi)$   in
  $\mathbf{G}$. Denoting by $\Gamma_{U,\psi}$ the associated Gelfand--Graev $\R$-representation of $G$,
  the set $I_{0}$ should be the unique $I$ in $\IC_{\phi}$ such that the $(\phi,I)$-component
  $(\Gamma_{U,\psi})^{\phi,I}$ of  $\Gamma_{U,\psi}$ is non-zero. Moreover,
  $\IC_{\check\varphi_{0}}^{-1}((\Gamma_{U,\psi})^{\phi,I_{0}})$ should be the depth-$0$ summand of a
  Gelfand--Graev representation of $\mathbf{G}_{\phi,I_{0}}$, that should be used to normalize the
  tame categorical local Langlands correspondence for $\mathbf{G}_{\phi,I_{0}}$ (the lower map in
  our diagram above).
\item There is a priori no best choice for $\varphi_{0}$ nor $\check\varphi_{0}$, but we expect
  that over $R=\o\ZM[\frac 1{pN}]$, 
  for any choice of 
  $\varphi_{0}$, there should be a unique $\check\varphi_{0}$ such that the equivalence
  $\IC_{\check\varphi_{0}}$ is compatible with Langlands functoriality along $^{L}\varphi_{0}$ for irreducible complex 
  representations. 
Before explaining what we mean, here is a natural way of producing a map $\varphi_{0}\mapsto\check\varphi_{0}$.
Suppose given a \emph{tamely ramified }$L$-embedding
${^{L}\mathbf{G}_{\phi}}\injo {^{L}\mathbf{G}}$, i.e., an embedding 
that extends both the inclusion
$C_{\hat{\mathbf{G}}}(\phi)\subset \hat{\mathbf{G}}$ and the identity on $\{1\}\times P_{F}$. 
Writing  such an embedding  as $(c,w)\mapsto c.\psi(w)$ for some
 $\psi:\,W_{F}\To{}{^{L}\mathbf{G}}$ whose image normalizes the pinning $\varepsilon_{\phi}$ of
 $C_{\hat{\mathbf{G}}}(\phi)$, we can then write $\varphi_{0}=\hat\varphi_{0}.\psi$ for some
 $\hat\varphi_{0}\in Z^{1}(W_{F},Z(C_{\hat{\mathbf{G}}}(\phi)))$ that extends $\phi$, as in (the
 proof of) Lemma \ref{splittings_tr}. Applying Borel's procedure to $\hat\varphi_{0}$ then provides
 a $\check\varphi_{0}$. 
 We expect that a good choice of $\psi$, associated to suitable $\chi$-data as in \cite[\S 6.1]{Kaletha_double},
 will provide the ``correct'' matching  between $\varphi_{0}$ and $\check\varphi_{0}$, in the sense
 that $\IC_{\check\varphi_{0}}$ induces the functorial lifting along $^{L}\varphi_{0}$ of the  $L$-packets
 associated to supercuspidal Langlands parameters in \cite{Kaletha_Lpackets}. 
 In any case, once $\psi$ is chosen (or, equivalently, the $L$-embedding
 ${^{L}\mathbf{G}_{\phi}}\injo {^{L}\mathbf{G}}$),  
 the dashed arrow that makes  Diagram (\ref{diagram_cllc}) commutative is independent of the
 choice of a matching pair $(\varphi_{0},\check\varphi_{0})$. 
\end{itemize}

\alin{The example of $\GL_{n}$} \label{CLLC_GLn} In the case  $\mathbf{G}=\GL_{n}$, several points in the above 
discussion simplify drastically. For one, the sets $\IC_{\phi}$ are always singletons, so that there
is no need to choose a base point there. Moreover, when $R=\o\QM_{\ell}$, the tame categorical Langlands
correspondence for $\mathbf{G}_{\phi}$ is available from Zhu's work, because $\mathbf{G}_{\phi}$ is a product of
restrictions of scalars of $\GL_{n_{i}}$'s.  Therefore, the bottom map
of Diagram (\ref{diagram_cllc}) is available for all $\phi\in\Phi(P_{F},\GL_{n})$ (regardless of
$\phi$ having abelian image, actually).

Moreover, since 
$\hat{\mathbf{G}}_{\phi}=C_{\hat{\mathbf{G}}}(\phi)$ is a Levi subgroup of $\GL_{n}$, 
 the projection map
$N_{\hat{\mathbf{G}}}(C_{\hat{\mathbf{G}}}(\phi))\To{}N_{\hat{\mathbf{G}}}(C_{\hat{\mathbf{G}}}(\phi))/C_{\hat{\mathbf{G}}}(\phi)$
has natural sections given by permutation matrices.
Using again that $\mathbf{G}_{\phi}$ is a product of restrictions of scalars of $\GL_{n_{i}}$,
this provides a ``natural'' way of choosing an embedding
$^{L}\mathbf{G}_{\phi}\To{}{^{L}\mathbf{G}}$ and a $\psi$ as in the last bullet point of Section \ref{subsec-CLLC}. 
So, associated to this choice of $\psi$, we get an embedding
$D\Rep^{\phi}_{\o\QM_{\ell}}(\GL_{n}(F))\injo {\rm Ind.Coh}
\left(\Hom(W_{F}^{0},\GL_{n})/\GL_{n}\right)_{[\phi],\o\QM_{\ell}}$
that makes diagram (\ref{diagram_cllc}) commutative.
However, this embedding generally  needs to be ``rectified''. In the 
work of Bushnell and Henniart, this rectification is given by 
precomposing the left vertical arrow of (\ref{diagram_cllc}) by twisting with a certain tamely
ramified character of $G_{\phi}$, called a ``rectifier''.
Alternatively, one could also modify the chosen embedding $^{L}\mathbf{G}_{\phi}\To{}{^{L}\mathbf{G}}$
using appropriate $\chi$-data. The dictionary between both approaches is explained
in \cite{Tam} and \cite{Oi-Tokimoto}, in a slightly different setting, but we expect that their arguments can be adapted
to our setting.
\begin{cor}
  Assuming  that $p>n$,  there is an embedding
$$ D\Rep_{\o\QM_{\ell}}(\GL_{n}(F))\injo {\rm Ind.Coh}
\left(\Hom(W_{F}^{0},\GL_{n})/\GL_{n}\right)_{\o\QM_{\ell}}.$$
\end{cor}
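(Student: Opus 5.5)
The plan is to assemble the embedding block by block, using the decomposition of $\Rep_{\o\QM_{\ell}}(\GL_{n}(F))$ from Theorem \ref{exhaust}, and to match it with the decomposition of the stack of parameters into the summands indexed by $[\phi]$.

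\textbf{Step 1: decomposition into blocks.} Since $p>n$, $p$ is odd and does not divide $|W|=n!$. By Lemma \ref{centralizer_Levi} iv), every $C_{\hat{\mathbf{G}}}(\phi)$ is then a Levi subgroup, and $\GL_n$ has no torsion primes. Theorem \ref{exhaust} therefore gives
\[\Rep_{\o\QM_{\ell}}(\GL_n(F))=\prod_{\phi\in\Phi(P_F,\GL_n)}\Rep^{\phi}_{\o\QM_{\ell}}(\GL_n(F)),\]
where each $\IC_\phi$ is a singleton. Passing to derived categories, $D\Rep$ is the product of the $D\Rep^{\phi}$, and there are no Ext groups between different factors by Proposition \ref{disjonction}.

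On the spectral side, \cite{DHKM1} gives that $\Hom(W_F^0,\GL_n)/\GL_n$ is the disjoint union of the open and closed substacks indexed by $[\phi]$. Hence ${\rm Ind.Coh}$ of the whole stack is the product of the ${\rm Ind.Coh}$ of these summands. It is then enough to produce, for each $\phi$, a fully faithful embedding
\[D\Rep^{\phi}\injo {\rm Ind.Coh}(\cdots)_{[\phi]}\]
and take the product. A product of fully faithful functors between product categories is fully faithful.

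\textbf{Step 2: the embedding for a fixed $\phi$.} Fill the dashed arrow of diagram (\ref{diagram_cllc}) by composing three maps, in this order:
\begin{itemize}
\item the inverse of the equivalence $\IC_{\check\varphi_0}$ from Theorem \ref{thm-main-2}, which is available since $\o\QM_\ell$ contains $\mu_{4p^\infty}$ and $\sqrt p$; being exact, it extends to derived categories;
\item the bottom embedding, given by Zhu's tame categorical correspondence \cite[Thm 1.6]{Zhu} for $\mathbf{G}_\phi=\prod_i{\rm Res}_{F_i|F}\GL_{e_i}$;
\item the spectral equivalence $\IC^{\rm spec}_{\varphi_0}$.
\end{itemize}
For the spectral step, choose $\varphi_0$ using the permutation-matrix section discussed just above, together with a matching character $\check\varphi_0$ obtained by Borel's procedure. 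The spectral equivalence is defined over $\o\QM_\ell$ without any integrality issue, so $\varphi_0$ only needs to be defined over $\CM\simeq\o\QM_\ell$.

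The composite is fully faithful, being the composite of two equivalences and one fully faithful functor. Note that the corollary asserts only existence, so the rectifier question can be ignored.

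\textbf{Main obstacle: applying Zhu to $\mathbf{G}_\phi$.} Zhu's theorem is stated for unramified groups, while ${\rm Res}_{F_i|F}\GL_{e_i}$ may be tamely ramified. I would reduce via
\[D\Rep({\rm Res}_{F_i|F}\GL_{e_i}(F))=D\Rep(\GL_{e_i}(F_i)),\]
and, on the spectral side, via Shapiro's lemma,
\[Z^1(W_F^0/P_F,{\rm Ind}_{W_{F_i}}^{W_F}\GL_{e_i})\simeq Z^1(W_{F_i}^0/P_{F_i},\GL_{e_i}),\]
compatibly with the conjugation actions. I would then apply Zhu over $F_i$, for the unramified group $\GL_{e_i}$, and take the product over $i$.

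The delicate points are two. First, the depth-zero (tame) summand over $F$ should match the tame summand over $F_i$; this holds because $P_{F_i}=P_F$. Second, box products of fully faithful embeddings should stay fully faithful on ${\rm Ind.Coh}$ of a product of stacks. I expect that, for these tame stacks, this follows from the Künneth formula for ${\rm Ind.Coh}$.
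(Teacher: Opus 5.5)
Your proposal is correct and follows essentially the same route as the paper. You decompose via Theorem \ref{exhaust} into the blocks $\Rep^{\phi}$ with $\IC_{\phi}$ a singleton, fill in the dashed arrow of diagram (\ref{diagram_cllc}) for each $\phi$ by composing the inverse of the reduction-to-depth-zero equivalence with Zhu's tame embedding for $\mathbf{G}_{\phi}$ and the spectral equivalence $\IC^{\rm spec}_{\varphi_0}$, and take the product over $\phi$. Your reduction of Zhu's theorem to $\GL_{e_i}$ over $F_i$ (via restriction of scalars, $P_{F_i}=P_F$ and a K\"unneth argument) spells out what the paper only asserts when it says the ramification issue ``can be bypassed''; note only that the Shapiro identification is an isomorphism of quotient stacks, not of the $Z^1$-schemes themselves.
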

\begin{proof}
 Since $p>n$, the centralizer of any $\phi\in\Phi(P_{F},\GL_{n})$ is a
 Levi subgroup, so, after making choices as explained above,  it only remains to  sum over all
  $\phi\in\Phi(P_{F},\GL_{n})$.
\end{proof}
Such an embedding has already been constructed in \cite{BZCHN}. It would be interesting to compare both constructions.

\medskip

\emph{Over $\o\FM_{\ell}$}. As of writing this section, the tame CLLC is not available over
$\o\FM_{\ell}$. However, Zhu's paper contains a \emph{unipotent}
version, at least when $\ell>n$, that we introduced in the
end of the introduction. In Corollary 1.1.4 of \cite{Datequiv}, a ``reduction-to-unipotent'' result for depth-$0$ blocks of
$\Rep_{\o\FM_{\ell}}(\GL_{n}(F))$ is proven, with a pattern very similar to what we are doing
in this paper, except that we start from tamely ramified ``inertia parameters''. We refer to Section
1.1 of \cite{Datequiv} and also Paragraph 1.2.4 of \cite{Datfuncto} for the details.
Combining that result with our reduction-to-depth-$0$ result, we
obtain the following
\begin{cor}
  Assuming that $p>n$ and $\ell>n$, there is an embedding 
$$ D\Rep_{\o\FM_{\ell}}(\GL_{n}(F))\injo {\rm Ind.Coh}
\left(\Hom(W_{F}^{0},\GL_{n})/\GL_{n}\right)_{\o\FM_{\ell}}.$$
\end{cor}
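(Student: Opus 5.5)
The plan is to decompose $D\Rep_{\o\FM_{\ell}}(\GL_{n}(F))$ blockwise and to compose three equivalences on each block: our reduction to depth $0$, the reduction-to-unipotent of \cite{Datequiv}, and Zhu's unipotent embedding \cite[Thm.~5.4]{Zhu}.

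First, since $p>n$, $p$ does not divide $n!$, which is the order of the Weyl group. So Theorem \ref{exhaust} gives $\Rep_{\o\FM_{\ell}}(G)=\prod_{\phi}\Rep^{\phi}_{\o\FM_{\ell}}(G)$ over $\phi\in\Phi(P_{F},\GL_{n})$. In addition, $\IC_{\phi}$ is a singleton and $\mathbf{G}_{\phi}=\prod_{i}{\rm Res}_{F_{i}|F}\GL_{e_{i}}$, by \ref{example_gln}. On the spectral side, \cite{DHKM1} gives the matching decomposition of $\Hom(W_{F}^{0},\GL_{n})/\GL_{n}$ into summands indexed by $[\phi]$. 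Hence it suffices to embed each $D\Rep^{\phi}_{\o\FM_{\ell}}(G)$ into the $[\phi]$-summand and then take the product over $\phi$. Fully faithfulness of the product is clear, because distinct blocks are orthogonal by Proposition \ref{disjonction} and distinct summands on the spectral side are orthogonal.

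Next, fix $\phi$. Theorem \ref{thm-main} applies with $R=\o\FM_{\ell}$, which is a $\Rmintwo$-algebra since $\ell\neq p$. It gives an exact equivalence $\Rep^{1}(G_{\phi})\simto\Rep^{\phi}(G)$, and this passes to derived categories. On the spectral side, I choose a splitting $\varphi_{0}$ as in \ref{subsec-CLLC}; for $\GL_{n}$, permutation matrices provide the needed $L$-embedding, and $Z(\hat{\GL}_{n})$ is smooth, so we may take $N=1$. This gives $\IC^{\rm spec}_{\varphi_{0}}$, which identifies the $[\phi]$-summand with ${\rm Ind.Coh}$ of the tame stack $Z^{1}(W_{F}^{0}/P_{F},\hat{\mathbf{G}}_{\phi})/\hat{\mathbf{G}}_{\phi}$. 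This reduces the problem to constructing an embedding $D\Rep^{1}_{\o\FM_{\ell}}(G_{\phi})\injo{\rm Ind.Coh}(Z^{1}(W_{F}^{0}/P_{F},\hat{\mathbf{G}}_{\phi})/\hat{\mathbf{G}}_{\phi})$, i.e.\ to the bottom arrow of (\ref{diagram_cllc}) over $\o\FM_{\ell}$.

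For this bottom arrow, $G_{\phi}$ is a product of groups $\GL_{e_{i}}(F_{i})$ with $e_{i}\leq n<\ell$, so it suffices to treat one factor $\GL_{m}(E)$ with $m<\ell$ and $m<p$. I decompose its depth-$0$ category by tame inertia parameters, using Vignéras' decomposition \cite{Vig_Induced}. On the spectral side I use the matching decomposition of the tame stack into components indexed by inertia classes \cite[Cor.~4.21]{DHKM1}. By \cite[Cor.~1.1.4]{Datequiv}, each depth-$0$ block is equivalent to the unipotent block of a product of groups $\GL_{m_{j}}(E_{j})$, again with $m_{j}<\ell$. The parallel spectral reduction of \cite[\S 3]{DHKM1} identifies the corresponding component with the unipotent component of that product. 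Finally, I apply Zhu's theorem \cite[Thm.~5.4]{Zhu} to each unramified $\GL_{m_{j}}(E_{j})$; its hypothesis $\ell>m_{j}$ holds.

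The main obstacle is the compatibility in this last step. The reduction-to-unipotent equivalence of \cite{Datequiv} and the spectral reduction must land in the same unipotent component, and this must hold block by block. Since we only need some fully faithful embedding, and not the normalized one discussed in \ref{CLLC_GLn}, I expect it suffices to check that both reductions are indexed by the same inertia class and then simply compose the equivalences. I would verify this by matching the index sets.
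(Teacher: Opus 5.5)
Your proposal is correct and follows essentially the same route as the paper. You decompose by $\phi\in\Phi(P_F,\GL_n)$ using $p>n$ and Theorem \ref{exhaust}, move each $\phi$-block to the depth-$0$ category of $G_\phi$ via Theorem \ref{thm-main}, and identify the $[\phi]$-summand with the tame stack via $\IC^{\rm spec}_{\varphi_0}$. You then replace the bottom row of (\ref{diagram_cllc}) by the reduction-to-unipotent of \cite[Cor.~1.1.4]{Datequiv} combined with Zhu's unipotent embedding, and sum over $\phi$. This is exactly what the paper's one-line proof ("same as the last corollary, using Corollary 1.1.4 of \cite{Datequiv} on top of our equivalences") compresses.
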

\begin{proof}
  Same as the last corollary, using Corollary 1.1.4 of \cite{Datequiv}
  on top of our equivalences.
\end{proof}
Again, some choices are involved, and more work is needed to exhibit the ``correct'' embedding.

\clearpage
\printindex[notation]
\printindex[terminology]
\clearpage

\bibliography{artbiblio}

\end{document}